 \newtheorem{thm}{Theorem}[section]
 \newtheorem{athm}{Theorem}[chapter]
 \newtheorem{aprop}[athm]{Proposition}
 \newtheorem{alem}[athm]{Lemma}
 \newtheorem{cor}[thm]{Corollary}
 \newtheorem{lem}[thm]{Lemma}
 \newtheorem{prop}[thm]{Proposition}
 \newtheorem{conjecture}[thm]{Conjecture}
 \newtheorem{claim}[thm]{Claim}
 \theoremstyle{definition}
 \newtheorem{defn}[thm]{Definition}
 \newtheorem{altdefn}[thm]{Alternative Definition}
 \newtheorem{rem}[thm]{Remark}
 \newtheorem{note}[thm]{Note}
 \numberwithin{equation}{chapter}
 \DeclareMathOperator{\IM}{Im}
 \DeclareMathOperator{\Exp}{exp}
 \DeclareMathOperator{\Hom}{Hom}
 \DeclareMathOperator{\Map}{Map}
 \DeclareMathOperator{\Dim}{dim}
 \DeclareMathOperator{\Ker}{Ker}
 \DeclareMathOperator{\End}{End}
 \DeclareMathOperator{\inj}{Inj}
 \DeclareMathOperator{\hocolim}{hocolim}
 \DeclareMathOperator{\FMap}{FMap}
 \newcommand{\Real}{\mathbb{R}}
 \newcommand{\Complex}{\mathbb{C}}
 \newcommand{\redKstar}{\tilde{K}^{*}_{G}}
 \newcommand{\redKone}{\tilde{K}^{1}_{G}}
 \newcommand{\redKzero}{\tilde{K}^{0}_{G}}
 \newcommand{\Kstar}{K^{*}_{G}}
 \newcommand{\Kone}{K^{1}_{G}}
 \newcommand{\Kzero}{K^{0}_{G}}
 \newcommand{\pt}{\text{pt}}
 \newcommand{\st}{\tilde{\Sigma}}
 \newcommand{\sst}{\Sigma\tilde{\Sigma}}
 \newcommand{\Cat}{\mathcal{C}}
 \newcommand{\Top}{\mathcal{T}}
 \newcommand{\Cube}{\mathfrak{X}}
 \newcommand{\pos}{\mathcal{P}(S)}
 \newcommand{\tcx}{\tilde{c}\Cube}
 \newcommand{\Cubey}{\mathcal{Y}}
 \newcommand{\Cubez}{\mathcal{Z}}
 \newcommand{\pones}{\mathcal{P}_{1}(S)}
 \newcommand{\Ell}{\mathcal{L}(V_{0},V_{1})}
 \newcommand{\veen}{V_{n}(\Complex^{n+t})}
 \newcommand{\aich}{\Hom(V_{0},V_{1})}
 \newcommand{\sovo}{s_{0}(V_{0})}
 \newcommand{\svo}{s(V_{0})}
\begin{document}

\title{\huge \bf{The Equivariant Stable
Homotopy Theory Around Isometric Linear Maps}}
\author{\LARGE\bf{Harry Edward Ullman}\\\\
\large Submitted for the degree of Doctor of Philosophy}
\date{School of Mathematics and Statistics\\
University of Sheffield\\
July 2010} 
\maketitle
\newpage

\begin{abstract} The non-equivariant topology of Stiefel manifolds has been studied extensively, culminating in a result of Miller demonstrating that a Stiefel manifold splits stably to a wedge of Thom spaces over Grassmannians. Equivariantly, one can consider spaces of isometries between representations as an analogue to Stiefel manifolds. This
concept, however, yields a different theory to the non-equivariant case. We first construct a variation on the theory of the functional calculus before studying the homotopy-theoretic properties of this theory. This allows us to construct the main result; a natural tower of $G$-spectra running down from equivariant isometries which manifests the pieces of the non-equivariant splitting in the form of the homotopy cofibres of the tower. Furthermore, we detail extra topological properties and special cases of this theory, developing explicit expressions covering the tower's geometric and topological structure. We conclude with two detailed conjectures which provide an avenue for future study. Firstly we explore how our theory interacts with the splitting of Miller, proving partial results linking in our work with Miller's and conjecturing even deeper connections. Finally, we begin to calculate the equivariant $K$-theory of the tower, conjecturing and providing evidence towards the idea that the rich topological structure will be mirrored on the
$K$-theory level by a similarly deep algebraic structure.
\end{abstract}

\pagenumbering{roman} \setcounter{page}{3}
%\newgeometry{left=1.25in, right=2.25in, top=2in, bottom=2in}
\addcontentsline{toc}{chapter}{Acknowledgments}

\section*{Acknowledgments}

Writing a thesis is never easy, and there are a number of names I wish to mention in thanks - without their support, encouragement and advice this document would probably never have been started, let alone finished!

Firstly I wish to thank my mother, Gina Ullman, who has helped me through thick and thin towards where I am today. I choose to use this thesis as a constant reminder as to just how important she has been to my studies and development. Thanks Mum! I also wish to thank the rest of my family; my dad, aunts and uncles, grandmother and everyone else who has supported me.

Great thanks must go to my supervisor, Neil Strickland, who helped and advised me throughout both the good times and the bad. His comments and guidance have been invaluable and without them my thesis would never have made it this far. I also wish to thank all the staff, researchers and postgraduate students of the Sheffield University Pure Maths Department for making it into a great working environment. Thanks go to all those who have offered advice, mathematical or otherwise, shared an office with me, shared a house with me or even just put up with me. Many people within the department have made my time of study a pleasurable one and I offer my thanks to them all. I also offer my thanks to the EPSRC for funding and supporting my study.

Non mathematically, there are probably too many names to mention but I'd like to give it a go. I wish to thank the following. Cath Howdle and C-J Donnelly for checking and re-checking my spelling, grammar and general writing style far beyond the call of duty. Rosalind Higman, Emilie Yerby and Phil Scott for letting me visit them and thus letting me spend some amazing periods of time working on this thesis abroad. Caroline Chambers for useful mathematical hints
despite being a vet who dropped all traces of maths from her life age 18. Clemency Cooper for great encouragement and motivation through the medium of self-betterment star charts. Francesca Holdrick and Laura Daly for being the first people to `publish' my results (on their fridge), and for being my co-authors when writing non-mathematics. Finally, I thank everyone who has ever considered themselves a VOLE (they know who they are); each one has been a great friend who has kept me thoroughly on the right side of sanity. I leave it open as to which side this is.

\newpage

\pdfbookmark[0]{Contents}{toc}
\tableofcontents

\pagenumbering{arabic}
\setcounter{page}{6}
\chapter{Introduction}
\label{ch:ch1}

\section{Motivation - The Miller Splitting Theorem}\label{motivationthemillersplittingtheorem}

Let $\veen$ be the Stiefel manifold of $n$-frames in $\Complex^{n+t}$. One can also think of $\veen$ as the space of
all linear isometries $\Complex^{n}\to \Complex^{n+t}$. In particular, the unitary group $U(n)$ is the Stiefel manifold $V_{n}(\Complex^{n})$. 

For an unbased space $X$ we use the notation $X_{\infty}$ for the Alexandroff one-point compactification of $X$, which we think of as a based space based at the added point at infinity. Using this terminology we can study the homotopy theory of $\veen_{\infty}$, the based version of the naturally unbased space $\veen$. The stable homotopy theory of these spaces was first studied by James, who in \cite{JamesStiefelManifolds} demonstrated that the suspension spectrum $\Sigma^{\infty}\Sigma\Complex P^{n-1}_{\infty}$ splits off stably from the spectrum $\Sigma^{\infty}U(n)_{\infty}$. 

This result prompted many interesting avenues of research, for example the development of a stable splitting of the loop space of the unitary group, $\Omega U(n)$. Moreover, the result led to calculations regarding the homotopy theory of the classifying space of the unitary group, $BU(n)$.

Further research came from the study of the part left when one splits off $\Sigma^{\infty}\Sigma\Complex P^{n-1}_{\infty}$ from $\Sigma^{\infty}U(n)_{\infty}$. This idea was studied by Miller, who in \cite{Miller} proved that Stiefel manifolds have a stable splitting. The proof of this result can also be found in the survey article \cite{Crabb}, within the paper \cite{Kitchloo} and in Appendix \ref{ch:appendixa} of this document.

In order to state the Miller splitting we first need to establish some notation. Firstly we let $G_{k}(\Complex^{n})$ denote the Grassmannian of $k$-dimensional subspaces of $\Complex^{n}$. Further, we let $T$ be the tautological bundle over $G_{k}(\Complex^{n})$:
$$T:=\{(V,v):V\in G_{k}(\Complex^{n}),v\in V\}.$$

We also wish to define one other bundle over $G_{k}(\Complex^{n})$. Firstly we note that the unitary group $U(k)$ has as its associated Lie algebra the space $\mathfrak{u}(k)$ of skew-Hermitian $k\times k$ matrices. Also note that $\mathfrak{u}(k)$ has an action of $U(k)$ via the adjoint representation. We take the bundle $T$ to have structure group $U(k)$ and we denote the principal bundle by $P$. Hence via the Borel construction we define the following bundle over $G_{k}(\Complex^{n})$:
$$\mathfrak{u}(T):=P \times_{U(k)}\mathfrak{u}(k).$$

Finally, we use the notation $X^{V}$ to denote the Thom space of the vector bundle $V\to X$.

\begin{thm}[Miller]\label{themillerthm} There is a stable homotopy equivalence:
$$\Sigma^{\infty}\veen_{\infty}\simeq\bigvee_{k=0}^{n}\Sigma^{\infty}G_{k}(\Complex^{n})^{\mathfrak{u}(T)\oplus \Hom(T,\Complex^{t})}.$$
\end{thm}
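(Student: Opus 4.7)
The plan is to construct a finite filtration on $V_{n}(\Complex^{n+t})_{\infty}$ whose associated graded pieces are precisely the Thom spaces appearing on the right, and then to argue that the stable cofiber sequences split. The natural geometric parameter indexing the filtration is the dimension $k$ of the ``kernel'' of the isometry relative to the standard projection to $\Complex^{n}$.

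Writing each isometry $f \in V_{n}(\Complex^{n+t})$ as a pair $(f_{1},f_{2})$ with $f_{1}\colon \Complex^{n}\to\Complex^{n}$ and $f_{2}\colon\Complex^{n}\to\Complex^{t}$, the isometry condition reads $f_{1}^{*}f_{1}+f_{2}^{*}f_{2}=1$, so $A(f):=f_{1}^{*}f_{1}$ is self-adjoint with spectrum in $[0,1]$. The first step is to filter by $F_{k}:=\{f:\dim\ker f_{1}\geq k\}$, a nested sequence of closed subspaces $V_{n}(\Complex^{n+t})=F_{0}\supseteq F_{1}\supseteq\cdots\supseteq F_{n+1}=\emptyset$. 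For the filtration to behave well near strata, one replaces the literal kernel by a spectral subspace for eigenvalues below some small cutoff $c$, using the functional calculus on $A(f)$; this smooths out the jumps in dimension caused by small non-zero eigenvalues and explains the relevance of the functional-calculus theory developed earlier in the thesis.

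Next I would identify $F_{k}/F_{k+1}$ with $G_{k}(\Complex^{n})^{\mathfrak{u}(T)\oplus\Hom(T,\Complex^{t})}$. On the open stratum $\{\dim\ker f_{1}=k\}$ the assignment $f\mapsto V:=\ker f_{1}$ defines a projection to $G_{k}(\Complex^{n})$. The restriction $f_{2}|_{V}$ is an isometric embedding of $V$ into $\Complex^{t}$, and its deformations give the $\Hom(T,\Complex^{t})$ direction in the normal bundle of the stratum; the skew-Hermitian ``rotational'' degrees of freedom of $f_{1}$ transverse to its kernel contribute $\mathfrak{u}(T)$. Matching these up with local coordinates coming from the functional calculus exhibits a tubular neighborhood of the stratum whose one-point compactification is the advertised Thom space.

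The final step --- the stable splitting itself --- is where the main obstacle lies. One plausible approach is to build, for each $k$, a Pontrjagin--Thom collapse map from $\Sigma^{\infty}V_{n}(\Complex^{n+t})_{\infty}$ to $\Sigma^{\infty}G_{k}(\Complex^{n})^{\mathfrak{u}(T)\oplus\Hom(T,\Complex^{t})}$ using the tubular neighborhood of the $k$-th stratum described above, and then to verify both that these maps retract the cofiber inclusions and that they jointly assemble into a stable equivalence. Equivalence can be checked by a cell-by-cell homological comparison, since the Schubert cell decomposition of the Stiefel manifold is well understood and matches the cell structure of the target wedge after reindexing. Producing compatible, geometrically meaningful collapse maps --- rather than merely establishing the existence of some splitting --- is the technical heart of the argument, and is exactly where the analytic properties of the functional calculus on $A(f)$ must be exploited.
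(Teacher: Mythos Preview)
Your filtration is the wrong one, and this is a genuine gap rather than a matter of taste. Miller filters by $F_{k}=\{\alpha\in V_{n}(\Complex^{n+t}):\operatorname{rank}(\alpha-I)\leqslant k\}$, where $I:\Complex^{n}\hookrightarrow\Complex^{n+t}$ is the standard inclusion; this is an \emph{increasing} filtration with $F_{0}=\{I\}$ a single point and $F_{n}$ the whole manifold. Your filtration by $\dim\ker f_{1}$ does not produce the claimed Thom spaces as subquotients. A quick sanity check: your open stratum $F_{0}\setminus F_{1}=\{f_{1}\text{ injective}\}$ is a full-dimensional open subset of the Stiefel manifold, whereas $G_{0}(\Complex^{n})^{\mathfrak{u}(T)\oplus\Hom(T,\Complex^{t})}=S^{0}$; at the other end, your $F_{n}=\{f_{1}=0\}\cong V_{n}(\Complex^{t})$ (when $n\leqslant t$), whereas $G_{n}(\Complex^{n})^{\mathfrak{u}(T)\oplus\Hom(T,\Complex^{t})}$ is a sphere $S^{n^{2}+2nt}$. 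Already for $n=t=1$ your $F_{0}/F_{1}$ is the compactification of $S^{3}\setminus S^{1}$, which is not a sphere of any dimension. The correct geometric parameter is not the kernel of the projection $f_{1}$ but the subspace on which $\alpha$ \emph{fails to agree with} $I$, i.e.\ the orthogonal complement of $\ker(\alpha-I)$; sending $\alpha$ to this subspace is what fibres $F_{k}\setminus F_{k-1}$ over $G_{k}(\Complex^{n})$ with the right normal bundle. The identification of the fibre with $\mathfrak{u}(T)\oplus\Hom(T,\Complex^{t})$ then comes from a Cayley transform, not from spectral projections of $f_{1}^{*}f_{1}$.

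Your splitting strategy is also inverted relative to Miller's. The collapse maps $F_{k}(\Ell)_{\infty}\to F_{k}/F_{k-1}\cong G_{k}(\Complex^{n})^{\mathfrak{u}(T)\oplus\Hom(T,\Complex^{t})}$ come for free from the filtration; the content lies in constructing stable maps $\sigma_{k}$ going the \emph{other} way, from the Thom space back into $F_{k}(\Ell)_{\infty}$, and checking that the composite is the identity on the Thom space. Miller (and the paper) do this explicitly via the homeomorphism $s(W)\times\mathcal{L}(W,V)\cong\inj(W,V)$, $(\alpha,\theta)\mapsto-\theta\circ\exp(\alpha)$, applied fibrewise over the Grassmannian; this is where the functional calculus actually enters, not as a device for smoothing strata. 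Once such $\sigma_{k}$ exist, the splitting follows from the general lemma that a cofibre sequence with a one-sided section splits stably---no homological comparison is needed.
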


Let $I$ be a choice of inclusion $\Complex^{n}\to \Complex^{n+t}$. The proof of the Miller splitting begins with the definition of the Miller filtration:
$$F_{k}(\veen):=\{A\in \veen:\text{rank}(A-I)\leqslant k\}.$$

Miller first demonstrated that the space $F_{k}(\veen)\backslash F_{k-1}(\veen)$ is in fact diffeomorphic to the total space of the bundle $\mathfrak{u}(T)\oplus \Hom(T,\Complex^{t})$ over $G_{k}(\Complex^{n})$ and that this diffeomorphism is compatible with the projection maps down to the base space. Hence he extends this to the following based homeomorphism:
$$\frac{F_{k}(\veen)}{F_{k-1}(\veen)}_{\infty}\cong G_{k}(\Complex^{n})^{\mathfrak{u}(T)\oplus \Hom(T,\Complex^{t})}.$$

Thus there is a collapse $F_{k}(\veen)_{\infty}\to G_{k}(\Complex^{n})^{\mathfrak{u}(T)\oplus \Hom(T,\Complex^{t})}$. Miller then builds a stable map $\Sigma^{\infty}G_{k}(\Complex^{n})^{\mathfrak{u}(T)\oplus \Hom(T,\Complex^{t})}\to \Sigma^{\infty}F_{k}(\veen)_{\infty}$ such that composition with the stabilization of the collapse map gives the identity self-map of $\Sigma^{\infty}G_{k}(\Complex^{n})^{\mathfrak{u}(T)\oplus \Hom(T,\Complex^{t})}$. This map is referred to in the literature as the splitting map. This is then enough to prove the splitting via some standard results in homotopy theory.

Moreover, this splitting also appears in cohomology. The result below is well known for ordinary integral cohomology, it was proved by Kitchloo in \cite{Kitchloo} for the stated level of generality:

\begin{thm}[Kitchloo]\label{kitchloosresult} Let $E$ be a multiplicative complex oriented cohomology theory. Then $\tilde{E}^{*}(U(n)_{\infty})$ is an exterior algebra of dimension $n$ over $E^{*}(\pt)$ generated by $E^{*}(\Complex P^{n-1})$. Moreover, we have the following cohomology level composition of the splitting map with the inclusion $F_{k}(U(n))_{\infty}\to U(n)_{\infty}$:
$$\tilde{E}^{*}(U(n)_{\infty})\to\tilde{E}^{*}(F_{k}(U(n))_{\infty})\to \tilde{E}^{*}(G_{k}(\Complex^{n})^{\mathfrak{u}(T)\oplus \Hom(T,\Complex^{t})}).$$

Then the cohomology of $\tilde{E}^{*}(G_{k}(\Complex^{n})^{\mathfrak{u}(T)\oplus \Hom(T,\Complex^{t})})$ is isomorphic to the $k^{th}$ exterior power of the exterior algebra and the map above is the projection to this exterior power. Similar results hold for $\veen$.
\end{thm}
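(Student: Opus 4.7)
The plan is to combine Miller's splitting (Theorem~\ref{themillerthm}) with the Thom isomorphism, identify the resulting $E^{*}$-module decomposition with the exterior algebra structure on $E^{*}(U(n))$, and verify that the collapse map in the statement corresponds to projection onto the appropriate exterior power. Applying Miller's theorem yields
$$\tilde{E}^{*}(U(n)_{\infty}) \;\cong\; \bigoplus_{k=0}^{n} \tilde{E}^{*}\bigl(G_{k}(\Complex^{n})^{\mathfrak{u}(T)}\bigr).$$
Since $E$ is complex oriented and $\mathfrak{u}(T)$ is $E$-orientable (its complexification is $\End(T)$), the Thom isomorphism identifies the $k$th summand with $E^{*-k^{2}}(G_{k}(\Complex^{n}))$, which is free over $E^{*}$ of rank $\binom{n}{k}$ by the Schubert cell decomposition of the Grassmannian. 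Summing over $k$, $\sum_{k}\binom{n}{k} = 2^{n}$ matches the total rank of an exterior algebra on $n$ generators.

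The exterior algebra structure itself follows by induction on $n$ from the Leray--Hirsch theorem applied to the fibration $U(n-1) \to U(n) \to S^{2n-1}$ (valid since $E^{*}(S^{2n-1})$ is free of rank $2$ over $E^{*}$): we obtain $E^{*}(U(n)) \cong \Lambda_{E^{*}}(x_{1}, \ldots, x_{n})$ with $|x_{i}| = 2i - 1$. The $k=1$ piece of Miller's splitting is $\Sigma\Complex P^{n-1}_{\infty}$ (since $\mathfrak{u}(1)$ is a trivial real line bundle over $\Complex P^{n-1}$), so it recovers James' stable splitting and identifies a degree-shifted copy of $E^{*}(\Complex P^{n-1})$ inside $\tilde{E}^{*}(U(n)_{\infty})$ with the primitive generators $x_{1}, \ldots, x_{n}$.

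To show that the $k$th Miller summand is precisely $\Lambda^{k}$ and that the composition in the statement is the corresponding projection, I would compare the two decompositions of $\tilde{E}^{*}(U(n)_{\infty})$ in each total degree $d$. The rank of $(\Lambda^{k})_{d}$ counts $k$-subsets $\{i_{1} < \cdots < i_{k}\} \subseteq \{1, \ldots, n\}$ with $\sum(2i_{j} - 1) = d$; under the bijection $\lambda_{j} = i_{k-j+1} - (k-j+1)$ these correspond to the Schubert cells of $G_{k}(\Complex^{n})$ contributing to $E^{d-k^{2}}(G_{k}(\Complex^{n}))$. An induction on $n$ using the naturality of Miller's splitting under $U(n-1) \hookrightarrow U(n)$, anchored at the $k=1$ case already handled by James, then forces the two decompositions to coincide. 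The analogous statement for $\veen$ follows identically, with the extra bundle $\Hom(T, \Complex^{t})$ contributing only an additional Thom class shift in each summand.

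The main obstacle is the identification in the previous paragraph: matching the Miller summands with the exterior powers is not forced by the rank count alone, so one must argue geometrically that the Miller splitting map $G_{k}(\Complex^{n})^{\mathfrak{u}(T)} \to U(n)_{\infty}$ detects precisely the $k$-fold products of the James generators rather than some other basis of the exterior algebra. This is the technical heart of Kitchloo's argument, and it relies on a careful Chern class computation pairing the Thom class of $\mathfrak{u}(T)$ with iterated products in $\tilde{E}^{*}(U(n)_{\infty})$.
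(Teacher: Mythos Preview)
The paper does not contain a proof of this theorem. Theorem~\ref{kitchloosresult} appears in the introductory section~\ref{motivationthemillersplittingtheorem} as background material, attributed to Kitchloo and cited from \cite{Kitchloo}; the paper simply states the result and moves on. There is therefore no proof in the paper to compare your proposal against.

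As for your outline on its own terms: the ingredients you assemble (Miller's splitting, Thom isomorphism, the Leray--Hirsch computation of $E^{*}(U(n))$, and the rank count) are correct and standard, and you are right to flag that the real content lies in matching the Miller summands to the exterior powers rather than merely to free modules of the correct rank. Your final paragraph essentially concedes that this identification is the theorem, so what you have written is a reduction of the statement to its nontrivial core rather than a proof. If you want to complete the argument you would indeed need to carry out the computation you allude to, tracking how the splitting map interacts with the product structure; this is precisely what is done in \cite{Kitchloo}.
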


This was then extended by Strickland in Section $4$ of \cite{StricklandSubbundles} to cover unitary bundles. Overall, this gives a rather satisfying picture of the homotopy theory of $\veen$ - the space splits up stably into finite geometrically nice pieces in a natural way, further this splitting can be identified in cohomology. The goal of this document is to explore an equivariant generalization of these ideas. 

Normally with equivariant topology the first thing to do is see what happens in the na\"ive case, fix a group $G$, put $G$ in front of everything and hope things work. Within this framework it is possible to do this, however, this loses some information. The equivariant analogue of $\veen$ is not just $\veen$ with an action, rather it is something slightly different which cannot be studied simply by studying an equivariant form of the above theory. However, results similar to those depicted above can be found using different techniques, and the spirit of Miller's and Kitchloo's results motivate study of the equivariant question.

\section{How to Generalize}\label{howtogeneralize}

Throughout this section and most of the rest of the document we let $G$ be a fixed compact Lie group and work $G$-equivariantly; we take all our actions to be from the left. In order to equivariantly generalize the Miller splitting we need to come up with an equivariant analogue of $\veen$. As mentioned earlier, one can consider $\veen$ as a space of isometries. This leads us towards the following definition:

\begin{defn}\label{ell} Let $V_{0}$ and $V_{1}$ be two complex $G$-representations of finite dimension with invariant inner product. We let $d_{0}:=\Dim(V_{0})$ and $d_{1}:=\Dim(V_{1})$, then we restrict to the case where $d_{0}\leqslant d_{1}$. The other case is easily observed to be trivial. Define $\Ell$ to be the space of isometric linear maps from $V_{0}$ to $V_{1}$ with the following group action:
$$g.\theta(v)=g\theta(g^{-1}v).$$
Here $g\in G$, $\theta\in\Ell$ and $v\in V_{0}$.
\end{defn}

\begin{rem}\label{elliscompact} $\Ell$ is compact. Hence the based space $\Ell_{\infty}$ has an isolated basepoint.
\end{rem}

This gives us our equivariant analogue for $\veen$. Sadly, however, we cannot simply repeat Miller's process for $\Ell$. In the first stage of the splitting proof one built a filtration of $\veen$ using a canonical choice of inclusion $I:\Complex^{n}\to \Complex^{n+t}$. Equivariantly, however, we may well not have this choice. If $V_{0}$ and $V_{1}$ have unmatching group actions then there is no equivariant inclusion $V_{0}\to V_{1}$ and thus no starting filtration. In order to include $V_{0}$ into $V_{1}$ in any reasonable way one would require $V_{0}$ to be a subrepresentation of
$V_{1}$. In this case we have a filtration and it is easy to see that an equivariant Miller splitting follows, the proofs in the cited texts or in Appendix \ref{ch:appendixa} are natural enough to preserve any group actions. If the actions are incompatible, however, then this approach is simply unworkable and we need a new angle of attack.

In lieu of the above, $\Ell$ will not in general split stably in a fashion matching the non-equivariant case. However, equivariant analogues of the Thom spaces in the non-equivariant splitting do play a role in the equivariant stable homotopy theory around $\Ell$. Before stating the main result we first make a single technical change. 

\begin{defn}\label{svo} Let $V$ be a finite dimensional Hermitian vector space. Then by choosing an orthonormal basis for $V$ one can write an endomorphism of $V$ as a matrix $\alpha$. Set the adjoint matrix of $\alpha$ to be $\alpha^{\dag}:=\bar{\alpha}^{t}$, the conjugate-transpose of $\alpha$.

Now let $s(V)$ denote the space of selfadjoint endomorphisms of $V$:
$$s(V):=\{\alpha\in\End(V):\alpha=\alpha^{\dag}\}.$$

Elements of $s(V)$ are called positive if their eigenvalues are positive real numbers. Let $s_{+}(V)$ be the space of weakly positive (i.e. nonnegative) selfadjoint endomorphisms  of $V$ and let $s_{++}(V)$ be the space of strictly positive endomorphisms of $V$. We also note here that $s(V)$ carries the structure of a real vector space, thus we can think of $s(V)_{\infty}$ as a sphere and for convenience of notation set $S^{s(V)}:=s(V)_{\infty}$.

Finally for any Hermitian space $W$ we have the bundle $s(T)$ over $G_{k}(W)$ given by:
$$s(T):=\{(V,\alpha):V\in G_{k}(W),\alpha\in s(V)\}.$$
\end{defn}

Miller's work was phrased in terms of the unitary Lie algebra $\mathfrak{u}(V)$ and it's associated bundles; we choose to switch to phrasing things in terms of $s(V)$ as it proves to be more compatible with the functional calculus
techniques we use in Chapter \ref{ch:ch3}. This is easily done as multiplication by the imaginary number $i$ gives a homeomorphism $\mathfrak{u}(V)\cong s(V)$. Moreover, the Miller splitting can be reproved using $s(V)$ from the start rather than just using the above homeomorphism at the end of the proof; we do this in Appendix \ref{ch:appendixa}.

We note two more things before stating the main result. Firstly, we use the term `$G$-spectrum' in the below statement. We refer the reader to Section \ref{onthecategories} to detail precisely what we mean but we note here that our $G$-spectra are ones indexed on a chosen complete $G$-universe and that an equivariant suspension spectrum functor $\Sigma^{\infty}$ does exist. We also note a single convenience of notation. We use as a shorthand $\Hom(T,V_{1}-V_{0})$ for the virtual bundle $\Hom(T,V_{1})-\Hom(T,V_{0})$ over $G_{k}(V_{0})$. We can now state the theorem:

\begin{thm}\label{introresult} There is a natural tower of $G$-spectra from $\Sigma^{\infty}\Ell_{\infty}$ to the sphere spectrum $S^{0}$:
$$\Sigma^{\infty}\Ell_{\infty}\overset{\pi_{d_{0}}}{\to} X_{d_{0}-1}\overset{\pi_{d_{0}-1}}{\to}\ldots\overset{\pi_{2}}{\to}X_{1}\overset{\pi_{1}}{\to} S^{0}.$$
Further, the map $\Sigma^{\infty}\Ell_{\infty}\to S^{0}$ comes from the projection $\Ell\to\text{pt.}$ and the homotopy cofibre of the map $\pi_{k}$ is the suspension of the below Thom spectrum; equivalently in the stable category the homotopy fibre of $\pi_{k}$ is the below spectrum:
$$\Sigma^{\infty}G_{k}(V_{0})^{\Hom(T,V_{1}-V_{0})\oplus s(T)}.$$
\end{thm}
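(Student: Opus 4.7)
The plan is to construct the tower by setting up an equivariant analogue of Miller's filtration on $\Ell$, replacing the non-equivariant choice of a reference inclusion $I : V_0 \to V_1$ by the functional-calculus machinery developed in Chapter~\ref{ch:ch3}. The tower would then arise from the successive quotient maps of this filtration, and the strata would realise the required Thom spectra.

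The first and principal step is to define an equivariant filtration $F_0 \subseteq F_1 \subseteq \cdots \subseteq F_{d_0} = \Ell$ that is intrinsic, i.e.\ does not require any fixed map $I : V_0 \to V_1$. The idea is that each $\theta \in \Ell$ should determine, via functional calculus applied to a canonically associated selfadjoint operator, a triple $(T_\theta, \alpha_\theta, \phi_\theta)$ consisting of a $G$-subspace $T_\theta \subseteq V_0$, a selfadjoint endomorphism $\alpha_\theta \in s(T_\theta)$, and a transverse linear map $\phi_\theta : T_\theta \to V_1$ (taken modulo the intrinsic $\Hom(T_\theta, V_0)$ degrees of freedom). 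One then sets $\theta \in F_k$ precisely when $\Dim(T_\theta) \leqslant k$. The naturality of functional calculus makes the filtration $G$-invariant and functorial in $V_0, V_1$. This is where the hard work lies: non-equivariantly the triple is read off $\theta - I$, but without an equivariant $I$ one must construct it coordinate-freely from intrinsic spectral data, and it is exactly this construction which occupies Chapter~\ref{ch:ch3}.

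Granted the filtration, the second step identifies the successive strata by an equivariant normal-bundle/tubular-neighborhood argument. The map $\theta \mapsto (T_\theta, \alpha_\theta, \phi_\theta)$ should restrict to an equivariant diffeomorphism of $F_k \setminus F_{k-1}$ onto the total space of $\Hom(T, V_1 - V_0) \oplus s(T)$ over $G_k(V_0)$: the $s(T)$ summand records $\alpha_\theta$, and the virtual $\Hom(T, V_1 - V_0)$ summand records $\phi_\theta$ equivariantly. Passing to one-point compactifications yields
\[ F_k / F_{k-1} \;\cong\; G_k(V_0)^{\Hom(T, V_1 - V_0) \oplus s(T)} \]
as based $G$-spaces. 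Finally, one assembles the tower formally: with an appropriate indexing convention, let $X_k$ be the suspension spectrum of the appropriate quotient $\Ell / F_j$, so that the top of the tower is $\Sigma^\infty \Ell_\infty$ and the bottom is $S^0 = \Sigma^\infty(\Ell/\Ell)$, realising $\Sigma^\infty \Ell_\infty \to S^0$ as the suspension spectrum of $\Ell \to \pt$; take $\pi_k$ to be induced by including one filtration stage into the next; and apply the Puppe sequence to the short cofibre sequences $F_{k-1} \to F_k \to F_k / F_{k-1}$ to identify the homotopy cofibre of each $\pi_k$ with the claimed suspended Thom spectrum, as required.
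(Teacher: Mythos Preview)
Your proposal rests on constructing an intrinsic $G$-equivariant filtration
\[
F_0 \subseteq F_1 \subseteq \cdots \subseteq F_{d_0} = \Ell
\]
of $\Ell$ itself, by associating to each isometry $\theta$ a canonical selfadjoint operator and then reading off a subspace $T_\theta$ from its spectral data. This is precisely the step that cannot be carried out, and it is the reason the paper does \emph{not} proceed this way. The only natural selfadjoint operator attached to an isometry $\theta : V_0 \to V_1$ is $\theta^{\dag}\theta$, and this is the identity, so it carries no information whatsoever. There is nothing else canonical: any assignment $\theta \mapsto T_\theta \in G_k(V_0)$ which is continuous and equivariant would in particular give a $G$-map $\Ell \to G_k(V_0)$, and for generic representations no such map picking out a distinguished $k$-plane exists. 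The paper makes exactly this point in Section~\ref{howtogeneralize}: without an equivariant inclusion $I : V_0 \to V_1$ there is no Miller filtration, and Chapter~\ref{ch:ch3} does not supply one --- it develops functional-calculus machinery for a different purpose.

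The actual construction is quite different. Rather than filter $\Ell$, the paper \emph{introduces} an auxiliary selfadjoint variable $\alpha \in s(V_0)$ alongside $\theta$, and defines
\[
\tilde{X}_k' \;=\; \{(\alpha,\theta) : \alpha \in s(V_0),\ \theta \in \mathcal{L}(P_k(\alpha), V_1)\},
\]
where $P_k(\alpha)$ is the span of the top $k$ eigenspaces of $\alpha$ (Definition~\ref{pkdefn}). This $\tilde{X}_k'$ is a quotient of $s(V_0) \times \Ell$, not of $\Ell$ alone; one has $\tilde{X}_{d_0}' = s(V_0) \times \Ell$ and $\tilde{X}_0' = s(V_0)$. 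The spectra in the tower are then the \emph{formal desuspensions} $X_k := S^{-s(V_0)} \wedge (\tilde{X}_k')_\infty$, which are not in general suspension spectra of spaces. The maps $\pi_k$ come from restricting $\theta$ to the smaller subspace $P_{k-1}(\alpha) \subset P_k(\alpha)$. The cofibres are identified (Chapter~\ref{ch:ch5}) not by a stratum diffeomorphism but by quotienting out a common subspace $Y_k$ and recognising the resulting map as a fibrewise version, over $G_k(V_0)$, of the cofibre sequence $\inj(W,V_1)_\infty \to \Sigma\inj(W,V_1)^c_\infty \to S^{\Hom(W,V_1)\oplus\Real}$ built from the NDR pair in Section~\ref{BuildingNDRPairsUsingFunctionalCalculus}. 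The bundle identity of Lemma~\ref{destabilizinglemma} then converts the answer into the stated Thom spectrum. So the extra $s(V_0)$ is not extracted from $\theta$; it is added by hand and later cancelled by a desuspension.
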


The word natural here means that all steps taken in the proof are natural enough to preserve the group action and associated structures. This gives us something similar and yet different from Miller's result. The same pieces of information as in the splitting, the Thom spectra, appear. However, they occur in a slightly different form to how they do in the non-equivariant case. The majority of this document is dedicated to constructing this tower and proving this theorem.

Following on from this, we choose to explore certain topological properties of the theorem. Our goal is to document a
complete picture of the general geometry and topology surrounding the tower, which we achieve through explicit statements regarding the maps and homotopies involved.

Finally, we consider two follow-up conjectures to our work. Firstly, if $V_{0}$ is a subrepresentation of $V_{1}$ then Miller's result does follow, so we explore our theorem in this setting:

\begin{conjecture}\label{introsplits} In the case where $V_{0}$ is a subrepresentation of $V_{1}$ the tower of Theorem \ref{introresult} suitably degenerates to produce an equivariant stable splitting which interacts well with the geometry and topology used in Miller's work.
\end{conjecture}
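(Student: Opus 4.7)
The plan is to leverage the canonical equivariant inclusion $I \colon V_{0} \hookrightarrow V_{1}$, available precisely when $V_{0}$ is a subrepresentation of $V_{1}$, to produce sections of every map $\pi_{k}$ in the tower. Since $I$ is $G$-equivariant, the Miller-style filtration
$$F_{k}(\Ell) := \{\theta \in \Ell : \text{rank}(\theta - I) \leqslant k\}$$
is a filtration by closed $G$-invariant subspaces, and the arguments of Appendix \ref{ch:appendixa} are natural enough to preserve the group action, yielding an equivariant wedge decomposition
$$\Sigma^{\infty}\Ell_{\infty} \simeq \bigvee_{k=0}^{d_{0}} \Sigma^{\infty} G_{k}(V_{0})^{\Hom(T, V_{1} - V_{0}) \oplus s(T)}$$
whose wedge summands match the homotopy fibres of the $\pi_{k}$ in Theorem \ref{introresult}.

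The key steps I would carry out are as follows. First, identify the tower layers with the filtration subquotients: show that in the presence of $I$, each intermediate spectrum $X_{k}$ is equivariantly stably equivalent to $\Sigma^{\infty} F_{k}(\Ell)_{\infty}$ and that $\pi_{k}$ corresponds to the map induced by the inclusion $F_{k-1}(\Ell) \hookrightarrow F_{k}(\Ell)$ under this equivalence. Second, use the equivariant form of Miller's splitting map to build, for each $k$, a stable section $\sigma_{k} \colon X_{k-1} \to X_{k}$ of $\pi_{k}$. Third, assemble the $\sigma_{k}$ into an equivalence
$$\bigvee_{k=0}^{d_{0}} \Sigma^{\infty} G_{k}(V_{0})^{\Hom(T, V_{1} - V_{0}) \oplus s(T)} \simeq \Sigma^{\infty}\Ell_{\infty},$$
thereby exhibiting the tower as stably split. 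Fourth, check compatibility with Kitchloo's description (Theorem \ref{kitchloosresult}) by tracing the splitting through complex-oriented cohomology and matching the resulting exterior-power decomposition with that arising from Miller's wedge summands.

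The hard part will be the first step. The tower of Theorem \ref{introresult} is built using functional-calculus techniques that make no reference to $I$, so to identify its layers with the rank-filtration strata one must carefully match the spectral data of the functional calculus applied to $\theta$ (typically involving $\theta^{\dag}\theta$ or an associated selfadjoint operator on $V_{0}$) with the eigenvalue data of $\theta - I$. In particular, one must reconcile the appearance of the selfadjoint bundle $s(T)$ in both theories and verify that the Thom isomorphisms in the functional-calculus construction agree with those in the rank-filtration picture. Once this identification is pinned down at the level of the filtration quotients, the remaining steps should follow from an equivariant adaptation of Miller's argument together with the standard fact that a tower whose connecting maps all admit sections splits stably as a wedge of its fibres.
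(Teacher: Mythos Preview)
Your proposal is essentially the same strategy the paper pursues in Chapter~\ref{ch:ch7}, and you have correctly located the crux. However, you should be aware that this statement is a genuine \emph{conjecture} in the paper: the author was unable to complete exactly the step you flag as ``the hard part,'' and the statement remains open in general.

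Concretely, the paper defines an explicit comparison map $res_{k}\colon F_{k}(\Ell)_{\infty}\to X_{k}$ (the composite of the filtration inclusion with the tower projection) and formulates your first step as Conjecture~\ref{equivofmillersplittings}: that $res_{k}$ is a stable equivalence. The paper then shows this would imply the splitting (Claim~\ref{thetowersplits}) via precisely your steps two and three, using the one-sided inverses $\tau_{k}\circ h_{k}\circ res_{k}^{-1}$ to $\phi_{k}$. What the paper \emph{can} prove is: $res_{k}$ restricts to a homeomorphism on explicit open dense subsets (Proposition~\ref{homeomorphicondensesubsets}); $res_{d_{0}-1}$ is an equivalence via the octahedral axiom (Proposition~\ref{topequivalence}); and the full conjecture in the case $d_{0}=2$ (Theorem~\ref{dimntwotowersplits}).

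The gap in your plan is therefore not one of strategy but of execution: your phrase ``carefully match the spectral data of the functional calculus applied to $\theta$ \ldots\ with the eigenvalue data of $\theta-I$'' is exactly where the author gets stuck. Section~\ref{AnEquivalentConjecture} reformulates the problem as showing that two explicit maps $g_{0},g_{1}\colon G_{k}(V_{0})^{\Hom(T,V_{1})\oplus s(T^{\bot})}\to\tilde{X}_{k}$ are homotopic; the author constructs a proper homotopy between their projections to $s(V_{0})$ but cannot lift it to $\tilde{X}_{k}$ because the isometry component $-\sigma(\beta)\oplus I_{W^{\bot}}$ becomes undefined on $\Ker(\beta)$ once one moves off the injective locus. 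So your outline is correct as a plan, but it does not constitute a proof, and the paper's evidence suggests the required matching is genuinely delicate rather than a routine bookkeeping exercise.
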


This claim is still unproven, however, we detail evidence strongly suggesting that the claim is correct. Further, we can prove the result in the special case where $d_{0}=2$. Finally, as Kitchloo demonstrated the Miller splitting in cohomology, so too do we wish to demonstrate that our tower's properties transfer across to equivariant cohomology. We work in reduced complex equivariant topological $K$-theory as the theory is highly calculable.

We apply $K$-theory to various portions of the tower. We note here that a calculation over the whole tower would be an extremely in-depth exercise, however, the local calculations at least suggest a way forward. The results imply the following claim, which we conjecture in more detail in Chapter \ref{ch:ch8}. If this claim were to be true then we would retrieve a suitable equivariant extension to the splitting exhibited by Kitchloo:

\begin{conjecture}\label{introconjecture} The equivariant $K$-theory of the tower resembles a Koszul complex,
with the differentials running between the equivariant $K$-theory of each $\Sigma^{\infty}G_{k}(V_{0})^{\Hom(T,V_{1}-V_{0})\oplus s(T)}$. Moreover, the equivariant $K$-theory of each $\Sigma^{\infty}G_{k}(V_{0})^{\Hom(T,V_{1}-V_{0})\oplus s(T)}$ is an exterior power
of some exterior algebra over the representation ring of $G$. Finally if $V_{0}$ is a subrepresentation of $V_{1}$ the differentials are zero and the equivariant $K$-theory of $\Ell$ is an exterior algebra.
\end{conjecture}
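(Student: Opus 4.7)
The plan is to tackle the conjecture in three stages mirroring its three clauses: \textbf{first}, identify the equivariant $K$-theory of each layer $\Sigma^{\infty}G_{k}(V_{0})^{\Hom(T,V_{1}-V_{0})\oplus s(T)}$ with a $k$-th exterior power; \textbf{second}, use the cofibre sequences of the tower to assemble these into a Koszul-shaped chain complex and pin down the differentials; \textbf{third}, deduce the split case.

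For the first stage, I would apply an equivariant Thom isomorphism in complex $K$-theory. The bundle $\Hom(T,V_{1}-V_{0})$ is manifestly complex and equivariantly oriented. For $s(T)$, the identification $s(V)\otimes_{\Real}\Complex\cong V\otimes V^{*}$ of complex $G$-representations exhibits $s(T)$ as the real form of a complex bundle, giving a Thom isomorphism up to a degree shift. Combining these reduces $\tilde{K}^{*}_{G}$ of the $k$-th layer to a twisted version of $K^{*}_{G}(G_{k}(V_{0}))$, which by the equivariant splitting principle is free of rank $\binom{d_{0}}{k}$ over the representation ring $R(G)$. Comparison with Kitchloo's non-equivariant exterior-power description then makes it natural to expect the $k$-th layer to realise the $k$-th exterior power over $R(G)$ of a single $R(G)$-module $M$ of rank $d_{0}$, whose underlying generators come from $V_{0}$ and whose multiplicative structure is twisted by the virtual representation $V_{1}-V_{0}$.

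For the second stage, the cofibre triangles furnished by Theorem \ref{introresult} give long exact sequences in $\tilde{K}^{*}_{G}$ which assemble into a spectral sequence whose $E_{1}$-page, after the identifications above, sits in a single row with $k$-th entry $\Lambda^{k}M$. The differentials are $R(G)$-linear maps $\Lambda^{k}M\to\Lambda^{k-1}M$. Using naturality in $V_{0}$ and $V_{1}$ together with the structural fact that any such $R(G)$-linear derivation of an exterior algebra is interior product with a fixed element, I would force each differential to be contraction against a single $\eta\in M^{*}$ depending only on the representation-theoretic relationship between $V_{0}$ and $V_{1}$. Computing $\eta$ explicitly reduces to a single low-degree calculation, which should be tractable using the functional calculus description of the maps $\pi_{k}$ advertised after Definition \ref{svo}.

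For the third stage, if $V_{0}$ is a subrepresentation of $V_{1}$ then Conjecture \ref{introsplits} stably splits the tower, so all connecting maps in the cofibre sequences are null-homotopic, every differential in the spectral sequence vanishes, and $\tilde{K}^{*}_{G}(\Ell_{\infty})\cong\bigoplus_{k=0}^{d_{0}}\Lambda^{k}M=\Lambda^{*}M$, recovering the exterior algebra. The main obstacle is the second stage: explicitly identifying $\eta$. In the non-equivariant setting the Miller splitting kills all such differentials for formal reasons, so Kitchloo's argument offers no template; here the differentials are genuinely non-zero and governed by how badly $V_{0}$ fails to embed $G$-equivariantly into $V_{1}$. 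Pinning down $\eta$ in representation-theoretic terms, and then verifying that the resulting complex is honestly a Koszul complex rather than some other $R(G)$-linear derivation on an exterior algebra, is where I expect the substantive difficulty to lie.
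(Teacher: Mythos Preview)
First, a framing issue: this statement is a \emph{conjecture} in the paper, not a theorem. The paper does not prove it. Chapter~\ref{ch:ch8} provides partial evidence only at the bottom level $k=1$ (for $G$ finite abelian) and then states a more detailed version as Conjecture~\ref{TheFinalConjecture}, explicitly flagging the obstructions. So your proposal should be read as a competing strategy toward the same open problem, not as an alternative to a known proof.

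That said, your plan diverges from the paper's approach in two substantive ways, and each has a gap.

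\textbf{The $s(T)$ bundle.} You claim that $s(V)\otimes_{\Real}\Complex\cong V\otimes V^{*}$ exhibits $s(T)$ as the real form of a complex bundle and hence gives a Thom isomorphism up to degree shift. But $s(T)$ is a \emph{real} bundle of rank $k^{2}$; when $k$ is odd this rank is odd and $s(T)$ cannot carry a complex structure at all, so no straightforward Thom isomorphism in complex $K$-theory is available. The paper treats exactly this as one of the two main obstructions (\S\ref{TheKTheoryoftheTowerConjecture}): for $k=1$ one has $s(T)\cong\Real$ and the issue evaporates, but for $k>1$ the paper says plainly that ``we simply cannot do this'' and leaves the computation of $\redKstar(G_{k}(V_{0})^{s(T)})$ open. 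Your complexification remark does not resolve this.

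\textbf{Identifying the differential.} The paper's evidence for the differential is concrete and geometric: it shows in \S\ref{TheBottomoftheTower} that $\delta_{1}$ arises from a Pontryagin--Thom collapse, and then in Proposition~\ref{itsaresiduemap} identifies $\delta_{1}^{*}$ (for finite abelian $G$) as the \emph{residue map} associated to the meromorphic function $f_{V_{1}}(T)/f_{V_{0}}(T)$, via Strickland's multicurves result. Proposition~\ref{bottomdifferentialiszeroinsubrep} then proves directly that this residue vanishes if and only if $V_{0}\leqslant V_{1}$, by factoring the polynomials $f_{V}$ over the line decomposition of $V$. Your proposal instead argues abstractly that any $R(G)$-linear map $\Lambda^{k}M\to\Lambda^{k-1}M$ compatible with naturality must be contraction against some $\eta\in M^{*}$; this is not obviously true without further structure (the differentials need not assemble into a derivation a priori), and even granting it, you have no mechanism for computing $\eta$. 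The paper's residue description is the actual content here.

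\textbf{The split case.} Your third stage invokes Conjecture~\ref{introsplits} to conclude the differentials vanish when $V_{0}\leqslant V_{1}$. But that is itself an open conjecture in the paper, so this is circular. The paper instead proves the vanishing of $\delta_{1}^{*}$ in the subrepresentation case \emph{directly} from the residue formula (Proposition~\ref{bottomdifferentialiszeroinsubrep}), independent of any splitting conjecture.

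In short: the paper's route is explicit computation with Segal's polynomials $f_{V}(t)$ plus the Pontryagin--Thom/residue identification of $\delta_{1}$, carried out only for $k=1$ and finite abelian $G$; your route is a Thom-isomorphism-plus-formal-derivation argument whose first step does not go through for the real bundle $s(T)$ and whose third step assumes an unproven conjecture.
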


We conclude with a remark that we also believe that this result holds for other equivariant cohomology theories. We limit ourselves to working in $K$-theory, however, as it offers us a lot of scope for direct calculations and `brute force methods'. Moreover, in the equivariant world the concept of `any' equivariant cohomology theory is hard to pin down, as for truly universal statements in cohomology one needs to make certain restrictions and assumptions about the group involved. Studying this claim in more generality and for other cohomology theories is a future goal of the author.

\section{Chapter Layouts}\label{chapterlayouts}

The work proceeds in the following way. Firstly we conclude Chapter \ref{ch:ch1} with two sections on notation, conventions and technicalities. Section \ref{conventions} is a centralized reference point for much of our notational and other conventions. Further, we finish the chapter with a technical section compiling together many standard continuity and properness results that we will need. This section, \ref{NotesonContinuityandProperness}, is not original.

Chapter \ref{ch:ch2} is our first background chapter, all of the work is previously known. Section \ref{onthecategories} details the categories of $G$-spaces and $G$-spectra that we will work in. In particular, this section discusses the equivariant suspension spectrum functor and notes how certain space-level constructions behave when transferred to spectra. Section \ref{BasicTheory} then covers the basic theory of homotopy cofibre sequences, the topological construction we will be using as part of our main result. This is extended in Section \ref{NDRPairs} where we detail a method of building cofibre sequences via NDR pairs, another topological construction which we also define. We then spend a section detailing how cofibre sequences interact with quotients - the results in Section \ref{TotalCofibres} will allow us to simplify the proof of Theorem \ref{introresult} by taking a quotient at a certain point. This section also includes a brief exposition of the theory of cubical diagrams and total cofibres. Finally in Section \ref{SplittingsviaCofibreSequences} we cover how to retrieve stable splittings from cofibre sequences with certain properties; this theory is used in the proof of the Miller splitting and we will use it to conjecture certain results in Chapter \ref{ch:ch7}.

Chapter \ref{ch:ch3} is our final background chapter. This chapter deals with the functional calculus, a theory from functional analysis. The standard theory is found in Section \ref{TheStandardTheory}, none of this chapter is original. In Section \ref{Variations}, however, we expand and generalize the functional calculus to a more general context; this theory is prevalent throughout the rest of the document. We then use the expanded theory in Section \ref{BuildingNDRPairsUsingFunctionalCalculus} to build various NDR pairs and cofibre sequences which will be used in the proof of the main theorem. Finally, in Section \ref{TheHomotopyTypeofCertainMapsinFunctionalCalculus} we cover the homotopy classification of maps built from our functional calculus extension, proving a nice classification result which determines when two maps of this type are homotopic.

We state the main theorem in Chapter \ref{ch:ch4}. Section \ref{TheTower} details a statement of the main result as well as defining sets that will become the spectra in our tower. These sets are then topologized in Section \ref{TopologizingTheTower} - the interesting topology they hold will be used in many continuity arguments. This section also explicitly defines the spectra in our tower. We then take a detour into the topology and equivariance of bundles over Grassmannians in Section \ref{TheTopologyOfBundlesOverGrassmannians}. This section is unoriginal but we gather together the results to be complete and to be rigourous when defining certain $G$-spaces that will build the $G$-spectra that we claim are the cofibres of the tower. We construct the spectra from the spaces in Section \ref{DestabilizingtheTower}. Finally, in Section \ref{CandidateMaps} we write down unstable maps which will stabilize to become the maps in our tower. We do not prove here, however, that these maps are well-defined - this will be exhibited for one map in Chapter \ref{ch:ch5} and proved for the other map in Chapter \ref{ch:ch6}.

We prove the meat of the result - that the cofibres of the maps in our tower are as claimed - in Chapter \ref{ch:ch5}. This is done firstly for the top of the tower in Section \ref{TheTopTriangle} by proving that the top of the tower is isomorphic to a cofibre sequence built in Chapter \ref{ch:ch3}. We then explain a method of generalization in Section \ref{GeneralizingtheTopoftheTower} before using this to prove the result for the whole tower in Section \ref{TheOtherTriangles} - in this section we simplify the proof somewhat by taking a global quotient as discussed in \ref{TotalCofibres}.

Chapter \ref{ch:ch6} then covers further topological properties of the tower. Section \ref{BypassingtheQuotientbyaLift} remedies the problem of taking a quotient by giving explicit lifts of the maps involved - this section demonstrates that one of the maps in the general cofibre sequence is as described in Section \ref{CandidateMaps}. Section \ref{BypassingtheQuotientbyaLift} completes the work on defining explicit unstable maps to build the tower of Theorem \ref{introresult}, we then spend Section \ref{ExplicitNullHomotopies} explicitly writing down the null-homotopies of each composition in each cofibre sequence. Finally the bottom portion of our tower holds nice extra geometric and topological properties, this is explored in Section \ref{TheBottomoftheTower}.

Chapter \ref{ch:ch7}, our first conjectural chapter, covers the special case of our work when the Miller splitting can also be built. Section \ref{TheConjecture} conjectures how our spectra and the spectra in the Miller filtration are related, providing evidence suggesting that our claims are true. The consequences of this conjecture are explored in Section \ref{ConjectureFollowup} to derive Conjecture \ref{introsplits}, further the evidence provided in Section \ref{TheConjecture} allows us to prove Conjecture \ref{introsplits} in the dimension $2$ special case. We then state an equivalent conjecture in Section \ref{AnEquivalentConjecture}; again we provide evidence that this conjecture is true. That all these conjectures are equivalent is then proved in Section \ref{FollowuptotheSecondConjecture}.

Our final chapter, Chapter \ref{ch:ch8}, conjectures details about the equivariant $K$-theory of the tower built in Theorem \ref{introresult}. Section \ref{BasicResults} covers the basic results in equivariant $K$-theory that we will need, this section is unoriginal. Section \ref{TheKTheoryoftheBottomoftheTower} then applies this theory to partially calculate the $K$-theory of the bottom of the tower. Finally, we conjecture in Section \ref{TheKTheoryoftheTowerConjecture} a more detailed conjecture similar to Conjecture \ref{introconjecture}. To do this we use the evidence of the previous section, while also noting the obstructions that prevent the general calculation following through directly from the results concerning the bottom of the tower.

Finally, we present as an appendix a proof of the Miller splitting. Appendix \ref{ch:appendixa} is unoriginal, but we include the work to establish our conventions and to allow easy referral to the various maps and spaces involved in the proof of the Miller splitting.

\section{Conventions}\label{conventions}

We conclude this chapter with two sections on notational conventions and continuity notes that we will use throughout the document. This section supplements the notation and conventions already laid out in the proceeding sections and is gathered together to act as a centralized reference point for the reader. Firstly we note a stylistic tendency. When defining certain spaces we will occasionally use a dash as a notational convention for the unbased space, for example in defining certain based spaces $\tilde{X}_{k}$ in Chapter \ref{ch:ch4} we first define unbased spaces $\tilde{X}'_{k}$ and set $\tilde{X}_{k}:=(\tilde{X}'_{k})_{\infty}$. This notational style occurs throughout the document.

On equivariance, the one notational convention we take is writing $U\leqslant V$ to denote that $U$ is a subrepresentation of a representation $V$. We also use this notation in the non-equivariant case to denote vector subspace, we rely on context to determine which meaning we mean at any given time. 

Regarding suspensions, we use the standard notation $\Sigma$ for reduced suspension. We want to be consistent, however, on how $\Sigma$ is parameterized. As a standard we will be taking the `coordinate' in $\Sigma$ to lie in $\Real$ - we will explicitly mention when this is not the case. Regarding maps involving suspensions, we use the notation below for maps of the form $f:X\to\Sigma Y$:
$$\xymatrix{f:X\ar[r]|\bigcirc &Y.}$$

We next explicitly define two particular subspaces of homomorphisms that will occur throughout: 

\begin{defn}\label{injandnoninj} Let $V$ and $W$ be vector spaces. Define:
$$\inj(V,W):=\{\gamma\in\Hom(V,W):\gamma\text{ is injective}\},$$
$$\inj(V,W)^{c}:=\{\gamma\in\Hom(V,W):\gamma\text{ is not injective}\}.$$
\end{defn}

At various points we will both implicitly and explicitly use choices of homeomorphisms between $\Real$, $(0,\infty)$ and
$(0,1)$. We always take $\Real\cong (0,\infty)$ via the exponential map which we denote by both $\Exp(x)$ and $e^{x}$ depending on context; we use $e$ for the exponential of a number but tend to use $\Exp$ when we mean the exponential of a matrix or similar. In both cases $\log$ is our standard notation for the inverse. We also make
the below choice:

\begin{defn}\label{UnitIntervalHomeomorphism} Whenever we consider $(0,1)\cong \Real$
we do so via the function $\log(x/(1-x))$.
\end{defn}

Finally, we make a single remark on our conventions concerning categories, suspending further discussion until Section \ref{onthecategories}:

\begin{rem}\label{ignoringsigmainfty} For a space $X$ we use the notation $X$ not only to refer to the space, but also to the suspension spectrum $\Sigma^{\infty} X$. We rely on context to determine whether we mean the space or spectrum at any one time.
\end{rem}

\section{Notes on Continuity and Properness}\label{NotesonContinuityandProperness}

As we are mostly dealing with compactified spaces we need to be careful regarding a lot of continuity arguments. In particular the properness of certain maps tends to come into play. Hence we conclude Chapter \ref{ch:ch1} with a detour covering various standard results from general topology that will be used throughout the document. The first lemma is standard:

\begin{lem}\label{propernessequalsbasepoints} Let $f:X\to Y$ be a continuous map. Then $f$ has a continuous extension $f_{\infty}:X_{\infty}\to Y_{\infty}$ if and only if $f$ is proper.
\end{lem}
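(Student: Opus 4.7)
The plan is to interpret the one-point compactification $X_{\infty}=X\sqcup\{\infty_{X}\}$ in the usual way (open neighbourhoods of the added basepoint are complements of compact subsets of $X$), interpret ``continuous extension'' as a continuous $f_{\infty}$ restricting to $f$ on $X$ with $f_{\infty}(\infty_{X})=\infty_{Y}$, and then verify both implications by chasing what ``neighbourhood of infinity'' means on each side.

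For the forward direction I would assume $f_{\infty}$ is continuous and test it against a compact set $K\subseteq Y$. Then $Y_{\infty}\setminus K$ is an open neighbourhood of $\infty_{Y}$, so its preimage $f_{\infty}^{-1}(Y_{\infty}\setminus K)$ is an open subset of $X_{\infty}$ containing $\infty_{X}$. Since $f_{\infty}(\infty_{X})=\infty_{Y}\notin K$, this preimage is exactly $X_{\infty}\setminus f^{-1}(K)$. Openness of the latter in $X_{\infty}$ is, by definition of the topology on $X_{\infty}$, the statement that $f^{-1}(K)$ is compact in $X$. Hence $f$ is proper.

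For the reverse direction I would assume $f$ is proper and construct $f_{\infty}$ by the obvious assignment. Continuity at points of $X$ is immediate because $X$ is open in $X_{\infty}$ and $f_{\infty}|_{X}=f$ is continuous into $Y\subseteq Y_{\infty}$. To check continuity at $\infty_{X}$ it suffices to test against a neighbourhood basis of $\infty_{Y}$, namely sets of the form $Y_{\infty}\setminus K$ with $K\subseteq Y$ compact. The computation from the first half now runs in reverse: $f_{\infty}^{-1}(Y_{\infty}\setminus K)=X_{\infty}\setminus f^{-1}(K)$, and properness of $f$ says $f^{-1}(K)$ is compact, so this is by definition an open neighbourhood of $\infty_{X}$ in $X_{\infty}$.

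There is no real obstacle; the argument is essentially bookkeeping about the neighbourhood basis at infinity. The only mild pitfall worth mentioning is the implicit convention that ``extension'' means the extension sending $\infty_{X}$ to $\infty_{Y}$ (which is forced if one wants a based map $X_{\infty}\to Y_{\infty}$ and is anyway the only sensible choice when $f$ is unbounded); any other choice for the image of $\infty_{X}$ would not give a continuous map in general, and the equivalence in the lemma would fail. One should also work under the standing assumption that all spaces in sight are locally compact Hausdorff so that the one-point compactifications behave well, but this is part of the ambient conventions of the document.
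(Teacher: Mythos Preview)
Your proof is correct and is exactly the standard argument. The paper itself does not give a proof of this lemma at all; it simply declares it ``standard'' and moves on. Your neighbourhood-basis-at-infinity argument is the expected one, and your remarks about the implicit basepoint convention and the locally compact Hausdorff hypothesis are accurate and match the ambient conventions of the document.
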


This indicates the importance of proper maps in the based case. A useful tool is the following lemma:

\begin{lem}\label{propercompositionisproper} Let $X$, $Y$ and $Z$ be locally compact Hausdorff spaces, $f:X\to Y$ and $g:Y\to Z$. If $g\circ f$ is proper then $f$ is proper.
\end{lem}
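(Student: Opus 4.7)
The plan is to verify properness of $f$ directly from the definition: I will show that the preimage under $f$ of any compact subset of $Y$ is compact. Take a compact set $L\subseteq Y$. The strategy is to exhibit $f^{-1}(L)$ as a closed subset of a set that we know to be compact by hypothesis.

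First, since $g$ is continuous, the image $g(L)$ is compact in $Z$. By the assumed properness of $g\circ f$, the preimage $(g\circ f)^{-1}(g(L))=f^{-1}(g^{-1}(g(L)))$ is compact in $X$. Next, the set-theoretic inclusion $L\subseteq g^{-1}(g(L))$ gives
$$f^{-1}(L)\subseteq f^{-1}(g^{-1}(g(L))),$$
so $f^{-1}(L)$ sits inside a compact set. To conclude compactness I need it to be closed there; since $Y$ is Hausdorff, the compact set $L$ is closed in $Y$, and continuity of $f$ then makes $f^{-1}(L)$ closed in $X$. A closed subset of a compact set is compact, so $f^{-1}(L)$ is compact and $f$ is proper.

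There is no real obstacle here, only bookkeeping: the temptation to write $f^{-1}(L)=(g\circ f)^{-1}(g(L))$ must be resisted, since in general one only has the inclusion $\subseteq$, and that is precisely why one needs the Hausdorff hypothesis to close up $f^{-1}(L)$ inside the larger compact set. I note in passing that local compactness of $X$, $Y$, $Z$ is not used in the argument above; it is stated in the lemma presumably to keep the setting consistent with Lemma \ref{propernessequalsbasepoints} and with the rest of the section, where one wishes to translate between properness and continuity of the extensions to one-point compactifications. An alternative route via Lemma \ref{propernessequalsbasepoints} is unattractive: knowing $(g\circ f)_\infty$ is continuous does not by itself tell us that $f_\infty$ is, so the direct set-theoretic proof is the cleanest.
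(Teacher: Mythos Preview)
Your proof is correct and follows essentially the same route as the paper: both take a compact $L\subseteq Y$, push forward by $g$ to get $g(L)$ compact, pull back by $g\circ f$ to get a compact set containing $f^{-1}(L)$, and then use that $L$ is closed in the Hausdorff space $Y$ to conclude $f^{-1}(L)$ is closed (hence compact) inside it. Your remark that local compactness is not actually needed is a fair observation.
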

\begin{proof} Let $U$ be a compact subset of $Y$, we wish to show that $f^{-1}(U)$ is compact. The space $g(U)$ is compact as it is the continuous image of a compact space. The map $g\circ f$ is proper, thus $(g\circ f)^{-1}(g(U))$ is a compact subset of $X$. This, however, is $f^{-1}(g^{-1}(g(U)))$. The space $f^{-1}(U)$ is a subspace of this, we claim that $f^{-1}(U)$ is closed and thus compact as it is the closed subspace of a compact space. This is true as $f$ is continuous and $U$ is a compact subspace of a Hausdorff space, and thus closed. Hence $f^{-1}(U)$ is closed and hence compact as required.
\end{proof}

As demonstrated above, being closed is intrinsically linked to being compact for Hausdorff spaces. Hence knowledge of a map being closed is useful in certain properness arguments. The below result, Proposition $8.2$ in Chapter $1$ of
\cite{BredonTopologyandGeometry}, is useful for constructing properness arguments involving Cartesian products when combined with Tychonoff's Theorem:

\begin{lem}\label{closedprojifcompact} The projection map $\pi_{0}:X\times Y\to X$ is closed if $Y$ is compact.
\end{lem}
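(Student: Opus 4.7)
The plan is to prove this by the standard tube lemma argument, showing the complement of the image of any closed set is open. Let $C\subseteq X\times Y$ be closed; I want to show $\pi_0(C)$ is closed in $X$, equivalently that $X\setminus \pi_0(C)$ is open.

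Fix $x\in X\setminus\pi_0(C)$. The fact that $x\notin\pi_0(C)$ is equivalent to saying that the fibre $\{x\}\times Y$ is disjoint from $C$, i.e.\ $\{x\}\times Y\subseteq (X\times Y)\setminus C$, and the latter is open by hypothesis. The task reduces to finding an open neighbourhood $U\ni x$ whose whole ``tube'' $U\times Y$ still misses $C$, because any such $U$ will be contained in $X\setminus\pi_0(C)$.

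For this I would use compactness of $Y$ directly. For each $y\in Y$ the point $(x,y)$ lies in the open set $(X\times Y)\setminus C$, so by the definition of the product topology there are open sets $U_y\ni x$ in $X$ and $V_y\ni y$ in $Y$ with $U_y\times V_y\subseteq (X\times Y)\setminus C$. The family $\{V_y\}_{y\in Y}$ covers the compact space $Y$, so extract a finite subcover $V_{y_1},\dots,V_{y_n}$ and set $U:=U_{y_1}\cap\cdots\cap U_{y_n}$, which is open and contains $x$. A direct check shows $U\times Y\subseteq \bigcup_{i=1}^n (U_{y_i}\times V_{y_i})\subseteq (X\times Y)\setminus C$, which is exactly the tube I wanted.

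There is no real obstacle here; the argument is entirely standard point-set topology and the only ingredient genuinely requiring compactness of $Y$ is the extraction of the finite subcover producing the tube. I would present it as a single short proof rather than isolate the tube lemma as a separate statement, since the lemma is already being quoted from Bredon's book.
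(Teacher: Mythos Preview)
Your proof is correct; it is the standard tube-lemma argument. The paper itself does not supply a proof of this lemma at all but simply cites it as Proposition~8.2 in Chapter~1 of Bredon's \emph{Topology and Geometry}, so there is nothing to compare against beyond noting that your argument is precisely the textbook proof one would find there.
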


We now cover a nice general result which allows us to equip a space with two seemingly different equivalent topologies - this will prove useful in many of the later continuity arguments.

\begin{lem}\label{whensubspaceisquotient} Let $X$ and $Z$ be locally compact and Hausdorff and let $f:X\to Z$
be a continuous proper map. Setting $Y:=f(X)$, we have an obvious inclusion of sets $j:Y\rightarrowtail Z$ and surjection $p:X\twoheadrightarrow Y$. Moreover, as $f$ is a proper map of locally compact Hausdorff spaces we have
the continuous extension $f_{\infty}:X_{\infty}\to Z_{\infty}$. Setting $Y_{\infty}:=f_{\infty}(X_{\infty})$, we again have an obvious inclusion of sets $j_{\infty}:Y_{\infty}\rightarrowtail Z_{\infty}$ and surjection
$p:X_{\infty}\twoheadrightarrow Y_{\infty}$. These fit into the below diagram of sets:
$$\xymatrix{X\ar@{->>}[r]^{p}\ar@{ >->}[d]_{i_{X}}&Y\ar@{ >->}[r]^{j}\ar@{ >->}[d]^{i_{Y}}&Z\ar@{ >->}[d]^{i_{Z}}\\
X_{\infty}\ar@{->>}[r]_{p_{\infty}}&Y_{\infty}\ar@{ >->}[r]_{j_{\infty}}&Z_{\infty}}$$
Then there are unique topologies on $Y$ and $Y_{\infty}$ such that:
\begin{enumerate}
\item $Y$ is locally compact and Hausdorff and $Y_{\infty}$ is its one-point compactification.
\item $p$ is a proper quotient map.
\item $j$ is a proper closed inclusion.
\item $p_{\infty}$ is a quotient map.
\item $j_{\infty}$ is a closed inclusion.
\item $i_{Y}$ is an open inclusion.
\end{enumerate}
\end{lem}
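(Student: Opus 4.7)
The plan is to equip $Y$ with the subspace topology inherited from $Z$ and then verify that this is the unique choice forced by the listed properties. The key preliminary observation is that any proper continuous map to a locally compact Hausdorff space is closed; applied to $f$ this gives that $Y = f(X)$ is closed in $Z$. Giving $Y$ the subspace topology then makes it locally compact and Hausdorff as a closed subspace of such, makes $j$ a closed inclusion, and makes $j$ proper, since $j^{-1}(K) = Y \cap K$ is a closed subset of a compact set for any compact $K \subseteq Z$. This handles parts of (1) and (3).

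For $p : X \to Y$, note that $p$ is simply $f$ with codomain restricted to $Y$, hence continuous. It is proper because $p^{-1}(K) = f^{-1}(K)$ for any compact $K \subseteq Y \subseteq Z$, and $f$ is proper. A proper continuous surjection onto a locally compact Hausdorff space is closed, and a continuous closed surjection is automatically a quotient map, yielding (2).

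On the compactified side I would take $Y_\infty$ to be the one-point compactification of the topology just placed on $Y$, which is forced by (1). Since $Y$ is closed in $Z$, the complement $Z \setminus Y$ is open in $Z$ and hence in $Z_\infty$, so $Y \cup \{\infty\}$ is closed in $Z_\infty$, and a direct check of the defining neighbourhood systems shows that the one-point compactification topology on $Y$ agrees with the subspace topology that $Y \cup \{\infty\}$ inherits from $Z_\infty$; this identifies $Y_\infty$ set-theoretically with $f_\infty(X_\infty)$ and gives (5). The map $i_Y$ is then an open inclusion by construction of the one-point compactification, delivering (6). Finally, $p_\infty$ is the continuous extension of the proper map $p$ supplied by Lemma \ref{propernessequalsbasepoints}, and as a continuous surjection from the compact space $X_\infty$ to the Hausdorff space $Y_\infty$ it is automatically closed, hence a quotient map, giving (4).

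The main obstacle I expect is the uniqueness statement, together with the fiddly check that the two natural candidate topologies on $Y$, namely the subspace topology from $Z$ and the quotient topology from $X$ under $p$, do in fact coincide. For uniqueness, note that (3) forces $Y$ to carry the subspace topology from $Z$, since a continuous closed inclusion into a Hausdorff space is necessarily an embedding; separately, (2) forces $Y$ to carry the quotient topology under $p$. The proper-implies-closed argument above is precisely what guarantees these two candidate topologies agree, so the topology on $Y$ is pinned down, and (1) then uniquely determines the topology on $Y_\infty$ as the corresponding one-point compactification.
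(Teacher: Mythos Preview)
Your proposal is correct. The route differs from the paper's in organisation rather than in substance, but the difference is worth noting.

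You front-load the standard fact that a proper continuous map into a locally compact Hausdorff space is closed, and then work upward: first equip $Y$ with the subspace topology from $Z$, verify (2) and (3), and only then pass to the one-point compactification to handle (4)--(6). The paper instead begins at the compactified level, showing directly that the quotient topology from $p_{\infty}$ and the subspace topology from $j_{\infty}$ on $Y_{\infty}$ coincide (using compactness of $X_{\infty}$ in place of your proper-implies-closed lemma), and then descends to $Y$ by comparing three candidate topologies --- subspace of $Z$, quotient of $X$, and subspace of $Y_{\infty}$ --- and checking pairwise that they agree via explicit open-set arguments.

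Your approach is more economical: invoking proper-implies-closed once does the work that the paper spreads across several compactness-and-closedness chases. The paper's approach has the minor advantage of not needing that lemma as a separate input, instead deriving everything from compactness of $X_{\infty}$, which is perhaps more self-contained. Both handle uniqueness the same way in spirit: condition (2) forces the quotient topology and condition (3) forces the subspace topology, so exhibiting one topology satisfying both pins it down.
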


\begin{proof} We first prove that the quotient topology on $Y_{\infty}$ from $p_{\infty}$ and the subspace topology on $Y_{\infty}$ from $j_{\infty}$ are equivalent. Moreover, in doing this we will demonstrate that $j_{\infty}$ is closed under this topology. Let $U\subseteq Y_{\infty}$ be closed in the quotient topology. Then $p_{\infty}^{-1}(U)$ is a closed subspace of the compact space $X_{\infty}$ and hence compact. Further, $f_{\infty}(p_{\infty}^{-1}(U))$ is the continuous image of a compact set and hence compact, hence closed in $Z_{\infty}$ as $Z_{\infty}$ is a Hausdorff space. As $f_{\infty}$ is surjective onto
$Y_{\infty}$, the above remarks imply that $j_{\infty}(U)$ is closed and thus $U$ is closed in the subspace topology.

Now let $U$ be open in the subspace topology on $Y_{\infty}$. By assumption $U=j_{\infty}^{-1}(V)$ for some open subset $V$ of $Z_{\infty}$. One can observe that $p_{\infty}^{-1}(U)=f^{-1}_{\infty}(V)$ which is open as $f$ is continuous. Thus $U$ is open in the quotient topology. We have thus proved that the quotient and subspace topologies on $Y_{\infty}$ are the same and that $j_{\infty}$ is closed.

We now equip $Y$ with three topologies - the subspace of $Y_{\infty}$ topology, the subspace of $Z$ topology and the
quotient of $X$ topology. We prove that the three topologies are equivalent. Let $\tau_{q}$ be the quotient topology, $\tau_{s}$ the subspace of $Z$ topology and $\tau_{\infty}$ the subspace of $Y_{\infty}$ topology. Similarly, let $\tau_{q}^{\infty}$ and $\tau_{s}^{\infty}$ be the quotient and subspace topologies on $Y_{\infty}$. Let $U\in \tau_{\infty}$, then $U=U_{\infty}\cap Y$ for some $U_{\infty}$ open in $Y_{\infty}$. Then $U_{\infty}\in
\tau_{q}^{\infty}$ and thus $p_{\infty}^{-1}(U_{\infty})$ is open in $X_{\infty}$. As a set, it is clear that $p^{-1}_{\infty}(U_{\infty})\cap X=p^{-1}(U)$. Moreover, $p^{-1}_{\infty}(U_{\infty})\cap X$ is easily observed to be open in $X$, hence $U\in \tau_{q}$.

Similarly, we also note that $U_{\infty}\in\tau^{\infty}_{s}$ and thus there is a $V_{\infty}$ open in $Z_{\infty}$ such that $V_{\infty}\cap Y_{\infty}=U_{\infty}$. Setting $V:=V_{\infty}\cap Z$ it is clear to see that as sets $V\cap Y=U$ and that $V$ is open in $Z$. Thus $U\in \tau_{s}$.

We now wish to show that if $U\in\tau_{q}$ then $U\in\tau_{\infty}$. As $U\in\tau_{q}$ then $p^{-1}(U)$ is open in $X$ and hence open in $X_{\infty}$. This is enough to show that $U\in\tau_{\infty}$. Similarly, if $U\in\tau_{s}$ then there exists $V$ open in $Z$ such that $V\cap Y=U$. That $V$ is open in $Z$, however, implies that $V$ is open in $Z_{\infty}$ and from here it is simple to see that $U\in\tau_{\infty}$. This proves the equality of the three topologies.

We now equip $Y$ with this topology.With it $Y$ is the quotient of a locally compact space and the subspace of a Hausdorff space and hence $Y$ is locally compact and Hausdorff. That $Y_{\infty}$ is the one-point compactification of $Y$ is easy to see from the equivalence of all the topologies. The claimed properties then all follow.
\end{proof}

Finally, many of the spaces we work with are topologized via a norm. We would like to get a handle on what compactness means in this case, which we do with the below standard lemma:

\begin{lem}\label{extendedheineborel} Let $V$ be a finite dimensional normed space. Then a subset $U$ is compact if and only if it is closed and bounded.
\end{lem}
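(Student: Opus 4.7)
The plan is to reduce the result to the classical Heine--Borel theorem on $\Real^n$ via the fact that all norms on a finite dimensional real vector space are equivalent. Concretely, I would split the biconditional into its two directions and handle the ``only if'' direction first, since it requires only very general facts, and the ``if'' direction second, where the reduction to $\Real^n$ does the real work.

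For the forward direction, suppose $U \subseteq V$ is compact. Since $V$ is a normed space it is Hausdorff, and any compact subset of a Hausdorff space is closed, giving closedness immediately. For boundedness, observe that the norm $\norm{\cdot} : V \to \Real$ is continuous (by the reverse triangle inequality), so its restriction to $U$ has compact image in $\Real$, which is in particular bounded; this is exactly the statement that $U$ is bounded.

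For the backward direction, let $d := \Dim(V)$ and choose a basis $e_1, \ldots, e_d$ of $V$, giving a linear isomorphism $\phi : \Real^d \to V$ via $\phi(x_1, \ldots, x_d) = \sum_i x_i e_i$. Equip $\Real^d$ with the standard Euclidean norm $\norm{\cdot}_2$ and pull back the given norm on $V$ to a second norm $\norm{x}_V := \norm{\phi(x)}$ on $\Real^d$. The heart of the argument is to show that $\norm{\cdot}_V$ and $\norm{\cdot}_2$ are equivalent, i.e.\ there exist constants $c, C > 0$ with $c\norm{x}_2 \leqslant \norm{x}_V \leqslant C\norm{x}_2$ for all $x$. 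The upper bound is straightforward from the triangle inequality, taking $C = \bigl(\sum_i \norm{e_i}^2\bigr)^{1/2}$ and applying Cauchy--Schwarz. The upper bound also shows that $\norm{\cdot}_V : (\Real^d, \norm{\cdot}_2) \to \Real$ is continuous. The lower bound then follows because the Euclidean unit sphere $S^{d-1} \subseteq \Real^d$ is compact by the classical Heine--Borel theorem, so the continuous function $\norm{\cdot}_V$ attains a minimum $c > 0$ on $S^{d-1}$ (positivity uses that $\norm{\cdot}_V$ is a norm, so vanishes only at $0 \notin S^{d-1}$), and homogeneity extends this bound to all of $\Real^d$.

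Equivalence of norms implies that $\phi$ is a homeomorphism between $(\Real^d, \norm{\cdot}_2)$ and $V$, and in particular preserves both closedness and boundedness (boundedness because each norm controls the other up to a constant). Given $U \subseteq V$ closed and bounded, the preimage $\phi^{-1}(U)$ is then a closed and bounded subset of $\Real^d$, hence compact by classical Heine--Borel, and thus $U = \phi(\phi^{-1}(U))$ is compact as the continuous image of a compact set. The main obstacle is the equivalence of norms step, since one must be careful to use classical Heine--Borel on $\Real^d$ as a black box rather than invoking the result being proved; once that is set up cleanly, everything else is routine.
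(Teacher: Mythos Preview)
Your proof is correct, and in fact more self-contained than the paper's. The paper argues the ``if'' direction by simply citing as a black box that the closed unit ball of a finite dimensional normed space is compact (attributing this to ``the Heine--Borel Theorem''), then notes that a closed bounded set is a closed subset of some closed ball and hence compact. You instead unpack that black box: you explicitly establish equivalence of norms with the Euclidean norm on $\Real^d$ and then invoke classical Heine--Borel only on $\Real^d$, which is cleaner if one is worried about circularity. For the ``only if'' direction the paper uses the metric-space characterisation of compactness (compact $\Leftrightarrow$ complete and totally bounded), deducing closedness from completeness and boundedness from total boundedness; your argument via ``compact in Hausdorff is closed'' and ``continuous image of compact is bounded'' is more direct and avoids that detour. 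Both routes are entirely valid; yours trades brevity for explicitness, while the paper's version leans on standard metric-space facts to keep the proof short.
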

\begin{proof} We first note that via the Heine-Borel Theorem the closed unit ball of a finite dimensional normed vector space is compact. Thus via noting that scaling is a continuous self-map we have that any closed ball in $V$ is compact. A bounded set by definition is a set contained within a closed ball, so a closed and bounded set $U$ will be a closed subset of a compact closed ball. $V$ is Hausdorff as standard, hence $U$ is compact. Conversely, let $U$ be compact. By the Heine-Borel Theorem $U$ is complete and totally bounded but this implies the result as completeness implies that $U$ is closed and $U$ being totally bounded implies that $U$ is bounded.
\end{proof}

\chapter{Cofibre Sequences and NDR Pairs}
\label{ch:ch2}

\section{On Categories of Spaces and Spectra}\label{onthecategories}

We open this chapter by remarking briefly on the categories we choose to work in. Firstly when we say space, we actually mean $G$-equivariant compactly generated weak Hausdorff space. We let $\Map(X,Y)$ denote the space of maps from $X$ to $Y$ equipped with the compact-open topology and group action given by conjugation, $g.f(x):=gf(g^{-1}x)$. One can also consider a based version of this theory by requiring $G$-fixed basepoints. We use $CGWH$ to denote the category with objects based $G$-equivariant compactly generated weak Hausdorff spaces and morphism sets $\Map(X,Y)$. The non-equivariant version of this category is built from a modification of the category $CGH$ of compactly generated
Hausdorff spaces first introduced in \cite{SteenrodCategory}; this modification was originally made by McCord in Section $2$ of \cite{McCordClassifyingSpacesAndSymmetricProducts}. This category is the standard `space-level' category, it is used as such in both \cite{LewisMaySteinberger} and \cite{mayetal} and as noted in \cite{Hovey1} it has a model structure and hence we can take the homotopy category. We use this category to exploit its rich structure, $CGWH$ is a Cartesian closed category. General information on Cartesian closed categories can be found in Chapter $4$ Section $6$ of \cite{maclane}. In particular we have the below adjunction:
$$\Map(X\wedge Y,Z)\cong \Map (X,\Map(Y,Z)).$$ 

This bijection will be used often in continuity arguments - we will demonstrate that a map is continuous by demonstrating that what would be its adjoint is continuous. How the adjoint works in this particular case is described in great detail in \cite{SteenrodCategory}.

We finally note one other space category we work in. We denote by $GCW$ the subcategory of $CGWH$ whose objects are based finite $G$-$CW$-complexes. We make this distinction in certain places to ease the transition from $G$-space to $G$-spectrum. We finally note here that $GCW$ has the standard model structure and hence we can consider the associated homotopy category.

Let $\mathcal{U}$ be a choice of complete $G$-universe. We now define the homotopy category $\mathcal{F}_{G}$ of finite $G$-$CW$-spectra indexed over $\mathcal{U}$. Objects of $\mathcal{F}_{G}$ consist of expressions of the form $\Sigma^{-U}X$ for $U$ a finite dimensional subrepresentation of $\mathcal{U}$ and $X$ a based finite $G$-$CW$-complex. We then define morphisms as below, recalling $[A,B]_{G}$ to denote based $G$-homotopy classes of maps from $A$ to $B$ and recalling that for $U$ a subrepresentation of $W$ we have an honest representation $W\ominus U$:
$$\mathcal{F}_{G}(\Sigma^{-U} X,\Sigma^{-V}Y)=\lim_{\overset{\longrightarrow}{U,V\leqslant W\leqslant\mathcal{U},\dim(W)<\infty}}\left[\Sigma^{W\ominus U} X,\Sigma^{W\ominus V}Y\right]_{G}.$$

This gives us a well-defined category of finite $G$-$CW$-spectra to work from which is defined relatively easily due to the finiteness conditions. Moreover, this category has a well-defined smash product given by $\Sigma^{-U} X\wedge \Sigma^{-V}Y:=\Sigma^{-(U\oplus V)}X\wedge Y$. There is also a suspension spectrum functor given in the obvious way that allows us to pass from $G$-spaces to $G$-spectra in this case:
$$\Sigma^{\infty}:\text{Ho}(GCW)\to\mathcal{F}_{G}.$$

As the category $\mathcal{F}_{G}$ has morphisms arising from homotopy classes any constructions that apply on the space-level homotopy category also exist in $\mathcal{F}_{G}$. Moreover one can transfer a homotopy construction to $\mathcal{F}_{G}$ via $\Sigma^{\infty}$. In particular the homotopy-invariant constructions built throughout this chapter also appear in $\mathcal{F}_{G}$; there is a well-defined notion of cofibre sequence in this category. Finally, we remark that we could have chosen to work in the more detailed categories of general $G$-spectra, the category in \cite{LewisMaySteinberger} or one of the more highly structured categories, for example the category of EKMM spectra or the category of orthogonal spectra. We do not do so, however, for simplicities sake, instead noting here that $\mathcal{F}_{G}$ is a subcategory of the homotopy category of any reasonable category of $G$-spectra. Moreover, for a chosen $G$-universe upon which to index, all the stable categories are equivalent. The stable category has cofibre sequences which are constructed by the same method as we take in this chapter for spaces, this is noted at the beginning of Chapter $1$ Section $6$ of \cite{LewisMaySteinberger}.
Moreover, cofibre sequences and fibre sequences dualize in the stable category, as noted in Chapter $3$ Section $2$ of \cite{LewisMaySteinberger} and generate the distinguished triangles of the equivariant stable homotopy category. That this category is triangulated was explicitly proved as Theorem $9.4.3$ in \cite{HoveyPalmieriStrickland}.

\section{The Basic Theory of Cofibre Sequences}\label{BasicTheory}

We now cover some of the standard theory of cofibre sequences (occasionally referred to in the literature as Puppe or cofibration sequences). The results are standard and can be found in many basic sources, for example there is a detailed account of the space-level theory in Chapter $8$ of \cite{ConciseMay}. We include the detail to achieve consistency in our conventions and for completeness. Most of the theory we give in this chapter is for spaces, but can equally be applied in the categories $CGWH$, $GCW$ or $\mathcal{F}_{G}$ from Section \ref{onthecategories}.

Let $f:X\to Y$ be a map of based spaces and consider the unit interval $[0,1]$ as a based space with basepoint $\{0\}$.

\begin{defn}\label{cofibre} Let $C(X):=[0,1]\wedge X$ be the cone on $X$. The mapping cone or (homotopy) cofibre of $f$ is given by $C_{f}:=C(X)\vee Y/\sim$ where $\sim$ is the equivalence relation identifying $(1,x)$ with $f(x)$. We also use the notation $C(X)\cup_{f} Y$ for $C_{f}$.
\end{defn}

\begin{rem}\label{differentbasepointstothecitedsource} We remark here that in the cited text \cite{ConciseMay} the basepoint of $[0,1]$ is taken to be at $\{1\}$ rather than $\{0\}$; our choice only makes aesthetic changes to the theory.  
\end{rem}

\begin{rem}\label{cofibresdifferentcategories} Although we used spaces in the above definition we could equally use $G$-spaces and $G$-maps, equipping $[0,1]$ with the trivial action. Using this set-up all of the following theory applies in $CGWH$ or $GCW$. We can also make this definition in $\mathcal{F}_{G}$ but we have to be careful as our morphisms in this category are in some sense homotopy classes so we also need certain homotopy invariance properties which we cover below.
\end{rem}

The main aim of this document is to construct certain cofibre sequences. A cofibre sequence is going to be a sequence of spaces of the form $X\to Y\to C_{f}\to\ldots$. In order to build them we need the following lemma. We defer notes on the proof as it will follow from a later stated result but we choose to present this result first as we feel it provides more motivation for the later topological constructions. For the below statement we take suspension to have coordinate values in $(0,1)$ using the homeomorphism \ref{UnitIntervalHomeomorphism}.

\begin{lem}\label{cofibreslemma} Let $f:X\to Y$ be a map of based spaces, and let $i:Y\to C_{f}$ be the obvious inclusion map. Then the map $\beta:C_{i}=C(Y)\cup_{i} C_{f}:\to \Sigma X$ collapsing $C_{f}$ to $\Sigma X$ and collapsing $C(Y)$ to the basepoint is a homotopy equivalence $\beta:C_{i}\simeq \Sigma X$.
\end{lem}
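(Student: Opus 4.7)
The plan is to realize $\beta$ as the quotient map collapsing the contractible subspace $C(Y)\subset C_i$, and to argue that this quotient is a homotopy equivalence because $C(Y)$ sits inside $C_i$ as a cofibred contractible subspace. First I would unpack the definitions to check that $\beta$ really is the quotient map $C_i\to C_i/C(Y)$ under the identification $C_i/C(Y)\cong \Sigma X$. Writing $C_i=C(Y)\cup_Y(C(X)\cup_f Y)$, collapsing $C(Y)$ also collapses its base $Y\subset C_f$ to a point, so the quotient is $C_f/Y$, which is $C(X)$ with the base $X\times\{1\}$ collapsed, namely $\Sigma X$. Under this identification the map acts on the $C(X)$-part of $C_f$ by retaining the cone coordinate and sending $C(Y)$ to the basepoint, matching the description of $\beta$ in the statement.

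Next I would establish that the inclusion $C(Y)\hookrightarrow C_i$ is a cofibration. The standard inclusion $Y\hookrightarrow C(Y)$ is a cofibration because $(C(Y),Y)$ admits an explicit NDR structure using the cone coordinate both for the deformation and for the representation function. Cofibrations in $CGWH$ are preserved by pushout, so applying this to the pushout square defining $C_i$ (with legs $Y\hookrightarrow C(Y)$ and $Y\hookrightarrow C_f$) shows that $C(Y)\hookrightarrow C_i$ is again a cofibration. The pair $(C_i,C(Y))$ is therefore an NDR pair in the sense developed later in Section \ref{NDRPairs}.

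Finally, $C(Y)$ is contractible via the obvious straight-line deformation along the cone coordinate towards the cone point. Combined with the cofibration property of $C(Y)\hookrightarrow C_i$, the standard fact that collapsing a contractible cofibred subspace yields a homotopy equivalence of based spaces gives that $\beta\colon C_i\to C_i/C(Y)\cong \Sigma X$ is a homotopy equivalence. This last step is precisely the later result to which the author defers: once the NDR-pair machinery of Section \ref{NDRPairs} is in place, the collapse argument applies verbatim and delivers the conclusion.

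The main obstacle is the cofibration step; everything else is essentially bookkeeping or an appeal to the contractibility of the cone. In the $CGWH$ setting one must actually exhibit explicit NDR data for $(C(Y),Y)$ and track it through the pushout construction, rather than wave hands about ``nice pairs''. An alternative route would be to construct an explicit homotopy inverse $\gamma\colon\Sigma X\to C_i$ by exploiting the embedded $C(X)\subset C_f\subset C_i$ with reparameterised suspension coordinate, and then write down explicit homotopies $\gamma\beta\simeq \mathrm{id}_{C_i}$ and $\beta\gamma\simeq \mathrm{id}_{\Sigma X}$ using the cone structure of $C(Y)$. This is feasible but notationally heavy and obscures the conceptual content, so I would favour the cofibration-collapse approach.
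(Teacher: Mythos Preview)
Your overall strategy is correct and is exactly the route the paper takes: it defers the proof to Proposition~\ref{classicalcofibrationcofibres}, applying it to the inclusion $i\colon Y\hookrightarrow C_f$ so that the collapse $C_i\to C_i/C(Y)\cong C_f/Y\cong\Sigma X$ is a homotopy equivalence. Your identification of $\beta$ with this quotient map is fine.

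There is, however, a small gap in your cofibration step. You exhibit $Y\hookrightarrow C(Y)$ as a cofibration and then invoke pushout-stability; but in the pushout square
\[
\xymatrix{Y\ar[r]\ar[d] & C(Y)\ar[d]\\ C_f\ar[r] & C_i}
\]
pushing out the cofibration $Y\hookrightarrow C(Y)$ along $Y\to C_f$ gives that $C_f\hookrightarrow C_i$ is a cofibration, not $C(Y)\hookrightarrow C_i$. To conclude the latter you need the \emph{other} leg $Y\hookrightarrow C_f$ to be a cofibration. This is true, and for the same structural reason: $C_f=C(X)\cup_f Y$ is itself the pushout of the cofibration $X\hookrightarrow C(X)$ along $f$, so $Y\hookrightarrow C_f$ is a cofibration. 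Once you insert this one extra sentence your argument goes through. In fact this is precisely how the paper uses Proposition~\ref{classicalcofibrationcofibres}: it applies the NDR machinery directly to the pair $(C_f,Y)$ rather than to $(C_i,C(Y))$, which is marginally more economical since it avoids the second pushout step and the separate appeal to contractibility of $C(Y)$.
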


The above lemma suggests that we should be able to construct sequences of the following form:
$$X\overset{f}{\to} Y\overset{i_{f}}{\to} C_{f}\to\Sigma X\to \Sigma Y\to\ldots.$$

To do this we can extend the sequence $X\overset{f}{\to} Y\overset{i_{f}}{\to} C_{f}\to\ldots$ in the obvious way:
$$X\overset{f}{\to} Y\overset{i_{f}}{\to} C_{f}\overset{i_{i_{f}}}{\to}C_{i_{f}}\to C_{i_{i_{f}}}\to\ldots.$$

We have the above lemma giving homotopy equivalences $C_{i_{f}}\simeq\Sigma X$, $C_{i_{i_{f}}}\simeq\Sigma Y$ and similar. However, if we wish to match the two sequences up via these homotopy equivalences we need to insert a twist into the sequence. This can occur in different places, we will take the twist as indicated by the following proposition.

\begin{defn}\label{thecofibrestwist} Let $f:X\to Y$ be a map of based spaces. Taking suspension coordinates in $(0,1)$, let $-\Sigma f:\Sigma X\to \Sigma Y$ be the map given by $(-\Sigma f)(t\wedge x)=(1-t)\wedge f(x)$.
\end{defn}

\begin{prop}\label{howtobuildacofibseq} For any based map $f:X\to Y$ the following diagram commutes up to homotopy:
$$\xymatrix
{X\ar[r]^{f}&Y\ar[r]^{i_{f}}&C_{f}\ar[dr]_{d_{f}}\ar[r]^{i_{i_{f}}}&C_{i_{f}}\ar[d]^{\beta_{X}}\ar[dr]^{d_{i_{f}}}\ar[r]^{i_{i_{i_{f}}}}&C_{i_{i_{f}}\ar[d]^{\beta_{Y}}}\\
&&&\Sigma X\ar[r]_{-\Sigma f}&\Sigma Y}
$$
Here $\beta_{X}$ and $\beta_{Y}$ are the homotopy equivalences from Lemma \ref{cofibreslemma} and the maps $d_{f}$ and $d_{i_{f}}$ are the obvious collapse maps.
\end{prop}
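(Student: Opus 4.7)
The strategy is to treat the diagram as three regions: the left triangle $C_f \to C_{i_f} \to \Sigma X$ with hypotenuse $d_f$, the right triangle $C_{i_f} \to C_{i_{i_f}} \to \Sigma Y$ with hypotenuse $d_{i_f}$, and a lower square relating $\beta_X$, $-\Sigma f$, $d_{i_f}$ and $\beta_Y\circ i_{i_{i_f}}$. The two triangles commute strictly, not merely up to homotopy: the map $\beta_X$ from Lemma \ref{cofibreslemma} is constructed so that its restriction to the subspace $C_f \subset C_{i_f}$ is exactly $d_f$, and likewise $\beta_Y|_{C_{i_f}} = d_{i_f}$ when one applies the lemma to $i_f : Y \to C_f$ in place of $f$. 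Since $i_{i_f}$ and $i_{i_{i_f}}$ are the canonical inclusions, $\beta_X \circ i_{i_f} = d_f$ and $\beta_Y \circ i_{i_{i_f}} = d_{i_f}$ on the nose.

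The content therefore lies in producing a homotopy $(-\Sigma f) \circ \beta_X \simeq d_{i_f}$ as maps $C_{i_f} \to \Sigma Y$. I would exploit the pushout description $C_{i_f} = C(Y) \cup_Y C_f$, so that a homotopy $H : C_{i_f} \times [0,1] \to \Sigma Y$ is specified by compatible homotopies on $C(Y) \times [0,1]$ and $C_f \times [0,1]$ that agree on $Y \times [0,1]$. On $C(Y)$ I would set $H((s,y),u) := (us,\, y)$, which is basepoint at $u=0$ and is the quotient map $(s,y) \mapsto (s,y) \in \Sigma Y$ at $u = 1$; on $C(X) \subset C_f$ set $H((t,x),u) := (1 - t(1-u),\, f(x))$, which at $u = 0$ is $(1-t, f(x))$ and at $u = 1$ is constantly basepoint; and on $Y \subset C_f$ set $H(y, u) := (u, y)$. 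At $u = 0$ this reproduces $(-\Sigma f) \circ \beta_X$ and at $u = 1$ it reproduces $d_{i_f}$. Consistency at the top of $C(Y)$ gives $H((1,y), u) = (u, y) = H(y,u)$, and consistency along $f$ at the top of $C(X)$ gives $H((1,x), u) = (u, f(x)) = H(f(x), u)$, so the three pieces descend to a well-defined continuous map on $C_{i_f} \times [0,1]$.

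The main obstacle is pinning down the interpolation so that all of these gluing conditions hold simultaneously, and it is precisely here that the sign in $-\Sigma f$ is forced on us. Had one attempted the analogous construction with the na\"ive suspension $(\Sigma f)(t,x) = (t, f(x))$, the top-of-$C(X)$ gluing would demand an identity of the form $(1-u, f(x)) = (u, f(x))$, which fails except at $u = 1/2$; reversing the suspension coordinate via $-\Sigma f$ is exactly what makes the two pieces of the homotopy meet compatibly on $Y$. Once the formula is in hand, every required property is a routine coordinate check, and the commutativity of the diagram up to homotopy follows.
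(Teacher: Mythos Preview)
Your proof is correct and follows the same overall strategy as the paper: both observe that the left and right triangles commute strictly and reduce to showing $(-\Sigma f)\circ\beta_X \simeq d_{i_f}$. The only difference is presentational: the paper rewrites $C_{i_f}$ (after applying $-Cf$ to the $C(X)$ piece) as $[0,2]\times Y/\!\sim$ and describes the homotopy as ``sliding'' the surviving unit interval along $[0,2]$, whereas you write the same homotopy down directly in coordinates on $C(Y)\cup_Y C_f$; your map $u\mapsto H(-,u)$ is exactly this slide, with the join point moving from $0$ to $1$.
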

\begin{proof} It is clear that the left-hand triangle and right-hand triangle strictly commute. Thus the only issue is showing that the middle triangle commutes up to homotopy, i.e. that $d_{i_{f}}\simeq -\Sigma f\circ \beta_{X}$. These two maps factor as below:
$$d_{i_{f}}:C_{i_{f}}= C(Y)\cup_{i_{f}}(C_{f})\overset{coll.}{\to} \frac{C_{i_{f}}}{C_{f}}\cong \Sigma Y$$
$$-\Sigma f\circ \beta_{X}:C_{i_{f}}=C(Y)\cup_{i_{f}}(C(X)\cup_{f} Y)\overset{coll.}{\to}\frac{C_{i_{f}}}{C(Y)}\cong \Sigma X\overset{-\Sigma f}{\to}\Sigma Y.$$

It is easy to observe that $-\Sigma f\circ \beta_{X}$ is equivalent to applying $-Cf$ to $C(X)$ before collapsing out. As $C(X)$ is collapsed out, precomposing by $-Cf$ on $C(X)$ does not affect the map $d_{i_{f}}$. Thus we can consider the space $Z$ given by applying $-Cf$ to $C_{i_{f}}$ - it is trivial to see that $Z$ is homeomorphic to the space $[0,2]\times Y/\sim$ where here $\sim$ quotients out the two end copies of $Y$ and the copy of $[0,2]$ affixed to the basepoint. We note that the twist in the map $-Cf$ is needed to do this. Thus finding a homotopy between $d_{i_{f}}$ and $-\Sigma f\circ \beta_{X}$ is equivalent to finding a homotopy between two maps $Z\to \Sigma Y$, the first collapsing out $[1,2]\times Y$ and the second collapsing out $[0,1]\times Y$. This homotopy, however, is simply the homotopy that slides the copy of the product of the unit interval with $Y$ that will remain along the interval $[0,2]$,
starting at $[0,1]$ and ending at $[1,2]$. This then produces a homotopy between $d_{i_{f}}$ and $-\Sigma f\circ \beta_{X}$ as required.
\end{proof}

This gives us the following sequence:
$$X\overset{f}{\to} Y\overset{i_{f}}{\to}
C_{f}\overset{d_{f}}{\to}\Sigma X\overset{-\Sigma f}{\to} \Sigma
Y\overset{-\Sigma i_{f}}{\to}\Sigma C_{f}\overset{-\Sigma
d_{f}}{\to}\Sigma^{2}X\overset{\Sigma^{2}f}{\to}\Sigma^{2}
Y\overset{\Sigma^{2}i_{f}}{\to}\ldots.$$

Note that we only need to consider the $X\overset{f}{\to} Y\overset{i_{f}}{\to} C_{f}\overset{d_{f}}{\to}\Sigma X$ portion of the sequence as it contains all the necessary information. We can also denote a sequence as a triangle:
$$\xymatrix{Y\ar[d]& X\ar[l]_{f}\\
C_{f}\ar[ur]|\bigcirc}$$

The next definition covers what we mean when we say something `is' a cofibre sequence; the concept is fairly clear on the space level but importantly it implies that cofibre sequences are actually constructions in the homotopy category. Hence the analogous construction in $\mathcal{F}_{G}$ is well-defined. Moreover, this definition makes it clear that a cofibre sequence of $CW$-complexes is a cofibre sequence in $\mathcal{F}_{G}$ after one applies $\Sigma^{\infty}$. Also, all the constructions and modifications of cofibre sequences proved for spaces throughout the rest of the document hold in $\mathcal{F}_{G}$ by simply replacing the word `space' with `spectrum'; this definition of isomorphic sequences allows this. Finally, we note that this definition also hints to the fact that in the stable category cofibre sequences are distinguished; the definition resembles one of the axioms of a triangulated category.

\begin{defn}\label{isomorphistoacofibreseq} We say $X\to Y\to Z\to \Sigma X$ is isomorphic to a cofibre sequence $A\overset{f}{\to} B\to C_{f}\to\Sigma A$ if we have the following homotopy commutative diagram, where the downward maps are all homotopy equivalences: 
$$\xymatrix{X\ar[d]_{\sim}\ar[r]&Y\ar[d]_{\sim}\ar[r]&Z\ar[d]_{\sim}\ar[r]&\Sigma X\ar[d]_{\sim}\\
A\ar[r]_{f}& B\ar[r]& C_{f}\ar[r]& \Sigma A}$$
\end{defn}

\begin{lem}\label{rotatingcofibresequences} Using the above definition, Lemma \ref{cofibreslemma} and Proposition
\ref{howtobuildacofibseq} we note that if $X\to Y\to Z\to\Sigma X$ is a cofibre sequence, then so is any portion of the long sequence $X\to Y\to Z\to\Sigma X\to\ldots$. For example if $f:X\to Y$ then $Y\overset{i_{f}}{\to} C_{f}\overset{d_{f}}{\to}\Sigma X\overset{-\Sigma f}{\to} \Sigma Y$ is a cofibre sequence.
\end{lem}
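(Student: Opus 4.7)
The plan is to reduce to the model cofibre sequence built from a map and then invoke Proposition \ref{howtobuildacofibseq} together with Lemma \ref{cofibreslemma}. By Definition \ref{isomorphistoacofibreseq}, saying $X\to Y\to Z\to \Sigma X$ is a cofibre sequence means there is a homotopy commutative ladder with vertical equivalences connecting it to a sequence of the form $A\overset{g}{\to} B\overset{i_{g}}{\to} C_{g}\overset{d_{g}}{\to}\Sigma A$ for some map $g$. Hence it suffices to prove the rotation result for the model sequence, that is, to show that
$$B\overset{i_{g}}{\to} C_{g}\overset{d_{g}}{\to}\Sigma A\overset{-\Sigma g}{\to} \Sigma B$$
is isomorphic to a cofibre sequence.

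First I would consider the genuine cofibre sequence built from the map $i_{g}$, namely
$$B\overset{i_{g}}{\to} C_{g}\overset{i_{i_{g}}}{\to} C_{i_{g}}\overset{d_{i_{g}}}{\to}\Sigma B.$$
By Lemma \ref{cofibreslemma} there is a homotopy equivalence $\beta_{X}:C_{i_{g}}\iso \Sigma A$, and Proposition \ref{howtobuildacofibseq} tells us that under this equivalence the collapse $i_{i_{g}}$ corresponds up to homotopy to $d_{g}$ and that $d_{i_{g}}$ corresponds up to homotopy to $-\Sigma g$. Packaging these data gives a homotopy commutative diagram
$$\xymatrix{B\ar@{=}[d]\ar[r]^{i_{g}}& C_{g}\ar@{=}[d]\ar[r]^{d_{g}}&\Sigma A\ar[d]_{\sim}^{\beta_{X}^{-1}}\ar[r]^{-\Sigma g}&\Sigma B\ar@{=}[d]\\
B\ar[r]_{i_{g}}& C_{g}\ar[r]_{i_{i_{g}}}& C_{i_{g}}\ar[r]_{d_{i_{g}}}&\Sigma B}$$
with vertical equivalences, which is exactly the isomorphism of sequences demanded by Definition \ref{isomorphistoacofibreseq}.

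To finish, I would paste this square of equivalences underneath the original ladder of equivalences that presented $X\to Y\to Z\to\Sigma X$ as isomorphic to $A\to B\to C_{g}\to\Sigma A$, suspending appropriately for the fourth term. Composition of homotopy equivalences is a homotopy equivalence, so the resulting ladder exhibits $Y\to Z\to \Sigma X\to\Sigma Y$ as isomorphic to the cofibre sequence of $i_{g}$, proving it is itself a cofibre sequence. The argument for arbitrary further rotations is the same, applied inductively: each new segment is rotated using Proposition \ref{howtobuildacofibseq} and pasted in.

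The only real subtlety, and the place to be careful, is the sign twist $-\Sigma g$ rather than $\Sigma g$; this is exactly what Proposition \ref{howtobuildacofibseq} was set up to track, so no extra work is needed beyond citing it. There are no serious obstacles here, as the content was essentially carried out in the proof of Proposition \ref{howtobuildacofibseq}; the present lemma is simply the packaging of that fact through the formal notion of isomorphism of sequences in Definition \ref{isomorphistoacofibreseq}.
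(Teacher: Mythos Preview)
Your proposal is correct and follows exactly the approach the paper intends: the paper does not supply a separate proof but simply cites Definition \ref{isomorphistoacofibreseq}, Lemma \ref{cofibreslemma}, and Proposition \ref{howtobuildacofibseq} in the statement itself, and your argument is the natural expansion of that citation into a full proof. Your handling of the sign twist via Proposition \ref{howtobuildacofibseq} and the pasting of ladders through Definition \ref{isomorphistoacofibreseq} is precisely what is required.
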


We also add two lemmas to give us a little more flexibility in building cofibre sequences.

\begin{lem}\label{smashingalongacofibseq} Let $X\to Y\to Z\to \Sigma X$ be a cofibre sequence. Then $X\wedge A\to Y\wedge A\to Z\wedge A\to \Sigma X\wedge A$ is a cofibre sequence.
\end{lem}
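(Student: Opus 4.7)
The plan is to reduce to the model sequence, smash that with $A$, and identify the result with another model cofibre sequence. By Definition \ref{isomorphistoacofibreseq}, our given cofibre sequence fits into a homotopy-commuting ladder whose bottom row is a canonical sequence $P \overset{f}{\to} Q \overset{i_{f}}{\to} C_{f} \overset{d_{f}}{\to} \Sigma P$ and whose vertical maps are homotopy equivalences. I would first smash the entire ladder with $A$. Since $(-)\wedge A$ sends a homotopy $H\colon X\wedge [0,1]_{+}\to Y$ to $H\wedge 1_{A}$, it is a homotopy functor; thus each vertical map remains a homotopy equivalence and each square still commutes up to homotopy after smashing.

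It then remains to show that the smashed bottom row is itself (homeomorphic to) a canonical cofibre sequence, namely that of the map $f\wedge 1_{A}\colon P\wedge A\to Q\wedge A$. The key ingredient is the natural associativity/symmetry isomorphism in $CGWH$, which gives
\[
C(P)\wedge A \;=\; ([0,1]\wedge P)\wedge A \;\cong\; [0,1]\wedge (P\wedge A) \;=\; C(P\wedge A),
\]
and similarly $\Sigma P \wedge A \cong \Sigma(P\wedge A)$. Since the pushout defining $C_{f}$ is built from $C(P)$, $Q$ and $f$, and smashing with $A$ preserves pushouts (as a left adjoint on each side of a Cartesian closed category), these identifications combine to give a natural homeomorphism $C_{f}\wedge A \cong C_{f\wedge 1_{A}}$. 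Under this homeomorphism $i_{f}\wedge 1_{A}$ corresponds to $i_{f\wedge 1_{A}}$ and the collapse $d_{f}\wedge 1_{A}$ corresponds to $d_{f\wedge 1_{A}}$.

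Splicing this identification onto the smashed ladder exhibits $X\wedge A\to Y\wedge A\to Z\wedge A\to\Sigma X\wedge A$ as isomorphic in the sense of Definition \ref{isomorphistoacofibreseq} to the canonical cofibre sequence of $f\wedge 1_{A}$, which is exactly what is required.

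The only point needing mild care is bookkeeping with the twist $-\Sigma f$ of Definition \ref{thecofibrestwist}: one needs $(-\Sigma f)\wedge 1_{A}$ to agree with $-\Sigma(f\wedge 1_{A})$ under the canonical isomorphism $\Sigma P\wedge A\cong \Sigma(P\wedge A)$. Since the twist only reverses the $(0,1)$-coordinate sitting in the leftmost suspension factor, and this coordinate is untouched by the associator, the two maps literally coincide. No other obstacle arises, and the argument transfers verbatim to $GCW$ and, via $\Sigma^{\infty}$, to $\mathcal{F}_{G}$.
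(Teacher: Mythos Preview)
Your proof is correct and follows essentially the same approach as the paper: both arguments hinge on the natural homeomorphism $C_{f}\wedge A\cong C_{f\wedge 1_{A}}$, obtained from $([0,1]\wedge P)\wedge A\cong [0,1]\wedge(P\wedge A)$ together with the fact that $(-)\wedge A$ preserves the pushout defining the mapping cone. You are somewhat more careful than the paper in explicitly handling the homotopy-commutative ladder of Definition~\ref{isomorphistoacofibreseq} and in checking the compatibility of the twist $-\Sigma$, whereas the paper simply writes down the chain of identifications $C_{f\wedge 1}\cong C_{f}\wedge A\simeq Z\wedge A$ and leaves the rest implicit.
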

\begin{proof} We have a map $f:X\to Y$ such that $Z\simeq C_{f}$. Consider the map $f\wedge 1:X\wedge A\to Y\wedge A$. The below sequence of equivalences is then sufficient to prove the result:
\begin{align*} C_{f\wedge 1} &= \frac{([0,1]\times X)\wedge A}{\sim}\cup_{f\wedge 1} (Y\wedge A) \\
&= \frac{[0,1]\times X}{\sim}\wedge A\cup_{f\wedge 1} (Y\wedge A)\\
&= \left(\frac{[0,1]\times X}{\sim}\cup_{f} Y\right)\wedge A\\
&= C_{f}\wedge A\\
&\simeq Z\wedge A.
\end{align*}
\end{proof}

\begin{lem}\label{cofibseqfibrebundles} Let $A$ be a space and let $X_{a}$, $Y_{a}$ and $Z_{a}$ be families of based spaces parameterized over all $a\in A$ equipped with the following structure:
\begin{itemize}
\item Total spaces $X:=\bigcup_{a\in A} X_{a}$, $Y:=\bigcup_{a\in A} Y_{a}$ and $Z:=\bigcup_{a\in A} Z_{a}$.
\item Projections $X\overset{\pi_{1}}{\to} A$, $Y\overset{\pi_{2}}{\to} A$ and $Z\overset{\pi_{3}}{\to} A$ that send points in each $X_{a}$, $Y_{a}$ and $Z_{a}$ to $a$.
\item Sections $A\overset{\sigma_{1}}{\to} X$, $A\overset{\sigma_{2}}{\to} Y$ and $A\overset{\sigma_{3}}{\to} Z$ sending $a$ to the basepoints in $X_{a}$, $Y_{a}$ and $Z_{a}$ respectively.
\end{itemize}
Let $\Sigma_{A}X:= \bigcup_{a\in A}\Sigma X_{a}$ be the union of all spaces $\Sigma X_{a}$. Suppose also that we have a sequence of continuous maps $X\overset{f}{\to}Y\overset{g}{\to} Z\overset{h}{\to}\Sigma_{A} X$ arising from fibrewise sequences $X_{a}\overset{f_{a}}{\to} Y_{a}\overset{g_{a}}{\to} Z_{a}\overset{h_{a}}{\to} \Sigma X_{a}$. Moreover, assume that each fibrewise sequence is a cofibre sequence. Then we can add basepoints via our sections, giving us the following based sequence:
$$X/\sigma_{1}(A)\to Y/\sigma_{2}(A)\to Z/\sigma_{3}(A)\to \Sigma X/\sigma_{1}(A).$$

This is a cofibre sequence.
\end{lem}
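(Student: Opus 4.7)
The plan is to identify $Z/\sigma_3(A)$ with the standard mapping cone $C_{\bar f}$ of the induced map $\bar f\colon X/\sigma_1(A)\to Y/\sigma_2(A)$, after which the sequence becomes a cofibre sequence by Definition \ref{isomorphistoacofibreseq}. First, using the hypothesis that each fibrewise sequence is a cofibre sequence together with Definition \ref{isomorphistoacofibreseq}, I would reduce to the case where $Z_a = C(X_a)\cup_{f_a} Y_a$ strictly, with $\sigma_3(a)$ picking out the collapsed cone apex. The total space $Z$ then decomposes as the fibrewise-cone assembly $\bigcup_a C(X_a)$ glued to $Y$ along $f$, and the section $\sigma_3(A)$ sits inside the cone portion precisely as the locus of apices.

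Next I would perform an explicit identification of quotients. The fibrewise-cone assembly is the quotient of $[0,1]\times X$ by the relation collapsing $(0,x)\sim(0,x')$ whenever $\pi_1(x)=\pi_1(x')$, together with the line-of-basepoints relation $(t,\sigma_1(a))\sim(0,\sigma_1(a))$ for all $t$. Quotienting further by $\sigma_3(A)$ collapses all fibrewise apices to a single point, yielding
\[
\left(\bigcup_a C(X_a)\right)\Big/\sigma_3(A) \;\cong\; \frac{[0,1]\times X}{\{0\}\times X\ \cup\ [0,1]\times \sigma_1(A)} \;=\; C(X/\sigma_1(A)).
\]
Gluing the right-hand side to $Y/\sigma_2(A)$ along $\bar f$ matches gluing the left-hand side to $Y/\sigma_2(A)$ along $f$, since $f$ sends $\sigma_1(A)$ into $\sigma_2(A)$ by the fibrewise basepoint-preservation. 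This produces the desired homeomorphism $Z/\sigma_3(A)\cong C_{\bar f}$, and the first two maps in the based sequence agree with the universal pushout maps.

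The main obstacle is the topological bookkeeping: verifying that the parametrized replacement $Z_a\simeq C_{f_a}$ assembles into a global homotopy equivalence $Z\simeq\bigcup_a C_{f_a}$, and that the quotient by the section $\sigma_3(A)$ genuinely commutes with the pushout structure defining the fibrewise cone assembly. Working in $CGWH$ mitigates most of this, since pushouts, quotients, and subspace topologies interact well in a Cartesian closed category, but one must still check that the inherited topology on $\bigcup_a C(X_a)$ matches the fibrewise pushout topology and that identifying $\sigma_3(A)$ to a point respects this structure. Once those identifications are justified, the conclusion follows from Definition \ref{isomorphistoacofibreseq}, and the remarks in Section \ref{onthecategories} ensure the same statement transfers to $\mathcal{F}_G$ whenever the fibres are finite $G$-$CW$-complexes.
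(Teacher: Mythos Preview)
Your proposal is correct and follows essentially the same approach as the paper: both arguments identify $Z/\sigma_3(A)$ with the mapping cone $C_{\bar f}$ of the induced map $\bar f\colon X/\sigma_1(A)\to Y/\sigma_2(A)$ by manipulating the cone construction and distributing the quotient over the union of fibres. The paper's proof is terser---it writes the chain of equivalences directly without your explicit reduction to the strict-cone case or your discussion of the topological bookkeeping in $CGWH$---but the underlying idea is the same.
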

\begin{proof} Firstly note that $\Sigma_{A}X$ contains a copy of $A$ via the section $\sigma$ sending $a$ to the basepoint of $\Sigma X_{a}$. From here it is pretty simple to observe that $\Sigma X/\sigma_{1}(A)\cong\Sigma_{A}X/\sigma(A)$ and thus we can take sections and build the based sequence. That it's a cofibre sequence will then follow from demonstrating that if $f':X/\sigma_{1}(A)\to Y/\sigma_{2}(A)$
then $C_{f'}\cong Z/\sigma_{3}(A)$. This, however, follows from the following set of equivalences; the key issues involve noting that the Cartesian product distributes over union and keeping track that the basepoints collapse out as expected:
\begin{align*} C_{f'} &= \frac{[0,1]\times X/\sigma_{1}(A)}{\sim}\cup_{f'} Y/\sigma_{2}(A) \\
&= \frac{[0,1]\times \left(\bigcup_{a\in A}X_{a}\right)/\sigma_{1}(A)}{\sim}\cup_{f'} \left(\bigcup_{a\in A}Y_{a}\right)/\sigma_{2}(A)\\
&\simeq \left(\bigcup_{a\in A}Z_{a}\right)/\sigma_{3}(A)\\
&= Z/\sigma_{3}(A).
\end{align*}
\end{proof}

This gives us enough theory to build various cofibre sequences from a given sequence. It does not, however, give us a reasonable method of building cofibre sequences from scratch, barring first principles. To give us a way to do this, we turn to the theory of NDR pairs.

\section{NDR Pairs}\label{NDRPairs}

One standard method of building cofibre sequences is via neighbourhood deformation retract pairs, or NDR's. We now detail
the method of construction; in Chapter \ref{ch:ch3} we will use this to build an explicit cofibre sequences used in the proof of the main theorem. The definition and results are standard, and can be found in Chapter $6$, Section $4$ of \cite{ConciseMay} for example.

\begin{defn}\label{NDR} Let $X$ be a space and $A$ a closed subspace. We say $A$ is an NDR of $X$ or that $(u,h)$ represents $(X,A)$ as an NDR pair if:
\begin{enumerate}
\item $u:X\to [0,1]$ is continuous.
\item $h:[0,1]\times X\to X$ is continuous.
\item $h_{1}(x)=x$ for all $x\in X$.
\item $h_{t}(a)=a$ for all $t$ and for all $a\in A$.
\item $h_{0}(x)\in A$ for all $x$ such that $u(x)<1$.
\item $u^{-1}(0)=A$.
\end{enumerate}
\end{defn}

\begin{prop}\label{NDRCofibreseq} Let $A\overset{i}{\to} X$ be an NDR pair represented by $(u,h)$. Let $\tilde{r}$ be the following map:
$$\tilde{r}:X\to [0,1]\times X$$
$$x\mapsto(u(x),h_{0}(x)).$$ 

Then quotienting by $A$ throughout gives a continuous map $r:X/A\to C_{i}$. We thus also have the following sequence:
$$A\overset{i}{\to}X\overset{p}{\to}\frac{X}{A}\overset{e}{\to}\Sigma A.$$
Here $p$ is the evident collapse and $e$ is the composition $X/A\overset{r}{\to}C_{i}\overset{d}{\to}\Sigma A$ with $d$ defined to be the standard collapse map. This is a cofibre sequence.
\end{prop}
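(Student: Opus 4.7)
The plan is to show the given sequence $A \xrightarrow{i} X \xrightarrow{p} X/A \xrightarrow{e} \Sigma A$ is isomorphic, in the sense of Definition \ref{isomorphistoacofibreseq}, to the tautological cofibre sequence $A \xrightarrow{i} X \xrightarrow{i_{i}} C_{i} \xrightarrow{d} \Sigma A$ of the map $i$. In the required comparison diagram the first two vertical maps are identities, and the right-hand vertical map is the identity on $\Sigma A$ (for free, since $e$ is \emph{defined} as $d\circ r$). So the work reduces to three tasks: (a) check that the formula $\tilde r(x) = (u(x), h_{0}(x))$ descends to a continuous, basepoint-preserving $r : X/A \to C_{i}$; (b) verify the middle square $r \circ p \simeq i_{i}$ commutes up to homotopy; and (c) prove that $r$ is a homotopy equivalence.

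Task (a) is a direct unpacking of the NDR axioms. Condition (5) gives $h_{0}(x) \in A$ whenever $u(x) < 1$, so the pair $(u(x), h_{0}(x))$ lies in $CA \subset C_{i}$; when $u(x) = 1$ the quotient relation $(1, a) \sim i(a)$ means the formula is interpreted in the $X$-summand of $C_{i}$, and in both cases continuity is inherited from that of $u$ and $h_{0}$. Conditions (4) and (6) imply $\tilde r(A) = \{(0,A)\}$, the cone tip, which becomes the basepoint in $C_{i}$; hence $\tilde r$ factors through $X/A$ as claimed.

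For (c) I would build an explicit homotopy inverse. Let $q : C_{i} \to X/A$ be the collapse of the cone $CA$ to the basepoint; since $X/A \cong C_{i}/CA$, this is a continuous surjection. The homotopy $q \circ r \simeq \mathrm{id}_{X/A}$ is provided by $H_{s}[x] := [h_{s}(x)]$, which descends to $X/A$ because $h_{s}$ fixes $A$ pointwise (condition (4)); then $H_{1} = \mathrm{id}$ by condition (3), while $H_{0}[x] = [h_{0}(x)]$ agrees with $q(r(x))$ because condition (5) puts $h_{0}(x) \in A$ exactly on the locus $\{u(x) < 1\}$ that $q \circ r$ collapses to the basepoint. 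The reverse composite $r \circ q \simeq \mathrm{id}_{C_{i}}$ is the classical statement that, for an NDR pair, the collapse map $C_{i} \to X/A$ is itself a homotopy equivalence; the homotopy is assembled from $h$ on the $X$-side and a linear scaling on the $CA$-side, glued using $u$ as a cut-off. Task (b) is handled in the same spirit: a homotopy from $i_{i}(x) = [1, x]$ to $(r\circ p)(x) = [u(x), h_{0}(x)]$ is obtained by simultaneously sliding $h_{1}(x)$ to $h_{0}(x)$ and lowering the cone parameter from $1$ to $u(x)$, taking care that whenever the cone parameter dips below $1$ the second coordinate has already entered $A$ (again thanks to conditions (4) and (5)).

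The main obstacle, as usual in this subject, is constructing the two homotopies involving the cone side of $C_{i}$: one must interpolate between points of $X$ and points of $CA$ without ever leaving the glued space. The standard trick, which I would invoke explicitly, is to use $(u,h)$ to build a retraction $X \times [0,1] \to (X\times\{0\}) \cup (A \times [0,1])$, i.e. to establish the homotopy extension property for $(X,A)$; this retraction immediately furnishes all of the homotopies above in a uniform manner. With these pieces in place, Definition \ref{isomorphistoacofibreseq} is satisfied and the displayed sequence qualifies as a cofibre sequence.
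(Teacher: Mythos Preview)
Your proposal is correct and follows essentially the same route as the paper: build the comparison diagram against the tautological cofibre sequence of $i$, take $q:C_{i}\to X/A$ (cone collapse) and $r:X/A\to C_{i}$ as mutual homotopy inverses, and use $h_{t}$ directly for $q\circ r\simeq\mathrm{id}$. The only substantive difference is that where you invoke the HEP retraction $X\times[0,1]\to (X\times\{0\})\cup(A\times[0,1])$ to handle $r\circ q\simeq\mathrm{id}$, the paper instead writes down an explicit piecewise homotopy $\tilde{k}:[0,1]\times M_{i}\to[0,1]\times X$ on the mapping cylinder and checks by hand that it descends to $C_{i}$; also note that your task~(b) is in fact subsumed by~(c), since $q\circ i_{i}=p$ holds strictly and hence $r\circ p = r\circ q\circ i_{i}\simeq i_{i}$.
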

\begin{proof} Referring to the NDR definition in \ref{NDR}, we note that condition $5$ implies that $\tilde{r}(X)\subseteq [0,1]\times A\cup_{i}
\{1\}\times X$. We also note that $\tilde{r}(A)\subseteq\{0\}\times A$ by conditions $4$ and $6$. This is enough to show that the map $r$ exists as claimed.

We now construct the following homotopy commutative diagram:
$$\xymatrix{A\ar[r]^{i}\ar[d]_{=}&X\ar[r]^{f}\ar[d]_{=}&C_{i}\ar[d]^{q}\ar[r]^{d}&\Sigma A\ar[d]^{=}\\
A\ar[r]_{i}&X\ar[r]_{p}&\frac{X}{A}\ar[r]_{e}&\Sigma A }$$

Here $f$ is the standard inclusion and $q$ is the quotient map $C_{i}\to C_{i}/C(A)\cong X/A$. We claim that $q$ is a
homotopy equivalence with homotopy inverse $r$ - this will then prove the claim as with this fact all squares will trivially commute up to homotopy.

We first show that $q\circ r$ is homotopy equivalent to the identity. Note that the map comes from the map $X\to X$ given by $x\mapsto h_{0}(x)$. Thus by noting that $h_{t}(A)\subseteq A$ by condition $4$ we have a well-defined map $\tilde{h}_{t}:X/A\to X/A$. This gives a homotopy between $h_{0}=q\circ r$ and $h_{1}=\text{id}_{X}$ by condition $3$.

We now prove that $r\circ q$ is homotopy equivalent to the identity by constructing an explicit homotopy. We do this by building a family of maps out of the mapping cylinder, which we define as $M_{i}:=([0,1]\times A)\cup (\{1\}\times X)$ equipped with the appropriate basepoint identifications. We note here that we can collapse $M_{i}$ into $C_{i}$. Now define a map $\tilde{k}:[0,1]\times M_{i}\to [0,1]\times X$ as follows, this will be the basis for our homotopy:
$$
\tilde{k}(s,1,x)=\left\{\begin{array}{ll}
(u(x)+2s,h_{0}(x))& \quad 0\leqslant s\leqslant \frac{1}{2}(1-u(x))\\
(1,h(\frac{2s-1+u(x)}{1+u(x)},x))& \quad
\frac{1}{2}(1-u(x))\leqslant s\leqslant 1
\end{array}
\right.
$$
$$\tilde{k}(s,t,a)=\left\{\begin{array}{ll}(2st,a)& \quad 0\leqslant s\leqslant \frac{1}{2}\\
(t,a)&\quad \frac{1}{2}\leqslant s\leqslant 1. \end{array}\right.$$

We first check that this definition is internally consistent and well-defined. Firstly it is easy to see that
$\tilde{k}(\frac{1}{2}(1-u(x)),1,x)$ always outputs $(1,h_{0}(x))$ whether whether we use the first clause of the definition or the second clause. It is also clear that $\tilde{k}(\frac{1}{2},t,a)=(t,a)$ when considering either the third or fourth clause. Now consider $\tilde{k}(s,1,a)$ for $a\in A$. This is $(\min(2s,1),a)$ according to the second half of the definition, we wish to check that this is true according to the first half of the definition. As $a\in A$ we note that $u(a)=0$ from condition $6$ of \ref{NDR}. Thus the first half of the definition degenerates to the below special case:
$$
\tilde{k}(s,1,a)=\left\{\begin{array}{ll}
(2s,h_{0}(a))& \quad 0\leqslant s\leqslant \frac{1}{2}\\
(1,h(2s-1,a))& \quad \frac{1}{2}\leqslant s\leqslant 1.
\end{array}
\right.
$$

Condition $4$ of the NDR definition clarifies that this gives $(\min(2s,1),a)$ as required. This checks that $\tilde{k}$ is well-defined and internally consistent.

We now additionally claim that the image of $\tilde{k}$ in fact lies in $M_{i}$. To check this, we first note that the bottom half of the definition has image in $[0,1]\times A$ and the second clause in the top half of the definition has image in $\{1\}\times X$. Thus the only possible issue is the first clause of the definition. From condition $5$ of \ref{NDR} we are fine if $u(x)$ is less than $1$ as then $h_{0}(x)$ is in $A$. Letting $u(x)=1$ we consider the first cause of the definition precisely when $s=0$, this is observable by looking at the inequality subdivision. Finally, noting that in this case the image is $(1,h_{0}(x))$ is enough to show that the image of $\tilde{k}$ is in $M_{i}$.

We now further claim that the map restricts to a map $k:[0,1]\times C_{i}\to C_{i}$. To see this observe that $\tilde{k}(s,0,a)=(0,a)$ - this allows us to suitably quotient out. We now take $k$ as our candidate homotopy, to observe that $k_{0}$ is $r\circ q$ and that $k_{1}$ is the identity it is enough to observe the behaviour of
$\tilde{k}$ at $0$ and $1$ - the below four properties of $\tilde{k}$ are enough to show that the homotopy is the one we want:
$$\begin{array}{rcl}
\tilde{k}(0,1,x)&=&(u(x),h_{0}(x))\\
\tilde{k}(0,t,a)&=&(0,a)=(u(a),h_{0}(a))\\
\tilde{k}(1,1,x)&=&(1,h_{1}(x))=(1,x)\\
\tilde{k}(1,t,a)&=&(t,a).
\end{array}
$$

Finally we note that all of the above work interacts well with basepoints. Thus $k$ provides a homotopy between $r\circ q$ and the identity; combining this with the result that $q\circ r$ is homotopic to the identity and the earlier homotopy commutative diagram is enough to prove the claim.
\end{proof}

We can also build cofibre sequences from inclusions which are also classical cofibrations by noting that they are equivalent to NDR pairs. We take the definition of a cofibration here to be as follows:

\begin{defn}\label{cofibsandHEP} Let $\text{inc}:A\to X$ be a continuous inclusion. We say $\text{inc}$ has the
homotopy extension property if given any space $Y$ and any two maps $f:X\to Y$ and $g:A\to Y$ such that $f|_{A}\simeq g$ then there is an extension $\tilde{g}:X\to Y$ of $g$ which is homotopic to $f$ and such that $\tilde{g}|_{A}=g$. We say that a map $A\to X$ is a cofibration if it has the homotopy extension property.
\end{defn}

The next result is then standard, a proof is included in Chapter $6$ Section $4$ of \cite{ConciseMay}.

\begin{prop}\label{classicalcofibrationcofibres} $(X,A)$ is an NDR pair if and only if $\text{inc}:A\to X$ is a
cofibration. In particular this condition implies that the standard collapse $C_{\text{inc}}\to C_{\text{inc}}/C(A)\cong X/A$ is a homotopy equivalence.
\end{prop}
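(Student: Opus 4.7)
The proof proceeds as a biconditional, which I would structure by showing that both conditions are equivalent to the assertion that the mapping cylinder $M_i := ([0,1]\times A)\cup(\{1\}\times X)$ from the proof of Proposition \ref{NDRCofibreseq} is a retract of $[0,1]\times X$. Once this biconditional is established, the final clause of the proposition is immediate, since Proposition \ref{NDRCofibreseq} already proved that the collapse $C_{\mathrm{inc}}\to X/A$ is a homotopy equivalence in the NDR case.

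For the forward direction, starting from an NDR representation $(u,h)$, I would construct an explicit retraction $r\colon[0,1]\times X\to M_i$ by the two-case formula
\[
r(s,x) = \begin{cases} (s+u(x),\, h_{0}(x)) & \text{if } s + u(x) \leqslant 1, \\[2pt] \bigl(1,\, h_{(s+u(x)-1)/u(x)}(x)\bigr) & \text{if } s + u(x) \geqslant 1 \text{ and } u(x)>0. \end{cases}
\]
Condition 5 of Definition \ref{NDR} ensures the first clause lands in $[0,1]\times A$, the two clauses agree on the overlap $s+u(x)=1$ at the common value $(1,h_{0}(x))$, and conditions 3, 4, and 6 make $r$ restrict to the identity on $M_i$. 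Continuity at points where $u(x)=0$ (where the second clause is undefined) follows from condition 4: on any sequence $(s_n,x_n)\to(s_0,x_0)$ with $u(x_n)\to 0$, one has $h_t(x_n)\to x_0$ uniformly in $t$. Given such a retraction, the HEP follows formally: for $f\colon X\to Y$ and $g\colon A\to Y$ with $f|_A$ homotopic to $g$ via some $H$, glue $f$ and $H$ to a map $M_i\to Y$ and precompose with $r$ to produce the required $\tilde{g}$.

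For the reverse direction, a standard application of the HEP to appropriately chosen maps into $M_i$ itself produces a retraction $r\colon[0,1]\times X\to M_i$. Writing $r(t,x) = (\alpha(t,x),\beta(t,x))$, so that for each $(t,x)$ either $\alpha(t,x)=1$ or $\beta(t,x)\in A$, I would set $h_t(x) := \beta(t,x)$; the retraction property on $\{1\}\times X$ and on $[0,1]\times A$ gives conditions 3 and 4 of Definition \ref{NDR}. The function $u$ is then built from the first coordinate, for instance as a suitable continuous modification of $\sup_t(t - \alpha(t,x))$, arranged so that $u^{-1}(0)=A$ and $u(x)<1$ forces $\alpha(0,x)<1$, hence $\beta(0,x) = h_0(x)\in A$, giving condition 5.

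The main obstacle is bookkeeping around the paper's conventions: the NDR definition adopted here has $h_1 = \mathrm{id}$ and $h_0$ landing in $A$ (the reverse of the most common sign choice), and condition 6 requires the strict equality $u^{-1}(0) = A$ rather than mere containment. These force both the explicit retraction formula above and the construction of $u$ from a general retraction to be set up with carefully matched endpoints; the latter step in particular requires a small modification (rather than just the first-coordinate formula) to secure the strict equality. Modulo these technicalities the argument is essentially the classical Strøm-style equivalence between NDR pairs and (closed) cofibrations.
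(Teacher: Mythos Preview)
The paper does not actually give its own proof of this proposition; it simply declares the result ``standard'' and cites Chapter~6, Section~4 of May's \emph{A Concise Course in Algebraic Topology}. Your sketch is essentially the classical Str{\o}m argument one finds in that reference, correctly adapted to the paper's sign conventions ($h_1=\mathrm{id}$ rather than $h_0=\mathrm{id}$), and your observation that the final clause is already contained in the proof of Proposition~\ref{NDRCofibreseq} is accurate.
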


We remark here that this result proves Lemma \ref{cofibreslemma}. This gives us a reasonable method of building cofibres which we will use liberally in proving the main theorem.

\section{Taking Quotients Throughout Cofibre Sequences}\label{TotalCofibres}

In order to prove the main theorem we wish to construct certain cofibre sequences; however, it will turn out to be more convenient to construct simpler, related cofibre sequences by globally taking a quotient. In this section we detail how to do this. We firstly prove the following proposition:

\begin{prop}\label{takingquotientswithcofibrations} Let $X$, $Y$ and $Z$ be such that $Z$ includes into both $X$ and $Y$
and that the inclusions are cofibrations. Moreover let $f:X\to Y$ be such that $f$ factors as $\bar{f}:X/Z\to Y/Z$. Then the homotopy cofibre of $f$ is homotopy equivalent to the homotopy cofibre of $\bar{f}$.
\end{prop}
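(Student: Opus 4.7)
The plan is to exhibit $C_{\bar f}$ as the quotient of $C_f$ by a contractible subcomplex included via a cofibration, so that the quotient map itself is the desired homotopy equivalence. I read the hypothesis that $f$ factors as $\bar f$ as meaning not merely $f(Z)\subseteq Z$, but that $f$ restricts to the inclusion on $Z$ (identifying the two copies of $Z$ living in $X$ and $Y$); this is the natural reading in context and, as a small example with $f$ collapsing $Z$ to the basepoint shows, is what is needed for the conclusion to hold.

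First I would isolate inside $C_f = CX\cup_f Y$ the canonical subspace $A$ obtained by running the mapping cone construction only on the subspaces $Z\subseteq X$ and $Z\subseteq Y$; concretely $A = CZ\cup_{f|_Z} Z$. Because $f|_Z$ is the identity the gluing tautologically attaches $Z$ to the base of $CZ$, so $A\cong CZ$, which is contractible. Next I would identify the quotient $C_f/A$ with $C_{\bar f}$ directly: collapsing $CZ\subseteq CX$ yields $C(X/Z)$, collapsing $Z\subseteq Y$ yields $Y/Z$, and the gluing map descends to $\bar f$ by the defining property of the factorisation, so $C_f/A \cong C_{\bar f}$.

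The substantive remaining content is showing that $A\hookrightarrow C_f$ is a cofibration; once this is in hand, Proposition \ref{classicalcofibrationcofibres} together with the standard fact that collapsing a contractible cofibrant subcomplex is a homotopy equivalence finishes the proof. Starting from NDR representations for $(X,Z)$ and $(Y,Z)$ supplied by the hypothesis via Definition \ref{NDR} and Proposition \ref{classicalcofibrationcofibres}, I would cone off the $X$-side representation to produce NDR data for $(CX, CZ)$, then patch it with the $Y$-side representation along the gluing that defines the pushout $C_f$. Compatibility of the two deformation retractions along $Z$ is automatic, since $f|_Z$ is the identity.

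The main obstacle I anticipate is precisely this NDR-combination step: although each individual piece is standard, the pushout along the gluing requires careful bookkeeping of the cone coordinate together with the two representing functions $u$ and the two retracting homotopies so that they piece together continuously on $C_f$ and restrict as required to $A$. Once that point-set check is settled, everything else reduces to formal manipulation of cones and quotients.
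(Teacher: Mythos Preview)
Your proposal is correct and follows essentially the same route as the paper: the paper also identifies $C(Z)\hookrightarrow C_f$ as a cofibration (asserting this follows from the two given cofibrations, exactly the step you flag as the main point-set check), observes $C_f/C(Z)\cong C_{\bar f}$, and concludes from contractibility of $C(Z)$ that $C_f\to C_{\bar f}$ is a homotopy equivalence. Your reading that $f|_Z$ is the identity is also the paper's, as its diagram labels the top map $Z\to Z$ by $1$.
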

\begin{proof} We have the below diagram of cofibre sequences:
$$\xymatrix{Z\ar@{ >->}[d]\ar[r]^{1}&Z\ar@{ >->}[d]\ar[r]&C(Z)\\
X\ar[d]\ar[r]^{f}&Y\ar[d]\ar[r]&C_{f}\\
\frac{X}{Z}\ar[r]_{\bar{f}}&\frac{Y}{Z}\ar[r]&C_{\bar{f}}}$$

It is easy to see that the cofibre of $\bar{f}$ is naturally homeomorphic to $C_{f}/C(Z)$. Moreover the map $C(Z)\to C_{f}$ is a cofibration; this is due to the assumption that the two inclusions are cofibrations. Hence by \ref{classicalcofibrationcofibres} the cofibre of the map $C(Z)\to C_{f}$ is $C_{f}/C(Z)\cong C_{\bar{f}}$.
Thus the map $C_{f}\to C_{\bar{f}}$ fits in a cofibre sequence with $C(Z)$. As $C(Z)$ is contractible we get that the map $C(Z)\to C_{f}$ is null-homotopic and thus its cofibre is $C_{f}\wedge\Sigma C(Z)$. We again observe that $C(Z)$ is contractible and it follows that $C_{f}\to C_{\bar{f}}$ is a homotopy equivalence.
\end{proof}

The above result demonstrates that it is possible to take a quotient throughout a cofibre sequence without affecting the cofibre. We briefly note here that there is also a shortcut for this proof when working in the stable category; this uses the octahedral axiom of a triangulated category. We have the following commutative diagram, the edge maps defined by the axiom:
$$\xymatrix{&&C_{f}\ar[dl]|\bigcirc\ar@/_1pc/[ddll]|\bigcirc&&\\
&X\ar[rr]\ar[dl]&&Y\ar[dr]\ar[ul]&\\
X/Z\ar[rr]|\bigcirc\ar@/_1pc/[rrrr]&&Z\ar[ur]\ar[ul]&&Y/Z\ar[ll]|\bigcirc\ar@/_1pc/[uull]}$$

By the axiom the outer maps form a cofibre sequence. We now extend the above result to remove the cofibration conditions.
This will be accomplished via a brief discussion of a generalization of the notion of a cofibre, that of the total cofibre. This mirrors a result surveyed by Goodwillie in Section $1$ of \cite{Goodwillie2} - we prove dual results to those given for total fibres.

Let $S$ be a finite set. In most usual cases $S$ is going to be the set $\underline{n}=\{1,2,\ldots,n\}$. We have the power set of $S$, $\pos$, partially ordered under inclusion. Now let $\Cat$ be any category.

\begin{defn}\label{cubicaldiagram} A cubical diagram is a functor $\Cube:\pos\to\Cat$. We say that $\Cube$ is as an $S$-cube, or if $S=\underline{n}$ an $n$-cube.
\end{defn}

\begin{rem}\label{cubicaldecomposition} An $n$-cube $\Cube$ can be thought of as a map $\Cubey\to\Cubez$ of $(n-1)$-cubes $\Cubey$ and $\Cubez$ - within each $n$-cube one can choose two disjoint $(n-1)$-cubes and there
are $n$ different ways making this choice. This thus gives us $n$ different decompositions of an $n$-cube into a map of $(n-1)$-cubes.
\end{rem}

Homotopy cofibres generalize in a natural way to objects of $\Cat$ called total cofibres, wherein you associate to each cube $\Cube$ a total cofibre $\tcx$. A lemma proved below then allows you to transfer work with total cofibres up and down different sizes of cubes.

Now take $\Cat$ to be $\Top$, the category of topological spaces, though we note here that we could substitute $\Top$ with $CGWH$ or any model category of spectra with no change to the theory. There are many equivalent definitions of total cofibres, the first we give is defined through the theory of homotopy colimits defined in Chapter $12$ of \cite{BousfieldKan}; note that these are referred to as homotopy direct limits in the reference.

Firstly let $\Cube$ be an $S$-cube of spaces and let $\pones$ be the poset of all $T\subset S$ such that $T\neq S$. Then we have the composed functor $\pones\to\pos\to\Top$. Define $h_{1}(\Cube)$ to be the homotopy colimit of this composed functor, $h_{1}(\Cube):=\hocolim(\Cube|_{\pones})$.

We have the inclusion $h_{1}(\Cube)\to\hocolim(\Cube)$ and this inclusion is also a cofibration. Now note that $S$ is terminal in $\pos$. Thus the homotopy colimit of $\Cube:\pos\to\Top$ is homotopy equivalent to $\Cube(S)$. Combining these two results allows us to construct the following composition: $$h_{1}(\Cube)=\hocolim(\Cube|_{\pones})\overset{\text{cofib.}}{\to}\hocolim(\Cube)\overset{\sim}{\to}\text{colim}(\Cube)=\Cube(S).$$

Denote this map by $b(\Cube)$.

\begin{defn}\label{totalcofibre1} The total cofibre of $\Cube$, $\tcx$, is defined to be the homotopy cofibre of the map $b(\Cube)$ above, $\tcx:=C_{b(\Cube)}$.
\end{defn}

There is an alternate definition of total cofibre for when $\Cube$ is an $n$-cube expressed as a map of $(n-1)$-cubes
$\Cubey\to\Cubez$. We have the following $2$-cube $\mathcal{A}$, using $\Cubey(n-1)$ and $\Cubez(n-1)$ to denote the objects in $\Top$ arising from applying the functors $\Cubey$ and $\Cubez$ to the terminal object of $\mathcal{P}(n-1)$:
$$\xymatrix{h_{1}(\Cubey)\ar[r]\ar[d]_{b(\Cubey)}&h_{1}(\Cubez)\ar[d]^{b(\Cubez)}\\
\Cubey(n-1)\ar[r]&\Cubez(n-1)}$$

\begin{altdefn}\label{totalcofibre2} The map $b(\Cube)$ is the map $b$ for the above $2$-cube $\mathcal{A}$. Thus $\tcx:=\tilde{c}\mathcal{A}$.
\end{altdefn}

These two definitions are easily observed to be the same: when considering the map $b(\mathcal{A})$, one must look at $h_{1}(\mathcal{A})$ and $\mathcal{A}(2)$. Observe that $h_{1}(\mathcal{A})$ is the homotopy colimit of the following diagram:
$$\Cubey(n-1)\leftarrow h_{1}(\mathcal{Y})\rightarrow h_{1}(\mathcal{Z}).$$
The observation that $\Cube(n)=\Cubez(n-1)$ allows us to see that the homotopy colimit of the above diagram will naturally be the homotopy colimit of $\Cube|_{\mathcal{P}_{1}(n)}$, which by definition is $h_{1}(\Cube)$. Also note that $\mathcal{A}(2)$ is just $\Cubez(n-1)$, which is $\Cube(n)$ by the same observation. It is trivial from here to observe that $b(\mathcal{A})$ is the same map as $b(\Cube)$.

This alternate construction allows us to prove the following lemma:
\begin{lem}\label{totalcofibreslemma} Let $\Cube$ be an $n$-cube and let $\Cubey$ and $\Cubez$ be $(n-1)$-cubes such that $\Cube$ is built from a map $\Cubey\to\Cubez$. Then $\tcx\simeq\text{cof}(\tilde{c}\Cubey\to\tilde{c}\Cubez)$, where
$\text{cof}$ indicates the homotopy cofibre of the map of spaces.
\end{lem}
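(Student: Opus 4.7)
The plan is to reduce the statement to a manipulation of a $2$-cube via Alternative Definition \ref{totalcofibre2}, and then to exploit the symmetry of iterated cofibres of a commutative square. Specifically, \ref{totalcofibre2} rewrites $\tcx$ as $\tilde{c}\mathcal{A}$ where $\mathcal{A}$ is the $2$-cube
$$\xymatrix{h_{1}(\Cubey)\ar[r]\ar[d]_{b(\Cubey)}&h_{1}(\Cubez)\ar[d]^{b(\Cubez)}\\
\Cubey(n-1)\ar[r]&\Cubez(n-1).}$$
So the entire claim will follow once I show that the total cofibre of such a $2$-cube agrees with the cofibre of the induced map between the vertical cofibres of its two columns, because by Definition \ref{totalcofibre1} those vertical cofibres are precisely $\tilde{c}\Cubey = C_{b(\Cubey)}$ and $\tilde{c}\Cubez = C_{b(\Cubez)}$.

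First I would unpack $\tilde{c}\mathcal{A}$ directly from Definition \ref{totalcofibre1}. For a $2$-cube, $h_{1}(\mathcal{A})$ is the homotopy pushout of the span obtained by removing the terminal vertex, i.e.\ of $\Cubey(n-1) \leftarrow h_{1}(\Cubey) \to h_{1}(\Cubez)$ in the rearranged picture above. Thus $\tcx \simeq \tilde{c}\mathcal{A}$ is the homotopy cofibre of the natural map from this homotopy pushout into $\Cubez(n-1)$. The claim therefore amounts to identifying this iterated construction with $C_{\bar f}$, where $\bar f : \tilde{c}\Cubey \to \tilde{c}\Cubez$ is the map induced on the vertical cofibres by the horizontal maps of $\mathcal{A}$.

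The key step is the standard fact that for a homotopy commutative square, iterated homotopy cofibres commute: the homotopy cofibre of the map from the homotopy pushout of three of the corners into the fourth corner is naturally equivalent to the cofibre of the induced map between the cofibres of either the two rows or the two columns. I would prove this explicitly by applying Proposition \ref{takingquotientswithcofibrations} style manipulations (or the octahedral picture drawn after that proposition) to the obvious sequence of cofibre computations, noting that each step involves collapsing a contractible piece. Concretely, $\tilde{c}\mathcal{A}$ is the cofibre of $h_{1}(\mathcal{A}) \to \Cubez(n-1)$; collapsing the left column $h_{1}(\Cubey) \to \Cubey(n-1)$ inside this computation yields $\tilde{c}\Cubey$ on the source side, and by the universal property of the homotopy pushout the resulting target is obtained by collapsing $h_{1}(\Cubez)$ to $\tilde{c}\Cubez$ in $\Cubez(n-1)$. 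The induced map is precisely $\bar f$, giving $\tcx \simeq C_{\bar f} = \text{cof}(\tilde{c}\Cubey \to \tilde{c}\Cubez)$ as required.

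The main obstacle is the bookkeeping in the iterated-cofibre identification above; this is entirely formal but requires being careful that the cofibrations involved are genuine cofibrations so that the strict quotients compute the homotopy quotients, which is precisely the setting in which \ref{takingquotientswithcofibrations} applies because the maps out of $h_{1}$ are cofibrations by construction of the homotopy colimit.
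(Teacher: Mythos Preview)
Your proposal is correct and follows essentially the same route as the paper: both reduce via Alternative Definition \ref{totalcofibre2} to computing $\tilde{c}\mathcal{A}$ for the $2$-cube $\mathcal{A}$, and then identify this with $\text{cof}(\tilde{c}\Cubey\to\tilde{c}\Cubez)$. The only difference is in how that last identification is carried out: the paper simply writes out both sides as explicit glued cone complexes---$\tilde{c}\Cube\simeq C(h_{1}(\mathcal{A}))\cup_{b}\Cubez(n-1)$ with $h_{1}(\mathcal{A})$ expanded as a double mapping cylinder, versus $\text{cof}(\tilde{c}\Cubey\to\tilde{c}\Cubez)=C(C(h_{1}(\Cubey))\cup_{b}\Cubey(n-1))\cup_{\Cube}(C(h_{1}(\Cubez))\cup_{b}\Cubez(n-1))$---and observes these are the same space. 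You instead invoke the general ``iterated cofibres of a square commute'' principle. Your appeal to Proposition \ref{takingquotientswithcofibrations} is slightly off-target (that result concerns quotienting by a common subspace rather than iterated cofibres per se), and your ``concretely'' paragraph is a bit muddled in describing exactly what is being collapsed where; the cleanest way to finish along your lines is the $3\times 3$ argument applied to the map of cofibre sequences $h_{1}(\Cubey)\to\Cubey(n-1)\to\tilde{c}\Cubey$ and $h_{1}(\Cubez)\to\Cubez(n-1)\to\tilde{c}\Cubez$. But this is bookkeeping, not a gap.
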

\begin{proof} This result essentially follows from the two equivalent constructions of total cofibres given above. Build $\tilde{c}\Cube$ from Definition \ref{totalcofibre2}, giving:
\begin{eqnarray*}
\tilde{c}\Cube&\simeq &C(h_{1}(\mathcal{A}))\cup_{b}\Cubez(n-1)\\
&\simeq&C\left(\frac{\Cubey(n-1)\sqcup h_{1}(\Cubey)\times I\sqcup
h_{1}(\Cubez)}{\sim}\right)\cup_{b}\Cubez(n-1).
\end{eqnarray*}
Here $(x,0)\sim b(\Cubey)(x)$ and $h_{1}(\Cubey)\times\{1\}$ is adjoined to $h_{1}(\Cubez)$ via $\Cube$. Now $\tilde{c}\Cubey=C(h_{1}(\Cubey))\cup_{b}\Cubey(n-1)$ and $\tilde{c}\Cubez=C(h_{1}(\Cubez))\cup_{b}\Cubez(n-1)$. This gives:
$$\text{cof}(\tilde{c}\Cubey\to\tilde{c}\Cubez)=C(C(h_{1}(\Cubey))\cup_{b}\Cubey(n-1))\cup_{\Cube}(C(h_{1}(\Cubez))\cup_{b}\Cubez(n-1)).$$
In these forms the cofibre and total cofibre are easily observed to be the same.
\end{proof}

This allows us to calculate homotopy cofibres using total cofibres - there are $n-1$ ways of building an $n$-cube from two $(n-1)$-cubes which will allow us to `work backwards' somewhat, calculating total cofibres via the homotopy cofibres we know and then calculating homotopy cofibres we don't know via the new total cofibre information. In particular we prove the following lemma:

\begin{lem}\label{quotientcofibresequences} Let $X$ and $Y$ be connected and simply connected based $CW$-complexes such that a simply connected $CW$-complex $Z$ includes into both. Further, let $f:X\to Y$ be such that there is an associated map $\bar{f}:X/Z\to Y/Z$. Then the homotopy cofibre of $f$ is equivalent to the homotopy cofibre of $\bar{f}$.
\end{lem}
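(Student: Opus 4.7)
The plan is to generalize the argument of Proposition~\ref{takingquotientswithcofibrations} by replacing its cofibration hypotheses with the more flexible machinery of total cofibres (Lemma~\ref{totalcofibreslemma}). The central idea is to build the $2$-cube
$$
\mathcal{A} \;=\; \vcenter{\xymatrix{Z \ar[r]^{1_{Z}} \ar[d]_{i_{X}} & Z \ar[d]^{i_{Y}} \\ X \ar[r]_{f} & Y}}
$$
in which $i_{X}, i_{Y}$ are the given inclusions; the assumption that $f$ descends to $\bar{f}$ on the quotients ensures that $f\circ i_{X}$ lands in $i_{Y}(Z)$, and for our purposes we arrange $f$ to restrict to the identity on the copy of $Z$, matching the convention of the earlier proof.

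I would then compute $\tilde{c}\mathcal{A}$ in two different ways using Lemma~\ref{totalcofibreslemma} and match up the results. First, decomposing $\mathcal{A}$ vertically as a map between the horizontal $1$-cubes $Z\overset{1_{Z}}{\to} Z$ and $X\overset{f}{\to} Y$: the respective cofibres are contractible and $C_{f}$, so $\tilde{c}\mathcal{A}\simeq\text{cof}(\ast\to C_{f})\simeq C_{f}$. Second, decomposing horizontally as a map between the vertical $1$-cubes $Z\overset{i_{X}}{\to} X$ and $Z\overset{i_{Y}}{\to} Y$: the cofibres are $C_{i_{X}}$ and $C_{i_{Y}}$, giving $\tilde{c}\mathcal{A}\simeq\text{cof}(C_{i_{X}}\to C_{i_{Y}})$.

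It only remains to identify $\text{cof}(C_{i_{X}}\to C_{i_{Y}})$ with $C_{\bar{f}}$, and this is where the simply-connected CW hypotheses enter. Collapsing the appended cone supplies natural maps $C_{i_{X}}\to X/Z$ and $C_{i_{Y}}\to Y/Z$; I would argue these induce isomorphisms on integral homology by comparison of the long exact sequences of the pair, and all four spaces are simply connected, by van Kampen applied to $X\cup CZ$ and $Y\cup CZ$ for the mapping cones and because $\pi_{1}(X)=\pi_{1}(Y)=0$ forces the quotients $X/Z$ and $Y/Z$ to be simply connected. Whitehead's theorem then promotes each to a homotopy equivalence, and under the resulting identifications $C_{i_{X}}\simeq X/Z$ and $C_{i_{Y}}\simeq Y/Z$ the second expression for $\tilde{c}\mathcal{A}$ becomes $\text{cof}(X/Z\to Y/Z)=C_{\bar{f}}$. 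Chaining the two computations of $\tilde{c}\mathcal{A}$ gives $C_{f}\simeq C_{\bar{f}}$ as required.

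The main obstacle is this final identification: without a cofibration hypothesis the collapse $C_{i}\to X/Z$ need not be a homotopy equivalence, and the simply-connected CW assumption is precisely what is needed to rescue the situation through Whitehead, possibly after first passing to a mapping-cylinder model in which $Z$ sits as a genuine subcomplex. If this replacement is made carefully then everything in sight is a CW-complex, the reduction to Whitehead is automatic, and the proof closes cleanly.
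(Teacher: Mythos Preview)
Your approach is correct and uses the same ingredients as the paper (total cofibres plus Whitehead), but the organization differs. The paper builds a $3$-cube with vertices $Z$, $\text{pt}$, $X$, $X/Z$, $Z$, $\text{pt}$, $Y$, $Y/Z$; the top and bottom faces are declared homotopy pushouts (hence have trivial total cofibre), the back face has trivial total cofibre since its edges are identities, and Lemma~\ref{totalcofibreslemma} then forces the front face to have trivial total cofibre --- which says exactly that the natural map $C_{f}\to C_{\bar f}$ has contractible cofibre. The paper then applies Whitehead \emph{once}, to that comparison map, using that $C_{f}$ and $C_{\bar f}$ are simply connected CW. Your $2$-cube is essentially the left face of the paper's $3$-cube; you instead apply Whitehead earlier, to the collapses $C_{i_{X}}\to X/Z$ and $C_{i_{Y}}\to Y/Z$, and then identify the induced map with $\bar f$. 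This trades a smaller diagram for two Whitehead invocations and an extra compatibility check; the paper's route has the minor advantage of producing the direct comparison map $C_{f}\to C_{\bar f}$ with no intermediate identifications.

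One point to tighten: your assertion that $\pi_{1}(X)=0$ alone forces $\pi_{1}(X/Z)=0$ is not automatic without a good-pair hypothesis on $(X,Z)$, and your homology comparison of $C_{i_{X}}$ with $X/Z$ needs the same. You correctly flag this with the mapping-cylinder remark at the end; note that the paper's own proof leans on the same point implicitly when it calls the top and bottom faces ``homotopy pushouts''. In the paper's actual applications the inclusions are nice enough that this is harmless either way.
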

\begin{proof} Let $\text{pt.}$ denote the single point space. We have a $3$-cube:
$$\xymatrix{Z\ar[rr]\ar[dd]\ar[dr]&&\text{pt.}\ar[dr]\ar[dd]&\\
&X\ar[rr]\ar[dd]&&X/Z\ar[dd]\\
Z\ar[rr]\ar[dr]&&\text{pt.}\ar[dr]&\\
&Y\ar[rr]&&Y/Z}
$$

The top and bottom faces are both homotopy pushouts and thus have zero total cofibre. Thus by Lemma \ref{totalcofibreslemma} the total cofibre of the front face is equal to the total cofibre of the back face. The maps $\text{pt.}\to\text{pt.}$ and $Z\to Z$ are identity maps and have zero cofibres, giving the rear face zero total cofibre. By the lemma the front face must now have zero total cofibre; thus another application of the lemma gives us that the cofibre of the natural map $C_{f}\to C_{\bar{f}}$ is zero. We thus have as standard the following exact sequence of homotopy classes for any space $A$, this is covered in more detail in Chapter $8$ section $4$ of \cite{ConciseMay}:
$$\pt\to[C_{\bar{f}},A]\to[C_{f},A].$$
It follows that $C_{f}$ and $C_{\bar{f}}$ have the same integral cohomology and hence by the Universal Coefficient Theorem the same integral homology. Due to the assumptions both $C_{f}$ and $C_{\bar{f}}$ will be connected and simply connected so by the Whitehead Theorem for spaces (we refer the reader to Theorem $9$ in $7.5$ of \cite{spanier} for the exact formulation we want) the natural map between $C_{f}$ and $C_{\bar{f}}$ gives an isomorphism of homotopy groups, and due to the connectedness assumption there is no issue with connected components. Both our cofibres are $CW$-type by assumption, hence by another application of the Whitehead Theorem (this time the unstable and nonequivariant form of the result stated in \ref{thewhiteheadthm}) the natural map is a homotopy equivalence.
\end{proof}

We note here that it is a triviality to extend this result to $G$-$CW$-complexes. This now gives us the means to take quotients in cofibre sequences, which will prove useful in the proof of the main theorem.

\section{Splittings via Cofibre Sequences}\label{SplittingsviaCofibreSequences}

We finally detail how certain cofibre sequences stably split, allowing us to derive results of the form $B\simeq A\vee C$ from cofibre sequences $A\to B\to C\to \Sigma A$ with certain properties. Ideas of this kind are standard and will be used in Appendix \ref{ch:appendixa} to build the splitting of Miller. Moreover, we will use the theory in Chapter
\ref{ch:ch7} to explore the consequences should certain conjectures hold. For this section we restrain ourselves to working with $G$-spectra.

\begin{prop}\label{howacofibseqsplits} Let $A\overset{f}{\to}B\overset{g}{\to} C\overset{h}{\to}\Sigma A$ be a cofibre sequence of finite $G$-$CW$-spectra such that either of the following holds:
\begin{itemize}
\item There is a map $f':B\to A$ such that $f'\circ f$ is an isomorphism in $\mathcal{F}_{G}$ on $A$.
\item There is a map $g':C\to B$ such that $g\circ g'$ is an isomorphism in $\mathcal{F}_{G}$ on $C$.
\end{itemize}
Then $B$ is isomorphic in $\mathcal{F}_{G}$ to $A\vee C$.
\end{prop}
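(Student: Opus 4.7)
The plan is to reduce each case to showing that the connecting map $h \colon C \to \Sigma A$ becomes null-homotopic in $\mathcal{F}_{G}$, and then to compare with a split cofibre sequence via a five-lemma argument inside the triangulated structure of the stable category.

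I would first establish that $h \simeq 0$ under either hypothesis. In any cofibre sequence the composition of two consecutive maps is null (this is immediate from the cone construction underlying Proposition \ref{howtobuildacofibseq}). Under the retraction hypothesis, rotate via Lemma \ref{rotatingcofibresequences} to obtain the cofibre sequence $C \overset{h}{\to} \Sigma A \overset{-\Sigma f}{\to} \Sigma B$, so that $(-\Sigma f)\circ h \simeq 0$; composing with $-\Sigma f'$, which satisfies $(-\Sigma f')\circ(-\Sigma f) = \Sigma(f'\circ f) \simeq \mathrm{id}_{\Sigma A}$, yields $h \simeq (-\Sigma f')\circ(-\Sigma f)\circ h \simeq 0$. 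Under the section hypothesis, $h \circ g \simeq 0$ directly from the cofibre sequence, so $h \simeq h \circ g \circ g' \simeq 0$.

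Given $h \simeq 0$, I would compare with the split cofibre sequence $A \overset{\iota_{A}}{\to} A \vee C \overset{\pi_{C}}{\to} C \overset{0}{\to} \Sigma A$, which is indeed a cofibre sequence since the cofibre of a wedge-summand inclusion is the complementary summand. In the first case form $\phi \colon B \to A \vee C$ corresponding to the pair $(f', g)$, using that $A \vee C$ is the categorical product of $A$ and $C$ in $\mathcal{F}_{G}$. The left square commutes because $\phi \circ f \simeq (\mathrm{id}_{A}, 0) = \iota_{A}$, where the first coordinate uses $f' \circ f \simeq \mathrm{id}_{A}$ and the second uses $g \circ f \simeq 0$; the middle square commutes by construction; the right square commutes because $h \simeq 0$. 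With the outer two vertical maps being identities, the triangulated five-lemma (applicable since the equivariant stable homotopy category is triangulated by Theorem $9.4.3$ of \cite{HoveyPalmieriStrickland}, and $\mathcal{F}_{G}$ sits inside it) forces $\phi$ to be an isomorphism in $\mathcal{F}_{G}$. The second case is dual: define $\psi \colon A \vee C \to B$ from the coproduct pair $(f, g')$ and run the analogous argument.

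The only nontrivial step is the vanishing of $h$; once this is in place the rest is a routine application of the triangulated structure, so no serious obstacle is anticipated.
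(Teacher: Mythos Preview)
Your proof is correct and shares the first step with the paper: both establish that $h$ is null via exactly the argument you give (the paper isolates this as a separate lemma). The second step differs genuinely. The paper passes to equivariant homotopy groups, obtains short exact sequences $0 \to \pi_n^H(A) \to \pi_n^H(B) \to \pi_n^H(C) \to 0$ from the long exact sequence, applies the algebraic Splitting Lemma using $f'_*$ or $g'_*$, and then invokes the equivariant Whitehead theorem to upgrade the homotopy-group isomorphism to an isomorphism in $\mathcal{F}_G$. Your route stays inside the triangulated category: you build an explicit comparison map to the split triangle and cite the triangulated five-lemma. Your argument is more self-contained in that it avoids the detour through homotopy groups and Whitehead; the paper's argument, on the other hand, makes the splitting visible at the level of invariants. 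One small point: the hypothesis says $f' \circ f$ is an \emph{isomorphism}, not the identity, so your left square commutes with vertical map $f' \circ f$ on $A$ rather than $\mathrm{id}_A$; either replace $f'$ by $(f' \circ f)^{-1} \circ f'$ at the outset, or keep $f' \circ f$ as the (iso) vertical map on $A$ and $\Sigma(f' \circ f)$ on $\Sigma A$ --- the five-lemma conclusion is unaffected.
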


\begin{rem}\label{unstablenearlysplitting} In particular, if $A\overset{f}{\to}B\overset{g}{\to} C\overset{h}{\to}\Sigma A$ is a cofibre sequence of finite $G$-$CW$-complexes such that, for example, there exists a map $f':B\to A$ such that $f'\circ f\simeq \text{Id}_{A}$ then the associated map of spectra gleaned by applying $\Sigma^{\infty}$ would be an isomorphism and hence there would be a stable splitting.
\end{rem}

In order to prove this we first prove the below lemma:

\begin{lem}\label{hisnullhomotopic} Under one of the conditions above the map $h$ is null.
\end{lem}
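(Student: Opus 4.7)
The plan is to deduce nullity of $h$ from the exactness of the functors $[-,X]$ and $[X,-]$ applied to the cofibre sequence, which is allowed because we are working in the stable category $\mathcal{F}_G$, where cofibre sequences are distinguished triangles (as noted at the end of Section \ref{onthecategories}).

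First, I would rotate the given cofibre sequence using Lemma \ref{rotatingcofibresequences} to obtain
$$B\overset{g}{\to} C\overset{h}{\to}\Sigma A\overset{-\Sigma f}{\to}\Sigma B,$$
which is also a cofibre sequence. Then I handle the two hypotheses separately.

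In the first case, assume $f'\circ f$ is an isomorphism on $A$ (after replacing $f'$ by an appropriate scalar multiple in $\mathcal{F}_G$, one may as well assume $f'\circ f = \mathrm{id}_A$). Apply $[-,\Sigma A]$ to the rotated sequence to get an exact sequence
$$[\Sigma B,\Sigma A]\overset{(-\Sigma f)^{*}}{\to}[\Sigma A,\Sigma A]\overset{h^{*}}{\to}[C,\Sigma A].$$
Now $h^{*}(\mathrm{id}_{\Sigma A})=h$, so to prove $h=0$ it suffices to find a preimage of $\mathrm{id}_{\Sigma A}$ under $(-\Sigma f)^{*}$. The natural candidate is $-\Sigma f'$: a quick check using Definition \ref{thecofibrestwist} shows that the two twists cancel, giving $(-\Sigma f')\circ(-\Sigma f)=\Sigma(f'\circ f)=\mathrm{id}_{\Sigma A}$. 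Hence $(-\Sigma f)^{*}(-\Sigma f')=\mathrm{id}_{\Sigma A}$, and by exactness $h=h^{*}(\mathrm{id}_{\Sigma A})=0$.

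In the second case, assume $g\circ g'$ is an isomorphism on $C$; again one may reduce to $g\circ g' = \mathrm{id}_C$. This time apply $[C,-]$ to the rotated sequence to obtain an exact sequence
$$[C,B]\overset{g_{*}}{\to}[C,C]\overset{h_{*}}{\to}[C,\Sigma A].$$
Then $g_{*}(g')=g\circ g'=\mathrm{id}_{C}$, so $\mathrm{id}_{C}$ lies in the image of $g_{*}$, and by exactness $h=h_{*}(\mathrm{id}_{C})=0$.

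The proof is essentially a bookkeeping exercise with the triangulated structure; the only subtle point is the sign calculation in the first case, where one needs to verify directly from the definition of $-\Sigma(\cdot)$ that the two twists compose to an honest identity rather than to $\Sigma(f'\circ f)$ multiplied by a sign. Once that is in hand, both cases follow from a single application of exactness, so I do not anticipate any serious obstacle.
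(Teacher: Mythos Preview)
Your proof is correct and follows essentially the same idea as the paper's. The paper phrases it more directly---noting that $\Sigma f\circ h$ (respectively $h\circ g$) is null as a composite of consecutive maps in the cofibre sequence, then composing with $\Sigma f'$ (respectively $g'$) and using that $\Sigma(f'\circ f)$ (respectively $g\circ g'$) is an isomorphism to force $h=0$---rather than invoking the long exact sequence formalism, but the mathematical content is identical.
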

\begin{proof} First consider the case where $f'$ exists. From the detail of $\S$\ref{BasicTheory} it is clear that the composition $\Sigma f\circ h:C\to \Sigma B$ is null. Thus $\Sigma f'\circ\Sigma f\circ h$ is null, but as $f'\circ f$ is an isomorphism this implies that $h$ is null.

For the case where $g'$ exists, we note that similar to above the composite $h\circ g$ is null. Thus $h\circ g\circ g'$ is null and $g\circ g'$ being being an isomorphism implies that $h$ is null.
\end{proof}

We now move to a homotopy groups argument. As noted in Chapter $3$, Section $2$ of \cite{LewisMaySteinberger} for any subgroup $H$ we have the below long-exact sequence of homotopy groups. This only applies in the stable case:
$$\ldots\to \pi_{n}^{H}(A)\overset{f_{*}}{\to}\pi_{n}^{H}(B)\overset{g_{*}}{\to}\pi_{n}^{H}(C)\overset{h_{*}}{\to}\pi_{n-1}^{H}(A)\to\ldots.$$
As $h$ is null the maps $h_{*}$ are all zero. Thus we can replace this sequence with the following short exact sequences:
$$0\to \pi_{n}^{H}(A)\overset{f_{*}}{\to}\pi_{n}^{H}(B)\overset{g_{*}}{\to}\pi_{n}^{H}(C)\to 0.$$
We now recall the Splitting Lemma, a basic result found in many sources, for example it is stated in Section $2.2$ of \cite{Hatcher}:

\begin{lem}\label{thesplittinglemma} Let $0\to X\overset{a}{\to} Y\overset{b}{\to} Z\to 0$ be a short exact sequence of
abelian groups. Then the following are equivalent:
\begin{enumerate}
\item There exists $a':Y\to X$ such that $a'\circ a:X\to X$ is the
identity.
\item There exists $b':Z\to Y$ such that $b\circ b':Z\to Z$ is the
identity.
\item There exists an isomorphism $f:Y\to X\oplus Z$ such that $f(a(x))=(x,0)$ and $b(f^{-1}(x,z))=z$ for all $x\in X$ and $z\in Z$.
\end{enumerate}
\end{lem}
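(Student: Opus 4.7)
The plan is to prove the cyclic chain of implications $3 \Rightarrow 1$, $3 \Rightarrow 2$, $1 \Rightarrow 3$, $2 \Rightarrow 3$. The first two are essentially free: given the isomorphism $f$ of condition $3$, one simply sets $a'$ to be the composition $Y \overset{f}{\to} X \oplus Z \to X$ with the projection, and $b'$ to be the composition $Z \to X \oplus Z \overset{f^{-1}}{\to} Y$ with the inclusion. The naturality conditions built into $3$ then make $a' \circ a = \mathrm{id}_X$ and $b \circ b' = \mathrm{id}_Z$ automatic.

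For $1 \Rightarrow 3$, given $a' : Y \to X$ with $a' \circ a = \mathrm{id}_X$, I would define $f : Y \to X \oplus Z$ by $f(y) := (a'(y), b(y))$. The compatibility conditions $f(a(x)) = (x, 0)$ and $b \circ f^{-1}$ being the second projection are then straightforward from the definition (the first using $a' \circ a = \mathrm{id}$ and $b \circ a = 0$). The key step is proving $f$ is an isomorphism: injectivity follows because if $f(y) = 0$ then $b(y) = 0$, so $y = a(x)$ for some $x$ by exactness, and then $0 = a'(y) = a'(a(x)) = x$, giving $y = 0$; surjectivity follows because given $(x, z)$ one picks any $y_0$ with $b(y_0) = z$ (using surjectivity of $b$) and then sets $y := a(x - a'(y_0)) + y_0$, which is easily checked to satisfy $f(y) = (x, z)$.

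For $2 \Rightarrow 3$, given $b' : Z \to Y$ with $b \circ b' = \mathrm{id}_Z$, I would proceed dually. For each $y \in Y$, the element $y - b'(b(y))$ lies in $\ker(b) = \mathrm{im}(a)$ by exactness, so by injectivity of $a$ there is a unique $x(y) \in X$ with $a(x(y)) = y - b'(b(y))$. Define $f(y) := (x(y), b(y))$. Additivity of $x(y)$ in $y$ follows from the uniqueness clause, giving a homomorphism; an explicit inverse is $(x, z) \mapsto a(x) + b'(z)$, and both compatibility conditions with $a$ and $b$ drop out immediately.

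The main obstacle is mostly clerical rather than conceptual: keeping the diagram chases consistent, and in particular verifying in $1 \Rightarrow 3$ that the candidate map $f$ is genuinely surjective, since one must produce a preimage using both $a$ and the chosen splitting $a'$ simultaneously. No deep input beyond the exact sequence axioms and the given one-sided inverses is needed.
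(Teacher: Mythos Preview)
Your proof is correct and entirely standard. Note, however, that the paper does not actually supply a proof of this lemma: it is stated as a basic result and referenced to Section~2.2 of Hatcher, so there is no paper-proof against which to compare. Your cyclic scheme $3\Rightarrow 1$, $3\Rightarrow 2$, $1\Rightarrow 3$, $2\Rightarrow 3$ is the usual textbook argument, and all the diagram chases you outline go through as written.
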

We apply this to the short exact sequences of homotopy groups to retrieve result $3$, noting that we can do this because the either/or condition in the statement of Proposition \ref{howacofibseqsplits} gives us either one or the other of the inverses needed. We now apply an equivariant form of the Whitehead Theorem, which we state below. Non-equivariantly this can be found in many standard reference books, the result is $3.5$ of Part III in \cite{Adams1} for example, while an equivariant form is included as $5.10$ in Chapter $1$ of \cite{LewisMaySteinberger}:

\begin{prop}\label{thewhiteheadthm} Let $X$ and $Y$ be finite $G$-$CW$-spectra. Then $f:X \to Y$ is an isomorphism in $\mathcal{F}_{G}$ if and only if $\pi_{*}^{H}(f):\pi_{*}^{H}(X)\cong \pi_{*}^{H}(Y)$ for all $H\leqslant G$.
\end{prop}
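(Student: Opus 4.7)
The forward direction is trivial: an isomorphism $f$ in $\mathcal{F}_G$ induces isomorphisms $\pi_n^H(f)$ for every $n$ and every subgroup $H$ simply by functoriality of the homotopy group functors. So the substance of the plan is the converse, and I would proceed in four steps.

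First, I would pass from the condition on $f$ to a condition on its cofibre. Form the cofibre sequence $X \overset{f}{\to} Y \to C_f \to \Sigma X$ in $\mathcal{F}_G$, which exists by the discussion in Section~\ref{onthecategories} (the cofibre construction transfers from spaces to finite $G$-CW-spectra). For each subgroup $H \leqslant G$, applying $\pi_*^H$ to this distinguished triangle yields a long exact sequence as cited from Chapter~3 of \cite{LewisMaySteinberger}. The hypothesis that $\pi_*^H(f)$ is an isomorphism for all $H$ then forces $\pi_*^H(C_f) = 0$ for every $H$ and every degree. So it suffices to prove the following: if $Z$ is a finite $G$-CW-spectrum with $\pi_*^H(Z) = 0$ for every $H \leqslant G$, then $Z$ is isomorphic to the zero object in $\mathcal{F}_G$.

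Second, I would prove an auxiliary fact by induction on the number of cells of a second variable $W$: for any finite $G$-CW-spectrum $W$ and any $Z$ with vanishing equivariant homotopy groups (as in the reduction above), the group of morphisms $\mathcal{F}_G(W, Z)$ is zero. The base case consists of the stable orbit cells $W = \Sigma^{-V} G/H_+ \wedge S^n$, for which $\mathcal{F}_G(W, Z)$ is by construction a shifted equivariant homotopy group $\pi_m^H(Z)$ and so vanishes by assumption. For the inductive step, write $W = W' \cup e$ where the top cell $e$ is attached to the subcomplex $W'$; this gives a cofibre sequence $\Sigma^{-V} G/H_+ \wedge S^{n-1} \to W' \to W$, and applying $\mathcal{F}_G(-, Z)$ produces a long exact sequence (this is the contravariant version of the homotopy long exact sequence, valid in any triangulated category). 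Both flanking terms vanish by the inductive hypothesis and the base case, so the middle term does too.

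Third, I would apply the auxiliary fact with $W = Z = C_f$. This gives $\mathcal{F}_G(C_f, C_f) = 0$, and in particular the identity morphism of $C_f$ is zero; hence $C_f \cong 0$ in $\mathcal{F}_G$. Finally I would invoke the standard triangulated-category fact that a distinguished triangle $X \to Y \to 0 \to \Sigma X$ forces the first arrow to be an isomorphism, concluding that $f$ is an isomorphism in $\mathcal{F}_G$.

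The main obstacle here is really step two: the cellular induction on $W$. It requires that $\mathcal{F}_G$ be properly set up so that the cofibre sequence of a $G$-CW-filtration of $W$ gives a distinguished triangle, and that $\mathcal{F}_G(-, Z)$ turns this into a long exact sequence—issues of foundational detail that are addressed in the reference \cite{LewisMaySteinberger}. The identification of $\mathcal{F}_G(\Sigma^{-V} G/H_+ \wedge S^n, Z)$ with an equivariant homotopy group $\pi_*^H(Z)$ on the nose is also a piece of foundational bookkeeping that should be checked against the conventions for the complete universe $\mathcal{U}$ and for the suspension spectrum functor set up in Section~\ref{onthecategories}.
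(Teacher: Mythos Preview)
Your argument is correct and is essentially the standard proof of the equivariant Whitehead theorem: reduce to the cofibre, then kill the cofibre by a cellular induction showing $\mathcal{F}_G(W,Z)=0$ for all finite $W$ whenever $Z$ has vanishing $\pi_*^H$. The paper, however, does not supply its own proof of this proposition at all; it simply cites the result from the literature (Part~III, 3.5 of \cite{Adams1} non-equivariantly, and Chapter~1, 5.10 of \cite{LewisMaySteinberger} equivariantly). So there is nothing to compare against beyond noting that your proof is precisely the argument one finds in those references, and your caveats about the foundational bookkeeping (identifying $\mathcal{F}_G(\Sigma^{-V}G/H_+\wedge S^n,Z)$ with $\pi_*^H(Z)$, and the exactness of $\mathcal{F}_G(-,Z)$ on cofibre sequences) are exactly the points that the cited sources handle carefully.
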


An application of this theorem to the split exact sequences above thus produces a proof of Proposition \ref{howacofibseqsplits}. We also note here that using similar techniques we can prove another useful result. It is clear that the cofibre of $A\overset{\text{id.}}{\to} A$ is isomorphic to a point in $\mathcal{F}_{G}$, we prove the converse:

\begin{prop}\label{zerocofibre} Let $A$ and $B$ be two finite $G$-$CW$-spectra and $f:A\to B$ be such that
the map has zero cofibre. Then $A$ is isomorphic to $B$ via $f$.
\end{prop}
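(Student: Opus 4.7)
The plan is to extend $f$ into a cofibre sequence and then mimic the strategy used in the proof of Proposition \ref{howacofibseqsplits}, but with the roles simplified because the cofibre is trivial. Concretely, by Lemma \ref{rotatingcofibresequences} we can consider the cofibre sequence
$$A \overset{f}{\to} B \to C_{f} \to \Sigma A,$$
and by hypothesis $C_{f}$ is isomorphic to a point in $\mathcal{F}_{G}$.

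Next I would pass to equivariant homotopy groups. As quoted from Chapter $3$, Section $2$ of \cite{LewisMaySteinberger}, for each closed subgroup $H \leqslant G$ the sequence above induces a long exact sequence
$$\ldots \to \pi_{n}^{H}(A) \overset{f_{*}}{\to} \pi_{n}^{H}(B) \to \pi_{n}^{H}(C_{f}) \to \pi_{n-1}^{H}(A) \to \ldots.$$
Since $C_{f} \simeq \pt$ in $\mathcal{F}_{G}$, we have $\pi_{*}^{H}(C_{f}) = 0$ for all $H$, so the long exact sequence immediately forces $f_{*} : \pi_{n}^{H}(A) \to \pi_{n}^{H}(B)$ to be an isomorphism for every $n$ and every $H \leqslant G$.

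Finally, since $A$ and $B$ are finite $G$-$CW$-spectra, I can invoke the equivariant Whitehead Theorem in the form of Proposition \ref{thewhiteheadthm}: a map of finite $G$-$CW$-spectra inducing isomorphisms on $\pi_{*}^{H}$ for every subgroup $H$ is an isomorphism in $\mathcal{F}_{G}$. Applying this to $f$ concludes the proof. There is no real obstacle here; the entire argument is essentially a direct corollary of the long exact sequence and the Whitehead Theorem, and the only mild care needed is to note that the hypothesis that $C_{f}$ has zero cofibre means that $C_{f}$ is isomorphic to $\pt$ in $\mathcal{F}_{G}$, which kills its equivariant homotopy groups in all degrees.
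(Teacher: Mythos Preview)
Your proof is correct and follows essentially the same route as the paper: use the long exact sequence in $\pi_{*}^{H}$ associated to the cofibre sequence, observe that $\pi_{*}^{H}(C_{f})=0$ forces $f_{*}$ to be an isomorphism for all $H$, and conclude via the equivariant Whitehead Theorem \ref{thewhiteheadthm}. The only minor slip is in your final sentence, where you wrote ``$C_{f}$ has zero cofibre'' when you meant ``$f$ has zero cofibre'' (i.e.\ $C_{f}\simeq\pt$); otherwise the argument matches the paper's.
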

\begin{proof} We have a long exact sequence of homotopy groups:
$$\ldots\pi_{n+1}^{H}(C_{f})\to \pi_{n}^{H}(A)\overset{f_{*}}{\to}\pi_{n}^{H}(B)\to\pi_{n}^{H}(C_{f})\to\ldots.$$
As $C_{f}$ is zero the sequence thus gives us the following short exact sequences:
$$0\to \pi_{n}^{H}(A)\overset{f_{*}}{\to}\pi_{n}^{H}(B)\to0.$$
Thus $A$ and $B$ have matching homotopy groups and the result follows from the Whitehead Theorem \ref{thewhiteheadthm}.
\end{proof}

We will be able to use these results throughout the rest of the document to derive splittings and equality from the homotopy cofibre sequences we build.
\chapter{Functional Calculus}
\label{ch:ch3}

\section{The Standard Theory}\label{TheStandardTheory}

The functional calculus is a technique from analysis used to study maps from one vector space to another. Morally, it allows us to build new maps from old ones by applying functions to spaces of eigenvalues. In this section we state the
basic result and discuss its implications for our goal - all of this is standard and can be found throughout the literature.

Throughout this chapter and beyond $V$ and $W$ will tend to refer to finite dimensional vector spaces equipped with a Hermitian inner product from which we also retrieve norms given by $\|v\|:=\sqrt{\langle v,v\rangle}$. This structure also allows us to define an adjoint $\gamma^{\dagger}\in \Hom(W,V)$ from $\gamma\in\Hom(V,W)$; by the Riesz representation theorem this construction is characterized by $\langle \gamma(v),w\rangle=\langle v,\gamma^{\dag}(w)\rangle$ for all $v\in V$ and $w\in W$. In the most part we will be using the functional calculus to study the spaces $\Hom(V,W)$ and $\End(V)=\Hom(V,V)$. We equip these spaces with the following norm:

\begin{defn}\label{operatornorm} The operator norm on $\Hom(V,W)$ is given by:
$$\|\gamma\|=\sup\{\|\gamma(v)\|:\|v\|=1\}.$$
\end{defn}

\begin{defn}\label{analyticspectrum} Let $V$ be Hermitian and let $\gamma\in \End(V)$. The (analytic) spectrum of $\gamma$ is  given as follows:
$$\sigma_{\End(V)}(\gamma):=\{\lambda\in\Complex:\lambda-\gamma\text{ is not invertible}\}.$$
For $V$ finite dimensional the spectrum of $\gamma$ precisely corresponds to the set of eigenvalues of $\gamma$.
\end{defn}

Manipulation of the analytic spectrum of an endomorphism of certain type leads to many interesting and fundamental results within functional analysis.

\begin{defn}\label{normalelement} We say that an endomorphism $\gamma$ is normal if it commutes with its
adjoint. In particular we recall $s(V)$ from Definition \ref{svo} to be the space of all selfadjoint endomorphisms of $V$, noting that our old and new definitions for adjoint do coincide. We note that every $\alpha\in s(V)$ is normal. 
\end{defn}

It is a standard result that $\gamma$ is normal if and only if it is diagonalizable. Study of the spectral theory of normal maps leads towards a deep and interesting facet of analysis framed around a group of results known as the functional calculus. One of these results is stated below, variations of the Lemma can be found in many basic sources on functional analysis. The opening chapters of the book \cite{DavidsonC*Algebras}, for example, detail the general theory; appendix $A$ of \cite{StricklandSubbundles} demonstrates the result applied to a topological context.

\begin{lem}\label{fnalcalclemma} Let $\gamma\in\End(V)$ be normal, let $X\subseteq \Complex$ be such that $\sigma_{\End(V)}(\gamma)\subseteq X$ and let $f:X\to \Complex$ be a continuous function. Then there exists a normal endomorphism $f(\gamma)$ of $V$ determined by $f$ as discussed in the remark below and such that $\text{id}(\gamma)=\gamma$, $(f+g)(\gamma)=f(\gamma)+g(\gamma)$, $(fg)(\gamma)=f(\gamma)g(\gamma)$, $f\circ g(\gamma)=f(g(\gamma))$, $c(\gamma)=c$ for $c$ a constant map and $\bar{f}(\gamma)=f(\gamma)^{\dag}$ where bar is complex conjugation. Moreover, this construction is continuous in the following sense. Let $K$ be a closed subset of $\Complex$. Denote by $\mathcal{C}(K,\Complex)$ the space of continuous functions from $K$ to $\Complex$. Then the below map is continuous:
$$\mathcal{C}(K,\Complex)\times\{\gamma\in\End(V):\gamma^{\dag}\gamma=\gamma\gamma^{\dag},\sigma_{\End(V)}(\gamma)\subseteq K\}\to \End(V)$$
$$(f,\gamma)\mapsto f(\gamma).$$
\end{lem}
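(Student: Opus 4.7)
The plan is to define $f(\gamma)$ via the spectral decomposition of $\gamma$ and then reduce the joint continuity claim to the polynomial case via Stone--Weierstrass.

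First I would exploit normality. Since $\gamma \in \End(V)$ is normal and $V$ is finite dimensional, the spectral theorem furnishes an orthonormal basis of eigenvectors, giving a decomposition $V = \bigoplus_{\lambda \in \sigma_{\End(V)}(\gamma)} V_{\lambda}$ with orthogonal projections $P_{\lambda}$ onto $V_{\lambda}$. I would then simply define
$$f(\gamma) := \sum_{\lambda \in \sigma_{\End(V)}(\gamma)} f(\lambda)\, P_{\lambda}.$$
All of the listed algebraic identities ($\mathrm{id}(\gamma)=\gamma$, additivity, multiplicativity, composition, conjugation and constants) then drop out one eigenspace at a time, since $P_{\lambda}P_{\mu} = \delta_{\lambda\mu}P_{\lambda}$ and each $P_{\lambda}$ is selfadjoint. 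Normality of $f(\gamma)$ is immediate because it is diagonal in the chosen orthonormal basis.

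The main obstacle is the joint continuity statement, since the eigenvalues and eigenspaces of $\gamma$ do not depend continuously on $\gamma$ at crossings of the spectrum. To sidestep this I would first handle polynomial $f$. For a polynomial $p(z,\bar z)$ in two variables, evaluation $\gamma \mapsto p(\gamma,\gamma^{\dag})$ is manifestly continuous because it is built from the continuous operations of addition, scalar multiplication, composition and the adjoint on the normed algebra $\End(V)$; and the dependence on the coefficients of $p$ is plainly continuous. Next I would record the key spectral estimate: for a normal endomorphism $\delta$ with $\sigma_{\End(V)}(\delta) \subseteq K$ and any continuous $g$, one has
$$\|g(\delta)\|_{\mathrm{op}} \;=\; \max_{\lambda \in \sigma_{\End(V)}(\delta)} |g(\lambda)| \;\leqslant\; \|g\|_{\infty, K},$$
which follows directly from the spectral definition above because $g(\delta)$ is a diagonal operator in an orthonormal basis with entries $g(\lambda)$.

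Finally I would glue these two ingredients with Stone--Weierstrass. Given $(f_{0},\gamma_{0})$ and $\eps>0$, choose a polynomial $p(z,\bar z)$ with $\|p - f_{0}\|_{\infty, K} < \eps/4$; such a $p$ exists because polynomials in $z$ and $\bar z$ are dense in $\mathcal{C}(K,\Complex)$ when $K$ is compact, and a mild cutoff argument handles general closed $K$ by working on a compact neighbourhood of $\sigma_{\End(V)}(\gamma_{0})$. Then for $(f,\gamma)$ close to $(f_{0},\gamma_{0})$ the triangle inequality
$$\|f(\gamma) - f_{0}(\gamma_{0})\| \;\leqslant\; \|f(\gamma)-p(\gamma,\gamma^{\dag})\| + \|p(\gamma,\gamma^{\dag}) - p(\gamma_{0},\gamma_{0}^{\dag})\| + \|p(\gamma_{0},\gamma_{0}^{\dag}) - f_{0}(\gamma_{0})\|$$
controls the first and third terms by the spectral estimate applied to $f-p$ and $f_{0}-p$ respectively (using that $\sigma(\gamma)\subseteq K$ by hypothesis), and the middle term by the polynomial-level continuity already established. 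This gives the required joint continuity, with the uniqueness of $f(\gamma)$ then forced by density of polynomials. I expect the only subtlety requiring care is ensuring the neighbourhood of $(f_{0},\gamma_{0})$ is small enough that $\sigma_{\End(V)}(\gamma)\subseteq K$ persists and that $\|f-f_{0}\|_{\infty,K}$ is genuinely controlled, which is built into the domain of the continuous map as stated.
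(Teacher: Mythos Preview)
The paper does not actually prove this lemma: it is stated as a standard result from functional analysis, with references to \cite{DavidsonC*Algebras} and appendix~A of \cite{StricklandSubbundles} in lieu of a proof. Your argument is the standard one and is correct; the spectral definition together with the Stone--Weierstrass density argument is exactly how the cited sources proceed in the finite-dimensional (or more generally $C^{*}$-algebraic) setting.

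One small point worth tightening: when $K$ is closed but not compact, you should be explicit about which topology $\mathcal{C}(K,\Complex)$ carries, since $\|\cdot\|_{\infty,K}$ may be infinite. Your ``mild cutoff'' fix is the right idea --- the spectrum $\sigma_{\End(V)}(\gamma_{0})$ is a finite set, so one chooses a compact $K_{0}\subseteq K$ containing it in its interior, and continuity of the eigenvalue functions (established later in the paper as Lemma~\ref{continuouseigenvalues}, or directly from continuity of roots of the characteristic polynomial) keeps $\sigma_{\End(V)}(\gamma)\subseteq K_{0}$ for $\gamma$ near $\gamma_{0}$. The Stone--Weierstrass step and the spectral estimate then run on $K_{0}$ without change.
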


\begin{rem}\label{howfnalcalcworks} If $\gamma$ is an endomorphism with eigenvalues $\{\lambda\}$ and corresponding eigenspaces $\{\langle v\rangle\}$ then $f(\gamma)$ is the endomorphism with eigenvalues $\{f(\lambda)\}$ and eigenspaces $\{\langle v\rangle\}$. For example $\Exp(\gamma)$ has eigenvalues $\{e^{\lambda}\}$ and $\log(\gamma)$ has eigenvalues $\{\log(\lambda)\}$. Moreover it is easy to see that $\Exp(\log(\gamma))=\log(\Exp(\gamma))=\gamma$.
\end{rem}

Although we have defined the above for general normal operators, we will tend to only consider selfadjoint elements of $\End(V)$. Doing this specializes our results via the below lemma:

\begin{lem}\label{selfadjointeigenspaces} Let $\gamma\in \End(V)$. Then $\gamma\in s(V)$ if and only if the eigenvalues of $\gamma$ are real. Now let $\gamma\in \Hom(V,W)$ and recall the notation defined in \ref{svo} for selfadjoint endomorphisms with positive eigenvalues. Then $\gamma^{\dag}\gamma\in s_{+}(V)$.
\end{lem}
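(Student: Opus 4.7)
The plan is to attack both parts by computing inner products against eigenvectors, which is the standard technique when translating selfadjointness into eigenvalue information.

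For the first assertion, the forward direction is immediate: suppose $\gamma=\gamma^{\dag}$ and let $v\in V$ be a nonzero eigenvector with eigenvalue $\lambda$. Then using the defining property of the adjoint one has
\[
\lambda\|v\|^{2}=\langle\gamma v,v\rangle=\langle v,\gamma^{\dag}v\rangle=\langle v,\gamma v\rangle=\bar{\lambda}\|v\|^{2},
\]
so $\lambda=\bar{\lambda}\in\Real$. For the converse I would appeal to the functional calculus of Lemma \ref{fnalcalclemma}. Here the subtle point (and the main obstacle) is that the converse really needs $\gamma$ to be normal so that Lemma \ref{fnalcalclemma} applies and $\gamma$ has an honest spectral decomposition; one cannot conclude selfadjointness for a completely arbitrary $\gamma$ just from real eigenvalues (e.g.\ a non-diagonalizable real-eigenvalue Jordan block). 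Assuming normality, take $f=\mathrm{id}$ on $\sigma_{\End(V)}(\gamma)\subseteq\Real$. Since complex conjugation is the identity on $\Real$, we have $\bar{f}=f$ on the spectrum, so the identities of Lemma \ref{fnalcalclemma} yield $\gamma^{\dag}=\bar{f}(\gamma)=f(\gamma)=\gamma$, giving $\gamma\in s(V)$.

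For the second assertion I would first check selfadjointness algebraically, using $(\alpha\beta)^{\dag}=\beta^{\dag}\alpha^{\dag}$ and $(\gamma^{\dag})^{\dag}=\gamma$:
\[
(\gamma^{\dag}\gamma)^{\dag}=\gamma^{\dag}(\gamma^{\dag})^{\dag}=\gamma^{\dag}\gamma,
\]
so $\gamma^{\dag}\gamma\in s(V)$. To show the eigenvalues are nonnegative, let $v$ be a nonzero eigenvector of $\gamma^{\dag}\gamma$ with eigenvalue $\lambda$; then
\[
\lambda\|v\|^{2}=\langle\gamma^{\dag}\gamma v,v\rangle=\langle\gamma v,\gamma v\rangle=\|\gamma v\|^{2}\geqslant 0,
\]
so $\lambda\geqslant 0$. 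Combining these two facts with the convention set in Definition \ref{svo} gives $\gamma^{\dag}\gamma\in s_{+}(V)$ as required. The only genuine obstacle is the normality issue in the converse of the first part; otherwise both statements reduce to short inner product manipulations.
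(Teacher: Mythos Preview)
The paper does not actually supply a proof of this lemma; it is stated as a standard fact and the text moves on immediately. Your argument is correct and complete for what the paper intends. In particular, your observation about the converse is well taken: as literally stated, ``real eigenvalues $\Rightarrow$ selfadjoint'' is false for arbitrary $\gamma\in\End(V)$, and a normality hypothesis is required. In context this is harmless, since the lemma sits immediately after the discussion of normal operators and Lemma~\ref{fnalcalclemma}, and the paper only ever applies it to normal $\gamma$; but you are right to flag it. Your use of the identity $\bar f(\gamma)=f(\gamma)^{\dag}$ from Lemma~\ref{fnalcalclemma} with $f=\mathrm{id}$ on a real spectrum is exactly the sort of specialization the paper has in mind when it says ``doing this specializes our results via the below lemma''. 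The second part is handled by the standard inner-product computation you give, and nothing more is needed.
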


Thus if we want to use Lemma \ref{fnalcalclemma} to build new selfadjoint endomorphisms from old we need to restrict to the case where the domain of the function $f$ is a suitable subset of $\Real$. Moreover if we want $f(\gamma)$ to be selfadjoint then by Lemma \ref{selfadjointeigenspaces} and Remark \ref{howfnalcalcworks} we need $f$ to have codomain $\Real$. Similarly we can further restrict ourselves to looking at functions $f:\Real^{+}\to \Real^{+}$ to give us maps $s_{+}(V)\to s_{+}(V)$, for example if $\gamma$ is selfadjoint and positive then there is a well-defined positive selfadjoint endomorphism given by $\gamma^{\frac{1}{2}}$. Two immediate corollaries of this restriction are stated below:

\begin{cor}\label{shomeotosplus} For any Hermitian space $V$ we have $s(V)\cong s_{++}(V)$ via the following maps:
$$\Exp:s(V)\to s_{++}(V)$$
$$\gamma\mapsto \Exp(\gamma)$$
$$s(V)\leftarrow s_{++}(V):\log$$
$$\log(\gamma)\mapsfrom \gamma.$$
\end{cor}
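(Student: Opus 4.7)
The plan is to apply Lemma \ref{fnalcalclemma} directly with the two mutually inverse functions $\Exp:\Real\to\Real^{+}$ and $\log:\Real^{+}\to\Real$, and then verify that the resulting operator-level maps land in the correct spaces, are mutually inverse, and are continuous.

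First I would check that $\Exp$ and $\log$ send the two spaces into each other. Given $\gamma\in s(V)$, Lemma \ref{selfadjointeigenspaces} says $\sigma_{\End(V)}(\gamma)\subseteq\Real$, so $\Exp$ (viewed as a continuous function on a closed subset of $\Complex$ containing this spectrum) can be applied to $\gamma$. By Remark \ref{howfnalcalcworks} the eigenvalues of $\Exp(\gamma)$ are $\{e^{\lambda}\}$, which are strictly positive reals, so $\Exp(\gamma)\in s_{++}(V)$. The identity $\overline{\Exp}(\gamma)=\Exp(\gamma)^{\dag}$ combined with the reality of $\Exp$ on $\Real$ gives $\Exp(\gamma)^{\dag}=\Exp(\gamma)$, confirming selfadjointness. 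Symmetrically, for $\gamma\in s_{++}(V)$ the spectrum lies in $\Real^{+}$, so $\log(\gamma)$ is defined, has real eigenvalues $\{\log(\lambda)\}$, and is selfadjoint by the same argument, so $\log(\gamma)\in s(V)$.

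Next I would show the two maps are mutually inverse. This is immediate from the composition clause $f\circ g(\gamma)=f(g(\gamma))$ in Lemma \ref{fnalcalclemma} and the fact that $\Exp\circ\log=\text{id}$ on $\Real^{+}$ and $\log\circ\Exp=\text{id}$ on $\Real$; together with the identity clause $\text{id}(\gamma)=\gamma$, this gives $\Exp(\log(\gamma))=\gamma$ and $\log(\Exp(\gamma))=\gamma$ as required. This was already observed in Remark \ref{howfnalcalcworks}.

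Finally I would address continuity. Choose $K=\Real$ in the continuity clause of Lemma \ref{fnalcalclemma}: fixing $f=\Exp$ (a continuous function in $\mathcal{C}(\Real,\Complex)$) and restricting the second variable to selfadjoint $\gamma$ (which are normal with spectrum contained in $\Real$) yields a continuous map $s(V)\to \End(V)$ which by the preceding paragraph factors through $s_{++}(V)$. The analogous argument with $K=\Real^{+}$ and $f=\log$ gives continuity of the inverse. There is no real obstacle here; the corollary is essentially just a direct unpacking of the functional calculus machinery, and the only thing one must be slightly careful about is that the continuity assertion in Lemma \ref{fnalcalclemma} is stated with a fixed closed subset $K\subseteq\Complex$, so one must pick $K$ once and for all rather than letting it vary with $\gamma$.
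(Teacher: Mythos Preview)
Your proof is correct and matches the paper's approach: the paper simply states this as an ``immediate corollary'' of the functional calculus restriction to real-valued functions on selfadjoint operators, without giving any further argument, and your unpacking of Lemma~\ref{fnalcalclemma} is exactly what is intended. One minor caveat: in the continuity argument for $\log$, the set $K=(0,\infty)$ is not closed in $\Complex$, so the continuity clause of Lemma~\ref{fnalcalclemma} does not apply verbatim; this is easily patched by working locally with closed intervals $K=[\epsilon,R]$ containing the spectrum of a given $\gamma\in s_{++}(V)$.
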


\begin{cor}\label{themaprho} We have a well-defined continuous map $\rho:\Hom(V,W)\to s_{+}(V)$ for any Hermitian spaces $V$ and $W$ given by $\rho(\gamma)=(\gamma^{\dag}\gamma)^{\frac{1}{2}}$.
\end{cor}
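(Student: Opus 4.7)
The plan is to verify the claim in two stages: first that the formula gives a well-defined element of $s_{+}(V)$, and second that the resulting assignment is continuous, both being direct applications of the functional calculus of Lemma \ref{fnalcalclemma}.

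For well-definedness, given $\gamma \in \Hom(V,W)$, Lemma \ref{selfadjointeigenspaces} tells us that $\gamma^{\dag}\gamma \in s_{+}(V)$, so in particular $\gamma^{\dag}\gamma$ is normal (as every selfadjoint element is normal, per Definition \ref{normalelement}) and its spectrum lies in $[0,\infty) \subseteq \Real$. I would then take the continuous function $f:[0,\infty) \to \Real$ defined by $f(x) = \sqrt{x}$ and apply Lemma \ref{fnalcalclemma} to form $f(\gamma^{\dag}\gamma)$, which we denote $(\gamma^{\dag}\gamma)^{1/2}$. To see this output lies in $s_{+}(V)$, Remark \ref{howfnalcalcworks} identifies its eigenvalues as $\{\sqrt{\lambda}\}$ where $\{\lambda\}$ are the eigenvalues of $\gamma^{\dag}\gamma$; since each $\lambda \geq 0$, each $\sqrt{\lambda}$ is a non-negative real number, and Lemma \ref{selfadjointeigenspaces} then confirms that $(\gamma^{\dag}\gamma)^{1/2}$ is selfadjoint with non-negative eigenvalues, i.e. lies in $s_{+}(V)$.

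For continuity, I would factor $\rho$ as the composite
\[
\Hom(V,W) \xrightarrow{\mu} s_{+}(V) \xrightarrow{\sqrt{\cdot}} s_{+}(V),
\]
where $\mu(\gamma) := \gamma^{\dag}\gamma$. The map $\mu$ is continuous since the adjoint operation $\gamma \mapsto \gamma^{\dag}$ is an (anti)linear isometry in the operator norm of Definition \ref{operatornorm} and composition of operators is jointly continuous, so their composite $\gamma \mapsto \gamma^{\dag}\gamma$ is continuous. For the second map, I would apply the continuity clause of Lemma \ref{fnalcalclemma} with $K = [0,\infty)$, which is closed in $\Complex$: the lemma provides continuity of the joint evaluation map on $\mathcal{C}(K,\Complex) \times \{\alpha : \alpha\text{ normal}, \sigma_{\End(V)}(\alpha) \subseteq K\}$, and restricting to the single function $f = \sqrt{\cdot}$ in the first coordinate yields continuity of $\alpha \mapsto \alpha^{1/2}$ on the subspace $s_{+}(V)$ of that domain. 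Composing, $\rho$ is continuous.

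There is no real obstacle here; the only mild subtlety is making sure $K = [0,\infty)$ qualifies under Lemma \ref{fnalcalclemma} (it is closed though not compact, but the statement of the lemma only requires closedness), and that $s_{+}(V)$ is contained in the set of normal operators with spectrum in $K$, both of which are immediate from the definitions.
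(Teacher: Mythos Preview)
Your proof is correct and follows exactly the approach the paper intends: the corollary is stated without explicit proof as an immediate consequence of Lemma~\ref{selfadjointeigenspaces} (that $\gamma^{\dag}\gamma \in s_{+}(V)$) together with the preceding discussion that the functional calculus of Lemma~\ref{fnalcalclemma} restricted to $f:\Real^{+}\to\Real^{+}$ yields continuous maps $s_{+}(V)\to s_{+}(V)$, with $\gamma^{1/2}$ given as the explicit example. Your factorisation and appeal to the continuity clause of Lemma~\ref{fnalcalclemma} simply make this explicit.
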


We now note a few standard properties of the map $\rho$ above. We first observe that for all $v\in V$ we have $\|\rho(\gamma)(v)\|=\|\gamma(v)\|$. This makes it trivial to see that $\Ker(\rho(\gamma))=\Ker(\gamma)$. It is also easy to see that $\IM(\rho(\gamma))$ is contained within $(\Ker(\gamma))^{\bot}$ and thus by a simple application of rank-nullity that $\IM(\rho(\gamma))=(\Ker(\gamma))^{\bot}$. Restricting $\rho(\gamma)$ to $(\Ker(\gamma))^{\bot}$ then clearly gives an isomorphism $(\Ker(\gamma))^{\bot}\to(\Ker(\gamma))^{\bot}$ via the first isomorphism theorem. This crucially gives us a domain upon which $\rho(\gamma)$ has an inverse and also allows us to define the following map
$\sigma$:

\begin{prop}\label{themapsigma} For each $\gamma\in\Hom(V,W)$ there is a well-defined continuous map
$\sigma(\gamma):(\Ker(\gamma))^{\bot}\to W$ given by:
$$\sigma(\gamma):=\left((\Ker(\gamma))^{\bot}\overset{\rho(\gamma)^{-1}}{\longrightarrow}(\Ker(\gamma))^{\bot}\overset{\gamma}{\longrightarrow} W\right).$$
Moreover, $\sigma(\gamma)$ is a linear isometry and $\gamma=\sigma(\gamma)\circ\rho(\gamma)$.
\end{prop}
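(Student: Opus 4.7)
The plan is to unpack the proposition into four small checks — well-definedness, continuity, the isometry property, and the factorization — each of which essentially has been set up by the paragraph immediately preceding the statement, so the work is in organizing the observations.

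First I would verify that $\sigma(\gamma)$ is well-defined. By the discussion before the proposition, $\rho(\gamma)$ sends $V$ onto $(\Ker(\gamma))^{\bot}$ with $\Ker(\rho(\gamma))=\Ker(\gamma)$, so restricting $\rho(\gamma)$ to $(\Ker(\gamma))^{\bot}$ yields an injective endomorphism of $(\Ker(\gamma))^{\bot}$; by rank–nullity (or the first isomorphism theorem, as already invoked) this restriction is a linear isomorphism with genuine inverse $\rho(\gamma)^{-1}\colon (\Ker(\gamma))^{\bot}\to(\Ker(\gamma))^{\bot}\subseteq V$. Composing with $\gamma$ then produces a linear map $\sigma(\gamma)\colon(\Ker(\gamma))^{\bot}\to W$; continuity is automatic since it is a linear map between finite dimensional normed spaces, and linearity is clear since both factors in the definition are linear.

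For the isometry claim, I would use the key identity $\|\rho(\gamma)(v)\|=\|\gamma(v)\|$ for all $v\in V$. This identity comes from expanding
\[
\|\rho(\gamma)(v)\|^{2}=\langle\rho(\gamma)^{2}(v),v\rangle=\langle\gamma^{\dag}\gamma(v),v\rangle=\|\gamma(v)\|^{2},
\]
where the first equality uses that $\rho(\gamma)$ is selfadjoint and the second uses the functional calculus identity $\rho(\gamma)^{2}=\gamma^{\dag}\gamma$. Given $w\in(\Ker(\gamma))^{\bot}$, set $v:=\rho(\gamma)^{-1}(w)\in(\Ker(\gamma))^{\bot}$. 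Then $\|\sigma(\gamma)(w)\|=\|\gamma(v)\|=\|\rho(\gamma)(v)\|=\|w\|$, proving the isometry statement.

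Finally, for the factorization $\gamma=\sigma(\gamma)\circ\rho(\gamma)$, I note that the composition makes sense as a map $V\to W$ because $\rho(\gamma)$ always lands in $(\Ker(\gamma))^{\bot}$, the domain of $\sigma(\gamma)$. To evaluate it, decompose any $v\in V$ orthogonally as $v=v_{0}+v_{1}$ with $v_{0}\in\Ker(\gamma)$ and $v_{1}\in(\Ker(\gamma))^{\bot}$. Since $\Ker(\rho(\gamma))=\Ker(\gamma)$, we have $\rho(\gamma)(v)=\rho(\gamma)(v_{1})$, so
\[
\sigma(\gamma)(\rho(\gamma)(v))=\gamma\bigl(\rho(\gamma)^{-1}(\rho(\gamma)(v_{1}))\bigr)=\gamma(v_{1})=\gamma(v),
\]
the last equality because $\gamma(v_{0})=0$. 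The only step that requires a moment's care is checking that the restriction of $\rho(\gamma)$ really is an isomorphism of $(\Ker(\gamma))^{\bot}$, but that has already been handed to us, so I expect no genuine obstacle; the proposition is essentially the classical polar decomposition and the proof will be short.
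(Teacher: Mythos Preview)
Your proof is correct and covers all four claims. The one place your route genuinely differs from the paper is the isometry check: the paper shows $\sigma(\gamma)^{\dag}\sigma(\gamma)=\text{id}$ by an explicit algebraic computation, carefully introducing the restriction $\rho_{1}(\gamma)\colon(\Ker(\gamma))^{\bot}\to(\Ker(\gamma))^{\bot}$ and the inclusion/projection pair $i,i^{\dag}$ so that $\sigma(\gamma)=\gamma\circ i\circ\rho_{1}(\gamma)^{-1}$, and then simplifying $\rho_{1}(\gamma)^{-1}\circ i^{\dag}\circ\rho(\gamma)^{2}\circ i\circ\rho_{1}(\gamma)^{-1}$ down to the identity. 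You instead invoke the norm identity $\|\rho(\gamma)(v)\|=\|\gamma(v)\|$ (which the paper records just before the proposition) and read off $\|\sigma(\gamma)(w)\|=\|w\|$ directly. Your argument is shorter and more elementary; the paper's buys explicit bookkeeping of where each map lives, which matters later when $\sigma$ is reused in more delicate constructions. For continuity you also take the quicker route (linear in finite dimensions), whereas the paper argues via the functional calculus that the restricted $\rho(\gamma)$ is a homeomorphism; both are fine here. Your handling of the factorization via the orthogonal decomposition $v=v_{0}+v_{1}$ is a slight expansion of what the paper dismisses as ``trivial to observe.''
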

\begin{proof} That the map is well-defined follows from the above remarks on the existence of an inverse to $\rho(\gamma)|_{(\Ker(\gamma))^{\bot}}$. Continuity of $\sigma(\gamma)$ will then follow from the observation that this
restriction of $\rho(\gamma)$ is a homeomorphism onto $(\Ker(\gamma))^{\bot}$. This follows for functional calculus reasons from the observation that the map below is a homeomorphism:
$$f:(0,\infty)\to (0,\infty)$$
$$\iota\mapsto \iota^{1/2}.$$

To show $\sigma(\gamma)$ is an isometry we consider $(\sigma(\gamma))^{\dag}\sigma(\gamma)$; we wish to show that this is the identity. Naively this can be thought of as $\rho(\gamma)^{-1}\circ \gamma^{\dag}\circ\gamma\circ \rho(\gamma)^{-1}|_{(\Ker(\gamma))^{\bot}}$, however, this is not as stated correct. We need to be clear about the domains and codomains of the left-hand $\rho(\gamma)^{-1}$ and the middle $\gamma$ and $\gamma^{\dag}$, specifying them exactly in order to make this expression correct.

Firstly, we let $i:(\Ker(\gamma))^{\bot}\to V$ be the inclusion, then we have $i^{\dag}:V\to(\Ker(\gamma))^{\bot}$ the projection. We let $\rho_{1}(\gamma)$ be the unique restriction filling in the following commutative square:
$$\xymatrix{V\ar@{->>}[d]_{i^{\dag}}\ar[r]^{\rho(\gamma)}&V\\
(\Ker(\gamma))^{\bot}\ar[r]_{\rho_{1}(\gamma)}^{\cong}&(\Ker(\gamma))^{\bot}\ar@{ >->}[u]_{i}}$$

Thus $\rho(\gamma)=i\circ\rho_{1}(\gamma)\circ i^{\dag}$. It is also easy to see that $\sigma(\gamma)$ is $\gamma\circ i\circ\rho_{1}(\gamma)^{-1}$. We thus check the following calculation:
$$(\sigma(\gamma))^{\dag}\sigma(\gamma)=\rho_{1}(\gamma)^{-1}\circ i^{\dag}\circ
\rho(\gamma)^{2}\circ i\circ \rho_{1}(\gamma)^{-1}=\rho_{1}(\gamma)^{-1}\circ\rho_{1}(\gamma)^{2}\circ\rho_{1}(\gamma)^{-1}=\text{id}.$$

This demonstrates that $\sigma(\gamma)$ is an isometry. Finally, the last claim is trivial to observe.
\end{proof}

There is an immediate corollary to the above in the special case when $\gamma^{\dag}\gamma$ is strictly positive. In these circumstances we can define $\rho(\gamma)^{-1}=(\gamma^{\dag}\gamma)^{-\frac{1}{2}}$ globally using Lemma \ref{fnalcalclemma}; the problem point of the spectrum potentially containing $0$ is avoided. This allows us to define the map $\sigma(\gamma)$ globally if $\gamma^{\dag}\gamma\in s_{++}(V)$.

Combining some of the ideas above we can build the following homeomorphism; this will be used in the construction of the main result. We note here that there appears to be a trivial minus sign in the formulation. This exists for technical
reasons - it is needed so that we can link the result back in with the Miller splitting and induce an identity homotopy later. We recall the notation $S^{V}$ for the one-point compactification of a vector space $V$, thought of as a sphere.

\begin{prop}\label{thetaEalpha} Let $V$ and $W$ be Hermitian and such that $\Dim(V)\leqslant \Dim(W)$. We have the following homeomorphism:
$$\kappa':s(V)\times\mathcal{L}(V,W)\cong\inj(V,W)$$
$$(\alpha,\theta)\mapsto -\theta\circ \Exp(\alpha).$$

We thus have a continuous extension $\kappa$ giving a homeomorphism on the one-point compactifications, $\kappa:S^{s(V)}\wedge\mathcal{L}(V,W)_{\infty}\cong\inj(V,W)_{\infty}$. Finally we also have a collapse map $\kappa^{!}:S^{\Hom(V,W)}\to
S^{s(V)}\wedge\mathcal{L}(V,W)_{\infty}$.
\end{prop}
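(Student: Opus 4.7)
The plan is to first show $\kappa'$ is a homeomorphism of unbased spaces; the claim about $\kappa$ then follows automatically by one-point compactification, and $\kappa^{!}$ is a standard Pontryagin--Thom collapse. For well-definedness, Corollary~\ref{shomeotosplus} gives $\Exp(\alpha) \in s_{++}(V)$ invertible when $\alpha \in s(V)$, and $\theta$ is a linear isometry hence injective, so $-\theta \circ \Exp(\alpha)$ is injective as required. Continuity of $\kappa'$ is immediate from Lemma~\ref{fnalcalclemma} and continuity of composition of linear maps.

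The main step is constructing a continuous inverse. For $\gamma \in \inj(V,W)$ one has $\Ker(\gamma) = 0$, hence $\gamma^{\dag}\gamma \in s_{++}(V)$ and $\rho(\gamma) \in s_{++}(V)$ by functional calculus. Corollary~\ref{shomeotosplus} then produces $\log(\rho(\gamma)) \in s(V)$, and by the remark following Proposition~\ref{themapsigma} the map $\sigma(\gamma) = \gamma \circ \rho(\gamma)^{-1}$ is defined globally on $V$ as a linear isometry $V \to W$. Set
$$(\kappa')^{-1}(\gamma) := \bigl(\log(\rho(\gamma)),\, -\sigma(\gamma)\bigr).$$
Mutual inverseness is a direct check. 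If $\gamma = -\theta \circ \Exp(\alpha)$, then $\theta^{\dag}\theta = \operatorname{id}_{V}$ yields $\gamma^{\dag}\gamma = \Exp(2\alpha)$, so $\rho(\gamma) = \Exp(\alpha)$; hence $\log(\rho(\gamma)) = \alpha$ and $-\sigma(\gamma) = -\gamma \circ \Exp(-\alpha) = \theta$. Conversely, the polar decomposition of Proposition~\ref{themapsigma} rewrites as $\gamma = -(-\sigma(\gamma)) \circ \Exp(\log(\rho(\gamma)))$.

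Both $s(V) \times \mathcal{L}(V,W)$ and $\inj(V,W)$ are locally compact Hausdorff (the former is a finite-dimensional real vector space times the compact manifold of isometries, the latter is open in $\Hom(V,W)$), so the homeomorphism $\kappa'$ is automatically proper and by Lemma~\ref{propernessequalsbasepoints} extends to a homeomorphism $\kappa$ on one-point compactifications. Using the compactness of $\mathcal{L}(V,W)$ (Remark~\ref{elliscompact}) and the standard identification $(A \times B)_{\infty} \cong A_{\infty} \wedge B_{+}$ for $B$ compact, the domain becomes $S^{s(V)} \wedge \mathcal{L}(V,W)_{\infty}$ as claimed. Finally, $\kappa^{!}$ arises from the open inclusion $\inj(V,W) \subset \Hom(V,W)$ as the standard Pontryagin--Thom map sending the complement to the basepoint of $S^{\Hom(V,W)}$. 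The only real bookkeeping subtlety is continuity of $\gamma \mapsto \rho(\gamma)^{-1}$, and hence of $\sigma$, on $\inj(V,W)$; this follows from Lemma~\ref{fnalcalclemma} applied to $t \mapsto t^{-1/2}$ on any compact subset of $(0,\infty)$ containing the spectrum of $\gamma^{\dag}\gamma$.
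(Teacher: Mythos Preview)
Your proof is correct and follows essentially the same route as the paper: both construct the explicit inverse $\gamma \mapsto (\log(\rho(\gamma)), -\sigma(\gamma))$, verify mutual inverseness via $\gamma^{\dag}\gamma = \Exp(2\alpha)$, and obtain $\kappa^{!}$ from the open inclusion $\inj(V,W)\hookrightarrow\Hom(V,W)$. One minor slip: in your last sentence the complement should be sent to the basepoint of the \emph{target} $\inj(V,W)_{\infty}$, not of $S^{\Hom(V,W)}$.
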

\begin{proof} We build this homeomorphism in two stages. Firstly from Corollary \ref{shomeotosplus} we have a homeomorphism $s(V)\cong s_{++}(V)$ given by $\alpha\mapsto \Exp(\alpha)$. To build $\kappa'$ we compose with the map $s_{++}(V)\times \mathcal{L}(V,W)\to\Hom(V,W)$ given by $(\beta,\theta)\mapsto-\theta\circ\beta$. We wish to check that the image of this map is in $\inj(V,W)$ and that $\kappa'$ has a continuous inverse.

Now let $\gamma\in\text{Im}(\kappa')$, i.e. $\gamma=-\theta\circ \Exp(\alpha)$ for some $\theta\in\mathcal{L}(V,W)$ and $\alpha\in s(V)$. Then $\gamma^{\dag}\gamma=(\Exp(\alpha))^{2}$ which is invertible, thus showing that $\gamma\in\inj(V,W)$ as required.

We proceed to show $\kappa'$ is a homeomorphism by building a continuous inverse. $\gamma^{\dag}\gamma$ is selfadjoint and strictly positive and thus we can use Corollary \ref{shomeotosplus}, Corollary \ref{themaprho} and Proposition \ref{themapsigma} to build the below continuous map:
$$\inj(V,W)\to s(V)\times\mathcal{L}(V,W)$$
$$\gamma\mapsto
(\log(\rho(\gamma)),-\sigma(\gamma) ).$$

We now check that this map is well-defined. This, however, follows from earlier remarks. We have already noted that $\sigma(\gamma)$ will be an isometry if it can be built as a map out of $V$, this is fine as $\gamma$ is injective.  That $\log(\rho(\gamma))\in s(V)$ follows from Corollary \ref{shomeotosplus} and from noting that $\rho(\gamma)\in s_{++}(V)$.

Observing that the map is an inverse to $\kappa'$ is also simple. We have one composition given by $\sigma(\gamma)\circ \Exp(\log(\rho(\gamma)))$ and another given by $(\log(\rho(-\theta\circ \Exp(\alpha))),-\sigma(-\theta\circ
\Exp(\alpha)))$ - recalling from above that $\rho(\gamma)=(\gamma^{\dag}\gamma)^{1/2}$, that $\sigma(\gamma)=\gamma\circ(\gamma^{\dag}\gamma)^{-1/2}$ and that if $\gamma=-\theta\circ \Exp(\alpha)$ then
$\gamma^{\dag}\gamma=(\Exp(\alpha))^{2}$ allows one to check that both compositions degenerate to the identity.

The second claim then follows trivially from the first. Finally observe that there is an open embedding $\inj(V,W)\hookrightarrow\Hom(V,W)$ and hence $\kappa'$ gives an embedding $s(V)\times\mathcal{L}(V,W)\hookrightarrow\Hom(V,W)$. Thus the required collapse map $\kappa^{!}$ can be constructed.
\end{proof}

There is another method of building new maps from selfadjoint endomorphisms, this time using the selfadjoint property
rather than relying on the weaker concept of normal operators. Let $\alpha$ be a selfadjoint endomorphism of a finite dimensional Hermitian space $V$, then it has real eigenvalues which can be ordered via the standard $\leqslant$ ordering on $\Real$. Set $e_{j}(\alpha)$ as the $(j+1)^{th}$ eigenvalue under this ordering, $j$ running from $0$ to $\dim(V)-1$. We also use $e_{top}(\alpha)$ as a notation of convenience for the top eigenvalue $e_{\Dim(V)-1}(\alpha)$ under this ordering. Then define the function $e_{j}:s(V)\to \Real$ to be the map which sends $\alpha$ to $e_{j}(\alpha)$. Furthermore, denote by $\eta':s(V)\to \Real^{n}$ the map sending $\alpha$ to its eigenvalues $(e_{0}(\alpha),e_{1}(\alpha),\ldots,e_{top}(\alpha))$. We claim that these eigenvalue functions are continuous, though the result is standard we choose to detail the proof for completeness. We first need the lemma below, called the Courant-Fischer Min-Max Theorem, which can be found in \cite{ReedSimonMethModMathPhyVol4} as Theorem $XIII.1$, for
example. We recall the notation $S(V)$ for the unit sphere of a vector space $V$ and $G_{j}(V)$ for the $j^{th}$ dimensional Grassmannian on $V$.

\begin{lem}\label{courantfischer}  Let $V$ be Hermitian of dimension $d$, then for $\alpha\in s(V)$:
$$e_{j}(\alpha)=\max_{W\in G_{d-j}(V)}\min_{w\in S(W)}\langle\alpha(w),w\rangle.$$
\end{lem}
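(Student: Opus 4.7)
The plan is a standard two-step argument based on the spectral theorem. By the spectral theorem for selfadjoint operators on a finite dimensional Hermitian space, we can choose an orthonormal basis $v_0, v_1, \ldots, v_{d-1}$ of $V$ consisting of eigenvectors of $\alpha$, ordered so that the corresponding eigenvalues are $e_0(\alpha) \leqslant e_1(\alpha) \leqslant \ldots \leqslant e_{top}(\alpha)$. With this choice fixed, any unit vector $w = \sum_i c_i v_i$ satisfies $\langle \alpha(w), w\rangle = \sum_i e_i(\alpha) |c_i|^2$, a convex combination of the eigenvalues.

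For the inequality $e_j(\alpha) \leqslant \max_{W} \min_{w} \langle \alpha(w), w\rangle$, I would exhibit a particular $W$ that realises the bound. Take $W_0 := \langle v_j, v_{j+1}, \ldots, v_{d-1}\rangle$, which has dimension $d-j$. For any unit vector $w = \sum_{i\geqslant j} c_i v_i$ in $S(W_0)$, the identity above gives
$$\langle \alpha(w), w\rangle = \sum_{i\geqslant j} e_i(\alpha) |c_i|^2 \geqslant e_j(\alpha) \sum_{i\geqslant j}|c_i|^2 = e_j(\alpha),$$
so the minimum over $S(W_0)$ is at least $e_j(\alpha)$, and hence so is the maximum over all $W \in G_{d-j}(V)$.

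For the reverse inequality, I need to show that for every $W \in G_{d-j}(V)$ there is some $w \in S(W)$ with $\langle \alpha(w), w\rangle \leqslant e_j(\alpha)$. Set $U := \langle v_0, v_1, \ldots, v_j\rangle$, of dimension $j+1$. Since $\dim W + \dim U = (d-j) + (j+1) = d+1 > d$, the subspaces $W$ and $U$ must intersect in a subspace of positive dimension. Pick any unit vector $w \in S(W \cap U)$; writing $w = \sum_{i \leqslant j} c_i v_i$ with $\sum|c_i|^2 = 1$, the same computation gives
$$\langle \alpha(w), w\rangle = \sum_{i\leqslant j} e_i(\alpha)|c_i|^2 \leqslant e_j(\alpha),$$
which bounds the inner minimum from above by $e_j(\alpha)$. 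Combining the two inequalities yields the theorem.

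The argument is essentially linear-algebraic and the only conceptual step is the intersection-of-subspaces trick that forces the upper bound; the main obstacle is really just pedantic bookkeeping of the ordering convention (smallest-to-largest with the index $j$ shifted by one), which should be handled by the explicit formula $\langle \alpha(w), w\rangle = \sum e_i(\alpha)|c_i|^2$ used in both halves.
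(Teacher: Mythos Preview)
Your proof is correct and is the standard textbook argument for the Courant--Fischer theorem. Note, however, that the paper does not actually prove this lemma: it simply states it and cites Reed--Simon (Theorem~XIII.1 of \emph{Methods of Modern Mathematical Physics}, Vol.~4) as a reference, so there is no proof in the paper to compare against. Your spectral-theorem plus dimension-count argument is exactly what one finds in such references.
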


\begin{lem}\label{continuouseigenvalues} The eigenvalue functions $e_{j}$ are continuous.
\end{lem}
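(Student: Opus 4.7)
The plan is to use the Courant--Fischer characterization from Lemma \ref{courantfischer} together with the compactness of the Grassmannian and of unit spheres to push continuity through the nested min/max. The key observation is that both optimizations run over compact sets that vary continuously, so standard facts about parametrized extrema of continuous functions apply.

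First I would note that the bilinear evaluation map
$$B : s(V) \times V \to \Real, \qquad B(\alpha, w) = \langle \alpha(w), w \rangle$$
is jointly continuous, as it is the composition of the continuous evaluation $\End(V) \times V \to V$ with the continuous inner product. Next, I would package the two levels of optimization by introducing the (compact) total space of the unit-sphere bundle
$$E_{d-j} := \set{(W, w) : W \in G_{d-j}(V),\ w \in W,\ \|w\| = 1} \subseteq G_{d-j}(V) \times S(V),$$
which is closed in a product of compact spaces (the defining conditions $w\in W$ and $\|w\|=1$ are closed), hence compact by Lemma \ref{extendedheineborel}.

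The main step is to show that the partial minimum
$$F : s(V) \times G_{d-j}(V) \to \Real, \qquad F(\alpha, W) := \min_{w \in S(W)} \langle \alpha(w), w \rangle$$
is continuous. For this I would give a direct $\varepsilon$-argument: fixing $(\alpha_0, W_0)$, an achieving minimizer $w_0 \in S(W_0)$ provides the upper semicontinuity bound, since one can continuously deform $w_0$ to a unit vector $w \in S(W)$ for $W$ near $W_0$ (using that the projection $E_{d-j} \to G_{d-j}(V)$ is an open map between compact manifolds, or simply by orthogonally projecting $w_0$ into $W$ and renormalizing), and $B$ is continuous. For lower semicontinuity one uses compactness of the fiber: if $F(\alpha, W)$ were strictly smaller than $F(\alpha_0, W_0) - \varepsilon$ for some sequence $(\alpha_n, W_n) \to (\alpha_0, W_0)$, one extracts achieving unit vectors $w_n \in S(W_n)$, uses compactness of $E_{d-j}$ to pass to a convergent subsequence $w_n \to w_\infty \in S(W_0)$, and concludes $\langle \alpha_0(w_\infty), w_\infty\rangle \leqslant F(\alpha_0, W_0) - \varepsilon$, contradicting the definition of $F(\alpha_0, W_0)$ as a minimum.

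Finally, with $F$ continuous and $G_{d-j}(V)$ compact (it is a closed bounded subset of a finite-dimensional space, hence compact by Lemma \ref{extendedheineborel}), the parametrized maximum
$$e_j(\alpha) = \max_{W \in G_{d-j}(V)} F(\alpha, W)$$
is continuous by the dual of the same argument applied once more. The main obstacle is really the semicontinuity bookkeeping in the intermediate step; once that is nailed down, the outer $\max$ is routine because $G_{d-j}(V)$ is compact and $F$ is already known to be continuous.
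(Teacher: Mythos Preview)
Your argument is correct and, like the paper, starts from the Courant--Fischer formula, but the two proofs then diverge. The paper never analyzes the dependence of the inner minimum on the subspace $W$; instead it fixes a single $W$ and directly compares $\min_{w\in S(W)}\langle\alpha(w),w\rangle$ with $\min_{w\in S(W)}\langle\beta(w),w\rangle$ using the Cauchy--Schwarz bound $\langle(\beta-\alpha)(w),w\rangle\leqslant\|\alpha-\beta\|$ for $\|w\|=1$. This yields the uniform estimate $|\min_{S(W)}\langle\alpha(\cdot),\cdot\rangle-\min_{S(W)}\langle\beta(\cdot),\cdot\rangle|\leqslant\|\alpha-\beta\|$, and taking the maximum over $W$ immediately gives $|e_{j}(\alpha)-e_{j}(\beta)|\leqslant\|\alpha-\beta\|$, i.e.\ each $e_{j}$ is $1$-Lipschitz. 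Your route---showing the intermediate function $F(\alpha,W)$ is continuous via semicontinuity and compactness of the sphere bundle, then taking a parametrized maximum over the compact Grassmannian---is perfectly valid and perhaps more conceptually transparent about \emph{why} the min--max is continuous, but it only yields continuity, whereas the paper's approach gives the sharper quantitative Lipschitz bound with essentially no extra work.
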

\begin{proof} From first principles of the continuity of metric spaces $e_{j}$ is continuous if for every $\alpha$ selfadjoint and every $\epsilon>0$ there exists $\delta>0$ such that if $\|\alpha-\beta\|<\delta$ then
$|e_{j}(\alpha)-e_{j}(\beta)|<\epsilon$. Fix $\epsilon>0$ and let $\|\alpha-\beta\|<\epsilon$. Then continuity follows if we can show that $|e_{j}(\alpha)-e_{j}(\beta)|<\|\alpha-\beta\|$.

To show this, fix $W,W'\in G_{d-j}(V)$. Then for any fixed $\epsilon'>0$ one can choose a $w'\in S(W')$ such that $\langle\alpha(w'),w'\rangle-\min_{w\in S(W)}\langle\alpha(w),w\rangle<\epsilon'$, i.e. that $\langle\alpha(w'),w'\rangle<\epsilon'+\min_{w\in S(W)}\langle\alpha(w),w\rangle$. Now consider $\min_{w\in S(W')}\langle\beta(w),w\rangle$. For our choice of $w'$ we have the below inequality:
$$\min_{w\in S(W')}\langle\beta(w),w\rangle\leqslant \langle\beta(w'),w'\rangle=\langle(\alpha+\beta-\alpha)(w'),w'\rangle.$$
It is standard from the Cauchy-Schwartz inequality that for any $v$ such that $\|v\|=1$ then $\langle \gamma(v),v\rangle\leqslant \|\gamma\|$. Using this and the above inequality we retrieve the following inequality:
$$\min_{w\in S(W')}\langle\beta(w),w\rangle\leqslant \min_{w\in S(W)}\langle\alpha(w),w\rangle+\|\alpha-\beta\|+\epsilon'.$$

This is true for any fixed $\epsilon'$. Hence the following statement holds:
$$\min_{w\in S(W')}\langle\beta(w),w\rangle\leqslant \min_{w\in S(W)}\langle\alpha(w),w\rangle+\|\alpha-\beta\|.$$
The work above is symmetric in $\alpha$ and $\beta$. Thus the below statement is true:
$$\left|\min_{w\in S(W)}\langle\alpha(w),w\rangle-\min_{w\in S(W')}\langle\beta(w),w\rangle\right|\leqslant\|\alpha-\beta\|.$$
Taking maximums and re-equating the eigenvalues via the Courant-Fischer Theorem retrieves that $|e_{j}(\alpha)-e_{j}(\beta)|<\|\alpha-\beta\|$. This implies continuity in the eigenvalue functions.
\end{proof}

There is one other result we wish to note regarding the behaviour of the eigenvalues of selfadjoint
endomorphisms. This can be found throughout the literature, for example in \cite{PryceFunctionalAnalysis}, and details how the operator norm simplifies using some of the machinery we have covered.

\begin{lem}\label{operatornormiseigenvalues} Let $\gamma\in\Hom(V,W)$. Then $\|\gamma\|=e_{top}(\rho(\gamma))$. In particular if $\gamma$ is a selfadjoint endomorphism then $\|\gamma\|$ is equal to the spectral radius of
$\gamma$, i.e. $\|\gamma\|=\text{max}(|e_{i}|)$ for $e_{i}$ the $i^{th}$ eigenvalue of $\gamma$.
\end{lem}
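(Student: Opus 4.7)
The plan is to reduce the general case to the case of a selfadjoint positive operator via the map $\rho$, and then invoke the spectral theorem for selfadjoint operators. Recall from the discussion just after Corollary \ref{themaprho} that $\|\rho(\gamma)(v)\| = \|\gamma(v)\|$ for all $v \in V$; this is the key property that lets us reduce. Taking the supremum over the unit sphere $S(V)$ yields immediately that $\|\gamma\| = \|\rho(\gamma)\|$, so it suffices to prove the identity $\|\beta\| = e_{top}(\beta)$ for an arbitrary $\beta \in s_{+}(V)$ (applied here to $\beta = \rho(\gamma)$).

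For this reduced claim I would use the fact, noted after Definition \ref{normalelement}, that a selfadjoint (in particular normal) endomorphism is diagonalizable. Pick an orthonormal eigenbasis $v_{0},\ldots,v_{d-1}$ of $\beta$ with corresponding eigenvalues $e_{0}(\beta) \leqslant \ldots \leqslant e_{top}(\beta)$. For any unit vector $v = \sum_{i} c_{i} v_{i}$ with $\sum_{i} |c_{i}|^{2} = 1$ we have
$$\|\beta(v)\|^{2} = \sum_{i} |c_{i}|^{2} e_{i}(\beta)^{2} \leqslant e_{top}(\beta)^{2},$$
since the $e_{i}(\beta)$ are nonnegative (as $\beta \in s_{+}(V)$). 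Equality is attained at $v = v_{top}$, giving $\|\beta\| = e_{top}(\beta)$ as required. Combining, $\|\gamma\| = \|\rho(\gamma)\| = e_{top}(\rho(\gamma))$.

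For the ``in particular'' statement, suppose $\gamma \in s(V)$ is itself selfadjoint. Then $\gamma^{\dag}\gamma = \gamma^{2}$, so by the functional calculus (Lemma \ref{fnalcalclemma}) and Remark \ref{howfnalcalcworks}, $\rho(\gamma) = (\gamma^{2})^{1/2}$ has eigenvalues $|e_{i}(\gamma)|$ on the same eigenspaces as $\gamma$. Therefore $e_{top}(\rho(\gamma)) = \max_{i} |e_{i}(\gamma)|$, which is the spectral radius of $\gamma$, completing the proof.

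The argument is essentially routine given the structure already built up in the chapter; the only mild subtlety is the clean reduction via $\rho$, which avoids having to argue separately for rectangular $\gamma \in \Hom(V,W)$. There is no serious obstacle, since the Courant--Fischer machinery used for Lemma \ref{continuouseigenvalues} is overkill here: a direct diagonalisation argument suffices.
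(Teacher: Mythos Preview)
Your proof is correct. The paper does not actually prove this lemma; it simply states that the result ``can be found throughout the literature, for example in \cite{PryceFunctionalAnalysis}'' and moves on. Your argument via the reduction $\|\gamma\| = \|\rho(\gamma)\|$ followed by diagonalisation of the positive selfadjoint $\rho(\gamma)$ is the standard one and fills in exactly what the paper omits.
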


This machinery is all we need to now, for example, write down continuous maps of the form $\alpha\mapsto \alpha- e_{0}(\alpha)$ or $\alpha \mapsto \alpha^{-1/2}$. Note the principle behind these constructions; we apply functions to spaces of eigenvalues to build new maps. We now take this principle and construct a variation on the functional
calculus theory.

\section{Variations}\label{Variations}

In order to build a variation of the functional calculus we first need a model of a space of eigenvalues.

\begin{defn}\label{DeeAndFaces} We make the following definitions:
\begin{itemize}
\item Set $D'(d):=\{(t_{0},\ldots,t_{d-1})\in\Real^{d}:t_{0}\leqslant t_{1} \leqslant\ldots\leqslant t_{d-1}\}$. Moreover denote $D'(d)_{\infty}$ by $D(d)$.
\item Set $F_{i}(D'(d)):=\{t\in D'(d):t_{i}=t_{i+1}\}$ and $F_{i}(D(d)):=(F_{i}(D'(d)))_{\infty}$ for $i\in\{0,\ldots,d-2\}$; call these subsets the faces of $D(d)$.
\item A based map $f:D(d)\to D(d)$ is facial if it preserves faces, let $F(d)$ be the space of facial maps $D(d)\to
D(d)$ equipped with the compact-open topology.
\item Set $D'_{+}(d)$ to be the subspace of $D'(d)$ that takes non-negative values in all coordinates, set $D_{+}(d):=D_{+}'(d)_{\infty}$.
\item  Set $D_{0}'(d):=\{(t_{0},\ldots,t_{d-1})\in D_{+}'(d):t_{0}=0\}$ and put $D_{0}(d):= D_{0}'(d)_{\infty}$. Similar to the above we also have faces $F_{i}(D_{+}(d))$ for $i\in\{0,\ldots,d-2\}$.
\item A based map $f:D_{+}(d)\to D_{+}(d)$ is facial if it preserves faces and preserves $D_{0}(d)$. Denote by $F_{+}(d)$ the space of facial maps $D_{+}(d)\to D_{+}(d)$ with the compact-open topology.
\item For any based space $X$ let $\text{FMap}(D(d),D(d)\wedge X)$ be the space of maps $f:D(d)\to D(d)\wedge X$ such that $f(F_{i}(D(d)))\subseteq F_{i}(D(d))\wedge X$; we equip this space with the compact-open topology. Such maps $f:D(d)\to D(d)\wedge X$ are referred to as facial. In more generality, $\text{FMap}(Y,Z)$ will be used in similar situations to refer to spaces of `facial' maps from $Y$ to $Z$; $Y$ and $Z$ in this case are any based spaces such that consistent with the definitions above there is a reasonable notion for a map $Y\to Z$ to be termed facial. Further for any based space $Y$ with a definition of a `facial' self-map $Y\to Y$ we set $\text{FMap}(Y)$ to be the space of facial self maps of $Y$ with the compact-open topology. We rely on the context of each situation to indicate what we mean by facial map at any given time.
\end{itemize}
\end{defn}

\begin{rem}\label{whyDandnotSimplices} We note here that the above definitions resemble simplicial complexes somewhat. In fact the work below can be rephrased in terms of simplicial complexes rather than these spaces $D(d)$. However, we choose not to do this for three reasons. Firstly if we were to rephrase this in simplicial language we don't quite get ordinary simplices. In fact $D(d)$ is the following, where $\st$ is unreduced suspension based at an endpoint and $\Delta_{n}$ is the standard $n$-simplex:
$$D(d)\cong\sst \Delta_{d-2}.$$
Although we do retrieve that the faces of the simplex correspond to our definition of faces above, that we have these extra suspensions cropping up when we phrase the work in terms of simplicial complexes is undesirable. Secondly it is harder to see how universal the work is in this language. Many of the results proved using these definitions rely on rather simple face-preserving homeomorphisms, for example \ref{wereactuallyworkingwithd}. When written in terms of simplices these homeomorphisms are far harder to write down. Finally, the simplicial phrasing looks less obviously like a space modeling eigenvalues. Although this is a mostly aesthetic issue it is still one that we feel is important.
\end{rem}

This definition mirrors the idea of the analytic spectrum from Section \ref{TheStandardTheory}, $D(d)$ is going to in some way model eigenvalues in our variation.

\begin{lem}\label{propereigenvectors} Let $V$ be Hermitian of dimension $d$ and let $\eta':s(V)\to D'(d)$ be the eigenvalue map. Then $\eta'$ is proper and hence we have an extension $\eta:s(V)_{\infty}\to D(d)$. Moreover, $\eta'$ is surjective and hence $\eta$ is a quotient map.
\end{lem}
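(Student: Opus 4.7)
The plan is to prove continuity, properness and surjectivity of $\eta'$, then derive the extension and quotient claims from the general results in Section \ref{NotesonContinuityandProperness}.

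First I would observe that $\eta'$ is continuous: each component is the eigenvalue function $e_j$ studied in Lemma \ref{continuouseigenvalues}, and $\eta'$ is just the tuple of these. Surjectivity is essentially a direct construction: given any $(t_0,\ldots,t_{d-1})\in D'(d)$, pick any orthonormal basis $\{v_0,\ldots,v_{d-1}\}$ of $V$ and set $\alpha$ to be the diagonal endomorphism with $\alpha(v_j)=t_j v_j$. Then $\alpha$ is selfadjoint and its ordered eigenvalue list is exactly $(t_0,\ldots,t_{d-1})$, so $\eta'(\alpha)$ is the given tuple.

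The core of the proof is properness. Let $K\subseteq D'(d)$ be compact. By continuity of $\eta'$ and Hausdorffness of $D'(d)$, the preimage $(\eta')^{-1}(K)$ is closed in $s(V)$. To show it is compact I would apply Lemma \ref{extendedheineborel} to the finite-dimensional normed space $s(V)$ and check boundedness. By Lemma \ref{extendedheineborel} applied to $\Real^d\supseteq D'(d)$, the compact set $K$ is bounded, so there is some $M$ with $|t_i|\leqslant M$ for every $(t_0,\ldots,t_{d-1})\in K$ and every $i$. Lemma \ref{operatornormiseigenvalues} then identifies the operator norm of a selfadjoint endomorphism with its spectral radius, giving $\|\alpha\|=\max_i|e_i(\alpha)|\leqslant M$ for every $\alpha\in(\eta')^{-1}(K)$. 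Hence $(\eta')^{-1}(K)$ is closed and bounded, thus compact, and $\eta'$ is proper.

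Properness together with Lemma \ref{propernessequalsbasepoints} yields the continuous extension $\eta:s(V)_{\infty}\to D(d)$. For the final statement I would argue that $\eta$ is a continuous surjection (the surjectivity of $\eta'$ extends by sending the added point at infinity to the added point at infinity) from the compact space $s(V)_{\infty}$ onto the Hausdorff space $D(d)$; such a map is automatically closed, since closed subsets of a compact space are compact and compact subsets of a Hausdorff space are closed, and a continuous closed surjection is a quotient map. I do not expect any serious obstacle; the only subtle point is making sure the norm bound really does come from the ordered eigenvalue tuple, which is exactly what Lemma \ref{operatornormiseigenvalues} provides.
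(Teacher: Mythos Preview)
Your proposal is correct and follows essentially the same route as the paper: continuity from Lemma \ref{continuouseigenvalues}, properness by showing the preimage of a compact set is closed and bounded via Lemma \ref{operatornormiseigenvalues} and Lemma \ref{extendedheineborel}, and the quotient claim from the standard fact that a continuous surjection from a compact space to a Hausdorff space is a quotient map. Your explicit diagonal construction for surjectivity is a welcome detail the paper leaves implicit.
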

\begin{proof} Take $C$ a compact subset of $D'(d)$ - as we are working within Euclidean space this is a closed
and bounded set by the Heine-Borel Theorem. Its inverse image is closed due to the continuity of $\eta'$ inherited from the continuity in its factors. Moreover, the inverse image is also clearly bounded - by Lemma \ref{operatornormiseigenvalues} if $\alpha\in s(V)$ then $\|\alpha\|=\|\eta'(\alpha)\|$ and hence if $\eta'(\alpha)\in C$ then there is a bound on $\|\alpha\|$. Thus the preimage is compact due to Lemma \ref{extendedheineborel} and $\eta'$ is proper. The rest of the result follows from the standard claim that a continuous surjection from a compact space to a Hausdorff space is a quotient map.
\end{proof}

\begin{lem}\label{themapnu} For $t\in D'(d)$ set $\Delta(t)$ to be the diagonal matrix with entries $t$. Let $V$ again be Hermitian and of dimension $d$ and let $\nu'$ be the following map:
$$\nu':\mathcal{L}(\Complex^{d},V)\times D'(d)\to s(V)$$
$$(\alpha,t)\mapsto \alpha\Delta(t)\alpha^{\dag}.$$
Then $\nu'$ is proper and hence there is a map $\nu:\mathcal{L}(\Complex^{d},V)_{\infty}\wedge D(d)\to s(V)_{\infty}$. Moreover, $\nu'$ is surjective and hence $\nu$ is a quotient map.
\end{lem}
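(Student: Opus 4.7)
The plan is to verify properness and surjectivity of $\nu'$ directly, after which the existence of $\nu$ follows from Lemma \ref{propernessequalsbasepoints} and the fact that a continuous surjection from a compact space to a Hausdorff space is a quotient map.

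For surjectivity, the spectral theorem applied to any $\beta\in s(V)$ produces an orthonormal basis $v_{0},\ldots,v_{d-1}$ of eigenvectors with real eigenvalues $\lambda_{0}\leqslant\ldots\leqslant\lambda_{d-1}$. Defining $\alpha\in\mathcal{L}(\Complex^{d},V)$ by $\alpha(e_{i})=v_{i}$ and setting $t=(\lambda_{0},\ldots,\lambda_{d-1})\in D'(d)$, one has $\nu'(\alpha,t)=\alpha\Delta(t)\alpha^{\dag}=\beta$ by direct computation. This also shows $\nu'$ is continuous as it is built from matrix multiplication and the obviously continuous map $\Delta$.

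For properness, let $K\subseteq s(V)$ be compact. Since $V$ has dimension $d$, every $\alpha\in\mathcal{L}(\Complex^{d},V)$ is a unitary isomorphism with $\alpha^{\dag}\alpha=I_{\Complex^{d}}$, so $\alpha\Delta(t)\alpha^{\dag}$ has the same eigenvalues as $\Delta(t)$, namely $t_{0},\ldots,t_{d-1}$. Hence by Lemma \ref{operatornormiseigenvalues},
\[\|\alpha\Delta(t)\alpha^{\dag}\|=\max_{i}|t_{i}|=\|t\|_{\infty}.\]
Since $K$ is compact in the normed space $s(V)$, Lemma \ref{extendedheineborel} gives a bound $M$ with $\|\beta\|\leqslant M$ for all $\beta\in K$. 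Therefore $\nu'^{-1}(K)$ is contained in the set $\mathcal{L}(\Complex^{d},V)\times\{t\in D'(d):\|t\|_{\infty}\leqslant M\}$. The first factor is compact, essentially by Remark \ref{elliscompact} applied with trivial $G$-action, and the second is closed and bounded in $\Real^{d}$, hence compact by Lemma \ref{extendedheineborel}. Thus $\nu'^{-1}(K)$ is a closed (by continuity of $\nu'$) subset of a compact space, hence compact.

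Having established properness and surjectivity of $\nu'$, Lemma \ref{propernessequalsbasepoints} gives the continuous extension $\nu:\mathcal{L}(\Complex^{d},V)_{\infty}\wedge D(d)\to s(V)_{\infty}$, where we use that one-point compactification converts the product of a compact space and a locally compact space to a smash product in the standard way. The map $\nu$ inherits surjectivity from $\nu'$, and since $\mathcal{L}(\Complex^{d},V)_{\infty}\wedge D(d)$ is compact while $s(V)_{\infty}$ is Hausdorff, $\nu$ is a quotient map. The main subtlety in this argument is the identification $\|\alpha\Delta(t)\alpha^{\dag}\|=\|t\|_{\infty}$, which crucially relies on $\alpha$ being an isometry between spaces of equal dimension.
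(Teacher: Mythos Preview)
Your proof is correct and rests on the same key observation as the paper's: the eigenvalues of $\alpha\Delta(t)\alpha^{\dag}$ are exactly the entries of $t$. The only difference is in packaging the properness argument. You bound $\nu'^{-1}(K)$ directly via $\|\alpha\Delta(t)\alpha^{\dag}\|=\|t\|_{\infty}$ and Heine--Borel, whereas the paper phrases the same observation as saying that $\eta'\circ\nu'$ is the projection $\mathcal{L}(\Complex^{d},V)\times D'(d)\to D'(d)$, notes this projection is proper (since $\mathcal{L}(\Complex^{d},V)$ is compact), and then invokes Lemma~\ref{propercompositionisproper} to conclude $\nu'$ is proper. The paper's route is a touch slicker since it avoids unpacking the norm identity and reuses the composition lemma already set up in Chapter~\ref{ch:ch1}, but your direct argument is entirely sound and arguably more self-contained.
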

\begin{proof} The composition $\nu' \circ \eta': \mathcal{L}(\Complex^{d},V)\times D'(d)\to D'(d)$ is patently the
projection, and as previously observed $\mathcal{L}(\Complex^{d},V)$ is compact. Hence the inverse image of a compact set $C\subseteq D'(d)$ under this projection will be $\mathcal{L}(\Complex^{d},V)\times C$ which is compact. This is enough to show that the composition is proper and thus via Lemma \ref{propercompositionisproper} the map $\nu'$ is
proper. That $\nu'$ is surjective is a standard result of linear algebra following from the diagonalizability of all normal and hence all selfadjoint matrices. The rest of the result follows.
\end{proof}

We are now in a position to construct our first functional calculus variation.

\begin{prop}\label{fnalvarA} Let $X$ be a based space, let $f:D(d)\to  D(d)\wedge X$ be facial and let $V$ be a Hermitian
space of dimension $d$. There exists a unique map $\mathfrak{A}_{f}:s(V)_{\infty}\to s(V)_{\infty}\wedge X$ making the diagram below commute:
$$\xymatrix@C=1.5cm{\mathcal{L}(\Complex^{d},V)_{\infty}\wedge D(d)\ar[d]_{1\wedge f}\ar[r]^{\phantom{xxxx}\nu}&s(V)_{\infty}\ar[d]^{\mathfrak{A}_{f}}\ar[r]^{\eta}&D(d)\ar[d]^{f}\\
\mathcal{L}(\Complex^{d},V)_{\infty}\wedge D(d)\wedge
X\ar[r]_{\phantom{xxxx}\nu\wedge1}& s(V)_{\infty}\wedge
X\ar[r]_{\eta\wedge 1}&D(d)\wedge X}$$
Moreover:
\begin{itemize}
\item The map $\mathfrak{A}:\FMap (D(d),D(d)\wedge X)\to \Map (s(V)_{\infty},s(V)_{\infty}\wedge X)$ given by $f\mapsto \mathfrak{A}_{f}$ is continuous.
\item $\mathfrak{A}_{f}(\alpha)$ can be expressed explicitly for a given $\alpha$. Choose an orthonormal basis of
eigenvectors $v_{0},\ldots,v_{d-1}$ of $\alpha$ with eigenvalues $e_{0}\leqslant\ldots\leqslant e_{d-1}$; then if
$f(e_{0},\ldots,e_{d-1})=(s_{0},\ldots,s_{d-1})\wedge x$ we have $\mathfrak{A}_{f}(\alpha)=f(\alpha)\wedge x$ where $f(\alpha)$ denotes the endomorphism with eigenvectors $v_{i}$ and eigenvalues $s_{i}$.
\end{itemize}
\end{prop}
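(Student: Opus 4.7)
The plan is to define $\mathfrak{A}_{f}$ by descending a natural map through the quotient $\nu$, exploiting Lemma \ref{themapnu} which provides that $\nu$ is a surjective quotient map. Concretely, set $\Psi_{f} := (\nu\wedge 1)\circ (1\wedge f)\colon \mathcal{L}(\Complex^{d},V)_{\infty}\wedge D(d)\to s(V)_{\infty}\wedge X$ and aim to factor $\Psi_{f}$ through $\nu$; the resulting map is the unique $\mathfrak{A}_{f}$ making the left-hand square commute, with uniqueness immediate from the surjectivity of $\nu$. Commutativity of the right-hand square will follow from the observation that $\eta\circ\nu$ sends $(\alpha,t)$ to $t$: since $\Dim V=d$ any $\alpha\in\mathcal{L}(\Complex^{d},V)$ is actually a unitary isomorphism, so $\nu(\alpha,t)=\alpha\Delta(t)\alpha^{\dag}$ is unitarily conjugate to $\Delta(t)$ and thus has ordered eigenvalue tuple $t$.

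The core of the proof is verifying that $\Psi_{f}$ is constant on the fibres of $\nu$. Suppose $\nu(\alpha_{1},t_{1})=\nu(\alpha_{2},t_{2})$ at non-basepoint elements; applying $\eta$ yields $t_{1}=t_{2}=:t$. Then $U:=\alpha_{2}^{\dag}\alpha_{1}$ is a unitary endomorphism of $\Complex^{d}$ satisfying $U\Delta(t)=\Delta(t)U$, so the matrix entries obey $U_{ij}(t_{i}-t_{j})=0$; hence $U_{ij}=0$ unless $t_{i}=t_{j}$, i.e., $U$ is block-diagonal on the maximal runs of equal coordinates of $t$. Writing $f(t)=s\wedge x$ in $D(d)\wedge X$, the facial condition $f(F_{i}(D(d)))\subseteq F_{i}(D(d))\wedge X$ forces $s_{i}=s_{i+1}$ whenever $t_{i}=t_{i+1}$, and transitivity across a full run yields $s_{i}=s_{j}$ whenever $t_{i}=t_{j}$. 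Therefore $U$ also commutes with $\Delta(s)$, giving $\alpha_{1}\Delta(s)\alpha_{1}^{\dag}=\alpha_{2}\Delta(s)\alpha_{2}^{\dag}$ and hence $\Psi_{f}(\alpha_{1},t_{1})=\Psi_{f}(\alpha_{2},t_{2})$. Basepoints cause no trouble since $f$ sends the basepoint of $D(d)$ to the basepoint of $D(d)\wedge X$, which $\nu\wedge 1$ sends to the basepoint of $s(V)_{\infty}\wedge X$.

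For continuity of $\mathfrak{A}\colon\FMap(D(d),D(d)\wedge X)\to\Map(s(V)_{\infty},s(V)_{\infty}\wedge X)$, by the Cartesian closed structure of $CGWH$ from Section \ref{onthecategories} it suffices to show the adjoint evaluation map $\tilde{\mathfrak{A}}\colon\FMap(D(d),D(d)\wedge X)\wedge s(V)_{\infty}\to s(V)_{\infty}\wedge X$ is continuous. Smashing $\FMap(D(d),D(d)\wedge X)$ with $\nu$ produces a quotient map, since $\nu$ is a quotient between compact Hausdorff spaces and smashing with a compactly generated space preserves quotients in $CGWH$. One verifies that $\tilde{\mathfrak{A}}$ precomposed with this quotient coincides, up to the evident swap of smash factors, with the composition $(\nu\wedge 1)\circ(1\wedge\mathrm{ev})$, where $\mathrm{ev}\colon\FMap(D(d),D(d)\wedge X)\wedge D(d)\to D(d)\wedge X$ is the standard evaluation; continuity of $\tilde{\mathfrak{A}}$ then follows from the universal property of quotient maps.

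The explicit formula is a direct unwinding: given $\alpha\in s(V)$ with orthonormal eigenvectors $v_{0},\ldots,v_{d-1}$ and eigenvalues $e_{0}\leqslant\ldots\leqslant e_{d-1}$, let $\bar{\alpha}\in\mathcal{L}(\Complex^{d},V)$ send the $i$-th standard basis vector to $v_{i}$, so that $\nu(\bar{\alpha},(e_{0},\ldots,e_{d-1}))=\alpha$; then $\mathfrak{A}_{f}(\alpha)=\Psi_{f}(\bar{\alpha},e)=\bar{\alpha}\Delta(s)\bar{\alpha}^{\dag}\wedge x$, which is precisely the endomorphism with eigenvectors $v_{i}$ and eigenvalues $s_{i}$ smashed with $x$. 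The principal obstacle in the argument is the factorization step of paragraph two: the facial condition on $f$ is exactly the combinatorial input needed to ensure that the block-diagonal structure of unitaries commuting with $\Delta(t)$ is compatible with $\Delta(s)$, and aligning this with the face structure of $D(d)$ is where the real content of the lemma sits.
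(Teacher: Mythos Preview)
Your proof is correct and follows essentially the same approach as the paper: both factor $(\nu\wedge 1)\circ(1\wedge f)$ through the quotient $\nu$ via the centralizer argument (that $\alpha_{2}^{\dag}\alpha_{1}$ commutes with $\Delta(t)$, and the facial condition ensures it then commutes with $\Delta(s)$), obtain uniqueness from surjectivity of $\nu$, and prove continuity of $\mathfrak{A}$ via the adjunction in $CGWH$ together with the fact that $1\wedge\nu$ is a quotient map. Your treatment is slightly more explicit in places (e.g.\ you verify the right-hand square directly via $\eta\circ\nu=\mathrm{pr}_{2}$), but the substance is the same.
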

\begin{proof} We have a characterization of a possible map $\mathfrak{A}_{f}$ above, we first check that this characterization is reasonable, i.e. that it is possible to put a map in the place of $\mathfrak{A}_{f}$ to fill in the square. To do this we need to check that if $\nu(\alpha,t)=\nu(\alpha',t')$ then $\nu(\alpha,f(t))=\nu(\alpha',f(t'))$. From the condition that $\nu(\alpha,t)=\nu(\alpha',t')$ we have that $(\alpha^{\dag}\alpha')^{-1}\Delta(t)\alpha^{\dag}\alpha'=\Delta(t')$ which demonstrates that $t=t'$ and $\alpha^{\dag}\alpha'$ is in the centralizer of $\Delta(t)$. We now note that the centralizer of $\Delta(t)$ depends only on repeating values of $t_{i}$ in $t$. Let $f(t)=s\wedge x$, then $f$ is facial if and only if the centralizer of $\Delta(t)$ is a subgroup of the centralizer of $\Delta(s)$. Hence $\alpha^{\dag}\alpha'$ centralizes $\Delta(s)$ and from this it is easy to show that $\nu(\alpha,f(t))=\nu(\alpha',f(t'))$ as required. Thus having a map $\mathfrak{A}_{f}$ filling in the square is reasonable, and it is simple to observe that the characterization of $\mathfrak{A}_{f}$ in the statement of the proposition makes the diagram commute. We now wish to show that $\mathfrak{A}_{f}$ is continuous, that it is unique and that $f\mapsto \mathfrak{A}_{f}$ is continuous.

That $\mathfrak{A}_{f}$ is unique follows from $\nu$ being surjective, as noted in \ref{themapnu}. To show $\mathfrak{A}_{f}$ is continuous, we simply observe that $\mathfrak{A}_{f}\circ\nu$ is the continuous map $\nu\circ(1\wedge f)$ by commutativity. This is enough to prove that $\mathfrak{A}_{f}$ is continuous as $\nu$ is a quotient map - continuous compositions where one precomposes with a quotient automatically induce continuity in the postcomposition function.

Finally we wish to show that the map $\mathfrak{A}$ below is continuous:
$$\mathfrak{A}:\FMap (D(d),D(d)\wedge
X)\to \Map (s(V)_{\infty},s(V)_{\infty}\wedge X)$$
$$f\mapsto\mathfrak{A}_{f}.$$

We recall that we are working in $CGWH$ as detailed in $\S$\ref{onthecategories}. We have the below
adjunction: 
$$\xymatrix@R=0.001cm{\Map(\FMap (D(d),D(d)\wedge X), \Map (s(V)_{\infty},s(V)_{\infty}\wedge X))\\
\cong\\
\Map(\FMap (D(d),D(d)\wedge X)\wedge s(V)_{\infty},s(V)_{\infty}\wedge X)}$$

Therefore it is enough to show that the adjoint map $\mathfrak{A}^{\#}:\FMap (D(d),D(d)\wedge X)\wedge s(V)_{\infty}\to
s(V)_{\infty}\wedge X$ is continuous - here $\mathfrak{A}^{\#}$ is characterized by $\mathfrak{A}^{\#}(f,g)= \mathfrak{A}_{f}(g)$, how to build the adjoint in this case is explicitly detailed in \cite{SteenrodCategory}.

Let $\text{eval.}$ be the continuous map below:
$$\text{eval.}:\FMap (D(d),D(d)\wedge X)\wedge \mathcal{L}(\Complex^{d},V)_{\infty}\wedge D(d)\to \mathcal{L}(\Complex^{d},V)_{\infty}\wedge D(d)\wedge X$$
$$(f,\alpha,t)\mapsto (\alpha,f(t)).$$

We have the following commutative diagram:
$$\xymatrix{\FMap (D(d),D(d)\wedge X)\wedge \mathcal{L}(\Complex^{d},V)_{\infty}\wedge D(d)\ar[r]^{\phantom{xxxxxxxx}\text{eval.}}\ar[d]_{1\wedge\nu}&\mathcal{L}(\Complex^{d},V)_{\infty}\wedge D(d)\wedge X\ar[d]^{\nu\wedge 1}\\
\FMap (D(d),D(d)\wedge X)\wedge s(V)_{\infty}\ar[r]_{\phantom{xxxxxxxx}\mathfrak{A}^{\#}}&s(V)_{\infty}\wedge X}$$

This demonstrates that $\mathfrak{A}^{\#}\circ (1\wedge \nu)$ is continuous as it is equal to the continuous composition $(\nu\wedge 1)\circ\text{eval.}$. It follows that $\mathfrak{A}^{\#}$ is continuous as $(1\wedge \nu)$ is a quotient map. This proves that $\mathfrak{A}$ is continuous and hence proves the proposition.
\end{proof}

We can further extend this result by a simple restriction. The proof of the below is identical to the result above:

\begin{cor}\label{fnvarApositive} The maps given in the above lemmas \ref{propereigenvectors} and \ref{themapnu} factor in the following way:
$$\eta':s_{+}(V)\to D'_{+}(d)$$
$$\nu':\mathcal{L}(\Complex^{d},V)\times D'_{+}(d)\to s_{+}(V).$$

Thus if $f:D_{+}(d)\to D_{+}(d)\wedge X$ is facial then we have a unique map $\mathfrak{A}_{f}$ making the below diagram commute, such that $\mathfrak{A}$ is continuous and such that the characterization of $\mathfrak{A}_{f}$ coincides with the one given in Proposition \ref{fnalvarA}:
$$\xymatrix@C=1.5cm{\mathcal{L}(\Complex^{d},V)_{\infty}\wedge D_{+}(d)\ar[d]_{1\wedge f}\ar[r]^{\phantom{xxxx}\nu}&s_{+}(V)_{\infty}\ar[d]^{\mathfrak{A}_{f}}\ar[r]^{\eta}&D_{+}(d)\ar[d]^{f}\\
\mathcal{L}(\Complex^{d},V)_{\infty}\wedge D_{+}(d)\wedge
X\ar[r]_{\phantom{xxxx}\nu\wedge1}& s_{+}(V)_{\infty}\wedge
X\ar[r]_{\eta\wedge 1}&D_{+}(d)\wedge X}$$
\end{cor}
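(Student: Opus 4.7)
The plan is to mirror the proof of Proposition \ref{fnalvarA} verbatim, after a short preliminary step verifying that $\eta'$ and $\nu'$ factor as claimed. For $\eta'$, the characterization of $s_{+}(V)$ in Definition \ref{svo} as the selfadjoint endomorphisms with non-negative eigenvalues gives $\eta'(s_{+}(V)) \subseteq D_{+}'(d)$ immediately. For $\nu'$, the identity
\[ \langle \alpha \Delta(t) \alpha^{\dag} v, v \rangle = \langle \Delta(t) \alpha^{\dag} v, \alpha^{\dag} v \rangle = \sum_{i} t_{i}\, \abs{(\alpha^{\dag} v)_{i}}^{2} \]
shows that for $t \in D_{+}'(d)$ the form is non-negative, so $\alpha \Delta(t) \alpha^{\dag} \in s_{+}(V)$. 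The restricted maps remain proper, since restricting a proper map to the closed preimage of a closed subspace is proper, and remain surjective by the diagonalizability of positive selfadjoint endomorphisms with non-negative eigenvalues. Passing to one-point compactifications yields the restricted quotient maps $\eta : s_{+}(V)_{\infty} \to D_{+}(d)$ and $\nu : \mathcal{L}(\Complex^{d}, V)_{\infty} \wedge D_{+}(d) \to s_{+}(V)_{\infty}$.

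With these in hand, the construction of $\mathfrak{A}_{f}$ proceeds along the template of Proposition \ref{fnalvarA}. Given facial $f: D_{+}(d) \to D_{+}(d) \wedge X$, existence of a map filling the square reduces to showing that $\nu(\alpha, t) = \nu(\alpha', t')$ implies $(\nu \wedge 1)(\alpha, f(t)) = (\nu \wedge 1)(\alpha', f(t'))$. The same centralizer argument as before applies: the eigenvalue tuple is invariant under $\nu$, so $t = t'$; the intertwiner $\alpha^{\dag}\alpha'$ then centralizes $\Delta(t)$; and preservation of each face $F_{i}$ by $f$ guarantees that this centralizer is contained in the centralizer of $\Delta(s)$ whenever $f(t) = s \wedge x$. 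The explicit description of $\mathfrak{A}_{f}(\alpha)$ in terms of an orthonormal eigenbasis transfers unchanged from the ambient case.

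Uniqueness of $\mathfrak{A}_{f}$ will follow from surjectivity of the restricted $\nu$; continuity of $\mathfrak{A}_{f}$ will follow because $\mathfrak{A}_{f} \circ \nu = (\nu \wedge 1) \circ (1 \wedge f)$ is continuous while $\nu$ is a quotient map; and continuity of $\mathfrak{A}: f \mapsto \mathfrak{A}_{f}$ will follow from the $CGWH$ adjunction together with the same evaluation-map diagram chase as in the earlier proof, applied to $\mathcal{L}(\Complex^{d}, V)_{\infty} \wedge D_{+}(d)$ rather than $\mathcal{L}(\Complex^{d}, V)_{\infty} \wedge D(d)$. I expect no serious obstacle: the only genuinely new verification is the preservation of positivity by the restrictions of $\eta'$ and $\nu'$, which is the elementary quadratic-form calculation above. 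The additional condition that $f$ preserves $D_{0}(d)$, built into the facial convention for $D_{+}$, plays no essential role in the existence or continuity of $\mathfrak{A}_{f}$ here; it is retained as a compatibility condition for the zero-eigenvalue locus, and will presumably be needed when this variant is deployed in the constructions of later sections.
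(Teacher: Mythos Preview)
Your proposal is correct and matches the paper's approach exactly: the paper simply states that the proof is identical to that of Proposition \ref{fnalvarA}, and you have faithfully transcribed that argument with the additional (routine) verification that $\eta'$ and $\nu'$ restrict to the positive cones.
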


Thus we can build maps out of $s_{+}(V)_{\infty}$ by considering maps on a space modeling eigenvalues of selfadjoint maps. We now wish to extend this further one stage, building self-maps of spaces of homomorphisms.

\begin{lem}\label{rhoisproper} Let $W$ be Hermitian of dimension $d$ and $V$ Hermitian and such that $\Dim(V)\geqslant d$. The map $\rho:\Hom(W,V)\to s_{+}(V)$ as defined in \ref{themaprho} is a proper surjection and hence extends to a quotient map on compactifications.
\end{lem}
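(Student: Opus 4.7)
The plan is to establish three things in turn: surjectivity of $\rho$, properness of $\rho$, and then deduce the quotient-map statement on compactifications from general point-set facts.

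For surjectivity, I would take an arbitrary $\alpha \in s_{+}(W)$ (reading the target as $s_+$ of the $d$-dimensional space, as in Corollary \ref{themaprho}) and exhibit an explicit preimage. The hypothesis $\Dim(V)\geqslant d$ guarantees the existence of an isometric linear inclusion $i: W\hookrightarrow V$, so $i^{\dagger}\circ i = \mathrm{id}_{W}$. I would then set $\gamma := i\circ \alpha \in \Hom(W,V)$ and compute
$$\gamma^{\dagger}\gamma = \alpha^{\dagger}\,i^{\dagger} i\,\alpha = \alpha^{2},$$
using that $\alpha$ is selfadjoint. Applying the functional calculus (Lemma \ref{fnalcalclemma}), the map $t\mapsto (t^{2})^{1/2}$ is the identity on $[0,\infty)$, and $\sigma_{\End(W)}(\alpha)\subseteq [0,\infty)$ since $\alpha\in s_{+}(W)$, so $\rho(\gamma)=(\alpha^{2})^{1/2}=\alpha$.

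For properness, the crucial observation is that $\rho$ controls the operator norm. By Lemma \ref{operatornormiseigenvalues}, for any $\gamma\in\Hom(W,V)$ we have $\|\gamma\| = e_{top}(\rho(\gamma))$, and since $\rho(\gamma)\in s_{+}(W)$ is selfadjoint and positive, $e_{top}(\rho(\gamma))=\|\rho(\gamma)\|$. Thus $\|\gamma\| = \|\rho(\gamma)\|$. Now let $K\subseteq s_{+}(W)$ be compact; by Lemma \ref{extendedheineborel} it is closed and bounded, say $\|\beta\|\leqslant M$ for all $\beta\in K$. Then $\rho^{-1}(K)$ is closed in $\Hom(W,V)$ by continuity of $\rho$, and bounded by $M$ via the norm identity, so another application of Lemma \ref{extendedheineborel} to the finite-dimensional normed space $\Hom(W,V)$ shows $\rho^{-1}(K)$ is compact. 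Hence $\rho$ is proper.

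Finally, by Lemma \ref{propernessequalsbasepoints} the proper continuous map $\rho$ extends to a continuous based map $\rho_{\infty}:\Hom(W,V)_{\infty}\to s_{+}(W)_{\infty}$ on one-point compactifications; it remains surjective since the added point at infinity maps to the basepoint. As $\Hom(W,V)_{\infty}$ is compact and $s_{+}(W)_{\infty}$ is Hausdorff, the standard fact that a continuous surjection from a compact space to a Hausdorff space is a quotient map finishes the proof. I do not anticipate a genuine obstacle here; the only mildly delicate step is the surjectivity lift, which is why the dimension hypothesis $\Dim(V)\geqslant d$ is included in the statement, and everything else is an application of already-established lemmas.
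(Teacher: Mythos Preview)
Your proof is correct and follows essentially the same route as the paper: the paper's proof consists of a single sentence invoking Lemma~\ref{operatornormiseigenvalues} and Lemma~\ref{extendedheineborel} for properness, which is exactly your argument via $\|\gamma\|=\|\rho(\gamma)\|$. You are more explicit than the paper on surjectivity (the paper treats it as evident and addresses only properness) and on the passage to a quotient map on compactifications, but these additions are straightforward and in line with the surrounding material.
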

\begin{proof} The only issue is showing that $\rho$ is proper, but this follows from \ref{operatornormiseigenvalues} and the characterization of compact sets as closed and bounded sets from \ref{extendedheineborel}.
\end{proof}

\begin{lem}\label{themapmu} Again, let $W$ be Hermitian of dimension $d$ and $V$ Hermitian and such that $\Dim(V)\geqslant d$. We have the map $\mu'$ below:
$$\mu':\mathcal{L}(W,V)\times s_{+}(W)\to \Hom(W,V)$$
$$(\theta,\alpha)\mapsto -\theta\circ\alpha.$$
Then $\mu'$ is proper and thus there exists an extension $\mu:\mathcal{L}(W,V)_{\infty}\wedge s_{+}(W)_{\infty}\to S^{\Hom(W,V)}$. Moreover $\mu'$ is surjective and hence $\mu$ is a quotient map.
\end{lem}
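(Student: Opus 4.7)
The plan is to mirror closely the proofs of Lemmas \ref{themapnu} and \ref{rhoisproper}, exploiting two crucial facts: first that $\mathcal{L}(W,V)$ is compact (being a Stiefel manifold, it is closed and bounded in $\Hom(W,V)$ and hence compact by Lemma \ref{extendedheineborel}), and second that $\rho$ composed with $\mu'$ has a very clean form on $s_+(W)$. Specifically, for any $(\theta,\alpha) \in \mathcal{L}(W,V)\times s_+(W)$ we have
\[
\rho(\mu'(\theta,\alpha)) = \bigl((-\theta\alpha)^{\dag}(-\theta\alpha)\bigr)^{1/2} = (\alpha^{\dag}\theta^{\dag}\theta\alpha)^{1/2} = (\alpha^{2})^{1/2} = \alpha,
\]
using $\theta^{\dag}\theta=\text{id}_W$ and the fact that $\alpha$ is already a positive selfadjoint square root. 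So $\rho\circ\mu'$ is simply projection to the second factor.

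For properness, given a compact $C\subseteq \Hom(W,V)$, continuity of $\rho$ shows $\rho(C)$ is compact in $s_+(W)$, and the identity $\rho\circ\mu'=\text{pr}_2$ above forces $(\mu')^{-1}(C)\subseteq \mathcal{L}(W,V)\times \rho(C)$. Since $\mathcal{L}(W,V)$ is compact, this product is compact; and $(\mu')^{-1}(C)$ is closed in it by continuity of $\mu'$ combined with Hausdorffness of $\Hom(W,V)$. Hence $(\mu')^{-1}(C)$ is compact, proving that $\mu'$ is proper.

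For surjectivity, fix $\gamma\in\Hom(W,V)$ and set $\alpha:=\rho(\gamma)\in s_+(W)$. By Proposition \ref{themapsigma}, the isometry $-\sigma(\gamma):(\Ker\gamma)^{\bot}\to V$ satisfies $\gamma|_{(\Ker\gamma)^\bot} = -(-\sigma(\gamma))\circ\rho(\gamma)|_{(\Ker\gamma)^\bot}$. Because $\Dim V\geqslant \Dim W$, the orthogonal complement of the image of $-\sigma(\gamma)$ inside $V$ has dimension at least $\Dim(\Ker\gamma)$, so we can pick any isometry $\Ker\gamma\to V$ landing in that complement and glue it to $-\sigma(\gamma)$ to produce an isometry $\theta\in\mathcal{L}(W,V)$. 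On $(\Ker\gamma)^\bot$ we then recover $-\theta\alpha = \sigma(\gamma)\rho(\gamma)=\gamma$, and on $\Ker\gamma$ both $\alpha$ and $\gamma$ vanish (since $\Ker\rho(\gamma)=\Ker\gamma$), so $-\theta\alpha=\gamma$ everywhere. This gives surjectivity.

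Finally, Lemma \ref{propernessequalsbasepoints} promotes the proper map $\mu'$ to a continuous extension $\mu:\mathcal{L}(W,V)_{\infty}\wedge s_{+}(W)_{\infty}\to S^{\Hom(W,V)}$, and since this extension is a continuous surjection from a compact space to a Hausdorff space, the standard argument already used in Lemma \ref{propereigenvectors} shows it is a quotient map. The only mildly subtle point is the surjectivity argument, where one must be careful that the chosen extension of $-\sigma(\gamma)$ to $\Ker\gamma$ exists precisely because of the dimension hypothesis $\Dim V\geqslant d$; everything else reduces to bookkeeping with the polar decomposition machinery from Section \ref{TheStandardTheory}.
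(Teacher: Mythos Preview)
Your proof is correct and follows essentially the same approach as the paper: both establish properness via the identity $\rho\circ\mu'=\text{pr}_2$ combined with compactness of $\mathcal{L}(W,V)$, and both prove surjectivity by extending $\sigma(\gamma)$ from $(\Ker\gamma)^\bot$ to an isometry on all of $W$ using the dimension hypothesis. The only cosmetic difference is that the paper cites Lemma \ref{propercompositionisproper} for the properness step, whereas you inline that argument directly.
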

\begin{proof} We note that $\mu'\circ\rho:\mathcal{L}(W,V)\times s_{+}(W)\to s_{+}(W)$ is the projection. This is proper due to $\mathcal{L}(W,V)$ being compact and by the same techniques used in the proof of \ref{themapnu}. Thus by Lemma \ref{propercompositionisproper} $\mu'$ is a proper map. The only other issue is its surjectivity. Let $\gamma\in\Hom(W,V)$. Then recalling $\sigma$ from \ref{themapsigma} we can extend $\sigma(\gamma)$ to an isometry $\varsigma(\gamma):W\to V$ by taking any choice of isometry $i:\Ker(\gamma)\to V\backslash \IM(\gamma)$ and setting $\varsigma(\gamma):=i\oplus\sigma(\gamma)$. Taking $(-\varsigma(\gamma),\rho(\gamma))\in\mathcal{L}(W,V)\times s_{+}(W)$, we note that by remarks in \ref{themapsigma} we will have $\mu'(-\varsigma(\gamma),\rho(\gamma))=\gamma$ and hence $\mu'$ is surjective. The rest of the result follows.
\end{proof}

\begin{prop}\label{fnalvarB} Let $X$ be a based space, $f:D_{+}(d)\to D_{+}(d)\wedge X$ be facial, $W$ Hermitian of dimension $d$ and $V$ Hermitian and such that $\Dim(V)\geqslant d$. Then there uniquely exists a map $\mathfrak{B}_{f}:S^{\Hom(W,V)}\to  S^{\Hom(W,V)}\wedge X$ making the diagram below commute:
$$\xymatrix@C=1.5cm{\mathcal{L}(W,V)_{\infty}\wedge s_{+}(W)_{\infty}\ar[d]_{1\times \mathfrak{A}_{f}}\ar[r]^{\phantom{xxxx}\mu}&S^{\Hom(W,V)}\ar[d]^{\mathfrak{B}_{f}}\ar[r]^{\rho}&s_{+}(W)_{\infty}\ar[d]^{\mathfrak{A}_{f}}\\
\mathcal{L}(W,V)_{\infty}\wedge s_{+}(W)_{\infty}\wedge
X\ar[r]_{\phantom{xxxx}\mu\wedge1}& S^{\Hom(W,V)}\wedge
X\ar[r]_{\rho\wedge 1}&s_{+}(W)_{\infty}\wedge X}$$
Moreover:
\begin{itemize}
\item  Let $F(\mathfrak{A})$ be the space of all maps $s_{+}(W)_{\infty}\to s_{+}(W)_{\infty}\wedge X$ of the form $\mathfrak{A}_{f}$ as defined in \ref{fnvarApositive}. Then $\mathfrak{B}:F(\mathfrak{A})\to \Map(S^{\Hom(W,V)},S^{\Hom(W,V)}\wedge X)$ given by $f\mapsto \mathfrak{B}_{f}$ is continuous.
\item $\mathfrak{B}_{f}(\gamma)$ can be expressed explicitly for a given $\gamma$. Choose an orthonormal basis of eigenvectors $v_{0},\ldots,v_{d-1}$ for $\gamma^{\dag}\gamma$ with eigenvalues $e_{0}^{2}\leqslant\ldots\leqslant e_{d-1}^{2}$ and such that $\gamma(v_{i})=e_{i}m_{i}$ with the $m_{i}$ being orthonormal in $V$. Then if $f(e_{0},\ldots,e_{d-1})=(s_{0},\ldots,s_{d-1})\wedge x$ we have $\mathfrak{B}_{f}(\gamma)=f(\gamma)\wedge x$, where $f(\gamma)$ denotes the homomorphism sending each $v_{i}$ to $s_{i}m_{i}$.
\end{itemize}
\end{prop}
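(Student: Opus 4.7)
The plan is to mirror the proof of Proposition \ref{fnalvarA} step by step, with the quotient map $\mu$ of Lemma \ref{themapmu} playing the role that $\nu$ played there. First I would show that the composite $(\mu\wedge 1)\circ(1\wedge \mathfrak{A}_{f})$ factors through $\mu$, producing the unique $\mathfrak{B}_{f}$ making the left square commute. Commutativity of the right square would then be a direct computation using that elements $\theta\in\mathcal{L}(W,V)$ satisfy $\theta^{\dag}\theta=1_{W}$. Continuity of $\mathfrak{B}_{f}$ would follow from the quotient property of $\mu$, and continuity of $\mathfrak{B}$ via an adjunction argument identical in spirit to the one at the end of \ref{fnalvarA}, ending with the check that $\mathfrak{B}^{\#}\circ(1\wedge \mu)$ is continuous as it agrees with $(\mu\wedge 1)$ composed with a suitable evaluation map. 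The explicit formula would then be read off by chasing a preimage $(-\varsigma(\gamma),\rho(\gamma))$ of $\gamma$ under $\mu$, as constructed in the surjectivity argument of \ref{themapmu}.

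The main obstacle is the well-definedness step. Suppose $\mu(\theta,\alpha)=\mu(\theta',\alpha')=\gamma$ with $\gamma\in\Hom(W,V)$; I would first apply $\rho$ to both sides and use $\theta^{\dag}\theta=\theta'^{\dag}\theta'=1_{W}$ to deduce $\alpha=\rho(\gamma)=\alpha'$, so the two preimages share their second coordinate. Call this common value $\beta$, and write $\mathfrak{A}_{f}(\beta)=\tilde{\beta}\wedge x$ using the explicit description from \ref{fnalvarA}. To conclude $(\mu\wedge 1)(\theta,\tilde{\beta}\wedge x)=(\mu\wedge 1)(\theta',\tilde{\beta}\wedge x)$, I need $\theta\tilde{\beta}=\theta'\tilde{\beta}$, that is, the isometry difference $\theta-\theta'$ must vanish on $\IM(\tilde{\beta})=(\Ker(\tilde{\beta}))^{\bot}$. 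From $\theta\beta=\theta'\beta$ the map $\theta-\theta'$ vanishes on $\IM(\beta)=(\Ker(\beta))^{\bot}$, so it suffices to prove $\Ker(\beta)\subseteq\Ker(\tilde{\beta})$.

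Since $\tilde{\beta}$ has the same eigenvectors as $\beta$, this inclusion reduces to showing that whenever an eigenvalue $e_{i}$ of $\beta$ is zero, the corresponding $s_{i}$ is also zero. When $i=0$ this is precisely the condition that $f$ preserves $D_{0}(d)$. When $i>0$, the ordering forces $e_{0}=\cdots=e_{i}=0$, so the eigenvalue tuple lies in the intersection $F_{0}(D_{+}(d))\cap\cdots\cap F_{i-1}(D_{+}(d))\cap D_{0}(d)$; face preservation then forces $s_{0}=\cdots=s_{i}$, which combined with $s_{0}=0$ yields $s_{i}=0$. Thus face preservation and $D_{0}$-preservation together — that is, the full facial condition — are precisely what make $\mathfrak{B}_{f}$ well-defined. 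The basepoint case is immediate since the basepoint of $s_{+}(W)_{\infty}\wedge X$ is sent by $\mu\wedge 1$ to the basepoint of $S^{\Hom(W,V)}\wedge X$, so nothing is lost if $\mathfrak{A}_{f}(\beta)$ degenerates.

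Once well-definedness is settled the remaining pieces are essentially formal. The right square commutes by the same $\theta^{\dag}\theta=1_{W}$ computation that identified the second coordinate of the preimage with $\rho(\gamma)$, combined with $(\tilde{\beta}^{2})^{1/2}=\tilde{\beta}$ since $\tilde{\beta}\in s_{+}(W)$. Uniqueness and continuity of $\mathfrak{B}_{f}$ follow from surjectivity and the quotient map property of $\mu$ exactly as in \ref{fnalvarA}. Continuity of $f\mapsto\mathfrak{B}_{f}$ is handled by the same adjoint-and-evaluate trick used there, applied to the evaluation $F(\mathfrak{A})\wedge\mathcal{L}(W,V)_{\infty}\wedge s_{+}(W)_{\infty}\to\mathcal{L}(W,V)_{\infty}\wedge s_{+}(W)_{\infty}\wedge X$ and postcomposed with $\mu\wedge 1$. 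The explicit description of $\mathfrak{B}_{f}(\gamma)$ then follows by combining the formula for $\mathfrak{A}_{f}(\rho(\gamma))$ from \ref{fnalvarA} with the relation $\gamma=-\varsigma(\gamma)\circ\rho(\gamma)$ and the observation that $\varsigma(\gamma)$ maps each eigenvector $v_{i}$ of $\rho(\gamma)$ with nonzero eigenvalue to $m_{i}$, as required.
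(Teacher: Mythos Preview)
Your proposal is correct and follows essentially the same route as the paper. The paper's proof also reduces well-definedness to the kernel inclusion $\Ker(\alpha)\subseteq\Ker(\mathfrak{A}_{f}(\alpha))$ and then handles uniqueness, continuity, and the continuity of $\mathfrak{B}$ via the same quotient-and-adjunction pattern you describe; the only cosmetic difference is that the paper obtains $\alpha=\alpha'$ by a case split on whether $\alpha\in s_{++}(W)$ rather than by applying $\rho$ directly, and it asserts the kernel inclusion from faciality without spelling out the eigenvalue argument you give.
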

\begin{proof} Let $\mu(\theta,\alpha)=\mu(\theta',\alpha')$. We wish to show that adding a $\mathfrak{B}_{f}$ would be internally consistent, i.e. that $\mu(\theta,\mathfrak{A}_{f}(\alpha))=\mu(\theta',\mathfrak{A}_{f}(\alpha'))$. First note that if we restrict ourselves to looking at the case of strictly positive elements of $s(W)$ by restricting $\mu$ to a map out of $s_{++}(W)$, then we have that $\mu$ is injective for the same reasons as to why injectivity holds in \ref{thetaEalpha}. This thus means there are no issues, $\mu(\theta,\mathfrak{A}_{f}(\alpha))=\mu(\theta',\mathfrak{A}_{f}(\alpha'))$ as required. Hence we turn our attention to the case where $\alpha$ and $\alpha'$ both have eigenvalues equal to zero. It is clear that in this case $\mu(\theta,\alpha)$ being equal to $\mu(\theta',\alpha')$ indicates that $\alpha=\alpha'$ but also that $\theta$ and $\theta'$ may not agree on $\Ker(\alpha)$ - they need only match on the image of $\alpha$. This is all we need, however,
as $f$ being facial implies in particular that $\Ker(\alpha)\subseteq\Ker(\mathfrak{A}_{f}(\alpha))$ and thus that
$\mu(\theta,\mathfrak{A}_{f}(\alpha))=\mu(\theta',\mathfrak{A}_{f}(\alpha'))$. This implies that a map $\mathfrak{B}_{f}$ would therefore be reasonable and internally consistent.

We now define a map $\mathfrak{B}_{f}$ given by the description above - this commutes when fitted into the diagram. As in the earlier proof $\mathfrak{B}_{f}$ is unique by the surjectivity of $\mu$. Also as before $\mathfrak{B}_{f}$ fitting into the commutative diagram implies continuity as $\mu$ is a quotient map, thus $\mathfrak{B}_{f}\circ\mu$ being continuous implies $\mathfrak{B}_{f}$ is.

Finally, to prove $\mathfrak{B}$ is continuous we again work with the adjunction in $CGWH$. We wish to show $\mathfrak{B}:F(\mathfrak{A})\to \Map(S^{\Hom(W,V)},S^{\Hom(W,V)}\wedge X)$ is continuous. We have an adjunction:
$$\xymatrix@R=0.001cm{\Map(F(\mathfrak{A}),\Map(S^{\Hom(W,V)},S^{\Hom(W,V)}\wedge X))\\
\cong \\
\Map(F(\mathfrak{A})\wedge S^{\Hom(W,V)},S^{\Hom(W,V)}\wedge X)}$$

Hence continuity of $\mathfrak{B}$ follows from continuity of the adjoint $\mathfrak{B}^{\#}:F(\mathfrak{A})\wedge S^{\Hom(W,V)}\to S^{\Hom(W,V)}\wedge X$. Let $\text{eval.}$ be the continuous map below:
$$\text{eval.}:F(\mathfrak{A})\wedge\mathcal{L}(W,V)_{\infty}\wedge s_{+}(W)_{\infty}\to\mathcal{L}(W,V)_{\infty}\wedge s_{+}(W)_{\infty}\wedge X$$
$$(\mathfrak{A}_{f},\theta,\alpha)\mapsto
(\theta,\mathfrak{A}_{f}(\alpha)).$$

We have the following commutative diagram:
$$\xymatrix{F(\mathfrak{A})\wedge\mathcal{L}(W,V)_{\infty}\wedge s_{+}(W)_{\infty}\ar[r]^{\text{eval.}}\ar[d]_{1\wedge\mu}&\mathcal{L}(W,V)_{\infty}\wedge s_{+}(W)_{\infty}\wedge X\ar[d]^{\mu\wedge 1}\\
F(\mathfrak{A})\wedge S^{\Hom(W,V)}\ar[r]_{\mathfrak{B}^{\#}}&S^{\Hom(W,V)}\wedge X}$$

Thus by the diagram $\mathfrak{B}^{\#}\circ (1\wedge\mu)$ is continuous. The map $\mu$ is a quotient and hence so is
$(1\wedge\mu)$. It follows that $\mathfrak{B}^{\#}$ and hence $\mathfrak{B}$ is continuous - this is enough to give us the result.
\end{proof}

This builds the required variation of functional calculus - we will tend to take the standard case where $X$ is $S^{0}$, thus letting us look at self maps of $D(d)$ or $D_{+}(d)$. We now mention one further construction as a way of building maps from the very simple starting level of self-maps of $D_{+}(2)$. Let $f:D_{+}(2)\to D_{+}(2)$ be facial. Then $f(t_{0},t_{1})$ can be expressed in the form $(g(t_{0},t_{1}),g(t_{0},t_{1})+h(t_{0},t_{1}))$ for some $g,h$
landing in the positive reals and such that $h(t,t)=0$. We set $\hat{f}:D_{+}(d+1)\to D_{+}(d+1)$ to be given by
$\hat{f}(t_{0},\ldots,t_{d})_{i}=g(t_{0},t_{d})+\frac{t_{i}-t_{0}}{t_{d}-t_{0}}h(t_{0},t_{d})$ where this makes sense, i.e. where $t_{0}<t_{d}$ and $\hat{f}(t_{0},\ldots,t_{d})_{i}=g(t_{0},t_{d})$ when $t_{0}=t_{d}$; we map $\hat{f}(\infty)$ to $\infty$. To make this work we need the following lemma:

\begin{lem}\label{MakingHatWork} $\hat{f}$ is continuous, $f\mapsto \hat{f}$ gives a continuous map from facial maps on $D_{+}(2)$ to facial maps on $D_{+}(d+1)$.
\end{lem}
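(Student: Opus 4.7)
The plan is to first rewrite $\hat{f}$ in a cleaner form that makes every required property visible. Write $f=(f_0,f_1)$ so $g=f_0$ and $h=f_1-f_0$; since $f$ is facial, $h$ vanishes on the diagonal, and $f_0(0,\cdot)=0$. The formula for $\hat{f}$ then becomes the convex combination
$$\hat{f}(t_0,\ldots,t_d)_i \;=\; \frac{t_d-t_i}{t_d-t_0}\,f_0(t_0,t_d) \;+\; \frac{t_i-t_0}{t_d-t_0}\,f_1(t_0,t_d)$$
whenever $t_0<t_d$, with weights in $[0,1]$ summing to $1$. This formula makes the facial property of $\hat{f}$ immediate: on $F_i(D_+(d+1))=\{t_i=t_{i+1}\}$ the $i$-th and $(i{+}1)$-th coordinates coincide by direct inspection, and on $D_0(d+1)$ the $0$-th coordinate is $f_0(0,t_d)=0$.

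Next I would handle continuity of $\hat{f}$ on $D_+'(d+1)$. On the open locus $\{t_0<t_d\}$ the above expression is a composition of continuous functions. At a point $(s,\ldots,s)$ where $t_0=t_d$, note that $f_0(s,s)=f_1(s,s)$ because $h(s,s)=0$; as $(t_0,\ldots,t_d)\to(s,\ldots,s)$, the weights remain bounded in $[0,1]$ while $f_0(t_0,t_d),f_1(t_0,t_d)\to f_0(s,s)$, so the value tends to the common quantity $g(s,s)$, matching the separate definition at the diagonal. For continuity at the point at infinity, observe that any sequence leaving every compact subset of $D_+'(d+1)$ must have $t_d\to\infty$, in which case $(t_0,t_d)$ leaves every compact subset of $D_+'(2)$; since $f$ extends continuously to $D_+(2)\to D_+(2)$, properness forces $f_1(t_0,t_d)\to\infty$. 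The top coordinate of $\hat{f}$ is exactly $f_1(t_0,t_d)$, so $\hat{f}$ itself tends to infinity, giving continuity at the added basepoint.

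For continuity of the assignment $f\mapsto \hat{f}$, I would use that $D_+(d+1)$ is compact and Hausdorff, so the compact-open topology on $F_+(d+1)$ agrees with uniform convergence. If $f^{(n)}\to f$ uniformly then each coordinate function $f^{(n)}_j$ converges uniformly to $f_j$, and the convex-combination formula above then exhibits $\hat{f}^{(n)}\to \hat{f}$ uniformly, since the interpolation coefficients depend only on the input point and the uniform bound on $f_0,f_1$ produces a uniform bound on $\hat{f}$. The only genuinely subtle point in the whole argument is continuity at the $\{t_0=t_d\}$ locus, where the piecewise definition forces us to invoke the facial hypothesis $h(t,t)=0$; every other step is a routine consequence of the convex-combination presentation.
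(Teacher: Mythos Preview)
Your convex-combination rewriting of $\hat f$ is a clean improvement on the paper's coordinate-by-coordinate check, and your treatment of the facial property and of the pointwise continuity of $\hat f$ (including at the diagonal and at $\infty$) is correct. One small terminological slip: when you say ``properness forces $f_1(t_0,t_d)\to\infty$'' you are really using continuity of $f$ at the basepoint $\infty\in D_+(2)$, not properness of an underlying map on $D_+'(2)$; a based map $D_+(2)\to D_+(2)$ need not arise from a proper map.

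The genuine soft spot is your argument for continuity of $f\mapsto\hat f$. You invoke uniform convergence and say ``the uniform bound on $f_0,f_1$ produces a uniform bound on $\hat f$'', but the target values live in $D_+(d+1)$, a one-point compactification, so uniform closeness must be measured in a metric on $[0,\infty]$ rather than the linear metric on $[0,\infty)$; a naive convex-combination estimate in the linear metric does not transfer. Your idea is salvageable: the interpolation map $\mu\colon(\Delta_{d-1})_+\wedge D_+(2)\to D_+(d+1)$ sending $(s,(a,b))$ to $((1-s_i)a+s_ib)_i$ is continuous (the top coordinate is $b$, which handles the point at infinity), hence uniformly continuous on its compact domain, and every $t\in D_+(d+1)$ is $\mu(s,(t_0,t_d))$ for some $s$; then $\hat f(t)=\mu(s,f(t_0,t_d))$ and uniform continuity of $\mu$ converts uniform convergence of $f^{(n)}$ to uniform convergence of $\hat f^{(n)}$.

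The paper takes exactly this interpolation map (called $\lambda$ there), shows it is a proper surjection and hence a quotient on compactifications, and then runs the standard $CGWH$ adjunction argument: continuity of $\text{hat}$ reduces to continuity of its adjoint $\text{hat}^{\#}$, and the commutative square
\[
\xymatrix{F_{+}(2)\wedge D_{+}(2)\wedge (\Delta_{d-1})_{\infty}\ar[r]^{\phantom{xx}\text{eval.}}\ar[d]_{1\wedge \lambda}&D_{+}(2)\wedge (\Delta_{d-1})_{\infty}\ar[d]^{\lambda}\\
F_{+}(2)\wedge D_{+}(d+1)\ar[r]_{\phantom{xx}\text{hat}^{\#}}&D_{+}(d+1)}
\]
forces $\text{hat}^{\#}$ to be continuous since $1\wedge\lambda$ is a quotient. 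This is the same factorization you are implicitly using, packaged categorically rather than metrically; the paper's route avoids having to name a metric on the compactification at all.
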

\begin{proof} We first check that if $f$ is facial then so is $\hat{f}$. We check to see that $\hat{f}$ preserves $D_{0}(d)$, i.e. that $\hat{f}(0,\ldots,t_{d})_{0}=0$. This is equivalent to checking that $g(0,t_{d})+\frac{0-0}{t_{d}-0}h(0,t_{d})$ is zero - the other case is simply $g(0,0)=0$ as required. This, however, is $g(0,t_{d})$ which is $0$ as $f$ is facial. Next we require that $\hat{f}$ preserves $F_{i}(D_{+}(d))$. Let $t_{i}=t_{i+1}=t$, we now wish to check that $g(t_{0},t_{d})+\frac{t_{i}-t_{0}}{t_{d}-t_{0}}h(t_{0},t_{d})$ is
equal to $g(t_{0},t_{d})+\frac{t_{i+1}-t_{0}}{t_{d}-t_{0}}h(t_{0},t_{d})$ but this is trivial, as is the other case where $g(t_{0},t_{d})$ is patently equal to $g(t_{0},t_{d})$. It follows that $\hat{f}$ is facial.

Continuity of $\hat{f}$ is clear away from $t_{d}-t_{0}=0$, it is inherited from the continuity of $f$ and by extension of $g$ and $h$. We have $(t_{i}-t_{0})/(t_{d}-t_{0})\in[0,1]$, near $t_{d}-t_{0}=0$ we also have $h(t_{0},t_{d})$ approaching $0$ and hence a limiting argument demonstrates continuity at the point $t_{d}-t_{0}=0$.

We now wish to check that the map $\text{hat}:F_{+}(2)\to F_{+}(d+1)$ is continuous. We have an adjunction given
below:
$$\xymatrix@R=0.001cm{\Map(F_{+}(2),\Map(D_{+}(d+1),D_{+}(d+1)))\\
\cong\\
\Map(F_{+}(2)\wedge D_{+}(d+1),D_{+}(d+1))}$$

We observe that $F_{+}(d+1)$ is actually a subspace of $\Map(D_{+}(d+1),D_{+}(d+1))$, thus from the above adjunction we can show $\text{hat}$ is continuous if we show the adjoint $\text{hat}^{\#}$ is. Let $\Delta_{d-1}$ be the standard $(d-1)$-simplex which we take to be parameterized by $d-2$ increasing coordinates in $[0,1]$. Define $\lambda'$ as follows:
$$\lambda':D_{+}'(2)\times \Delta_{d-1}\to D_{+}'(d)$$
$$(t_{0},t_{1},s_{1},\ldots,s_{d-1})\mapsto (t_{0},t_{0}+s_{1}(t_{1}-t_{0}),\ldots,t_{0}+s_{d-1}(t_{1}-t_{0}),t_{1}).$$

The map $\lambda'$ is easily demonstrated to be proper and a surjection, hence $\lambda$, the map on compactifications, is a quotient. Let $\text{eval.}$ be the map below:
$$\text{eval.}:F_{+}(2)\wedge D_{+}(2)\wedge (\Delta_{d-1})_{\infty}\to D_{+}(2)\wedge (\Delta_{d-1})_{\infty}$$
$$(f,t,s)\mapsto (f(t),s).$$
We have the following commutative diagram:
$$\xymatrix{F_{+}(2)\wedge D_{+}(2)\wedge (\Delta_{d-1})_{\infty}\ar[r]^{\phantom{xx}\text{eval.}}\ar[d]_{(1\wedge \lambda)}&D_{+}(2)\wedge (\Delta_{d-1})_{\infty}\ar[d]^{\lambda}\\
F_{+}(2)\wedge D_{+}(d+1)\ar[r]_{{\phantom{xx}\text{hat}^{\#}}}&D_{+}(d+1)}$$
As before, this gives us that $\text{hat}^{\#}\circ (1\wedge \lambda)$ is continuous, hence so is $\text{hat}^{\#}$ and hence so is $\text{hat}$. This completes the proof.
\end{proof}

We now wish to use this variation to build certain NDR pairs that will be used in the later work - we do this in the next section.

\section{Building NDR Pairs Using Functional Calculus}\label{BuildingNDRPairsUsingFunctionalCalculus}

We now use the theory from the above section to build $(S^{\Hom(W,V)},\inj(W,V)^{c}_{\infty})$ into an NDR pair for
Hermitian spaces $V$ and $W$, with $\Dim(W)=d+1$ and $\Dim(V)\geqslant d+1$. This will be of relevance in Chapter \ref{ch:ch5} when we come to calculate the homotopy cofibres needed to prove the main theorem; we shall demonstrate that the sequences we build are isomorphic to the cofibre sequence generated from $(S^{\Hom(W,V)},\inj(W,V)^{c}_{\infty})$. We start with the following two results:

\begin{prop}\label{unitdiscNDR} Let $X$ be the upper half disc $\{z\in\Complex:|z|\leqslant 1, \IM(z)\geqslant 0\}$ and let $Y$ be  the upper semicircle $\{z\in X:|z|=1\}$. Then $(X,Y)$ is an NDR pair via:
$$u''(re^{i\theta}):=\min(1,2-2r)$$
$$h_{t}''(re^{i\theta}):=\min(1,(2-t)r)e^{i\theta}.$$
\end{prop}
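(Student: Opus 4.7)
The plan is to verify the six conditions of Definition \ref{NDR} directly, writing each coordinate $x\in X$ in the polar form $re^{i\theta}$ with $0\leqslant r\leqslant 1$ and $0\leqslant\theta\leqslant\pi$. There are no conceptual subtleties; the only care required is in handling the $\min$ expressions over the relevant ranges of $r$ and $t$.

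First I would note that $u''$ is continuous as the minimum of two continuous functions of $r$, and that $h''$ is continuous because it only rescales the modulus by a continuous non-negative factor and preserves the argument, so its image remains in the upper half disc $X$. Conditions (1) and (2) of \ref{NDR} follow. For condition (6), I would observe that $u''(re^{i\theta})=0$ forces $\min(1,2-2r)=0$, hence $r=1$, giving $u''^{-1}(0)=Y$ as required.

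Next I would check the three conditions on $h''$. For condition (3), when $t=1$ we have $(2-t)r=r\leqslant 1$, so $h_1''(re^{i\theta})=re^{i\theta}$. For condition (4), any $a=e^{i\theta}\in Y$ has $r=1$, and for $t\in[0,1]$ we get $(2-t)\geqslant 1$, so $\min(1,2-t)=1$ and $h_t''(a)=a$. For condition (5), if $u''(x)<1$ then $\min(1,2-2r)<1$, which forces $2-2r<1$, i.e.\ $r>1/2$; then $(2\cdot 0)r=2r>1$, so $h_0''(x)=e^{i\theta}\in Y$.

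The main (and only) thing to watch out for is the case analysis around whether $\min(1,2-2r)$ and $\min(1,(2-t)r)$ are pinned at $1$ or take their non-trivial value; once one records that the transitions happen precisely at $r=1/2$ and $(2-t)r=1$, all six NDR conditions drop out. Hence $(X,Y)$ is an NDR pair as claimed.
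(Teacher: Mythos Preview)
Your proof is correct and follows essentially the same approach as the paper: both verify conditions (1)--(6) of Definition \ref{NDR} directly by writing points in polar form and tracking the case splits in the $\min$ expressions. The only difference is cosmetic (you check condition (6) before (3)--(5)); the arguments are otherwise identical.
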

\begin{proof} We check conditions $1$ to $6$ from Definition \ref{NDR}. The continuity of $u''$ and $h_{t}''$ is clear and hence $1$ and $2$ hold. The map $h_{1}''$ is evaluated as $h_{1}''(re^{i\theta})=\min(1,(2-1)r)e^{i\theta}=re^{i\theta}$, from this it is clear that $3$ holds. If $z\in Y$ then $z$ is of the form $e^{i\theta}$. Thus $h_{t}''(z)=\min(1,2-t)e^{i\theta}=e^{i\theta}$ and we observe that $4$ holds. Regarding condition $5$, if $u''(z)<1$ then $2-2r<1$, i.e. $r>1/2$. In this case $h_{0}''(z)=\min(1,2r)e^{i\theta}=e^{i\theta}$, thus $h_{0}''(z)$ is in $Y$ as required. Finally, let $z$ be such that $u''(z)=0$, i.e. $2-2r=0$. This occurs if and only if $r=1$, showing that $z$ is in $Y$ as required for condition $6$ to be satisfied; thus $(X,Y)$ is an NDR pair.
\end{proof}

\begin{prop}\label{theconformalmapphi} The map $\phi$ as defined below gives us a relative homeomorphism $(D_{+}(2),D_{0}(2))\cong(X,Y)$:
$$\phi(t_{0},t_{1})=\frac{i-(t_{1}+it_{0})^{2}}{i+(t_{1}+it_{0})^{2}}.$$
\end{prop}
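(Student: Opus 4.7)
The plan is to factor $\phi$ as a composition of three homeomorphisms of known regions of $\Complex$ and its one-point compactification. First, the $\Real$-linear map $\psi_1(t_0,t_1) := t_1 + i t_0$ carries $D_+'(2)$ bijectively onto the closed sector $S := \{r e^{i\theta} : r\geq 0,\ 0\leq\theta\leq\pi/4\}$ and carries $D_0'(2)$ onto $[0,\infty)\subset\Real$. Second, squaring $\psi_2(z):=z^2$ is a homeomorphism $S\to Q$, where $Q$ is the closed first quadrant, and sends $[0,\infty)$ to itself. Third, the Möbius transformation $\psi_3(w):=(i-w)/(i+w)$ is a self-homeomorphism of the Riemann sphere $\hat\Complex$, and by construction $\phi=\psi_3\circ\psi_2\circ\psi_1$ on $D_+'(2)$. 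The substantive content is then to show that $\psi_3$ restricts to a homeomorphism $Q\cup\{\infty\}\to X$ carrying $[0,\infty)\cup\{\infty\}$ onto $Y$.

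For this I would first read off the boundary behaviour of $\psi_3$ from direct calculation. For $w=r>0$, one obtains $|\psi_3(r)|^2=\bigl((1-r^2)^2+4r^2\bigr)/(1+r^2)^2=1$ and $\IM\psi_3(r)=2r/(1+r^2)>0$, so the positive real axis is sent homeomorphically onto the upper semicircle $Y$, with $\psi_3(0)=1$ and $\psi_3(\infty)=-1$. For $w=is$ with $s\geq 0$, $\psi_3(is)=(1-s)/(1+s)\in[-1,1]$, so the non-negative imaginary axis (together with $\infty$) is sent onto the diameter $[-1,1]$, again with $\psi_3(0)=1$ and $\psi_3(\infty)=-1$. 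Hence $\psi_3$ maps the boundary circle $\partial(Q\cup\{\infty\})=\Real_{\geq 0}\cup i\Real_{\geq 0}\cup\{\infty\}\subset\hat\Complex$ homeomorphically onto $\partial X=Y\cup[-1,1]$.

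To upgrade this to a homeomorphism of the closed regions, I would observe that since $\psi_3$ is a self-homeomorphism of $\hat\Complex$, the image $\psi_3(Q\cup\{\infty\})$ is a closed subset whose topological boundary is exactly $\partial X$; so its interior is one of the two components of $\hat\Complex\setminus\partial X$. Evaluating at a convenient interior point, for instance $\psi_3(1+i)=(-1+2i)/5\in\mathrm{int}(X)$, pins the image down to $X$ itself. Combining the three steps, $\psi_2\circ\psi_1$ is proper (as $(t_0,t_1)\to\infty$, $|z^2|=t_0^2+t_1^2\to\infty$), so it extends to a homeomorphism $D_+(2)\to Q\cup\{\infty\}$ sending the added basepoint to $\infty$; then $\psi_3$ extends this to $\phi\colon D_+(2)\to X$ sending the basepoint to $-1$. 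By tracking the chain of subsets, $D_0(2)$ is mapped onto $[0,\infty)\cup\{\infty\}$ and thence onto $Y$, yielding the relative part of the homeomorphism. The only genuinely non-routine step is the boundary-correspondence argument for $\psi_3$; everything else reduces to assembling continuous bijections of compact Hausdorff spaces.
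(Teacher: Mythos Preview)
Your proof is correct and follows essentially the same route as the paper's: both factor $\phi$ as the composition $(t_0,t_1)\mapsto t_1+it_0$, then $z\mapsto z^2$, then the M\"obius map $w\mapsto (i-w)/(i+w)$, tracking the same sequence of regions (sector $\to$ quadrant $\to$ half-disc) and the same image of $D_0(2)$. Your version supplies more explicit verification for the M\"obius step (boundary calculations and the interior-point check), whereas the paper relies on pictures and standard complex-analytic intuition; the underlying argument is the same.
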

\begin{proof} This map can be built up using basic complex analysis. We first consider the map out of $D_{+}(2)$ given by $(t_{0},t_{1})\mapsto (t_{1},t_{0})$. Denoting this map by $z\mapsto \dot{z}$, this gives us the homeomorphism below, the bold line representing $D_{0}(2)$:

\begin{tikzpicture}
\shadedraw[left color=white, right color=gray, draw=white] (0,0) -- (2,2) -- (0,2) -- cycle;
\draw[<->](-2,0) -- (2,0);
\draw[<->](0,-2) -- (0,2);
\draw[line width=2pt](0,0) -- (0,2);
\draw(0,0) -- (2,2);
\draw(1.25,0.5) node {$\cup\{\infty\}$};
\shadedraw[left color=gray, right color=white, draw=white] (7,0) -- (9,2) -- (9,0) -- cycle;
\draw[<->](5,0) -- (9,0);
\draw[<->](7,-2) -- (7,2);
\draw[line width=2pt](7,0) -- (9,0);
\draw(7,0) -- (9,2);
\draw(7.75,1.5) node {$\cup\{\infty\}$};
\draw(3.5,0.75) node {$\cong$};
\draw(3.5,1.5) node {$z\mapsto \dot{z}$};
\draw(3.5,0) node {$\dot{z} \mapsfrom z$};
\end{tikzpicture}

We now apply the map $z\mapsto z^{2}$ to build another homeomorphism:

\begin{tikzpicture}
\shadedraw[left color=gray, right color=white, draw=white] (0,0) -- (2,2) -- (2,0) -- cycle;
\draw[<->](-2,0) -- (2,0);
\draw[<->](0,-2) -- (0,2);
\draw[line width=2pt](0,0) -- (2,0);
\draw(0,0) -- (2,2);
\draw(0.75,1.5) node {$\cup\{\infty\}$};
\shadedraw[left color=gray, right color=white, draw=white] (7,0) -- (9,0) -- (9,2) -- (7,2) -- cycle;
\draw[<->](5,0) -- (9,0);
\draw[<->](7,-2) -- (7,2);
\draw[line width=2pt](7,0) -- (9,0);
\draw(6.5,1) node {$\cup\{\infty\}$};
\draw(3.5,0.75) node {$\cong$};
\draw(3.5,1.5) node {$z\mapsto z^{2}$};
\draw(3.5,0) node {$z^{\frac{1}{2}} \mapsfrom z$};
\end{tikzpicture}

Finally, we apply the map sending $z\mapsto (i-z)/(i+z)$ and the basepoint $\infty$ to $-1$. This is a homeomorphism with codomain $X$ and inverse $z\mapsto i(1-z)/(1+z)$; note this sends $-1$ to $\infty$. The bold line is sent to $Y$, as
observed below.

\begin{tikzpicture}
\shadedraw[left color=gray, right color=white, draw=white] (0,0) -- (2,0) -- (2,2) -- (0,2) -- cycle;
\draw[<->](-2,0) -- (2,0);
\draw[<->](0,-2) -- (0,2);
\draw[line width=2pt](0,0) -- (2,0);
\draw(-0.5,1) node {$\cup\{\infty\}$};
\shadedraw[left color=white, right color=gray, draw=white] (8,0) arc (0:180:1) -- cycle;
\draw[<->](5,0) -- (9,0);
\draw[<->](7,-2) -- (7,2);
\draw[line width=2pt](8,0) arc (0:180:1);
\filldraw[black](6,0) circle (0.5mm);
\draw(5.5,-1) node {the basepoint `$\infty$'};
\draw[->](5.5,-0.8) -- (6,-0.1);
\draw(3.5,0.75) node {$\cong$};
\draw(3.5,1.5) node {$z\mapsto \frac{i-z}{i+z}$};
\draw(3.5,0) node {$\frac{i(1-z)}{1+z} \mapsfrom z$};
\end{tikzpicture}

The above composition gives a homeomorphism $(D_{+}(2),D_{0}(2))\cong(X,Y)$. It is trivial to observe that this
composition can be explicitly written down as $\phi$ stated above. Finally, we note that this maps $F_{0}(D_{+}(2))$ homeomorphically onto the base of $X$, the set of $z\in X$ such that $\IM(z)=0$.
\end{proof}

This now allows us to build an NDR pair out of $(D_{+}(2),D_{0}(2))$ in an obvious way:

\begin{prop}\label{D2NDRpair} $(D_{+}(2),D_{0}(2))$ is an NDR pair via:
$$u'(t_{0},t_{1}):=u''\circ\phi(t_{0},t_{1})$$
$$h_{t}'(t_{0},t_{1}):=\phi^{-1}\circ h_{t}''\circ \phi(t_{0},t_{1}).$$
\end{prop}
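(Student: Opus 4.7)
The plan is to simply verify the six conditions of Definition \ref{NDR} by transferring them across the relative homeomorphism $\phi:(D_{+}(2),D_{0}(2))\cong(X,Y)$ established in Proposition \ref{theconformalmapphi}, using the fact that $(X,Y)$ is already known to be an NDR pair via $(u'',h'')$ by Proposition \ref{unitdiscNDR}. The entire argument is essentially formal conjugation: since $u'=u''\circ\phi$ and $h'_t=\phi^{-1}\circ h''_t\circ\phi$, each property required of $(u',h')$ follows from the corresponding property of $(u'',h'')$ together with the fact that $\phi$ restricts to a bijection $D_{0}(2)\to Y$.

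In more detail, I would work through the conditions in order. Continuity of $u'$ and $h'$ is immediate from continuity of $\phi$, $\phi^{-1}$, $u''$ and $h''$, handling conditions (1) and (2). For (3), $h'_1=\phi^{-1}\circ h''_1\circ\phi=\phi^{-1}\circ\mathrm{id}\circ\phi=\mathrm{id}$. For (4), take $(t_0,t_1)\in D_{0}(2)$; then $\phi(t_0,t_1)\in Y$, so $h''_t(\phi(t_0,t_1))=\phi(t_0,t_1)$ by condition (4) for $(X,Y)$, and applying $\phi^{-1}$ gives $h'_t(t_0,t_1)=(t_0,t_1)$. For (5), if $u'(t_0,t_1)=u''(\phi(t_0,t_1))<1$ then $h''_0(\phi(t_0,t_1))\in Y$, and applying $\phi^{-1}$ yields $h'_0(t_0,t_1)\in\phi^{-1}(Y)=D_{0}(2)$. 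For (6), $(u')^{-1}(0)=\phi^{-1}((u'')^{-1}(0))=\phi^{-1}(Y)=D_{0}(2)$.

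There is really no obstacle here; the only thing worth pausing on is that one needs $\phi$ to be a \emph{relative} homeomorphism in the sense that $\phi(D_{0}(2))=Y$ (and hence $\phi^{-1}(Y)=D_{0}(2)$), which is exactly what Proposition \ref{theconformalmapphi} provides. Since that proposition has already been proved, the verification above is essentially mechanical and can be presented in a few lines.
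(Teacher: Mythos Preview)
Your proposal is correct and follows essentially the same approach as the paper's own proof: both verify the six conditions of Definition~\ref{NDR} by conjugating through the relative homeomorphism $\phi$ of Proposition~\ref{theconformalmapphi}. The paper unpacks the explicit formula for $u''$ when checking conditions (5) and (6), whereas you argue more abstractly using only the NDR properties of $(X,Y)$, but the underlying argument is identical.
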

\begin{proof} We again check the points from Definition \ref{NDR}. As $\phi$ is a homeomorphism we are considering continuous maps and thus $1$ and $2$ hold. That $h_{1}''$ is the identity on $X$ trivially implies that $h_{1}'$ is the identity on $D_{+}(2)$ and thus condition $3$ also holds. If $y\in D_{0}(2)$ then from above $\phi(y)\in Y$ and thus
$h_{t}''\circ\phi(y)\in Y$. This then shows $h_{t}'(y)\in D_{0}(2)$ as $\phi^{-1}$ maps $Y$ homeomorphically to
$D_{0}(2)$, proving that $4$ holds. If $u'(t)<1$ then $2-2|\phi(t)|<1$ or $|\phi(t)|>1/2$; thus $h_{0}''\circ\phi(t)$ is in $Y$ and thus $h_{0}'(t)\in D_{0}(2)$ so $5$ is satisfied. Finally, if $u'(t)=0$ then $|\phi(t)|=1$, i.e. $\phi(t)\in Y$ and thus $t\in D_{0}(2)$ as required for condition $6$ to hold. Thus $(D_{+}(2),D_{0}(2))$ is an NDR pair.
\end{proof}

Finally, we use this combined with the functional calculus variation above to prove the following, constructing the needed NDR pair and cofibre sequence:

\begin{prop}\label{homNDRpair} $(S^{\Hom(W,V)},\inj(W,V)^{c}_{\infty})$ is an NDR pair via:
$$u(\gamma):=u'(e_{0}(\rho(\gamma)),e_{d}(\rho(\gamma)))$$
$$h_{t}:=\mathfrak{B}_{\widehat{h'_{t}}}.$$
\end{prop}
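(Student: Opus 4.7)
The plan is to transfer the NDR structure on $(D_+(2), D_0(2))$ established in Proposition \ref{D2NDRpair} through two layers of functional calculus: first via the hat construction from Lemma \ref{MakingHatWork} to pass to a face-preserving self-map of $D_+(d+1)$, and then via $\mathfrak{B}$ from Proposition \ref{fnalvarB} to reach a self-map of $S^{\Hom(W,V)}$. The key geometric observation underpinning the transfer is that $\gamma \in \inj(W,V)^{c}$ if and only if $\rho(\gamma)$ has $0$ as an eigenvalue, equivalently if and only if $(e_0(\rho(\gamma)), e_d(\rho(\gamma))) \in D_0(2)$. This correspondence lets each of the six NDR axioms for $(D_+(2), D_0(2))$ be translated directly into the corresponding axiom for $(S^{\Hom(W,V)}, \inj(W,V)^{c}_{\infty})$.

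Continuity of $u$ (condition 1) is immediate, since $u$ is the composition of the continuous $u'$ with the continuous eigenvalue functions from Lemma \ref{continuouseigenvalues} applied to the continuous $\rho$ from Corollary \ref{themaprho}. Continuity of $h$ (condition 2) follows from the chain: $t \mapsto h'_t$ is continuous $[0,1] \to F_+(2)$ by the explicit formula for $h'_t$; the hat operation is continuous by Lemma \ref{MakingHatWork}; and $\mathfrak{B}$ is continuous by Proposition \ref{fnalvarB}. Joint continuity in $t$ and $\gamma$ is then a standard adjunction argument in $CGWH$. Condition 3 is verified by observing that $h'_1 = \mathrm{id}_{D_+(2)}$ corresponds to $g(t_0, t_1) = t_0$ and $h(t_0, t_1) = t_1 - t_0$, for which the hat formula gives $\widehat{h'_1}(t_0, \ldots, t_d)_i = t_0 + \frac{t_i - t_0}{t_d - t_0}(t_d - t_0) = t_i$, so $\widehat{h'_1} = \mathrm{id}_{D_+(d+1)}$, and then the explicit characterization of $\mathfrak{B}$ in Proposition \ref{fnalvarB} forces $h_1 = \mathrm{id}$. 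Condition 6 is the equivalence chain $u(\gamma) = 0 \iff u'(e_0, e_d) = 0 \iff (e_0, e_d) \in D_0(2) \iff e_0(\rho(\gamma)) = 0 \iff \gamma \in \inj(W,V)^{c}$, with the added basepoints matching on both sides because $\rho$ and the eigenvalue maps are proper.

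The two remaining axioms require slightly more care. For condition 4, if $\gamma \in \inj(W,V)^{c}$ then the bottom eigenvalue of $\rho(\gamma)$ is $0$; condition 4 for $(D_+(2), D_0(2))$ gives $h'_t(0, e_d) = (0, e_d)$, which forces $g(0, e_d) = 0$ and $h(0, e_d) = e_d$ in the $(g, h)$ decomposition of $h'_t$. Substituting into the hat formula yields $\widehat{h'_t}(0, e_1, \ldots, e_d)_i = 0 + \frac{e_i}{e_d} e_d = e_i$ for every $i$ when $e_d > 0$, with the degenerate case $e_d = 0$ handled by the boundary clause $\widehat{h'_t}(0, \ldots, 0) = (0, \ldots, 0)$. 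Thus $\widehat{h'_t}$ fixes the eigenvalue tuple of $\rho(\gamma)$, and by the explicit formula for $\mathfrak{B}$ in Proposition \ref{fnalvarB} we conclude $h_t(\gamma) = \gamma$. For condition 5, if $u(\gamma) < 1$ then condition 5 for $(D_+(2), D_0(2))$ places $h'_0(e_0, e_d) \in D_0(2)$, so its first coordinate vanishes, i.e. $g(e_0, e_d) = 0$; the hat formula then forces the first coordinate of $\widehat{h'_0}(e_0, e_1, \ldots, e_d)$ to be $0$, so $\rho(h_0(\gamma))$ has $0$ as an eigenvalue and hence $h_0(\gamma) \in \inj(W,V)^{c}$. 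The main point requiring vigilance throughout is consistent behaviour at the added basepoint $\infty$, but this is handled automatically since $\rho$ and the eigenvalue maps are proper, $u'$ and $h'_t$ already extend continuously to the basepoint, and $\mathfrak{B}$ produces based maps.
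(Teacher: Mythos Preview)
Your proof is correct and follows essentially the same approach as the paper: both verify the six NDR axioms by pushing the corresponding axioms for $(D_+(2),D_0(2))$ through the hat construction and then through $\mathfrak{B}$, with the same case split on whether $e_d=0$ in condition~4 and the same use of $g(e_0,e_d)=0$ in condition~5. Your treatment of condition~6 is slightly more streamlined, invoking condition~6 for $(D_+(2),D_0(2))$ directly rather than unwinding through $\phi$ and $u''$, but this is cosmetic.
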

\begin{proof} Again we refer to Definition \ref{NDR}. Continuity of $h_{t}$ follows from the continuity of the constructions \ref{fnalvarB} and \ref{MakingHatWork}, while continuity of $u$ follows from Lemma \ref{continuouseigenvalues}, Corollary \ref{themaprho} and the continuity of $u'$; thus $1$ and $2$ hold. Now let $\gamma:W\to V$ be a homomorphism, we wish to show that $\mathfrak{B}_{\widehat{h'_{1}}}(\gamma)=\gamma$. Note that 
$h_{1}'(t_{0},t_{1})=(t_{0},t_{1})$. Thus applying construction \ref{MakingHatWork} gives us $\widehat{h'_{1}}(t_{0},\ldots,t_{d})_{i}=t_{0}+(\frac{t_{i}-t_{0}}{t_{d}-t_{0}})(t_{d}-t_{0})=t_{i}$. Thus by the explicit expression of $\mathfrak{B}_{\widehat{h'_{1}}}$ from Proposition \ref{fnalvarB} we have $\mathfrak{B}_{\widehat{h'_{1}}}(\gamma)=\gamma$, and condition $3$ is satisfied.

For condition $4$, let $\gamma$ be non-injective. We wish to show that $\mathfrak{B}_{\widehat{h'_{t}}}(\gamma)=\gamma$ for all $t$. By Proposition \ref{fnalvarB} there is an orthonormal basis of eigenvectors $v_{0},\ldots,v_{d-1}$ for $\gamma^{\dag}\gamma$ with eigenvalues $e_{i}^{2}$ such that $\gamma(v_{i})=e_{i}m_{i}$ for some $m_{i}$ orthonormal in $W$. As $\gamma$ is non-injective it follows that $e_{0}=0$. Then if $\widehat{h'_{t}}(\underline{e})=\underline{s}$ we have $\mathfrak{B}_{\widehat{h'_{t}}}(\gamma)(v_{i})=s_{i}m_{i}$. We claim that $\widehat{h'_{t}}(\underline{e})=\underline{e}$, which would be sufficient to prove that $4$ holds.

We can write $h_{t}'(t_{0},t_{1})$ in the form $(g(t_{0},t_{1}),g(t_{0},t_{1})+h(t_{0},t_{1}))$ for some $g$ and $h$. First consider the case where $e_{i}=0$ for all $i$. If so, then $(\widehat{h'_{t}})_{i}(\underline{0})=g(0,0)$ by definition. We have $\phi(0,0)=1$ and $h_{t}''(1)=1$ so $h_{t}'(0,0)=(0,0)$. Thus $g(0,0)=0$ and we have $(\widehat{h'_{t}})_{i}(\underline{0})=0$.

Now consider the case where $e_{d}\neq0$. Then $(\widehat{h'_{t}})_{i}(\underline{e})=g(0,e_{d})+h(0,e_{d})e_{i}/e_{d}$.
We claim that $g(0,e_{d})=0$ and $h(0,e_{d})=e_{d}$, i.e. that $h_{t}'(0,e_{d})=(0,e_{d})$. As $\phi$ maps $D_{0}(2)$ to $Y$ we have that $h_{t}''(\phi(0,e_{d}))=\phi(0,e_{d})$ and thus that $h_{t}'(0,e_{d})=(0,e_{d})$ as required, proving that $4$ holds.

For condition $5$, let $\gamma$ be such that $u(\gamma)<1$. We wish to show that $\mathfrak{B}_{\widehat{h'_{0}}}(\gamma)$ is non-injective, i.e. that if $v_{i}$ is an orthonormal basis for $\gamma^{\dag}\gamma$ with eigenvalues $e_{i}^{2}$ then $(\widehat{h'_{0}})_{0}(\underline{e})=0$. If all $e_{i}=0$ this is trivial from above, if not then observe that $e_{0}=e_{0}(\rho(\gamma))$ and $e_{d}=e_{d}(\rho(\gamma))$ so by the assumption that $u(\gamma)<1$ we have $u'(e_{0},e_{d})<1$ and therefore $h'_{0}(e_{0},e_{d})\in D_{0}(2)$ by Proposition \ref{D2NDRpair}. Thus $h_{0}'(e_{0},e_{d})=(0,h(e_{0},e_{d}))$ for some $h$, so $(\widehat{h'_{0}})_{0}(\underline{e})=0+h(e_{0},e_{d})(e_{0}-e_{0})/(e_{d}-e_{0})=0$ as required, hence condition $5$ holds.

Finally we consider condition $6$. Let $\gamma$ be such that $u(\gamma)=0$. We wish to show that $\gamma$ is non-injective. Note $u(\gamma)=u'(e_{0}(\rho(\gamma)),e_{d}(\rho(\gamma)))=u''\circ\phi(e_{0}(\rho(\gamma)),e_{d}(\rho(\gamma)))$. This is zero if and only if $2-2|\phi(e_{0}(\rho(\gamma)),e_{d}(\rho(\gamma)))|=0$ which occurs when $|\phi(e_{0}(\rho(\gamma)),e_{d}(\rho(\gamma)))|=1$. By the definition of $\phi$, this implies that $(e_{0}(\rho(\gamma)),e_{d}(\rho(\gamma)))\in D_{0}(2)$ which in turn shows that $e_{0}(\rho(\gamma))=0$ which proves that $\gamma$ is non-injective as required. Thus $(S^{\Hom(W,V)},\inj(W,V)^{c}_{\infty})$ is an NDR pair.
\end{proof}

\begin{cor}\label{homcofibseq} The following is a cofibre sequence, with $i$, $p$ and $e$ being as defined from Proposition \ref{NDRCofibreseq}:
$$\inj(W,V)^{c}_{\infty}\overset{i}{\to}S^{\Hom(W,V)}\overset{p}{\to}\inj(W,V)_{\infty}\overset{e}{\to}\Sigma\inj(W,V)^{c}_{\infty}.$$
\end{cor}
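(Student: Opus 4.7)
The plan is simply to invoke Proposition \ref{NDRCofibreseq} for the NDR pair $(S^{\Hom(W,V)},\inj(W,V)^{c}_{\infty})$ produced by Proposition \ref{homNDRpair}. That proposition immediately yields a cofibre sequence
$$\inj(W,V)^{c}_{\infty}\overset{i}{\to}S^{\Hom(W,V)}\overset{p}{\to}\frac{S^{\Hom(W,V)}}{\inj(W,V)^{c}_{\infty}}\overset{e}{\to}\Sigma\inj(W,V)^{c}_{\infty},$$
so the only content left is the identification of the middle quotient with $\inj(W,V)_{\infty}$.

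For this identification I would argue as follows. Since injectivity of a linear map $W\to V$ is equivalent to the non-vanishing of some minor, $\inj(W,V)$ is open in the finite dimensional normed space $\Hom(W,V)$, and hence $\inj(W,V)^{c}$ is closed. Applying Lemma \ref{whensubspaceisquotient} to the proper closed inclusion $\inj(W,V)^{c}\hookrightarrow\Hom(W,V)$ shows that the set $\inj(W,V)^{c}\cup\{\infty\}$, equipped with its subspace topology inside $S^{\Hom(W,V)}$, is canonically homeomorphic to the one-point compactification $\inj(W,V)^{c}_{\infty}$, and that this subspace is closed in $S^{\Hom(W,V)}$.

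The quotient $S^{\Hom(W,V)}/\inj(W,V)^{c}_{\infty}$ is then obtained from $\Hom(W,V)\cup\{\infty\}$ by collapsing the closed subset $\inj(W,V)^{c}\cup\{\infty\}$ to a single point. The complement of this closed subset in $S^{\Hom(W,V)}$ is exactly $\inj(W,V)$, so the resulting quotient is precisely the one-point compactification $\inj(W,V)_{\infty}$, with basepoint the collapsed image of $\inj(W,V)^{c}_{\infty}$. Substituting this identification into the cofibre sequence above gives the statement of the corollary. There is essentially no obstacle here: the genuine topological work was carried out in Proposition \ref{homNDRpair}, and this corollary is just the packaging of that NDR structure through Proposition \ref{NDRCofibreseq} together with a standard one-point-compactification quotient identity.
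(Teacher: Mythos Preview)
Your proposal is correct and matches the paper's approach exactly: the corollary is stated in the paper with no proof, being an immediate application of Proposition \ref{NDRCofibreseq} to the NDR pair supplied by Proposition \ref{homNDRpair}. Your explicit identification of the quotient $S^{\Hom(W,V)}/\inj(W,V)^{c}_{\infty}$ with $\inj(W,V)_{\infty}$ is a detail the paper leaves implicit, but it is the only thing to check and you have handled it correctly.
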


We will use this sequence throughout the rest of the document.

\section{The Homotopy Type of Spaces of Maps in Functional Calculus} \label{TheHomotopyTypeofCertainMapsinFunctionalCalculus}

To prove the theorem stated at the beginning of the next chapter we will show that other sequences we construct are
isomorphic to the cofibre sequence constructed at the end of the previous section; key to this will be showing that certain other maps are homotopic to the map $e$ in Corollary \ref{homcofibseq}. These other maps, however, will also turn out to arise from the construction $\mathfrak{B}_{f}$, thus we wish to explore the homotopy type of maps arising from our functional calculus constructions. We start out by making the following observation, the proof is clear:

\begin{lem}\label{homotopiesthroughfnalcalc} Let $f$ and $g$ be facial maps such that $f\simeq g$ through a facial homotopy $h_{t}$. Then $\mathfrak{A}_{f}\simeq\mathfrak{A}_{g}$ via $\mathfrak{A}_{h_{t}}$ and $\mathfrak{B}_{f}\simeq\mathfrak{B}_{g}$ via $\mathfrak{B}_{h_{t}}$.
\end{lem}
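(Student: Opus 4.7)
The plan is to observe that this lemma is a direct consequence of the continuity of the assignments $f \mapsto \mathfrak{A}_f$ and $f \mapsto \mathfrak{B}_f$ established in Propositions \ref{fnalvarA} and \ref{fnalvarB}, combined with the exponential adjunction in $CGWH$ discussed in Section \ref{onthecategories}.

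First I would unpack the hypothesis. A facial homotopy $h_t$ from $f$ to $g$ is, by definition, a continuous map $h \colon [0,1] \to \FMap(D(d), D(d) \wedge X)$ with $h(0) = f$, $h(1) = g$, and each $h_t$ facial. Postcomposing with the continuous map $\mathfrak{A} \colon \FMap(D(d), D(d) \wedge X) \to \Map(s(V)_\infty, s(V)_\infty \wedge X)$ from Proposition \ref{fnalvarA} yields a continuous path $\mathfrak{A} \circ h \colon [0,1] \to \Map(s(V)_\infty, s(V)_\infty \wedge X)$ whose endpoints are $\mathfrak{A}_f$ and $\mathfrak{A}_g$ respectively.

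Now I would invoke the adjunction
$$\Map(Y \wedge Z, W) \cong \Map(Y, \Map(Z,W))$$
in $CGWH$ (with $[0,1]$ treated as a based space in the usual way) to convert this path into a map $H \colon [0,1] \wedge s(V)_\infty \to s(V)_\infty \wedge X$ defined on points by $H(t,\alpha) = \mathfrak{A}_{h_t}(\alpha)$. By construction $H(0,-) = \mathfrak{A}_f$ and $H(1,-) = \mathfrak{A}_g$, so $H$ is precisely the homotopy $\mathfrak{A}_{h_t}$ claimed. The argument for $\mathfrak{B}$ is word-for-word identical, replacing Proposition \ref{fnalvarA} by Proposition \ref{fnalvarB} and noting that each $h_t$ being facial ensures $\mathfrak{A}_{h_t} \in F(\mathfrak{A})$ so that $\mathfrak{B}_{h_t}$ is defined along the entire homotopy.

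There is no real obstacle here; all the substantive work was already done in proving that $\mathfrak{A}$ and $\mathfrak{B}$ are themselves continuous functors of the input facial map. The only minor point to double-check is that a facial homotopy, interpreted as a path in the space of facial maps (with the compact-open topology), is continuous — but this is immediate from the universal property of the compact-open topology together with the continuity of $h$ as a map on $[0,1] \times D(d)$.
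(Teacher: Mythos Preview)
Your proposal is correct and is precisely the argument the paper has in mind: the paper simply states that ``the proof is clear'' and gives no further details, relying on exactly the continuity of $\mathfrak{A}$ and $\mathfrak{B}$ established in Propositions \ref{fnalvarA} and \ref{fnalvarB}. Your unpacking via the exponential adjunction makes explicit what the paper leaves implicit.
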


Thus we want to look at the facial homotopy type of facial maps. We look at the basic case, that of facial self-maps $D(d)\to D(d)$. We first easily observe the below homeomorphisms:

\begin{lem}\label{itsalljustspheresandballs} $D(1)\cong S^{1}$ and $D(d)\cong B^{d}$ for $d>1$. Moreover $F_{i}(D(d))\cong D(d-1)$ via the homeomorphism $(t_{0},\ldots,t_{d-1})\mapsto (t_{0},\ldots,t_{i-1},t_{i+1},\ldots,t_{d-1})$ and we deduce that each face is also homeomorphic to either $B^{n}$ for some $n$ or $S^{1}$.
\end{lem}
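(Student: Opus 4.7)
The plan is to reduce $D'(d)$ to a standard closed half-space of $\Real^{d}$ and then identify its one-point compactification. For $d = 1$ the definition gives $D'(1) = \Real$, so immediately $D(1) = \Real_{\infty} \cong S^{1}$.

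For $d \geqslant 2$, I would first apply the linear change of variables $(t_{0}, t_{1}, \ldots, t_{d-1}) \mapsto (t_{0}, t_{1} - t_{0}, \ldots, t_{d-1} - t_{d-2})$, which is a homeomorphism of $\Real^{d}$ carrying $D'(d)$ onto $\Real \times [0, \infty)^{d-1}$. Next I would iterate the elementary planar observation that $[0, \infty)^{2} \cong \Real \times [0, \infty)$ (via angle-doubling in the first quadrant, $(r, \theta) \mapsto (r, 2\theta)$) to conclude $[0, \infty)^{d-1} \cong \Real^{d-2} \times [0, \infty)$ for $d \geqslant 2$, whence $D'(d) \cong \Real^{d-1} \times [0, \infty)$ as a closed upper half-space.

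To finish the main claim, I would identify the one-point compactification $(\Real^{d-1} \times [0, \infty))_{\infty}$ with $B^{d}$. A clean approach is to embed the half-space into the closed upper half-ball $B^{d}_{+} \subset B^{d}$ via $y \mapsto y/(1 + \|y\|)$; the image is everything in $B^{d}_{+}$ outside the curved hemispheric face $S^{d-1}_{+}$, so adding the point at infinity amounts to collapsing $S^{d-1}_{+}$ to a point, and the resulting quotient $B^{d}_{+}/S^{d-1}_{+}$ is homeomorphic to $B^{d}$ by a ``half-boundary collapse''. I expect this last step to be the main obstacle: one must verify the quotient really is a closed ball as a topological space and not merely up to homotopy equivalence, which is cleanest via an explicit homeomorphism.

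Finally, for the face claim, the coordinate-deletion map $(t_{0}, \ldots, t_{d-1}) \mapsto (t_{0}, \ldots, t_{i-1}, t_{i+1}, \ldots, t_{d-1})$ restricts to a continuous bijection $F_{i}(D'(d)) \to D'(d-1)$, with continuous inverse given by duplicating the $i$-th coordinate, hence is a homeomorphism. Passing to one-point compactifications (using Lemma \ref{whensubspaceisquotient}) gives $F_{i}(D(d)) \cong D(d-1)$, and applying the main claim to $D(d-1)$ shows each face is homeomorphic to $B^{d-1}$ when $d \geqslant 3$ and to $S^{1}$ when $d = 2$.
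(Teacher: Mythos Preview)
Your proof is correct. The paper does not actually supply a proof of this lemma; it is introduced with ``we first easily observe the below homeomorphisms'' and left at that, so there is nothing to compare your argument against. Your reduction of $D'(d)$ to a closed half-space via the difference-of-coordinates change of variables followed by iterated angle-doubling is clean, and your identification of the one-point compactification of a half-space with $B^{d}$ is the standard one. The half-boundary collapse $B^{d}_{+}/S^{d-1}_{+} \cong B^{d}$ that you flag as the delicate step can be made explicit by observing directly that $B^{d}$ minus a single boundary point is homeomorphic to the closed half-space (a stereographic-type projection from that point), which is exactly the statement you need rephrased.

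Two minor remarks. First, for the face claim you invoke Lemma~\ref{whensubspaceisquotient}, but all you actually need is that a homeomorphism of locally compact Hausdorff spaces is proper and hence extends to a homeomorphism of one-point compactifications (Lemma~\ref{propernessequalsbasepoints} applied in both directions). Second, the paper's Remark~\ref{whyDandnotSimplices} records the alternative description $D(d)\cong\sst\Delta_{d-2}$, which gives another route to the ball identification if one prefers to argue simplicially rather than through half-spaces.
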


We now consider this more generally, looking at intersections of faces.

\begin{defn}\label{faceintersectionBsigma} Let $\sigma\subseteq \{0,\ldots,d-2\}$. Then set $\bar{B}_{\sigma}$ to be the intersection of faces $\bigcap_{i\notin \sigma}F_{i}(D(d))$.
\end{defn}

\begin{rem}\label{remarksonintersectionsoffaces} If $\sigma\cup\{d-1\}=\{i_{0},\ldots,i_{m-1}\}$ then the map $(t_{0},\ldots,t_{d-1})\mapsto (t_{i_{0}},\ldots,t_{i_{m-1}})$ is a homeomorphism between $\bar{B}_{\sigma}$ and $D(m)$. Hence $\bar{B}_{\emptyset}\cong S^{1}$ and $\bar{B}_{\sigma}\cong B^{|\sigma|+1}$ for $\sigma$ nonempty.
\end{rem}

Thus intersections of faces are balls unless we consider the intersection of all faces, which is a sphere embedded in $D(d)$. Moreover, facial maps by definition preserve $\bar{B}_{\sigma}$ for each $\sigma$. This suggests the following claim:

\begin{prop}\label{facialhomotopiesviadegrees} Let $f,g:D(d)\to D(d)$ be facial and such that $f$ and $g$ have the same
degree on $\bar{B}_{\emptyset}$. Then $f\simeq g$ through facial maps.
\end{prop}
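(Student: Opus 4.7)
The proof is by induction on $d$. The base case $d=1$ reduces to the classical fact: $D(1) \cong S^1$ coincides with $\bar{B}_{\emptyset}$, every based self-map is vacuously facial, and based self-maps of $S^1$ of equal degree are based homotopic.

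For the inductive step $d \geq 2$, I first reduce to the case where $f|_{\bar{B}_{\emptyset}} = g|_{\bar{B}_{\emptyset}}$. The restrictions are based maps $S^1 \to S^1$ of the same degree, hence homotopic through a based homotopy $h_s$, and I extend $h_s$ to a facial homotopy on $D(d)$ using that the inclusion $\bar{B}_{\emptyset} \hookrightarrow D(d)$ has the homotopy extension property in the facial category. This HEP is to be verified by constructing a facial NDR pair representative $(u,k)$ in the spirit of Section \ref{BuildingNDRPairsUsingFunctionalCalculus}, exploiting that $D'(d) \subset \mathbb{R}^d$ is convex and that componentwise linear interpolation preserves the relations $t_i = t_{i+1}$ defining the faces $F_i$.

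Having arranged $f = g$ on $\bar{B}_{\emptyset}$, I then induct along the stratification $\{\bar{B}_{\sigma}\}$ of Definition \ref{faceintersectionBsigma} in order of increasing $|\sigma|$. At stage $k$, I assume $f$ and $g$ agree on every $\bar{B}_{\tau}$ with $|\tau| < k$. For each $\sigma$ with $|\sigma|=k$, Remark \ref{remarksonintersectionsoffaces} gives $\bar{B}_{\sigma} \cong B^{k+1}$, and its boundary $\bigcup_{i \in \sigma} \bar{B}_{\sigma \setminus \{i\}}$ lies in the lower strata where $f$ and $g$ already coincide. Contractibility of $\bar{B}_{\sigma}$ then produces a homotopy from $f|_{\bar{B}_{\sigma}}$ to $g|_{\bar{B}_{\sigma}}$ rel boundary, which extends facially by the HEP of $\bar{B}_{\sigma} \hookrightarrow D(d)$. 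After $d-1$ iterations, $\sigma = \{0, \ldots, d-2\}$ and $\bar{B}_{\sigma} = D(d)$, yielding $f \simeq g$ through facial maps as required.

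The main obstacle is establishing the facial HEP at each nested inclusion, particularly near the basepoint $\infty$ where componentwise linear interpolation in $\mathbb{R}^d$ fails to be proper. I plan to handle this by gluing the convex-interior linear interpolation with an explicit radial retraction on a neighbourhood of $\infty$; since every face $F_i$ contains $\infty$, such a radial retraction is automatically facial on that neighbourhood, and combining the two pieces produces the facial deformation representing each NDR pair.
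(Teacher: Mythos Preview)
Your strategy and the paper's agree on the inductive stratification by the cells $\bar{B}_{\sigma}$, and both exploit that a facial map sends each ball $\bar{B}_{\sigma}$ to itself. The essential divergence is in how the extension step is carried out. The paper never invokes any HEP: it builds the homotopy only on the skeleta $\bar{B}[k] := \bigcup_{|\sigma|\leqslant k}\bar{B}_{\sigma}$, passing from $\bar{B}[k]$ to $\bar{B}[k+1]$ via the elementary observation (Corollary \ref{extendingballshomotopies}) that a homotopy between two self-maps of a ball, prescribed on its boundary sphere, extends over the ball. For each $\sigma$ with $|\sigma|=k+1$ the extension lands in $\bar{B}_{\sigma}$ itself, and these extensions patch because any two such cells meet inside $\bar{B}[k]$, where the homotopy is already given. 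At the last step $\bar{B}[d-1]=D(d)$ and the proof is finished. This entirely bypasses the ``main obstacle'' you flagged: no facial NDR representative for $\bar{B}_{\sigma}\hookrightarrow D(d)$ is needed, and nothing special happens near $\infty$.

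Your route through HEP could in principle be made to work, but as written it carries a secondary gap beyond the unestablished facial HEP. Applying the HEP of a single inclusion $\bar{B}_{\sigma}\hookrightarrow D(d)$ to push $f$ toward $g$ on $\bar{B}_{\sigma}$ gives you no control on the other top cells $\bar{B}_{\tau}$ with $|\tau|=k$; the facial homotopy you obtain on $D(d)$ may well move $f$ away from $g$ on cells you intend to process next, or undo agreement already achieved. To repair this you would have to use HEP for the whole skeleton $\bar{B}[k]\hookrightarrow D(d)$ and run the rel-boundary homotopies on all top cells simultaneously --- at which point you are essentially reproducing the paper's patching argument with an extra and unnecessary layer of HEP machinery on top.
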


To prove this we choose to induct on some concept of dimension. To make this rigourous we define the following subspaces of $D(d)$:

\begin{defn}\label{bk} Let $\bar{B}[k]$ be the union of all $\bar{B}_{\sigma}$ with $|\sigma|\leqslant k$. We say that a self map of $\bar{B}[k]$ is facial if it preserves each $\bar{B}_{\sigma}$.
\end{defn}

Note that $\bar{B}[0]=\bar{B}_{\emptyset}$ and $\bar{B}[d-1]=D(d)$. We now proceed to prove the below lemma. Note here that should this result hold then Proposition \ref{facialhomotopiesviadegrees} will follow by induction on the dimension of $\bar{B}[k]$ - assumption that $f$ and $g$ have the same degree on $\bar{B}_{\emptyset}$ gives us the assumption that they are homotopic on $\bar{B}[0]$ and thus homotopic via facial maps on $\bar{B}[0]$ as the only facial
intersection contained in $\bar{B}[0]$ is $\bar{B}_{\emptyset}=\bar{B}[0]$ itself.

\begin{lem}\label{facialhomotopiesinduction} Let $f,g:D(d)\to D(d)$ be facial maps such that $f|_{\bar{B}[k]}$ is homotopic to $g|_{\bar{B}[k]}$ through facial maps. Then this homotopy can be extended to a facial homotopy $f|_{\bar{B}[k+1]}\simeq g|_{\bar{B}[k+1]}$.
\end{lem}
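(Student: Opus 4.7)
The plan is to view $\bar{B}[k+1]$ as obtained from $\bar{B}[k]$ by attaching the cells $\bar{B}_\sigma$ with $|\sigma|=k+1$, and to extend the given facial homotopy across each cell using the contractibility of that cell. First I would verify the cellular structure: $\bar{B}[k+1] = \bar{B}[k] \cup \bigcup_{|\sigma|=k+1} \bar{B}_\sigma$, and by Remark \ref{remarksonintersectionsoffaces} each $\bar{B}_\sigma$ with $|\sigma|=k+1$ is a ball $B^{k+2}$. Its topological boundary inside $D(d)$ is $\bigcup_{i \in \sigma} \bar{B}_{\sigma \setminus \{i\}}$, since each extra relation $t_i = t_{i+1}$ for $i \in \sigma$ drops us into a codimension-one sub-face; each such piece has index set of size $k$, so the whole boundary already lies in $\bar{B}[k]$. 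Also $\bar{B}_\sigma \cap \bar{B}_{\sigma'} = \bar{B}_{\sigma \cap \sigma'} \subseteq \bar{B}[k]$ whenever $\sigma \neq \sigma'$ both have size $k+1$, so distinct top-dimensional cells meet only in already-handled territory.

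Next I would handle a single cell $\bar{B}_\sigma$. Write $H_t : \bar{B}[k] \to \bar{B}[k]$ for the given facial homotopy from $f|_{\bar{B}[k]}$ to $g|_{\bar{B}[k]}$. Because $H_t$ is facial and $\partial \bar{B}_\sigma \subseteq \bar{B}_\sigma \cap \bar{B}[k]$, the restriction $H_t|_{\partial \bar{B}_\sigma}$ takes values in $\bar{B}_\sigma$; combined with $f|_{\bar{B}_\sigma}$ and $g|_{\bar{B}_\sigma}$ (which land in $\bar{B}_\sigma$ since $f,g$ are facial on $D(d)$), this assembles to a continuous map
\[
\bar{B}_\sigma \times \{0,1\} \;\cup\; \partial \bar{B}_\sigma \times [0,1] \;\longrightarrow\; \bar{B}_\sigma.
\]
The source is $\partial(\bar{B}_\sigma \times I) \cong S^{k+2}$ and the target is the contractible ball $B^{k+2}$, so the map extends across $\bar{B}_\sigma \times [0,1] \cong B^{k+3}$, concretely by coning off along any chosen contraction of $\bar{B}_\sigma$. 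Crucially the extension still has image in $\bar{B}_\sigma$, which is what will make the global homotopy facial.

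Finally I would glue these independent extensions into a single homotopy on $\bar{B}[k+1]$. Well-definedness at overlaps is immediate: any two top cells $\bar{B}_\sigma, \bar{B}_{\sigma'}$ intersect inside $\bar{B}[k]$, where all extensions restrict to the original $H_t$. Continuity follows since $\bar{B}[k+1]$ is a finite union of compact cells glued along closed sub-cells on which the pieces already agree. The resulting homotopy is facial because for $|\tau| \leq k$ we are inside $\bar{B}[k]$ where $H_t$ was facial by hypothesis, and for $|\tau| = k+1$ preservation of $\bar{B}_\tau$ is exactly the condition we engineered cell-by-cell. The main obstacle—and really the only non-routine point—is controlling the image of the extension so that it stays inside the appropriate ball $\bar{B}_\sigma$ rather than drifting into the rest of $D(d)$; this is where the identification of $\bar{B}_\sigma$ as a contractible ball (rather than something with nontrivial higher homotopy) is essential, and it is the reason the degree hypothesis in Proposition \ref{facialhomotopiesviadegrees} only needs to be checked on the one piece $\bar{B}_\emptyset \cong S^1$.
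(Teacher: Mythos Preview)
Your proposal is correct and follows essentially the same approach as the paper: both arguments view $\bar{B}[k+1]$ as $\bar{B}[k]$ with the cells $\bar{B}_\sigma$ ($|\sigma|=k+1$) attached along their boundaries, extend the given homotopy over each such cell using that $\bar{B}_\sigma$ is a contractible ball (the paper packages this step as Corollary~\ref{extendingballshomotopies}, which is exactly your ``map from $\partial(\bar{B}_\sigma\times I)$ into a ball extends'' argument), and then glue using $\bar{B}_\sigma\cap\bar{B}_{\sigma'}\subseteq\bar{B}[k]$ before checking faciality. The only cosmetic difference is that the paper isolates the extension step into two preparatory lemmas, while you invoke contractibility directly.
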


To prove this, we need another small lemma and brief corollary.

\begin{lem}\label{extendingboundariesofballs} Suppose that $X$ is homeomorphic to a ball of dimension $n+1$ and $Y$ homeomorphic to a ball of dimension $n$, $X\cong B^{n+1}$ and $Y\cong B^{n}$, and let $p$ be a map $\partial X\to Y$. Then there exists an extension $\tilde{p}:X\to Y$ of $p$.
\end{lem}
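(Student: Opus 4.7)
The plan is to exploit the fact that $Y$ is contractible: since $Y \cong B^n$, it is an absolute retract, so any map from a closed subspace of a reasonable space into $Y$ extends. Equivalently, since $X \cong B^{n+1}$ is homeomorphic to the cone on its boundary $\partial X \cong S^n$, and any map into a contractible space from the base of a cone extends over the cone, we are done.

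More concretely, I would first fix a homeomorphism $\psi: X \to B^{n+1}$, so that $\psi$ identifies $\partial X$ with $S^n$ and provides a canonical coning structure: every point of $X$ can be written as $\psi^{-1}(s \cdot x)$ for a unique $x \in \partial X$ and $s \in [0,1]$, with $s = 0$ corresponding to the cone point $\psi^{-1}(0)$ and $s = 1$ corresponding to $\partial X$ itself. Next, choose a basepoint $y_0 \in Y$ and a contraction $H: Y \times [0,1] \to Y$ with $H(y, 1) = y$ and $H(y, 0) = y_0$; such a contraction exists because $Y \cong B^n$ is convex (we may take $H(y, t) = (1-t) y_0 + t y$ under the identification $Y \cong B^n$).

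Now define the extension by
$$\tilde{p}\bigl(\psi^{-1}(s \cdot x)\bigr) := H(p(x), s)$$
for $x \in \partial X$ and $s \in [0,1]$. This is well-defined because at $s = 0$ the formula yields $H(p(x), 0) = y_0$ independently of $x$, matching the single cone point; and for $s = 1$ we recover $\tilde{p}(\psi^{-1}(x)) = H(p(x), 1) = p(x)$, so the map does restrict to $p$ on $\partial X$. Continuity follows from the continuity of $\psi$, $H$, and $p$, together with the fact that the quotient giving the coning structure is a quotient of a compact Hausdorff space.

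The main obstacle is essentially notational — making the coning parametrization rigourous so the extension is manifestly well-defined at the cone point. There is no deeper homotopy-theoretic difficulty, since the statement is really just the assertion that maps into a contractible space extend from $S^n$ over $B^{n+1}$, which is the defining property of contractibility phrased in terms of $\pi_n$.
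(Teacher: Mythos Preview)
Your proof is correct and is essentially the same as the paper's: the paper also reduces to $X=B^{n+1}$, $Y=B^n$, writes points of $X$ as $(x,t)$ with $x\in\partial X$ and $t$ a scalar, and sets $\tilde{p}(x,t)=t\,p(x)$, which is exactly your cone-plus-contraction formula with the specific linear contraction $H(y,t)=ty$ and $y_0=0$. Your write-up is simply more explicit about well-definedness at the cone point.
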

\begin{proof} Without loss of generality we can assume that $X=B^{n+1}$ and $Y=B^{n}$. In this case, we can thus parameterize $X$ by coordinates $(x,t)$ for $x$ a point on the boundary and $t$ a scalar. Under
this, we build the required extension $\tilde{p}$ via $\tilde{p}(x,t):=tp(x)$.
\end{proof}

\begin{cor}\label{extendingballshomotopies} Suppose $Y\cong B^{n}$ and $f,g:Y\to Y$ are maps such that $h:[0,1]\times\partial Y\to Y$ gives a homotopy from $f|_{\partial Y}$ to $g|_{\partial Y}$. Then there exists an extension $\tilde{h}:[0,1]\times Y\to Y$ that provides a homotopy between $f$ and $g$.
\end{cor}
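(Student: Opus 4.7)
The plan is to realise $\tilde{h}$ as an extension of a map defined on the boundary of a ball, so that Lemma \ref{extendingboundariesofballs} applies directly.

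First I would set $X := [0,1] \times Y$. Since $Y \cong B^{n}$, the product $X$ is homeomorphic to $B^{n+1}$, and its boundary decomposes as
$$\partial X = (\{0\} \times Y) \cup ([0,1] \times \partial Y) \cup (\{1\} \times Y),$$
with the three pieces overlapping along $\{0\} \times \partial Y$ and $\{1\} \times \partial Y$.

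Next I would define a candidate boundary map $p : \partial X \to Y$ by the formulas $p(0,y) = f(y)$, $p(1,y) = g(y)$, and $p(t,y) = h(t,y)$ for $y \in \partial Y$. The compatibility conditions $h(0,y) = f(y)$ and $h(1,y) = g(y)$ for $y \in \partial Y$ hold by hypothesis on $h$, so $p$ is well-defined on the overlaps; continuity then follows from the pasting lemma applied to the three closed pieces of $\partial X$.

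Finally I would invoke Lemma \ref{extendingboundariesofballs} with $X \cong B^{n+1}$ and target $Y \cong B^{n}$ to obtain an extension $\tilde{p} : X \to Y$ with $\tilde{p}|_{\partial X} = p$, and then set $\tilde{h} := \tilde{p}$. By construction $\tilde{h}(0,y) = f(y)$, $\tilde{h}(1,y) = g(y)$, and $\tilde{h}|_{[0,1]\times \partial Y} = h$, so $\tilde{h}$ is the desired homotopy. The only step that requires any care at all is the verification that the boundary data glue continuously to a single map on $\partial X$; once that is in hand, the result is an immediate application of the preceding lemma.
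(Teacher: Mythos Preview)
Your proof is correct and essentially identical to the paper's: both set $X = [0,1]\times Y \cong B^{n+1}$, assemble $f$, $g$, and $h$ into a single continuous map $p:\partial X \to Y$, check consistency on the overlaps, and then invoke Lemma \ref{extendingboundariesofballs} to extend $p$ over all of $X$.
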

\begin{proof} Set $X:=[0,1]\times Y$ then it is a trivial exercise to show that $X$ is homeomorphic to $B^{n+1}$. Now define a map $p:\partial X\to Y$ in the following way. Note that $\partial X$ is $([0,1]\times \partial Y)\cup
(\{0,1\}\times Y)$ - set $p$ therefore to be $h$ on $[0,1]\times \partial Y$, $f$ on $\{0\}\times Y$ and $g$ on $\{1\}\times Y$; note this is consistent. Then use Lemma \ref{extendingboundariesofballs} to extend $p$ to a map
$\tilde{h}:[0,1]\times Y\to Y$ giving the required homotopy.
\end{proof}

We now have enough machinery to prove Lemma \ref{facialhomotopiesinduction}. Let $f$ and $g$ be the two maps and
set $h_{k}$ to be the facial homotopy $[0,1]\times \bar{B}[k]\to \bar{B}[k]$ agreeing with $f$ on $0$ and $g$ on $1$. Now let $\bar{B}_{\sigma}$ be such that $|\sigma|=k+1$. Then we have that $\bar{B}_{\sigma}$, a $(k+2)$-ball, is contained within $\bar{B}[k+1]$ while its boundary $\partial \bar{B}_{\sigma}$, a $(k+1)$-sphere, is contained
within $\bar{B}[k]$. Restrict $f$ and $g$ to two maps $f|_{\bar{B}_{\sigma}}$ and $g|_{\bar{B}_{\sigma}}$ and restrict $h_{k}$ to a homotopy $f|_{\partial \bar{B}_{\sigma}}\simeq g|_{\partial \bar{B}_{\sigma}}$. This extends to give a homotopy $h_{k+1,\sigma}:[0,1]\times \bar{B}_{\sigma}\to \bar{B}_{\sigma}$ via Corollary \ref{extendingballshomotopies} which agrees with $h_{k}$ on the boundary, $f|_{\bar{B}_{\sigma}}$ on $0$ and $g|_{\bar{B}_{\sigma}}$ on $1$. Moreover, performing this for all face intersections of dimension $k+1$ gives us a family of maps $h_{k+1,\sigma}$. If $\bar{B}_{\sigma}$ and $\bar{B}_{\tau}$ are two different face intersections of this type, then note that $\bar{B}_{\sigma}\cap \bar{B}_{\tau}\subset \bar{B}[k]$. Thus the two homotopies $h_{k+1,\sigma}$ and $h_{k+1,\tau}$ agree on the intersection as they are both extensions of $h_{k}$ out of $\bar{B}[k]$. Therefore we can patch these maps together to get a homotopy $h_{k+1}:[0,1]\times \bigcup \bar{B}_{\sigma}\to \bigcup \bar{B}_{\sigma}$. Noticing
that $\bigcup \bar{B}_{\sigma}=\bar{B}[k+1]$ allows us to write this as $h_{k+1}:[0,1]\times \bar{B}[k+1]\to \bar{B}[k+1]$, a homotopy between $f$ and $g$ extending $h_{k}$. Finally, we need to check that this is facial, but this follows instantly for face intersections of dimension up to $k$ as $h_{k}$ is facial and furthermore follows for face intersections of dimension $k+1$ via the method of construction of patching together homotopies $h_{k+1,\sigma}$ which satisfy $h_{k+1,\sigma}([0,1]\times \bar{B}_{\sigma})\subseteq \bar{B}_{\sigma}$. Thus we have extended the homotopy, proving Lemma \ref{facialhomotopiesinduction} as required and thus proving Proposition \ref{facialhomotopiesviadegrees} via induction.

Hence we now have a criterion for when $f\simeq g:D(d)\to D(d)$. There are induced maps $f':S^{1}\to S^{1}$ and $g':S^{1}\to S^{1}$ given by $t\mapsto f(t,\ldots,t)$ and $t\mapsto g(t,\ldots,t)$. If these maps have the same degree then $f\simeq g$ through facial maps. Moreover, we can apply this to more specific cases.

In the last section we defined a map $e:\inj(W,V)_{\infty}\to\Sigma \inj(W,V)^{c}_{\infty}$ as $\mathfrak{B}_{f}$ for a certain facial map $f:D_{+}(d)\to \Sigma D_{+}(d)$. We want to apply the homotopical information above to this case. We start with the below lemma, the proof is clear:

\begin{lem}\label{factoringthroughinfnalcalc} Let $f:D_{+}(d)\to \Sigma D_{+}(d)$ be facial, consider the induced map $\mathfrak{B}_{f}:S^{\Hom(W,V)}\to S^{\Real\oplus\Hom(W,V)}$. Then $\mathfrak{B}_{f}$ factors through
$\inj(W,V)_{\infty}\to\Sigma \inj(W,V)^{c}_{\infty}$ if and only if $f$ factors through $D_{+}(d)/D_{0}(d)\to\Sigma
D_{0}(d)$.
\end{lem}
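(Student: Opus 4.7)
The plan is to work directly from the explicit formula for $\mathfrak{B}_f$ given in Proposition \ref{fnalvarB}: if $\gamma\in\Hom(W,V)$ has an orthonormal eigenbasis $v_0,\ldots,v_{d-1}$ for $\gamma^{\dag}\gamma$ with eigenvalues $e_0^2\leqslant\ldots\leqslant e_{d-1}^2$ and $\gamma(v_i)=e_im_i$ for $m_i$ orthonormal in $V$, and if we write $f(e_0,\ldots,e_{d-1})=(s_0,\ldots,s_{d-1})\wedge x$ in $D_+(d)\wedge S^{\Real}$, then $\mathfrak{B}_f(\gamma)=f(\gamma)\wedge x$ where $f(\gamma)$ is the homomorphism sending $v_i$ to $s_im_i$. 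Each of the two ``factors through'' assertions unpacks as a pair of conditions: a source condition (the map kills a prescribed subspace and so descends through the quotient) and a target condition (the image lies in the prescribed subspace). I will show that the explicit formula converts both conditions on $f$ directly into both conditions on $\mathfrak{B}_f$, and vice versa.

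Two elementary observations bridge the two sides. First, $\gamma$ is non-injective iff $e_0=0$, equivalently $(e_0,\ldots,e_{d-1})\in D_0(d)$; likewise $f(\gamma)$ is non-injective iff $s_0=0$, equivalently $(s_0,\ldots,s_{d-1})\in D_0(d)$. Second, by the surjectivity of $\nu$ in Lemma \ref{themapnu}, every tuple in $D'_+(d)$ arises as the eigenvalue tuple of $\rho(\gamma)$ for some $\gamma\in\Hom(W,V)$, so I can realise any desired input of $f$ by an actual homomorphism.

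For the direction $(\Leftarrow)$, assume $f$ factors as $D_+(d)\to D_+(d)/D_0(d)\to\Sigma D_0(d)\to\Sigma D_+(d)$, which means $f(D_0(d))=*$ and the image of $f$ lies in $\Sigma D_0(d)$. If $\gamma\in\inj(W,V)^c$ then $(e_i)\in D_0(d)$ forces $f(e_i)=*$ and so $\mathfrak{B}_f(\gamma)=*$, giving the source condition. For arbitrary $\gamma$, the target condition on $f$ forces $s_0=0$, so $f(\gamma)$ is non-injective and $\mathfrak{B}_f(\gamma)=f(\gamma)\wedge x\in\Sigma\inj(W,V)^c_\infty$, giving the target condition.

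Conversely, assume $\mathfrak{B}_f$ factors. Given $(e_i)\in D_+(d)$, pick a realising $\gamma$. The hypothesis $\mathfrak{B}_f(\gamma)\in\Sigma\inj(W,V)^c_\infty$ forces $f(\gamma)$ to be non-injective, so $s_0=0$, giving $f(e_i)\in\Sigma D_0(d)$ and establishing the target condition for $f$. For the source condition, if additionally $(e_i)\in D_0(d)$ then $\gamma$ is non-injective, so $\mathfrak{B}_f(\gamma)=f(\gamma)\wedge x=*$. The one place where care is needed, and the main bookkeeping obstacle, is that a smash product vanishes whenever either factor is the basepoint, so a short case analysis is required: either $x=*$ in $S^{\Real}$, in which case $f(e_i)=(s_i)\wedge x=*$ directly; or $f(\gamma)=*$ in $S^{\Hom(W,V)}$, which can only happen if some $s_i$ equals $\infty$, forcing the tuple $(s_i)$ to be the added point of $D_+(d)$ and again $f(e_i)=*$. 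Either way $f(D_0(d))=*$, and the desired factorization of $f$ follows.
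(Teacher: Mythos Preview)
Your proof is correct and supplies the details that the paper omits entirely---the paper simply declares the proof to be clear and moves on. One small remark: in the converse target-condition step you write that $\mathfrak{B}_f(\gamma)\in\Sigma\inj(W,V)^c_\infty$ ``forces $f(\gamma)$ to be non-injective''; strictly this also permits $\mathfrak{B}_f(\gamma)=*$, but the basepoint case analysis you carry out for the source condition handles that possibility as well (yielding $f(e_i)=*\in\Sigma D_0(d)$), so the argument goes through.
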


Hence we want to look at facial maps $D_{+}(d)/D_{0}(d)\to\Sigma D_{0}(d)$. This is equivalent to the above work via the two homeomorphisms below, the proof is standard to check:

\begin{lem}\label{wereactuallyworkingwithd} We have the below face-preserving homeomorphisms:
\begin{itemize}
\item $D_{+}(d)/D_{0}(d)\cong D(d)$ via the map $t\mapsto \log (t)$.
\item $\Sigma D_{0}(d)\cong D(d)$ via the map $(s,t_{0}=0,\ldots,t_{d-1})\mapsto (s+t_{0},s+t_{1},\ldots,s+t_{d-1})$.
\end{itemize}
\end{lem}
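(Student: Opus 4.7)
My plan is to prove each homeomorphism by the same template: exhibit the continuous inverse and then invoke the fact that a continuous bijection from a compact space to a Hausdorff space is a homeomorphism. Both $D_{+}(d)/D_{0}(d)$ and $\Sigma D_{0}(d)$ are compact (a quotient and a smash product of compact spaces respectively), and $D(d)$ is Hausdorff as the one-point compactification of a locally compact Hausdorff space, so the template applies in each case.

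For the first map, I would first verify that applying $\log$ coordinatewise gives a continuous bijection $D_{+}'(d)\setminus D_{0}'(d)\to D'(d)$ with continuous inverse $\Exp$ applied coordinatewise; the fact that $\log:(0,\infty)\to\Real$ is a strictly increasing homeomorphism ensures both that the image is all of $D'(d)$ and that the relations $t_{i}=t_{i+1}$ are preserved, so faces go to faces. The substance of the argument is then to confirm that this map descends to a well-defined map $D_{+}(d)/D_{0}(d)\to D(d)$ sending the basepoint to $\infty\in D(d)$, and that the descended map is continuous. This in turn reduces to observing that as $t$ approaches either $D_{0}'(d)$ (forcing $t_{0}\to 0$ and hence $\log t_{0}\to -\infty$) or the point at infinity in $D_{+}(d)$ (forcing $t_{d-1}\to\infty$ and hence $\log t_{d-1}\to +\infty$), the image escapes every compact subset of $D'(d)$, i.e.\ tends to $\infty\in D(d)$. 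Combined with the compact-to-Hausdorff observation, this gives the homeomorphism.

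For the second map, the candidate inverse is $(r_{0},\ldots,r_{d-1})\mapsto (r_{0},\,(0,r_{1}-r_{0},\ldots,r_{d-1}-r_{0}))$, and the monotonicity plus the $t_{0}=0$ condition on the target match precisely the monotonicity of the $r_{i}$. Continuity and bijectivity off the basepoint are then immediate. The point to check is again the basepoint of the reduced suspension: $\Sigma D_{0}(d)$ is approached either when the suspension coordinate $s\to\pm\infty$ or when $t_{d-1}\to\infty$, and in either case $s+t_{d-1}\to\pm\infty$, so the image tends to $\infty\in D(d)$ as required. Face preservation is immediate from the equivalence $s+t_{i}=s+t_{i+1}\iff t_{i}=t_{i+1}$.

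I expect the only mildly delicate step to be the suspension bookkeeping in the second part, specifically reconciling the convention (from Section \ref{conventions}) that the suspension coordinate ranges over $\Real$ with the identification of the basepoint of $\Sigma D_{0}(d)$ with $\infty\in D(d)$; once this is pinned down the rest is essentially formal.
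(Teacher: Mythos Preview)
Your proposal is correct and in fact considerably more detailed than the paper itself, which simply states that ``the proof is standard to check'' and gives no argument. Your compact-to-Hausdorff template together with the explicit inverses and the basepoint checks is exactly the kind of verification the paper is leaving to the reader, so there is nothing to compare against.
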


Hence the facial homotopy type of facial maps $D_{+}(d)/D_{0}(d)\to\Sigma D_{0}(d)$ is determined by degree. This works as follows:

\begin{rem}\label{whatdegreesmean} Via the identifications above, the $S^{1}$ in $D_{+}(d)/D_{0}(d)$ is given by the intersection of faces and thus is achieved when considering points in $D_{+}(d)/D_{0}(d)$ of the form $(t,\ldots,t)$
for $t$ in the strictly positive real numbers. The copy of $S^{1}$ in $\Sigma D_{0}(d)$ also arises from the intersection of faces - in this case the $S^{1}$ is from points $(s,0,\ldots,0)$ for $s$ the suspension coordinate taken over the real numbers. Thus observing the degree of the map $f:D_{+}(d)/D_{0}(d)\to \Sigma D_{0}(d)$ is given by considering the map $S^{1}\to S^{1}$ given by $f(t,\ldots,t)$, with the initial $S^{1}$ running over the positive real numbers and the final $S^{1}$ running over $\Real$.
\end{rem}
\chapter{Building the Tower}
\label{ch:ch4}

\section{The Tower}\label{TheTower}

We now state Theorem \ref{introresult} in more detail. Most of this chapter is then dedicated to defining the spectra and maps used in the construction of the tower; the topological claims in the result are proved in Chapter \ref{ch:ch5}. We recall most of the notation laid out throughout Chapter \ref{ch:ch1}.

\begin{thm}\label{themaintheorem}
There is a natural tower of finite $G$-$CW$-spectra:
$$
\xymatrix@C=1.5cm
{\Ell_{\infty}=X_{d_{0}}\ar[d]_{\pi_{d_{0}}}&G_{d_{0}}(V_{0})^{\Hom(T,V_{1} - V_{0})\oplus s(T)}\ar[l]_{\phi_{d_{0}}}\\
X_{d_{0}-1}\ar[ur]|\bigcirc^{\delta_{d_{0}}}\ar[d]_{\pi_{d_{0}-1}}&G_{d_{0}-1}(V_{0})^{\Hom(T,V_{1} - V_{0})\oplus s(T)}\ar[l]_{\phi_{d_{0}-1}}\\
X_{d_{0}-2}\ar[ur]|\bigcirc^{\delta_{d_{0}-1}}\ar@{.}[d]&G_{d_{0}-2}(V_{0})^{\Hom(T,V_{1} - V_{0})\oplus s(T)}\ar[l]_{\phi_{d_{0}-2}}\\
X_{1}\ar[d]_{\pi_{1}}&G_{1}(V_{0})^{\Hom(T,V_{1} - V_{0})\oplus s(T)}\ar[l]_{\phi_{1}}\\
S^{0}=X_{0}\ar[ur]|\bigcirc^{\delta_{1}}}
$$
This tower has the following properties:
\begin{enumerate}
\item The composition $\pi:\Ell_{\infty}\to S^{0}$ of the maps $\pi_{k}$ is the projection.
\item Each triangle is a cofibre triangle.
\end{enumerate}
\end{thm}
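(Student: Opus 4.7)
The plan is to define the spectra $X_k$ and the maps of the tower by hand in Sections \ref{TopologizingTheTower}--\ref{CandidateMaps}, then verify in Chapter \ref{ch:ch5} that each triangle is a cofibre triangle by identifying it, up to twisting by a Thom bundle, with the NDR-pair cofibre sequence of Corollary \ref{homcofibseq}. Since Miller's filtration of $\Ell$ by rank of $A-I$ is unavailable in the absence of an equivariant inclusion $V_0\to V_1$, the intermediate sets $\tilde{X}'_k$ must be built from intrinsic data: a natural candidate is an equivalence class of tuples consisting of a subspace $W\leqslant V_0$ together with a selfadjoint endomorphism $\alpha\in s(W)$ and an isometry off $\Ker(\alpha)$ into $V_1$, glued along the degeneracy locus of $\alpha$ so that such points drop to a lower filtration level. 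Via the functional calculus identification $\kappa'$ of Proposition \ref{thetaEalpha}, this recovers $\Ell_\infty$ at the top stratum and $S^0$ at the bottom, and the sets are topologized as proper quotients using Lemma \ref{whensubspaceisquotient}.

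The candidate maps are then written down explicitly. The map $\pi_k$ degenerates the top eigenvalue of the selfadjoint data and thereby pushes a point into the identified subspace of $\tilde{X}'_{k-1}$, while $\delta_k$ is built from the collapse $\kappa^!$ of Proposition \ref{thetaEalpha} smashed with the relevant Thom data over $G_k(V_0)$; the appearance of the virtual bundle $\Hom(T,V_1-V_0)$ in the cofibre (rather than just $\Hom(T,V_1)$) reflects a necessary stable twist by a representation sphere, enabled by the destabilization discussed in Section \ref{DestabilizingtheTower}.

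The core topological work is identifying the cofibre of each $\pi_k$ as $\Sigma^\infty G_k(V_0)^{\Hom(T,V_1-V_0)\oplus s(T)}$. I would begin with the top triangle ($k=d_0$), where the Grassmannian is a single point and the triangle collapses to a single instance of the functional calculus cofibre sequence \ref{homcofibseq}, giving the base case essentially by inspection. For general $k$, I would argue fibrewise over $G_k(V_0)$: over each $W\in G_k(V_0)$ the triangle should restrict to a copy of Corollary \ref{homcofibseq} twisted by the fibres of $\Hom(T,V_1-V_0)$ and $s(T)$. Lemma \ref{cofibseqfibrebundles} then assembles these fibrewise cofibre sequences into a global cofibre sequence of Thom spectra, and Lemma \ref{quotientcofibresequences} lets me simplify the comparison to the candidate triangle by passing to a common quotient that clears away lower filtration levels.

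The hardest step will be verifying that, after this quotient simplification, the boundary map produced by the NDR pair agrees up to $G$-equivariant homotopy with the explicit $\delta_k$ of Section \ref{CandidateMaps}. This is where Section \ref{TheHomotopyTypeofCertainMapsinFunctionalCalculus} does the crucial work: by Proposition \ref{facialhomotopiesviadegrees} a facial self-map of $D(d)$ is determined up to facial homotopy by its degree on $\bar{B}_\emptyset\cong S^1$, so via Lemmas \ref{factoringthroughinfnalcalc}--\ref{wereactuallyworkingwithd} the comparison reduces to a single degree computation, in which the sign of the twist $-\Sigma f$ from Definition \ref{thecofibrestwist} must be tracked against the minus sign built into Proposition \ref{thetaEalpha}. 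A secondary obstacle is that the quotient simplification obscures whether the unstable candidates really represent the tower's maps; this is remedied by the lifts constructed in Section \ref{BypassingtheQuotientbyaLift}, though those properly belong to the topological refinement in Chapter \ref{ch:ch6} rather than to the proof of the theorem itself.
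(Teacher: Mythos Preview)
Your overall proof architecture matches the paper closely: the top triangle is handled directly via Corollary \ref{homcofibseq}, the general triangle is obtained fibrewise over $G_k(V_0)$ using Lemma \ref{cofibseqfibrebundles}, the comparison is simplified by the quotient of Lemma \ref{quotientcofibresequences}, and the identification of the boundary map reduces to a degree-one check via Proposition \ref{facialhomotopiesviadegrees}. The role of Section \ref{BypassingtheQuotientbyaLift} is also correctly diagnosed.

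Where your proposal diverges is in the construction of the intermediate sets $\tilde X'_k$ and the maps $\tilde\pi_k$. You suggest tuples $(W,\alpha,\theta)$ with a \emph{variable} subspace $W\leqslant V_0$, $\alpha\in s(W)$, and an isometry off $\Ker(\alpha)$, glued along the degeneracy locus; and you describe $\pi_k$ as ``degenerating the top eigenvalue''. The paper does something both simpler and more rigid: $\alpha$ is always a selfadjoint endomorphism of the \emph{whole} of $V_0$, the subspace $P_k(\alpha)\leqslant V_0$ is determined intrinsically by the eigenvalue ordering of $\alpha$ (Definition \ref{pkdefn}), and $\tilde X'_k=\{(\alpha,\theta):\alpha\in s(V_0),\ \theta\in\mathcal L(P_k(\alpha),V_1)\}$. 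The map $\tilde\pi'_k$ is then pure restriction, $(\alpha,\theta)\mapsto(\alpha,\theta|_{P_{k-1}(\alpha)})$; nothing happens to $\alpha$. This makes the quotient/subspace topologies of Lemma \ref{topologisinglemma} transparent, and the homeomorphisms $\mathfrak q_k,\mathfrak f_k$ of Propositions \ref{thetaEalphaextended}--\ref{sigmainjchomeoextended} fall out by splitting $\alpha$ along $P_k(\alpha)\oplus P_k(\alpha)^\perp$. Your variable-$W$ model would need a nontrivial gluing relation even to recover $s(V_0)$ at $k=0$, and it obscures why the composite $\tilde X'_{d_0}\to\tilde X'_0$ is the projection (part 1 of the theorem), which in the paper's model is immediate from $P_0(\alpha)=0$ and $P_{d_0}(\alpha)=V_0$.

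A second small correction: the quotient step for general $k$ is not by ``lower filtration levels'' in any Miller-like sense, but by the specific locus $Y'_k=\{(\alpha,\theta):\dim P_k(\alpha)<k\}$ where the $(d_0{-}k)$th and $(d_0{-}k{-}1)$th eigenvalues of $\alpha$ coincide; this is exactly what is needed to make $P_k$ land in the single Grassmannian $G_k(V_0)$ (Corollary \ref{Pkiscont}) and to set up the fibrewise picture.
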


We have not specified in the above statement what any of the spectra or maps actually are. We spend this section defining sets that will become the $X_{k}$ after a desuspension before topologizing these sets in $\S$\ref{TopologizingTheTower}. We then spend the rest of this chapter showing that the Thom spaces in the above statement are well-defined virtual $G$-vector bundles that lie in $\mathcal{F}_{G}$ before concluding the section by defining the maps. The first step in proving this theorem is building a tower of $G$-spaces from $S^{\svo}\wedge\Ell_{\infty}$ to $S^{\svo}$.

Let $\alpha\in\svo$. Then as discussed in $\S$\ref{TheStandardTheory} $\alpha$ has real eigenvalues which can
thus be ordered. Hence this ordering passes to the eigenspaces of $\alpha$.

\begin{note}\label{whatareeigenspaces} Recall $e_{k}(\alpha)$ to be the $k^{\text{th}}$ eigenvalue of $\alpha$. Then the eigenspace associated to $e_{k}(\alpha)$ can be thought of as $\Ker(\alpha-e_{k}(\alpha))$; this is a vector subspace of $V_{0}$. Hence the $k^{\text{th}}$ eigenspace of $\alpha$ is the space $\Ker(\alpha-e_{k}(\alpha))$.
\end{note}

\begin{defn}\label{pkdefn} Define $P_{k}(\alpha)$ - a vector subspace of $V_{0}$ - to be the orthogonal complement in $V_{0}$ of the direct sum of the first $d_{0}-k$ eigenspaces of $\alpha$ under the eigenspace ordering:
$$P_{k}(\alpha):=\left[\bigoplus_{j+k<d_{0}}(\Ker(\alpha-e_{j}(\alpha)))\right]^{\bot}.$$
\end{defn}

\begin{rem}\label{whatpkcouldmean} Let $e_{d_{0}-k-1}(\alpha)\neq e_{d_{0}-k}(\alpha)$. Then in this case $P_{k}(\alpha)$ is the direct sum of the top $k$ eigenspaces of $\alpha$. In general, $P_{k}(\alpha)$ is the largest direct sum of top eigenspaces of $\alpha$ such that $\Dim(P_{k}(\alpha))\leqslant k$.
\end{rem}

It is easy to see from the definition that $P_{0}(\alpha)=0$, and $P_{d_{0}}(\alpha)=V_{0}$. We also note here that $P_{d_{0}-1}(\alpha)=(\Ker(\alpha-e_{0}(\alpha)))^{\bot}$ and that $P_{k}(\alpha)$ includes into $P_{k+1}(\alpha)$ in the obvious way. We now define sets $\tilde{X}_{k}'$ using this $P_{k}(\alpha)$ construction:

\begin{defn}\label{tildeXk'} The set $\tilde{X}_{k}'$ is defined as follows:
$$\tilde{X}_{k}':=\{(\alpha,\theta):\alpha\in\svo, \theta:P_{k}(\alpha)\to V_{1}\text{ is isometric}\}.$$
\end{defn}

\begin{rem}\label{itsagset} For any representation $V$ the space $s(V)$ is a $G$-space with conjugation action $g.\alpha (v):=g\alpha(g^{-1}v)$. Hence we can equip $\tilde{X}_{k}'$ with the action $g.(\alpha,\theta):=(g.\alpha,g.\theta)$. We need to check that this action is well-defined, i.e. that $g.\theta$ is an isometry from $P_{k}(g.\alpha)$ to $V_{1}$. This boils down to checking that $g^{-1}(P_{k}(g.\alpha))=P_{k}(\alpha)$ but this easily follows from the definition of the action on $\alpha$. Thus $\tilde{X}_{k}'$ is a $G$-set.
\end{rem}

It can be immediately noted from the remarks concerning $P_{0}(\alpha)$ and $P_{d_{0}}(\alpha)$ that $\tilde{X}_{d_{0}}'=\svo\times\Ell$ and that $\tilde{X}_{0}'=\svo$. We now define the following map of sets:

\begin{defn}\label{themapspi} We have a surjection of $G$-sets as follows:
$$\tilde{\pi}_{k}':\tilde{X}_{k}'\to\tilde{X}_{k-1}'$$
$$(\alpha,\theta)\mapsto (\alpha,\theta|_{P_{k-1}(\alpha)}).$$
This map is constructed from the inclusion $P_{k-1}(\alpha)\to P_{k}(\alpha)$.
\end{defn}

The $G$-set $\tilde{X}_{k}'$ will eventually be built into the $G$-spectrum $X_{k}$. There is one thing we have not mentioned, however, and that is the topology on $\tilde{X}_{k}'$. We now choose to discuss this in detail.

\section{Topologizing the Tower}\label{TopologizingTheTower}

We wish to suitably topologize the sets $\tilde{X}_{k}'$. The topology is clear for the cases $k=d_{0}$ and $k=0$ as they are the classical spaces $\svo\times\Ell$ and $\svo$ which have a standard topology; moreover equipped with this topology the two spaces become $G$-spaces. For the rest of the tower, however, we have to be more subtle. When topologized, we wish for the maps $\tilde{\pi}_{k}'$ to be continuous. This suggests that we should take the following topology:

\begin{rem}\label{quotientthingy} There is a surjection $\svo\times\Ell\to \tilde{X}'_{k}$ given by $(\alpha,\theta)\mapsto (\alpha,\theta|_{P_{k}(\alpha)})$. This allows us to
topologize $\tilde{X}'_{k}$ as a quotient of $\svo\times\Ell$.
\end{rem}

Under this topology the sets $\tilde{X}'_{k}$ become $G$-spaces and the maps $\tilde{\pi}_{k}'$ are continuous quotient $G$-maps. Further, we have the following standard result:

\begin{lem}\label{thetowerisGCW} The spaces $\svo$ and $\Ell$ have the homotopy type of finite $G$-$CW$-complexes. Hence each $G$-space $\tilde{X}_{k}'$ has the homotopy type of a finite $G$-$CW$-complex as the quotient of a finite complex.
\end{lem}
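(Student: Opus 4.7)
The plan is to establish the three claims in turn. First, $s(V_0)$ is a finite-dimensional real $G$-representation via the conjugation action, so the straight-line deformation $(\alpha,t)\mapsto(1-t)\alpha$ provides a $G$-equivariant contraction to the $G$-fixed origin. Thus $s(V_0)$ has the $G$-homotopy type of a point, which is trivially a finite $G$-$CW$-complex.

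Second, $\mathcal{L}(V_0,V_1)$ is a compact smooth $G$-manifold: it sits inside $\Hom(V_0,V_1)$ as the preimage of $\mathrm{id}_{V_0}$ under the smooth $G$-equivariant map $\theta\mapsto\theta^{\dagger}\theta$, with compactness recorded in Remark \ref{elliscompact}. A standard equivariant triangulation theorem (due to Illman) for compact smooth $G$-manifolds with $G$ a compact Lie group gives $\mathcal{L}(V_0,V_1)$ the structure of a finite $G$-$CW$-complex. It follows that the product $\tilde X'_{d_0}=s(V_0)\times\mathcal{L}(V_0,V_1)$ has finite $G$-$CW$ type, as does $\tilde X'_0=s(V_0)$.

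For the intermediate spaces $\tilde X'_k$ with $0<k<d_0$ I would actually show something stronger: each such space is $G$-equivariantly contractible. The key observation is that the eigenspaces of $\alpha$ are preserved under scaling, so $P_k((1-t)\alpha)=P_k(\alpha)$ for $t\in[0,1)$, while at $\alpha=0$ every vector is a zero-eigenvector and the sum in Definition \ref{pkdefn} exhausts $V_0$, forcing $P_k(0)=0$ (here one uses $k<d_0$, so that the sum has at least one term). The homotopy $((\alpha,\theta),t)\mapsto((1-t)\alpha,\theta)$ on $s(V_0)\times\mathcal{L}(V_0,V_1)$ is thus compatible with the quotient equivalence of Remark \ref{quotientthingy} and descends to a $G$-equivariant homotopy in $\tilde X'_k$ between the identity and the constant map at the fixed point represented by $(0,\emptyset)$.

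The main technical point to verify is continuity of the descended homotopy at $t=1$, where the eigenspace structure degenerates. This reduces to the standard fact that if $q$ is a quotient map and $A$ is locally compact Hausdorff then $q\times\mathrm{id}_A$ remains a quotient map; applied with $A=[0,1]$ and $q$ the quotient of Remark \ref{quotientthingy}, the continuous map on $s(V_0)\times\mathcal{L}(V_0,V_1)\times[0,1]$ descends continuously to $\tilde X'_k\times[0,1]$. Combined with the two endpoint cases, every $\tilde X'_k$ therefore has the $G$-homotopy type of a finite $G$-$CW$-complex.
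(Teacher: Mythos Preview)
The paper records this lemma without proof, leaning on the phrase ``as the quotient of a finite complex'' as if it were self-evident. Your argument is correct and in fact establishes something sharper than the paper claims: for $0\le k<d_0$ the space $\tilde X_k'$ is $G$-equivariantly contractible to the $G$-fixed point $(0,\emptyset)$. The key observation that $P_k((1-t)\alpha)=P_k(\alpha)$ for $t<1$ while $P_k(0)=0$ is exactly right, and your continuity argument via the quotient map $q\times\mathrm{id}_{[0,1]}$ is the standard way to push the homotopy down. This is a cleaner justification than the paper's, since $s(V_0)\times\mathcal L(V_0,V_1)$ is not compact and quotients by non-cellular equivalence relations need not preserve finite $G$-CW homotopy type in general. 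Your conclusion is also consistent with the paper's later explicit homeomorphism $\tilde X_{d_0-1}'\cong\mathbb R\times\mathrm{Inj}(V_0,V_1)^c$ (Proposition~\ref{sigmainjchomeo}), whose right-hand side is visibly star-shaped at the origin.
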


We now have a way of topologizing $\tilde{X}_{k}'$, however there is another way to do so. We choose to present another, equivalent, topology for $\tilde{X}_{k}'$ as this alternate topology eases many of the continuity arguments used later; as it is a subspace topology we use it for continuity proofs for functions mapping into the space.

Let $0\leqslant k<d_{0}$, then for any $\alpha\in\svo$ we have a defined $e_{d_{0}-k-1}(\alpha)$. Via \ref{continuouseigenvalues} we have the below continuous map:
$$\svo\to \svo$$
$$\alpha\mapsto \alpha-e_{d_{0}-k-1}(\alpha).$$

Further, the following is a continuous function:
$$f:\Real \to [0,\infty)$$
$$x\mapsto \text{max}(0,x).$$

Thus the below construction follows by the functional calculus \ref{fnalcalclemma}:

\begin{prop}\label{lambdak} We have a continuous map $\lambda_{k}:\svo\to s_{+}(V_{0})$ given by $\lambda_{k}(\alpha):=f(\alpha-e_{d_{0}-k-1}(\alpha))$ for the function $f$ above. The map $\lambda_{k}(\alpha):V_{0}\to V_{0}$ is zero on the bottom $d_{0}-k$ eigenspaces of $\alpha$ and $\alpha-e_{d_{0}-k-1}(\alpha)$ on the other eigenspaces.
\end{prop}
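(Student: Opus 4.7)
The plan is to express $\lambda_{k}$ as a composition of continuous maps built out of (i) the eigenvalue function $e_{d_{0}-k-1}$, and (ii) the functional calculus applied to $f(x)=\max(0,x)$, and then read off the explicit eigenspace description via Remark \ref{howfnalcalcworks}.

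First I would note that the map $\alpha\mapsto \alpha - e_{d_{0}-k-1}(\alpha)\cdot\mathrm{id}_{V_{0}}$ is a continuous self-map of $s(V_{0})$. Continuity follows because $\alpha\mapsto\alpha$ is the identity, while $\alpha\mapsto e_{d_{0}-k-1}(\alpha)$ is continuous by Lemma \ref{continuouseigenvalues}, and scalar multiplication by the identity endomorphism is continuous. Since $s(V_{0})$ is closed under addition and under scalar multiplication by real numbers, the image lies in $s(V_{0})$; in particular each $\alpha - e_{d_{0}-k-1}(\alpha)\mathrm{id}$ is selfadjoint and hence normal with spectrum contained in $\mathbb{R}$.

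Next I would apply Lemma \ref{fnalcalclemma} with the fixed continuous function $f:\mathbb{R}\to\mathbb{R}$, $f(x)=\max(0,x)$. Taking $K=\mathbb{R}$ (a closed subset of $\mathbb{C}$), the continuity statement in \ref{fnalcalclemma} yields that $\beta\mapsto f(\beta)$ is continuous on the set of normal $\beta\in\End(V_{0})$ with $\sigma(\beta)\subseteq\mathbb{R}$. Composing with the map of the previous paragraph produces a continuous map $\lambda_{k}:\svo\to\End(V_{0})$. It remains to show this lands in $s_{+}(V_{0})$, and this will drop out of the eigenspace description.

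For the explicit description, I would invoke Remark \ref{howfnalcalcworks}. Fix an orthonormal eigenbasis $v_{0},\ldots,v_{d_{0}-1}$ of $\alpha$ with eigenvalues $e_{0}\leqslant\cdots\leqslant e_{d_{0}-1}$. Then the same $v_{i}$ are eigenvectors of $\alpha - e_{d_{0}-k-1}(\alpha)\mathrm{id}$ with eigenvalues $\mu_{i}:=e_{i}-e_{d_{0}-k-1}(\alpha)$, and by the Remark $\lambda_{k}(\alpha)$ has eigenvalues $f(\mu_{i})$ on the same eigenvectors. For $i\leqslant d_{0}-k-1$ we have $\mu_{i}\leqslant 0$, so $f(\mu_{i})=0$; these account for precisely the bottom $d_{0}-k$ eigenspaces of $\alpha$. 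For $i\geqslant d_{0}-k$ we have $\mu_{i}\geqslant 0$, so $f(\mu_{i})=\mu_{i}$, which is exactly the action of $\alpha - e_{d_{0}-k-1}(\alpha)$ on those eigenvectors. In particular all eigenvalues of $\lambda_{k}(\alpha)$ are nonnegative real numbers, so $\lambda_{k}(\alpha)\in s_{+}(V_{0})$ by Lemma \ref{selfadjointeigenspaces}, completing the proof.

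I do not anticipate a serious obstacle: the only mild subtlety is ensuring that a single compact (or closed) $K$ can be chosen for the continuity statement of Lemma \ref{fnalcalclemma}, but this is handled uniformly by taking $K=\mathbb{R}$. Any potential awkwardness from coincidences $e_{d_{0}-k-1}(\alpha)=e_{d_{0}-k}(\alpha)$ is harmless, since $f(0)=0$ gives the same answer under either half of the case split.
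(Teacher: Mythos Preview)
Your proof is correct and follows exactly the approach the paper intends: the paper presents this proposition as an immediate consequence of the functional calculus Lemma \ref{fnalcalclemma} applied after the continuous shift $\alpha\mapsto\alpha-e_{d_{0}-k-1}(\alpha)$, and you have simply spelled out those steps, invoking Remark \ref{howfnalcalcworks} for the eigenspace description.
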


When restricted to $P_{k}(\alpha)$ the map $\lambda_{k}(\alpha)$ will provide an automorphism of $P_{k}(\alpha)$ as standard. Away from $P_{k}(\alpha)$, however, $\lambda_{k}(\alpha)$ is zero by definition. Hence the image of
$\lambda_{k}(\alpha)$ is $P_{k}(\alpha)$. We can use this construction to topologize $\tilde{X}_{k}'$ as follows:

\begin{prop}\label{topologisingthingy} Recalling $\rho$ from \ref{themaprho}, we have a bijection of sets:
$$\tilde{X_{k}}'\longrightarrow\{(\alpha,\beta):\alpha\in\svo, \beta:V_{0}\to V_{1}, \rho(\beta)=\lambda_{k}(\alpha)\}$$
$$(\alpha,\theta)\mapsto (\alpha,-\theta\circ \lambda_{k}(\alpha)).$$
\end{prop}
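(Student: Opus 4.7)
The proof plan is to use the polar decomposition machinery developed in Propositions \ref{themaprho} and \ref{themapsigma} to invert the forward map. The key structural observation is that $\lambda_k(\alpha)$ is a positive selfadjoint endomorphism of $V_0$ which vanishes on $P_k(\alpha)^\perp$ and restricts to an automorphism of $P_k(\alpha)$ (it acts as $\alpha - e_{d_0-k-1}(\alpha)$ there, with all eigenvalues strictly positive). In particular $\lambda_k(\alpha)$ factors as a map $V_0 \twoheadrightarrow P_k(\alpha) \hookrightarrow V_0$.

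First I would check the forward map is well-defined. Given $(\alpha,\theta) \in \tilde{X}_k'$, set $\beta := -\theta \circ \lambda_k(\alpha)$; this makes sense as a map $V_0 \to V_1$ because $\mathrm{Im}(\lambda_k(\alpha)) \subseteq P_k(\alpha) = \mathrm{Dom}(\theta)$. Since $\theta$ is isometric on $P_k(\alpha)$ we have $\theta^\dagger \theta = \mathrm{id}_{P_k(\alpha)}$, and since $\lambda_k(\alpha)$ is selfadjoint a direct computation gives
\[
\beta^\dagger \beta \;=\; \lambda_k(\alpha)\, \theta^\dagger \theta\, \lambda_k(\alpha) \;=\; \lambda_k(\alpha)^2.
\]
As $\lambda_k(\alpha)$ is positive semi-definite, $\rho(\beta) = (\lambda_k(\alpha)^2)^{1/2} = \lambda_k(\alpha)$ as required.

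For the inverse, suppose $(\alpha,\beta)$ satisfies $\rho(\beta) = \lambda_k(\alpha)$. From the discussion preceding Proposition \ref{themapsigma} we have $\ker(\beta) = \ker(\rho(\beta)) = \ker(\lambda_k(\alpha)) = P_k(\alpha)^\perp$, so $(\ker \beta)^\perp = P_k(\alpha)$. Proposition \ref{themapsigma} then produces the isometry $\sigma(\beta): P_k(\alpha) \to V_1$ satisfying $\beta = \sigma(\beta) \circ \rho(\beta) = \sigma(\beta) \circ \lambda_k(\alpha)$. Setting $\theta := -\sigma(\beta)$ gives an isometry $P_k(\alpha) \to V_1$ with $\beta = -\theta \circ \lambda_k(\alpha)$, so the candidate inverse sends $(\alpha,\beta) \mapsto (\alpha, -\sigma(\beta))$.

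Finally I would verify the two compositions are identities. Starting from $(\alpha,\theta)$, the image $\beta = -\theta\lambda_k(\alpha)$ has $\rho(\beta) = \lambda_k(\alpha)$ by the first step; using the defining formula $\sigma(\beta) = \beta \circ (\rho(\beta)|_{P_k(\alpha)})^{-1}$ one reads off $\sigma(\beta) = -\theta$ on $P_k(\alpha)$, so $-\sigma(\beta) = \theta$. The other composition $(\alpha,\beta) \mapsto (\alpha, -\sigma(\beta)) \mapsto (\alpha, \sigma(\beta)\lambda_k(\alpha)) = (\alpha, \sigma(\beta)\rho(\beta)) = (\alpha,\beta)$ is immediate from the polar decomposition. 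The only mildly delicate point is bookkeeping the domains of $\sigma$ and the factorisation of $\lambda_k(\alpha)$ through $P_k(\alpha)$; the minus sign appearing in the statement is harmless because $\rho(-\gamma) = \rho(\gamma)$ and $\sigma(-\gamma) = -\sigma(\gamma)$.
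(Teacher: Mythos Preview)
Your proof is correct and follows essentially the same route as the paper: both use that $\mathrm{Im}\,\lambda_k(\alpha)=P_k(\alpha)$ to see the forward map is well-defined, compute $\beta^\dagger\beta=\lambda_k(\alpha)^2$ to get $\rho(\beta)=\lambda_k(\alpha)$, and then invoke the polar decomposition $\sigma$ of Proposition~\ref{themapsigma} to produce the inverse $(\alpha,\beta)\mapsto(\alpha,-\sigma(\beta))$. The only cosmetic difference is that the paper argues injectivity and surjectivity separately (injectivity by noting $\theta,\theta'$ must agree on $\mathrm{Im}\,\lambda_k(\alpha)=P_k(\alpha)$), whereas you package the same content as an explicit two-sided inverse; the underlying computations are identical.
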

\begin{proof} First we check that the map is well-defined. The image of $\lambda_{k}(\alpha)$ is $P_{k}(\alpha)$ so
$-\theta\circ\lambda_{k}(\alpha)$ is reasonable; $\theta$ never ends up mapping out of too big a domain. Now we check that $\rho(\beta)$ is $\lambda_{k}(\alpha)$ as specified - noting that $\beta^{\dag}\beta=\lambda_{k}(\alpha)^{\dag}\circ\theta^{\dag}\circ\theta\circ\lambda_{k}(\alpha)=\lambda_{k}(\alpha)^{\dag}\circ\lambda_{k}(\alpha)$
and that $\lambda_{k}(\alpha)$ is as proved above selfadjoint and nonnegative is all that is required here. This checks that the map is well-defined.

Regarding injectivity, let $(\alpha,\theta)$ and $(\alpha',\theta')$ have the same image. Clearly $\alpha=\alpha'$ in this case and as $-\theta\circ\lambda_{k}(\alpha)=-\theta'\circ\lambda_{k}(\alpha)$ then $\theta$ and $\theta'$ must agree on the image of $\lambda_{k}(\alpha)$ which is $P_{k}(\alpha)$; thus $\theta$ and $\theta'$ agree and the map is injective.

To prove surjectivity we consider the pair $(\alpha,\beta)$ in the target set; we wish to build a suitable isometry $\theta$ from them. Our candidate isometry in this case is $-\sigma(\beta)$ as defined in \ref{themapsigma}. This has
already been shown to be a well-defined isometry, therefore we only need to demonstrate that its domain is $P_{k}(\alpha)$ and that $\sigma(\beta)\circ\lambda_{k}(\alpha)$ is $\beta$. For the first point, we wish to show that
$(\Ker(\beta))^{\bot}=P_{k}(\alpha)$. This, however, follows from the identity
$(\Ker(\beta))^{\bot}=(\Ker(\rho(\beta)))^{\bot}=\IM(\rho(\beta))=\IM(\lambda_{k}(\alpha))=P_{k}(\alpha)$; this result was discussed in the paragraph following \ref{themaprho}. Regarding the second point, recalling again that $\lambda_{k}(\alpha)=\rho(\beta)$ and that $\sigma(\beta)\circ\rho(\beta)=\beta$ is enough to prove the claim.
\end{proof}

\begin{rem}\label{wehaveasubspacetopology} The bijection of Proposition \ref{topologisingthingy} allows us to topologize $\tilde{X}_{k}'$ as a subspace of $\svo\times\aich$ by demanding that the bijection be a homeomorphism. Further, it is clear that the action on $\tilde{X}_{k}'$ defined earlier matches up with the standard diagonal-conjugation action on the subspace of $\svo\times\aich$.
\end{rem}

We now have two topologies on $\tilde{X}_{k}'$, we claim these are the same:

\begin{lem}\label{topologisinglemma} The following map is continuous and proper:
$$\svo\times \Ell\to \svo\times\aich$$
$$f:(\alpha,\theta)\mapsto (\alpha, -\theta\circ\lambda_{k}(\alpha)).$$
Hence $\tilde{X}_{k}'$ has the same topology as a quotient of $\svo\times\Ell$ or as a subspace of $\svo\times\aich$.
\end{lem}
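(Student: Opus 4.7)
The plan is to proceed in two stages: first verify directly that $f$ is continuous and proper, then invoke Lemma \ref{whensubspaceisquotient} to deduce that the two topologies on $\tilde{X}_{k}'$ coincide.

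Continuity is routine. The eigenvalue map $\alpha\mapsto e_{d_{0}-k-1}(\alpha)$ is continuous by Lemma \ref{continuouseigenvalues}, so $\alpha\mapsto\lambda_{k}(\alpha)$ is continuous by Proposition \ref{lambdak}. Composition of linear maps is a continuous bilinear operation on finite-dimensional spaces, and negation is continuous, so $(\alpha,\theta)\mapsto(\alpha,-\theta\circ\lambda_{k}(\alpha))$ is continuous in both arguments simultaneously.

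For properness, the key point is that $\Ell$ is compact by Remark \ref{elliscompact}. Let $K\subseteq\svo\times\aich$ be compact, and write $K_{1}\subseteq\svo$ for its image under the first-factor projection, which is compact as a continuous image. Because the first coordinate of $f(\alpha,\theta)$ is $\alpha$, any $(\alpha,\theta)\in f^{-1}(K)$ satisfies $\alpha\in K_{1}$, giving $f^{-1}(K)\subseteq K_{1}\times\Ell$. The latter is compact as a product of compact sets, and $f^{-1}(K)$ is closed in the Hausdorff space $\svo\times\Ell$ as the continuous preimage of the closed set $K$. Hence $f^{-1}(K)$ is a closed subset of a compact space and is itself compact, proving $f$ proper.

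With $f$ continuous and proper and both $\svo\times\Ell$ and $\svo\times\aich$ locally compact Hausdorff, Lemma \ref{whensubspaceisquotient} applies and yields a unique topology on the set-theoretic image $f(\svo\times\Ell)$ making it simultaneously a quotient of $\svo\times\Ell$ and a subspace of $\svo\times\aich$. Finally, observe that the composition of the quotient map $\svo\times\Ell\to\tilde{X}_{k}'$ from Remark \ref{quotientthingy} with the bijection of Proposition \ref{topologisingthingy} sends $(\alpha,\theta)$ to $(\alpha,-\theta|_{P_{k}(\alpha)}\circ\lambda_{k}(\alpha))=(\alpha,-\theta\circ\lambda_{k}(\alpha))$, since $\lambda_{k}(\alpha)$ takes values in $P_{k}(\alpha)$; this agrees with $f$ as a map of sets. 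So under the bijection, $f(\svo\times\Ell)$ is precisely $\tilde{X}_{k}'$, and the quotient topology of Remark \ref{quotientthingy} coincides with the subspace topology of Remark \ref{wehaveasubspacetopology}. There is no real obstacle beyond unpacking the definitions; compactness of $\Ell$ does all the work for properness, and the rest is just assembling the earlier lemmas.
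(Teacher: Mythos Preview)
Your proof is correct and follows essentially the same approach as the paper: both establish properness by projecting a compact set to the first factor, using compactness of $\Ell$ to trap $f^{-1}(K)$ inside a compact product, and noting it is closed there; both then invoke Lemma \ref{whensubspaceisquotient}. You add a slightly more explicit check that the set-theoretic image of $f$ is exactly $\tilde{X}_{k}'$ under the bijection of Proposition \ref{topologisingthingy}, which the paper leaves implicit.
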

\begin{proof} The map as stated is continuous because of the continuity of $\lambda_{k}(\alpha)$; thus we only need to show that it is proper. Let $U$ be a compact subset of $\svo\times\aich$. We claim that $f^{-1}(U)$ is a closed subset of a compact space and hence compact. Let $\pi:\svo\times\aich\to \svo$ be the projection to the first factor. $\pi(U)$ is the continuous image of a compact set and hence compact. It is easy to observe from here that $f^{-1}(U)\subseteq \pi(U)\times \Ell$ and that $\pi(U)\times \Ell$ is a compact space by Tychonoff's Theorem. Hence we only need to demonstrate that $f^{-1}(U)$ is closed. This is standard, however, as $U$ is a compact subspace of a Hausdorff space and thus closed. The second claim then follows immediately from the first claim and from Lemma \ref{whensubspaceisquotient} in an obvious way.
\end{proof}

We now have $G$-spaces $\tilde{X}_{k}'$. One thing to note here, however, is that the $X_{k}$ and $\pi_{k}$ we are required to define are based - thus we should check that what we've defined is compatible with adding a basepoint. Let $\tilde{X}_{k}$ be the one-point compactification of $\tilde{X}'_{k}$.

\begin{lem}\label{propermapslemma} The maps $\tilde{\pi}_{k}'$ are proper.
\end{lem}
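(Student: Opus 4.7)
My plan is to exploit the common factor $\svo\times\Ell$ together with the surjections $p_j\colon \svo\times\Ell\to\tilde{X}'_j$ defined by $(\alpha,\phi)\mapsto(\alpha,\phi|_{P_j(\alpha)})$. These fit into a triangle $\tilde{\pi}_k'\circ p_k=p_{k-1}$, because restriction of $\phi$ to $P_{k-1}(\alpha)$ can be done by first restricting to $P_k(\alpha)$ and then restricting again along the chain $P_{k-1}(\alpha)\subseteq P_k(\alpha)$.

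The crucial input is that each $p_j$ is itself a proper surjection. Surjectivity is clear from the definition of $\tilde{X}'_j$. Properness falls out of Lemma \ref{topologisinglemma}: that lemma shows the map $\svo\times\Ell\to\svo\times\aich$ sending $(\alpha,\phi)$ to $(\alpha,-\phi\circ\lambda_j(\alpha))$ is proper, and by the bijection of Proposition \ref{topologisingthingy} (a homeomorphism once $\tilde{X}'_j$ carries the subspace topology, which by Lemma \ref{topologisinglemma} agrees with the quotient topology) this map is precisely $p_j$ followed by the inclusion $\tilde{X}'_j\hookrightarrow\svo\times\aich$. Compact subsets of $\tilde{X}'_j$ are a fortiori compact in $\svo\times\aich$, so properness descends to $p_j$.

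Now fix a compact $K\subseteq\tilde{X}'_{k-1}$. I would argue that $(\tilde{\pi}_k')^{-1}(K)=p_k(p_{k-1}^{-1}(K))$. The containment $\supseteq$ is immediate from $\tilde{\pi}_k'\circ p_k=p_{k-1}$. For $\subseteq$, take $(\alpha,\theta)\in(\tilde{\pi}_k')^{-1}(K)$; since $d_0\leqslant d_1$ the partial isometry $\theta\colon P_k(\alpha)\to V_1$ extends to a full isometry $\phi\colon V_0\to V_1$, and this $\phi\in\Ell$ satisfies $p_k(\alpha,\phi)=(\alpha,\theta)$ and $p_{k-1}(\alpha,\phi)=(\alpha,\theta|_{P_{k-1}(\alpha)})\in K$. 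Because $p_{k-1}$ is proper, $p_{k-1}^{-1}(K)$ is compact, and its continuous image under $p_k$ is $(\tilde{\pi}_k')^{-1}(K)$, which is therefore compact.

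The only step that really demands care is the transfer of properness from the map into $\svo\times\aich$ to the surjection $p_j$ into $\tilde{X}'_j$, but this is a routine application of the equivalence of topologies in Lemma \ref{topologisinglemma}. Everything else is set-theoretic bookkeeping and the fact that isometries from a subspace of $V_0$ to $V_1$ extend to $V_0$ when $d_0\leqslant d_1$.
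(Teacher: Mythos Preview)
Your argument is correct, but it takes a different route from the paper's. The paper factors $\tilde{\pi}_k'$ through the first-coordinate projection $\mathrm{pr}\colon\tilde{X}_{k-1}'\to\svo$ and observes that the composite $\mathrm{pr}\circ\tilde{\pi}_k'$ is itself the projection $\tilde{X}_k'\to\svo$. It then checks directly that this projection is proper: for compact $U\subseteq\svo$, its preimage in $\tilde{X}_k'$ is exactly the image of the compact set $U\times\Ell$ under the quotient map $p_k$, using only surjectivity and continuity of $p_k$ together with compactness of $\Ell$. Lemma~\ref{propercompositionisproper} then gives properness of $\tilde{\pi}_k'$ immediately.

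Your approach instead establishes that each $p_j$ is proper (via Lemma~\ref{topologisinglemma} and the equivalence of topologies) and then computes $(\tilde{\pi}_k')^{-1}(K)=p_k\bigl(p_{k-1}^{-1}(K)\bigr)$ by hand, invoking the extension of partial isometries. This is sound, and the set-theoretic identity you prove is the same one the paper uses implicitly. The trade-off is that you import a heavier input---properness of the $p_j$, which needs the subspace/quotient topology comparison---whereas the paper gets by with only continuity and surjectivity of $p_k$ plus the single abstract Lemma~\ref{propercompositionisproper}. The paper's version is shorter and avoids the explicit isometry-extension step; yours is more self-contained in that it never appeals to the ``proper composite forces proper first factor'' lemma.
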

\begin{proof} We claim that the projection $\tilde{X}_{k}'\to \svo$ is proper, then the result will follow from Lemma
\ref{propercompositionisproper}. Let $U$ be a compact subset of $\svo$, as $\svo$ is Hausdorff we note that this set is closed. The pre-image of $U$ in $\svo\times\Ell$ is $U\times\Ell$ which is compact. The continuous image of $U\times \Ell$ in $\tilde{X}_{k}'$ under the quotient map is thus also compact, and finally it is easy to observe that this
space is the same as the preimage of $U$ in $\tilde{X}_{k}'$. This proves the result.
\end{proof}

This lemma means we have extensions $\tilde{\pi}_{k}:\tilde{X}_{k}\to \tilde{X}_{k-1}$. This allows us to construct the following unstable tower of $G$-spaces with the homotopy type of finite based $G$-$CW$-complexes:
$$S^{\svo}\wedge\Ell_{\infty}\to\tilde{X}_{d_{0}-1}\to\tilde{X}_{d_{0}-2}\to\ldots\to S^{\svo}.$$

It is clear that the composition down this tower is just the projection $S^{\svo}\wedge\Ell_{\infty}\to S^{\svo}$. Recalling the categorical notes in $\S$\ref{onthecategories} we can apply $\Sigma^{\infty}$ and pass to the category $\mathcal{F}_{G}$. There is a finite $G$-$CW$-spectrum $S^{-\svo}:=\Sigma^{-\svo} S^{0}$. Thus we smash by $S^{-\svo}$ throughout and making the definition $X_{k}:=S^{-\svo}\wedge \tilde{X}_{k}$ we get the following tower in $\mathcal{F}_{G}$:
$$\Ell_{\infty}\to X_{d_{0}-1}\to X_{d_{0}-2}\to\ldots\to S^{0}.$$

We have already noted that $S^{\svo}\wedge\Ell_{\infty}\to S^{\svo}$ is just the projection, hence the map $\Ell_{\infty}\to S^{0}$ is the projection. Part $1$ of Theorem \ref{themaintheorem} then follows; we have constructed a suitable stable tower such that the map running down the whole tower is the projection.

\section{The Topology of Bundles Over Grassmannians}\label{TheTopologyOfBundlesOverGrassmannians}

We now turn our attention to defining the $G$-spectra that will become the cofibres of the maps $\pi_{k}$. To do this, we need to take a detour into the topology of Grassmannians and of bundles over Grassmannians. The
general theory is standard, for example the non-equivariant theory is covered in \cite{CharacteristicClassesMilnorStasheff} in the setting of real Grassmannians. We include the detail, however, to cover results specific to our purpose - we wish to be clear on the subjects of equivariance, topology and continuity when it comes to the bundles we use. 

\begin{defn}\label{whatisanequigrassmannian} We take $G_{k}(V_{0})$ to be the Grassmannian of all $k$-dimensional vector subspaces of $V_{0}$. We then equip $G_{k}(V_{0})$ with the standard $G$-action inherited from the action on $V_{0}$. 
\end{defn}

In order to topologize $G_{k}(V_{0})$ effectively, we first make the following definition:

\begin{defn}\label{GVandGPrime} Set $G(V_{0}):=\coprod_{k=0}^{d_{0}} G_{k}(V_{0})$. We also define the space $G'(V_{0})$ as $\{\pi\in\End(V_{0}):\pi^{2}=\pi, \pi^{\dag}=\pi\}$. Finally set $G'_{k}(V_{0})$ to be the subspace of $\pi\in G'(V_{0})$ with trace equal to $k$.
\end{defn}

$G'(V_{0})$ and its associated subspaces can then be topologized as closed subspaces of $s(V_{0})$. We now note that there are bijections between $G_{k}(V_{0})$ and $G_{k}'(V_{0})$ given by $W\mapsto 1_{W}\oplus 0_{W^{\bot}}$ and $\IM(\pi)\mapsfrom\pi$; these are mutually inverse. This allows us to topologize $G_{k}(V_{0})$ by making the bijections into homeomorphisms and setting that $U$ is open in $G_{k}(V_{0})$ if and only if its image is open in $G_{k}'(V_{0})$. Under this topology $G_{k}(V_{0})$ is a $G$-space.

\begin{lem}\label{compactgrassmannian} $G_{k}(V_{0})$ is compact.
\end{lem}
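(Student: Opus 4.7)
The plan is to exploit the homeomorphism $G_k(V_0) \cong G'_k(V_0)$ established just before the lemma, and then show that $G'_k(V_0)$ is a compact subspace of $s(V_0)$ by applying the extended Heine--Borel theorem (Lemma \ref{extendedheineborel}). Since $s(V_0)$ is a finite dimensional normed real vector space, it suffices to prove that $G'_k(V_0)$ is both closed and bounded inside $s(V_0)$.

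For closedness, I would observe that $G'_k(V_0)$ is cut out of $\End(V_0)$ by the equations $\pi^2 = \pi$, $\pi^\dagger = \pi$, and $\operatorname{tr}(\pi) = k$. Each of these defines a closed subset: the map $\pi \mapsto \pi^2 - \pi$ is continuous, so its zero set is closed; the map $\pi \mapsto \pi - \pi^\dagger$ is continuous, so its zero set is closed; and the trace is a continuous linear functional, so its fibre over $k$ is closed. Intersecting these closed subsets gives that $G'_k(V_0)$ is closed in $\End(V_0)$, hence in $s(V_0)$.

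For boundedness, I would use Lemma \ref{operatornormiseigenvalues}. Any $\pi \in G'_k(V_0)$ is a self-adjoint idempotent, so by standard linear algebra its eigenvalues lie in $\{0,1\}$. Applying Lemma \ref{operatornormiseigenvalues} to the self-adjoint endomorphism $\pi$, its operator norm equals the spectral radius, which is at most $1$. Thus $G'_k(V_0)$ is contained in the closed unit ball of $s(V_0)$, so it is bounded.

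The main work is minimal; the only subtlety is invoking the correct characterisations set up earlier (the homeomorphism with $G'_k(V_0)$, the identification of operator norm with spectral radius for self-adjoint operators, and the Heine--Borel form in Lemma \ref{extendedheineborel}). Once these are in place, closed plus bounded yields compactness of $G'_k(V_0)$, and transferring along the homeomorphism gives compactness of $G_k(V_0)$.
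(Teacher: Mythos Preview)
Your proposal is correct and follows essentially the same approach as the paper: pass to $G'_k(V_0)$, note that each $\pi$ is a self-adjoint projection so $\|\pi\|\leqslant 1$, conclude closed and bounded, and apply Lemma \ref{extendedheineborel}. The paper is slightly terser (it simply observes $\|\pi\|=1$ for $k\neq 0$ and $\|\pi\|=0$ for $k=0$, having already recorded that $G'(V_0)$ is closed in $s(V_0)$), but the substance is identical.
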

\begin{proof} If $\pi\in G_{k}'(V_{0})$ then $\pi$ is of the form $1_{W}\oplus 0_{W^{\bot}}$ for some $W$. If $k\neq 0$ then it is simple to see that $\|\pi\|=1$ and thus that $G_{k}'(V_{0})$ is closed and bounded in $s(V_{0})$, hence compact by Lemma \ref{extendedheineborel}. For $k=0$ we have $\|\pi\|=0$, the proof then follows.
\end{proof}

Similar to results in the previous section we also note that there is another way of topologizing $G_{k}(V_{0})$, this time via quotients. Let $i:\mathcal{L}(\Complex^{k},V_{0})\to G_{k}(V_{0})$ be the map given by $i(\alpha):=\IM(\alpha)$. This is continuous as it corresponds to the continuous $i':\mathcal{L}(\Complex^{k},V_{0})\to G_{k}'(V_{0})$ given by $i'(\alpha):=\alpha\alpha^{\dag}$. The map is also surjective via a standard linear algebra argument. Thus as $i$ is a continuous surjection from a compact space to a Hausdorff space it is a quotient map and we have a different way of equipping $G_{k}(V_{0})$ with the above topology.

We now show that the map $P_{k}:\alpha\mapsto P_{k}(\alpha)$ mapping into $G_{k}(V_{0})$ is continuous under this
topology. This will be one of the cornerstones of the maps we will build in the next chapter. To do this we need the following lemma:

\begin{lem}\label{poseigenvaluescontinuouslybuildplanes} Let $\svo^{\times}$ be the space of injective selfadjoint
endomorphisms, or equivalently $\{\alpha\in\svo:e_{i}(\alpha)\in \Real^{\times}\forall i\}$. Then the map $f:\svo^{\times}\to G(V_{0})$ sending $\alpha$ to the sum of the positive eigenspaces of $\alpha$ is continuous.
\end{lem}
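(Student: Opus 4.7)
The plan is to exploit the functional calculus of Lemma \ref{fnalcalclemma} applied to a suitable cut-off function. Recall that via the identifications $G_k(V_0) \cong G'_k(V_0) \subseteq s(V_0)$ of Definition \ref{GVandGPrime}, the space $G(V_0) = \coprod_k G_k(V_0)$ is topologized by viewing each subspace as its orthogonal projection endomorphism. So if $f(\alpha)$ denotes the sum of the positive eigenspaces of $\alpha$, the composite
$$s(V_0)^\times \overset{f}{\longrightarrow} G(V_0) \hookrightarrow s(V_0)$$
sends $\alpha$ to the orthogonal projection onto $f(\alpha)$, which is exactly $\chi(\alpha)$, where $\chi \colon \mathbb{R}^\times \to \mathbb{R}$ is the characteristic function $\chi(t) = 1$ for $t > 0$ and $\chi(t) = 0$ for $t < 0$. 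Since $s(V_0) \hookrightarrow G(V_0)$ decomposes $G'(V_0)$ into trace-$k$ clopen pieces corresponding to each $G_k(V_0)$, continuity into $G(V_0)$ follows from continuity of the map $\alpha \mapsto \chi(\alpha)$ into $s(V_0)$.

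The obstacle is that $\chi$ is defined on $\mathbb{R}^\times$, which is not closed in $\mathbb{C}$, so Lemma \ref{fnalcalclemma} does not apply directly. I would work around this locally. Fix $\alpha_0 \in s(V_0)^\times$ and let $\epsilon > 0$ be strictly smaller than $\min_i |e_i(\alpha_0)|$. By the continuity of the eigenvalue functions (Lemma \ref{continuouseigenvalues}), there is an open neighbourhood $U$ of $\alpha_0$ in $s(V_0)^\times$ on which every eigenvalue satisfies $|e_i(\alpha)| \geq \epsilon/2$. Setting $K := (-\infty, -\epsilon/2] \cup [\epsilon/2, \infty) \subset \mathbb{C}$, this is a closed subset of $\mathbb{C}$ containing $\sigma_{\End(V_0)}(\alpha)$ for every $\alpha \in U$, and $\chi$ restricts to a continuous function $K \to \mathbb{R}$.

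Now invoke the continuity clause of Lemma \ref{fnalcalclemma}: the map
$$\mathcal{C}(K, \mathbb{C}) \times \{\alpha \in \End(V_0) : \alpha\alpha^{\dagger} = \alpha^{\dagger}\alpha,\ \sigma_{\End(V_0)}(\alpha) \subseteq K\} \to \End(V_0),\qquad (h, \alpha) \mapsto h(\alpha)$$
is continuous. Selfadjoint endomorphisms are normal, so $U$ is contained in the second factor. Fixing $h = \chi|_K$ therefore shows that $\alpha \mapsto \chi(\alpha)$ is continuous on $U$. By Remark \ref{howfnalcalcworks}, $\chi(\alpha)$ is precisely the projection onto the sum of the positive eigenspaces of $\alpha$.

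Finally, to match with $G(V_0)$: the trace of $\chi(\alpha)$ (the number of positive eigenvalues of $\alpha$) is locally constant on $s(V_0)^\times$ because no eigenvalue can cross $0$ while remaining in $s(V_0)^\times$; combined with the identification of $G_k(V_0)$ with the trace-$k$ stratum of $G'(V_0)$, this shows $f$ lands continuously in the disjoint union. Since $\alpha_0$ was arbitrary, $f$ is continuous on all of $s(V_0)^\times$. The main subtlety is purely the $K$-localisation trick needed to feed $\chi$ into the functional calculus, but it is standard once one notices that $s(V_0)^\times$ is exactly the locus where $\chi$ can be stably applied.
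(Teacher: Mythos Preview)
Your proof is correct and follows essentially the same approach as the paper: apply the functional calculus (Lemma \ref{fnalcalclemma}) to the characteristic function of the positive reals to produce the projection onto the positive eigenspaces, and then identify this with the corresponding element of $G'(V_0)$. The paper's proof is terser and simply invokes Lemma \ref{fnalcalclemma} on $f'\colon\Real^\times\to\{0,1\}$ without spelling out the local closed-$K$ trick you use to justify the continuity clause; your version is more explicit about that hypothesis but not a different argument.
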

\begin{proof} Define $f':\Real^{\times}\to \{0,1\}$ sending all negative numbers to $0$ and all positive numbers to $1$. This is clearly continuous under any topology one can give $\{0,1\}$, self-conjugate and such that $f'(t)^{2}=f'(t)$ for all $t$. Thus by Lemma \ref{fnalcalclemma} we have a map $f:\svo^{\times}\to G'(V)$ and it is simple to check that this map behaves as stated in the claim.
\end{proof}

\begin{cor}\label{Pkiscont} Let $s_{k}(V_{0})$ be the subspace of $\svo$ given as follows:
$$s_{k}(V_{0}):=\{\alpha\in\svo:\Dim(P_{k}(\alpha))=k\}.$$
Then $P_{k}:s_{k}(V_{0})\to G_{k}(V_{0})$ is continuous.
\end{cor}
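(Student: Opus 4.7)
The plan is to leverage Lemma \ref{poseigenvaluescontinuouslybuildplanes} together with the continuity of the eigenvalue functions \ref{continuouseigenvalues} to express $P_k$ as a continuous composition. The key observation is that on $s_k(V_0)$ the condition $\dim(P_k(\alpha))=k$ forces a strict gap $e_{d_0-k-1}(\alpha)<e_{d_0-k}(\alpha)$; otherwise the repeated eigenvalue at this boundary would prevent us from separating off exactly $k$ top eigenspaces and $P_k(\alpha)$ would have dimension strictly less than $k$. So $s_k(V_0)$ is precisely the locus where there is a clean gap in the spectrum at the correct place.

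First I would define the continuous function
\[
c:s_k(V_0)\to\Real,\qquad c(\alpha):=\tfrac{1}{2}\bigl(e_{d_0-k-1}(\alpha)+e_{d_0-k}(\alpha)\bigr),
\]
whose continuity is immediate from Lemma \ref{continuouseigenvalues}. Using $c$, form the continuous shifted map
\[
\Psi:s_k(V_0)\to s(V_0),\qquad \Psi(\alpha):=\alpha-c(\alpha)\cdot\text{id}_{V_0}.
\]
By construction $\Psi(\alpha)$ has the same eigenspaces as $\alpha$, with eigenvalues $e_j(\alpha)-c(\alpha)$. By the choice of $c$, the bottom $d_0-k$ eigenvalues are strictly negative and the top $k$ eigenvalues are strictly positive, so $\Psi(\alpha)\in s(V_0)^{\times}$ in the sense of Lemma \ref{poseigenvaluescontinuouslybuildplanes}.

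Now I would postcompose with the continuous map $f:s(V_0)^{\times}\to G(V_0)$ from Lemma \ref{poseigenvaluescontinuouslybuildplanes} sending an injective selfadjoint endomorphism to the direct sum of its positive eigenspaces. The composite $f\circ\Psi:s_k(V_0)\to G(V_0)$ is continuous, and by the analysis of the spectrum of $\Psi(\alpha)$ its image is the sum of the eigenspaces of $\alpha$ corresponding to $e_{d_0-k}(\alpha),\ldots,e_{d_0-1}(\alpha)$; on $s_k(V_0)$ this is exactly $P_k(\alpha)$ and has dimension $k$. Hence $f\circ\Psi$ lands in the clopen piece $G_k(V_0)\subseteq G(V_0)$ and equals $P_k$.

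There is no real obstacle here beyond identifying the right shift; the only subtlety is confirming that on $s_k(V_0)$ the gap $e_{d_0-k-1}<e_{d_0-k}$ genuinely holds, so that $\Psi$ lands in $s(V_0)^{\times}$ and Lemma \ref{poseigenvaluescontinuouslybuildplanes} applies. Once that is noted, continuity of $P_k$ follows from the continuity of all three building blocks: the eigenvalue functions, scalar shifting, and the functional-calculus projection $f$.
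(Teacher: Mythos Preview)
Your proposal is correct and follows essentially the same approach as the paper: shift $\alpha$ by the midpoint $\tfrac{1}{2}(e_{d_0-k-1}(\alpha)+e_{d_0-k}(\alpha))$ so the spectrum splits into strictly positive and strictly negative parts, then apply the functional-calculus projection $f$ from Lemma~\ref{poseigenvaluescontinuouslybuildplanes}. The only minor omission is that you have not separately dispatched the trivial edge cases $k=0$ and $k=d_0$, where the indices $e_{d_0-k-1}$ or $e_{d_0-k}$ are undefined; the paper handles these first before giving your argument for $0<k<d_0$.
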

\begin{proof} Firstly if $k=0$ then $P_{0}(\alpha)=0$ and the claim follows trivially. Now if $k=d_{0}$ then $P_{d_{0}}:s_{d_{0}}(V_{0})\to G_{d_{0}}(V_{0})$ is the continuous map $s_{d_{0}}(V_{0})\to\text{pt.}$. Let $0<k<d_{0}$, then $e_{d_{0}-k}$ and $e_{d_{0}-k-1}$ are well-defined continuous maps and we have $s_{k}(V_{0})=\{\alpha\in\svo:e_{d_{0}-k-1}(\alpha)<e_{d_{0}-k}(\alpha)\}$. Define a map $s_{k}(V_{0})\to \Real$ given as follows:
$$\alpha\mapsto 1/2(e_{d_{0}-k-1}(\alpha)+e_{d_{0}-k}(\alpha)).$$
This is clearly a continuous map and $1/2(e_{d_{0}-k-1}(\alpha)+e_{d_{0}-k}(\alpha))$ is not an eigenvalue of $\alpha$, thus $\alpha-1/2(e_{d_{0}-k-1}(\alpha)+e_{d_{0}-k}(\alpha))$ is in $s(V_{0})^{\times}$. The claim then follows by observing that $P_{k}(\alpha)$ is $f(\alpha-1/2(e_{d_{0}-k-1}(\alpha)+e_{d_{0}-k}(\alpha)))$ for the function $f$ defined in \ref{poseigenvaluescontinuouslybuildplanes}.
\end{proof}

This map will prove to be useful in constructing some of the later continuity arguments. Another useful ingredient is the
following observation:

\begin{note}\label{perpishomeo} $G_{k}(V_{0})$ is homeomorphic to $G_{d_{0}-k}(V_{0})$ via $W\mapsto W^{\bot}$. This follows from the homeomorphism $G_{k}'(V_{0})\cong G_{d_{0}-k}'(V_{0})$ sending $\pi$ to $1_{V_{0}}-\pi$.
\end{note}

We define the following sets:
$$Z_{k}:=\{(W,\gamma):W\in G_{k}(V_{0}),\gamma\in\Hom(W,V_{1})\},$$
$$\tilde{Z}_{k}:=\{(W,\gamma,\psi):W\in G_{k}(V_{0}),\gamma\in\Hom(W,V_{1}),\psi\in s(W^{\bot})\}.$$

We wish to equip the above sets with a suitable topology. This is done as follows:

\begin{lem}\label{topologizingthefibrebundles} We have a bijection between $Z_{k}$ and the space
$Z_{k}':=\{(\pi,\beta):\pi\in G_{k}'(V_{0}),\beta\in\aich,
\beta\circ (1-\pi)=0\}$ given by the below maps:
$$(W,\gamma)\mapsto (1_{W}\oplus 0_{W^{\bot}},\gamma\circ(1_{W}\oplus 0_{W^{\bot}}))$$
$$(\IM(\pi),\beta|_{\IM(\pi)})\mapsfrom (\pi,\beta).$$
This lets us topologize $Z_{k}$ as a subspace of $G_{k}'(V_{0})\times \aich$. Moreover we have the following surjection onto $Z_{k}$:
$$\mathcal{L}(\Complex^{k},V_{0})\times\Hom(\Complex^{k},V_{1})\to Z_{k}$$
$$(\zeta,\gamma_{0})\mapsto (\IM(\zeta),\gamma_{0}\circ \zeta^{\dag}).$$
Thus $Z_{k}$ can also be topologized as a quotient of $\mathcal{L}(\Complex^{k},V_{0})\times\Hom(\Complex^{k},V_{1})$. The composition $\mathcal{L}(\Complex^{k},V_{0})\times\Hom(\Complex^{k},V_{1})\to G_{k}'(V_{0})\times \aich$ is continuous and proper and hence by Lemma \ref{whensubspaceisquotient} the two topologies are the same.
\end{lem}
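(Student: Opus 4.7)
The plan is to handle each claim in turn, relying on a mix of linear algebra identities and the abstract topological machinery already set up in Section~\ref{NotesonContinuityandProperness}. First I would verify that the two maps $(W,\gamma)\mapsto(1_W\oplus 0_{W^\bot},\gamma\circ(1_W\oplus 0_{W^\bot}))$ and $(\pi,\beta)\mapsto(\IM(\pi),\beta|_{\IM(\pi)})$ are well-defined as maps between $Z_k$ and $Z_k'$: in the forward direction, $1_W\oplus 0_{W^\bot}\in G_k'(V_0)$ by construction, while $(1-\pi)$ projects onto $W^\bot$, where the composite vanishes; in the reverse direction, $\IM(\pi)$ is a $k$-dimensional subspace and $\beta|_{\IM(\pi)}$ is just a homomorphism. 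Checking that the two compositions are the identity reduces to observing that for $\pi\in G_k'(V_0)$ one has $\pi=1_{\IM(\pi)}\oplus 0_{\IM(\pi)^\bot}$, and that the condition $\beta\circ(1-\pi)=0$ forces $\beta$ to be determined by its restriction to $\IM(\pi)$.

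Next I would address the claim that the stated surjection $\mathcal{L}(\Complex^k,V_0)\times\Hom(\Complex^k,V_1)\to Z_k$ is a continuous surjection. Under the bijection with $Z_k'$, this map takes the explicit form $(\zeta,\gamma_0)\mapsto(\zeta\zeta^\dag,\gamma_0\zeta^\dag)$, which is manifestly continuous as a polynomial in the matrix entries. Surjectivity follows by reversing the argument: given $(\pi,\beta)\in Z_k'$, choose any isometry $\zeta\colon\Complex^k\to V_0$ with $\IM(\zeta)=\IM(\pi)$, so that $\zeta\zeta^\dag=\pi$, and set $\gamma_0:=\beta\circ\zeta$; then $\gamma_0\zeta^\dag=\beta\zeta\zeta^\dag=\beta\pi=\beta$, using $\beta\circ(1-\pi)=0$.

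The main technical step is showing that this composition is proper into $G_k'(V_0)\times\Hom(V_0,V_1)$. Given a compact $K$ in the target, its preimage is closed by continuity, so by Lemma~\ref{extendedheineborel} it suffices to show boundedness in the finite dimensional normed space $\Hom(\Complex^k,V_0)\times\Hom(\Complex^k,V_1)$. The $\zeta$-coordinate lies in the compact Stiefel manifold $\mathcal{L}(\Complex^k,V_0)$ (every isometry has operator norm $1$ or $0$), so it is automatically bounded. For the $\gamma_0$-coordinate, the key identity is the isometry relation $\zeta^\dag\zeta=1_{\Complex^k}$, which gives $\gamma_0=(\gamma_0\zeta^\dag)\zeta$; hence $\|\gamma_0\|\leqslant\|\gamma_0\zeta^\dag\|\cdot\|\zeta\|$, and boundedness of $\gamma_0\zeta^\dag$ on the image forces boundedness of $\gamma_0$. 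This is the step I expect to be the real substance of the argument, since it is the one that genuinely uses the isometric nature of $\zeta$.

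Finally, with continuity and properness of the composition in hand, together with local compactness and Hausdorffness of both $\mathcal{L}(\Complex^k,V_0)\times\Hom(\Complex^k,V_1)$ and $G_k'(V_0)\times\Hom(V_0,V_1)$, Lemma~\ref{whensubspaceisquotient} applies verbatim with $X$, $Y=Z_k'$ and $Z$ as in its statement. It produces a unique topology on $Z_k'$ which simultaneously makes the projection from $X$ a proper quotient and the inclusion into $Z$ a closed embedding, so the quotient topology and the subspace topology on $Z_k$ coincide, completing the proof.
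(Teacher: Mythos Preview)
Your proof is correct and follows essentially the same approach as the paper. The paper also dismisses the bijection and surjection claims as routine and focuses on properness of the composition $(\zeta,\gamma_0)\mapsto(\zeta\zeta^\dag,\gamma_0\zeta^\dag)$, bounding $\gamma_0$ via the isometry identity $\zeta^\dag\zeta=1$ (the paper phrases this as $\|\gamma_0\|=\|\gamma_0\circ\zeta^\dag\|$, which is exactly your inequality combined with $\|\zeta\|=1$), and then invokes Lemma~\ref{whensubspaceisquotient}. Your organization is arguably a little cleaner: you handle closedness of the preimage in one stroke via continuity, whereas the paper projects to each factor separately using Lemma~\ref{closedprojifcompact}.
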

\begin{proof} The proof is simple barring demonstrating that the below composition is proper:
$$f:\mathcal{L}(\Complex^{k},V_{0})\times\Hom(\Complex^{k},V_{1})\to G_{k}'(V_{0})\times \aich$$
$$(\zeta,\gamma_{0})\mapsto (\zeta\zeta^{\dag},\gamma_{0}\circ\zeta^{\dag}\circ(\zeta\zeta^{\dag})).$$
First note that $\zeta^{\dag}\circ(\zeta\zeta^{\dag})=\zeta^{\dag}$. Let $C$ be a compact subset of $G_{k}'(V_{0})\times \aich$. We need to check that the inverse image of $C$ is closed in each factor and bounded in the $\Hom$-factor. By Tychonoff's Theorem and \ref{extendedheineborel} this will be enough to show that $f^{-1}\{C\}$ is compact. Firstly consider the projection from $\mathcal{L}(\Complex^{k},V_{0})\times\Hom(\Complex^{k},V_{1})$ down to $\Hom(\Complex^{k},V_{1})$, this is a closed map via \ref{closedprojifcompact} as $\mathcal{L}(\Complex^{k},V_{0})$ is compact. Now let $(\zeta,\gamma_{0})\in f^{-1}\{C\}$. We claim that there is a bound on the norm of $\gamma_{0}$. As $C$ is compact we are given that $\|\gamma_{0}\circ\zeta^{\dag}\|\leqslant R$ for some real number $R$, the bound on $\gamma_{0}$ then follows by a standard observation that $\|\gamma_{0}\|=\|\gamma_{0}\circ\zeta^{\dag}\|$. Thus $f^{-1}\{C\}$ is compact in the second factor. Another application of \ref{closedprojifcompact} gives us that the projection of $f^{-1}\{C\}$ down to $\mathcal{L}(\Complex^{k},V_{0})$ is closed and hence compact as $\mathcal{L}(\Complex^{k},V_{0})$ is compact. This is enough to show that $f^{-1}\{C\}$ is compact and hence $f$ is proper. The rest of the claim follows.
\end{proof}

\begin{cor}\label{factoringinsTperp} There is a bijection between $\tilde{Z}_{k}$ and the following space, $\pi$ and $\beta$ satisfying the same conditions as in Lemma \ref{topologizingthefibrebundles} above:
$$\{(\pi,\beta,\xi):\xi\in s(V_{0}), \xi\circ\pi=0, \IM(\xi)\subseteq\IM(1_{V_{0}}-\pi)\}.$$
This is given by:
$$(W,\gamma,\psi)\mapsto (1_{W}\oplus 0_{W^{\bot}},\gamma\circ(1_{W}\oplus 0_{W^{\bot}}),\psi\circ(1_{W^{\bot}}\oplus 0_{W}))$$
$$(\IM(\pi),\beta|_{\IM(\pi)},\xi|_{\IM(1_{V_{0}}-\pi)})\mapsfrom (\pi,\beta,\xi).$$
Hence we can topologize $\tilde{Z}_{k}$ as a subspace of $G_{k}'(V_{0})\times \aich\times \svo$. Moreover we have the below surjection onto $\tilde{Z}_{k}$:
$$\mathcal{L}(\Complex^{k}\oplus \Complex^{d_{0}-k},V_{0})\times\Hom(\Complex^{k},V_{1})\times s(\Complex^{d_{0}-k})\to \tilde{Z}_{k}$$
$$((\zeta,\eta),\gamma_{0},\psi_{0})\mapsto (\IM(\zeta),\gamma_{0}\circ \zeta^{\dag},\eta\circ\psi_{0}\circ\eta^{\dag}).$$
Thus $\tilde{Z}_{k}$ can also be topologized as a quotient. The composition of the above maps $\mathcal{L}(\Complex^{k}\oplus \Complex^{d_{0}-k},V_{0})\times\Hom(\Complex^{k},V_{1})\times s(\Complex^{d_{0}-k})\to G_{k}'(V_{0})\times \aich\times\svo$ is continuous and proper and hence by Lemma \ref{whensubspaceisquotient} the two topologies are the same.
\end{cor}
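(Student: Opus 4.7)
The plan is to mirror the proof of Lemma \ref{topologizingthefibrebundles}, enlarging each step to carry the extra factor $\psi$. First I would check that the forward map is well-defined: given $(W,\gamma,\psi)$, setting $\pi=1_W\oplus 0_{W^\bot}$ makes $\pi^2=\pi$, $\pi^\dag=\pi$ and $\mathrm{tr}(\pi)=k$, the composite $\beta=\gamma\circ(1_W\oplus 0_{W^\bot})$ kills $W^\bot$ so that $\beta\circ(1-\pi)=0$, and $\xi=\psi\circ(1_{W^\bot}\oplus 0_W)$ both kills $W$ and has image in $W^\bot$, giving $\xi\circ\pi=0$ and $\IM(\xi)\subseteq \IM(1-\pi)$. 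The reverse map $(\pi,\beta,\xi)\mapsto(\IM(\pi),\beta|_{\IM(\pi)},\xi|_{\IM(1-\pi)})$ is then well-defined precisely because of these three conditions, and the two round trips collapse to the identity by direct substitution. This also gives the subspace topology from $G_k'(V_0)\times\Hom(V_0,V_1)\times s(V_0)$.

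Next I would verify the surjection. Given $(W,\gamma,\psi)$, pick any $(\zeta,\eta)\in\mathcal{L}(\Complex^k\oplus\Complex^{d_0-k},V_0)$ with $\IM(\zeta)=W$, which forces $\IM(\eta)=W^\bot$. Then set $\gamma_0:=\gamma\circ\zeta\in\Hom(\Complex^k,V_1)$ (viewing $\zeta$ with codomain $W$) and $\psi_0:=\eta^\dag\circ\psi\circ\eta\in s(\Complex^{d_0-k})$. Using the identities $\zeta\zeta^\dag=1_W\oplus 0_{W^\bot}$ and $\eta\eta^\dag=1_{W^\bot}\oplus 0_W$, the formula stated in the corollary returns $(W,\gamma,\psi)$, so the map is surjective. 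Continuity of the composition into $G_k'(V_0)\times\Hom(V_0,V_1)\times s(V_0)$ is immediate from the polynomial character of all the formulas.

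The main obstacle is showing this composition is proper. I expect to spend almost all the proof here, adapting the argument from Lemma \ref{topologizingthefibrebundles}. Fix a compact $C$ in the target and decompose its projections via Tychonoff. The space $\mathcal{L}(\Complex^k\oplus\Complex^{d_0-k},V_0)$ is compact (it is a unitary group, since $\dim(\Complex^k\oplus\Complex^{d_0-k})=d_0=\dim V_0$), so Lemma \ref{closedprojifcompact} lets me close up the first factor freely. For the $\gamma_0$-factor the argument of the earlier lemma transplants verbatim: $\zeta$ isometric gives $\zeta^\dag\zeta=1$, hence $\|\gamma_0\|=\|\gamma_0\circ\zeta^\dag\|$, and a bound on $C$ bounds $\gamma_0$. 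The new ingredient is the analogous identity $\|\psi_0\|=\|\eta\circ\psi_0\circ\eta^\dag\|$, which follows because $\eta^\dag\eta=1$ forces $\psi_0=\eta^\dag\circ(\eta\circ\psi_0\circ\eta^\dag)\circ\eta$, giving one inequality, while submultiplicativity and $\|\eta\|=\|\eta^\dag\|=1$ give the other. Combining the closed-projection argument with boundedness in each non-compact factor and Lemma \ref{extendedheineborel}, the preimage of $C$ is closed and bounded in each component and hence compact. The topologies then agree by Lemma \ref{whensubspaceisquotient}.
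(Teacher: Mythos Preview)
Your proposal is correct and follows essentially the same route as the paper. The paper's own proof is a two-line remark: it singles out the identity $\|\psi_{0}\|=\|\eta\circ\psi_{0}\circ\eta^{\dag}\|$ as the one new ingredient and then defers everything else to the method of Lemma~\ref{topologizingthefibrebundles} together with Lemma~\ref{extendedheineborel}, which is exactly the structure you outline.
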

\begin{proof} Note that $\|\psi_{0}\|=\|\eta\circ\psi_{0}\circ\eta^{\dag}\|$. The result can then easily be proved by following the method and style of the previous proof and recalling that compact subsets of spaces of selfadjoints are closed and bounded subsets as noted in \ref{extendedheineborel}.
\end{proof}

We can also topologize subspaces of $\tilde{Z}_{k}$ in a similar fashion. We now prove that the space $Z_{k}$ is a vector bundle over $G_{k}(V_{0})$ by demonstrating how $Z_{k}$ under this topology satisfies local triviality. We note here that this result easily generalizes using the same techniques as below to give similar results for other cases; for example $\tilde{Z}_{k}$ is a vector bundle. 

First fix $W\in G_{k}(V_{0})$ and set $\pi_{W}$ as its image in $G_{k}'(V_{0})$. We now define the following set:
$$U':=\{\pi\in G_{k}'(V_{0}):\pi_{W}\circ \pi|_{W}:W\to W\text{ is an isomorphism}\}.$$

We set $U$ to be the image of $U'$ in $G_{k}(V_{0})$. We first claim that $U'$ is open in $G_{k}'(V_{0})$. Choose $1>\epsilon>0$ and let $\pi$ be such that $\|\pi-\pi_{W}\|<\epsilon$. The map $\pi-\pi_{W}$ is $1$ on some subset of $V_{0}$ and $0$ everywhere else, thus the value of the norm $\|\pi-\pi_{W}\|$ is either $0$ or $1$. As $\epsilon<1$ then
$\|\pi-\pi_{W}\|=0$, hence $\pi_{W}\circ \pi|_{W}$ is an isomorphism and thus $\pi\in U'$; this demonstrates that $U'$ and $U$ are open. The set $U$ will be our neighbourhood around $W$ satisfying the properties of the local triviality condition.

Now for $\alpha\in \Hom(W,W^{\bot})$ set $\hat{\alpha}:W\to W\oplus W^{\bot}=V_{0}$ to be given by $\hat{\alpha}(w)=w+ \alpha(w)$. We also give this map an adjoint by $\hat{\alpha}^{\dag}:=1+\alpha^{\dag}$. This allows us to define a
map $g'$ out of $\Hom(W,W^{\bot})$:
$$g'(\alpha):=\left(V_{0}\overset{\hat{\alpha}^{\dag}}{\to}W\overset{(1+\alpha^{\dag}\alpha)^{-1}}{\to}W\overset{\hat{\alpha}}{\to} V_{0}\right).$$

This is well-defined as $(1+\alpha^{\dag}\alpha)=\hat{\alpha}^{\dag}\hat{\alpha}$ is strictly positive and thus has an inverse. Its easy to see that $g'$ actually maps into $G'(V_{0})$; the map $g'(\alpha)$ is patently
selfadjoint and idempotent. Moreover, as $W$ is a dimension $k$ subspace and $\hat{\alpha}^{\dag}$ is surjective it is easy to see that $g'(\alpha)$ has trace $k$ and thus lies in $G_{k}'(V_{0})$.

There is hence also a map $g:\Hom(W,W^{\bot})\to G_{k}(V_{0})$, from the surjectivity of $\hat{\alpha}^{\dag}$ it follows that $\IM(g'(\alpha))=\IM(\hat{\alpha})$. Thus we have that $g(\alpha)=\IM(\hat{\alpha})$. We now claim that $g'$ actually lands in $U'$ and moreover gives a homeomorphism between $U'$ and $\Hom(W,W^{\bot})$. That $g'$ lands in $U'$ follows from observing the behaviour of $g'(\alpha)|_{W}$, it can then further be observed that $\pi_{W}\circ g'(\alpha)|_{W}$ is an isomorphism as required. Continuity of $g'$ is also easy to observe, thus we only
need to demonstrate the existence of a continuous inverse to $g'$.

To do this, we first note that $V_{0}\cong W\oplus W^{\bot}$ and hence $g'(\alpha):W\oplus W^{\bot}\to W\oplus W^{\bot}$ can be decomposed into a $2\times 2$ matrix. This is a standard technique used in functional analysis, for example it is prevalent throughout \cite{Wegge-Olsen}. In our case the matrix is below, it takes values along the top row in $\Hom(W,W)$ and $\Hom(W,W^{\bot})$ and along the bottom row in $\Hom(W^{\bot},W)$ and $\Hom(W^{\bot},W^{\bot})$:
$$g'(\alpha)=\left(\begin{array}{cc}\alpha(1+\alpha^{\dag}\alpha)^{-1}\alpha^{\dag}&\alpha(1+\alpha^{\dag}\alpha)^{-1}\\
(1+\alpha^{\dag}\alpha)^{-1}\alpha^{\dag}&(1+\alpha^{\dag}\alpha)^{-1}\end{array}\right).$$

Using the same decomposition as above let $g'':U'\to \Hom(W,W^{\bot})$ be built as follows:
$$g''\left[\left(\begin{array}{cc}\pi_{11}&\pi_{12}\\\pi_{21}&\pi_{22}\end{array}\right)\right]:=\pi_{12}\circ\pi_{22}^{-1}.$$

This is patently continuous and moreover one can easily observe that it provides the required inverse to $g'$. Thus $g'$ is a homeomorphism, a fact which is key towards proving the below result:

\begin{prop}\label{xisavectorbundle} $Z_{k}$ is a vector bundle over $G_{k}(V_{0})$.
\end{prop}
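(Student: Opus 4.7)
The goal is to exhibit local trivialisations of the evident projection $\pi: Z_k \to G_k(V_0)$, $(W,\gamma) \mapsto W$. Since the fibre over $W$ is the complex vector space $\Hom(W, V_1)$ of constant rank $k d_1$, it suffices to produce, for each fixed $W \in G_k(V_0)$, an open neighbourhood $U$ of $W$ and a fibrewise-linear homeomorphism $\Phi: U \times \Hom(W, V_1) \to \pi^{-1}(U)$ covering the identity on $U$.

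I would take $U$ and the homeomorphism $g: \Hom(W, W^\bot) \to U$, $\alpha \mapsto \IM(\hat\alpha)$, produced just above the statement. The key observation is that for $W' = g(\alpha) \in U$, the affine map $\hat\alpha: W \to V_0$ is a linear isomorphism onto $W'$ that varies continuously with $\alpha$; reading off the top row of the $2 \times 2$ decomposition of $g'(\alpha)$, its inverse $\hat\alpha^{-1}: W' \to W$ is given by $(1+\alpha^\dag\alpha)^{-1} \circ \hat\alpha^\dag|_{W'}$. This supplies a continuously varying identification of each fibre $W' \in U$ with the fixed model space $W$, which I would use to set
\[ \Phi(W', \gamma_0) := (W',\, \gamma_0 \circ \hat\alpha^{-1}), \qquad \alpha := g''(\pi_{W'}), \]
with inverse $(W', \gamma') \mapsto (W', \gamma' \circ \hat\alpha)$. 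Fibrewise linearity is immediate, because composition with the fixed linear isomorphism $\hat\alpha$ is linear and bijective; compatibility with the projection is built into the definition.

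For continuity I would pass to the embedded description of Lemma \ref{topologizingthefibrebundles}, tracking $(W',\gamma')$ as $(\pi_{W'}, \gamma' \circ \pi_{W'}) \in G_k'(V_0) \times \aich$. In these coordinates every ingredient is manifestly continuous: $\pi_{W'} \mapsto \alpha$ by the explicit $2 \times 2$ formula for $g''$, $\alpha \mapsto \hat\alpha$ and $\alpha \mapsto \hat\alpha^\dag$ by affine expressions, $\alpha \mapsto (1+\alpha^\dag\alpha)^{-1}$ by functional calculus (Lemma \ref{fnalcalclemma}, noting $1+\alpha^\dag\alpha$ is strictly positive), and composition of homomorphisms is jointly continuous.

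The main technical obstacle is merely bookkeeping: $\hat\alpha^{-1}$ is a priori only defined on the moving subspace $W'$, not on all of $V_0$, so to remain honestly inside $\aich$ one should replace $\gamma_0 \circ \hat\alpha^{-1}$ by $\gamma_0 \circ (1+\alpha^\dag\alpha)^{-1} \hat\alpha^\dag \in \Hom(V_0, V_1)$, which agrees with the genuine inverse on $W' = \IM(g'(\alpha))$ and satisfies the orthogonality constraint cutting out $Z_k'$. Once this is in place, $\Phi$ and $\Phi^{-1}$ are continuous and fibrewise linear, so $Z_k \to G_k(V_0)$ is a locally trivial vector bundle as claimed.
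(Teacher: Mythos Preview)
Your argument is correct and follows essentially the same template as the paper: fix $W$, use the chart $g:\Hom(W,W^{\bot})\to U$ already constructed, and transport fibres along the linear isomorphism $\hat\alpha:W\to g(\alpha)$. The one genuine difference is that the paper trivialises via the isometric part $\sigma(\hat\alpha)$ of the polar decomposition rather than $\hat\alpha$ itself, defining $\tilde g(\alpha,\beta)=(g(\alpha),\beta\circ\sigma(\hat\alpha)^{-1})$; the author explicitly notes this is done ``to produce an isometric trivialization''. For the bare statement that $Z_k$ is a vector bundle your simpler choice of $\hat\alpha$ suffices and avoids invoking Proposition~\ref{themapsigma}, while the paper's version additionally preserves the Hermitian structure on the fibres, which may matter downstream when structure groups or metrics are in play.
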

\begin{proof} Let $W$ be a point in $G_{k}(V_{0})$, take $U$ as defined above and let $q:Z_{k}\to G_{k}(V_{0})$ be the bundle projection. We wish to show the following triangle commutes for $\Phi$ a homeomorphism and $p_{1}$ the projection to the first factor:
$$\xymatrix{U\times \Hom(W,V_{1})\ar[dr]_{p_{1}}\ar[r]^{\phantom{xxxx}\Phi}&q^{-1}(U)\ar[d]\\
& U}$$

This will prove that $Z_{k}$ satisfies local triviality and the claim will follow. However, commutativity of the above diagram follows immediately if the map $\tilde{g}$ fitting into the below commutative diagram is a homeomorphism:
$$\xymatrix{\Hom(W,W^{\bot})\times \Hom(W,V_{1})\ar[d]_{p_{1}}\ar[r]^{\phantom{xxxxxxxxx}\tilde{g}}&q^{-1}(U)\ar[d]\\
\Hom(W,W^{\bot})\ar[r]_{g}^{\cong}& U}$$

Here $\tilde{g}(\alpha,\beta):=(g(\alpha),g(\alpha)\overset{\sigma(\hat{\alpha})^{-1}}{\to} W\overset{\beta}{\to} V_{1})$ recalling $\sigma$ from \ref{themapsigma} - we introduce $\sigma$ here to produce an isometric trivialization. This is clearly continuous, with continuous inverse given by $(V,\theta)\mapsto (g^{-1}(V),\theta\circ\sigma(\widehat{g^{-1}(V)}))$ - this is easily checked to be well-defined and moreover it can also be checked that it gives the required two-sided inverse. Thus $Z_{k}$ is a fibre bundle of vector spaces and hence a vector bundle over $G_{k}(V_{0})$.
\end{proof}

There is also a similar result proving that $\tilde{Z}_{k}$ is a vector bundle over $G_{k}(V_{0})$. Moreover, reintroducing the tautological bundle notation from Chapter \ref{ch:ch1} and using $E$ as notation for the total space of a vector bundle it is clear that we can use the following notation for $Z_{k}$ and $\tilde{Z}_{k}$:
$$Z_{k}=E(\Hom(T,V_{1}))$$
$$\tilde{Z}_{k}=E(\Hom(T,V_{1})\oplus s(T)).$$

As we are working with vector bundles there is a well-defined concept of a Thom space. Moreover since $G_{k}(V_{0})$ is compact the Thom space is just the one-point compactification of the base space. Hence we have the following Thom spaces:
$$G_{k}(V_{0})^{\Hom(T,V_{1})}=(Z_{k})_{\infty}$$
$$G_{k}(V_{0})^{\Hom(T,V_{1})\oplus s(T^{\bot})}=(\tilde{Z}_{k})_{\infty}.$$

We conclude this section by reintroducing equivariance. A similar result also holds for $Z_{k}$:

\begin{lem} The following is a well-defined $G$-action on $\tilde{Z}_{k}$, recalling the action on $G_{k}(V_{0})$ and the conjugation action on spaces of maps:
$$g.(W,\gamma,\psi):=(g.W,g.\gamma,g.\psi).$$
This action makes $\tilde{Z}_{k}$ into both a $G$-space and a $G$-vector bundle over $G_{k}(V_{0})$.
\end{lem}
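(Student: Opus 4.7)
The plan is to verify the three claims of the lemma in turn: well-definedness of the action, continuity (giving the $G$-space structure), and compatibility with the vector bundle structure already established. The arguments are all routine, relying on unitarity of the $G$-action on $V_0$ and the characterizations given in Corollary \ref{factoringinsTperp}.

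First I would check well-definedness, which requires that if $(W,\gamma,\psi) \in \tilde{Z}_k$ then $(g.W, g.\gamma, g.\psi) \in \tilde{Z}_k$. Since $G$ acts on $V_0$ through linear isometries, $g.W$ is again a $k$-dimensional subspace, so $g.W \in G_k(V_0)$. By definition $(g.\gamma)(v) = g\gamma(g^{-1}v)$ requires $g^{-1}v \in W$, so $g.\gamma$ is naturally a map $g.W \to V_1$ which is visibly linear. The subtlety is to verify that $g.\psi$, naively a selfadjoint endomorphism of $g.(W^{\bot})$, actually lies in $s((g.W)^{\bot})$. For this, the key identity is $g.(W^{\bot}) = (g.W)^{\bot}$, which follows immediately from the $G$-invariance of the inner product on $V_0$: if $w \in W$ and $v \in W^{\bot}$ then $\langle g.w, g.v \rangle = \langle w, v \rangle = 0$. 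That $g.\psi$ is selfadjoint is then immediate from the same invariance, since $(g.\psi)^{\dagger} = g.\psi^{\dagger} = g.\psi$. The group axioms $e.(W,\gamma,\psi) = (W,\gamma,\psi)$ and $g.(h.(W,\gamma,\psi)) = (gh).(W,\gamma,\psi)$ are inherited factorwise.

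Next I would prove continuity by exploiting the subspace topology of Corollary \ref{factoringinsTperp}. The ambient space $G_k'(V_0) \times \Hom(V_0, V_1) \times s(V_0)$ carries the diagonal conjugation action $(\pi,\beta,\xi) \mapsto (g.\pi, g.\beta, g.\xi)$, which is continuous since each factor is. One checks that the bijection $(W,\gamma,\psi) \mapsto (1_W \oplus 0_{W^{\bot}}, \gamma \circ (1_W \oplus 0_{W^{\bot}}), \psi \circ (1_{W^{\bot}} \oplus 0_W))$ intertwines the two actions: the first coordinate is $G$-equivariant because $g.(1_W \oplus 0_{W^{\bot}}) = 1_{g.W} \oplus 0_{(g.W)^{\bot}}$ using the identity $g.(W^\bot)=(g.W)^\bot$ established above, and the other two coordinates follow from direct substitution and the formula $(g.f)(v) = gf(g^{-1}v)$. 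Since the diagonal action on the ambient product preserves the subspace identified with $\tilde{Z}_k$, it restricts to a continuous action there, which is the action defined in the statement.

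Finally, for the $G$-vector bundle structure I would verify the two defining conditions beyond the already-known bundle structure from Proposition \ref{xisavectorbundle} (and its analogue for $\tilde{Z}_k$). The projection $q: \tilde{Z}_k \to G_k(V_0)$ sending $(W,\gamma,\psi)$ to $W$ is visibly $G$-equivariant. The induced map between fibers $q^{-1}(W) = \Hom(W,V_1) \oplus s(W^{\bot}) \to q^{-1}(g.W) = \Hom(g.W,V_1) \oplus s((g.W)^{\bot})$ sending $(\gamma,\psi)$ to $(g.\gamma, g.\psi)$ is linear, since conjugation by $g$ is a linear operation on homomorphism and endomorphism spaces. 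This suffices to make $\tilde{Z}_k$ a $G$-vector bundle over $G_k(V_0)$.

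The only place that requires any care is the verification that $g.(W^{\bot}) = (g.W)^{\bot}$ and the equivariance of the subspace embedding of Corollary \ref{factoringinsTperp}; everything else is a formal consequence of working with a unitary $G$-action and the fact that conjugation preserves linearity, selfadjointness, and composition.
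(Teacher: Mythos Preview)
Your proof is correct and follows the same approach as the paper, though you are considerably more thorough: the paper's proof is a single paragraph noting that ``the only issue is checking that the action is well-defined, the rest of the claims are standard,'' and then verifies that $g.\gamma$ and $g.\psi$ have the right domains by observing $g^{-1}(g.W)=W$. Your explicit treatment of the identity $g.(W^\bot)=(g.W)^\bot$, continuity via the subspace embedding of Corollary~\ref{factoringinsTperp}, and the $G$-vector bundle axioms fills in exactly the details the paper declares standard.
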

\begin{proof} The only issue is checking that the action is well-defined, the rest of the claims are standard. To check that the action is well-defined we first need to check that $g.\gamma$ and $g.\psi$ have domain $g.W$. This involves checking that $g^{-1}(g.W)=W$ but this is clear. We also note that $g.\psi$ clearly has codomain $g.W$ by the nature of the action. This is enough to prove the result.
\end{proof}

Hence we have $G_{k}(V_{0})^{\Hom(T,V_{1})\oplus s(T^{\bot})}=(\tilde{Z}_{k})_{\infty}$, a well-defined based $G$-space. Moreover, the topological analysis above will allow us to construct various technical continuity arguments in Chapter \ref{ch:ch5}. We conclude this section by stating a lemma that will allow us to stabilize this Thom space and in the next section build the spectra that will become our cofibres. The proof is fairly simple and follows from the finite $CW$-structure possessed by $G_{k}(V_{0})$. 

\begin{lem}\label{thomspaceisfiniteCW} $G_{k}(V_{0})^{\Hom(T,V_{1})\oplus s(T^{\bot})}$ has the homotopy type of a finite based $G$-$CW$-complex.
\end{lem}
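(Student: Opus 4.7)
The plan is to reduce the claim to two standard facts: first that $G_{k}(V_{0})$ itself has the homotopy type of a finite $G$-$CW$-complex, and second that the Thom space of a $G$-vector bundle over a finite $G$-$CW$-complex inherits this property. First I would verify that $G_{k}(V_{0})$ is a compact smooth $G$-manifold (this is immediate from its realization inside $G'_{k}(V_{0})\subseteq s(V_{0})$, with $G$ acting by conjugation on the ambient vector space). By Illman's equivariant triangulation theorem, every smooth compact $G$-manifold admits a finite equivariant triangulation, and hence the structure of a finite $G$-$CW$-complex.

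Next I would invoke the results of $\S$\ref{TheTopologyOfBundlesOverGrassmannians}: the map $\Hom(T,V_{1})\oplus s(T^{\bot})\to G_{k}(V_{0})$ is a finite-rank $G$-vector bundle whose total space is a $G$-space in a manner compatible with the projection, and since $G_{k}(V_{0})$ is compact, the Thom space of this bundle is identified with the one-point compactification of the total space $\tilde{Z}_{k}$, now viewed as a based $G$-space with isolated basepoint.

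To pass from a finite $G$-$CW$-structure on the base to one on the Thom space, I would argue cell-by-cell. For each equivariant cell $G/H\times D^{n}\to G_{k}(V_{0})$ the pullback of the bundle is (up to $G$-homotopy) of the form $G\times_{H}(D^{n}\times W_{H})$ for a finite-dimensional $H$-representation $W_{H}$ (we may assume this by choosing the triangulation fine enough and using that equivariant vector bundles over $G/H\times D^{n}$ are classified by representations of $H$). Such a piece is a finite $G$-$CW$-complex after one-point compactification, because the one-point compactification of $G/H\times D^{n}\times W_{H}$ is $G$-homotopy equivalent to $(G/H)_{+}\wedge S^{W_{H}}\wedge S^{n}$, which is a finite $G$-$CW$-complex (representation spheres of compact Lie group representations admit finite $G$-$CW$-structures). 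Assembling these pieces along the attaching maps of the base produces a finite $G$-$CW$-structure on the Thom space.

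The main obstacle is the equivariant cellular bookkeeping: one must ensure that the $G$-$CW$-structure on the base can be refined so the bundle trivializes equivariantly over each cell, and that the gluing data for the Thom space is compatible with the $G$-$CW$ attaching maps. Once this is arranged the homotopy type claim follows formally, since we need only the homotopy type and not an explicit cellular model, so any choice of compatible trivializations will do.
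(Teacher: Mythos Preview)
Your proposal is correct and follows the same approach the paper indicates: the paper does not give a detailed proof but merely states that the result ``is fairly simple and follows from the finite $CW$-structure possessed by $G_{k}(V_{0})$.'' Your argument via Illman's theorem and a cell-by-cell passage to the Thom space is exactly the kind of elaboration the paper leaves to the reader.
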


\section{Stabilizing the Cofibres}\label{DestabilizingtheTower}

When we turn to calculating the cofibre of the map $\pi_{k}$ we instead will wish to calculate
the homotopy cofibre of the map $\tilde{\pi}_{k}$. This cofibre will turn out to be $G_{k}(V_{0})^{\Real\oplus\Hom(T,V_{1})\oplus s(T^{\bot})}$. Thus the stable cofibre of $\pi_{k}$ will be $S^{-\svo}\wedge G_{k}(V_{0})^{\Real\oplus\Hom(T,V_{1})\oplus s(T^{\bot})}$. However, this spectrum can be re-written in a nicer form.

All the standard categories of general spectra contain virtual vector bundles, formal differences $V-W$ for vector bundles $V$ and $W$. This is not immediately noticeable in $\mathcal{F}_{G}$ as virtual bundles don't explicitly take the form of a desuspension of a space but it can easily shown to be true. We use the shorthand $\Hom(T,V_{1}-V_{0})$ for the virtual bundle $\Hom(T,V_{1})-\Hom(T,V_{0})$; further we note here that this bundle is the honest bundle $\Hom(T,V_{1}\ominus V_{0})$ in the case where $V_{0}\leqslant V_{1}$. This virtual bundle has a $G$-action given in a similar fashion to the action on $\tilde{Z}_{k}$ in the previous section. Then via the below lemma we can rewrite our stable cofibre in the following form:
$$S^{-\svo}\wedge G_{k}(V_{0})^{\Real\oplus\Hom(T,V_{1})\oplus s(T^{\bot})}\cong G_{k}(V_{0})^{\Real\oplus\Hom(T,V_{1}-V_{0})\oplus s(T)}.$$

\begin{lem}\label{destabilizinglemma} As $G$-bundles over $G_{k}(V_{0})$ we have:
$$\Hom(T,V_{1} - V_{0})\oplus
s(T)\oplus \svo\cong \Hom(T,V_{1})\oplus s(T^{\bot}).$$
\end{lem}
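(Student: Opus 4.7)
The plan is to reduce the claimed virtual-bundle identity to an honest bundle isomorphism and then establish that by fibrewise linear-algebra decompositions, checking $G$-equivariance at each step.

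First I would add the honest bundle $\Hom(T,V_{0})$ to both sides and cancel the common summand $\Hom(T,V_{1})$. Recalling that $\Hom(T,V_{1}-V_{0})$ is just shorthand for the virtual bundle $\Hom(T,V_{1})-\Hom(T,V_{0})$, this reduces the claim to showing the honest $G$-bundle isomorphism
$$s(T)\oplus s(V_{0})\;\cong\; s(T^{\bot})\oplus\Hom(T,V_{0})$$
over $G_{k}(V_{0})$.

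Next, I would exploit the tautological orthogonal splitting $V_{0}\cong T\oplus T^{\bot}$ of $G$-bundles over $G_{k}(V_{0})$, which is built entirely from the invariant inner product on $V_{0}$. Writing any $\phi\in s(V_{0})$ fibrewise as a block matrix with respect to this splitting,
$$\phi=\begin{pmatrix}A&C^{\dag}\\ C&B\end{pmatrix},\qquad A\in s(T),\;B\in s(T^{\bot}),\;C\in\Hom(T,T^{\bot}),$$
gives a natural $G$-equivariant bundle isomorphism
$$s(V_{0})\;\cong\; s(T)\oplus s(T^{\bot})\oplus\Hom(T,T^{\bot}).$$
I would then use the standard real splitting of a complex endomorphism bundle into its selfadjoint and skew-Hermitian parts, $\End(T)\cong s(T)\oplus\mathfrak{u}(T)$, combined with the $G$-equivariant real isomorphism $\mathfrak{u}(T)\cong s(T)$ given by multiplication by $i$ (exactly the device used in the passage right after Definition \ref{svo}). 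Hence $\End(T)\cong s(T)\oplus s(T)$ as real $G$-bundles, and therefore
$$\Hom(T,V_{0})\;=\;\End(T)\oplus\Hom(T,T^{\bot})\;\cong\; s(T)\oplus s(T)\oplus\Hom(T,T^{\bot}).$$

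Finally, combining the two decompositions gives
$$s(T)\oplus s(V_{0})\;\cong\; s(T)\oplus s(T)\oplus s(T^{\bot})\oplus\Hom(T,T^{\bot})\;\cong\; s(T^{\bot})\oplus\Hom(T,V_{0}),$$
which is the required identity. The main thing to verify rather than prove is $G$-equivariance, but this is automatic at every stage: the adjoint, multiplication by $i$, and the projections onto $T$ and $T^{\bot}$ are all built from the Hermitian structure and complex structure on $V_{0}$, and both are preserved by the $G$-action. So there is no serious obstacle here; the proof is a careful functorial bookkeeping argument with fibrewise matrix decompositions, and the only subtle point worth emphasizing is the step where multiplication by $i$ is used to trade the skew-Hermitian part of $\End(T)$ for a second copy of $s(T)$.
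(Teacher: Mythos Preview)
Your proof is correct and follows essentially the same route as the paper: both arguments rest on the block-matrix decomposition $s(V_{0})\cong s(T)\oplus s(T^{\bot})\oplus\Hom(T,T^{\bot})$ together with the identification $\End(T)\cong s(T)\oplus s(T)$ via the selfadjoint/skew-Hermitian splitting and multiplication by $i$. The only cosmetic difference is that you first explicitly cancel $\Hom(T,V_{1})$ to isolate the honest bundle identity $s(T)\oplus s(V_{0})\cong s(T^{\bot})\oplus\Hom(T,V_{0})$, whereas the paper simply records the two decompositions and lets the virtual-bundle identity fall out.
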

\begin{proof} Firstly we have that if $W$ is a subspace of $V_{0}$ then $\svo$ can be decomposed into parts in $s(W)$, $s(W^{\bot})$ and $\Hom(W,W^{\bot})$; this gives us the bundle identification $\svo\cong s(T)\oplus s(T^{\bot})\oplus\Hom(T,T^{\bot})$. This is easiest to see when $\alpha\in \svo$ is written down as a $2\times
2$ matrix using the techniques used in the last section or in \cite{Wegge-Olsen}, for example. Via this technique the
selfadjoint endomorphism $\alpha$ takes the form below for $\beta\in s(W)$, $\gamma\in s(W^{\bot})$ and $\delta\in\Hom(W,W^{\bot})$; it is clear that this implies the above bundle isomorphism and further that this bundle isomorphism is equivariant:
$$\alpha=\left(\begin{array}{cc}\beta& \delta^{\dag}\\
\delta & \gamma\end{array}\right).$$
Next we note that if $A\in \End(V)$ then there is a classical decomposition of $A$ into the sum of two selfadjoint endomorphisms. This is given by $A\mapsto (1/2(A+A^{\bot}),i/2 (A-A^{\bot}))$ and has inverse $(\alpha,\beta)\mapsto \alpha\oplus i\beta$. Thus we get the bundle identification $\Hom(T,T)\cong 2s(T)$, further this identity is again  clearly equivariant. This allows us to make the following bundle identifications:
\begin{align}\Hom(T,V_{0})&\cong\Hom(T,T)\oplus\Hom(T,T^{\bot})\nonumber\\
&\cong 2s(T)\oplus\Hom(T,T^{\bot}).\nonumber
\end{align}
The result follows from this and the earlier statement that $\svo\cong s(T)\oplus s(T^{\bot})\oplus\Hom(T,T^{\bot})$.
\end{proof}

Thus, we wish to construct cofibre sequences that take the below form:
$$\xymatrix{\tilde{X}_{k}\ar[d]_{\tilde{\pi}_{d_{0}}}&G_{k}(V_{0})^{\Hom(T,V_{1})\oplus s(T^{\bot})}\ar[l]\ar[l]_{\tilde{\phi}_{k}\phantom{xxxxx}}\\
\tilde{X}_{k-1}\ar[ur]|\bigcirc_{\phantom{x}\tilde{\delta}_{k}}&}
$$

In particular, the top of the unstable tower should be:
$$\xymatrix{\tilde{X}_{d_{0}}\ar[d]_{\tilde{\pi}_{d_{0}}}&S^{\aich}\ar[l]_{\tilde{\phi}_{d_{0}}\phantom{xxx}}\\
\tilde{X}_{d_{0}-1}\ar[ur]|\bigcirc_{\phantom{x}\tilde{\delta}_{d_{0}}}&}
$$

We now explicitly state candidates for $\tilde{\phi}_{k}$ and $\tilde{\delta}_{k}$.

\section{Candidate Maps}\label{CandidateMaps}

We now state what will eventually be shown to be $\tilde{\phi}_{k}$ and $\tilde{\delta}_{k}$. To do this we will have to first deal with the top triangle, before moving on to consider the rest of the tower. While we remark that this seems somewhat unsatisfactory at first it will become clear in the chapters following that the cofibre sequences will fall out from this in a somewhat natural way.

Throughout this section we recall $\rho$, $\sigma$ and $\lambda_{k}$ from \ref{themaprho}, \ref{themapsigma} and \ref{lambdak}. We make the following definition:

\begin{defn}\label{topphi} Set the map $\tilde{\phi}_{d_{0}}:S^{\Hom(V_{0},V_{1})}\to S^{\svo}\wedge\Ell_{\infty}$ to be the collapse $\kappa^{!}$ from Proposition \ref{thetaEalpha}.
\end{defn}

This is patently a $G$-map. To build $\tilde{\delta}_{d_{0}}$ we need a similar result to Proposition \ref{thetaEalpha}. This turns out to be the below proposition:

\begin{prop}\label{sigmainjchomeo} There is a homeomorphism:
$$\tau:\tilde{X}'_{d_{0}-1}=\{(\alpha,\theta):\alpha\in\svo, \theta\in\mathcal{L}(P_{d_{0}-1}(\alpha),V_{1})\}\overset{\cong}{\to}\Real\times \inj(V_{0},V_{1})^{c}$$
$$(\alpha,\theta)\mapsto(e_{0}(\alpha),-\theta\circ(\alpha-e_{0}(\alpha))).$$
Thus we have $\tilde{X}_{d_{0}-1}\cong \Sigma \inj(V_{0},V_{1})^{c}_{\infty}$.
\end{prop}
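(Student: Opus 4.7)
The plan is to exhibit an explicit continuous inverse to $\tau$ and then pass to one-point compactifications for the second claim. Well-definedness of $\tau$ is straightforward: given $(\alpha,\theta) \in \tilde{X}'_{d_{0}-1}$, the endomorphism $\alpha - e_{0}(\alpha)$ is selfadjoint and positive semidefinite with image exactly $P_{d_{0}-1}(\alpha) = (\Ker(\alpha - e_{0}(\alpha)))^{\bot}$, so $-\theta \circ (\alpha - e_{0}(\alpha))$ makes sense as a map $V_{0} \to V_{1}$ and fails to be injective because $\alpha - e_{0}(\alpha)$ has nontrivial kernel. For continuity I would use the subspace topology on $\tilde{X}'_{d_{0}-1}$ from Proposition \ref{topologisingthingy}, noting that in the case $k = d_{0}-1$ one has $\lambda_{d_{0}-1}(\alpha) = \alpha - e_{0}(\alpha)$ (the matrix $\alpha - e_{0}(\alpha)$ is already nonnegative), so $\tau$ is essentially the projection to the $\aich$-factor under that embedding, paired with the continuous eigenvalue function $e_{0}$ of Lemma \ref{continuouseigenvalues}.

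The candidate inverse is $\tau^{-1}(e,\gamma) := (e + \rho(\gamma),\, -\sigma(\gamma))$, with $\rho$ and $\sigma$ taken from Corollary \ref{themaprho} and Proposition \ref{themapsigma}. Well-definedness rests on three observations: (i) $\gamma$ being non-injective forces $\rho(\gamma)$ to have $0$ as its smallest eigenvalue, so $e + \rho(\gamma) \in \svo$ with $e_{0}(e + \rho(\gamma)) = e$; (ii) $P_{d_{0}-1}(e+\rho(\gamma)) = (\Ker \rho(\gamma))^{\bot} = (\Ker \gamma)^{\bot}$, which is precisely the domain of $\sigma(\gamma)$; and (iii) $\sigma(\gamma)$ is an isometry, established in Proposition \ref{themapsigma}. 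Continuity is again read off the subspace description: $\tau^{-1}$ corresponds to $(e,\gamma) \mapsto (e + \rho(\gamma), \gamma)$ into $\svo \times \aich$, and $\rho$ is continuous.

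To verify the two compositions I would exploit the identity $\gamma = \sigma(\gamma)\rho(\gamma)$ from Proposition \ref{themapsigma}. For $\tau \circ \tau^{-1}(e,\gamma)$ this identity directly recovers $(e,\gamma)$. For $\tau^{-1} \circ \tau(\alpha,\theta)$, setting $\gamma := -\theta \circ (\alpha - e_{0}(\alpha))$, I compute $\gamma^{\dag}\gamma = (\alpha - e_{0}(\alpha))^{2}$ using $\theta^{\dag}\theta = \mathrm{id}$ on $P_{d_{0}-1}(\alpha)$ together with the fact that $\alpha - e_{0}(\alpha)$ maps into $P_{d_{0}-1}(\alpha)$; hence $\rho(\gamma) = \alpha - e_{0}(\alpha)$ by uniqueness of the positive square root, which recovers $\alpha = e + \rho(\gamma)$, and unwinding the definition of $\sigma$ on $(\Ker\gamma)^{\bot}$ recovers $\theta = -\sigma(\gamma)$.

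The second claim follows by applying one-point compactification to the homeomorphism $\tau$ of locally compact Hausdorff spaces and invoking the standard identity $(\Real \times Y)_{\infty} \cong S^{1} \wedge Y_{\infty}$. The main obstacle is purely bookkeeping: one must track carefully that $\sigma(\gamma)$ is only defined on $(\Ker\gamma)^{\bot}$ and verify this subspace coincides with $P_{d_{0}-1}$ of the reconstructed $\alpha$. Once this alignment is confirmed, both the inverse check and the continuity argument are routine consequences of Propositions \ref{themaprho}, \ref{themapsigma} and \ref{topologisingthingy}.
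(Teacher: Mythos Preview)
Your proposal is correct and follows essentially the same approach as the paper: both proofs exploit that $\lambda_{d_{0}-1}(\alpha)=\alpha-e_{0}(\alpha)$, invoke the subspace description of Proposition~\ref{topologisingthingy} for continuity, and build the inverse via $(t,\gamma)\mapsto(\rho(\gamma)+t,-\sigma(\gamma))$. The only cosmetic difference is that the paper explicitly factors $\tau$ through the subspace coordinates $(\alpha,\beta)$ first and then gives the simpler second map $(\alpha,\beta)\mapsto(e_{0}(\alpha),\beta)$, whereas you work directly with $(\alpha,\theta)$ and translate back; the content is identical.
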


\begin{proof} We build this map in two stages. Firstly we note that in this case the map $\lambda_{d_{0}-1}(\alpha)$ is given by $\alpha\mapsto\alpha-e_{0}(\alpha)$. From Proposition \ref{topologisingthingy} we have a homeomorphism between $\tilde{X}_{d_{0}-1}'$ and $\{(\alpha,\beta):\alpha\in\svo, \beta:V_{0}\to V_{1}, \rho(\beta)=\alpha-e_{0}(\alpha)\}$. We then define a map out of this homeomorphic space and into $\Real\times \inj(V_{0},V_{1})^{c}$ given by $(\alpha,\beta)\mapsto (e_{0}(\alpha),\beta)$. It is clear that $\tau$ is this composition. Thus all that is required is proof that the second map in the factorization is a homeomorphism.

Firstly we check that this map is well-defined, i.e. that $\beta$ in this case is non-injective. This is clear, however, as if $v$ is an eigenvector of $\alpha$ with eigenvalue $e_{0}(\alpha)$ then $\beta$ will send $v$ to $0$. We also note that this map is patently continuous via previous remarks. We next build a continuous inverse of the map, this will
therefore be enough to show that the map is a homeomorphism and hence that $\tau$ is a homeomorphism.

Define the map out of $\Real\times \inj(V_{0},V_{1})^{c}$ given by $(t,\delta)\mapsto (\rho(\delta)+t,\delta)$. We claim that this is a well-defined continuous inverse. On the first point, note that $\rho(\delta)+t$ is patently selfadjoint. We now just need to check that if $\alpha=\rho(\delta)+t$ then $\rho(\delta)$ is $\alpha-e_{0}(\alpha)$. Firstly note that $e_{0}(\alpha)$ is going to be $t$ - the lowest eigenvalue of $\rho(\delta)$ will be zero as $\delta$ is non-injective. Thus $\alpha-e_{0}(\alpha)$ will be $\rho(\delta)$ as required and the map is well-defined, landing in
the right domain. We also note here that the map is continuous via Lemmas \ref{fnalcalclemma} and \ref{continuouseigenvalues}.

Finally, we check that the two given maps are self-inverses. This, however, follows immediately from the lowest eigenvalue of $\rho(\delta)+t$ being $t$. Thus the map is a homeomorphism and thus $\tau$ is, proving the claim.
\end{proof}

\begin{defn}\label{topdelta} The map $\tilde{\delta}_{d_{0}}:\tilde{X}_{d_{0}-1}\to S^{\Real\oplus\aich}$ is given by composition of the homeomorphism $\tilde{X}_{d_{0}-1}\cong \Sigma\inj(V_{0},V_{1})^{c}_{\infty}$ with the twisted inclusion $-\Sigma i_{d_{0}}:\Sigma\inj(V_{0},V_{1})^{c}_{\infty}\to S^{\Real\oplus\aich}$, recalling $-\Sigma$ from Definition \ref{thecofibrestwist}.
\end{defn}

This map is again patently a $G$-map; both parts of the composition are easily observed to be $G$-maps. We now state what we will later demonstrate to be $\tilde{\phi}_{k}$ and $\tilde{\delta}_{k}$ for $k< d_{0}$. 

\begin{defn}\label{tildephik} $\tilde{\phi}_{k}$ is the extension of the below proper map:
$$\tilde{Z}_{k}\to \tilde{X}_{k}'$$
$$(W,\gamma,\psi)\mapsto (\psi|_{W^{\bot}}\oplus (\rho(\gamma)+e_{top}(\psi))|_{W},-\sigma(\gamma)).$$
\end{defn}

\begin{defn}\label{makingthecofibseqeasier} Set $Y_{k}'$ to be the following subspace of $\tilde{X}_{k}'$:
$$Y_{k}':=\{(\alpha,\theta)\in\tilde{X}_{k}':\Dim(P_{k}(\alpha))<k\}.$$

Abusing notation somewhat, this can be considered as both a subspace of $\tilde{X}_{k}'$ and of $\tilde{X}_{k-1}'$. We put $Y_{k}:=(Y_{k}')_{\infty}$.
\end{defn}

\begin{defn}\label{tildedeltak} $\tilde{\delta}_{k}$ is the composition of the collapse map $\tilde{X}_{k-1}\to \tilde{X}_{k-1}/Y_{k}$ with the map $\tilde{X}_{k-1}/Y_{k}\to G_{k}(V_{0})^{\Real\oplus \Hom(T,V_{1})\oplus s(T^{\bot})}$ built out of the below proper map:
$$\tilde{X}_{k-1}'\backslash Y'_{k}\to \Real\times \tilde{Z}_{k}$$
$$(\alpha,\theta)\mapsto \left(e_{d_{0}-k}(\alpha),P_{k}(\alpha),-\theta\circ \lambda_{k-1}(\alpha)|_{P_{k}(\alpha)},-\log((e_{d_{0}-k}(\alpha)-\alpha)|_{P_{k}(\alpha)^{\bot}})\right).$$
Finally, we insert a suspension twist $-\Sigma$ as detailed in Definition \ref{thecofibrestwist}.
\end{defn}

This gives us definitions for all of the maps in the tower. We note, however, that again one important issue has yet to be dealt with - barring the top triangle we have yet to demonstrate whether these maps are continuous, proper, equivariant or even well-defined. We will do this for $\tilde{\delta}_{k}$ in the next chapter, while we hold off the work on $\tilde{\phi}_{k}$ until Chapter \ref{ch:ch6} wherein we equate the cofibre sequences back to the map above.
\chapter{The Cofibres}
\label{ch:ch5}

\section{The Top Triangle}\label{TheTopTriangle}

In $\S$\ref{CandidateMaps} we defined candidate maps for the top of the tower. Recalling the definitions from the previous chapter, we now wish to show that the following triangle is a cofibre sequence:
$$\xymatrix{\tilde{X}_{d_{0}}\ar[d]_{\tilde{\pi}_{d_{0}}}&S^{\aich}\ar[l]_{\tilde{\phi}_{d_{0}}\phantom{xxx}}\\
\tilde{X}_{d_{0}-1}\ar[ur]|\bigcirc_{\phantom{x}\tilde{\delta}_{d_{0}}}&}
$$

In defining $\tilde{\phi}_{d_{0}}$ and $\tilde{\delta}_{d_{0}}$ we first demonstrated homeomorphisms $\tilde{X}_{d_{0}}\cong\inj(V_{0},V_{1})_{\infty}$ and $\tilde{X}_{d_{0}-1}\cong\Sigma\inj(V_{0},V_{1})^{c}_{\infty}$. Further, it is easy to observe that these homeomorphisms are equivariant when $\inj(V_{0},V_{1})$ and $\inj(V_{0},V_{1})^{c}$ are equipped with the conjugation group action. We thus have a unique map $\chi$ that completes the following strictly commutative square, recalling $\kappa$ from \ref{thetaEalpha} and $\tau$ from \ref{sigmainjchomeo}:
$$\xymatrix@C=2cm{\tilde{X}_{d_{0}}\ar[r]_{\cong}^{\kappa}\ar[d]_{\tilde{\pi}_{d_{0}}}&\inj(V_{0},V_{1})_{\infty}\ar[d]^{\chi}\\
\tilde{X}_{d_{0}-1}\ar[r]^{\cong}_{\tau}&\Sigma\inj(V_{0},V_{1})^{c}_{\infty}}
$$

By chasing the various definitions one can check that $\chi$ is explicitly given by the below formulation, recalling $\rho$ and $\sigma$ from \ref{themaprho} and \ref{themapsigma}:
$$\chi(\gamma)=\left(e_{0}(\log(\rho(\gamma))),\sigma(\gamma)\circ(\log(\rho(\gamma))-e_{0}(\log(\rho(\gamma))))\right).$$

Again, it is simple to see that $\chi$ is a $G$-map. We wish to show that the cofibre of $\tilde{\pi}_{d_{0}}$, or equivalently the cofibre of $\chi$, is $S^{\aich\oplus
\Real}$. Corollary \ref{homcofibseq}, however, gives us the following cofibre sequence. The maps $e$, $-\Sigma i$ and $-\Sigma p$ are defined in the corollary and again the maps are $G$-maps:
$$\inj(V_{0},V_{1})_{\infty}\overset{e}{\to}\Sigma\inj(V_{0},V_{1})^{c}_{\infty}\overset{-\Sigma i}{\to} S^{\aich\oplus\Real}\overset{-\Sigma p}{\to}\Sigma \inj(V_{0},V_{1})_{\infty}.$$

Thus by the definition of isomorphisms of cofibre sequences \ref{isomorphistoacofibreseq} we just need to show that $e$ and $\chi$ are homotopic - this will then allow us to replace $e$ in the above sequence with $\chi$, proving that the homotopy cofibre of $\chi$ is $S^{\aich\oplus \Real}$ as claimed. To show this, we first note that by construction the map $e$ is actually of the form $\mathfrak{B}_{f}$, a map built using the extended functional calculus of Proposition
\ref{fnalvarB}. We recall $u'$ and $h'_{0}$ as defined in \ref{D2NDRpair}, as well as the construction \ref{MakingHatWork}. The codomain of $u'$ naturally runs over $[0,1]$ so in the below map we have a suspension taking coordinates in $(0,1)$, thus we also recall our choice of homeomorphism \ref{UnitIntervalHomeomorphism} and abusing notation somewhat assume that the suspension coordinate in the below map actually runs over $\Real$. The following map $f:D_{+}(d_{0})\to S^{1}\wedge D_{+}(d_{0})$ is used to build $e$:
$$f:t\mapsto (u'(t_{0},t_{d-1}))\wedge\widehat{h_{0}'}(t).$$

The map $e$ has domain $\inj(V_{0},V_{1})_{\infty}$ and codomain $\Sigma\inj(V_{0},V_{1})_{\infty}^{c}$ so by Lemma
\ref{factoringthroughinfnalcalc} the domain and codomain of the facial map $f$ are $D_{+}(d_{0})/D_{0}(d_{0})$ and $\Sigma D_{0}(d_{0})$ respectively. Similarly, however, $\chi$ can be written as $\mathfrak{B}_{g}$ for some facial map $g:D_{+}(d_{0})/D_{0}(d_{0})\to \Sigma D_{0}(d_{0})$.

\begin{prop}\label{chiisfnalcalc} $\chi$ is $\mathfrak{B}_{g}$ for a facial map $g:D_{+}(d_{0})/D_{0}(d_{0})\to\Sigma D_{0}(d_{0})$ given as follows:
$$g:t\mapsto (\log(t_{0}))\wedge(\log(t_{1})-\log(t_{0}),...,\log(t_{d_{0}-1})-\log(t_{0})).$$
\end{prop}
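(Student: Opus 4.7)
The plan is to compute $\mathfrak{B}_g(\gamma)$ on an arbitrary $\gamma\in\inj(V_0,V_1)$ using the explicit formula of Proposition \ref{fnalvarB}, and match it term-by-term with the explicit description of $\chi(\gamma)$ derived above.

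First I would verify that $g$ is a well-defined facial map out of $D_+(d_0)/D_0(d_0)$ into $\Sigma D_0(d_0)$. The face relation $t_i=t_{i+1}$ is preserved because it yields $\log(t_i)-\log(t_0)=\log(t_{i+1})-\log(t_0)$. As $t_0\to 0^+$ the first smash factor $\log(t_0)$ approaches $-\infty$, the basepoint of $\Real_\infty=S^1$, so $g$ descends to the quotient by $D_0(d_0)$ and carries basepoint to basepoint. The tuple $(\log(t_1)-\log(t_0),\ldots,\log(t_{d_0-1})-\log(t_0))$ has implicit initial entry $\log(t_0)-\log(t_0)=0$, so it sits inside $D_0(d_0)$. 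Under the identifications of Lemma \ref{wereactuallyworkingwithd}, $g$ corresponds to the identity self-map of $D(d_0)$.

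Next, fix $\gamma\in\inj(V_0,V_1)$ and pick an orthonormal eigenbasis $v_0,\ldots,v_{d_0-1}$ of $\gamma^\dagger\gamma$ with eigenvalues $e_0^2\leqslant\cdots\leqslant e_{d_0-1}^2$, so that $\gamma(v_i)=e_im_i$ with the $m_i$ orthonormal in $V_1$. Then the ordered eigenvalues of $\rho(\gamma)$ are $e_0,\ldots,e_{d_0-1}$ and those of $\log(\rho(\gamma))$ are $\log(e_0),\ldots,\log(e_{d_0-1})$. Applying the explicit formula of Proposition \ref{fnalvarB} (with the codomain of $g$ factored as $D_+(d_0)\wedge S^1$, so that $s_i=\log(e_i)-\log(e_0)$ and the smash coordinate is $x=\log(e_0)$) yields
$$\mathfrak{B}_g(\gamma)=\bigl(v_i\mapsto(\log(e_i)-\log(e_0))m_i\bigr)\wedge\log(e_0),$$
which indeed lies in $\inj(V_0,V_1)^c_\infty\wedge S^1=\Sigma\inj(V_0,V_1)^c_\infty$ since $v_0$ lies in the kernel.

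Finally I would evaluate $\chi$ on the same basis. The endomorphism $\log(\rho(\gamma))-e_0(\log(\rho(\gamma)))$ sends $v_i$ to $(\log(e_i)-\log(e_0))v_i$, while $\sigma(\gamma)(v_i)=m_i$ because $\sigma(\gamma)=\gamma\circ\rho(\gamma)^{-1}$ and $\gamma$ is injective. Hence the second coordinate of $\chi(\gamma)$ is precisely $v_i\mapsto(\log(e_i)-\log(e_0))m_i$ and the suspension coordinate is $\log(e_0)$, matching $\mathfrak{B}_g(\gamma)$. The only real obstacle is bookkeeping: tracking the implicit zero that places the tuple inside $D_0(d_0)$, the order in which the two smash factors are written, and matching the pair notation of $\Real\times\inj(V_0,V_1)^c$ with the smash product $\inj(V_0,V_1)^c_\infty\wedge S^1$. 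The potential sign ambiguities coming from the $-\theta\circ\Exp(\alpha)$ in $\kappa$ and the $-\theta\circ(\alpha-e_0(\alpha))$ in $\tau$ cancel precisely as they did in the derivation of the explicit formula for $\chi$ at the start of this section.
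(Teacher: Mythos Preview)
Your proof is correct, but it takes a different route from the paper's. You work pointwise, invoking the explicit eigenbasis characterization of $\mathfrak{B}_g$ given in the second bullet of Proposition~\ref{fnalvarB} and matching it against the formula for $\chi$. The paper instead appeals to the \emph{uniqueness} clause of that same proposition: it observes that $\chi$, when sandwiched between the collapse $S^{\Hom(V_0,V_1)}\twoheadrightarrow\inj(V_0,V_1)_\infty$ and the inclusion $\Sigma\inj(V_0,V_1)^c_\infty\hookrightarrow S^{\Hom(V_0,V_1)}\wedge S^1$, makes the defining square for $\mathfrak{B}_g$ commute (with $1\wedge\mathfrak{A}_g$ on the left and $\mu$, $\mu\wedge 1$ along the horizontals), and then concludes $\chi=\mathfrak{B}_g$ by uniqueness. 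Your approach is more elementary and hands-on; the paper's diagrammatic argument avoids choosing eigenbases and makes the factorization through the subquotients manifest, which dovetails neatly with Lemma~\ref{factoringthroughinfnalcalc} used immediately afterward.
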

\begin{proof} We first note that by Proposition \ref{fnalvarA} we have a map $\mathfrak{A}_{g}:s_{+}(V_{0})_{\infty}\to S^{1}\wedge s_{+}(V_{0})_{\infty}$ built from $g$. We claim that $\chi=\mathfrak{B}_{g}$, which instantly follows by observing that the diagram below commutes and by noting the uniqueness of $\mathfrak{B}_{g}$ from Proposition \ref{fnalvarB}:
$$
\xymatrix{\Ell_{\infty}\wedge s_{+}(V_{0})_{\infty}\ar[rr]^{\mu}\ar[ddd]_{1\wedge \mathfrak{A}_{g}}&&\ar@{->>}[dl]S^{\aich}\ar[ddd]^{\mathfrak{B}_{g}}\\
&\inj(V_{0},V_{1})_{\infty}\ar[d]_{\chi}&\\
&\Sigma\inj(V_{0},V_{1})^{c}_{\infty}\ar@{ >->}[dr]&\\
\Ell_{\infty}\wedge s_{+}(V_{0})_{\infty}\wedge
S^{1}\ar[rr]_{\mu\wedge 1}&&S^{\aich}\wedge S^{1}}
$$
\end{proof}

Thus $e$ and $\chi$ come from facial maps $f$ and $g$ which as detailed in $\S$\ref{TheHomotopyTypeofCertainMapsinFunctionalCalculus} are classified up to homotopy by degree. Hence all we need
to show is the following proposition, while observing that homotopies built out of the extended functional calculus of $\S$\ref{Variations} will be equivariant in this case:

\begin{prop}\label{thedegreesarethesame} Referring to the concept of degree detailed in Remark \ref{whatdegreesmean} the maps $f$ and $g$ are both of degree 1. Thus via Proposition \ref{facialhomotopiesviadegrees} modified as discussed in Remark \ref{whatdegreesmean} $f$ is homotopic to $g$ through facial maps and thus $e$ is homotopic to $\chi$.
\end{prop}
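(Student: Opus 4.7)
My plan is to invoke the facial homotopy classification developed in the previous section. By Proposition \ref{facialhomotopiesviadegrees}, as adapted to factoring facial maps via Remark \ref{whatdegreesmean} and Lemma \ref{wereactuallyworkingwithd}, two facial maps $D_{+}(d_{0})/D_{0}(d_{0})\to\Sigma D_{0}(d_{0})$ are facially homotopic iff they induce the same degree on the $S^{1}=\bar{B}_{\emptyset}$ obtained by restriction to the diagonal. Hence once I check that $\deg(f)=\deg(g)=1$, I get a facial homotopy $f\simeq g$; Lemma \ref{homotopiesthroughfnalcalc} then lifts this through the extended functional calculus to a homotopy $\mathfrak{B}_{f}\simeq\mathfrak{B}_{g}$, i.e.\ $e\simeq\chi$, and since $\mathfrak{B}$ is built naturally from $G$-equivariant data the homotopy automatically respects the action.

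The degree of $g$ is immediate by inspection: on the diagonal, $g(t,\ldots,t)=\log(t)\wedge(0,\ldots,0)$ lies on the $S^{1}=\{(s,0,\ldots,0):s\in\Real\}\subseteq\Sigma D_{0}(d_{0})$, and the induced map of the source diagonal (parameterized by $t\in(0,\infty)$) to this $S^{1}$ is $t\mapsto\log(t)$, an orientation-preserving homeomorphism between one-point compactifications of $\Real^{+}$ and $\Real$.

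For $f$ the work is more delicate and I expect it to be the main obstacle. The plan is to trace through the explicit formulas defining $u'$, $h'_{0}$ and the hat construction on the diagonal. Using $\phi(t,t)=(1-2t^{2})/(1+2t^{2})$, a direct calculation shows $u'(t,t)=0$ only at $t\in\{0,\infty\}$, while $u'(t,t)=1$ on an interval about $t=1/\sqrt 2$ whose endpoints are determined by $|\phi|=1/2$. Outside that interval, one checks that $h'_{0}(t,t)=\phi^{-1}(\phi(t,t)/|\phi(t,t)|)$, which equals $(0,0)$ for small $t$ (since $\phi^{-1}(1)=(0,0)$) and equals the basepoint $\infty$ of $D_{+}(2)$ for large $t$ (since $\phi^{-1}(-1)=\infty$). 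Because $\widehat{h'_{0}}(t,\ldots,t)=(g(t,t),\ldots,g(t,t))$, and because the hat construction sends the basepoint to the basepoint by Lemma \ref{MakingHatWork}, the image $f(t,\ldots,t)$ is at the basepoint of $\Sigma D_{0}(d_{0})$ for all $t$ outside a single bounded interval $(0,t_{0})$, on which $\widehat{h'_{0}}$ is identically $(0,\ldots,0)$ and $u'(t,t)$ increases monotonically from $0$ to $1$.

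The critical observation is that the apparent ``return trip'' for large $t$ is actually through the basepoint of $D_{+}(2)$, not through the interior of the suspension coordinate, so the two pieces of the map do not cancel. Reinterpreting $u'$ via the identification $(0,1)\cong\Real$ from Definition \ref{UnitIntervalHomeomorphism}, the restriction of $f$ to the diagonal $S^{1}$ makes a single forward traversal of the target $S^{1}=\{(s,0,\ldots,0)\}$ and is basepoint elsewhere, giving degree $1$. Combined with the computation for $g$ this completes the proof via the facial homotopy classification and the functional-calculus lift.
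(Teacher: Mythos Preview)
Your proposal is correct and follows essentially the same route as the paper: both arguments compute the restriction of $f$ and $g$ to the diagonal $S^{1}$ explicitly and verify degree $1$, with $g$ immediate and $f$ requiring a trace through $\phi$, $u''$, $h''_{0}$ and the hat construction. The only difference is presentational: the paper writes down the induced self-map $f'$ of $S^{1}$ in closed form as $t\mapsto\log(8e^{t}/(1-6e^{t}))$ on $t<-\log 6$ and $\infty$ elsewhere, recognises it as the Pontryagin--Thom collapse of a strictly increasing embedding $f'':\Real\to\Real$, and then uses the straight-line homotopy $h_{s}=(1-s)f''+s\,\mathrm{id}$ (through increasing embeddings) to get a homotopy of collapses to the identity; you instead argue qualitatively that the diagonal map is a single monotone forward traversal on $(0,t_{0})$ and constant at the basepoint elsewhere, which is the same content stated less formulaically.
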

\begin{proof} Via \ref{wereactuallyworkingwithd} we have an inclusion $S^{1}\rightarrowtail D_{+}(d_{0})/D_{0}(d_{0})$ given by $t\mapsto (e^{t},\ldots,e^{t})$. We also have an inclusion $S^{1}\rightarrowtail \Sigma D_{0}(d_{0})$ given by $t\mapsto t\wedge \underline{0}$. Considering $g$ first, we have a map $g':S^{1}\to S^{1}$ given by $t\mapsto t$. This map then makes the below diagram strictly commute: 
$$\xymatrix{S^{1}\ar@{ >->}[r]\ar[d]_{g'}&\frac{D_{+}(d_{0})}{D_{0}(d_{0})}\ar[d]^{g}\\
S^{1}\ar@{ >->}[r]&\Sigma D_{0}(d_{0})}$$

Comments in Remark \ref{whatdegreesmean} detail why the degree of $g$ is given by the degree of $g'$. The map $g'$ is the identity, thus the degree of $g$ is $1$ as claimed.

We now consider the map $f$. Define the map $f':S^{1}\to S^{1}$ as follows:
$$t\mapsto\left\{\begin{array}{ll}
\log\left(\frac{8e^{t}}{1-6e^{t}}\right)& \quad t< -\log(6)\\
\infty& \quad\text{otherwise.}
\end{array}
\right.
$$

This map then makes the below diagram strictly commute, this can be observed by chasing through the definition of $u'$ back through the definitions of $u''$ from \ref{unitdiscNDR} and $\phi$ from \ref{theconformalmapphi} before applying the homeomorphism \ref{UnitIntervalHomeomorphism}. The collapse occurs because we take minimums when considering $u''$.
$$\xymatrix{S^{1}\ar@{ >->}[r]\ar[d]_{f'}&\frac{D_{0}(d_{0})}{D_{0}(d_{0})}\ar[d]^{f}\\
S^{1}\ar@{ >->}[r]&\Sigma D_{0}(d_{0})}$$

Thus to calculate the degree of $f$ we calculate the degree of $f'$. We claim this map is degree $1$. We first observe that we have the below map:
$$f'':\Real\to\Real$$
$$t\mapsto \log\left(\frac{e^{t}}{8+6e^{t}}\right).$$

It is easy to see that $f'$ is the collapse corresponding to this embedding, $f'=(f'')^{!}$. Further $f''$ is strictly increasing. Define a homotopy $h_{s}:\Real\to\Real$ given by $h_{s}(t)=(1-t)f''(t)+st$. Then $h_{s}$ is again a strictly increasing embedding and thus we have collapse maps $h_{s}^{!}:S^{1}\to S^{1}$. These provide a homotopy between $f'$ and the identity. Hence $f'$ has degree $1$. The rest of the proposition then follows from Proposition \ref{facialhomotopiesviadegrees}, Lemma \ref{homotopiesthroughfnalcalc} and Remark \ref{whatdegreesmean}.
\end{proof}

Thus we have the below cofibre sequence:
$$\tilde{X}_{d_{0}}\overset{\tilde{\pi}_{d_{0}}}{\to}\tilde{X}_{d_{0}-1}\to S^{\aich\oplus\Real}\to\Sigma\tilde{X}_{d_{0}}.$$

By the discussion in $\S$\ref{onthecategories} we get a cofibre sequence in $\mathcal{F}_{G}$ by applying the suspension spectrum functor $\Sigma^{\infty}$ throughout. Finally, we smash through the whole sequence by $S^{-\svo}$ to get the following cofibre sequence in $\mathcal{F}_{G}$:
$$X_{d_{0}}\overset{\pi_{d_{0}}}{\to} X_{d_{0}-1}\to S^{\Hom(V_{0},V_{1}-V_{0})\oplus \svo\oplus \Real}\to\Sigma X_{d_{0}}.$$

This proves part $2$ of Theorem \ref{themaintheorem} for the top triangle. Finally, we equate the result with our explicit maps via the proposition below, which is clear:

\begin{prop}\label{thetoptriangleiswhatwethink} We have an isomorphism of cofibre sequences, with the left and right squares commuting strictly:
$$\xymatrix{S^{\aich}\ar[d]_{1}^{=}\ar[r]^{\tilde{\phi}_{d_{0}}}&\tilde{X}_{d_{0}}\ar[d]_{\kappa}^{\cong}\ar[r]^{\tilde{\pi}_{d_{0}}}&\tilde{X}_{d_{0}-1}\ar[d]_{\tau}^{\cong}\ar[r]^{\tilde{\delta}_{d_{0}}}&S^{\aich\oplus \Real}\ar[d]_{1}^{=}\\
S^{\aich}\ar[r]_{p}&\inj(V_{0},V_{1})_{\infty}\ar[r]_{e}&\Sigma\inj(V_{0},V_{1})^{c}_{\infty}\ar[r]_{-\Sigma i}&S^{\aich\oplus \Real}}$$
\end{prop}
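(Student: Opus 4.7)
The plan is to verify each of the three squares in turn, using the work already completed in the section, and then cite \ref{isomorphistoacofibreseq} together with the cofibre sequence from \ref{homcofibseq} to conclude. The vertical maps are, by construction, homotopy equivalences: the outer two are identities, $\kappa$ is a homeomorphism by \ref{thetaEalpha}, and $\tau$ is a homeomorphism by \ref{sigmainjchomeo}. Moreover each is equivariant, since the formulas on either side are built from functional calculus and from the orthogonal-complement construction, both of which respect the $G$-action as used throughout Chapter \ref{ch:ch3} and Chapter \ref{ch:ch4}.

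For the left square, the only content is to unwrap definitions. By \ref{topphi} the map $\tilde{\phi}_{d_{0}}$ is literally $\kappa^{!}$, and by \ref{thetaEalpha} $\kappa^{!}$ is the collapse associated to the open embedding $\inj(V_{0},V_{1})\hookrightarrow \Hom(V_{0},V_{1})$ followed by the homeomorphism $\kappa^{-1}$ identifying $\inj(V_{0},V_{1})_{\infty}$ with $\tilde{X}_{d_{0}}$. Composing with $\kappa$ therefore collapses $\inj(V_{0},V_{1})^{c}_{\infty}$ to the basepoint, which is exactly the collapse map $p$ arising from the NDR pair in \ref{homNDRpair}; so $\kappa\circ\tilde{\phi}_{d_{0}}=p$ on the nose.

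For the right square, again the content is purely definitional: \ref{topdelta} defines $\tilde{\delta}_{d_{0}}$ as the composition $-\Sigma i_{d_{0}}\circ\tau$, and this is exactly what the diagram demands, so the square commutes strictly. For the middle square, strict commutativity fails, but in the preceding discussion we set up the unique map $\chi$ making the square $\tau\circ\tilde{\pi}_{d_{0}}=\chi\circ\kappa$ hold on the nose, and then established via Propositions \ref{chiisfnalcalc} and \ref{thedegreesarethesame} together with Lemma \ref{homotopiesthroughfnalcalc} that $\chi\simeq e$ through equivariant facial homotopies. Hence $\tau\circ\tilde{\pi}_{d_{0}}\simeq e\circ\kappa$, giving homotopy commutativity of the middle square.

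Assembling these three pieces, we have a diagram in which the outer squares commute strictly, the middle square commutes up to (equivariant) homotopy, all vertical arrows are equivalences, and the bottom row is the cofibre sequence produced by the NDR construction in \ref{homcofibseq}. The criterion of Definition \ref{isomorphistoacofibreseq} is then met verbatim, so the top row is isomorphic as a cofibre sequence to the bottom. I expect no serious obstacles: the bulk of the effort was the degree computation of $f'$ and $g'$ already carried out in \ref{thedegreesarethesame}, and what remains is just the bookkeeping of chasing the explicit formulas for $\kappa^{!}$, $\tau$ and the NDR collapse.
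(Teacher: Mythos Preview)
Your proposal is correct and matches the paper's approach exactly: the paper states this proposition with the remark that it ``is clear'' from the preceding work in the section, and your write-up simply spells out that clear argument --- unwinding \ref{topphi} and \ref{thetaEalpha} for the left square, \ref{topdelta} for the right square, and invoking the already-established homotopy $\chi\simeq e$ from \ref{chiisfnalcalc} and \ref{thedegreesarethesame} for the middle square.
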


\section{Generalizing the Top of the Tower}\label{GeneralizingtheTopoftheTower}

We now discuss how this work all generalizes up to the level of bundles via Lemma \ref{cofibseqfibrebundles}. Let $W$ and $V$ be bundles over a base space $X$. Then we have $\inj(W_{x},V_{x})_{\infty}$, $\Sigma \inj(W_{x},V_{x})_{\infty}^{c}$ and $S^{\Hom(W_{x},V_{x})\oplus \Real}$ as $X$-parameterized based families of spaces. We have cofibre sequences on each fibre by Corollary \ref{homcofibseq} and we have sections $\sigma_{i}$ that
send $x$ to the basepoint of each space. Thus by Lemma \ref{cofibseqfibrebundles} we have the below cofibre sequence. Note that we are being deliberately vague here about topology and equivariance, we wish in this section to simply detail
the method of generalization - how we concretely use this will be covered in the next section:
$$\frac{\inj(W,V)_{\infty}}{\sigma_{1}(X)}\to\frac{\Sigma \inj(W,V)_{\infty}^{c}}{\sigma_{2}(X)}\to \frac{\Sigma\Hom(W,V)_{\infty}}{\sigma_{3}(X)}\to \frac{\Sigma \inj(W,V)_{\infty}}{\sigma_{1}(X)}.$$

Assume each $W_{x}$ is of dimension $d$ and each $V_{x}$ is such that $\dim(V_{x})\geqslant d$. We have a well-defined facial map $g:D_{+}(d)/D_{0}(d)\to \Sigma D_{0}(d)$ given as follows, a simple dimension modification of the map $g$ defined in Proposition \ref{chiisfnalcalc}:
$$g(t):=(\log(t_{0}))\wedge(\log(t_{1})-\log(t_{0}),\ldots,\log(t_{d-1})-\log(t_{0})).$$

This map is used to build maps $\chi_{x}:\inj(W_{x},V_{x})_{\infty}\to \Sigma \inj(W_{x},V_{x})_{\infty}^{c}$ which take similar explicit formulations to that of the map $\chi$ introduced in the previous section. Each $\chi_{x}$ will lie in a
cofibre sequence exactly as $\chi$ did - $\chi_{x}$ will be homotopic to the map $e_{x}$ as defined in Corollary
\ref{homcofibseq} when applied to the pair $(W_{x},V_{x})$. Thus we have another family of based cofibre sequences over $X$ and hence we have the below cofibre sequence:
$$\frac{\inj(W,V)_{\infty}}{\sigma_{1}(X)}\overset{\chi_{X}}{\to}\frac{\Sigma \inj(W,V)_{\infty}^{c}}{\sigma_{2}(X)}\to \frac{\Sigma\Hom(W,V)_{\infty}}{\sigma_{3}(X)}\to \frac{\Sigma \inj(W,V)_{\infty}}{\sigma_{1}(X)}.$$

Finally, we note here that there is one further generalization we can take, that of `smashing' throughout by a bundle. Let $U$ be an arbitrary fibre bundle of based spaces over $X$, then one can smash by $U_{x}$ in each cofibre sequence via Lemma \ref{smashingalongacofibseq}. We still have sections etc and the process detailed above follows through
exactly as previously detailed, giving us the following cofibre sequence:
\small $$\frac{\inj(W,V)_{\infty}\wedge U}{\sigma_{1}(X)}\overset{\chi_{X}}{\to}\frac{\Sigma \inj(W,V)_{\infty}^{c}\wedge U}{\sigma_{2}(X)}\to\frac{\Sigma\Hom(W,V)_{\infty}\wedge U}{\sigma_{3}(X)}\to \frac{\Sigma \inj(W,V)_{\infty}\wedge U}{\sigma_{1}(X)}.$$
\normalsize

This thus gives a systematic method of approach - we identify a base space that will (with a smash) yield the right
cofibre, prove that our spaces are in fact the bundles $\inj(W,V)_{\infty}\wedge U/\sigma_{1}(X)$ and $\Sigma
\inj(W,V)_{\infty}^{c}\wedge U/\sigma_{2}(X)$ and prove that the map is of the form $\chi_{X}$, i.e. that the map on each fibre is $\chi_{x}$. We now do this explicitly.

\section{The Other Triangles}\label{TheOtherTriangles}

We now consider the map $\tilde{\pi}_{k}:\tilde{X}_{k}\to \tilde{X}_{k-1}$ for $k<d_{0}$. For now we forget that we have stated maps $\tilde{\delta}_{k}$ and $\tilde{\phi}_{k}$ and concentrate simply on finding the cofibre of $\tilde{\pi}_{k}$. We will exhibit $\tilde{\delta}_{k}$ naturally in the proof and lift to $\tilde{\phi}_{k}$ in the next chapter when we detail how to bypass certain technical necessities we encounter. 

We want our cofibres to be $G_{k}(V_{0})^{\Real\oplus\Hom(T,V_{1})\oplus s(T^{\bot})}$. This suggests taking $G_{k}(V_{0})$ as the base space in the systematic approach put forward at the end of the last section. Further, take the bundle denoted by $W$ in the last section to be the tautological bundle $T$, and $V$ to be the representation $V_{1}$. We also have the bundle $s(T^{\bot})$, each fibre can be given a one-point compactification which will allow us to smash with $\Sigma\Hom(T,V_{1})_{\infty}$ as noted. It is then easy to see that we topologize one of the spaces in the sequence in the previous section as follows:
$$G_{k}(V_{0})^{\Real\oplus\Hom(T,V_{1})\oplus s(T^{\bot})}=\frac{\Sigma\Hom(T,V_{1})_{\infty}\wedge s(T^{\bot})}{\sigma_{3}(G_{k}(V_{0}))}.$$

Thus we can follow the systematic approach of $\S$\ref{GeneralizingtheTopoftheTower} and build a cofibre sequence; our aim is to show that $\tilde{X}_{k}$, $\tilde{X}_{k-1}$ and $\tilde{\pi}_{k}$ fit into this idea in some way. We can simplify the problem, however. We recall the space $Y_{k}'$ defined in \ref{makingthecofibseqeasier} and note here that it is both a subspace of $\tilde{X}_{k}'$ and of $\tilde{X}_{k-1}'$ and it is closed under the action of $G$. Thus we can consider the induced map $\tilde{X}_{k}/Y_{k}\to \tilde{X}_{k-1}/Y_{k}$ - this will naturally have the same cofibre as $\tilde{\pi}_{k}$ by the theory of $\S$\ref{TotalCofibres}.

Hence we wish to look at $\tilde{X}_{k}/ Y_{k}$, $\tilde{X}_{k-1}/ Y_{k}$, and the associated unbased spaces $\tilde{X}_{k}'\backslash Y_{k}'$ and $\tilde{X}_{k-1}'\backslash Y_{k}'$. Identifying $\tilde{X}_{k}/Y_{k}$ and
$\tilde{X}_{k-1}/Y_{k}$ back into the ideas of $\S$\ref{GeneralizingtheTopoftheTower} follows from the
below two homeomorphisms:

\begin{prop}\label{thetaEalphaextended} Note that:
$$\tilde{X}_{k}'\backslash Y_{k}'=\{(\alpha,\theta):\alpha\in\svo,\Dim(P_{k}(\alpha))=k,\theta\in\mathcal{L}(P_{k}(\alpha),V_{1})\}.$$
Define the space $\mathcal{I}'_{k}$ by:
$$\mathcal{I}_{k}':=\{(W,\gamma,\psi):W\in
G_{k}(V_{0}),\gamma\in\inj(W,V_{1}),\psi\in s(W^{\bot})\}.$$

Then we topologize $\mathcal{I}_{k}'$ as a subspace of $\tilde{Z}_{k}$ using the method of
$\S$\ref{TheTopologyOfBundlesOverGrassmannians}. We set $\mathcal{I}_{k}$ to be the one-point compactification $(\mathcal{I}_{k}')_{\infty}$ and note that this space has a $G$-action inherited from the action on $\tilde{Z}_{k}$. Now note that as sets we have the following identification, we topologize to make this into a homeomorphism:
$$\mathcal{I}_{k}\cong\frac{\inj(T,V_{1})_{\infty}\wedge s(T^{\bot})}{\sigma_{1}(G_{k}(V_{0}))}.$$

Recalling $\rho$ and $\sigma$ from \ref{themaprho} and \ref{themapsigma}, the maps $\mathfrak{q}_{k}:\tilde{X}_{k}'\backslash Y_{k}'\to \mathcal{I}'_{k}$ and $\mathfrak{r}_{k}:\mathcal{I}'_{k}\to \tilde{X}_{k}'\backslash Y_{k}'$ stated below are well-defined continuous $G$-maps that are inverses of each other.
$$\mathfrak{q}_{k}:(\alpha,\theta)\mapsto\left(P_{k}(\alpha),-\theta\circ \Exp(\alpha|_{P_{k}(\alpha)}),-\log((e_{d_{0}-k}(\alpha)-\alpha)|_{P_{k}(\alpha)^{\bot}})\right)$$
$$\left((\log(e_{0}(\rho(\gamma)))-\Exp(-\psi))|_{W^{\bot}}\oplus \log(\rho(\gamma))|_{W},-\sigma(\gamma) \right)\mapsfrom(W,\gamma,\psi):\mathfrak{r}_{k}.$$

Thus we have that $\tilde{X}_{k}'\backslash Y_{k}'\cong \mathcal{I}'_{k}$ and hence that $\tilde{X}_{k}/ Y_{k}\cong \mathcal{I}_{k}$.
\end{prop}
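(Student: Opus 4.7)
The plan is to verify that $\mathfrak{q}_{k}$ and $\mathfrak{r}_{k}$ are well-defined continuous $G$-maps which are mutual inverses; once this is done the homeomorphism on one-point compactifications $\tilde{X}_{k}/Y_{k}\cong\mathcal{I}_{k}$ follows automatically, since a homeomorphism of locally compact Hausdorff spaces extends to their one-point compactifications by Lemma \ref{propernessequalsbasepoints} (and $Y_{k}'$ is closed in $\tilde{X}_{k}'$, so $\tilde{X}_{k}/Y_{k}$ is the compactification of $\tilde{X}_{k}'\setminus Y_{k}'$).

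For well-definedness of $\mathfrak{q}_{k}$, the condition $\Dim(P_{k}(\alpha))=k$ forces $e_{d_{0}-k-1}(\alpha)<e_{d_{0}-k}(\alpha)$, so $P_{k}(\alpha)$ is genuinely the span of the top $k$ eigenspaces. Then $\Exp(\alpha|_{P_{k}(\alpha)})$ is a strictly positive automorphism of $P_{k}(\alpha)$ by \ref{fnalcalclemma}, so $-\theta\circ\Exp(\alpha|_{P_{k}(\alpha)})$ lies in $\inj(P_{k}(\alpha),V_{1})$. On $P_{k}(\alpha)^{\bot}$, every eigenvalue of $\alpha$ is at most $e_{d_{0}-k-1}(\alpha)<e_{d_{0}-k}(\alpha)$, so $(e_{d_{0}-k}(\alpha)-\alpha)|_{P_{k}(\alpha)^{\bot}}$ is strictly positive and its $-\log$ is a well-defined selfadjoint endomorphism of $P_{k}(\alpha)^{\bot}$. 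For $\mathfrak{r}_{k}$, one checks that the $\alpha$ so constructed has $W$ as the sum of its top $k$ eigenspaces: on $W$ the eigenvalues of $\log(\rho(\gamma))$ are $\{\log(e_{i}(\rho(\gamma)))\}$, whose minimum is $\log(e_{0}(\rho(\gamma)))$, while on $W^{\bot}$ the operator $\log(e_{0}(\rho(\gamma)))-\Exp(-\psi)$ has eigenvalues strictly smaller than $\log(e_{0}(\rho(\gamma)))$ because $\Exp(-\psi)$ is strictly positive. Hence $P_{k}(\alpha)=W$ and $\Dim(P_{k}(\alpha))=k$, so $\mathfrak{r}_{k}$ lands in $\tilde{X}_{k}'\setminus Y_{k}'$.

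Continuity and equivariance are obtained by assembling pieces already in place: $\alpha\mapsto P_{k}(\alpha)$ is continuous on $s_{k}(V_{0})$ by Corollary \ref{Pkiscont}; the eigenvalue functions $e_{j}$ are continuous by Lemma \ref{continuouseigenvalues}; the functional calculus operations $\Exp,\log$ are continuous by Lemma \ref{fnalcalclemma}; and $\rho,\sigma$ are continuous by \ref{themaprho} and \ref{themapsigma}. Using the subspace topology on $\tilde{X}_{k}'\setminus Y_{k}'$ coming from \ref{wehaveasubspacetopology} and the subspace topology on $\mathcal{I}_{k}'\subseteq\tilde{Z}_{k}$ from \ref{factoringinsTperp}, one reduces continuity to continuity of compositions of maps into $\svo\times\aich\times\svo$, which is immediate. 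Every ingredient (functional calculus, $P_{k}$, $\rho$, $\sigma$, eigenvalue functions) is natural under conjugation by $G$, so both maps are $G$-equivariant.

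The inverse relations are verified by direct computation. Given $(W,\gamma,\psi)$, write $\alpha=\mathfrak{r}_{k}(W,\gamma,\psi)_{1}$; since $\theta:=-\sigma(\gamma)$ is isometric one obtains $\gamma^{\dag}\gamma=\Exp(2\alpha|_{W})$, hence $\rho(\gamma)=\Exp(\alpha|_{W})$, $\log(\rho(\gamma))=\alpha|_{W}$, $\sigma(\gamma)=-\theta$, and $\log(e_{0}(\rho(\gamma)))=e_{0}(\alpha|_{W})=e_{d_{0}-k}(\alpha)$. Substituting into $\mathfrak{q}_{k}$ recovers $(W,\gamma,\psi)$ on the nose, the key cancellation being $(e_{d_{0}-k}(\alpha)-\alpha)|_{W^{\bot}}=\Exp(-\psi)$ so that $-\log$ of it returns $\psi$. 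The reverse composition is similar. The main obstacle I anticipate is the bookkeeping needed to confirm that on $W^{\bot}$ the constructed $\alpha$ has its eigenvalues separated strictly from those on $W$ — but this is exactly what the strictly positive shift $\Exp(-\psi)$ guarantees, and it is the content of the otherwise mysterious term $\log(e_{0}(\rho(\gamma)))-\Exp(-\psi)$ in the formula for $\mathfrak{r}_{k}$.
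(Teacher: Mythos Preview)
Your proposal is correct and follows essentially the same approach as the paper's proof: verify well-definedness of each map via eigenvalue analysis, check the two compositions are identities by direct computation, and assemble continuity from the functional calculus lemma, continuous eigenvalue functions, and Corollary \ref{Pkiscont}. The only minor procedural difference is that the paper uses the \emph{quotient} topology on $\tilde{X}_{k}'\setminus Y_{k}'$ (from $s_{k}(V_{0})\times\Ell$) when checking continuity of $\mathfrak{q}_{k}$, which lets one write the second and third coordinates as globally defined maps $-\theta\circ\Exp(\alpha)$ and $-\log(e_{d_{0}-k}(\alpha)-\alpha)$ without having to handle the varying domain $P_{k}(\alpha)$ explicitly; you invoke the subspace topology on both sides, which is equivalent by Lemma \ref{topologisinglemma} but requires slightly more care to make the restriction-to-$P_{k}(\alpha)$ step rigorous.
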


\begin{proof} To prove this we need to check that $\mathfrak{q}_{k}$ and $\mathfrak{r}_{k}$ are both well-defined, that they are inverses of each other and that they are both continuous; that they are $G$-maps is standard to check. Throughout we implicitly assume use of Lemma \ref{fnalcalclemma} in order to write down the needed expressions,
we note where any issues crop up when using this.

First consider the map $\mathfrak{q}_{k}$, we wish to check that the map lands in the specified domain. The only issue for this is checking that $-\log((e_{d_{0}-k}(\alpha)-\alpha)|_{P_{k}(\alpha)^{\bot}})$ is selfadjoint and lands in $P_{k}(\alpha)^{\bot}$ - the first factor is patently an element of the Grassmannian and the map $-\theta\circ
\Exp(\alpha|_{P_{k}(\alpha)})$ is injective by similar techniques to those used in \ref{thetaEalpha}. Firstly, we note
that $(P_{k}(\alpha))^{\bot}$ is the sum of the first $d_{0}-k$ eigenspaces of $\alpha$. Thus $(e_{d_{0}-k}(\alpha)-\alpha)|_{P_{k}(\alpha)^{\bot}}$ has positive eigenvalues, the bottom $d_{0}-k$ eigenvalues of $\alpha$ are the only ones which come into play and all are less than or equal to $e_{d_{0}-k}$.
Moreover, as $P_{k}(\alpha)$ has been assumed to have dimension precisely $k$ we have that $e_{d_{0}-k-1}$ is strictly less than $e_{d_{0}-k}$. Thus it follows that $(e_{d_{0}-k}(\alpha)-\alpha)|_{P_{k}(\alpha)^{\bot}}$ is strictly
positive. It is also clearly selfadjoint by the distributivity of $^{\dag}$ over addition. Finally it is clear that the domain of $(e_{d_{0}-k}(\alpha)-\alpha)|_{P_{k}(\alpha)^{\bot}}$ is $P_{k}(\alpha)^{\bot}$. Thus we have an element of
$s_{++}(P_{k}(\alpha)^{\bot})$ which by Corollary \ref{shomeotosplus} maps to a general element of $s(P_{k}(\alpha)^{\bot})$ under $\log$. Further, there is no issue with the minus sign as this provides a self-homeomorphism of $s(P_{k}(\alpha)^{\bot})$. This proves that $\mathfrak{q}_{k}$ is well-defined.

To check that $\mathfrak{r}_{k}$ is well-defined we first need to show that if we set $\alpha'$ to be as below then $P_{k}(\alpha')$ has dimension $k$:
$$\alpha'=(\log(e_{0}(\rho(\gamma)))-\Exp(-\psi))|_{W^{\bot}}\oplus(\log(\rho(\gamma)))|_{W}.$$

We note that $\alpha'$ in this case is trivially seen to be selfadjoint using Lemma \ref{fnalcalclemma} so it has ordered eigenvalues and the concept of $P_{k}$ makes sense. To calculate the dimension of $P_{k}(\alpha')$ we show that $P_{k}(\alpha')=W$. First note that $(\log(e_{0}(\rho(\gamma)))-\Exp(-\psi))$ has $d_{0}-k$ eigenvalues
that are all strictly less than $\log(e_{0}(\rho(\gamma)))$, the minimal eigenvalue of $\alpha'|_{W}$. Hence $P_{k}(\alpha')=W$ and thus $P_{k}(\alpha')$ has the right dimension.

We also check $-\sigma(\gamma)$ is well-defined, but from all the earlier remarks this is a trivial exercise - we have the identification $W=P_{k}(\alpha')$ from above and we also note that that as $\gamma$ is injective $\ker(\gamma)$ is zero. Thus $-\sigma(\gamma)$ is a reasonable isometry out of $W$. This checks that $\mathfrak{r}_{k}$ is well-defined.

We now prove that $\mathfrak{q}_{k}\circ \mathfrak{r}_{k}$ is the identity on $\mathcal{I}'_{k}$. Set
$\theta':=-\sigma (\gamma)$ and set $\alpha'$ as above. We have already demonstrated that $P_{k}(\alpha')$ is $W$, next note that by definition $\alpha'|_{P_{k}(\alpha')}=\log(\rho(\gamma))$ and thus that the following holds: $$-\theta'\circ \Exp(\alpha'|_{P_{k}(\alpha')})=\sigma(\gamma)\circ \Exp(\log(\rho(\gamma)))=\sigma(\gamma)\circ\rho(\gamma)=\gamma.$$

Finally we note that we have set $\alpha'|_{P_{k}(\alpha')^{\bot}}$ to be $\log(e_{0}(\rho(\gamma)))-\Exp(-\psi)$. We also have $\log(e_{0}(\rho(\gamma)))=e_{d_{0}-k}(\alpha')$. Thus the below expression holds:
$$-\log((e_{d_{0}-k}(\alpha')-\alpha')|_{P_{k}(\alpha)^{\bot}})=-\log(\Exp(-\psi))=\psi.$$

This proves that $\mathfrak{q}_{k}\circ \mathfrak{r}_{k}$ is the identity. We now check that $\mathfrak{r}_{k}\circ \mathfrak{q}_{k}$ is the identity on $\tilde{X}_{k}'\backslash Y_{k}'$. To do this, set $\gamma'$ to be $-\theta\circ
\Exp(\alpha|_{P_{k}(\alpha)})$, set $\psi'$ to be $-\log((e_{d_{0}-k}(\alpha)-\alpha)|_{P_{k}(\alpha)^{\bot}})$ and
set $W'$ to be $P_{k}(\alpha)$. First consider the expression $\log(\rho(\gamma'))|_{W'}$ - recall that
$\rho(\gamma')=(\gamma'^{\dag}\gamma')^{1/2}$ gives $\rho(\gamma')=\Exp(\alpha|_{P_{k}(\alpha)})$ by noting that $\theta$
is an isometry. Thus $\log(\rho(\gamma'))|_{W'}=\alpha|_{P_{k}(\alpha)}$. Similarly we note that $\Exp(-\psi')=(e_{d_{0}-k}(\alpha)-\alpha)|_{P_{k}(\alpha)^{\bot}}$ and that $\log(e_{0}(\rho(\gamma')))$ is $e_{d_{0}-k}(\alpha)$. This makes the below identity hold:
$$(\log(e_{0}(\rho(\gamma)))-\Exp(-\psi))|_{W^{\bot}}=\alpha|_{P_{k}(\alpha)^{\bot}}.$$
To conclude we observe that $-\sigma (\gamma')$ is by definition $-\gamma'\circ (\gamma'^{\dag}\gamma')^{-1/2}$, this is well-defined as $\gamma'$ is injective. The $(\gamma'^{\dag}\gamma')^{-1/2}$ part is easily seen to be $\Exp(\alpha|_{P_{k}(\alpha)})^{-1}$ when you recall the definition of $\gamma'$ from above, thus we conclude that
$\sigma(\gamma')=\theta$. Combined with the above remarks, this shows that $\mathfrak{r}_{k}\circ \mathfrak{q}_{k}$ is the identity.

To check that $\mathfrak{r}_{k}$ and $\mathfrak{q}_{k}$ are continuous we must be concrete in our topology on $\mathcal{I}'_{k}$. Define a subspace topology on $\mathcal{I}'_{k}$ inherited from Corollary \ref{factoringinsTperp} as $\mathcal{I}'_{k}$ is a subspace of $\tilde{Z}_{k}$, this makes $\mathcal{I}'_{k}$ into a subspace of $G_{k}(V_{0})\times \aich\times\svo$. We now claim that $\mathfrak{q}_{k}$ is continuous. First equip $\tilde{X}_{k}'\backslash Y_{k}'$ with the quotient topology. Under this it is sufficient to show that the map
below is continuous, as continuity will then be inherited from taking the quotient and the subspace:
$$\{(\alpha,\theta)\in\svo\times\Ell:\Dim(P_{k}(\alpha))=k\} \to G_{k}(V_{0})\times \aich\times\svo$$
$$(\alpha,\theta)\mapsto(P_{k}(\alpha),-\theta\circ \Exp(\alpha),-\log(e_{d_{0}-k}(\alpha)-\alpha)).$$

The eigenfunctions have been shown to be continuous in Lemma \ref{continuouseigenvalues} and $\Exp$ and $\log$ are continuous in this context via Lemma \ref{fnalcalclemma}. Finally the map to the first factor $(\alpha,\theta)\mapsto P_{k}(\alpha)$ inherits continuity from the continuous map $P_{k}:s_{k}(V_{0})\to G_{k}(V_{0})$ as given in Corollary \ref{Pkiscont}.

Regarding the continuity of $\mathfrak{r}_{k}$, consider the subspace topology on $\tilde{X}_{k}'\backslash
Y_{k}'$. This allows us to instead consider the continuity of the following map:
$$G_{k}(V_{0})\times \aich\times\svo \to\svo\times\aich$$
$$(W,\gamma,\psi)\mapsto ((\log(e_{0}(\rho(\gamma)))-\Exp(-\psi))|_{W^\bot}\oplus \log(\rho(\gamma))|_{W},\gamma).$$

It is easy to see that under the identifications made this gives the map $\mathfrak{r}_{k}$ on the required subspaces. It is also simple to see from here that the map into the second factor comes from the identity. Thus continuity will follow from the continuity of the below map:
$$G_{k}(V_{0})\times \aich\times\svo\to \svo$$
$$(W,\gamma,\psi)\mapsto ((\log(e_{0}(\rho(\gamma)))-\Exp(-\psi))|_{W^\bot}\oplus \log(\rho(\gamma))|_{W}).$$

To show continuity here, consider instead the identification of $G_{k}(V_{0})$ with $G_{k}'(V_{0})$. If $\pi\in G_{k}'(V_{0})$ then by Note \ref{perpishomeo} we can instead think of the map below:
$$(\pi,\gamma,\psi)\mapsto ((\log(e_{0}(\rho(\gamma)))-\Exp(-\psi))\circ(1_{V_{0}}-\pi)\oplus \log(\rho(\gamma))\circ\pi).$$

This is continuous by the standard functional calculus and continuous eigenvalue lemmas. Thus continuity of $\mathfrak{r}_{k}$ follows. This then proves all that is required to prove the proposition.
\end{proof}

\begin{prop}\label{sigmainjchomeoextended} Note that:
$$\tilde{X}_{k-1}'\backslash
Y_{k}'=\{(\alpha,\theta):\alpha\in\svo,\Dim(P_{k}(\alpha))=k,\theta\in\mathcal{L}(P_{k-1}(\alpha),V_{1})\}.$$
Define the space $\mathcal{J}_{k}'$ by:
$$\mathcal{J}_{k}':=\{(W,\delta,\psi):W\in
G_{k}(V_{0}),\delta\in\inj(W,V_{1})^{c},\psi\in s(W^{\bot})\}.$$

Then we topologize $\mathcal{J}_{k}'$ as a subspace of $\tilde{Z}_{k}$ using the method of $\S$\ref{TheTopologyOfBundlesOverGrassmannians}. We set $\mathcal{J}_{k}$ to be the one-point compactification $(\mathcal{J}_{k}')_{\infty}$ and note that this space has a $G$-action inherited from the action on $\tilde{Z}_{k}$. Now note that as
sets we have the following identification of $\mathcal{J}_{k}$, we topologize to make this into a homeomorphism:
$$\Sigma \mathcal{J}_{k}\cong\frac{\Sigma\inj(T,V_{1})_{\infty}^{c}\wedge s(T^{\bot})}{\sigma_{2}(G_{k}(V_{0}))}.$$

Recalling $\rho$, $\sigma$ and $\lambda_{k-1}$ from \ref{themaprho} and \ref{themapsigma} and \ref{lambdak}, the maps $\mathfrak{f}_{k}:\tilde{X}_{k-1}'\backslash Y_{k}'\to \Real\times \mathcal{J}_{k}'$ and $\mathfrak{g}_{k}:\Real\times \mathcal{J}_{k}'\to \tilde{X}_{k-1}'\backslash Y_{k}'$ stated below are well-defined continuous $G$-maps that are
inverses of each other.
\small $$\mathfrak{f}_{k}:(\alpha,\theta)\mapsto\left(e_{d_{0}-k}(\alpha),P_{k}(\alpha),-\theta\circ \lambda_{k-1}(\alpha)|_{P_{k}(\alpha)},-\log((e_{d_{0}-k}(\alpha)-\alpha)|_{P_{k}(\alpha)^{\bot}})\right)$$
$$\left((t-\Exp(-\psi))|_{W^{\bot}}\oplus (\rho(\delta)+t)|_{W}, -\sigma(\delta)\right)\mapsfrom(t,W,\delta,\psi):\mathfrak{g}_{k}.$$
\normalsize

Thus we have that $\tilde{X}_{k-1}'\backslash Y_{k}'\cong \Real\times \mathcal{J}_{k}'$ and hence that $\tilde{X}_{k-1}/ Y_{k}\cong\Sigma \mathcal{J}_{k}$.
\end{prop}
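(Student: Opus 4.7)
The plan is to follow the same template as the proof of Proposition \ref{thetaEalphaextended}, with an extra real parameter $t$ playing the role of the free eigenvalue $e_{d_0-k}(\alpha)$ (analogous to the top-of-tower case \ref{sigmainjchomeo}), and with $\delta$ now only required to be non-injective rather than isometric. First I would check that $\mathfrak{f}_k$ actually lands in $\Real\times\mathcal{J}_k'$. The dimension assumption on the domain $\tilde{X}_{k-1}'\backslash Y_k'$ gives $e_{d_0-k-1}(\alpha)<e_{d_0-k}(\alpha)$, so $P_k(\alpha)\in G_k(V_0)$ is genuine. Recalling that $\lambda_{k-1}(\alpha)=f(\alpha-e_{d_0-k}(\alpha))$ vanishes on the bottom $d_0-k+1$ eigenspaces, its restriction to $P_k(\alpha)$ has the $e_{d_0-k}(\alpha)$-eigenspace of $\alpha$ as kernel; composing with $\theta$ therefore produces an element of $\inj(P_k(\alpha),V_1)^c$. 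Finally, on $P_k(\alpha)^{\bot}$ the endomorphism $\alpha$ has all eigenvalues strictly below $e_{d_0-k}(\alpha)$, so $(e_{d_0-k}(\alpha)-\alpha)|_{P_k(\alpha)^{\bot}}$ is strictly positive selfadjoint and the $\log$ in \ref{fnalcalclemma} applies to give a well-defined selfadjoint endomorphism of $P_k(\alpha)^{\bot}$.

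Next I would check well-definedness of $\mathfrak{g}_k$. Set $\alpha':=(t-\Exp(-\psi))|_{W^{\bot}}\oplus(\rho(\delta)+t)|_W$. This is selfadjoint as a direct sum of selfadjoints. Since $\Exp(-\psi)$ is strictly positive on $W^{\bot}$, every eigenvalue of $\alpha'|_{W^{\bot}}$ is strictly less than $t$; and since $\rho(\delta)\in s_+(W)$ with $\Ker(\rho(\delta))=\Ker(\delta)\neq 0$ (as $\delta$ is non-injective), the eigenvalues of $\alpha'|_W$ all lie in $[t,\infty)$, with $t$ occurring as an eigenvalue on $\Ker(\delta)$. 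This identifies the bottom $d_0-k$ eigenspaces of $\alpha'$ with $W^{\bot}$ and $e_{d_0-k}(\alpha')=t$, so $P_k(\alpha')=W$ has dimension $k$; furthermore $P_{k-1}(\alpha')=(\Ker(\delta))^{\bot}$ in $W$, which is precisely the domain of $\sigma(\delta)$ by Proposition \ref{themapsigma}. Hence $(\alpha',-\sigma(\delta))\in\tilde{X}_{k-1}'\backslash Y_k'$.

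Having established the two maps, verifying $\mathfrak{f}_k\circ\mathfrak{g}_k=\mathrm{id}$ and $\mathfrak{g}_k\circ\mathfrak{f}_k=\mathrm{id}$ is a routine calculation in the same style as in \ref{thetaEalphaextended}: the inverse relation between $\rho$ and $\sigma$ (via $\sigma(\gamma)\circ\rho(\gamma)=\gamma$), the identity $\Exp\circ\log=\mathrm{id}$ on strictly positive selfadjoints, and the explicit eigenvalue analysis above collapse both compositions to the identity.

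Continuity of $\mathfrak{f}_k$ factors through the quotient topology on $\tilde{X}_{k-1}'\backslash Y_k'$: on the restriction to the locus where $\Dim(P_k(\alpha))=k$, continuity of $P_k$ comes from Corollary \ref{Pkiscont}, continuity of $e_{d_0-k}$ from Lemma \ref{continuouseigenvalues}, continuity of $\lambda_{k-1}$ from Proposition \ref{lambdak}, and continuity of the logarithm part from Lemma \ref{fnalcalclemma}. For $\mathfrak{g}_k$ I would use the subspace topology on $\tilde{X}_{k-1}'\backslash Y_k'$ inside $\svo\times\aich$ and argue via the identification $G_k(V_0)\cong G_k'(V_0)$ (writing $W^{\bot}=\IM(1_{V_0}-\pi)$ as in \ref{factoringinsTperp}) so that the assignment $(W,\delta,\psi)\mapsto\alpha'$ is a continuous combination of functional calculus operations. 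Equivariance in both directions is inherited from conjugation-compatibility of $\rho$, $\sigma$, $\lambda_{k-1}$, and the eigenvalue functions. The final identification $\Sigma\mathcal{J}_k\cong \Sigma\inj(T,V_1)_{\infty}^c\wedge s(T^{\bot})/\sigma_2(G_k(V_0))$ is formal from the construction of $\mathcal{J}_k$ as the one-point compactification of $\mathcal{J}_k'$ viewed as a fibrewise subbundle of $\tilde{Z}_k$. The main obstacle I expect is simply bookkeeping on the eigenvalue analysis in step two, ensuring that the dimension of $P_k(\alpha')$ is exactly $k$ (rather than strictly less), which is what prevents $\mathfrak{g}_k$ from landing in the removed locus $Y_k'$.
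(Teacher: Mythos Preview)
Your proposal is correct and follows essentially the same approach as the paper's proof: the same well-definedness checks, the same eigenvalue analysis showing $P_k(\alpha')=W$ and $e_{d_0-k}(\alpha')=t$, and the same continuity strategy (quotient topology for $\mathfrak{f}_k$, subspace topology via $G_k'(V_0)$ for $\mathfrak{g}_k$). The only cosmetic difference is that the paper factors $\mathfrak{f}_k$ and $\mathfrak{g}_k$ through the intermediate description $\{(\alpha,\beta):\rho(\beta)=\lambda_{k-1}(\alpha),\ \Dim P_k(\alpha)=k\}$ from Proposition~\ref{topologisingthingy} before carrying out the same verifications you outline directly.
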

\begin{proof} Again, we note that throughout we implicitly invoke Lemma \ref{fnalcalclemma} and related results in order to write down certain identifications. To build the two maps we proceed in two stages. Firstly we note that we have a homeomorphism onto the space below given by $(\alpha, \theta)\mapsto (\alpha, -\theta\circ\lambda_{k-1}(\alpha))$:
$$\tilde{X}_{k-1}'\backslash Y_{k}'\cong\{(\alpha,\beta):\alpha\in\svo, \beta:V_{0}\to V_{1}, \rho(\beta)=\lambda_{k-1}(\alpha), \Dim(P_{k}(\alpha))=k\}.$$

We take the map out of this space given by:
$$\mathfrak{f}_{k}':(\alpha, \beta)\mapsto (e_{d_{0}-k}(\alpha), P_{k}(\alpha), \beta|_{P_{k}(\alpha)},-\log((e_{d_{0}-k}(\alpha)-\alpha)|_{P_{k}(\alpha)^{\bot}})).$$

It is easy to see that this composition is $\mathfrak{f}_{k}$. We claim that it is a reasonable map. We also note that we can consider the map below, this maps into the space above identified as homeomorphic to $\tilde{X}_{k-1}'\backslash Y_{k}'$. Again, it can easily be seen that composing this map with the above homeomorphism retrieves $\mathfrak{g}_{k}$:
$$\mathfrak{g}_{k}':(t,W, \delta, \psi)\mapsto ((t-\Exp(-\psi))|_{W^{\bot}}\oplus (\rho(\delta)+t)|_{W},\delta|_{W}\oplus 0|_{W^{\bot}}).$$

We claim that $\mathfrak{g}_{k}$ too is reasonable, that both are inverses of each other and that both are continuous. Again, checking that both maps are $G$-maps is standard.

We firstly consider the above factorization of $\mathfrak{f}_{k}$, which we refer to as $\mathfrak{f}_{k}'$. We need to check that $\mathfrak{f}_{k}'$ lands in the right domain, this is clear for the projection onto the first two factors and by the previous proof it also follows for the projection to the last factor. Thus the only issue is to show that $\beta|_{P_{k}(\alpha)}$ is a non-injective map out of $P_{k}(\alpha)$. To do this, note that $\Ker(\beta)=\Ker(\rho(\beta))=\Ker(\lambda_{k-1}(\alpha))$. The image of $\lambda_{k-1}(\alpha)$ is $P_{k-1}(\alpha)$ which we note is strictly smaller than $P_{k}(\alpha)$ as we have conditioned that $\Dim(P_{k}(\alpha))=k$. Thus the kernel of $\beta$ contains something in $P_{k}(\alpha)$ - the bit that is not in $P_{k-1}(\alpha)$ and thus the restriction is non-injective.

We now check that $\mathfrak{g}_{k}'$, the map stated above as being equivalent to $\mathfrak{g}_{k}$ via the homeomorphism, is well-defined. To do this, set $\alpha':=(t-\Exp(-\psi))|_{W^{\bot}}\oplus (\rho(\delta)+t)|_{W}$ and
$\beta':=\delta|_{W}\oplus 0|_{W^{\bot}}$. We need to prove that $P_{k}(\alpha')$ is of dimension $k$ and that
$\rho(\beta')=\lambda_{k-1}(\alpha')$ - the rest is trivial. For the first point we prove that $P_{k}(\alpha')$ is actually $W$. Note that $t-\Exp(-\psi)$ has $d_{0}-k$ eigenvalues all strictly less than $t$ as $\Exp(-\psi)$ is strictly positive. Moreover, note that $(\rho(\delta)+t)|_{W}$ has $k$ eigenvalues all greater than or
equal to $t$ as $\rho(\delta)$ is weakly positive. This allows us to conclude that $P_{k}(\alpha')$ is $W$ and thus that its dimension is $k$.

We now wish to prove that $\rho(\beta')=\lambda_{k-1}(\alpha')$. To do this, recall that $\lambda_{k-1}(\alpha')$ is zero on the bottom $d_{0}-k+1$ eigenspaces of $\alpha'$ and is $\alpha'-e_{d_{0}-k}(\alpha')$ otherwise. From the remarks above, the eigenvalue $e_{d_{0}-k}(\alpha')$ is the lowest eigenvalue of $(\rho(\delta)+t)$ which is $t$ as $\delta$ is non-injective. Thus it is easy to see that $\lambda_{k-1}(\alpha')=\rho(\delta)\oplus 0|_{W^{\bot}}$ which is $\rho(\beta')$. Thus $\mathfrak{g}_{k}'$ is well-defined.

We now prove that $\mathfrak{f}_{k}'\circ \mathfrak{g}_{k}'$ is the identity. Let $\alpha'$ and $\beta'$ be given by $(t-\Exp(-\psi))|_{W^{\bot}}\oplus (\rho(\delta)+t)|_{W}$ and $\delta|_{W}\oplus 0|_{W^{\bot}}$ respectively. We first note that $e_{d_{0}-k}(\alpha')$ is going to patently be $t$ for the reasons outlined above. Similarly, from the
above we can also conclude that $P_{k}(\alpha')$ will map to $W$ and thus that $\beta'|_{P_{k}(\alpha')}$ will map to
$\delta$. Finally we check that $-\log((e_{d_{0}-k}(\alpha')-\alpha')|_{P_{k}(\alpha')^{\bot}})$ is just $\psi$ but this follows as detailed below from what has already been shown:
$$-\log((e_{d_{0}-k}(\alpha')-\alpha')|_{P_{k}(\alpha')^{\bot}})\cong -\log(t-t+\Exp(-\psi))\cong\psi.$$

This proves that $\mathfrak{f}_{k}'\circ \mathfrak{g}_{k}'$ is the identity. To check that $\mathfrak{g}_{k}'\circ \mathfrak{f}_{k}'$ is the identity we set $t'$, $W'$, $\delta'$ and $\psi'$ to be $e_{d_{0}-k}(\alpha)$, $P_{k}(\alpha)$,
$\beta|_{P_{k}(\alpha)}$ and $-\log((e_{d_{0}-k}(\alpha)-\alpha)|_{P_{k}(\alpha)^{\bot}})$ respectively. We first claim that the map $\beta|_{P_{k}(\alpha)}\oplus 0|_{P_{k}(\alpha)^{\bot}}$ is just the map $\beta$, but this follows from the remarks above on the kernel of $\beta$ - it is $P_{k-1}(\alpha)^{\bot}$ which contains $P_{k}(\alpha)^{\bot}$. We now consider $t'-\Exp(-\psi')$ and wish to show this is $\alpha|_{P_{k}(\alpha)^{\bot}}$, but this follows immediately from the identifications given above. Finally, we wish to show that $\rho(\delta')+t'$ is $\alpha|_{P_{k}(\alpha)}$
but this follows from the below identity:
$$\rho(\delta')+t'=\rho(\beta|_{P_{k}(\alpha)})+e_{d_{0}-k}(\alpha)=(\lambda_{k}(\alpha)+e_{d_{0}-k}(\alpha))|_{P_{k}(\alpha)}=\alpha|_{P_{k}(\alpha)}.$$

Thus $\mathfrak{f}_{k}'\circ \mathfrak{g}_{k}'$ is the identity.

We now check the continuity of $\mathfrak{f}_{k}$ and $\mathfrak{g}_{k}$. Firstly, we note that $\Real\times \mathcal{J}_{k}'$ is topologized by Corollary \ref{factoringinsTperp} similarly to the topology on $\mathcal{I}_{k}'$ - $\Real\times \mathcal{J}_{k}'$ is a subspace of $\Real\times G_{k}(V_{0})\times\aich\times\svo$. To check the continuity of $\mathfrak{f}_{k}$, equip $\tilde{X}_{k-1}'\backslash Y_{k}'$ with the quotient topology. Again, it is sufficient to prove that the below map is continuous - the restriction to subspace and quotient then produces continuity on the level we desire:
\small $$\{(\alpha,\theta)\in\svo\times\Ell:\Dim(P_{k}(\alpha))=k\}\to \Real\times G_{k}(V_{0})\times\aich\times\svo$$
\normalsize
$$(\alpha,\theta)\mapsto(e_{d_{0}-k}(\alpha),P_{k}(\alpha),-\theta\circ(\alpha-e_{d_{0}-k}(\alpha)),-\log(e_{d_{0}-k}(\alpha)-\alpha)).$$

As in the previous proof, continuity follows from Lemmas \ref{fnalcalclemma} and \ref{continuouseigenvalues} and Corollary \ref{Pkiscont}.

Regarding the continuity of $\mathfrak{g}_{k}$, equipping $\tilde{X}_{k-1}'\backslash Y_{k}'$ with the subspace topology allows us to observe that continuity follows from the continuity of the below map:
$$\Real\times G_{k}'(V_{0})\times\aich\times\svo\mapsto \svo\times\aich$$
$$(t,\pi,\delta,\psi)\mapsto ((t-\Exp(-\psi))\circ(1_{V_{0}}-\pi)\oplus(\rho(\delta)+t)\circ\pi,\delta).$$

Similar to the previous proof this is patently going to restrict to the map $\mathfrak{g}_{k}$ by Note \ref{perpishomeo} and it is patently continuous by Lemmas \ref{fnalcalclemma} and \ref{continuouseigenvalues}. Thus continuity on $\mathfrak{g}_{k}$ follows and this is enough to prove all that is claimed in the proposition.
\end{proof}

Now note that there is a unique $G$-map $\chi'$ making the below diagram strictly commute:
$$\xymatrix@C=2cm{\tilde{X}_{k}\ar[d]_{\tilde{\pi}_{k}}\ar[r]^{\text{collapse}}&\frac{\tilde{X}_{k}}{Y_{k}}\ar[r]_{\cong}^{\mathfrak{q}_{k}}\ar[d]_{\text{proj.}}&\mathcal{I}_{k}\ar[d]^{\chi'}\\
\tilde{X}_{k-1}\ar[r]_{\text{collapse}}&\frac{\tilde{X}_{k-1}}{Y_{k}}\ar[r]^{\cong}_{\mathfrak{f}_{k}}&\Sigma\mathcal{J}_{k}}
$$

As discussed during the conclusion of the previous section, we have the following cofibre sequence, moreover with the equipped actions the maps are $G$-maps:
$$\mathcal{I}_{k}\to\Sigma \mathcal{J}_{k}\to G_{k}(V_{0})^{\Real\oplus\Hom(T,V_{1})\oplus
s(T^{\bot})}\to \Sigma \mathcal{I}_{k}.$$

We can also build a commutative diagram as follows:
$$\xymatrix{\tilde{X}_{k}/Y_{k}\ar[d]_{\cong}\ar[r]&\tilde{X}_{k-1}/Y_{k}\ar[d]_{\cong}\ar@{.>}[dr]&&\\
\mathcal{I}_{k}\ar[r]_{\chi'}&\Sigma
\mathcal{J}_{k}\ar[r]&G_{k}(V_{0})^{\Real\oplus\Hom(T,V_{1})\oplus
s(T^{\bot})}\ar[r]&\Sigma \mathcal{I}_{k} }
$$

We now deduce Theorem \ref{themaintheorem} by proving the following proposition:

\begin{prop}\label{chiprimeischi} The map $\chi'$ is built from maps $\chi_{W}:\inj(W,V_{1})_{\infty}\to
\Sigma\inj(W,V_{1})^{c}_{\infty}$ for each $W\in G_{k}(V_{0})$. Hence via the construction detailed in $\S$\ref{GeneralizingtheTopoftheTower} the sequence below is a cofibre sequence:
$$\mathcal{I}_{k}\overset{\chi'}{\to}\Sigma\mathcal{J}_{k}\to G_{k}(V_{0})^{\Real\oplus\Hom(T,V_{1})\oplus
s(T^{\bot})}\to \Sigma \mathcal{I}_{k}.$$

From the diagram above and Definition \ref{isomorphistoacofibreseq} we conclude that the cofibre of the map $\tilde{X}_{k}/Y_{k}\to \tilde{X}_{k-1}/Y_{k}$ is $G_{k}(V_{0})^{\Real\oplus\Hom(T,V_{1})\oplus s(T^{\bot})}$ and thus
by Lemma \ref{quotientcofibresequences} the cofibre of $\tilde{\pi}_{k}$ is $G_{k}(V_{0})^{\Real\oplus\Hom(T,V_{1})\oplus
s(T^{\bot})}$.
\end{prop}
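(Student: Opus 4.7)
The plan is to unwind $\chi'$ explicitly and show it respects the bundle structure over $G_k(V_0)$, then invoke the generalization machinery of $\S$\ref{GeneralizingtheTopoftheTower}. First I would compute $\chi'(W,\gamma,\psi)$ by running through the defining diagram: apply $\mathfrak{r}_k$ (the inverse to $\mathfrak{q}_k$) to obtain $(\alpha,\theta)\in \tilde{X}'_k\setminus Y'_k$, restrict $\theta$ to $P_{k-1}(\alpha)$, and then apply $\mathfrak{f}_k$. The key observation, established inside the proof of Proposition \ref{thetaEalphaextended}, is that $P_k(\alpha)=W$ and $\alpha|_W=\log\rho(\gamma)$, while $\alpha|_{W^\bot}$ has all its eigenvalues strictly less than $\log(e_0(\rho(\gamma)))$. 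Consequently $e_{d_0-k}(\alpha)=\log(e_0(\rho(\gamma)))$, the $\log((e_{d_0-k}(\alpha)-\alpha)|_{W^\bot})$ term simplifies to $-\psi$, and the remaining data of the image of $\mathfrak{f}_k$ becomes $\sigma(\gamma)\circ(\log\rho(\gamma)-e_0(\log\rho(\gamma)))$ on $W$.

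Comparing this with the explicit formula for $\chi$ derived at the top of $\S$\ref{TheTopTriangle}, I see that the fibre of $\chi'$ over $W\in G_k(V_0)$ is precisely $\chi_W\wedge \mathrm{id}_{s(W^\bot)_\infty}$, where $\chi_W:\inj(W,V_1)_\infty\to \Sigma\inj(W,V_1)^c_\infty$ is the analogue of $\chi$ for the pair $(W,V_1)$. This identifies $\chi'$ as the map $\chi_X$ arising from the systematic construction of $\S$\ref{GeneralizingtheTopoftheTower}, taken with base $X=G_k(V_0)$, bundle $T$ playing the role of $W$, representation $V_1$, and smash-factor bundle $U=s(T^\bot)_\infty$. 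The topological identifications needed to make this rigourous, namely $\mathcal{I}_k\cong \inj(T,V_1)_\infty\wedge s(T^\bot)/\sigma(G_k(V_0))$ and $\Sigma\mathcal{J}_k\cong \Sigma\inj(T,V_1)^c_\infty\wedge s(T^\bot)/\sigma(G_k(V_0))$, are exactly the content of Propositions \ref{thetaEalphaextended} and \ref{sigmainjchomeoextended}; equivariance of all the maps involved is inherited from the conjugation actions throughout.

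Having identified $\chi'$ as $\chi_X$, the discussion of $\S$\ref{GeneralizingtheTopoftheTower} together with the fact (proved for the top triangle in $\S$\ref{TheTopTriangle}) that each $\chi_W$ fits into a cofibre sequence with cofibre $S^{\Real\oplus\Hom(W,V_1)}$, yields the desired cofibre sequence
$$\mathcal{I}_k\overset{\chi'}{\to}\Sigma\mathcal{J}_k\to G_k(V_0)^{\Real\oplus\Hom(T,V_1)\oplus s(T^\bot)}\to \Sigma\mathcal{I}_k.$$
Combining with the commutative diagram constructed just above the statement and Definition \ref{isomorphistoacofibreseq} yields the cofibre of $\tilde{X}_k/Y_k\to \tilde{X}_{k-1}/Y_k$, and then Lemma \ref{quotientcofibresequences} (applied after checking that $Y_k$ and the $\tilde X_\bullet$ are connected and simply connected $G$-$CW$-complexes of the form required) transports this back to the cofibre of $\tilde{\pi}_k$.

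The main obstacle is step one, the bookkeeping at the fibrewise level: keeping track of how the eigenspace decomposition of $\alpha=\mathfrak{r}_k(W,\gamma,\psi)$ interacts with $P_{k-1}(\alpha)$, and verifying that the composition of $\sigma(\gamma)$ with $\lambda_{k-1}(\alpha)|_W$ reproduces exactly the closed form of $\chi_W$ rather than a homotopic variant. This is sensitive to how the lowest eigenvalue of $\rho(\gamma)$ may repeat and to the subtlety that $\lambda_{k-1}(\alpha)$ zeroes out the whole lowest eigenspace of $\alpha|_W$; once this calculation is done cleanly (using the functional calculus formulae of $\S$\ref{TheStandardTheory}), the remainder of the argument is essentially an assembly of already-proven pieces.
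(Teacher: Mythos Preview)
Your proposal is correct and follows the same approach as the paper: both compute the fibrewise map $\chi'_W$ by chasing through $\mathfrak{r}_k$, restriction, and $\mathfrak{f}_k$, arriving at the formula $(\gamma,\psi)\mapsto(e_0(\log\rho(\gamma)),\,\sigma(\gamma)\circ(\log\rho(\gamma)-e_0(\log\rho(\gamma))),\,\psi)$, recognise this as $\chi_W\wedge\mathrm{id}_{s(W^\bot)_\infty}$, and then invoke the assembly machinery of \S\ref{GeneralizingtheTopoftheTower}. Your version is considerably more detailed than the paper's (which simply writes down the fibrewise formula and declares it ``clearly built from maps $\chi_W$''); in particular, your worry about eigenvalue repetition is harmless here since $\max(0,x)=x$ for $x\geq 0$ forces $\lambda_{k-1}(\alpha)|_W=\log\rho(\gamma)-e_0(\log\rho(\gamma))$ on the nose regardless of multiplicity.
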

\begin{proof}Fixing $W\in G_{k}(V_{0})$, we have the maps $\chi'_{W}:\inj(W,V_{1})_{\infty}\wedge s(W^{\bot})_{\infty}\to
\Sigma\inj(W,V_{1})^{c}_{\infty}\wedge s(W^{\bot})_{\infty}$ given by:
$$(\gamma,\psi)\mapsto \left(e_{0}(\log(\rho(\gamma))),\sigma(\gamma)\circ(\log(\rho(\gamma))-e_{0}(\log(\rho(\gamma)))),\psi\right).$$

These maps are clearly built from maps $\chi_{W}:\inj(W,V_{1})_{\infty}\to \Sigma\inj(W,V_{1})^{c}_{\infty}$ as constructed in the previous section. The rest of the result then follows from the previously cited results and discussion.
\end{proof}

This thus proves part $2$ of Theorem \ref{themaintheorem} - apply $\Sigma^{\infty}$ and smash by $S^{-\svo}$ to get the following cofibre triangles in $\mathcal{F}_{G}$:
$$\xymatrix{X_{k}\ar[d]_{\pi_{k}}&G_{k}(V_{0})^{\Hom(T,V_{1} - V_{0})\oplus s(T)}\ar[l]\\
X_{k-1}\ar[ur]|\bigcirc}
$$

These are precisely the triangles in the tower of \ref{themaintheorem} and that they are cofibre sequences follows from
the discussion in $\S$\ref{onthecategories}. Theorem \ref{themaintheorem} then follows. Moreover, we can equate one of our explicit map statements to the above proof in the following way:

\begin{prop}\label{deltakisright} The map $\tilde{\delta}_{k}:\tilde{X}_{k-1}\to G_{k}(V_{0})^{\Real\oplus\Hom(T,V_{1})\oplus s(T^{\bot})}$ as stated in \ref{tildedeltak} is the composition below, $-\Sigma i_{k}$ being the twisted suspension of the standard fibrewise inclusion map $i_{k}:\mathcal{J}_{k}\to G_{k}(V_{0})^{\Hom(T,V_{1})\oplus s(T^{\bot})}$:
$$\tilde{\delta}_{k}:\tilde{X}_{k-1}\overset{\text{coll.}}{\to}\frac{\tilde{X}_{k-1}}{Y_{k}}\overset{\mathfrak{f}_{k}}{\cong}\mathcal{J}_{k}\overset{-\Sigma i_{k}}{\to}G_{k}(V_{0})^{\Real\oplus\Hom(T,V_{1})\oplus
s(T^{\bot})}.$$
Moreover the composition $\tilde{X}_{k-1}/Y_{k}\to G_{k}(V_{0})^{\Real\oplus\Hom(T,V_{1})\oplus s(T^{\bot})}$ coincides with the map in the cofibre sequence constructed above, and hence $\delta_{k}:=\Sigma^{-\svo}\tilde{\delta}_{k}$ is the map in the stable cofibre sequence.
\end{prop}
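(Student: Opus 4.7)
The plan is to verify that the two formulations of $\tilde{\delta}_k$ agree by unwinding the definitions and recognising that the cofibre-sequence composition naturally factors through $\mathfrak{f}_k$. Both candidates begin with the same collapse map $\tilde{X}_{k-1} \to \tilde{X}_{k-1}/Y_k$, so the task reduces to comparing, on the quotient, the map built from the explicit quadruple in Definition \ref{tildedeltak} with the composition $-\Sigma i_k \circ \mathfrak{f}_k$.

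First, I would observe that the unbased proper map defining $\tilde{\delta}_k$ in \ref{tildedeltak},
\[
(\alpha,\theta)\mapsto \left(e_{d_0-k}(\alpha),P_k(\alpha),-\theta\circ\lambda_{k-1}(\alpha)|_{P_k(\alpha)},-\log((e_{d_0-k}(\alpha)-\alpha)|_{P_k(\alpha)^{\bot}})\right),
\]
is only defined on $\tilde{X}_{k-1}'\setminus Y_k'$, since it requires $\dim(P_k(\alpha))=k$; on $Y_k'$ one does not have a well-defined $e_{d_0-k}$-gap or a sensible formula for $\lambda_{k-1}(\alpha)|_{P_k(\alpha)}$. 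This is precisely the reason for collapsing $Y_k$ first, and it shows that the map factors through $\tilde{X}_{k-1}/Y_k$. The properness needed to extend to one-point compactifications has already been set up in Proposition \ref{sigmainjchomeoextended}, where $\mathfrak{f}_k$ was shown to be a homeomorphism $\tilde{X}_{k-1}'\setminus Y_k' \cong \Real\times \mathcal{J}_k'$ via exactly the same quadruple formula.

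Next I would identify the target. The image of that quadruple formula lies inside $\Real\times \mathcal{J}_k'$, because $-\theta\circ\lambda_{k-1}(\alpha)|_{P_k(\alpha)}$ is always non-injective (its image equals the image of $\lambda_{k-1}(\alpha)|_{P_k(\alpha)}$, which is $P_{k-1}(\alpha)\subsetneq P_k(\alpha)$). Thus the formula factors as
\[
\tilde{X}_{k-1}'\setminus Y_k' \xrightarrow{\mathfrak{f}_k} \Real\times \mathcal{J}_k' \xrightarrow{1\times i_k'} \Real \times \tilde{Z}_k,
\]
where $i_k':\mathcal{J}_k'\hookrightarrow \tilde{Z}_k$ is the evident inclusion. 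Compactifying and collapsing the basepoint section $\sigma_2(G_k(V_0))$, the second factor becomes the fibrewise inclusion $i_k: \mathcal{J}_k \to G_k(V_0)^{\Hom(T,V_1)\oplus s(T^\bot)}$, and the $\Real$ factor becomes the suspension coordinate, so the whole map becomes $\Sigma i_k \circ \mathfrak{f}_k$. Inserting the common $-\Sigma$ twist then yields exactly $-\Sigma i_k\circ \mathfrak{f}_k$, matching the composition in the statement.

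Finally, to conclude the second assertion, I would compare this with the cofibre-sequence map produced in Proposition \ref{chiprimeischi}. There, the map from $\Sigma\mathcal{J}_k$ to the Thom spectrum is built fibrewise from the cofibre sequence of Corollary \ref{homcofibseq}, whose second map was $-\Sigma i$, the twisted suspension of the inclusion $\inj(W,V_1)^c_\infty \hookrightarrow S^{\Hom(W,V_1)}$. Smashing fibrewise with $s(T^\bot)_\infty$ and collapsing the section produces precisely $-\Sigma i_k$. Hence the composition $\tilde{X}_{k-1}/Y_k \to G_k(V_0)^{\Real\oplus\Hom(T,V_1)\oplus s(T^\bot)}$ agrees with the cofibre-sequence map, and smashing by $S^{-s(V_0)}$ after applying $\Sigma^\infty$ gives the stated identification of $\delta_k$. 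The only real subtlety throughout is bookkeeping the suspension twist consistently through the fibrewise construction and the collapse of basepoint sections; all continuity and properness have been handled in the preceding propositions, so no new technical obstacle arises.
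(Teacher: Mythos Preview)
Your proposal is correct and follows exactly the approach the paper has in mind; indeed, the paper dispenses with this proposition by simply remarking that it is ``clear to observe,'' and your argument is precisely the definition-chase that justifies that remark. You correctly identify that the quadruple formula in Definition~\ref{tildedeltak} is literally the formula for $\mathfrak{f}_k$ from Proposition~\ref{sigmainjchomeoextended}, that its image lands in $\Real\times\mathcal{J}_k'$, and that postcomposition with the fibrewise inclusion and the suspension twist reproduces the cofibre-sequence map of Proposition~\ref{chiprimeischi}.
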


This is clear to observe. In particular, it demonstrates that $\tilde{\delta}_{k}$ is a well-defined continuous $G$-map. We now proceed to look at the further topological properties of the result, starting with equating the results above to our statement of a candidate map $\tilde{\phi}_{k}$.
\chapter{Further Topological Properties}
\label{ch:ch6}

\section{Lifting the Tower}\label{BypassingtheQuotientbyaLift}

We have constructed the below diagram, the right triangle being the cofibre sequence constructed in $\S$\ref{TheOtherTriangles}. We recall from that section the definitions of $\mathfrak{r}_{k}$, $\mathfrak{f}_{k}$ and $-\Sigma i_{k}$ and their relationship with the map $\tilde{\delta}_{k}$. We denote by $p_{k}$ the fibrewise collapse map $G_{k}(V_{0})^{\Hom(T,V_{1})\oplus s(T^{\bot})}\to \mathcal{I}_{k}$ and also choose to label the top collapse map $\tilde{X}_{k}\to \tilde{X}_{k}/Y_{k}$ by $c_{k}$:
$$\xymatrix{Y_{k}\ar@{ >->}[r]\ar[d]_{1}&\tilde{X}_{k}\ar[d]_{\tilde{\pi}_{k}}\ar@{->>}[r]_{c_{k}}&\frac{\tilde{X}_{k}}{Y_{k}}\ar[d]&G_{k}(V_{0})^{\Hom(T,V_{1})\oplus s(T^{\bot})}\ar@/_2.5pc/@{.>}[ll]_{\tilde{\phi}_{k}?}\ar[l]_{\mathfrak{r}_{k}\circ p_{k}\phantom{xxxxxxxx}}\\
Y_{k}\ar@{ >->}[r]&\tilde{X}_{k-1}\ar@/_3.7pc/[rru]|\bigcirc_{\phantom{xx}\tilde{\delta_{k}}}\ar@{->>}[r]&\frac{\tilde{X}_{k-1}}{Y_{k}}\ar[ur]|\bigcirc _{\phantom{x}-\Sigma i_{k}\circ \mathfrak{f}_{k}}
}$$
By the theory of $\S$\ref{TotalCofibres} the map $\mathfrak{r}_{k}\circ p_{k}$ lifts to some map $j$ and $j$, $\tilde{\pi}_{k}$ and $\tilde{\delta}_{k}$ with $\tilde{X}_{k}$, $\tilde{X}_{k-1}$ and $G_{k}(V_{0})^{\Hom(T,V_{1})\oplus s(T^{\bot})}$ form a cofibre sequence. We wish to prove that $j$ is the map $\tilde{\phi}_{k}$ from \ref{tildephik}. We first check that $\tilde{\phi}_{k}$ is a reasonable choice of map.

\begin{prop}\label{phikworks} Recall $\tilde{Z}_{k}$ from Corollary \ref{factoringinsTperp}. Let $\tilde{\phi}_{k}'$ be the map below:
$$\tilde{\phi}_{k}':\tilde{Z}_{k}\to \tilde{X}_{k}'$$
$$(W,\gamma,\psi)\mapsto (\psi|_{W^{\bot}}\oplus (\rho(\gamma)+e_{top}(\psi))|_{W},-\sigma(\gamma)).$$
Then $\tilde{\phi}_{k}'$ is a continuous proper $G$-map and hence the map $\tilde{\phi}_{k}:=(\tilde{\phi}_{k}')_{\infty}$ exists and is well-defined.
\end{prop}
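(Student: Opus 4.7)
The plan is to verify in order that $\tilde{\phi}_{k}'$ is (i) well-defined as a map into $\tilde{X}_{k}'$, (ii) $G$-equivariant, (iii) continuous, and (iv) proper. The extension $\tilde{\phi}_{k}:=(\tilde{\phi}_{k}')_{\infty}$ then follows from Lemma \ref{propernessequalsbasepoints}. Equivariance (ii) is routine, since each ingredient ($\rho$, $\sigma$, $e_{top}$, restriction to $W$ or $W^{\bot}$) is natural with respect to the conjugation action of $G$.

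For (i), the central task is to identify $P_{k}(\alpha)$ where $\alpha:=\psi|_{W^{\bot}}\oplus(\rho(\gamma)+e_{top}(\psi))|_{W}$. Writing $\ell:=\Dim(\Ker(\gamma))$, the operator $\rho(\gamma)$ vanishes on $\Ker(\gamma)$ and is strictly positive on its orthogonal complement in $W$, so $\alpha|_{W}$ has $k-\ell$ eigenvalues strictly above $e_{top}(\psi)$ and $\ell$ eigenvalues equal to $e_{top}(\psi)$, while $\alpha|_{W^{\bot}}=\psi$ has all eigenvalues bounded above by $e_{top}(\psi)$ with $e_{top}(\psi)$ itself attained (since $k<d_{0}$, so $W^{\bot}$ is nonzero). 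The full eigenspace of $\alpha$ at eigenvalue $e_{top}(\psi)$ therefore has dimension at least $\ell+1$, and Remark \ref{whatpkcouldmean} forces $P_{k}(\alpha)=(\Ker(\gamma))^{\bot}$ (the orthogonal complement taken inside $W$). This matches exactly the natural domain of $\sigma(\gamma)$ from Proposition \ref{themapsigma}, so $-\sigma(\gamma)$ is an isometry $P_{k}(\alpha)\to V_{1}$.

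For (iii), I will use the quotient description of $\tilde{Z}_{k}$ from Corollary \ref{factoringinsTperp} and the subspace description of $\tilde{X}_{k}'\hookrightarrow\svo\times\aich$ coming from Proposition \ref{topologisingthingy} and Remark \ref{wehaveasubspacetopology}. Under the bijection of \ref{topologisingthingy}, the $-\sigma(\gamma)$ coordinate translates (using $\sigma(\gamma)\circ\rho(\gamma)=\gamma$ and the identity $\lambda_{k}(\alpha)=\rho(\gamma)\oplus 0_{W^{\bot}}$, easily checked via the spectral analysis above) into the ambient operator $\gamma_{0}\zeta^{\dag}$, while $\alpha$ becomes
$$\eta\psi_{0}\eta^{\dag}+\zeta\rho(\gamma_{0})\zeta^{\dag}+e_{top}(\psi_{0})\cdot\zeta\zeta^{\dag}\in\svo.$$
Every factor is continuous in $(\zeta,\eta,\gamma_{0},\psi_{0})$ by Lemmas \ref{fnalcalclemma} and \ref{continuouseigenvalues}, so the lift of $\tilde{\phi}_{k}'$ along the quotient map is continuous, and hence $\tilde{\phi}_{k}'$ itself is continuous.

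For (iv), I will use the ambient subspace topology on both sides together with Lemma \ref{extendedheineborel}. A compact $K\subseteq\tilde{X}_{k}'$ is closed and bounded in $\svo\times\aich$. If $(W,\gamma,\psi)\in(\tilde{\phi}_{k}')^{-1}(K)$, then a bound on $\gamma_{0}\zeta^{\dag}$ yields a bound on $\|\gamma\|$ by Lemma \ref{operatornormiseigenvalues}, and a bound on $\alpha$ yields $\|\psi\|=\|\alpha|_{W^{\bot}}\|\leqslant\|\alpha\|$ bounded. Since $G_{k}(V_{0})$ is compact (Lemma \ref{compactgrassmannian}), these fibrewise bounds confine the preimage to a compact subset of $\tilde{Z}_{k}$; being closed as the continuous preimage of a closed set, it is itself compact. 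The main obstacle throughout is the case analysis in (i): although $\Dim(P_{k}(\alpha))$ can drop below $k$ when $\gamma$ is not injective, it always agrees with $(\Ker(\gamma))^{\bot}$, keeping $-\sigma(\gamma)$ well-defined.
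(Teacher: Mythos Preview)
Your argument is correct and follows the same overall architecture as the paper: continuity via the quotient topology on $\tilde{Z}_{k}$ and the subspace embedding $\tilde{X}_{k}'\hookrightarrow\svo\times\aich$, and properness via closed-and-bounded criteria. Two minor differences are worth noting. First, you explicitly verify well-definedness by computing $P_{k}(\alpha)=(\Ker\gamma)^{\bot}_{W}$, a step the paper leaves implicit. Second, for properness the paper composes with the projection $\tilde{X}_{k}'\to\svo$ and invokes Lemma~\ref{propercompositionisproper}, then extracts a bound on $\|\gamma\|$ from the eigenvalue inequality $e\leqslant R-e_{top}(\psi)\leqslant 2R$; you instead read the bound on $\|\gamma\|$ directly off the second coordinate $\gamma_{0}\zeta^{\dag}$ in $\aich$, which is a small simplification.
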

\begin{proof} Firstly equip $\tilde{Z}_{k}$ with the quotient topology and equip $\tilde{X}_{k}'$ with the subspace topology. Then demonstrating continuity in $\tilde{\phi}_{k}'$ is equivalent to demonstrating continuity in the map below:
$$\mathcal{L}(\Complex^{k}\oplus \Complex^{d_{0}-k},V_{0})\times\Hom(\Complex^{k},V_{1})\times s(\Complex^{d_{0}-k})\to \svo\times \aich$$
$$((\zeta,\eta),\gamma_{0},\psi_{0})\mapsto((\eta\circ\psi_{0}\circ\eta^{\dag}\oplus (\rho(\gamma_{0}\circ\zeta^{\dag}) +e_{top}(\eta\circ\psi_{0}\circ\eta^{\dag}))),\gamma_{0}\circ\zeta^{\dag}).$$

For standard functional calculus reasons this is continuous. Now let $proj.:\tilde{X}_{k}'\to \svo$ be the projection to the first factor and let $C$ be a compact subset of $\svo$. We wish to demonstrate that $(proj.\circ\tilde{\phi}_{k}')^{-1}\{C\}$ is compact - this will demonstrate that $\tilde{\phi}_{k}'$ is proper by Lemma \ref{propercompositionisproper}. The compact subsets of $\tilde{Z}_{k}$ are known as the compact subsets of $G_{k}(V_{0})\times\aich\times\svo$ are known - by Tychonoff's Theorem and \ref{extendedheineborel} they are the closed subsets bounded in the two vector space coordinates. The space $\tilde{Z}_{k}$ is a subspace of this, hence we observe that under the vector bundle characterization of $\tilde{Z}_{k}$ compact subsets are closed and fibrewise bounded.

Now let $(W,\gamma,\psi)\in (proj.\circ\tilde{\phi}_{k}')^{-1}\{C\}$. It will be enough to show that there is a bound on the norms of $\gamma$ and $\psi$ - the inverse image of $C$ is clearly closed. By the compactness of $C$ we know that there is a positive real number $R$ such that $\gamma$ and $\psi$ satisfy the following conditions:
\begin{itemize}
\item $\|\psi\|\leqslant R$.
\item $\|\rho(\gamma)+e_{top}(\psi)\|\leqslant R$.
\end{itemize}
We immediately get the bound on $\|\psi\|$. For the other bound, let $e$ be an eigenvalue of $\rho(\gamma)$, it is sufficient to show that there is a positive upper bound on $e$. From the second condition we have that $e\leqslant R-e_{top}(\psi)$. Now from the first condition $-R\leqslant e_{top}(\psi)\leqslant R$ and hence $-R\leqslant -e_{top}(\psi)\leqslant R$. Thus $e\leqslant R-e_{top}(\psi)\leqslant 2R$, giving an upper bound on $e$ as required. This implies properness of $\tilde{\phi}_{k}'$. Moreover, observing that $\tilde{\phi}_{k}'$ is a $G$-map is standard. The claim then follows.
\end{proof}

Thus $\tilde{\phi}_{k}$ is at least well-defined. To show that it is our lift we attempt to further extend the functional calculus to maps which take the below form:
$$G_{k}(V_{0})^{\Hom(T,V_{1})\oplus s(T^{\bot})}\to \tilde{X}_{k}.$$

This will then let us, as earlier, classify maps that arise from the functional calculus up to homotopy. We first fix $k$ to be strictly less than $d_{0}$ - the $d_{0}=k$ case is trivial. The below statement will lead to a definition of facial in the case we want, the proof is standard:

\begin{lem}\label{thirdfacialhomeomorphism} There is a homeomorphism $f:D(d_{0}-k)\wedge D_{+}(k)\to D(d_{0})$ given as follows:
$$(s,t)\mapsto (s,s_{top}+t).$$
\end{lem}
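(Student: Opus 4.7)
The plan is to construct $f$ as the one-point compactification of a homeomorphism between the underlying unbased spaces, exploiting the standard fact that for locally compact Hausdorff spaces $X, Y$ one has $(X \times Y)_\infty \cong X_\infty \wedge Y_\infty$.

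First, I would define $f': D'(d_0-k) \times D'_+(k) \to D'(d_0)$ by the same formula $(s,t) \mapsto (s, s_{top} + t)$, writing this out as $(s_0,\ldots,s_{d_0-k-1},s_{d_0-k-1}+t_0,\ldots,s_{d_0-k-1}+t_{k-1})$. I would verify that the image genuinely lies in $D'(d_0)$: the initial block $s_0 \leq \ldots \leq s_{d_0-k-1}$ is non-decreasing by hypothesis, the final block $s_{top}+t_0 \leq \ldots \leq s_{top}+t_{k-1}$ is non-decreasing because $t \in D'_+(k)$, and these two blocks splice together correctly since $t_0 \geq 0$ forces $s_{d_0-k-1} \leq s_{top}+t_0$. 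Continuity is obvious since all operations are coordinate projections and a single addition.

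Next I would write down the candidate two-sided inverse $g': D'(d_0) \to D'(d_0-k) \times D'_+(k)$ by $u \mapsto \bigl((u_0,\ldots,u_{d_0-k-1}),(u_{d_0-k}-u_{d_0-k-1},\ldots,u_{d_0-1}-u_{d_0-k-1})\bigr)$. The monotonicity in $u$ immediately gives that the second component lies in $D'_+(k)$, so $g'$ is well-defined and continuous, and a direct substitution shows $g' \circ f' = \mathrm{id}$ and $f' \circ g' = \mathrm{id}$. Hence $f'$ is a homeomorphism of locally compact Hausdorff spaces.

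Finally, passing to one-point compactifications: any homeomorphism of locally compact Hausdorff spaces is automatically proper, so $f'$ extends uniquely to a homeomorphism on one-point compactifications by Lemma \ref{propernessequalsbasepoints}. Combining this with the natural identification $(D'(d_0-k) \times D'_+(k))_\infty \cong D(d_0-k) \wedge D_+(k)$ yields the desired homeomorphism $f: D(d_0-k) \wedge D_+(k) \to D(d_0)$, with the basepoint collapse handled correctly because $s = \infty$ forces $s_{top}=\infty$ and $t = \infty$ likewise sends all coordinates to $\infty$. There is no real obstacle here; the only subtlety worth remarking is that both factors must be compactified before smashing, which is exactly why the second factor is written as $D_+(k)$ rather than the ambient $D(k)$.
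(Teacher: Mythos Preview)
Your proof is correct and is precisely the standard argument the paper has in mind; the paper itself omits the proof entirely, stating only that ``the proof is standard'' before the lemma. Your closing remark about why $D_+(k)$ appears rather than $D(k)$ is slightly off, though: the real reason is that non-negativity of $t_0$ is exactly what guarantees the splice inequality $s_{d_0-k-1} \leq s_{top}+t_0$, not anything to do with compactification (both $D(k)$ and $D_+(k)$ are already one-point compactifications).
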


\begin{defn}\label{extendingfacialdefinition} We say a map $g:D(d_{0}-k)\wedge D_{+}(k)\to D(d_{0})$ is facial if the composite $D(d_{0})\overset{f^{-1}}{\to}D(d_{0}-k)\wedge D_{+}(k)\overset{g}{\to} D(d_{0})$ is facial.
\end{defn}

In particular, the map $f$ is facial under this definition. We now set up machinery to extend the functional calculus ideas from $\S$\ref{Variations}. In a similar fashion to the constructions \ref{fnalvarA} and \ref{fnalvarB} we will construct maps $p$, $q$ and $\mathfrak{C}_{g}$ such that the below diagram commutes:
$$\xymatrix{\mathcal{L}(\Complex^{d_{0}},V_{0})_{\infty}\wedge\mathcal{L}(\Complex^{k},V_{1})_{\infty}\wedge D(d_{0}-k)\wedge D_{+}(k)\ar[d]_{1\wedge1\wedge g}\ar[r]^{\phantom{xxxxxxxxxxxx}p}&G_{k}(V_{0})^{\Hom(T,V_{1})\oplus s(T^{\bot})}\ar[d]^{\mathfrak{C}_{g}}\\
\mathcal{L}(\Complex^{d_{0}},V_{0})_{\infty}\wedge\mathcal{L}(\Complex^{k},V_{1})_{\infty}\wedge D(d_{0})\ar[r]_{\phantom{xxxxxxxxxxxx}q}&\tilde{X}_{k}}$$

We first consider the maps $p$ and $q$:

\begin{defn}\label{themapp} Let $i:\Complex^{k}\to \Complex ^{d_{0}}$ be the choice of inclusion sending $\Complex^{k}$ to the last $k$ copies of $\Complex$ in $\Complex^{d_{0}}$. We recall $\tilde{Z}_{k}$ from \ref{factoringinsTperp} and $\Delta$ from \ref{themapnu} and define the map $p'$ as follows:
$$p':\mathcal{L}(\Complex^{d_{0}},V_{0})\times\mathcal{L}(\Complex^{k},V_{1})\times  D'(d_{0}-k)\times D'_{+}(k)\to \tilde{Z}_{k}$$
$$(\lambda,\mu,s,t)\mapsto (\lambda(i(\Complex^{k})),-\mu\circ \Delta(t)\lambda^{-1}|_{\lambda(i(\Complex^{k}))},\lambda\Delta(s)\lambda^{-1}|_{\lambda(i(\Complex^{k}))^{\bot}}).$$
\end{defn}

\begin{lem}\label{pworks} The map $p'$ is a well-defined proper surjection and thus the map $p:=(p')_{\infty}$ is well-defined.
\end{lem}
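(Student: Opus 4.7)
The plan has three components to verify: well-definedness of the output of $p'$, surjectivity, and properness. Once all three hold, $p:=(p')_\infty$ is well-defined by Lemma \ref{propernessequalsbasepoints}.

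For well-definedness, I would first observe that since $\lambda\in\mathcal{L}(\Complex^{d_0},V_0)$ is an isometry, $\lambda(i(\Complex^k))$ is a genuine $k$-dimensional subspace of $V_0$, and $\lambda^{-1}$ makes sense on $\IM(\lambda)=V_0$ and agrees with $\lambda^\dagger$ there. The second coordinate $-\mu\circ\Delta(t)\lambda^{-1}|_{\lambda(i(\Complex^k))}$ is then a well-defined element of $\Hom(\lambda(i(\Complex^k)),V_1)$. For the third coordinate, note that $\lambda$ maps $i(\Complex^k)^\bot$ isometrically onto $\lambda(i(\Complex^k))^\bot$, and $\Delta(s)$ is a selfadjoint (diagonal real) endomorphism of $\Complex^{d_0-k}\cong i(\Complex^k)^\bot$. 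Hence the conjugation $\lambda\Delta(s)\lambda^{-1}|_{\lambda(i(\Complex^k))^\bot} = \lambda\Delta(s)\lambda^\dagger|_{\lambda(i(\Complex^k))^\bot}$ is a selfadjoint endomorphism of $\lambda(i(\Complex^k))^\bot$, so the triple lies in $\tilde{Z}_k$.

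For surjectivity, let $(W,\gamma,\psi)\in\tilde{Z}_k$. The selfadjoint endomorphism $\psi\in s(W^\bot)$ has an orthonormal basis of eigenvectors; ordering its real eigenvalues gives $s\in D'(d_0-k)$ and a unitary $U:\Complex^{d_0-k}\to W^\bot$ with $\psi = U\Delta(s)U^\dagger$. Separately, using $\rho$ and $\sigma$ from \ref{themaprho} and \ref{themapsigma}, and extending $-\sigma(\gamma)$ to a full isometry $\varsigma:W\to V_1$ as in the proof of Lemma \ref{themapmu}, write $-\gamma = \varsigma\circ\rho(\gamma)$. Diagonalize $\rho(\gamma)\in s_+(W)$ as $\rho(\gamma) = U'\Delta(t)U'^\dagger$ with $t\in D'_+(k)$ in increasing order and $U':\Complex^k\to W$ a unitary. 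Then set $\lambda|_{i(\Complex^k)}=U'$, $\lambda|_{i(\Complex^k)^\bot}=U$ (giving $\lambda\in\mathcal{L}(\Complex^{d_0},V_0)$ with $\lambda(i(\Complex^k))=W$), and $\mu = \varsigma\circ U'\in\mathcal{L}(\Complex^k,V_1)$. A direct calculation then shows $p'(\lambda,\mu,s,t) = (W,\gamma,\psi)$.

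For properness, I would invoke Lemma \ref{closedprojifcompact} applied to the projection from the product onto the $D'(d_0-k)\times D'_+(k)$ factor, which is closed since $\mathcal{L}(\Complex^{d_0},V_0)\times\mathcal{L}(\Complex^k,V_1)$ is compact. Given a compact $C\subseteq\tilde{Z}_k$, the preimage $(p')^{-1}(C)$ is closed by continuity, so it suffices to show it projects to a bounded subset of $\Real^{d_0-k}\times\Real^k$ and then apply Lemma \ref{extendedheineborel}. If $(\lambda,\mu,s,t)\in (p')^{-1}(C)$, then $s$ is the tuple of eigenvalues of the third coordinate of $p'(\lambda,\mu,s,t)\in C$, which is norm-bounded on the compact $C$ (combining Lemma \ref{operatornormiseigenvalues} with a characterization of compact subsets of $\tilde{Z}_k$ akin to that used in Corollary \ref{factoringinsTperp}). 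Similarly, $t$ consists of the eigenvalues of $\rho$ applied to the second coordinate, and an application of Lemma \ref{operatornormiseigenvalues} yields the bound. Hence $(p')^{-1}(C)$ is compact, and properness follows.

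The main obstacle is the surjectivity argument, which is where the asymmetric roles of $D'(d_0-k)$ (no positivity) and $D'_+(k)$ (nonnegativity) become critical, and where one must carefully unwind the polar decomposition of $\gamma$ into an isometric extension $\varsigma$ and a positive selfadjoint part, so that the ordered eigenvalues of $\rho(\gamma)$ genuinely land in $D'_+(k)$; the other two parts are relatively routine topology.
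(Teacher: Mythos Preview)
Your proposal is correct. The well-definedness and surjectivity arguments are more explicit than the paper's (which dismisses both as ``easily observed'' and ``standard linear algebra'' respectively), but follow the same underlying ideas.

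For properness the paper takes a slightly slicker route: it defines an eigenvalue map $\eta'':\tilde{Z}_k\to D'(d_0-k)\times D'_+(k)$ sending $(W,\gamma,\psi)$ to the ordered eigenvalue tuples of $\psi$ and $\rho(\gamma)$, observes that $\eta''\circ p'$ is precisely the projection onto the last two factors (which is proper since the isometry factors are compact), and then invokes Lemma~\ref{propercompositionisproper} directly. Your argument unpacks exactly the same content --- the observation that $s$ and $t$ are recovered as eigenvalues of the output --- but assembles it by hand via closed-and-bounded and Lemma~\ref{extendedheineborel}. Both work; the paper's version avoids the explicit bounding step by encoding it in the map $\eta''$.
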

\begin{proof} The only real issue with this claim is properness, the map is pretty easily observed to be well-defined and continuous and moreover surjectivity can be shown via a standard linear algebra argument similar to the surjectivity claims in \ref{fnalvarA}. To prove that $p'$ is proper, consider the map below:
$$\eta'':\tilde{Z}_{k}\to D(d_{0}-k)\times D'_{+}(k)$$
$$(W,\gamma,\psi)\mapsto (e_{0}(\psi),...,e_{d_{0}-k-1}(\psi)), (e_{0}(\rho(\gamma)),...,e_{k-1}(\rho(\gamma))).$$
Then we claim that $\eta''\circ p'$ is proper. One can observe that this composition is just the projection to $D'(d_{0}-k)\times D_{+}'(k)$. Let $U$ be a compact subset of $D'(d_{0}-k)\times D_{+}'(k)$, then the inverse image of $C$ under $\eta''\circ p'$ is $\mathcal{L}(\Complex^{d_{0}},V_{0})\times\mathcal{L}(\Complex^{k},V_{1})\times U$. This is compact, thus the composition is proper and hence by Lemma \ref{propercompositionisproper} $p'$ is proper.
\end{proof}

\begin{defn}\label{themapq} Define the map $q'$ as follows:
$$q':\mathcal{L}(\Complex^{d_{0}},V_{0})\times\mathcal{L}(\Complex^{k},V_{1})\times D'(d_{0})\to \tilde{X}'_{k}$$
$$(\lambda,\mu,t')\mapsto (\lambda\Delta(t')\lambda^{-1},-\mu\circ \lambda^{-1}|_{P_{k}(\lambda\Delta(t')\lambda^{-1})}).$$
\end{defn}

\begin{lem}\label{qworks} The map $q'$ is a well-defined proper surjection and thus the map $q:=(q')_{\infty}$ is well-defined.
\end{lem}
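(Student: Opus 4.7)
The plan is to verify in turn well-definedness, continuity, surjectivity and properness of $q'$, after which Lemma \ref{propernessequalsbasepoints} automatically produces the continuous extension $q=(q')_{\infty}$. The overall structure should mirror the proof of Lemma \ref{qworks}'s sibling, Lemma \ref{pworks}, with the extra wrinkle that $P_{k}$ is not continuous as a function of $\alpha$ and its dimension can drop when eigenvalues of $t'$ coincide across positions $d_{0}-k-1$ and $d_{0}-k$.

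First I would check well-definedness. Since $\lambda\in\mathcal{L}(\Complex^{d_{0}},V_{0})$ is an isometry, $\lambda^{-1}=\lambda^{\dag}$, so $\lambda\Delta(t')\lambda^{-1}\in s(V_{0})$ follows from $\Delta(t')$ being selfadjoint. The delicate point is that the restriction $-\mu\circ\lambda^{-1}|_{P_{k}(\lambda\Delta(t')\lambda^{-1})}$ must land in $V_{1}$ in a meaningful way, i.e.\ $\lambda^{-1}$ must send $P_{k}(\lambda\Delta(t')\lambda^{-1})$ into $i(\Complex^{k})$ so that we can compose with $\mu$ after identifying $i(\Complex^{k})\cong\Complex^{k}$. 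Because $t'$ is sorted, the definition of $P_{k}$ forces $P_{k}(\lambda\Delta(t')\lambda^{-1})\subseteq\lambda(i(\Complex^{k}))$ in every case, even when the dimension drops below $k$; the resulting composition is isometric since both $\mu$ and $\lambda^{-1}$ are.

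Continuity is most transparent if we equip $\tilde{X}_{k}'$ with the subspace topology of Proposition \ref{topologisingthingy}. Under that identification the second coordinate of $q'$ becomes $\mu\circ\lambda^{-1}\circ\lambda_{k}(\lambda\Delta(t')\lambda^{-1})$, which now makes sense globally because $\lambda_{k}(\alpha)$ vanishes off $P_{k}(\alpha)$. Continuity then follows from continuity of $\lambda_{k}$ (Proposition \ref{lambdak}), continuity of $\lambda\mapsto\lambda^{-1}$ on the compact space $\mathcal{L}(\Complex^{d_{0}},V_{0})$, and bilinearity of composition. Surjectivity uses the spectral theorem: given $(\alpha,\theta)\in\tilde{X}_{k}'$, pick an orthonormal basis of eigenvectors of $\alpha$ in eigenvalue order to define $\lambda$ so that $\lambda\Delta(t')\lambda^{-1}=\alpha$ for $t'=(e_{0}(\alpha),\ldots,e_{d_{0}-1}(\alpha))$; the partial isometry $-\theta\circ\lambda|_{\lambda^{-1}(P_{k}(\alpha))}$ is defined on a subspace of $i(\Complex^{k})\cong\Complex^{k}$, and since $\dim(V_{1})\geqslant k$ it extends to a full isometry $\mu\in\mathcal{L}(\Complex^{k},V_{1})$ with $q'(\lambda,\mu,t')=(\alpha,\theta)$.

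For properness, the strategy is to dominate $q'$ by a projection just as in \ref{pworks}. Let $\pi_{1}:\tilde{X}_{k}'\to s(V_{0})$ be the first-coordinate projection and $\eta':s(V_{0})\to D'(d_{0})$ the eigenvalue map of Lemma \ref{propereigenvectors}; the composition $\eta'\circ\pi_{1}\circ q'$ is simply projection onto the third factor. Given a compact $C\subseteq\tilde{X}_{k}'$, the continuous image $\eta'(\pi_{1}(C))=:K$ is compact in $D'(d_{0})$, so $(q')^{-1}(C)\subseteq\mathcal{L}(\Complex^{d_{0}},V_{0})\times\mathcal{L}(\Complex^{k},V_{1})\times K$. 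The two $\mathcal{L}$-factors are compact, and $(q')^{-1}(C)$ is closed in this product by continuity of $q'$ and the Hausdorff property inherited from the subspace topology, so $(q')^{-1}(C)$ is compact. The hardest part will be the bookkeeping in the well-definedness step: one must carefully track how eigenvalue coincidences shrink $P_{k}(\lambda\Delta(t')\lambda^{-1})$ while still keeping $\lambda^{-1}$ of it inside $i(\Complex^{k})$, and this same subtlety resurfaces in showing the substitute expression $\mu\circ\lambda^{-1}\circ\lambda_{k}(\lambda\Delta(t')\lambda^{-1})$ really does agree with the formula given for $q'$.
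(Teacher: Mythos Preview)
Your proposal is correct and follows the same approach as the paper. The paper's proof is much terser: it only addresses properness, using exactly your idea that $\eta'\circ q'$ is the projection onto $D'(d_{0})$ and hence proper, then invoking Lemma \ref{propercompositionisproper} rather than unpacking the closed-subset-of-a-compact-product argument. Your additional paragraphs on well-definedness, continuity via the subspace topology of Proposition \ref{topologisingthingy}, and surjectivity via the spectral theorem are all sound and simply make explicit what the paper dismisses with ``the only real issue is demonstrating that $q'$ is proper''.
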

\begin{proof} Again, the only real issue is demonstrating that $q'$ is proper. To prove this we first take $\eta':\tilde{X}_{k}'\to D'(d_{0})$ to be the first factor eigenvalue map - we then claim that the composition $\eta'\circ q'$ is proper. This, however, follows from the observation that $\eta'\circ q'$ is just the projection onto the factor $D'(d_{0})$, thus the inverse image of a compact set $U$ is $\mathcal{L}(\Complex^{d_{0}},V_{0})\times\mathcal{L}(\Complex^{k},V_{1})\times U$ which is compact. Thus
$\eta'\circ q'$ is proper and hence so is $q'$ by Lemma \ref{propercompositionisproper}.
\end{proof}

We are now ready to construct another functional calculus variation:

\begin{prop}\label{fnalvarC} There is a unique continuous map $\mathfrak{C}_{g}$ making the below diagram commute:
$$\xymatrix{\mathcal{L}(\Complex^{d_{0}},V_{0})_{\infty}\wedge\mathcal{L}(\Complex^{k},V_{1})_{\infty}\wedge D(d_{0}-k)\wedge D_{+}(k)\ar[d]_{1\wedge1\wedge g}\ar[r]^{\phantom{xxxxxxxxxxxx}p}&G_{k}(V_{0})^{\Hom(T,V_{1})\oplus s(T^{\bot})}\ar[d]^{\mathfrak{C}_{g}}\\
\mathcal{L}(\Complex^{d_{0}},V_{0})_{\infty}\wedge\mathcal{L}(\Complex^{k},V_{1})_{\infty}\wedge D(d_{0})\ar[r]_{\phantom{xxxxxxxxxxxx}q}&\tilde{X}_{k}}$$

Moreover:
\begin{itemize}
\item The below map is continuous:
$$\mathfrak{C}:\FMap(D(d_{0}-k)\wedge D_{+}(k))\to \Map(G_{k}(V_{0})^{\Hom(T,V_{1})\oplus s(T^{\bot})}, \tilde{X}_{k})$$
$$g\mapsto \mathfrak{C}_{g}.$$
\item There is a characterization of $\mathfrak{C}_{g}(W,\gamma,\psi)$ as follows. Let $v_{0},...,v_{d_{0}-k-1}$ be an orthonormal basis for $W^{\bot}$ of eigenvectors of $\psi$ with eigenvalues $e_{0},...,e_{d_{0}-k-1}$ and let $v_{d_{0}-k},...,v_{d_{0}-1}$ be an orthonormal basis of $W$ of eigenvectors of $\gamma^{\dag}\gamma$ with eigenvalues
$e_{d_{0}-k}^{2},...,e_{d_{0}-1}^{2}$. Moreover, let $m_{i}$ be the vectors of $V_{1}$ given by $\gamma(v_{i})=e_{i}m_{i}$. Then if $g(e)=s$, then $\mathfrak{C}_{g}(W,\gamma,\psi)$ maps to $(\alpha,\theta)$ where $\alpha$ is the selfadjoint transformation of $V_{0}$ with eigenvectors $v_{i}$ and eigenvalues $s_{i}$ and $\theta$ is the isometry on the top eigenspaces of $\alpha$ given by $\theta(v_{i})=-m_{i}$. Further, $\theta$ can additionally be characterized as $\theta= -\sigma(\gamma)$ where here $\sigma$ is as in \ref{themapsigma}.
\end{itemize}
\end{prop}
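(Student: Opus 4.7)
The plan is to follow the template established by Propositions \ref{fnalvarA} and \ref{fnalvarB} very closely, since the setup here is an evident extension of those ideas: $p$ and $q$ have just been shown to be proper surjections, hence quotient maps on compactifications, and $g$ is facial in the sense of Definition \ref{extendingfacialdefinition}. First I would establish the key consistency lemma: if $p(\lambda,\mu,s,t) = p(\lambda',\mu',s',t')$, then $q(\lambda,\mu, g(s,t)) = q(\lambda',\mu',g(s',t'))$. Unpacking the definitions, equality under $p$ forces $\lambda(i(\Complex^k)) = \lambda'(i(\Complex^k))$ as subspaces of $V_0$, and furthermore pins down the restrictions of the diagonalized selfadjoints and of $\mu \circ \Delta(t)\lambda^{-1}$ on the relevant summands. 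As in the proofs of \ref{fnalvarA} and \ref{fnalvarB}, the ambiguity in $(\lambda,\mu,s,t)$ is precisely controlled by centralizers of the diagonal matrices $\Delta(s)$ on $\Complex^{d_0-k}$ and $\Delta(t)$ on $\Complex^k$ (together with an $S^1$-phase ambiguity coming from non-injectivity of the $\mu$-factor when eigenvalues of $t$ vanish, handled exactly as in \ref{fnalvarB}). Facial maps are by definition precisely those that send one stratum of equal eigenvalues into another, so they carry these centralizers into bigger centralizers; composing with $g$ therefore preserves the identifications made by $p$, and $q$ is then well-defined on the quotient.

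Next I would define $\mathfrak{C}_g$ by the explicit formula in the second bullet of the statement and verify that with this definition the square commutes on the nose. The explicit characterization doubles as the proof that $\mathfrak{C}_g$ exists concretely (not just as an abstract factorization); its well-definedness reduces to the fact that the eigenbases $v_i$ are unique up to choice inside eigenspaces, with the facial condition on $g$ again ensuring the output is independent of this choice. Uniqueness of $\mathfrak{C}_g$ is automatic from Lemma \ref{pworks}: $p$ is surjective, so any two lifts agree. Continuity of $\mathfrak{C}_g$ follows from the fact that $p$ is a quotient map (by Lemma \ref{whensubspaceisquotient} together with \ref{pworks}): $\mathfrak{C}_g \circ p = q \circ (1 \wedge 1 \wedge g)$ is continuous, hence $\mathfrak{C}_g$ is continuous.

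For continuity of the assignment $g \mapsto \mathfrak{C}_g$, I would imitate the adjunction trick used in \ref{fnalvarA} and \ref{fnalvarB}. In $CGWH$ we have
\[
\Map\bigl(\FMap(D(d_0-k)\wedge D_+(k)), \Map(G_k(V_0)^{\Hom(T,V_1)\oplus s(T^\perp)}, \tilde{X}_k)\bigr)
\]
\[
\cong \Map\bigl(\FMap(D(d_0-k)\wedge D_+(k))\wedge G_k(V_0)^{\Hom(T,V_1)\oplus s(T^\perp)}, \tilde{X}_k\bigr),
\]
so it suffices to show the adjoint $\mathfrak{C}^{\#}$ is continuous. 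Smashing the evaluation map for $\FMap$ against $p$ and composing with $q$ gives a commuting square in which the left vertical map is $1\wedge p$, hence a quotient, while the top and right maps are patently continuous; continuity of $\mathfrak{C}^{\#}$ follows exactly as in the earlier proofs.

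The main obstacle will be the consistency verification in the first step: unlike Propositions \ref{fnalvarA} and \ref{fnalvarB}, we are juggling two simultaneous diagonal parameters $s$ and $t$ living on orthogonally complementary subspaces cut out by $\lambda$, and the identifications produced by $p$ mix these via the choice of $\lambda(i(\Complex^k))$ and via the non-injectivity of the $\mu$-coordinate. The bookkeeping of centralizers and of the $\sigma(\gamma)$-type phase ambiguity must be done carefully, but no fundamentally new ideas beyond those in \ref{fnalvarA} and \ref{fnalvarB} are required; the facial condition of Definition \ref{extendingfacialdefinition}, transported across the homeomorphism of Lemma \ref{thirdfacialhomeomorphism}, has been set up precisely to pass these identifications through $g$.
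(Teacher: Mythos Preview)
Your proposal is correct and follows essentially the same approach as the paper: the consistency check via centralizers of $\Delta(s)$ and $\Delta(t)$ (with the non-invertible $\Delta(t)$ case handled separately), uniqueness from surjectivity of $p$, continuity from $p$ being a quotient, and the adjunction trick for continuity of $\mathfrak{C}$ are all exactly as in the paper's proof. Your identification of the main bookkeeping obstacle is also on target; the paper handles it by introducing $\zeta:=\lambda^{-1}\lambda'$ and splitting it as $\zeta_0\oplus\zeta_1$ along $i(\Complex^k)\oplus i(\Complex^k)^\perp$, which is precisely the centralizer analysis you describe.
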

\begin{proof} We firstly want to check that a map $\mathfrak{C}_{g}$ of this form is reasonable. The potential issue is that neither $p$ nor $q$ are injective - thus we need to check whether if $p(\lambda,\mu,s,t)=p(\lambda',\mu',s',t')=(W,\gamma,\psi)$ then $q(\lambda,\mu,g(s,t))=q(\lambda',\mu',g(s',t'))$. Firstly set $\zeta:=\lambda^{-1}\lambda':\Complex^{d_{0}}\to \Complex^{d_{0}}$. Then as $\lambda(i(\Complex^{k}))=\lambda'(i(\Complex^{k}))$ we have $\zeta(i(\Complex^{k}))=i(\Complex^{k})$ and thus $\zeta$ splits into $\zeta_{0}:\Complex^{k}\to \Complex^{k}$ and $\zeta_{1}:\Complex^{d_{0}-k}\to \Complex^{d_{0}-k}$, we suppress the notation of $i$ from this point though we use the function implicitly. Now as $-\mu\circ
\Delta(t)\lambda^{-1}|_{W}=-\mu'\circ\Delta(t')\lambda'^{-1}|_{W}=\gamma$ we have that
$\gamma^{\dag}\gamma=\lambda\Delta(t^{2})\lambda^{-1}|_{W}=\lambda'\Delta((t')^{2})\lambda'^{-1}|_{W}$ and hence
$\rho(\gamma)=\lambda\Delta(t)\lambda^{-1}|_{W}=\lambda'\Delta(t')\lambda'^{-1}|_{W}$. This demonstrates that
$\zeta_{0}^{-1}\Delta(t)\zeta_{0}=\Delta(t')$, hence $t=t'$ and $\zeta_{0}$ is in the centralizer of $\Delta(t)$.

Similarly $\psi=\lambda\Delta(s)\lambda^{-1}|_{W^{\bot}}=\lambda'\Delta(s')\lambda'^{-1}|_{W^{\bot}}$ implies that $\zeta_{1}^{-1}\Delta(s)\zeta_{1}=\Delta (s')$ and hence that $s=s'$ and $\zeta_{1}$ is in the centralizer of $\Delta(s)$. Now, let $\Delta(s\oplus t)$ denote the diagonal matrix with the entries $s$ and then $t$. The above demonstrates that $\zeta$ is in the centralizer of $\Delta(s\oplus t)$. Now, in order to prove that $q(\lambda,\mu,g(s,t))=q(\lambda',\mu',g(s',t'))$ on the first factor we need to show that $\zeta$ is in the centralizer of $\Delta(g(s,t))$. We note, however, that the centralizer only depends on repeating values down the matrix, and as $g$ is facial we note that this implies that the centralizer of $\Delta(s\oplus t)$ is a subgroup of the centralizer of $\Delta(g(s,t))$. Hence $\zeta$ is in the centralizer of $\Delta(g(s,t))$ and equality on the first factor follows.

For equality on the second factor it is clear that this will follow if $\mu^{-1}\mu'=\zeta_{0}$. This, however, follows if $\Delta(t)$ is invertible from $\mu^{-1}\mu'\Delta(t)=\Delta(t)\zeta_{0}=\zeta_{0}\Delta(t)$, which follows from looking at the two identifications of $\gamma$. If $\Delta(t)$ is not invertible, however, then we're still okay here.
We want to show that $-\mu\circ \lambda^{-1}|_{P_{k}(\lambda\Delta(g(s,t))\lambda^{-1})}=-\mu'\circ \lambda'^{-1}|_{P_{k}(\lambda'\Delta(g(s',t'))\lambda'^{-1})}$ so can concern ourselves with just showing that $\mu^{-1}\mu'$ agrees with $\zeta_{0}$ on the restriction to the top $n$ copies of $\Complex$ in $\Complex^{k}$ for some $n$ - we determine $n$ by taking the maximal value such that $\Delta(t)|_{\Complex^{n}}$ is invertible. Thus equality in the second factor follows for the same reasons as above. Hence having a map $\mathfrak{C}_{g}$ making the square commute is reasonable; moreover, it is clear to see that the choice for $\mathfrak{C}_{g}$ as detailed above is one that would work to make the diagram commute.

Uniqueness of $\mathfrak{C}_{g}$ follows from the surjectivity of $p$. We now check the continuity of $\mathfrak{C}_{g}$ by showing that $\mathfrak{C}_{g}\circ p$ is continuous - this then implies continuity in $\mathfrak{C}_{g}$ as $p$ is a
quotient. We can show that $\mathfrak{C}_{g}\circ p$ is continuous, however, by noting that it is equal to the continuous map $(1\wedge 1\wedge g)\circ q$.

Finally we wish to show that the below map is continuous, i.e. a morphism in $CGWH$:
$$\mathfrak{C}:\FMap (D(d_{0}-k)\wedge D_{+}(k))\to \Map(G_{k}(V_{0})^{\Hom(T,V_{1})\oplus s(T^{\bot})}, \tilde{X}_{k}).$$

We have a standard adjunction:
$$\xymatrix@R=0.0001cm{\Hom(\FMap (D(d_{0}-k)\wedge D_{+}(k)),\Map(G_{k}(V_{0})^{\Hom(T,V_{1})\oplus s(T^{\bot})},
\tilde{X}_{k}))\\
\cong \\
\Hom(\FMap (D(d_{0}-k)\wedge D_{+}(k))\wedge
G_{k}(V_{0})^{\Hom(T,V_{1})\oplus s(T^{\bot})},\tilde{X}_{k}) }$$
Thus if we show that $\mathfrak{C}^{\#}$ from $\FMap (D(d_{0}-k)\wedge D_{+}(k))\wedge G_{k}(V_{0})^{\Hom(T,V_{1})\oplus s(T^{\bot})}$ to $\tilde{X}_{k}$ given by $\mathfrak{C}^{\#}(g,W,\gamma,\psi):=\mathfrak{C}_{g}(W,\gamma,\psi)$ is continuous, then continuity of $\mathfrak{C}$ follows.

Take the shorthand $\mathcal{L}:=\mathcal{L}(\Complex^{d_{0}},V_{0})_{\infty}\wedge\mathcal{L}(\Complex^{k},V_{1})_{\infty}$. Let $\text{eval.}$ be the map below:
$$\FMap (D(d_{0}-k)\wedge D_{+}(k))\wedge\mathcal{L}\wedge D(d_{0}-k)\wedge D_{+}(k)\overset{\text{eval.}}{\to}\mathcal{L}\wedge D(d_{0})$$
$$(g,\lambda,\mu,s,t)\mapsto(\lambda,\mu,g(s,t)).$$

We have the following commutative diagram:
$$\xymatrix{\FMap (D(d_{0}-k)\wedge D_{+}(k))\wedge\mathcal{L}\wedge D(d_{0}-k)\wedge D_{+}(k)\ar[r]^{\phantom{xxxxxxxxxxxxxxxxxxx}\text{eval.}}\ar[d]_{1\wedge p}&\mathcal{L}\wedge D(d_{0})\ar[d]^{q}\\
\FMap (D(d_{0}-k)\wedge
D_{+}(k))\wedge G_{k}(V_{0})^{\Hom(T,V_{1})\oplus s(T^{\bot})}\ar[r]_{\phantom{xxxxxxxxxxxxxxxxxxx}\mathfrak{C}^{\#}}&\tilde{X}_{k}}$$

Observe from the diagram that $q\circ\text{eval.}=\mathfrak{C}^{\#}\circ (1\wedge p)$ is continuous and hence so is $\mathfrak{C}^{\#}$ as $(1\wedge p)$ is a quotient map. Continuity of $\mathfrak{C}$ follows.
\end{proof}

We can immediately state the below lemma, the proof is simple to observe:

\begin{lem}\label{tildephikisfnalcalc} $\tilde{\phi}_{k}=\mathfrak{C}_{f}$ for the facial map $f$ defined in \ref{thirdfacialhomeomorphism}.
\end{lem}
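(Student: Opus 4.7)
The plan is to exploit the uniqueness clause of Proposition \ref{fnalvarC}: it will suffice to check that $\tilde{\phi}_{k}$ agrees with $\mathfrak{C}_{f}$ on inputs, which I will do via the explicit pointwise characterization given in the second bullet of that proposition rather than by verifying the defining commutative square directly. So the main task is bookkeeping: identify the eigenvalue data of a generic $(W,\gamma,\psi)\in\tilde{Z}_{k}$, feed it through $f$, and recognize the output as $\tilde{\phi}_{k}'(W,\gamma,\psi)$.

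First I would fix $(W,\gamma,\psi)$ and pick the orthonormal bases as in the bullet: $v_{0},\ldots,v_{d_{0}-k-1}$ for $W^{\bot}$ diagonalizing $\psi$ with eigenvalues $e_{0}\leqslant\ldots\leqslant e_{d_{0}-k-1}$, and $v_{d_{0}-k},\ldots,v_{d_{0}-1}$ for $W$ diagonalizing $\gamma^{\dagger}\gamma$ with eigenvalues $e_{d_{0}-k}^{2}\leqslant\ldots\leqslant e_{d_{0}-1}^{2}$. Writing $s=(e_{0},\ldots,e_{d_{0}-k-1})\in D(d_{0}-k)$ and $t=(e_{d_{0}-k},\ldots,e_{d_{0}-1})\in D_{+}(k)$, one has $s_{top}=e_{d_{0}-k-1}=e_{top}(\psi)$, so $f(s,t)=(e_{0},\ldots,e_{d_{0}-k-1},\,e_{top}(\psi)+e_{d_{0}-k},\ldots,e_{top}(\psi)+e_{d_{0}-1})$; note these coordinates are nondecreasing since each $e_{j}\geqslant 0$ for $j\geqslant d_{0}-k$, so this really does lie in $D(d_{0})$.

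Now apply the pointwise formula for $\mathfrak{C}_{f}$: the output $(\alpha,\theta)$ has $\alpha$ with eigenvectors $v_{i}$ and eigenvalues $f(e)_{i}$. Restricted to $W^{\bot}$ this reproduces $\psi$, and restricted to $W$ the eigenvalues $e_{top}(\psi)+e_{j}$ with eigenvectors $v_{j}$ exhibit $\alpha|_{W}=\rho(\gamma)+e_{top}(\psi)$, since the $v_{j}$ (for $j\geqslant d_{0}-k$) are exactly the eigenvectors of $\rho(\gamma)=(\gamma^{\dagger}\gamma)^{1/2}$ with eigenvalues $e_{j}$. Hence $\alpha=\psi|_{W^{\bot}}\oplus (\rho(\gamma)+e_{top}(\psi))|_{W}$, which is the first component of $\tilde{\phi}_{k}'(W,\gamma,\psi)$. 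The second bullet of Proposition \ref{fnalvarC} explicitly identifies the isometry component as $-\sigma(\gamma)$, so the second component also matches.

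The only point requiring care is the case where $\gamma$ is not injective, i.e.\ $e_{d_{0}-k}=0$. There the two eigenvalues $e_{d_{0}-k-1}$ and $e_{top}(\psi)+e_{d_{0}-k}$ coincide, so $P_{k}(\alpha)$ is strictly smaller than $W$: a short calculation shows $P_{k}(\alpha)=(\ker\gamma)^{\bot}$, which is precisely the domain of $-\sigma(\gamma)$ as constructed in Proposition \ref{themapsigma}. Thus the pairing $(\alpha,-\sigma(\gamma))$ is a well-defined element of $\tilde{X}_{k}'$ in both cases, and in both cases it agrees with $\tilde{\phi}_{k}'(W,\gamma,\psi)$. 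This verifies agreement on $\tilde{Z}_{k}$, and passing to one-point compactifications (which is legitimate since both maps are already known to be continuous and proper by Propositions \ref{phikworks} and \ref{fnalvarC}) gives $\tilde{\phi}_{k}=\mathfrak{C}_{f}$. The hardest part is really just keeping the index conventions straight across the splitting $D(d_{0}-k)\wedge D_{+}(k)\cong D(d_{0})$; once that is pinned down the identification is forced.
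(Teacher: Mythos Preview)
Your proposal is correct and is essentially a careful unpacking of what the paper deems immediate: the paper offers no argument beyond declaring the identity ``simple to observe,'' and your verification via the pointwise characterization in Proposition~\ref{fnalvarC} is exactly the intended route. The extra care you take with the non-injective case (checking $P_{k}(\alpha)=(\ker\gamma)^{\bot}$ so that the domain of $-\sigma(\gamma)$ matches) is a welcome sanity check that the paper silently assumes.
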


Further, we can extend our functional calculus to the below result. The proof is clear from the proof of \ref{fnalvarC}:

\begin{cor}\label{extendingCtoD} Let $g':D(d_{0}-k)\wedge D_{+}(k)/D_{0}(k)\to D(d_{0})/F_{d_{0}-k-1}(D(d_{0}))$ be facial. Recalling $\mathcal{I}_{k}'$ from \ref{thetaEalphaextended} we can restrict $p'$ and $q'$ to the below maps:
$$p':\mathcal{L}(\Complex^{d_{0}},V_{0})\times\mathcal{L}(\Complex^{k},V_{1})\times D'(d_{0}-k)\times D'_{+}(k)\backslash D_{0}'(k)\to \mathcal{I}_{k}'$$
$$q':\mathcal{L}(\Complex^{d_{0}},V_{0})\times\mathcal{L}(\Complex^{k},V_{1})\times (D'(d_{0})\backslash F_{d_{0}-k-1}(D'(d_{0})))\to \tilde{X}'_{k}\backslash Y'_{k}.$$

There is a unique continuous map $\mathfrak{D}_{g'}$ making the below diagram commute:
$$\xymatrix{\mathcal{L}(\Complex^{d_{0}},V_{0})_{\infty}\wedge\mathcal{L}(\Complex^{k},V_{1})_{\infty}\wedge D(d_{0}-k)\wedge\frac{D_{+}(k)}{D_{0}(k)} \ar[d]_{1\wedge1\wedge g'}\ar[r]^{\phantom{xxxxxxxxxxxxxxxxxxxx}p}&\mathcal{I}_{k}\ar[d]^{\mathfrak{D}_{g'}}\\
\mathcal{L}(\Complex^{d_{0}},V_{0})_{\infty}\wedge\mathcal{L}(\Complex^{k},V_{1})_{\infty}\wedge
\frac{D(d_{0})}{F_{d_{0}-k-1}(D(d_{0}))}\ar[r]_{\phantom{xxxxxxxxxxxxxxxxxxxx}q}&\frac{\tilde{X}_{k}}{Y_{k}}}$$
\end{cor}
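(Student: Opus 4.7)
The plan is to mirror the proof of Proposition \ref{fnalvarC} verbatim, with all the real work going into checking that the four restricted maps land in the advertised subspaces. First I would verify that the restriction of $p'$ takes values in $\mathcal{I}_{k}'$: removing $D_{0}'(k)$ from the $D_{+}'(k)$ factor forces every entry of $t$ to be strictly positive, so $\Delta(t)$ is invertible and $-\mu\circ\Delta(t)\lambda^{-1}|_{\lambda(i(\Complex^k))}$ is injective into $V_{1}$, placing the output in $\inj(W,V_{1})\times s(W^{\bot})$ over $G_{k}(V_{0})$. For $q'$, removing $F_{d_{0}-k-1}(D'(d_{0}))$ from the domain forces $t'_{d_{0}-k-1}<t'_{d_{0}-k}$, and the eigenvalue-bookkeeping at the end of Section \ref{TheOtherTriangles} then guarantees that $\dim P_{k}(\lambda\Delta(t')\lambda^{-1})=k$, i.e.\ the image lands in $\tilde{X}_{k}'\setminus Y_{k}'$. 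Properness and surjectivity of the two restrictions follow by exactly the same projection-onto-spectrum arguments used in Lemmas \ref{pworks} and \ref{qworks}, with compactness of the $\mathcal{L}$ factors supplying the missing bounds.

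Next I would construct $\mathfrak{D}_{g'}$ by demanding that it satisfy the stated square, and verify internal consistency exactly as in \ref{fnalvarC}. If $p(\lambda,\mu,s,t)=p(\lambda',\mu',s',t')$, then setting $\zeta:=\lambda^{-1}\lambda'$ and splitting into $\zeta_{0}\oplus\zeta_{1}$ gives $s=s'$, $t=t'$ and $\zeta$ in the centralizer of $\Delta(s\oplus t)$. The condition that $g'$ be facial in the sense of Definition \ref{extendingfacialdefinition} is precisely what ensures the centralizer of $\Delta(s\oplus t)$ is contained in the centralizer of $\Delta(g'(s,t))$, so $q(\lambda,\mu,g'(s,t))=q(\lambda',\mu',g'(s',t'))$ and the definition is consistent. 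Uniqueness is immediate from the surjectivity of $p$ on the restricted domain, and continuity follows from $p$ being a quotient map together with continuity of the composite $q\circ(1\wedge 1\wedge g')$, exactly as before.

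I expect the main obstacle to be the first step: pinning down the precise translation between the face $F_{d_{0}-k-1}(D(d_{0}))$ and the stratum $Y_{k}'$, and between $D_{0}(k)$ and the locus of non-injective homomorphisms. Once these two translations are nailed down, the restricted versions of $p$ and $q$ behave formally like their ambient counterparts and the rest of the proof is a mechanical restriction of Proposition \ref{fnalvarC}. No continuous-selection or homotopy-theoretic input beyond what was already used for $\mathfrak{C}_{g}$ is needed.
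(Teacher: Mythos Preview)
Your proposal is correct and matches the paper's approach exactly: the paper simply states that the proof is clear from the proof of Proposition~\ref{fnalvarC}, and what you have written is precisely the mechanical restriction of that argument, with the added verification that the two subspace conditions ($t_{0}>0$ versus injectivity, and $t'_{d_{0}-k-1}<t'_{d_{0}-k}$ versus $\dim P_{k}=k$) line up. Your version is in fact more detailed than the paper's one-line dismissal.
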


We also have the following two lemmas, the proofs are yet again easy to observe:

\begin{lem}\label{collapseinfnalcalc} Let $h:D(d_{0})\to D(d_{0})/F_{d_{0}-k-1}(D(d_{0}))$ be the standard collapse map. Then the below diagram commutes:
$$\xymatrix{\mathcal{L}(\Complex^{d_{0}},V_{0})_{\infty}\wedge\mathcal{L}(\Complex^{k},V_{1})_{\infty}\wedge D(d_{0})\ar[r]^{\phantom{xxxxxxxxxxxx}q}\ar[d]_{1\wedge h}&\tilde{X}_{k}\ar[d]^{c_{k}}\\
\mathcal{L}(\Complex^{d_{0}},V_{0})_{\infty}\wedge\mathcal{L}(\Complex^{k},V_{1})_{\infty}\wedge \frac{D(d_{0})}{F_{d_{0}-k-1}(D(d_{0}))}\ar[r]_{\phantom{xxxxxxxxxxxxxxxxx}q}&\frac{\tilde{X}_{k}}{Y_{k}}
}$$
\end{lem}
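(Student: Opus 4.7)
The plan is to reduce commutativity to a single set-theoretic claim: that $q$ carries $\mathcal{L}(\Complex^{d_{0}},V_{0})_{\infty}\wedge\mathcal{L}(\Complex^{k},V_{1})_{\infty}\wedge F_{d_{0}-k-1}(D(d_{0}))$ into $Y_{k}$. Both vertical arrows in the square are quotient maps collapsing prescribed subspaces to the basepoint (on the left we collapse the smash factor $F_{d_{0}-k-1}(D(d_{0}))$, on the right we collapse $Y_{k}\subseteq\tilde{X}_{k}$), and $q$ is surjective after one-point compactification by Lemma \ref{qworks}. So once the inclusion $q(\ldots \wedge F_{d_{0}-k-1}(D(d_{0})))\subseteq Y_{k}$ is established, there is a unique filler making the square commute, and by the universal property of the left-hand quotient this filler has to be $c_{k}\circ q$.

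First I would unfold the definition of $q'$ from \ref{themapq}. For $(\lambda,\mu,t)$ in the uncompactified product set $\alpha:=\lambda\Delta(t)\lambda^{-1}$; since $\lambda$ is unitary this is selfadjoint with ordered eigenvalues precisely $t_{0}\leqslant\ldots\leqslant t_{d_{0}-1}$, and the eigenspace decomposition is obtained by applying $\lambda$ to the standard coordinate decomposition of $\Complex^{d_{0}}$. In particular $e_{j}(\alpha)=t_{j}$ for every $j$.

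Next I would verify the key containment. Suppose $t\in F_{d_{0}-k-1}(D'(d_{0}))$, i.e.\ $t_{d_{0}-k-1}=t_{d_{0}-k}$. Then $e_{d_{0}-k-1}(\alpha)=e_{d_{0}-k}(\alpha)$, so the eigenspace $\Ker(\alpha-e_{d_{0}-k-1}(\alpha))$ has dimension at least two, containing both the $(d_{0}-k-1)$-th and $(d_{0}-k)$-th ordered eigenvectors. Grouping the ordered eigenvalues $e_{0},\ldots,e_{d_{0}-k-1}$ into their distinct values and using that eigenspaces of a selfadjoint operator for distinct eigenvalues are orthogonal, the sum $\sum_{j+k<d_{0}}\Ker(\alpha-e_{j}(\alpha))$ equals the direct sum of those eigenspaces for the distinct values appearing, which then has dimension strictly greater than $d_{0}-k$. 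Taking orthogonal complements per Definition \ref{pkdefn} gives $\Dim P_{k}(\alpha)<k$, so $q'(\lambda,\mu,t)\in Y'_{k}$.

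Finally I would close the argument. On compactifications this inclusion extends to $q(\mathcal{L}\wedge\mathcal{L}\wedge F_{d_{0}-k-1}(D(d_{0})))\subseteq Y_{k}$ (the added basepoint on the left is sent to the basepoint of $\tilde{X}_{k}$, which lies in $Y_{k}$). Hence $c_{k}\circ q$ factors through the quotient $1\wedge h$, and the resulting factorization is exactly the lower path $q\circ(1\wedge h)$: on points of the form $(\lambda,\mu,[t])$ with $t\notin F_{d_{0}-k-1}(D'(d_{0}))$ the collapse $h$ acts trivially, so both compositions send $(\lambda,\mu,t)$ to the class of $q'(\lambda,\mu,t)$ in $\tilde{X}_{k}/Y_{k}$. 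There is no real obstacle here; the only substantive content is the dimension count tying the face $F_{d_{0}-k-1}$ to the drop in $\Dim P_{k}$, and everything else is a direct chase through the definitions of $q$, $c_{k}$, and $h$.
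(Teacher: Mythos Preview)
Your argument is correct and is exactly the verification the paper has in mind; the paper omits any proof here, saying only that the lemma is ``easy to observe''. The sole substantive point is the one you isolate: if $t\in F_{d_{0}-k-1}(D'(d_{0}))$ then $e_{d_{0}-k-1}(\alpha)=e_{d_{0}-k}(\alpha)$ for $\alpha=\lambda\Delta(t)\lambda^{-1}$, forcing $\Dim P_{k}(\alpha)<k$ and hence $q'(\lambda,\mu,t)\in Y'_{k}$, after which commutativity is a formality about quotient maps.
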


\begin{lem}\label{theothercollapseinfnalcalc} Let $h':D(d_{0}-k)\wedge D_{+}(k)\to D(d_{0}-k) \wedge (D_{+}(k)/D_{0}(k))$ be the standard collapse map. Then the below diagram commutes:
$$\xymatrix{\mathcal{L}(\Complex^{d_{0}},V_{0})_{\infty}\wedge\mathcal{L}(\Complex^{k},V_{1})_{\infty}\wedge D(d_{0}-k)\wedge D_{+}(k)
\ar[d]_{1\wedge 1\wedge h'}\ar[r]^{\phantom{xxxxxxxxxxxx}p}&G_{k}(V_{0})^{\Hom(T,V_{1})\oplus s(T^{\bot})}\ar[d]^{p_{k}}\\
\mathcal{L}(\Complex^{d_{0}},V_{0})_{\infty}\wedge\mathcal{L}(\Complex^{k},V_{1})_{\infty}\wedge
D(d_{0}-k)\wedge
\frac{D_{+}(k)}{D_{0}(k)}\ar[r]_{\phantom{xxxxxxxxxxxx}p}&\mathcal{I}_{k}}$$
\end{lem}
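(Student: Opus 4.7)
The plan is to verify commutativity of the square by pointwise comparison, exploiting that both vertical maps are collapses whose collapsing conditions are readily comparable. Tracing definitions, on an element $(\lambda,\mu,s,t)$ avoiding all factor basepoints, the top-right composite $p_{k}\circ p$ produces the triple $(W,\gamma,\psi)$ with $W=\lambda(i(\Complex^{k}))$, $\gamma=-\mu\circ\Delta(t)\lambda^{-1}|_{W}$, and $\psi=\lambda\Delta(s)\lambda^{-1}|_{W^{\bot}}$, then leaves this fixed when $\gamma$ is injective and sends it to the basepoint of $\mathcal{I}_{k}$ otherwise. The bottom-left composite $p\circ(1\wedge 1\wedge h')$ collapses $(s,t)$ to the basepoint exactly when $t\in D_{0}(k)$, and otherwise applies the restricted $p$ from Corollary \ref{extendingCtoD} to produce the same triple $(W,\gamma,\psi)$.

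The key observation is thus the equivalence: $\gamma$ is non-injective if and only if $t\in D_{0}(k)$. Since $\lambda^{-1}|_{W}\colon W\to i(\Complex^{k})$ is an isomorphism and $\mu$ is isometric (hence injective), injectivity of $\gamma$ reduces to injectivity of $\Delta(t)\colon\Complex^{k}\to\Complex^{k}$. The diagonal matrix $\Delta(t)$ has non-negative entries $t_{0}\leqslant t_{1}\leqslant\cdots\leqslant t_{k-1}$, so it is non-injective if and only if $t_{0}=0$, i.e., if and only if $t\in D_{0}(k)$. Consequently both composites agree on non-basepoint inputs: they produce the same triple in $\mathcal{I}_{k}$ when $t\notin D_{0}(k)$, and both land at the basepoint when $t\in D_{0}(k)$.

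Basepoint conditions coming from the other smash factors are handled symmetrically by the two composites, since $p$ and $p_{k}$ were built on one-point compactifications and preserve basepoints by construction, so these cases add nothing new. The pointwise verification above then yields the desired equality of continuous maps, continuity itself being automatic from the corresponding continuity of $p$, $p_{k}$ and $h'$ established earlier. There is no real obstacle; the whole content is the algebraic identification of the two collapse criteria, which is why the author declares the proof to be easy to observe.
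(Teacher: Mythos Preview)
Your proposal is correct and constitutes exactly the pointwise verification the paper alludes to when it declares the result ``easy to observe'' without further argument. The identification of the two collapse criteria via the equivalence $\gamma$ non-injective $\Leftrightarrow t_{0}=0 \Leftrightarrow t\in D_{0}(k)$ is precisely the content, and you have traced the definitions accurately.
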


We need one more piece of technology before we can phrase our work in functional calculus, the below lemma. Again, the
proof is clear:

\begin{lem}\label{comparingCandD} Let $g$ and $g'$ be such that $h\circ g=g'\circ h'$. Then $\mathfrak{D}_{g'}\circ p_{k}=c_{k}\circ \mathfrak{C}_{g}$.
\end{lem}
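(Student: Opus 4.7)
The plan is to verify the identity after precomposing with the quotient map $p$; since $p$ is a surjection (indeed a quotient map, by Lemma \ref{pworks}), this will suffice to conclude the equality of maps on the source $G_{k}(V_{0})^{\Hom(T,V_{1})\oplus s(T^{\bot})}$.

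First I would chase the left-hand side $c_{k}\circ\mathfrak{C}_{g}\circ p$. By the defining square of $\mathfrak{C}_{g}$ in Proposition \ref{fnalvarC}, we have $\mathfrak{C}_{g}\circ p = q\circ (1\wedge 1\wedge g)$. Applying $c_{k}$ and then invoking Lemma \ref{collapseinfnalcalc} (which gives $c_{k}\circ q = q\circ (1\wedge h)$, up to the obvious suppression of the $\mathcal{L}$-factors) produces
$$c_{k}\circ \mathfrak{C}_{g}\circ p \;=\; q\circ (1\wedge 1\wedge (h\circ g)),$$
where on the right $q$ denotes the restricted version landing in $\tilde{X}_{k}/Y_{k}$ appearing in Corollary \ref{extendingCtoD}.

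Next I would chase the right-hand side $\mathfrak{D}_{g'}\circ p_{k}\circ p$. By Lemma \ref{theothercollapseinfnalcalc} we have $p_{k}\circ p = p\circ (1\wedge 1\wedge h')$, with $p$ on the right now being the restricted map into $\mathcal{I}_{k}$. The defining square of $\mathfrak{D}_{g'}$ in Corollary \ref{extendingCtoD} then gives $\mathfrak{D}_{g'}\circ p = q\circ (1\wedge 1\wedge g')$, so
$$\mathfrak{D}_{g'}\circ p_{k}\circ p \;=\; q\circ (1\wedge 1\wedge (g'\circ h')).$$

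The hypothesis $h\circ g = g'\circ h'$ now makes the two expressions identical, so $c_{k}\circ \mathfrak{C}_{g}\circ p = \mathfrak{D}_{g'}\circ p_{k}\circ p$. Since $p$ is a quotient map onto $G_{k}(V_{0})^{\Hom(T,V_{1})\oplus s(T^{\bot})}$, this equality descends to the desired identity $c_{k}\circ\mathfrak{C}_{g} = \mathfrak{D}_{g'}\circ p_{k}$. There is no real obstacle here; the only thing to be slightly careful about is bookkeeping the two senses in which $p$ and $q$ appear (unrestricted in the $\mathfrak{C}$-square, restricted to the non-$Y_{k}$/non-face parts in the $\mathfrak{D}$-square), but the two collapse lemmas are precisely what allow us to slide between them.
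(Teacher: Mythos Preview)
Your argument is correct and is precisely the diagram chase the paper has in mind; the paper simply declares the proof ``clear'' without writing it out, and what you have written is the natural unpacking of that claim using the defining squares for $\mathfrak{C}_{g}$ and $\mathfrak{D}_{g'}$ together with the two collapse lemmas and the surjectivity of $p$.
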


This allows us to switch to working in the functional calculus if the map $\mathfrak{r}_{k}$ comes from $\mathfrak{D}_{g'}$ for some $g'$.

\begin{lem}\label{RcircKisfnalcalc} Let $g':D(d_{0}-k)\wedge D_{+}(k)/D_{0}(k)\to D(d_{0})/F_{d_{0}-k-1}(D(d_{0}))$ be given by the homeomorphism below:
$$(s,t)\mapsto (\log(t_{0})-\Exp(-s),\log(t)).$$
Then $\mathfrak{D}_{g'}= \mathfrak{r}_{k}$.
\end{lem}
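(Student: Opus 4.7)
The approach I take is to invoke the uniqueness clause of Corollary \ref{extendingCtoD}: $\mathfrak{D}_{g'}$ is the unique continuous map making the defining square commute, so it suffices to verify that $\mathfrak{r}_{k}$ satisfies the same commutativity condition, namely $\mathfrak{r}_{k} \circ p = q \circ (1 \wedge 1 \wedge g')$ after the appropriate restrictions of $p$ and $q$. Since $p, q$ are proper extensions of unbased maps $p', q'$ (Lemmas \ref{pworks}, \ref{qworks}) and $\mathfrak{r}_{k}$ similarly extends its unbased counterpart from Proposition \ref{thetaEalphaextended}, this identity reduces to a pointwise verification on the unbased source.

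So I would evaluate both composites at a generic point $(\lambda, \mu, s, t)$ in $\mathcal{L}(\Complex^{d_{0}}, V_{0}) \times \mathcal{L}(\Complex^{k}, V_{1}) \times D'(d_{0}-k) \times (D'_{+}(k) \backslash D'_{0}(k))$, noting that $t_{0} > 0$ throughout. Applying $p'$ yields $(W, \gamma, \psi) = (\lambda(i(\Complex^{k})), -\mu \circ \Delta(t) \lambda^{-1}|_{W}, \lambda \Delta(s) \lambda^{-1}|_{W^{\bot}})$. Because $\lambda$ and $\mu$ are isometries and $\Delta(t), \Delta(s)$ are real and diagonal, an elementary functional-calculus computation gives $\rho(\gamma) = \lambda \Delta(t) \lambda^{-1}|_{W}$, $\Exp(-\psi) = \lambda \Delta(\Exp(-s)) \lambda^{-1}|_{W^{\bot}}$, and $e_{0}(\rho(\gamma)) = t_{0}$. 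Substituting into the formula for $\mathfrak{r}_{k}'$, the first factor of $\mathfrak{r}_{k}'(p'(\lambda, \mu, s, t))$ becomes the selfadjoint endomorphism $\lambda \Delta(\log(t_{0}) - \Exp(-s), \log(t)) \lambda^{-1}$, which is precisely the first factor of $q'(\lambda, \mu, g'(s,t))$ by direct inspection of the definition of $q'$.

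The main obstacle is matching the second factors. Since $t_{0} > 0$, the map $\gamma$ is injective on $W$, so $\rho(\gamma)$ is genuinely invertible with $\rho(\gamma)^{-1} = \lambda \Delta(t)^{-1} \lambda^{-1}|_{W}$; composing, $\sigma(\gamma)$ simplifies to an explicit expression in $\mu, \lambda^{-1}|_{W}$, and hence so does $-\sigma(\gamma)$. To compare this with $q'$'s prescription $-\mu \circ \lambda^{-1}|_{P_{k}}$, I must identify $P_{k}$ of the first factor: because the $W$-eigenvalues $\log(t_{j})$ are each at least $\log(t_{0})$, which is strictly greater than every $W^{\bot}$-eigenvalue $\log(t_{0}) - \Exp(-s_{i})$, the top $k$ eigenspaces coincide with $W$, so $P_{k} = W$. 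Careful sign-tracking then yields equality of the second factors. Continuity is automatic from Proposition \ref{thetaEalphaextended}, so once pointwise equality is established the uniqueness clause of Corollary \ref{extendingCtoD} forces $\mathfrak{D}_{g'} = \mathfrak{r}_{k}$, completing the proof.
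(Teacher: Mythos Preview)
Your approach is correct and is exactly the paper's: the paper's proof is the single sentence ``This follows immediately from the uniqueness of $\mathfrak{D}_{g'}$ and the below commutative diagram,'' displaying the square with $\mathfrak{r}_{k}$ in place of $\mathfrak{D}_{g'}$. You simply carry out in more detail the pointwise check that the square commutes, which the paper leaves implicit.
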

\begin{proof} This follows immediately from the uniqueness of $\mathfrak{D}_{g'}$ and the below commutative diagram:
$$\xymatrix{\mathcal{L}(\Complex^{d_{0}},V_{0})_{\infty}\wedge\mathcal{L}(\Complex^{k},V_{1})_{\infty}\wedge D(d_{0}-k)\wedge \frac{ D_{+}(k)}{D_{0}(k)}\ar[d]_{1\wedge1\wedge g'}\ar[r]^{\phantom{xxxxxxxxxxxxxxxxxxx}p}&\mathcal{I}_{k}\ar[d]^{\mathfrak{r}_{k}}\\
\mathcal{L}(\Complex^{d_{0}},V_{0})_{\infty}\wedge\mathcal{L}(\Complex^{k},V_{1})_{\infty}\wedge
\frac{D(d_{0})}{F_{d_{0}-k-1}(D(d_{0}))}\ar[r]_{\phantom{xxxxxxxxxxxxxxxxxx}q}&\frac{\tilde{X}_{k}}{Y_{k}}}$$
\end{proof}

This thus reduces the problem. We have the below diagram, recalling the maps from the discussion above:
$$\xymatrix{&D(d_{0})\ar[d]^{h}\\
D(d_{0}-k)\wedge D_{+}(k)\ar[r]_{g'\circ h'}\ar[ur]^{f}&
\frac{D(d_{0})}{F_{d_{0}-k-1}(D(d_{0}))}}
$$
If we can show this diagram commutes up to facial homotopy then by the fact that facial homotopies pass through the functional calculus and by the machinery set up above we will have the map $\mathfrak{D}_{g'}\circ p_{k}\cong \mathfrak{r}_{k}\circ p_{k}$ lifting to the map $\mathfrak{C}_{f}\cong\tilde{\phi}_{k}$; moreover it is clear that the restrictions and homotopies arising from this method will be equivariant. We note that there is a map $\bar{f}:D(d_{0}-k)\wedge D_{+}(k)/D_{0}(k)\to D(d_{0})/F_{d_{0}-k-1}(D(d_{0}))$ given by the same formulation as $f$ which makes the below diagram strictly commute:
$$\xymatrix{D(d_{0}-k)\wedge D_{+}(k)\ar[d]_{h'}\ar[r]^{f}&D(d_{0})\ar[d]^{h}\\
D(d_{0}-k)\wedge
\frac{D_{+}(k)}{D_{0}(k)}\ar[r]_{\bar{f}}&\frac{D(d_{0})}{F_{d_{0}-k-1}(D(d_{0}))}}$$
Thus it follows that if $g'$ and $\bar{f}$ are homotopic through facial maps, then $\tilde{\phi}_{k}$ is good for our candidate lift.

We wish to study the facial homotopy type of the below space of maps:
$$\FMap (D(d_{0}-k)\wedge D_{+}(k)/D_{0}(k),D(d_{0})/F_{d_{0}-k-1}(D(d_{0}))).$$

We firstly recall that $f$ from \ref{thirdfacialhomeomorphism} is a facial homeomorphism which moreover factors to $\bar{f}$ above, which is also a facial homeomorphism. Hence we can use $f$ to rephrase the problem yet again, to looking at the facial homotopy type of self-maps of $D(d_{0}-k)\wedge D_{+}(k)/D_{0}(k)$. Next, recall the first face-preserving homeomorphism from Lemma \ref{wereactuallyworkingwithd}. Applying this allows us to actually consider the facial homotopy type of self-maps of $D(d_{0}-k)\wedge D(k)$. This, however, we can classify using the theory of the facial homotopy type of self-maps of $D(d)$ as detailed through $\S$\ref{TheHomotopyTypeofCertainMapsinFunctionalCalculus}. The proof of the below claim follows from this previous work and by noting that facial maps $a:D(d_{0}-k)\wedge D(k)\to D(d_{0}-k)\wedge D(k)$ will by necessity of the facial structure come from facial maps $a_{0}:D(d_{0}-k)\to D(d_{0}-k)$ and $a_{1}:D(k)\to D(k)$:

\begin{prop}\label{facialhomotopytypeofC} Let $a,b:D(d_{0}-k)\wedge D(k)\to D(d_{0}-k)\wedge D(k)$ be facial. Then there is a copy of $S^{2}$ embedded in $D(d_{0}-k)\wedge D(k)$ arising from the copies of $S^{1}$ that lie in $D(d_{0}-k)$ and $D(k)$ as the intersections of all faces. Assume that the self-maps of $S^{2}$ coming from $a$ and $b$ have the same degree. Then $a\simeq b$ through facial maps.
\end{prop}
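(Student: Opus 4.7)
The plan is to mimic the inductive strategy of Lemma \ref{facialhomotopiesinduction} in the smash product setting $D(d_{0}-k)\wedge D(k)$. First make the embedded $S^{2}$ explicit: by Lemma \ref{itsalljustspheresandballs} and Remark \ref{remarksonintersectionsoffaces}, the minimal face intersection $\bar{B}_{\emptyset}$ inside each factor is homeomorphic to $S^{1}$, so the smash $\bar{B}_{\emptyset}\wedge\bar{B}_{\emptyset}\cong S^{2}$ sits inside $D(d_{0}-k)\wedge D(k)$. Facial self-maps $a,b$ preserve every face intersection of each factor and so restrict to self-maps of this $S^{2}$; the degree hypothesis on this restriction is the base case of the induction.

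Next, introduce a smash-product analog of the skeleta $\bar{B}[k]$ from Definition \ref{bk}: let $\bar{B}[m]$ denote the union of all $\bar{B}_{\sigma}\wedge\bar{B}_{\tau}$ with $|\sigma|+|\tau|\leqslant m$, where $\sigma\subseteq\{0,\ldots,d_{0}-k-2\}$ and $\tau\subseteq\{0,\ldots,k-2\}$. Then $\bar{B}[0]=S^{2}$ and $\bar{B}[d_{0}-2]=D(d_{0}-k)\wedge D(k)$. On the base, the equal-degree assumption supplies a homotopy $a|_{S^{2}}\simeq b|_{S^{2}}$ via $[S^{2},S^{2}]=\mathbb{Z}$; this respects the (trivial) facial structure at the $\bar{B}[0]$ level.

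The inductive step, mirroring Lemma \ref{facialhomotopiesinduction}, extends a facial homotopy from $\bar{B}[m]$ to $\bar{B}[m+1]$ piece by piece. For each new stratum $\bar{B}_{\sigma}\wedge\bar{B}_{\tau}$ with $|\sigma|+|\tau|=m+1$, its cellular boundary lies in $\bar{B}[m]$ where the homotopy is already defined, and one extends across the new piece using Corollary \ref{extendingballshomotopies}. Once this is done for each such piece the extensions agree on lower strata and can be patched to a single homotopy on $\bar{B}[m+1]$, completing the induction.

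The main obstacle I expect is verifying that each $\bar{B}_{\sigma}\wedge\bar{B}_{\tau}$ at level $m+1$ really has the cell shape required by the extension lemma, with the right identification of its boundary in $\bar{B}[m]$. When both factor index sets are nonempty, Remark \ref{remarksonintersectionsoffaces} gives two based balls and their smash is again a based cell whose cellular boundary sits in strictly lower strata. When exactly one is empty the smash is $S^{1}$ against a based ball, which one handles via a suspension variant of \ref{extendingballshomotopies} after identifying $S^{1}\wedge B^{n}$ with a ball of the appropriate dimension relative to boundary. The alternative decomposition approach hinted in the excerpt, writing facial self-maps as smash products $a_{0}\wedge a_{1}$ and applying Proposition \ref{facialhomotopiesviadegrees} to each factor, offers a potentially shorter route, but it requires a supplementary argument that the $S^{2}$-degree $\deg(a_{0})\cdot\deg(a_{1})$ can be moved freely between the factors up to facial homotopy; this seems no easier than the direct induction, so the plan above is my preferred route.
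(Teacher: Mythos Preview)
Your inductive approach is sound and takes a different route from the paper. The paper uses precisely the ``alternative'' you mention at the end: it asserts (in one sentence, without further argument) that any facial self-map of $D(d_{0}-k)\wedge D(k)$ must decompose as $a_{0}\wedge a_{1}$ for facial self-maps of the factors, and then appeals to Proposition~\ref{facialhomotopiesviadegrees} on each factor separately.

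Your concern about that route is well-founded. Even granting the decomposition, equality of the $S^{2}$-degree $\deg(a_{0})\deg(a_{1})=\deg(b_{0})\deg(b_{1})$ does not force the individual $S^{1}$-degrees to match, so a further argument is needed before Proposition~\ref{facialhomotopiesviadegrees} can be invoked factorwise. The paper does not supply one; in its sole application (Proposition~\ref{matchingdegreesofC}) both maps have $S^{2}$-degree~$1$ and the factor degrees can be read off directly, so the gap does no harm there, but as a general statement the paper's sketch is incomplete in exactly the way you suspect. What the paper's approach would buy, were those details filled in, is brevity: the work is offloaded to the already-proved Proposition~\ref{facialhomotopiesviadegrees}.

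Your direct induction sidesteps both the decomposition claim and the degree-splitting issue, at the cost of rerunning the extension argument in the smash-product setting. The one technical point you flag---strata with an $S^{1}$ factor---dissolves once you observe that every stratum $\bar{B}_{\sigma}\wedge\bar{B}_{\tau}$ above level~$0$ has at least one ball factor and is therefore contractible; extending a partially defined homotopy over a cofibration into a contractible target is automatic, so you do not even need the precise ball-and-boundary identification of Corollary~\ref{extendingballshomotopies}. With this remark the inductive step is uniform across all strata.
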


We thus have a homotopical classification in this case. Further, we can pull this classification back through the homeomorphisms to $\FMap (D(d_{0}-k)\wedge D_{+}(k)/D_{0}(k),D(d_{0})/F_{d_{0}-k-1}(D(d_{0})))$ and thus we
have a criterion for $g'$ and $\bar{f}$ to be homotopic - equality in the degree of each induced map $S^{2}\to S^{2}$ gleaned by considering what happens to the intersection of faces, i.e. points of the form $((s,...,s), (t,...,t))$.

\begin{prop}\label{matchingdegreesofC} The maps $g'$ and $\bar{f}$ are both of degree $1$.
\end{prop}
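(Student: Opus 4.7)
The plan is to reduce the degree computation to an explicit check of the self-maps of $S^2$ induced by $g'$ and $\bar{f}$, then verify each equals $+1$ using the criterion from Proposition \ref{facialhomotopytypeofC}.

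First I would set up compatible parameterizations of the two copies of $S^2$. The $S^2$ inside $D(d_0-k)\wedge D_+(k)/D_0(k)$ is the smash of the two face-intersection circles, naturally parameterized by $(s,t)\in\Real\times(0,\infty)$ via $((s,\ldots,s),(t,\ldots,t))$, with basepoint at the boundary. In the target $D(d_0)/F_{d_0-k-1}(D(d_0))$, I would parameterize the $S^2$ as the image under the facial homeomorphism $\bar{f}$, so that $(u,v)\in\Real\times(0,\infty)$ corresponds to the point $(u,\ldots,u,u+v,\ldots,u+v)\in D(d_0)$ (with $d_0-k$ copies of $u$ followed by $k$ copies of $u+v$). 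With this choice, the induced map $\bar{f}|_{S^2}$ reads in coordinates as the identity $(s,t)\mapsto(s,t)$, giving $\deg(\bar{f})=1$ immediately.

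For $g'$, evaluating the formula of Lemma \ref{RcircKisfnalcalc} on the diagonal yields $((s,\ldots,s),(t,\ldots,t))\mapsto(\log(t)-e^{-s},\ldots,\log(t)-e^{-s},\log(t),\ldots,\log(t))$, which under the target parameterization corresponds to $(u,v)=(\log(t)-e^{-s},\,e^{-s})$. To conclude this has degree $+1$, I would observe that the smooth map $F\colon\Real\times(0,\infty)\to\Real\times(0,\infty)$ given by this formula is a proper homeomorphism with explicit inverse $(u,v)\mapsto(-\log v,\,e^{u+v})$, and compute its Jacobian determinant to be $e^{-s}/t$, strictly positive on the whole domain. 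Properness is checked by following the four limits $s\to\pm\infty$ and $t\to 0^+,\infty$ to the boundary of $\Real\times(0,\infty)$. Together these imply that $F$ extends to an orientation-preserving self-homeomorphism of the one-point compactification $S^2$, which therefore has degree $+1$.

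The main subtlety, and the only nonroutine step, is the bookkeeping in the first paragraph: identifying the target $S^2$ compatibly with the face-preserving homeomorphism $\bar{f}$ so that the degree criterion of Proposition \ref{facialhomotopytypeofC} is correctly applied, and confirming that proper extension to $S^2$ actually computes the required degree. Once the identifications are pinned down, both computations collapse to elementary observations, after which Proposition \ref{facialhomotopytypeofC} produces a facial homotopy from $g'$ to $\bar{f}$, and hence (by Lemma \ref{homotopiesthroughfnalcalc} together with the discussion preceding the proposition) the desired lift of $\mathfrak{r}_k\circ p_k$ by $\tilde{\phi}_k=\mathfrak{C}_f$.
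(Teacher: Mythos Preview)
Your proposal is correct and follows essentially the same approach as the paper: restrict to the diagonal $S^{2}$, compute the induced self-map explicitly, and verify degree $+1$ via the sign of the Jacobian. The only cosmetic differences are your choice to parameterize the target $S^{2}$ through $\bar{f}$ itself (making $\bar{f}|_{S^{2}}$ tautologically the identity) and your use of a global proper-homeomorphism argument rather than the paper's ``injective almost everywhere, hence degree $\pm 1$, then check sign'' phrasing; the paper instead pulls everything back to $D(d_{0}-k)\wedge D(k)$ and obtains the map $g''(s,t)=(t-e^{-s},-s)$ with Jacobian determinant $1$, which amounts to the same computation after a $\log$ change of coordinates in the second variable.
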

\begin{proof} First consider $g'$, we have the map $g'':S^{2}\to S^{2}$ given below:
$$g''(s,t):=(t-e^{-s},-s).$$
We note that $g''$ makes the below diagram strictly commute:
$$\xymatrix{S^{2}\ar[d]_{g''}\ar@{ >->}[r]&\frac{D_{+}(k)}{D_{0}(k)}\wedge D(d_{0}-k)\ar[d]^{g'}\\
S^{2}\ar@{ >->}[r]&\frac{D(d_{0})}{F_{d_{0}-k-1}(D(d_{0}))}}$$
Now, $g''$ is injective almost everywhere, so by a local degree argument, for example as in $2.2$ of \cite{Hatcher}, it can only be degree $1$ or $-1$. To check which it is, we look at the underlying map $\Real\times\Real\to \Real\times\Real$ given by $(s,t)\mapsto (t-e^{-s},-s)$. This map's derivative matrix is given as follows:
$$\left(\begin{array}{cc}e^{-s}& 1\\
-1 & 0\end{array}\right).$$

This has determinant $1$ and thus the underlying map is orientation preserving. This demonstrates that $g''$ has degree $1$ and thus that $g'$ is of degree $1$.

For $\bar{f}$, let $\bar{f}':S^{2}\to S^{2}$ be the identity map. The map $\bar{f}'$ makes the below diagram strictly commute:
$$\xymatrix{S^{2}\ar[d]_{\bar{f}'}\ar@{ >->}[r]&\frac{D_{+}(k)}{D_{0}(k)}\wedge D(d_{0}-k)\ar[d]^{\bar{f}}\\
S^{2}\ar@{ >->}[r]&\frac{D(d_{0})}{F_{d_{0}-k-1}(D(d_{0}))}}$$

As $\bar{f}'$ is the identity, it follows that it patently has degree $1$. Thus $\bar{f}$ has degree $1$ and the claim follows.
\end{proof}

This result makes $g'$ and $\bar{f}$ homotopic through facial maps via Proposition \ref{facialhomotopytypeofC}. Thus this proves the following proposition:

\begin{prop}\label{explicitlift} We have an explicit lift of $\mathfrak{r}_{k}\circ p_{k}$ given by $\mathfrak{C}_{f}$ for the map $f:D(d_{0}-k)\wedge D_{+}(k)\to D(d_{0})$ defined in \ref{thirdfacialhomeomorphism} which sends $(s,t)$ to
$(s,s_{top}+t)$. Moreover, this lift is actually the map $\tilde{\phi}_{k}$ defined in \ref{tildephik}. Hence the map $\phi_{k}$ completes the stable cofibre sequence as claimed.
\end{prop}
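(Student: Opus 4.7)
The plan is to assemble the various pieces of functional-calculus machinery already developed and verify that they identify $\tilde{\phi}_k$ with the lift produced by the homotopical construction. First I would record the strictly commutative square from the paragraph preceding the statement:
\[
\xymatrix{D(d_{0}-k)\wedge D_{+}(k)\ar[d]_{h'}\ar[r]^{f}&D(d_{0})\ar[d]^{h}\\
D(d_{0}-k)\wedge D_{+}(k)/D_{0}(k)\ar[r]_{\bar{f}}&D(d_{0})/F_{d_{0}-k-1}(D(d_{0}))}
\]
so that by Lemma \ref{comparingCandD} applied to the pair $(f,\bar{f})$ we get $\mathfrak{D}_{\bar{f}}\circ p_{k}=c_{k}\circ \mathfrak{C}_{f}$. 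Combining this with Lemma \ref{tildephikisfnalcalc}, which says $\tilde{\phi}_{k}=\mathfrak{C}_{f}$, already yields $\mathfrak{D}_{\bar{f}}\circ p_{k}=c_{k}\circ \tilde{\phi}_{k}$ on the nose.

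Next I would use Lemma \ref{RcircKisfnalcalc} to rewrite $\mathfrak{r}_{k}\circ p_{k}=\mathfrak{D}_{g'}\circ p_{k}$ and compare $\mathfrak{D}_{g'}$ with $\mathfrak{D}_{\bar{f}}$. Proposition \ref{matchingdegreesofC} shows that both $g'$ and $\bar{f}$ are of degree $1$ on the embedded $S^{2}$, so by Proposition \ref{facialhomotopytypeofC} (pulled back via the face-preserving homeomorphisms in Lemma \ref{wereactuallyworkingwithd}) they are homotopic through facial maps. Invoking the $\mathfrak{D}$-analogue of Lemma \ref{homotopiesthroughfnalcalc} — facial homotopies pass through the extended functional calculus because $\mathfrak{D}$ is continuous in its input map, exactly as shown for $\mathfrak{A}$, $\mathfrak{B}$, $\mathfrak{C}$ in Propositions \ref{fnalvarA}, \ref{fnalvarB}, \ref{fnalvarC} and Corollary \ref{extendingCtoD} — this produces a homotopy $\mathfrak{D}_{g'}\simeq \mathfrak{D}_{\bar{f}}$. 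Chaining gives
\[
\mathfrak{r}_{k}\circ p_{k}\;=\;\mathfrak{D}_{g'}\circ p_{k}\;\simeq\;\mathfrak{D}_{\bar{f}}\circ p_{k}\;=\;c_{k}\circ\tilde{\phi}_{k},
\]
which is precisely the identity needed to exhibit $\tilde{\phi}_{k}$ as a lift of $\mathfrak{r}_{k}\circ p_{k}$ in the square introduced at the start of $\S$\ref{BypassingtheQuotientbyaLift}. Equivariance is automatic because every construction in sight — $p'$, $q'$, $\mathfrak{C}$, $\mathfrak{D}$, and the facial homotopy produced by Proposition \ref{facialhomotopytypeofC} applied fibrewise — is $G$-equivariant (the $U(d_{0})\times U(k)$ actions on $\mathcal{L}(\mathbb{C}^{d_{0}},V_{0})\times \mathcal{L}(\mathbb{C}^{k},V_{1})$ absorb all coordinate choices).

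To finish, I would apply Proposition \ref{takingquotientswithcofibrations} (or equivalently Lemma \ref{quotientcofibresequences}) in the reverse direction: having shown $\tilde{\phi}_{k}$ lifts $\mathfrak{r}_{k}\circ p_{k}$ and knowing by Proposition \ref{chiprimeischi} that the lower row is a cofibre sequence, the diagram forces $\tilde{X}_{k}\overset{\tilde{\pi}_{k}}{\to}\tilde{X}_{k-1}\overset{\tilde{\delta}_{k}}{\to}\Sigma\tilde{X}_{k}/Y_{k}\cong G_{k}(V_{0})^{\mathbb{R}\oplus\Hom(T,V_{1})\oplus s(T^{\bot})}$ together with $\tilde{\phi}_{k}$ to fit into a cofibre triangle on the unstable level. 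Applying $\Sigma^{\infty}$ and smashing with $S^{-s(V_{0})}$, using the identification of Lemma \ref{destabilizinglemma}, then yields the desired stable cofibre triangle with $\phi_{k}:=\Sigma^{-s(V_{0})}\tilde{\phi}_{k}$.

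The main obstacle — and the reason the intermediate Propositions \ref{fnalvarC}, \ref{matchingdegreesofC} and Lemma \ref{comparingCandD} were set up — is really just bookkeeping: keeping the two distinct functional-calculus functors $\mathfrak{C}$ and $\mathfrak{D}$ related via the collapse maps $h$, $h'$, $c_{k}$, $p_{k}$ so that a homotopy on the level of facial maps of simplex-like spaces actually descends to the intended homotopy between continuous maps of Thom spaces. Once this compatibility is in place and the degree computation of Proposition \ref{matchingdegreesofC} is accepted, the proposition follows by pure diagram chasing.
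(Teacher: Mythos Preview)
Your proposal is correct and follows essentially the same route as the paper: the proposition is stated as the culmination of the preceding discussion, which assembles exactly the ingredients you list --- Lemma \ref{tildephikisfnalcalc} to identify $\tilde{\phi}_{k}$ with $\mathfrak{C}_{f}$, Lemma \ref{RcircKisfnalcalc} to identify $\mathfrak{r}_{k}$ with $\mathfrak{D}_{g'}$, the strictly commuting square relating $f$ and $\bar{f}$ via Lemma \ref{comparingCandD}, and then the degree computation of Proposition \ref{matchingdegreesofC} fed into the classification of Proposition \ref{facialhomotopytypeofC} to conclude $g'\simeq\bar{f}$ through facial maps. Your chaining $\mathfrak{r}_{k}\circ p_{k}=\mathfrak{D}_{g'}\circ p_{k}\simeq\mathfrak{D}_{\bar{f}}\circ p_{k}=c_{k}\circ\tilde{\phi}_{k}$ and the passage to the stable cofibre sequence are exactly what the paper intends.
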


\section{Explicit Null-Homotopies}\label{ExplicitNullHomotopies}

A combination of the results in Chapter \ref{ch:ch5} and in the previous section allows us to declare that the below diagram is a cofibre sequence for the stated maps $\tilde{\pi}_{k}$, $\tilde{\phi}_{k}$ and $\tilde{\delta}_{k}$:
$$\xymatrix{\tilde{X}_{k}\ar[d]_{\tilde{\pi}_{d_{0}}}&G_{k}(V_{0})^{\Hom(T,V_{1})\oplus s(T^{\bot})}\ar[l]\ar[l]_{\tilde{\phi}_{k}\phantom{xxxxx}}\\
\tilde{X}_{k-1}\ar[ur]|\bigcirc_{\phantom{x}\tilde{\delta}_{k}}&}
$$

We note, however, that the proof of this is somewhat roundabout - we prove that something else is a cofibre sequence which then implies the result. Thus we take an aside to explicitly demonstrate why certain key properties held by cofibre sequences apply in this case. In particular, cofibre sequences are easily observed to have null composites along the sequence. We choose to spend this section writing down the null-homotopies; we note here that all homotopies and maps we write down in this section will be $G$-maps as standard.

Firstly we consider the top triangle:
$$\xymatrix{\tilde{X}_{d_{0}}\ar[d]_{\tilde{\pi}_{d_{0}}}&S^{\aich}\ar[l]_{\tilde{\phi}_{d_{0}}\phantom{xxx}}\\
\tilde{X}_{d_{0}-1}\ar[ur]|\bigcirc_{\phantom{x}\tilde{\delta}_{d_{0}}}&}
$$
The composite $\Sigma\tilde{\phi}_{d_{0}}\circ\tilde{\delta}_{d_{0}}$ is easily seen to be strictly null; the map composes the collapse to injectives with the inclusion of non-injective maps and this is patently null. We now look at the other two composites, recalling the definitions of $\rho$, $\sigma$, and $\lambda_{k}$ from \ref{themaprho}, \ref{themapsigma} and \ref{lambdak}.

\begin{prop}\label{thefirsttopcomposite} $\tilde{\pi}_{d_{0}}\circ\tilde{\phi}_{d_{0}}$ is null-homotopic.
\end{prop}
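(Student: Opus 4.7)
The plan is to leverage Proposition \ref{thetoptriangleiswhatwethink}, which identifies the top triangle with the NDR-generated cofibre sequence
$$\inj(V_{0},V_{1})^{c}_{\infty}\overset{i}{\to}S^{\aich}\overset{p}{\to}\inj(V_{0},V_{1})_{\infty}\overset{e}{\to}\Sigma\inj(V_{0},V_{1})^{c}_{\infty}.$$
Under the homeomorphisms $\kappa$ and $\tau$, the composite $\tilde{\pi}_{d_{0}}\circ\tilde{\phi}_{d_{0}}$ corresponds to $e\circ p$ (after replacing $\chi$ by the homotopic map $e$ as in Proposition \ref{thedegreesarethesame}). Since $e\circ p$ is the composition of two consecutive maps in a cofibre sequence it is automatically null-homotopic, and this already suffices for the claim; the real content of the section, however, is to extract an \emph{explicit} $G$-equivariant null-homotopy from the NDR data.

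To produce the explicit homotopy, I would unwind the construction of $e$ from Proposition \ref{NDRCofibreseq}: writing $A:=\inj(V_{0},V_{1})^{c}_{\infty}$, $X:=S^{\aich}$ and $C_{i}:=C(A)\cup_{i} X$, we have $e = d\circ r$, where $r:X/A\to C_{i}$ is the map constructed from the NDR data and $d:C_{i}\to\Sigma A$ collapses the copy of $X$ inside the cone to the basepoint. The collapse $p:X\to X/A$ factors as $p=q\circ f$, where $f:X\hookrightarrow C_{i}$ is the canonical inclusion of $X$ into the mapping cone and $q:C_{i}\to X/A$ collapses $C(A)$. Therefore
$$e\circ p \;=\; d\circ r\circ q\circ f.$$
The key ingredient is the explicit homotopy $k:[0,1]\times C_{i}\to C_{i}$ built in the proof of Proposition \ref{NDRCofibreseq}, which satisfies $k_{0}=r\circ q$ and $k_{1}=\mathrm{id}_{C_{i}}$. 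Postcomposing with $d$ and precomposing with $f$ therefore yields a homotopy $d\circ k_{t}\circ f$ from $d\circ r\circ q\circ f = e\circ p$ at $t=0$ to $d\circ f$ at $t=1$. Since $d$ by definition collapses $X\subseteq C_{i}$ to the basepoint, $d\circ f$ is the constant map, so this is the desired null-homotopy of $e\circ p$.

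To return to the original composite, I would paste three homotopies together: the $G$-homotopy $\chi\simeq e$ supplied by Proposition \ref{thedegreesarethesame} (which arose from a facial homotopy through Lemma \ref{homotopiesthroughfnalcalc}, hence is equivariant), the isomorphism $\tau$ of Proposition \ref{sigmainjchomeo}, and the explicit homotopy $d\circ k_{t}\circ f$ above. Each construction is built from equivariant ingredients: $\kappa$, $\tau$, the functional-calculus extensions of Chapter \ref{ch:ch3}, and the NDR structure on $(S^{\aich},\inj(V_{0},V_{1})^{c}_{\infty})$ established in Proposition \ref{homNDRpair}, all of which preserve the conjugation $G$-action since the maps involved ($\rho,\sigma,\Exp,\log$, eigenvalue functions) are equivariant. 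There is no substantial obstacle here: the only delicate point is checking equivariance at each stage of the concatenation, but this follows from the equivariance already built into the relevant NDR pair, the functional-calculus constructions, and the homotopy $k$ of Proposition \ref{NDRCofibreseq}, which is assembled entirely from the equivariant data $(u,h)$.
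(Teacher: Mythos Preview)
Your argument is correct: the abstract cofibre-sequence structure already forces $\tilde{\pi}_{d_{0}}\circ\tilde{\phi}_{d_{0}}$ to be null, and your extraction of an explicit homotopy from the NDR data via the map $k$ of Proposition \ref{NDRCofibreseq} is sound. The factorisation $p=q\circ f$ and the identification $d\circ k_{0}\circ f=e\circ p$, $d\circ k_{1}\circ f=d\circ f=\ast$ are exactly right.

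The paper, however, takes a different and more direct route. Rather than routing through the NDR machinery (which, once unwound, would involve the conformal map $\phi$ of Proposition \ref{theconformalmapphi} and the piecewise-defined homotopy $\tilde{k}$), the paper writes the composite out explicitly as
$$\gamma\mapsto\bigl(\log(\rho(\gamma)),\,-\sigma(\gamma)|_{P_{d_{0}-1}(\log(\rho(\gamma)))}\bigr)$$
on injective $\gamma$, and then exhibits a concrete proper $G$-map
$$h':U\to\tilde{X}'_{d_{0}-1},\qquad (t,\gamma)\mapsto\bigl(\log(\rho(\gamma)+t),\,-\sigma(\gamma)|_{P_{d_{0}-1}(\log(\rho(\gamma)))}\bigr),$$
where $U=\{(t,\gamma)\in[0,\infty)\times\aich:t+e_{0}(\rho(\gamma))>0\}$. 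The one-point compactification of $h'$ is then the null-homotopy. The only work is checking well-definedness (that $P_{d_{0}-1}$ is unchanged by adding $t$) and properness, the latter via a short eigenvalue-bound argument. This buys a clean closed-form null-homotopy entirely in the functional-calculus language of the paper; your NDR-based homotopy, while equally valid and conceptually tidy, would be considerably messier once fully expanded. The paper's own introduction to this section explicitly concedes that the abstract argument you give already suffices, and frames the section as an exercise in producing such concrete formulae.
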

\begin{proof} This composition is the below map:
$$\gamma\mapsto
\left\{\begin{array}{ll}(\log(\rho(\gamma)),-\sigma(\gamma)|_{P_{d_{0}-1}(\log(\rho(\gamma)))}
)&\text{if $\gamma$ is
injective}\\
\infty&\text{otherwise.}\end{array}\right.$$
Define the map $h'$ and space $U$ as follows:
$$U:=\{(t,\gamma)\in [0,\infty)\times\aich:t+e_{0}(\gamma)>0\}$$
$$h':U\to \tilde{X}_{d_{0}-1}'$$
$$(t,\gamma)\mapsto(\log(\rho(\gamma)+t),-\sigma(\gamma)|_{P_{d_{0}-1}(\log(\rho(\gamma)))}).$$
We claim this is well-defined, continuous and proper. If this is true then the one-point compactification $h:U_{\infty}\to \tilde{X}_{d_{0}-1}$ will be our required null-homotopy. We only have to check that
$P_{d_{0}-1}(\log(\rho(\gamma)))=P_{d_{0}-1}(\log(\rho(\gamma)+t))$ to make the map well-defined but this is standard as adding $t$ changes the eigenvalues but not the eigenvectors and hence not the eigenspaces. For properness, let $C$ be a compact subset of $\tilde{X}_{d_{0}-1}$ and let $(t,\gamma)\in h'^{-1}\{C\}$. We need a positive bound on $t$, an upper bound on the eigenvalues of $\rho(\gamma)$ and a strictly positive lower bound on $t+e_{0}(\rho(\gamma))$ for this to be proper - everything else is standard to check. By assumption we have $\|\log(\rho(\gamma)+t)\|\leqslant R$ for some positive real number $R$. Let $e$ be an eigenvalue of $\rho(\gamma)$. Then the inequality gives us that $|\log(e+t)|\leqslant R$ and hence that $e^{-R}\leqslant e+t\leqslant e^{R}$. As $e$ and $t$ are positive this gives us that $e\leqslant e^{R}$ and $t\leqslant e^{R}$ and hence we have upper bounds as required. For the lower bound we have
$0<e^{-R}\leqslant e+t$ which is enough. Finally continuity is easy to see when $X_{d_{0}-1}$ is embedded into $\svo\times \aich$.
\end{proof}

\begin{prop}\label{thesecondtopcomposite} $\tilde{\delta}_{d_{0}}\circ\tilde{\pi}_{d_{0}}$ is null-homotopic.
\end{prop}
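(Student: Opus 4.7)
The plan is to follow the template of Proposition \ref{thefirsttopcomposite} and exhibit an explicit proper equivariant homotopy whose one-point compactification is the null-homotopy we want. First I would unwind the composition: tracing an element $(\alpha,\theta)\in \tilde{X}_{d_{0}}' = \svo\times\Ell$ through $\tilde{\pi}_{d_{0}}$, then through the homeomorphism $\tau$ of Proposition \ref{sigmainjchomeo}, and finally through the twist $-\Sigma i_{d_{0}}$ of Definition \ref{topdelta}, gives
$$\tilde{\delta}_{d_{0}}\circ\tilde{\pi}_{d_{0}}\colon (\alpha,\theta)\longmapsto \bigl(-e_{0}(\alpha),\ -\theta\circ (\alpha - e_{0}(\alpha))\bigr)\in S^{\Real\oplus\aich}.$$
The expression $\theta\circ(\alpha-e_{0}(\alpha))$ is unambiguous because the image of $\alpha - e_{0}(\alpha)$ lies in $P_{d_{0}-1}(\alpha)$, where the full $\theta$ and its restriction $\theta|_{P_{d_{0}-1}(\alpha)}$ agree.

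I then propose the candidate homotopy
$$h'\colon[0,\infty)\times\tilde{X}_{d_{0}}'\longrightarrow \Real\oplus\aich,\qquad h'(t,\alpha,\theta) := \bigl(-e_{0}(\alpha)-t,\ -\theta\circ(\alpha-e_{0}(\alpha)+t)\bigr).$$
At $t=0$ this recovers the composition, and as $t\to\infty$ the first coordinate forces the image to infinity in $S^{\Real\oplus\aich}$. Continuity is immediate from Lemma \ref{continuouseigenvalues} and Lemma \ref{fnalcalclemma}, and equivariance follows since $e_{0}$ is invariant under conjugation and the conjugation action commutes with composition.

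The hard part, as in Proposition \ref{thefirsttopcomposite}, is properness. Given a compact $C\subset\Real\oplus\aich$ bounded in norm by some $R$, and $(t,\alpha,\theta)\in(h')^{-1}(C)$: the isometry property of $\theta$ converts $\|\theta\circ(\alpha-e_{0}(\alpha)+t)\|\leq R$ into $\|\alpha-e_{0}(\alpha)+t\|\leq R$, and the operator $\alpha-e_{0}(\alpha)+t$ is weakly positive with top eigenvalue $e_{top}(\alpha)-e_{0}(\alpha)+t\geq t$; combined with $|-e_{0}(\alpha)-t|\leq R$, this forces $t\in[0,R]$ and all eigenvalues of $\alpha$ into $[-2R,2R]$. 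Lemma \ref{extendedheineborel} together with the compactness of $\Ell$ (Remark \ref{elliscompact}) then shows $(h')^{-1}(C)$ is a closed subset of a compact space, hence compact. So $h'$ extends to a continuous based $G$-map $h\colon ([0,\infty)\times\tilde{X}_{d_{0}}')_{\infty}\to S^{\Real\oplus\aich}$, and identifying the source with $I\wedge\tilde{X}_{d_{0}}$ (for $I$ the unit interval based at $1$, using the homeomorphism $[0,\infty)\cong[0,1)$) presents $h$ as the desired null-homotopy: it restricts to $\tilde{\delta}_{d_{0}}\circ\tilde{\pi}_{d_{0}}$ at the $t=0$ end and to the constant basepoint at the $t=\infty$ end.
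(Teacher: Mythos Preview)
Your proof is correct and follows essentially the same strategy as the paper: build an explicit proper map $h'$ out of $[0,\infty)\times\tilde{X}_{d_0}'$ whose compactification realises the null-homotopy. The only cosmetic differences are that the paper leaves the suspension twist as a post-composition rather than folding it into the formula, omits your extra $-t$ shift in the $\Real$-coordinate (the second coordinate alone already drives the image to infinity), and restricts to the open subset $U=\{e_1(\alpha)-e_0(\alpha)+t>0\}$, a restriction your version shows is unnecessary.
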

\begin{proof} Consider the map $h'$ and space $U$ below:
$$U:=\{(t,\alpha,\theta)\in[0,\infty)\times \svo\times\Ell:e_{1}(\alpha)-e_{0}(\alpha)+t>0\}$$
$$h':U\to\Real\times \aich$$
$$(t,\alpha,\theta)\mapsto (e_{0}(\alpha),-\theta\circ(\alpha-e_{0}(\alpha)+t)).$$
That this is well-defined and continuous is trivial. If it is proper, then $h:U_{\infty}\to S^{\Real\oplus\aich}$ is easily seen to generate our null-homotopy when composed with a suspension twist. To show that this is proper let $C$ be a
compact subset of $\Real\times \aich$ and let $(t,\alpha,\theta)\in h'^{-1}\{C\}$. We need bounds on $t$ and the eigenvalues of $\alpha$, this will be enough to imply that the map is proper. By assumption we have
$\|-\theta\circ(\alpha-e_{0}(\alpha)+t)\|\leqslant R$ and $|e_{0}(\alpha)|\leqslant R$ for some positive real number $R$. Let $e$ be an eigenvalue of $\alpha$, then we have that $|e-e_{0}(\alpha)+t|\leqslant R$ by assumption. As $t$ and
$e-e_{0}(\alpha)$ are positive this immediately gives us that $t\leqslant R$ and $e-e_{0}(\alpha)\leqslant R$, giving one of the bounds. For the other, $|e_{0}(\alpha)|\leqslant R$ gives that $e_{0}(\alpha)\leqslant R$, hence $e\leqslant R+e_{0}(\alpha)\leqslant 2R$, giving the final bound. This is enough to show the result.
\end{proof}

This gives null-homotopies in the top of the tower. We now proceed to calculate the null-homotopies for $1\leqslant k<d_{0}$.

\begin{prop}\label{thefirstcomposite} $\tilde{\pi}_{k}\circ\tilde{\phi}_{k}$ is null-homotopic.
\end{prop}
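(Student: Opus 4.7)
The plan is to imitate the constructions of Propositions \ref{thefirsttopcomposite} and \ref{thesecondtopcomposite} by exhibiting a continuous proper map from an open subset of $[0,\infty) \times \tilde{Z}_{k}'$ whose one-point compactification supplies the null-homotopy. Concretely, I would take
\[
h'(t,W,\gamma,\psi) := \bigl(\psi|_{W^\bot} \oplus (\rho(\gamma) + e_{top}(\psi) + t)|_W,\ -\sigma(\gamma)|_{P_{k-1}(\alpha_t)}\bigr)
\]
on a suitable subset $U \subseteq [0,\infty)\times \tilde{Z}_{k}'$, where $\alpha_t$ denotes the first coordinate. The parameter $t$ rigidly shifts the $W$-eigenvalues of $\alpha_t$ upward, so at $t=0$ this recovers $\tilde{\pi}_k \circ \tilde{\phi}'_k$, while as $t \to \infty$ we have $\|\alpha_t\| \to \infty$ and the compactified map sends the fibre to the basepoint.

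The first step is verifying that $-\sigma(\gamma)|_{P_{k-1}(\alpha_t)}$ is meaningful. For $t>0$ the $W$-eigenvalues of $\alpha_t$ strictly dominate the $W^\bot$-eigenvalues, so $P_k(\alpha_t) = W$, and $P_{k-1}(\alpha_t) \subseteq W$ is the span of those top eigenspaces of $\rho(\gamma)$ which avoid its bottom eigenspace. These sit inside $(\ker\gamma)^\bot$, which is the domain of $\sigma(\gamma)$ by Proposition \ref{themapsigma}, so the restriction is legitimate. The defining condition for $U$ then just has to control the boundary $t=0$, where degeneracy $e_0(\rho(\gamma))=0$ combined with $\psi$ attaining $e_{top}(\psi)$ can merge an eigenspace of $\alpha_0$ across the $W\oplus W^\bot$ split; taking for instance $U = \{t>0\} \cup \{t + e_0(\rho(\gamma)) > 0\}$ and checking continuous agreement on the overlap lets $h'$ extend so that $h'(0,\cdot,\cdot,\cdot)$ matches $\tilde{\pi}_k \circ \tilde{\phi}'_k$.

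Properness is then routine along the lines already used in Propositions \ref{thefirsttopcomposite} and \ref{thesecondtopcomposite}. Given a compact $C \subseteq \tilde{X}_{k-1}'$, by Lemmas \ref{extendedheineborel} and \ref{operatornormiseigenvalues} the bound $\|\alpha_t\| \leq R$ forces both $\|\psi\| \leq R$ (via the $W^\bot$ summand) and $e_{top}(\rho(\gamma)) + e_{top}(\psi) + t \leq R$ (via the $W$ summand). Since $e_{top}(\psi) \geq -R$ and $t \geq 0$, this yields uniform bounds on $t$, on $e_{top}(\rho(\gamma))$ and hence on $\|\gamma\|$. Combining with compactness of $G_k(V_0)$ from Lemma \ref{compactgrassmannian}, Tychonoff's theorem and continuity (inherited from the functional calculus via Lemmas \ref{fnalcalclemma} and \ref{continuouseigenvalues}, together with the topology of Corollary \ref{factoringinsTperp}), the preimage $(h')^{-1}(C)$ is compact.

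Compactifying by Lemma \ref{propernessequalsbasepoints} gives a based map $h : U_\infty \to \tilde{X}_{k-1}$, and composing with the natural based map from $[0,\infty]_\infty \wedge G_k(V_0)^{\Hom(T,V_1) \oplus s(T^\bot)}$ induced by the inclusion $U \hookrightarrow [0,\infty) \times \tilde{Z}_{k}'$ produces the required null-homotopy between $\tilde{\pi}_k \circ \tilde{\phi}_k$ and the constant map. The main obstacle will be the careful choice of $U$ and the verification of continuity of $h'$ at the boundary $t=0$ in the degenerate case where $\gamma$ is non-injective and $\psi$ attains its top eigenvalue: precisely there the subspace $P_{k-1}(\alpha_t)$ jumps discontinuously as a subset of $W$, and one must argue that the composite $-\sigma(\gamma)|_{P_{k-1}(\alpha_t)}$ nevertheless varies continuously in $\End(V_0)$-valued coordinates supplied by Lemma \ref{topologisinglemma}.
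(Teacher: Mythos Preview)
Your homotopy formula is exactly the one the paper uses, and your properness argument is correct and matches the paper's. The divergence is in what you flag as ``the main obstacle'': you worry that at $t=0$ with $\gamma$ non-injective the subspace $P_{k-1}(\alpha_t)$ jumps, and you therefore restrict to an open $U\subsetneq [0,\infty)\times\tilde Z_k$. This worry is unfounded, and the restriction is both unnecessary and harmful.

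The point you are missing is that $P_{k-1}(\alpha_t)=P_{k-1}(\rho(\gamma))$ for \emph{every} $t\geq 0$, including $t=0$ and $\gamma$ non-injective. Indeed $e_{d_0-k}(\alpha_t)=e_0(\rho(\gamma))+e_{\mathrm{top}}(\psi)+t$, and $P_{k-1}(\alpha_t)$ is the sum of eigenspaces of $\alpha_t$ for eigenvalues strictly above this. Every $W^\bot$-eigenvalue of $\alpha_t$ is $\leq e_{\mathrm{top}}(\psi)\leq e_{d_0-k}(\alpha_t)$, while a $W$-eigenvalue $e_i(\rho(\gamma))+e_{\mathrm{top}}(\psi)+t$ exceeds $e_{d_0-k}(\alpha_t)$ exactly when $e_i(\rho(\gamma))>e_0(\rho(\gamma))$. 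So $P_{k-1}(\alpha_t)$ is the sum of $\rho(\gamma)$-eigenspaces in $W$ for eigenvalues strictly greater than $e_0(\rho(\gamma))$, which is $P_{k-1}(\rho(\gamma))$, independently of $t$ and $\psi$. Even when the bottom $W$-eigenspace merges with the top $W^\bot$-eigenspace at $t=0$, the merged eigenspace still sits \emph{below} the cutoff and is excluded from $P_{k-1}$; nothing jumps. Since $P_{k-1}(\rho(\gamma))\subseteq(\ker\gamma)^\bot$, the restriction $-\sigma(\gamma)|_{P_{k-1}(\alpha_t)}$ is meaningful everywhere.

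With this observation, $h'$ is defined on all of $[0,\infty)\times\tilde Z_k$, agrees with $\tilde\pi_k\circ\tilde\phi_k'$ at $t=0$ on the nose, and your properness argument goes through unchanged. This is precisely the paper's proof. Note also that your proposed fix via $U$ would not work as stated: points $(0,W,\gamma,\psi)$ with $\gamma$ non-injective lie outside your $U$, so after compactifying and collapsing they would map to the basepoint, whereas $\tilde\pi_k\circ\tilde\phi_k$ does \emph{not} send them to the basepoint. So the homotopy would fail to start at the right map.
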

\begin{proof} This composition is the below map:
$$(W,\gamma,\psi)\to \left(\delta:=((\rho(\gamma)+ e_{top}(\psi))|_{W}\oplus\psi|_{W^{\bot}}),-\sigma(\gamma)|_{P_{k-1}(\delta)}\right).$$
Consider the map $h'$ defined as follows, recalling $\tilde{Z}_{k}$ from \ref{factoringinsTperp}:
$$h':[0,\infty)\times \tilde{Z}_{k}\to \tilde{X}_{k-1}$$
$$(t,W,\gamma,\psi)\mapsto \left(\delta_{t}:=((\rho(\gamma)+ e_{top}(\psi)+t)|_{W}\oplus\psi|_{W^{\bot}}),-\sigma(\gamma)|_{P_{k-1}(\delta)}\right).$$
We claim $h'$ is well-defined, continuous and proper. It will hence generate the right null-homotopy on one-point compactifications. We first check that the map is well-defined, i.e. that $P_{k-1}(\delta)=P_{k-1}(\delta_{t})$. This, however, is easy to observe as both spaces will just be $P_{k-1}(\rho(\gamma))$ contained in $W$. We now check continuity. Equipping $\tilde{Z}_{k}$ with the quotient topology and $\tilde{X}_{k-1}$ with the subspace
topology reduces the argument down to demonstrating continuity in the below map:
$$[0,\infty)\times\mathcal{L}(\Complex^{k}\oplus \Complex^{d_{0}-k},V_{0})\times\Hom(\Complex^{k},V_{1})\times s(\Complex^{d_{0}-k})\to \svo\times \aich$$
$$(t,(\zeta,\eta),\gamma_{0},\psi_{0})\mapsto\left(\begin{array}{c}(\rho(\gamma_{0}\circ\zeta^{\dag})
+e_{top}(\eta\circ\psi_{0}\circ\eta^{\dag})+t)\oplus \eta\circ\psi_{0}\circ\eta^{\dag}\\
\sigma(\gamma_{0}\circ\zeta^{\dag})\circ\lambda_{k-1}(\rho(\gamma_{0}\circ\zeta^{\dag}))\end{array}\right).$$
For functional calculus reasons this is continuous. We finally demonstrate that the map is proper. Let $C$ be a compact subset of $\tilde{X}_{k-1}$ and let $(t,W,\gamma,\psi)\in h'^{-1}\{C\}$. Then properness follows if we can demonstrate bounds on $|t|$, $\|\gamma\|$ and $\|\psi\|$. By assumption we have that there exists a positive real number $R$ such that $\|\psi\|\leqslant R$ and $\|\rho(\gamma)+e_{top}(\psi)+t\|\leqslant R$. Let $e$ be an eigenvalue of $\rho(\gamma)$, then the second condition gives that $e+e_{top}(\psi)+t\leqslant R$, i.e. $e+t\leqslant R-e_{top}(\psi)$.
Now $\|\psi\|\leqslant R$ gives that $-R\leqslant e_{top}(\psi)\leqslant R$ and hence $-R\leqslant -e_{top}(\psi)\leqslant R$. Thus as $t$ and $e$ are positive we deduce that $t\leqslant 2R$ and $e\leqslant 2R$. This combined with $\|\psi\|\leqslant R$ demonstrates properness in the map, and the claim follows.
\end{proof}

\begin{prop}\label{thesecondcomposite} $\tilde{\delta}_{k}\circ\tilde{\pi}_{k}$ is null-homotopic.
\end{prop}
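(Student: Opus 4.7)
The plan is to imitate the null-homotopy construction in Propositions \ref{thesecondtopcomposite} and \ref{thefirstcomposite}: I will produce a proper continuous $G$-map $h'$ out of an open subset $U\subseteq [0,\infty)\times\tilde{X}'_{k}$ whose slice at $t=0$ is the unbased part of $\tilde{\delta}_{k}\circ\tilde{\pi}_{k}$ and whose large-$t$ behaviour forces points out to infinity in the target. Compactifying and pre-composing with the suspension twist will then deliver the required null-homotopy.

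First I would unwind the composition. Recalling $\tilde{\pi}_{k}(\alpha,\theta)=(\alpha,\theta|_{P_{k-1}(\alpha)})$ and the formula for $\tilde{\delta}_{k}$ from Definition~\ref{tildedeltak} and Proposition~\ref{deltakisright}, on the open subset $\{(\alpha,\theta):\Dim(P_{k}(\alpha))=k\}$ the composition sends (before the suspension twist) $(\alpha,\theta)$ to
$$\bigl(e_{d_{0}-k}(\alpha),\,P_{k}(\alpha),\,-\theta\circ\lambda_{k-1}(\alpha)|_{P_{k}(\alpha)},\,-\log((e_{d_{0}-k}(\alpha)-\alpha)|_{P_{k}(\alpha)^{\bot}})\bigr)\in \Real\times\tilde{Z}_{k},$$
and everything else to the basepoint. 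I will therefore set $U:=\{(t,\alpha,\theta)\in [0,\infty)\times\tilde{X}'_{k}:\Dim(P_{k}(\alpha))=k\}$ and define the candidate homotopy
$$h'(t,\alpha,\theta):=\bigl(e_{d_{0}-k}(\alpha),\,P_{k}(\alpha),\,-\theta\circ(\lambda_{k-1}(\alpha)+t\cdot\mathrm{Id})|_{P_{k}(\alpha)},\,-\log((e_{d_{0}-k}(\alpha)-\alpha)|_{P_{k}(\alpha)^{\bot}})\bigr).$$
Well-definedness needs only the observation that $\lambda_{k-1}(\alpha)$ and $t\cdot\mathrm{Id}$ both preserve the eigenspaces of $\alpha$, so their sum restricted to $P_{k}(\alpha)$ has image in $P_{k}(\alpha)$ (where $\theta$ is defined as an isometry). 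Continuity is standard from Lemmas~\ref{fnalcalclemma} and~\ref{continuouseigenvalues} together with Corollary~\ref{Pkiscont}, and $G$-equivariance is routine.

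The main work is properness. Given $(t,\alpha,\theta)\in (h')^{-1}(C)$ for $C$ compact in $\Real\times\tilde{Z}_{k}$, let $R$ be such that all four coordinate norms are at most $R$. The first factor gives $|e_{d_{0}-k}(\alpha)|\leqslant R$. The fourth factor, being selfadjoint on $P_{k}(\alpha)^{\bot}$, bounds the values $\log(e_{d_{0}-k}(\alpha)-e_{i}(\alpha))$ for $i<d_{0}-k$, so these differences lie in $[e^{-R},e^{R}]$ and the bottom eigenvalues of $\alpha$ are bounded. For the second factor, $\theta$ is an isometry on $P_{k}(\alpha)$, so the operator norm equals the largest eigenvalue of $\lambda_{k-1}(\alpha)+t$ on $P_{k}(\alpha)$; these eigenvalues are $t$ (on the $e_{d_{0}-k}(\alpha)$-eigenspace inside $P_{k}(\alpha)$) and $e_{i}(\alpha)-e_{d_{0}-k}(\alpha)+t$ for $i>d_{0}-k$, all non-negative, yielding both $t\leqslant R$ and bounds on the top eigenvalues. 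Hence $\alpha$ and $t$ are both bounded, $\theta$ lives in the compact $\Ell$, and closedness of $(h')^{-1}(C)$ is clear, so $h'$ is proper.

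The hard part conceptually is coordinating the single parameter $t$ against three coordinates of the target: adding $t$ only to the first coordinate (as in Proposition~\ref{thefirsttopcomposite}) would allow $e_{d_{0}-k}(\alpha)\to -\infty$ and $t\to+\infty$ simultaneously, destroying properness, which is why the deformation must instead be inserted inside the $\theta\circ(-)$ factor, following the pattern of Proposition~\ref{thesecondtopcomposite}. Once properness is in hand, $h:=(h')_{\infty}$ is a continuous based $G$-map $U_{\infty}\to (\Real\times\tilde{Z}_{k})_{\infty}$, and composing with the twisted fibrewise inclusion $-\Sigma i_{k}$ into $G_{k}(V_{0})^{\Real\oplus\Hom(T,V_{1})\oplus s(T^{\bot})}$ yields the null-homotopy: at $t=0$ the map agrees with $\tilde{\delta}_{k}\circ\tilde{\pi}_{k}$ by construction (including on the locus $\Dim(P_{k}(\alpha))<k$, which is collapsed to the basepoint on both sides), while as $t\to\infty$ the norm of the second factor grows without bound, sending every point to the basepoint.
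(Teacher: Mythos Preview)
Your proposal is correct and essentially identical to the paper's own proof: the paper takes the same domain $U=[0,\infty)\times(\tilde{X}'_{k}\setminus Y'_{k})$, the same homotopy formula $h'(t,\alpha,\theta)=(e_{d_{0}-k}(\alpha),P_{k}(\alpha),-\theta\circ(\lambda_{k-1}(\alpha)+t)|_{P_{k}(\alpha)},-\log((e_{d_{0}-k}(\alpha)-\alpha)|_{P_{k}(\alpha)^{\bot}}))$, and the same properness argument via bounding $t$, $\|\alpha\|$, and the gap $e_{d_{0}-k}(\alpha)-e_{d_{0}-k-1}(\alpha)$ from below. Your eigenvalue bookkeeping is in fact slightly more explicit than the paper's.
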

\begin{proof} Consider the map $h'$ below:
$$h':[0,\infty)\times \tilde{X}_{k}\backslash Y_{k}\to \Real \times\tilde{Z}_{k}$$
$$(t,\alpha,\theta)\mapsto (e_{d_{0}-k}(\alpha),P_{k}(\alpha),-\theta\circ(\lambda_{k-1}(\alpha)+t)|_{P_{k}(\alpha)},-\log(e_{d_{0}-k}(\alpha)-\alpha)|_{P_{k}(\alpha^{\bot})}).$$
We first check that the map is well-defined. The only potential issue is $-\theta\circ(\lambda_{k-1}(\alpha)+t)$, but this is reasonable as $\theta$ is coming from $\tilde{X}_{k}$. To check continuity, equip $\tilde{Z}_{k}$ with the subspace topology and $\tilde{X}_{k}$ with the quotient topology. Then continuity follows from the standard continuity of the below map, recalling the notation of $s_{k}(V_{0})$ from Corollary \ref{Pkiscont}:
$$[0,\infty)\times s_{k}(V_{0})\times\Ell\to \Real\times G_{k}(V_{0})\times\aich\times\svo $$
$$(t,\alpha,\theta)\mapsto(e_{d_{0}-k}(\alpha),P_{k}(\alpha),-\theta\circ(\alpha-e_{d_{0}-k}(\alpha)+t),-\log(e_{d_{0}-k}(\alpha)-\alpha)).$$
We finally check that the map is proper. Let $C$ be a compact subset of $\Real\times\tilde{Z}_{k}$ and let $(t,\alpha,\theta)\in h'^{-1}\{C\}$. For this map to be proper, we need a bound on $t$ and $\|\alpha\|$ and we need to keep $e_{d_{0}-k}(\alpha)$ and $e_{d_{0}-k-1}(\alpha)$ apart. By assumption there is a positive real number $R$ which bounds $|e_{d_{0}-k}(\alpha)|$, $\|-\theta\circ(\lambda_{k-1}(\alpha)+t)|_{P_{k}(\alpha)}\|$ and $\|-\log(e_{d_{0}-k}(\alpha)-\alpha)|_{P_{k}(\alpha^{\bot})}\|$. The second bound gives us that $e_{d_{0}-1}(\alpha)+t-e_{d_{0}-k}(\alpha)\leqslant R$. This will give a bound on $t$ for similar reasons to previous properness arguments, as well as a bound of $R$ on $e_{d_{0}-1}(\alpha)-e_{d_{0}-k}(\alpha)$. Moreover, the final assumption gives us that $-R\leqslant \log(e_{d_{0}-k}(\alpha)-e_{d_{0}-k-1}(\alpha))\leqslant R$ and hence $e^{-R}\leqslant e_{d_{0}-k}(\alpha)-e_{d_{0}-k-1}(\alpha)$; the right eigenvalues are suitably kept apart. We can repeat a similar process with $e_{0}(\alpha)$ to get $e_{d_{0}-k}(\alpha)-e_{0}(\alpha)\leqslant
e^{R}$, i.e. that $e_{0}(\alpha)\geqslant e_{d_{0}-k}(\alpha)-e^{R}$. Combining this with $e_{d_{0}-1}(\alpha)-e_{d_{0}-k}(\alpha)\leqslant R$ gives $\max(|e_{0}(\alpha)|,|e_{d_{0}-1}(\alpha)|)\leqslant R+e^{R}$. This gives a bound on $\|\alpha\|$. Hence $h'$ is proper. It is easy to see that from here the map $h$ on one-point compactifications can be suitably modified and twisted across the suspension to make the required null-homotopy.
\end{proof}

\begin{prop}\label{thethirdcomposite} $\Sigma\tilde{\phi}_{k}\circ\tilde{\delta}_{k}$ is null-homotopic.
\end{prop}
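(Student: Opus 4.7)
The plan is to mimic the explicit null-homotopy constructions of Propositions \ref{thefirstcomposite}, \ref{thesecondcomposite}, and the analogous results in the top triangle: exhibit a proper continuous $G$-map out of $[0,\infty) \times (\tilde{X}_{k-1}'\setminus Y_k')$ whose one-point compactification gives the desired null-homotopy. Because $\tilde{\delta}_k$ factors through the collapse $\tilde{X}_{k-1}\to \tilde{X}_{k-1}/Y_k$ and $Y_k$ maps to the basepoint, it suffices to null-homotope the restriction to $\tilde{X}_{k-1}'\setminus Y_k'$.

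First I would write out the composite $\Sigma\tilde{\phi}_k\circ\tilde{\delta}_k$ on a point $(\alpha,\theta)\in\tilde{X}_{k-1}'\setminus Y_k'$. With $W:=P_k(\alpha)$, $\gamma:=-\theta\circ\lambda_{k-1}(\alpha)|_{W}$, and $\psi:=-\log((e_{d_0-k}(\alpha)-\alpha)|_{W^\perp})$, Proposition \ref{deltakisright} gives $\tilde{\delta}_k(\alpha,\theta)=(e_{d_0-k}(\alpha), W,\gamma,\psi)$ (with a $-\Sigma$ twist on the first coordinate), and applying $\tilde{\phi}_k$ from \ref{tildephik} yields
\[
\Sigma\tilde{\phi}_k\circ\tilde{\delta}_k(\alpha,\theta) = \bigl(e_{d_0-k}(\alpha),\ \psi|_{W^\perp}\oplus(\rho(\gamma)+e_{top}(\psi))|_W,\ -\sigma(\gamma)\bigr),
\]
again modulo the twist. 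The natural thing to do is to push the top $k$ eigenvalues of the $s(V_0)$-coordinate off to $+\infty$, which is precisely the idea used in Proposition \ref{thefirstcomposite}. Accordingly, define
\[
h':[0,\infty)\times (\tilde{X}_{k-1}'\setminus Y_k') \to \Real\times \tilde{X}_k',
\]
\[
h'(t,\alpha,\theta):=\bigl(e_{d_0-k}(\alpha),\ \psi|_{W^\perp}\oplus(\rho(\gamma)+e_{top}(\psi)+t)|_W,\ -\sigma(\gamma)\bigr).
\]
This agrees with $\Sigma\tilde{\phi}_k\circ\tilde{\delta}_k$ at $t=0$ and, since adding $t$ does not change the eigenspace decomposition, is well-defined. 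Continuity is standard, following the template of Proposition \ref{phikworks}: equip $\tilde{X}_{k-1}'\setminus Y_k'$ with the subspace topology and $\tilde{X}_k'$ with the subspace topology from Lemma \ref{topologisinglemma}, then continuity reduces to standard functional calculus together with the continuity of $P_k$ on $s_k(V_0)$ from Corollary \ref{Pkiscont}.

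The heart of the argument will be the properness check, which is where the boundedness estimates must be done carefully (as in the proofs of Propositions \ref{thefirstcomposite} and \ref{thesecondcomposite}). Given $(t,\alpha,\theta)\in h'^{-1}(C)$ for $C$ compact in $\Real\times \tilde{X}_k'$, the bound on the first factor gives $|e_{d_0-k}(\alpha)|\leqslant R$. The bound on the $\svo$-coordinate of $\tilde{X}_k'$ simultaneously bounds $\|\psi\|\leqslant R$ and $\|\rho(\gamma)+e_{top}(\psi)+t\|\leqslant R$; using $\rho(\gamma)\geqslant 0$, $t\geqslant 0$ and $|e_{top}(\psi)|\leqslant R$, the usual manipulation yields $t\leqslant 2R$ and every eigenvalue of $\rho(\gamma)$ is $\leqslant 2R$. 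Finally, the bound on $\psi$ pins the lower $d_0-k$ eigenvalues of $\alpha$ within $e^R$ of $e_{d_0-k}(\alpha)$, while the bound on $\rho(\gamma)$ pins the top $k-1$ eigenvalues within $2R$ of $e_{d_0-k}(\alpha)$; combining these gives a bound on $\|\alpha\|$, and since $\theta$ is an isometry $\|\theta\|\leqslant 1$ automatically. Hence $h'$ is proper, so it extends to $h:[0,\infty]_\infty\wedge(\tilde{X}_{k-1}/Y_k)\to \Real_\infty\wedge\tilde{X}_k$; after the $-\Sigma$ twist on the $\Real$-factor this gives a map $[0,1]_+\wedge\tilde{X}_{k-1}\to\Sigma\tilde{X}_k$ which is $\Sigma\tilde{\phi}_k\circ\tilde{\delta}_k$ at $0$ and constant at the basepoint at $\infty$. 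Equivariance is automatic because every ingredient is built from $G$-equivariant constructions. The main obstacle I anticipate is precisely the properness book-keeping, since the constraint forcing $\dim P_k(\alpha)=k$ uniformly on compacta has to be extracted from the $\psi$-estimate rather than assumed, and the asymmetric roles of $W$ and $W^\perp$ make the two eigenvalue tails behave differently; everything else is routine.
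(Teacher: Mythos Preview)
Your proposed homotopy is not well-defined: the pair $(\alpha'_t,-\sigma(\gamma))$ fails to lie in $\tilde X_k'$ for any $t>0$. Recall that membership in $\tilde X_k'$ requires the isometry to have domain $P_k$ of the selfadjoint. Write $W=P_k(\alpha)$ and $\alpha'_t:=\psi|_{W^\perp}\oplus(\rho(\gamma)+e_{top}(\psi)+t)|_W$. At $t=0$ the bottom eigenvalue on $W$ and the top eigenvalue on $W^\perp$ coincide at $e_{top}(\psi)$, so that merged eigenspace is too large to include and $P_k(\alpha'_0)=(\ker\gamma)^\perp$, matching the domain of $-\sigma(\gamma)$. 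But once $t>0$ every eigenvalue on $W$ strictly exceeds every eigenvalue on $W^\perp$, hence $P_k(\alpha'_t)=W$ with $\dim W=k$. Since $\gamma=-\theta\circ\lambda_{k-1}(\alpha)|_W$ always has the $e_{d_0-k}(\alpha)$-eigenspace in its kernel, $-\sigma(\gamma)$ is defined only on a proper subspace of $W$, so $(\alpha'_t,-\sigma(\gamma))\notin\tilde X_k'$. Your remark that ``adding $t$ does not change the eigenspace decomposition'' is literally true but irrelevant: $P_k$ depends on the \emph{ordering} of the eigenvalues, and that is precisely what you have broken.

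The paper's homotopy avoids this by deforming both blocks simultaneously so that the critical eigenvalue coincidence persists. With $j=d_0-k$ it sets
\[
\beta_t:=-\log(e_j(\alpha)-\alpha+t)\big|_{P_k(\alpha)^\perp}\ \oplus\ \bigl(\alpha-e_j(\alpha)-\log(e_j(\alpha)-e_{j-1}(\alpha)+t)\bigr)\big|_{P_k(\alpha)},
\]
so that the top eigenvalue on $P_k(\alpha)^\perp$ and the bottom eigenvalue on $P_k(\alpha)$ are both $-\log(e_j(\alpha)-e_{j-1}(\alpha)+t)$ for every $t$; this forces $P_k(\beta_t)=P_{k-1}(\alpha)$ throughout, keeping $\theta$ (equivalently $-\sigma(\gamma)$) an admissible isometry. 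The domain is taken to be $U=\{(t,\alpha,\theta):e_j(\alpha)-e_{j-1}(\alpha)+t>0\}$, which also absorbs the boundary near $Y_k'$. Your overall strategy (factor through the collapse, exhibit a proper $h'$) is sound and your properness estimates are essentially the right ones; the repair needed is exactly this coupled deformation of the two blocks.
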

\begin{proof} First set $j:=d_{0}-k$. We note that the composite resembles the map below, once a suspension twist has been factored in:
$$\tilde{X}_{k-1}\to\frac{\tilde{X}_{k-1}}{Y_{k}}\to \Sigma\tilde{X}_{k}$$
\small $$(\alpha,\theta)\mapsto (e_{j}(\alpha),-\log(e_{j}(\alpha)-\alpha)|_{P_{k}(\alpha)^{\bot}}\oplus(\alpha-e_{j}(\alpha)-\log(e_{j}(\alpha)-e_{j-1}(\alpha)))|_{P_{k}(\alpha)},\theta).$$
\normalsize
Firstly define the selfadjoint endomorphism $\beta_{t}$ as follows: $$\beta_{t}:=-\log(e_{j}(\alpha)-\alpha+t)|_{P_{k}(\alpha)^{\bot}}\oplus(\alpha-e_{j}(\alpha)-\log(e_{j}(\alpha)-e_{j-1}(\alpha)+t))|_{P_{k}(\alpha)}.$$
Now set $U$ and $h'$ to be the following space and map:
$$U:=\{(t,\alpha,\theta)\in[0,\infty)\times\tilde{X}_{k-1}':e_{j}(\alpha)-e_{j-1}(\alpha)+t>0\}$$
$$h':U\to \Real\times \tilde{X}'_{k}$$
$$(t,\alpha,\theta)\mapsto (e_{j}(\alpha),\beta_{t},\theta).$$
We check this map is well-defined, continuous and proper. The map $h$ on one-point compactifications can then be modified to build the required null-homotopy. Firstly, for well-definedness we need to check that $\theta$ is well-defined, i.e. that $P_{k}(\beta_{t})$ is $P_{k-1}(\alpha)$. This is possible to see, however, by first observing that it is true for $\beta_{0}$ via the construction of the composite. Adding $t$ in such a way as we do should not affect eigenvectors, only eigenvalues; however, we have to be careful as intrinsic to this is the `collapsing out $P_{k}(\alpha)$' condition that holds at $t=0$. We're fine, however, by shifting the collapse by $t$ as we do via setting that $e_{j}(\alpha)-e_{j-1}(\alpha)+t>0$. This is enough to demonstrate that $P_{k}(\beta_{t})=P_{k-1}(\alpha)$ as required, and hence the map above is reasonable.

We note here that the map is easily seen to be continuous. Finally, we need to check that the map is proper. Let $C$ be a compact subset of $\Real\times \tilde{X}_{k}'$ and let $(t,\alpha,\theta)\in h'^{-1}\{C\}$. For properness in this case we require bounds on $t$ and $\|\alpha\|$ and a strictly positive lower bound on $e_{j}(\alpha)-e_{j-1}(\alpha)+t$. By assumption we have a positive real number $R$ such that $|e_{j}(\alpha)|\leqslant R$ and $\|\beta_{t}\|\leqslant R$. Firstly note that the second condition gives that if $e$ is a top eigenvalue of $\alpha$ then $e-e_{j}(\alpha)-\log(e_{j}(\alpha)-e_{j-1}(\alpha)+t)\leqslant R$. We know that $e-e_{j}(\alpha)$ is positive, hence $-\log(e_{j}(\alpha)-e_{j-1}(\alpha)+t)\leqslant R$. Multiplying by $-1$ and taking exponentials gives $e_{j}(\alpha)-e_{j-1}(\alpha)+t\geqslant e^{-R}$ giving a strictly positive lower bound on $e_{j}(\alpha)-e_{j-1}(\alpha)+t$ as required.

We also have from this condition that if $e'$ is a low eigenvalue then $-R\leqslant -\log(e_{j}(\alpha)-e'+t)\leqslant R$ and hence $e^{-R}\leqslant e_{j}-e'+t\leqslant e^{R}$. We first use this to note that $t\leqslant e^{R}$ and hence we have an upper bound for $t$. We now seek out a lower bound for $e'$. We have $e_{j}(\alpha)-e'\leqslant e^{R}$ and hence $e'\geqslant e^{R}-e_{j}(\alpha)$. We have a bound $-R\leqslant e_{j}(\alpha)\leqslant R$ by assumption and hence $e'\geqslant e^{R}-R$ for a lower bound.

Finally, we return to looking at $e$ a high eigenvalue. We wish to find an upper bound for $e$, this will then be enough to imply a bound on $\|\alpha\|$. Firstly we note from the above paragraph we have $-R\leqslant -\log(e_{j}(\alpha)-e_{j-1}(\alpha)+t)$. We also note that $-e_{j}(\alpha)\leqslant R$ from the first assumption. Now recall that we can also write down the inequality $e\leqslant R-e_{j}(\alpha)+\log(e_{j}(\alpha)-e_{j-1}(\alpha)+t)$. Combining these give $e\leqslant 3R$ and hence we have a bound on $\|\alpha\|$. This gives us that $h'$ is proper and hence we can build the required null-homotopy.
\end{proof}

\section{The Bottom of the Tower}\label{TheBottomoftheTower}

We have built the below stable $G$-map as part of our construction of the tower, noting here that $s(T)\cong \Real$ for $T$ the tautological line bundle over complex projective space:
$$\delta_{1}:S^{0}\to PV_{0}^{\Real\oplus \Hom(T,V_{1}-V_{0})\oplus s(T)}\cong\Sigma^{2}PV_{0}^{\Hom(T,V_{1}-V_{0})}.$$
We claim further that this map possesses extra geometric structure - it can be built from a Pontryagin-Thom collapse map for the right choice of embedding.

Let $j:PV_{0}\to W$ be an equivariant embedding of $PV_{0}$ into a representation $W$. Details on the Pontryagin-Thom construction can be found amidst much of the literature, we note here that a brief description of it lies in $B.2$ of \cite{Ravenel} - this book also includes a list of other references that cover the theory. Working initially in $G$-spaces we have the following Pontryagin-Thom collapse map; this map is a $G$-map:
$$j^{!}:S^{W}\to PV_{0}^{\nu_{j}}.$$

Here $\nu_{j}$ is the normal bundle $\tau_{W}-\tau_{PV_{0}}$. As $W$ is a vector space we can rewrite $PV_{0}^{\nu_{j}}$ as $\Sigma^{W}PV_{0}^{-\tau_{PV_{0}}}$. Pass to $G$-spectra via $\Sigma^{\infty}$ and smash by $S^{-W}$ to get the below stable $G$-map:
$$\Sigma^{-W}j^{!}:S^{0}\to PV_{0}^{-\tau_{PV_{0}}}.$$

We now take the lemma stated below as fact, the result is discussed in $\S14$ of \cite{CharacteristicClassesMilnorStasheff}, for example.

\begin{lem}\label{thetangentbundleofcomplexprojectivespace} The bundle $\tau_{PV_{0}}$ is the bundle $\Hom(T,T^{\bot})$.
\end{lem}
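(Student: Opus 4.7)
The plan is to verify this classical identification in a way that respects the $G$-action, by constructing explicit local charts on $PV_{0}$ and showing that the resulting trivializations of $\tau_{PV_{0}}$ match those of $\Hom(T,T^{\bot})$. This mirrors the local triviality argument we already gave for $Z_{k}$ in Section \ref{TheTopologyOfBundlesOverGrassmannians}, specialized to $k=1$.

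First, I would fix $L\in PV_{0}$ and define, for $\phi\in\Hom(L,L^{\bot})$, the line
$$\psi_{L}(\phi) := \{v+\phi(v):v\in L\}\in PV_{0}.$$
This is precisely the map $g$ of Section \ref{TheTopologyOfBundlesOverGrassmannians} in the case $k=1$, so from that discussion $\psi_{L}$ is a homeomorphism (in fact a smooth diffeomorphism) from $\Hom(L,L^{\bot})$ onto the open neighbourhood $U_{L}$ of $L$ consisting of those lines not orthogonal to $L$. The collection $\{\psi_{L}\}_{L\in PV_{0}}$ thus provides a smooth atlas for $PV_{0}$, with $\psi_{L}(0)=L$.

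Next I would use $d\psi_{L}|_{0}:\Hom(L,L^{\bot})\to T_{L}PV_{0}$ as the candidate fibrewise isomorphism. The key point is that this assignment is canonical: if $L'\in U_{L}$ and one pulls back the same tangent vector using $\psi_{L'}$ instead of $\psi_{L}$, the induced identification with $\Hom(L',L'^{\bot})$ agrees with the one coming from varying $L$ inside the chart. Concretely, one differentiates the transition function $\psi_{L}^{-1}\circ\psi_{L''}$ at $0$ and checks, using the decomposition $V_{0}=L\oplus L^{\bot}$ together with the explicit $2\times 2$ matrix form for projections recalled in the proof of Lemma \ref{destabilizinglemma}, that the derivative is exactly the natural identification $\Hom(L'',L''^{\bot})\cong\Hom(L,L^{\bot})$ coming from the tautological sub- and quotient-bundle structure. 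This is the step that requires the most care, as one must verify the cocycle condition to conclude that the fibrewise isomorphisms assemble into a global bundle isomorphism $\Hom(T,T^{\bot})\cong\tau_{PV_{0}}$.

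Finally, I would observe that $\psi_{g.L}(g\phi g^{-1})=g.\psi_{L}(\phi)$ for all $g\in G$, so each chart is $G$-equivariant in an evident sense, and therefore the global isomorphism constructed above intertwines the $G$-actions on $\Hom(T,T^{\bot})$ and $\tau_{PV_{0}}$. The main obstacle is the bookkeeping in verifying that the local identifications glue correctly on overlaps; once that is done, equivariance is essentially automatic from the naturality of the construction.
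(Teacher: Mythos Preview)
Your proposal is correct, but the paper does not prove this lemma at all: it simply states the identification as a standard fact, citing \S14 of Milnor--Stasheff, \emph{Characteristic Classes}. So you have supplied strictly more than the paper does.

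Your approach is the classical one and is entirely sound. The charts $\psi_{L}:\Hom(L,L^{\bot})\to U_{L}$ are indeed the $k=1$ case of the maps $g$ from Section~\ref{TheTopologyOfBundlesOverGrassmannians}, and differentiating at the origin gives the fibrewise identification $\Hom(L,L^{\bot})\cong T_{L}PV_{0}$. The one place where your write-up could be tightened is the gluing step: rather than checking the cocycle condition by hand via the $2\times 2$ matrix formalism, it is cleaner to observe that the differential of the transition map $\psi_{L}^{-1}\circ\psi_{L'}$ at a point is precisely the isomorphism $\Hom(L',L'^{\bot})\to\Hom(L,L^{\bot})$ induced by the canonical isomorphisms $T|_{U_{L}}\cong\underline{L}$ and $T^{\bot}|_{U_{L}}\cong\underline{L^{\bot}}$ (restriction of orthogonal projection). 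This makes the global bundle isomorphism manifest without coordinate computations. Your equivariance argument via $\psi_{g.L}(g\phi g^{-1})=g.\psi_{L}(\phi)$ is exactly right and is the reason the identification is a $G$-bundle isomorphism, which is what the paper actually needs.
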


Using this and the bundle identities used in Lemma \ref{destabilizinglemma} allows us to see that we have a stable $G$-map as follows:
$$\Sigma^{-W}j^{!}:S^{0}\to PV_{0}^{2s(T)-\Hom(T,V_{0})}\cong \Sigma^{2}PV_{0}^{-\Hom(T,V_{0})}.$$

Finally, we compose with the zero-section map $PV_{0}\to PV_{0}^{\Hom(T,V_{1})}$ and add in a twist $t\mapsto (1-t)$ to the first suspension coordinate to get the map $\delta^{j}$ below:
$$\delta^{j}:S^{0}\to \Sigma^{2}PV_{0}^{\Hom(T,V_{1}-V_{0})}.$$

\begin{prop}\label{thebottommapisPT} The map $\delta^{j}$ corresponds to the map $\delta_{1}$ for the choice of embedding $j:PV_{0}\to \svo$ given by $j:L\mapsto 0_{L}\oplus -1_{L^{\bot}}$.
\end{prop}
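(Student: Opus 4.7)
The plan is to recognise $\tilde{\delta}_1$ as a Pontryagin--Thom collapse for the embedding $j$ by exhibiting an explicit tubular neighbourhood and two trivializations that differ by an isotopically trivial automorphism. I first observe that $j(L) = 0|_L \oplus (-1)|_{L^\bot}$ has top eigenvalue $0$ with simple eigenspace $L$, so $j(PV_0) \subset U' := \tilde{X}'_0 \setminus Y'_1$, the open subset of $s(V_0)$ of those $\alpha$ whose top eigenspace is one-dimensional. Using the splitting $s(V_0) \cong s(T) \oplus s(T^\bot) \oplus \Hom(T,T^\bot)$ from the proof of Lemma~\ref{destabilizinglemma} together with Lemma~\ref{thetangentbundleofcomplexprojectivespace}, the image of $dj$ is the $\Hom(T,T^\bot)$ summand, so $\nu_j \cong s(T) \oplus s(T^\bot) \cong \Real \oplus s(T^\bot)$, matching the codomain of $\delta^j$.

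Next I would exhibit two trivializations of $U'$ as an open neighbourhood of the zero section of $\nu_j$. The linear ``exponential'' trivialization
\[
\phi(L,\tau,\psi) = \tau \cdot 1_L \oplus (-1+\psi)|_{L^\bot},
\]
defined on $\{(L,\tau,\psi) : \tau + 1 > e_{top}(\psi)\}$, is the natural parametrization one uses to compute $j^!$. The second is $\mathfrak{f}_1$ from Proposition~\ref{sigmainjchomeoextended}: for $k=1$ the bundle $\inj(T,V_1)^c$ reduces to the zero section (as $T$ is a line bundle), so $\mathfrak{f}_1|_{U'}$ is $\alpha \mapsto (e_{d_0-1}(\alpha), P_1(\alpha), -\log((e_{d_0-1}(\alpha) - \alpha)|_{P_1(\alpha)^\bot}))$, which is the trivialization used to build $\tilde{\delta}_1$. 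A direct calculation gives $\mathfrak{f}_1 \circ \phi(L,\tau,\psi) = (\tau, L, -\log((\tau+1)-\psi))$, whose derivative at the zero section sends $(d\tau, d\psi) \mapsto (d\tau, -d\tau + d\psi)$ with determinant $1$; thus the composite is an orientation-preserving fibrewise self-diffeomorphism of an open neighbourhood of the zero section of $\nu_j$, fixing the zero section.

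The straight-line homotopy
\[
H_s(L,\tau,\psi) = \bigl(L,\, \tau,\, (1-s)\psi - s\log((\tau+1)-\psi)\bigr)
\]
provides an isotopy between the identity and this automorphism through zero-section-fixing open embeddings, using the operator-monotonicity of $-\log$ on positive-definite elements of $s(L^\bot)$ to ensure that each $H_s$ remains a diffeomorphism on the relevant open set. This descends to a homotopy between the Pontryagin--Thom map built from $\phi$ and $\tilde{\delta}_1$ restricted to $U'$, before the zero-section inclusion into $\Hom(T,V_1)$ and the suspension twist. The zero-section inclusion $PV_0 \hookrightarrow PV_0^{\Hom(T,V_1)}$ used to build $\delta^j$ corresponds to the zero middle factor of $\tilde{\delta}_1$ (which vanishes because $\lambda_0 \equiv 0$), and the $t \mapsto 1-t$ twist of $\delta^j$ matches the $-\Sigma$ twist from Definition~\ref{tildedeltak}. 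Desuspending by $s(V_0)$ converts this unstable equality into the stable identity $\delta_1 = \delta^j$.

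The main obstacle will be the explicit sign and orientation accounting: the minus signs in the definition of $j$ (arranged so that $j(L)$ has top eigenvalue $0$ and hence $\tilde{\delta}_1(j(L))$ lands at the zero section), in $\phi$, in $\mathfrak{f}_1$, and in $-\log$ must interact correctly with the $-\Sigma$ and $t\mapsto 1-t$ twists to produce the claimed equality rather than its negative. Verifying that $H_s$ is a fibrewise diffeomorphism for every $s \in [0,1]$, and checking that the Pontryagin--Thom map is indeed independent of the rescaling of $\nu_j$ onto the domain of $\phi$, forms the key technical portion of the argument.
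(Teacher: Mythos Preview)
Your approach is essentially the same as the paper's: both decompose $\tilde{\delta}_1$ via the map $\alpha\mapsto (e_{d_0-1}(\alpha),P_1(\alpha),-\log((e_{d_0-1}(\alpha)-\alpha)|_{P_1(\alpha)^{\bot}}))$, observe that $\lambda_0\equiv 0$ forces the $\Hom(T,V_1)$ factor to be the zero section, match the suspension twists, and then argue that the remaining $s(T^{\bot})$ factor agrees with the Pontryagin--Thom normal-bundle identification. The only organisational differences are that the paper first strips off the top eigenvalue as a suspension coordinate (via $S^{s(V_0)}\cong\Sigma(s_0(V_0)_\infty)$) before comparing, and then dispatches the fibrewise comparison on $s(T^{\bot})$ with a one-line ``standard degree argument'' rather than your explicit straight-line isotopy $H_s$; your version is more explicit where the paper is sketchy, and the domain-of-$\phi$ issue you flag is one the paper leaves equally implicit.
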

\begin{proof} First note that this embedding is easily seen to be equivariant. We prove this unstably, first explicitly describing the map $\tilde{\delta}_{1}$. Let $p_{0}:S^{\svo}\to S^{\svo}/\sim$ be the quotient map of the equivalence relation given by $\alpha\sim\alpha'$ if and only if $e_{d_{0}-1}(\alpha)=e_{d_{0}-2}(\alpha)$ and $e_{d_{0}-1}(\alpha')=e_{d_{0}-2}(\alpha')$ - this is a $G$-map and is the equivalent of taking the quotient of $\tilde{X}_{0}$ by $Y_{1}$ as described in Definition \ref{makingthecofibseqeasier} and $\S$\ref{TheOtherTriangles}. We now note that in the case $k=1$ the map $\lambda_{k-1}(\alpha)$ recalled from \ref{lambdak} is the zero map. This allows us to split up $\tilde{\delta}_{1}$ into the following composition. First define $m$ to be the map below:
$$m:S^{\svo}/\sim\to \Sigma PV_{0}^{s(T^{\bot})}$$
$$\alpha\mapsto (e_{d_{0}-1}(\alpha),\Ker(\alpha-e_{d_{0}-1}(\alpha)),-\log(e_{d_{0}-1}(\alpha)-\alpha)|_{\Ker(\alpha-e_{d_{0}-1}(\alpha))^{\bot}}).$$
We then compose with the zero-section:
$$\Sigma PV_{0}^{s(T^{\bot})}\to \Sigma PV_{0}^{\Hom(T,V_{1})\oplus s(T^{\bot})}.$$
Finally, we twist the suspension coordinate with a $t\mapsto (1-t)$ twist to maintain the cofibre sequence structure. Pre-composing with $p_{0}$ gives us a map below, this is $\tilde{\delta}_{1}$:
$$S^{\svo}\to \Sigma PV_{0}^{\Hom(T,V_{1})\oplus s(T^{\bot})}.$$
Thus by noting the description of the build of $j^{!}$ it is clear that the claim follows from the equivalence of the two maps below:
$$j^{!}:S^{\svo}\to PV_{0}^{\svo-\Hom(T,T^{\bot})}\cong\Sigma PV_{0}^{s(T^{\bot})}$$
$$m\circ p_{0}:S^{\svo}\to \Sigma PV_{0}^{s(T^{\bot})}.$$
To show this we first put $\sovo$ to be the space of selfadjoint endomorphisms with zero top eigenvalue, we have a $G$-homeomorphism $S^{\svo}\cong \Sigma(\sovo_{\infty})$ given by the map $\alpha\mapsto (e_{d_{0}-1}(\alpha),\alpha-e_{d_{0}-1}(\alpha))$. We next note that $j$ actually provides an equivariant embedding into $\sovo$. It's clear that from here $j^{!}$ can be desuspended to $\Sigma^{-1}j^{!}:\sovo_{\infty}\to PV_{0}^{s(T^{\bot})}$ by including the top eigenvalue as the suspension coordinate. 

We now note that by choosing an embedding $j$ which outputs a selfadjoint that sends precisely one line to zero it becomes clear that $j^{!}$ will give $\Ker(\alpha-e_{d_{0}-1}(\alpha))$ as the base-value of the Thom space. Moreover is is also clear from this that $j^{!}$ will collapse out precisely the selfadjoints that have top two matching eigenvalues. Thus $j^{!}$ will incorporate the collapse $p_{0}$ as claimed. Finally, it suffices to show that the
bundle factors match up. This is again clear, however, by looking at $\Sigma^{-1} j^{!}$. This has to send a selfadjoint $\beta$ with top eigenvalue $0$ to a general selfadjoint element $\beta_{j}$ on $\Ker(\beta)^{\bot}$; moreover this map must be injective and maintain the local eigenvalue order structure, i.e. if $e_{k}(\beta)\leqslant e_{k}(\gamma)$ then $e_{k}(\beta_{j})\leqslant e_{k}(\gamma_{j})$. A standard degree argument will show that this is fibrewise $G$-homotopic
to $-\log(\beta|_{\Ker(\beta)^{\bot}})$ as required. This is enough to show that $\delta^{j}\simeq \delta_{1}$ as claimed.
\end{proof}
\chapter{The Subrepresentation Case}
\label{ch:ch7}

\section{The Conjecture}\label{TheConjecture}

We now conjecture what we believe to happen to Theorem \ref{themaintheorem} should $V_{0}\leqslant V_{1}$. For a more
precise framework, let $V_{1}=V_{0}\oplus V_{2}$ for some Hermitian representation $V_{2}$, here $\oplus$ is orthogonal direct sum with respect to the Hermitian structure. Further, let $I:V_{0}\to V_{1}$ to be the standard inclusion. Miller previously defined the following filtration of $\Ell$:
$$F_{k}(\Ell):=\{\alpha\in\Ell:\text{rank}(\alpha-I)\leqslant k\}.$$

It has been demonstrated that the based quotients of this filtration form a stable splitting of $\Ell_{\infty}$; we refer the reader to Appendix \ref{ch:appendixa}, \cite{Miller}, \cite{Kitchloo} or \cite{Crabb} for a proof. We believe our work links in with the work of Miller via the below conjecture:

\begin{conjecture}\label{equivofmillersplittings} Let $i:F_{k}(\Ell)_{\infty}\to \Ell_{\infty}$ be the standard inclusion. Let $\pi:\Ell_{\infty}\to X_{k}$ be the stable projection onto the spectrum $X_{k}$ from Theorem \ref{themaintheorem}. Then denote the composite $F_{k}(\Ell)_{\infty}\to X_{k}$ by $res_{k}$. The map $res_{k}$ is an equivalence.
\end{conjecture}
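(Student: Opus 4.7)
The plan is to proceed by induction on $k$. The base case $k = 0$ is immediate since $F_0(\Ell) = \{I\}$ gives $F_0(\Ell)_\infty = S^0 = X_0$ with $res_0$ the identity. In the subrepresentation setting, writing $V_1 = V_0 \oplus V_2$, the virtual bundle $\Hom(T, V_1 - V_0)$ reduces to the honest bundle $\Hom(T, V_2)$, so the cofibre $G_k(V_0)^{s(T) \oplus \Hom(T, V_2)}$ appearing in our tower coincides (via the identification $\mathfrak{u}(T) \cong s(T)$ by multiplication by $i$) with Miller's quotient $F_k(\Ell)/F_{k-1}(\Ell)$ described in Appendix \ref{ch:appendixa}.

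For the inductive step, assume $res_{k-1}$ is an equivalence. The plan is to show that the tower cofibre triangle
\[
G_k(V_0)^{s(T) \oplus \Hom(T, V_2)} \xrightarrow{\phi_k} X_k \xrightarrow{\pi_k} X_{k-1} \xrightarrow{\delta_k} \Sigma G_k(V_0)^{s(T) \oplus \Hom(T, V_2)}
\]
splits, i.e.\ that $\delta_k$ is null-homotopic. By Proposition \ref{howacofibseqsplits} this yields a stable splitting $X_k \simeq X_{k-1} \vee G_k(V_0)^{s(T) \oplus \Hom(T, V_2)}$; combined with the inductive equivalence $X_{k-1} \simeq F_{k-1}(\Ell)_\infty$ and Miller's splitting $F_k(\Ell)_\infty \simeq F_{k-1}(\Ell)_\infty \vee F_k/F_{k-1}$ from Appendix \ref{ch:appendixa}, the right-hand side becomes $F_k(\Ell)_\infty$, giving $X_k \simeq F_k(\Ell)_\infty$.

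I would attempt to construct the null-homotopy of $\delta_k$ in the style of Section \ref{ExplicitNullHomotopies}: starting from the explicit formula of Definition \ref{tildedeltak} and using the basepoint $I \in \Ell$ supplied by $V_0 \leqslant V_1$, one writes down a proper $G$-equivariant map out of $[0,\infty) \times \tilde X_{k-1}'$ that slides the eigenvalues $e_{d_0 - k}(\alpha)$ and $e_{d_0 - k - 1}(\alpha)$ together via the functional-calculus variants of Section \ref{Variations}, exploiting the decomposition $V_1 = V_0 \oplus V_2$ to arrange the image to lie in the collapsed locus $Y_k$. Once this is done, one must also verify that under the resulting splittings $res_k$ matches the equivalence $F_{k-1}(\Ell)_\infty \vee F_k/F_{k-1} \simeq X_{k-1} \vee G_k(V_0)^{s(T) \oplus \Hom(T, V_2)}$ built from $res_{k-1}$ and the identity on the cofibre; this reduces to comparing $\phi_k$ from Definition \ref{tildephik} with Miller's splitting map, which via the homotopy classification of Section \ref{TheHomotopyTypeofCertainMapsinFunctionalCalculus} should boil down to a degree computation since both maps arise from the same eigenvalue data.

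The main obstacle is constructing the null-homotopy of $\delta_k$ with all the required properties---proper, $G$-equivariant, and genuinely relying on the subrepresentation hypothesis, since $\delta_k$ is not null in general. A natural starting point is the case $d_0 = 2$, where the tower has only one non-trivial triangle and the null-homotopy can hopefully be written down by hand; evidence from that special case should guide the construction for the general inductive step.
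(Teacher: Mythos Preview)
The statement you are attempting to prove is labelled a \emph{Conjecture} in the paper, and immediately after stating it the author writes ``We so far have been unable to prove this.'' There is therefore no proof in the paper to compare your proposal against; the paper provides only partial evidence (Propositions~\ref{homeomorphicondensesubsets}, \ref{directdimn22proof}, \ref{topequivalence}) and an equivalent reformulation (Conjecture~\ref{subrephomotopy}).

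Your strategy is a reasonable line of attack, but both of its essential steps are precisely the open problems the paper isolates. The second step---showing that, after splitting, $res_k$ corresponds to the identity on the cofibre, which you reduce to comparing $\phi_k$ with Miller's splitting map---is exactly Conjecture~\ref{subrephomotopy}, which the paper shows in \S\ref{FollowuptotheSecondConjecture} is equivalent to Conjecture~\ref{equivofmillersplittings} itself. The discussion in \S\ref{AnEquivalentConjecture} explains why the hoped-for degree argument does not go through: the two maps $g_0,g_1$ are written down explicitly, a homotopy between their projections to $s(V_0)$ is constructed, but lifting that homotopy to $\tilde X_k$ requires controlling injectivity of an isometry on a varying subspace $P_k(\delta_t)$, and this is where the argument stalls.

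Your first step---a direct null-homotopy of $\delta_k$---is not addressed by your inductive hypothesis at all (knowing $res_{k-1}$ is an equivalence says nothing about $\delta_k$ a priori), so it would have to be produced from scratch using only $V_0\leqslant V_1$. Note also that the paper's partial results run the induction \emph{downward} from $k=d_0-1$ (Propositions~\ref{topequivalence} and \ref{thetowerisequivtothefiltration}), because $\phi_{d_0}$ genuinely coincides with Miller's top splitting map; your upward induction from $k=0$ has no analogous anchor, since $res_0$ being the identity gives no leverage on $\delta_1$. Finally, even if $\delta_k$ were null, the resulting splitting $X_k\simeq X_{k-1}\vee G_k$ depends on a choice of null-homotopy, so establishing an abstract equivalence $X_k\simeq F_k(\Ell)_\infty$ would not by itself show that the specific map $res_k$ realises it---which is why the second open step cannot be avoided.
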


We so far have been unable to prove this, however, we have been able to glean some evidence towards it and have proved certain special cases. Firstly we have the below result:

\begin{prop}\label{homeomorphicondensesubsets} $res_{k}$ provides a homeomorphism when restricted to certain open dense subsets.
\end{prop}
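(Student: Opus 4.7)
The plan is to give explicit descriptions of both sides and of the map, and then verify the homeomorphism statement directly on the generic strata.

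First I would replace the stable map $res_k$ by its unstable representative. Combining the identification $\tilde X_{d_0}\cong S^{s(V_0)}\wedge \mathcal L(V_0,V_1)_\infty$ from Proposition \ref{thetoptriangleiswhatwethink} with the tower projections $\tilde\pi_i\colon(\alpha,\theta)\mapsto(\alpha,\theta|_{P_{i-1}(\alpha)})$, the suspension $\Sigma^{s(V_0)}res_k$ is represented by the composite
\[
S^{s(V_0)}\wedge F_k(\mathcal L(V_0,V_1))_\infty\;\hookrightarrow\; \tilde X_{d_0}\;\xrightarrow{\tilde\pi_{k+1}\circ\cdots\circ\tilde\pi_{d_0}}\;\tilde X_k,\qquad (\alpha,\gamma)\longmapsto(\alpha,\gamma|_{P_k(\alpha)}).
\]
On the target I will take the open dense subset $\tilde X'_k\setminus Y'_k$ and use Proposition \ref{thetaEalphaextended} to identify it with $\mathcal I'_k$, a smooth bundle over $G_k(V_0)$ with fibre $\inj(W,V_1)\times s(W^\perp)$. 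On the domain I take $U:=\{(\alpha,\gamma)\in s(V_0)\times(F_k(\mathcal L)\setminus F_{k-1}(\mathcal L)):\dim P_k(\alpha)=k\}$, which is open and dense in $S^{s(V_0)}\wedge F_k(\mathcal L)_\infty$ and whose image lies in $\tilde X'_k\setminus Y'_k$.

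Next I would verify that the two real dimensions agree. Using Miller's diffeomorphism $F_k(\mathcal L)\setminus F_{k-1}(\mathcal L)\cong E(\mathfrak u(T)\oplus\Hom(T,V_2))$ over $G_k(V_0)$ one computes $\dim_\Real U = d_0^{\,2}+2kd_1-k^2$, and the same count is immediate from the bundle description of $\mathcal I'_k$. I would then show both sides project to $G_k(V_0)$ compatibly (via $\alpha\mapsto P_k(\alpha)$ on $U$ and $(W,\beta,\psi)\mapsto W$ on $\mathcal I'_k$) and construct an inverse $\mathcal I'_k\to U$ fibrewise over $W$: given $(W,\beta,\psi)$, set $\alpha|_W:=\log\rho(\beta)$, $\alpha|_{W^\perp}:=e_0(\alpha|_W)-\Exp(-\psi)$, and $\gamma|_W:=-\sigma(\beta)\in\mathcal L(W,V_1)$ via Proposition \ref{themapsigma}. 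The maps $\rho$ and $\sigma$ are mutually inverse to $\alpha|_W\mapsto\Exp(\alpha|_W)$ and $\gamma|_W\mapsto-\gamma|_W\Exp(\alpha|_W)$, so the $\alpha$- and $\gamma|_W$-parts of the inversion are routine; the same applies to the $\psi$-component via the functional calculus of \S\ref{TheStandardTheory}.

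The hard step is extending $\gamma|_W\in\mathcal L(W,V_1)$ to an isometry $\gamma\colon V_0\to V_1$ lying in $F_k\setminus F_{k-1}$. This is exactly the place where the subrepresentation hypothesis $V_0\leqslant V_1$ enters: a generic $\theta\in\mathcal L(W,V_1)$ forces a unique $K_\gamma:=\ker(\gamma-I)$ of dimension $d_0-k$ transverse to $W$, and hence a unique extension with $\gamma|_{K_\gamma}=I|_{K_\gamma}$. The main obstacle is showing that the set on which this transversality fails is closed of positive codimension inside $\mathcal I'_k$, so that after removing its preimage on both sides one obtains genuinely open dense subsets between which $\Sigma^{s(V_0)}res_k$ is a bijective smooth map of equidimensional manifolds with smooth inverse; the homeomorphism claim follows. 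I expect this transversality estimate, rather than any computational verification, to be the technical heart of the argument.
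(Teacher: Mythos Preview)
Your detour through $\mathcal I'_k$ is a red herring: the ``inverse'' you write down, sending $(W,\beta,\psi)$ to $(\alpha,\gamma|_W)$ via $\log\rho$, $\Exp$, and $-\sigma$, is precisely the homeomorphism $\mathfrak r_k$ of Proposition~\ref{thetaEalphaextended}, which undoes $\mathfrak q_k$, not $res_k$. After composing with $\mathfrak r_k$ you are back to the original problem of extending $\theta=\gamma|_W\in\mathcal L(W,V_1)$ to $\gamma\in F_k(\mathcal L(V_0,V_1))$, and your proposal defers this entirely to an unspecified transversality estimate. Moreover your formulation of that estimate is circular: you speak of $K_\gamma=\ker(\gamma-I)$ being transverse to $W$, but $K_\gamma$ depends on the extension $\gamma$ you are trying to construct, not on $\theta$ alone; you have not said what condition on $\theta$ guarantees existence and uniqueness of such a $\gamma$. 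The dimension count is correct but does nothing toward producing the inverse.

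The paper's argument is both more elementary and more explicit. It does not pass through $\mathcal I'_k$ at all. Instead, for fixed $\alpha$ with $\dim P_k(\alpha)=k$, it decomposes $V_0=W\oplus W^\perp$ and $V_1=W\oplus W^\perp\oplus V_2$ (with $W=P_k(\alpha)$), writes $\varphi$ as a $3\times 2$ block matrix and $\theta$ as its first column, and takes as the open dense condition that the block $\theta_1-I\colon W\to W$ be invertible. Under this hypothesis the unique isometric extension in $F_k$ is given by an explicit closed formula: set $\xi:=(\theta_1^{\dag}-I)^{-1}\theta_2^{\dag}\colon W^\perp\to W$ and take the second column of $\varphi$ to be $((\theta_1-I)\xi,\ \theta_2\xi+I,\ \theta_3\xi)$. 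One checks directly that this is an isometry, that $(\varphi-I)(x,y)=(\theta-I)(x+\xi y)$ so $\operatorname{rank}(\varphi-I)=k$, and that the isometry condition forces any competing extension to have the same $\xi$. No transversality or codimension argument is needed.
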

\begin{proof} We prove this unstably, considering the below map, which abusing notation somewhat we also call $res_{k}$:
$$S^{\svo}\wedge F_{k}(\Ell)_{\infty}\to \tilde{X}_{k}$$
$$(\alpha,\varphi)\mapsto (\alpha,\varphi|_{P_{k}(\alpha)}).$$
Next note that for each fixed $\alpha$ we can decompose $V_{0}$ into $P_{k}(\alpha)\oplus P_{k}(\alpha)^{\bot}$ and $V_{1}$ into $P_{k}(\alpha)\oplus P_{k}(\alpha)^{\bot}\oplus V_{2}$. Hence we can rewrite $(\alpha,\varphi)\in \svo\times F_{k}(\Ell)$ as $(\alpha,(\varphi_{ij}))$ where $(\varphi_{ij})$ is the below $3\times 2$-matrix:
$$\left(\begin{array}{cc}\varphi_{11}:P_{k}(\alpha)\to P_{k}(\alpha)&\varphi_{12}:P_{k}(\alpha)^{\bot}\to P_{k}(\alpha)\\
\varphi_{21}:P_{k}(\alpha)\to P_{k}(\alpha)^{\bot}&\varphi_{22}:P_{k}(\alpha)^{\bot}\to P_{k}(\alpha)^{\bot}\\
\varphi_{31}:P_{k}(\alpha)\to V_{2}&\varphi_{32}:P_{k}(\alpha)^{\bot}\to V_{2}\end{array}\right).$$
Similarly we can rewrite $(\alpha,\theta)\in \tilde{X}_{k}'$ as $(\alpha,(\theta_{i}))$ where $(\theta_{i})$ is the below vector:
$$\left(\begin{array}{c}\theta_{1}:P_{k}(\alpha)\to P_{k}(\alpha)\\
\theta_{2}:P_{k}(\alpha)\to P_{k}(\alpha)^{\bot}\\
\theta_{3}:P_{k}(\alpha)\to V_{2}\end{array}\right).$$
Under this identification it is clear that $res_{k}$ is the map sending the matrix $(\varphi_{ij})$ to its first column. Let $A$ and $B$ be the following spaces:
$$A:=\{(\alpha,\varphi)\in\svo\times F_{k}(\Ell):\Dim(P_{k}(\alpha))=k,\varphi_{11}-I\text{ is invertible}\}$$
$$B:=\{(\alpha,\theta)\in\tilde{X}_{k}':\Dim(P_{k}(\alpha))=k,\theta_{1}-I\text{ is invertible}\}.$$
$A$ and $B$ are open and dense in $\svo\times F_{k}(\Ell)$ and $\tilde{X}_{k}'$. Moreover $res_{k}$ clearly restricts to a map $A\to B$. We show that this is a homeomorphism, taking basepoints will then complete the proof. Let $(\alpha,\theta)\in B$, we show that it comes from an element of $A$. Since $\theta_{1}-I$ is invertible so is $\theta^{\dag}_{1}-I$. Let $\xi:=(\theta_{1}^{\dag}-I)^{-1}\theta_{2}^{\dag}$. We can now define $\xi_{1}:P_{k}(\alpha)^{\bot}\to P_{k}(\alpha)$ by $\xi_{1}:=(\theta_{1}-I)\xi$. Similarly, set $\xi_{2}:P_{k}(\alpha)^{\bot}\to P_{k}(\alpha)^{\bot}$ by $\xi_{2}:=\theta_{2}\xi+I$ and $\xi_{3}:P_{k}(\alpha)^{\bot}\to
V_{2}$ by $\xi_{3}:=\theta_{2}\xi$. Then we claim the below pair is in $A$:
$$\left(\alpha,\left(\begin{array}{cc}\theta_{1}&\xi_{1}\\
\theta_{2}&\xi_{2}\\
\theta_{3}&\xi_{3}\end{array}\right)\right).$$

We want to check firstly that the matrix is an isometry. Following through with the definitions and calculating the algebra easily demonstrates the below identity, which is enough to prove the claim:
$$\left(\begin{array}{ccc}\theta_{1}^{\dag}&\theta_{2}^{\dag}&\theta_{3}^{\dag}\\
\xi_{1}^{\dag}&\xi_{2}^{\dag}&\xi_{3}^{\dag}
\end{array}\right).\left(\begin{array}{cc}\theta_{1}&\xi_{1}\\
\theta_{2}&\xi_{2}\\
\theta_{3}&\xi_{3}\end{array}\right)=\left(\begin{array}{cc}I&0\\0&I\end{array}\right).$$

We now need to check that the matrix is in $F_{k}(\Ell)$, i.e. that the rank of the below matrix is at most $k$:
$$\left(\begin{array}{cc}\theta_{1}-I&\xi_{1}\\
\theta_{2}&\xi_{2}-I\\
\theta_{3}&\xi_{3}\end{array}\right)=\left(\begin{array}{cc}\theta_{1}-I&(\theta_{1}-I)\xi\\
\theta_{2}&\theta_{2}\xi\\
\theta_{3}&\theta_{3}\xi\end{array}\right).$$

Let $(x,y)\in V_{0}=P_{k}(\alpha)\oplus P_{k}(\alpha)^{\bot}$. Then applying the matrix above to $(x,y)$ is easily seen to be the same as applying $\theta-I$ to $(x,\xi(y))$. Hence it is clear that the matrix above has image $(\theta-I)(W)$ which is of dimension $k$. This shows that our built pre-image is in $A$ as claimed. Hence $res_{k}:A\to B$ surjective.

We now show the map is injective. This will then prove the claim. Let $(\alpha,\varphi)$ map to $(\alpha,\theta)$. Then $\varphi$ must be of the form below:
$$\left(\begin{array}{cc}\theta_{1}&\varphi_{12}\\
\theta_{2}&\varphi_{22}\\
\theta_{3}&\varphi_{32}\end{array}\right).$$
Moreover, as $\theta_{1}-I$ is invertible then $\theta_{1}-I$ has rank $k$. As $\varphi-I$ has rank at most $k$ we observe that it factors through $\theta-I$ and hence uniquely takes the form below for some $\xi'$:
$$\left(\begin{array}{cc}\theta_{1}-I&(\theta_{1}-I)\xi'\\
\theta_{2}&\theta_{2}\xi'\\
\theta_{3}&\theta_{3}\xi'\end{array}\right).$$

Chasing the definitions, calculating $\varphi^{\dag}\phi$ and recalling that this is $I$ leads to the conclusion that $\xi'$ must actually be $(\theta_{1}^{\dag}-I)^{-1}\theta_{2}^{\dag}$. This shows that $res_{k}:A\to B$ is injective and the proposition follows.
\end{proof}

This is not quite enough to prove the conjecture, but it does reduce it somewhat. Consider the map $res_{k}'$, the restriction of $res_{k}$ away from where it is a homeomorphism. Then if we can show that $res_{k}'$ is an equivalence we can retrieve by a standard cofibre sequences argument that $res_{k}$ is a homotopy equivalence. We can prove this in a special case:

\begin{prop}\label{directdimn22proof} Let $V_{0}=V_{1}$ be dimension $2$. Then $res_{1}'$ and hence $res_{1}$ are homotopy equivalences.
\end{prop}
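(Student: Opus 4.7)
By Proposition~\ref{homeomorphicondensesubsets}, $res_{1}$ restricts to a homeomorphism on the open dense subsets $A\subset\svo\times F_{1}(\Ell)$ and $B\subset\tilde X_{1}'$, and the open inclusions produce cofibre sequences
\[
(C_{A})_{\infty} \to (\svo\times F_{1}(\Ell))_{\infty} \to A_{\infty}, \qquad (C_{B})_{\infty} \to \tilde X_{1} \to B_{\infty},
\]
on which $res_{1}$ induces a homeomorphism between the right-hand terms. A standard five-lemma comparison of cofibre sequences then reduces the proposition to showing that $res_{1}'\colon (C_{A})_{\infty} \to (C_{B})_{\infty}$ is a homotopy equivalence.

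First I would pin down $C_{B}$ concretely in the dimension-$2$ setting. It consists of pairs $(\alpha,\theta)$ with either $\alpha$ scalar (forcing $\theta = 0$) or $\alpha$ non-scalar with $\theta_{1} = \mathrm{id}$; the isometry condition combined with $V_{0}=V_{1}$ (so $V_{2}=0$) forces $\theta$ in the second case to be the inclusion $P_{1}(\alpha)\hookrightarrow V_{0}$. Hence the first projection $(\alpha,\theta)\mapsto\alpha$ is a continuous bijection $C_{B}\to\svo$, and since $\lambda_{1}(\alpha)\to 0$ as $\alpha$ becomes scalar, the subspace description of Proposition~\ref{topologisingthingy} shows its inverse is also continuous. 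Thus $(C_{B})_{\infty}\cong S^{\svo}$ is a $4$-sphere, and under this identification $res_{1}'$ is the compactification of the projection $p\colon C_{A}\to\svo$, $(\alpha,\varphi)\mapsto\alpha$.

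The heart of the argument is to show $p_{\infty}\colon (C_{A})_{\infty}\to S^{\svo}$ is a homotopy equivalence. The plan is to split $C_{A}$ into the closed scalar piece $C_{A}^{\mathrm{scal}} = \Real\times F_{1}(U(2))$ and its open non-scalar complement; the condition $\varphi_{11}=\mathrm{id}$ on the line $P_{1}(\alpha)$ together with the isometry property forces all off-diagonal entries of $\varphi$ to vanish, so $\varphi$ fixes $P_{1}(\alpha)$ and the non-scalar piece is parameterized by the two distinct eigenvalues of $\alpha$, the top eigenline in $\Complex P^{1}$, and a unitary scalar on the orthogonal line, yielding $C_{A}^{\mathrm{non}}\cong \Real^{2}\times\Complex P^{1}\times S^{1}$. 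The induced cofibre sequence
\[
(C_{A}^{\mathrm{scal}})_{\infty} \to (C_{A})_{\infty} \to (C_{A}^{\mathrm{non}})_{\infty},
\]
together with Miller's stable splitting $F_{1}(U(2))_{+}\simeq S^{0}\vee\Sigma\Complex P^{1}_{+}$, determines both outer terms, and comparing with the Pontryagin--Thom description of $\delta_{1}$ from Proposition~\ref{thebottommapisPT} identifies the connecting map and forces $(C_{A})_{\infty}$ to collapse stably to a single $S^{4}$ with $p_{\infty}$ realizing a degree-one equivalence to $(C_{B})_{\infty}$.

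The principal obstacle is this last step: explicitly computing the connecting map in the stable cofibre sequence for $(C_{A})_{\infty}$ and verifying that the resulting extension collapses to $S^{4}$ in the way that $res_{1}'$ requires. This is an extension problem in the stable homotopy category, but in this low-dimensional special case ($d_{0}=2$) the explicit geometric content of the Pontryagin--Thom description of $\delta_{1}$ should make the required computation directly tractable.
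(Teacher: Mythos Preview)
Your opening reduction is exactly the paper's: you correctly identify $(C_{B})_{\infty}\cong S^{\svo}$ and recognise $res_{1}'$ as the compactified projection $(\alpha,\varphi)\mapsto\alpha$. From that point on, however, your route and the paper's diverge substantially.

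The paper does not touch the Miller splitting of $F_{1}(U(2))$, cofibre decompositions into scalar and non-scalar parts, or the Pontryagin--Thom description of $\delta_{1}$. Instead it desuspends once via $\svo\cong\Real\times\sovo$ and writes down an explicit surjection
\[
g\colon [0,\infty]\times S^{1}\times PV\longrightarrow \bar{A}_{\infty},\qquad (t,\lambda,L)\longmapsto\bigl(0_{L}\oplus(-t)I_{L^{\bot}},\,I_{L}\oplus\lambda I_{L^{\bot}}\bigr),
\]
which realises $\bar{A}_{\infty}$ as a concrete pushout (collapsing $\{0\}\times\{1\}\times PV$ and $\{\infty\}\times S^{1}\times PV$ to two points). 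A parallel, simpler pushout describes $S^{\sovo}$, and $res_{1}''$ is visibly the map of pushouts induced by forgetting the $S^{1}$-coordinate. The equivalence then follows from comparing the two pushout squares directly. This is an entirely unstable, hands-on argument; no stable splitting or connecting-map computation is needed.

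By contrast, your proposal is a strategy rather than a proof: you explicitly flag the connecting-map computation as ``the principal obstacle'' and do not carry it out. That computation is genuinely nontrivial---your pieces are $(C_{A}^{\mathrm{scal}})_{\infty}\simeq S^{1}\vee S^{2}\vee S^{4}$ and $(C_{A}^{\mathrm{non}})_{\infty}\simeq S^{2}\vee S^{3}\vee S^{4}\vee S^{5}$ stably, so to force the middle term to be an $S^{4}$ you must pin down several components of the attaching map precisely, and then separately argue that $p_{\infty}$ hits the surviving $S^{4}$ with degree one. None of this is done. Even if it were, the argument would only yield a \emph{stable} equivalence, whereas the paper's pushout comparison gives an honest unstable homotopy equivalence.

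In short: your reduction is right, but the remaining plan is both harder than necessary and incomplete. The paper's explicit parameterisation bypasses the extension problem you identify as the main difficulty.
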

\begin{proof} We again work unstably, let $V_{0}=V_{1}=V$. Consider $res_{1}':((s(V)\times F_{1}(\mathcal{L}(V,V)))\backslash A)_{\infty}\to (\tilde{X}_{1}'\backslash B)_{\infty}$. It is easy to see that the below identification holds:
$$(s(V)\times F_{1}(\mathcal{L}(V,V)))\backslash A\cong\{(\alpha,\varphi)\in s(V)\times F_{1}(\mathcal{L}(V,V)):\varphi_{11}=I\}.$$
Denote this space by $A^{c}$. Moreover, consider $(\alpha,\theta)\in\tilde{X}_{1}'\backslash B$. If $P_{1}(\alpha)$ is empty then $\theta$ is zero. Also, if $P_{1}(\alpha)$ is nonempty then $\theta$ is forced to be $I_{P_{1}(\alpha)}$. Thus we can identify $\tilde{X}_{1}'\backslash B$ with $s(V)$. Hence we are looking at the map below:
$$res_{1}':A^{c}_{\infty}\to S^{s(V)}$$
$$(\alpha,\varphi)\mapsto \alpha.$$

From here we can trivially remove a suspension from the $s(V)$-coordinate. We refer back to the proof of Proposition \ref{thebottommapisPT} - we take again the definition of $s_{0}(V)$ to be the space of selfadjoints of $V$ that have zero top eigenvalue. Thus we instead consider the following map:
$$res_{1}'':\bar{A}_{\infty}:=\{(\alpha,\varphi)\in s_{0}(V)\times F_{1}(\mathcal{L}(V,V)):\varphi_{11}=I\}_{\infty}\to s_{0}(V)_{\infty}.$$

Let $su(1)$ denote the unit complex numbers. We now define a map $g:[0,\infty]\times su(1) \times PV\to\bar{A}_{\infty}$ given by $(t,\lambda,L)\mapsto (0_{L}\oplus -tI_{L^{\bot}},I_{L}\oplus \lambda I_{L^{\bot}})$ and $(\infty,\lambda,L)\mapsto\infty$. This is surjective, and away from the endpoints of $[0,\infty]$ is injective. At $t=0$ it is again injective away from $\lambda=1$, but $(0,1,L)$ has the image $(0,I)$ independent of $L$. We thus have the below pushout:
$$\xymatrix{(\{0\}\times\{1\}\times PV)\coprod(\{\infty\}\times su(1)\times PV)\ar@{->>}[d]\ar[r]&[0,\infty]\times su(1)\times PV\ar@{->>}[d]\\
\{0\}\coprod\{\infty\}\ar@{ >->}[r]&\bar{A}_{\infty}}$$ 

There is also a standard map $[0,\infty]\times PV\to S^{s_{0}(V)}$ given by $(t,L)\mapsto 0_{L}\oplus-tI_{L^{\bot}}$. We can thus also form another pushout:
$$\xymatrix{\{0,\infty\}\times PV\ar@{->>}[d]\ar[r]&[0,\infty]\times PV\ar@{->>}[d]\\
\{0\}\coprod\{\infty\}\ar@{ >->}[r]& S^{s_{0}(V)}}$$

We can collapse out along the top of the first pushout to get the second pushout, which demonstrates the equivalence of $res_{1}''$ via comparing the two squares. This implies that $res_{1}'$ and hence $res_{1}$ are equivalences.
\end{proof}

Finally, we demonstrate one other equivalence. We note that this covers the result above, technically, but we include both due to the slightly tangential nature of the below proof.

\begin{prop}\label{topequivalence} $res_{d_{0}-1}$ is a stable equivalence.
\end{prop}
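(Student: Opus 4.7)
The plan is to compare Miller's splitting with the top cofibre triangle of the tower from Theorem \ref{themaintheorem}. Write $V_2 := V_1 \ominus V_0$, so that $V_1 = V_0 \oplus V_2$. In the subrepresentation case $F_{d_0}(\Ell) = \Ell$, and Miller's theorem (Appendix \ref{ch:appendixa}) gives a cofibre sequence
$$F_{d_0-1}(\Ell)_\infty \xrightarrow{i} \Ell_\infty \xrightarrow{q_M} S^{s(V_0) \oplus \Hom(V_0,V_2)}$$
together with a stable section $s_M$ of $q_M$. Meanwhile, the top triangle of Theorem \ref{themaintheorem}, destabilised and rotated, reads
$$S^{s(V_0) \oplus \Hom(V_0,V_2)} \xrightarrow{\phi_{d_0}} \Ell_\infty \xrightarrow{\pi_{d_0}} X_{d_0-1},$$
using $\Hom(T,V_1-V_0) \oplus s(T) = \Hom(V_0,V_2) \oplus s(V_0)$ at the unique point of $G_{d_0}(V_0)$.

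The whole argument reduces to showing that the composite $r := q_M \circ \phi_{d_0} : S^{s(V_0) \oplus \Hom(V_0,V_2)} \to S^{s(V_0) \oplus \Hom(V_0,V_2)}$ is a stable equivalence. Granting this, $r^{-1} \circ q_M$ is a stable retraction of $\phi_{d_0}$, so Proposition \ref{howacofibseqsplits} applied to the tower sequence gives $\Ell_\infty \simeq S^{s(V_0)\oplus\Hom(V_0,V_2)} \vee X_{d_0-1}$. Combined with Miller's decomposition $\Ell_\infty \simeq S^{s(V_0)\oplus\Hom(V_0,V_2)} \vee F_{d_0-1}(\Ell)_\infty$, a chase of the two splitting isomorphisms identifies $res_{d_0-1} = \pi_{d_0} \circ i$ with the induced stable equivalence between the complementary summands.

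Both $\phi_{d_0}$ and $s_M$ are Pontryagin-Thom collapse maps (compare Proposition \ref{thebottommapisPT} for a similar identification at the other end of the tower). The map $\phi_{d_0}$ arises from the equivariant open embedding $\kappa': s(V_0) \times \Ell \hookrightarrow \Hom(V_0,V_1)$, $(\alpha,\theta) \mapsto -\theta \circ \Exp(\alpha)$, of Proposition \ref{thetaEalpha}, while $s_M$ parametrises the top stratum $\Ell \setminus F_{d_0-1}(\Ell)$ of isometries with no fixed vector via a Cayley-transform-type embedding. To prove $r$ is a stable equivalence I would produce an equivariant isotopy between the restriction of $\kappa'$ to the open locus where $\theta$ has no eigenvalue $1$ and Miller's parametrisation, using straight-line deformations in the functional calculus of Chapter \ref{ch:ch3}.

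The main obstacle is the explicit construction of this equivariant isotopy of open embeddings into a common fixed-vector-free open subset of $\Ell$, ensuring compatibility with the sign and exponential twists built into $\kappa$. An alternative route that may bypass a direct geometric comparison is to compute the degree of $r$ on the geometric fixed-point spheres $(S^{s(V_0)\oplus\Hom(V_0,V_2)})^H$ for all closed subgroups $H \leqslant G$; if each such degree is a unit, the equivariant Whitehead theorem (Proposition \ref{thewhiteheadthm}) forces $r$ to be an equivalence.
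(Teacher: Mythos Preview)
Your overall strategy matches the paper's: compare Miller's top cofibre sequence with the top triangle of the tower via the composite $q_M\circ\phi_{d_0}$. The gap is that you have misidentified Miller's top splitting map and therefore propose to do far more work than is needed. In this paper's Appendix~\ref{ch:appendixa}, the unstable representative of $\sigma_{d_0}$ is \emph{defined} to be the collapse $\kappa^{!}:S^{\aich}\to S^{\svo}\wedge\Ell_{\infty}$ of Proposition~\ref{thetaEalpha}; the Cayley transform enters only in the construction of $\tau_{k}$, not of $\sigma_{d_0}$. But Definition~\ref{topphi} sets $\tilde\phi_{d_0}$ equal to the very same $\kappa^{!}$. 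Thus $\phi_{d_0}=\sigma_{d_0}$ on the nose, and the composite $r=q_M\circ\phi_{d_0}=\tau_{d_0}\circ h_{d_0}\circ\sigma_{d_0}$ is homotopic to the identity by Proposition~\ref{thecompositionsplits} itself. No isotopy of open embeddings and no fixed-point degree computation is required.

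Once $r\simeq\mathrm{id}$ is known, your ``chase of the two splitting isomorphisms'' is too vague: two abstract wedge decompositions $\Ell_\infty\simeq G_{d_0}\vee X_{d_0-1}$ and $\Ell_\infty\simeq G_{d_0}\vee F_{d_0-1}(\Ell)_\infty$ do not by themselves force $res_{d_0-1}$ to be the induced equivalence between complementary summands. The paper avoids this by feeding the two cofibre sequences and the identity self-map of $G_{d_0}$ directly into the octahedral axiom, which produces a cofibre sequence
\[
F_{d_0-1}(\Ell)_\infty \xrightarrow{\ res_{d_0-1}\ } X_{d_0-1} \longrightarrow \mathrm{pt},
\]
whence $res_{d_0-1}$ is an equivalence by Proposition~\ref{zerocofibre}. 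This is both shorter and actually pins down the map.
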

\begin{proof} We use a shorthand of $G_{d_{0}}$ for $G_{d_{0}}(V_{0})^{\Hom(T,V_{1}-V_{0})\oplus s(T)}$ for notational convenience. Recall three maps, firstly we recall our stable map $\phi_{d_{0}}:G_{d_{0}}\to \Ell_{\infty}$ from Theorem
\ref{themaintheorem}. Next we have from the proof of the Miller splitting the below collapse and homeomorphism. Details of these maps are found in the usual literature, we label the maps to be consistent with Appendix \ref{ch:appendixa}:
$$h_{d_{0}}:\Ell_{\infty}\to \frac{\Ell}{F_{d_{0}-1}(\Ell)}_{\infty}$$
$$\tau_{d_{0}}:\frac{\Ell}{F_{d_{0}-1}(\Ell)}_{\infty}\cong G_{d_{0}}.$$
We hence have the composition $\tau_{d_{0}}\circ h_{d_{0}}\circ \phi_{d_{0}}:G_{d_{0}}\to G_{d_{0}}$. We claim this is the identity. This is easy to see, however, as $\phi_{d_{0}}$ matches exactly the description of Miller's top-level splitting map and hence the composite is the identity - this is intrinsic to the proof of Miller's theorem. We can thus put together the below diagram of cofibre sequences, $\text{pt.}$ representing the contractible cofibre of the identity map:
$$\xymatrix{&&\text{pt.}\ar[dr]|\bigcirc&&\\
&G_{d_{0}}\ar[ur]\ar[dl]|\bigcirc&&\ar[ll]_{\text{id.}}G_{d_{0}}\ar[dl]^{\phi_{d_{0}}}&\\
F_{d_{0}-1}(\Ell)_{\infty}\ar@/_1pc/[rrrr]_{res_{d_{0}-1}}\ar[rr]&&\Ell_{\infty}\ar[ul]^{\tau_{d_{0}}\circ
h_{d_{0}}}\ar[rr]&&X_{d_{0}-1}\ar[ul]|\bigcirc}$$

We can build this diagram in the equivariant stable homotopy category. There, we can use the the octahedral axiom to add morphisms around the edge of the diagram and build the below cofibre sequence:
$$F_{d_{0}-1}(\Ell)_{\infty}\overset{res_{d_{0}-1}}{\longrightarrow}X_{d_{0}-1}\longrightarrow \text{pt.}.$$

Hence the map $res_{d_{0}-1}$ has contractible cofibre. It follows from Proposition \ref{zerocofibre} that it is an equivalence.
\end{proof}

\section{Conjecture Follow-up}\label{ConjectureFollowup}

We now follow up with what would happen should Conjecture \ref{equivofmillersplittings} hold. We firstly prove the below lemma:

\begin{lem}\label{splittingmaps} Recall from the Miller splitting the below collapse and homeomorphism. As we have done previously we use the labels of Appendix \ref{ch:appendixa} for these maps:
$$h_{k}:F_{k}(\Ell)_{\infty}\to \frac{F_{k}(\Ell)}{F_{k-1}(\Ell)}_{\infty}$$
$$\tau_{k}:\frac{F_{k}(\Ell)}{F_{k-1}(\Ell)}_{\infty}\cong G_{k}(V_{0})^{\Hom(T,V_{1}-V_{0})\oplus s(T)}.$$
Further, recall the stable map $\phi_{k}:G_{k}(V_{0})^{\Hom(T,V_{1}-V_{0})\oplus s(T)}\to X_{k}$. Finally, assume $res_{k}$ is a homotopy equivalence and let $res_{k}^{-1}$ denote the homotopy inverse. Then we have the below equivalence:
$$\tau_{k}\circ h_{k}\circ res_{k}^{-1}\circ \phi_{k}\simeq \text{id}:G_{k}(V_{0})^{\Hom(T,V_{1}-V_{0})\oplus s(T)}\to G_{k}(V_{0})^{\Hom(T,V_{1}-V_{0})\oplus s(T)}.$$
\end{lem}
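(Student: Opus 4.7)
The strategy is to exploit the splitting that the assumed equivalence provides. Conjecture~\ref{equivofmillersplittings} supplies equivalences $res_k$ and $res_{k-1}$, and a brief unpacking of the definitions shows that the projection $\Ell_\infty\to X_{k-1}$ factors as $\pi_k$ composed with the projection $\Ell_\infty\to X_k$, so that $\pi_k\circ res_k\circ j_k\simeq res_{k-1}$ where $j_k:F_{k-1}(\Ell)_\infty\to F_k(\Ell)_\infty$ is the Miller inclusion. Consequently $r:=res_{k-1}^{-1}\circ\pi_k\circ res_k$ is a retraction of $j_k$.

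Next, I would apply Proposition~\ref{howacofibseqsplits} to the Miller cofibre sequence
\[F_{k-1}(\Ell)_\infty\xrightarrow{j_k} F_k(\Ell)_\infty\xrightarrow{\tau_k\circ h_k} G_k(V_0)^{\Hom(T,V_1-V_0)\oplus s(T)}\]
together with the retraction $r$; the sequence splits, Miller's splitting map $s_k$ is identified with the inclusion of the complementary wedge summand, and in particular $\tau_k\circ h_k\circ s_k\simeq\text{id}$ together with $r\circ s_k\simeq *$. Setting $f:=res_k^{-1}\circ\phi_k$, the null-composition $\pi_k\circ\phi_k\simeq *$ in the cofibre triangle of Theorem~\ref{themaintheorem} yields $r\circ f = res_{k-1}^{-1}\circ\pi_k\circ\phi_k\simeq *$, so $f$ lies entirely in the $G_k(V_0)^{\Hom(T,V_1-V_0)\oplus s(T)}$ wedge summand. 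The splitting then forces $f\simeq s_k\circ\Phi$, where $\Phi:=\tau_k\circ h_k\circ f$ is precisely the composite $\tau_k\circ h_k\circ res_k^{-1}\circ\phi_k$ we wish to identify with the identity.

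The hard part will be to conclude $\Phi\simeq\text{id}$, since the reduction above pins $f$ down only up to an unknown self-equivalence of the Thom spectrum. My plan is downward induction on $k$: the base case $k=d_0$ is handled by Proposition~\ref{topequivalence}, where $\phi_{d_0}$ coincides directly with Miller's top-level splitting map. For the inductive step I would combine the identification at level $k+1$ with the ladder of cofibre triangles supplied by the conjecture, applying the octahedral axiom in the equivariant stable homotopy category to propagate the identity statement down. The principal obstacle is that a purely formal octahedral argument may only force $\Phi$ to be a self-equivalence rather than the identity; resolving this is expected to require a direct geometric comparison between the explicit formula for $\tilde\phi_k$ in Definition~\ref{tildephik}, built from the functional calculus of Chapter~\ref{ch:ch3}, and the Miller splitting map constructed in Appendix~\ref{ch:appendixa}, with an explicit homotopy extracted through the functional calculus variations of $\S$\ref{Variations}.
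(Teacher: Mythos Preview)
Your proposal overcomplicates the argument and, more importantly, your concern that ``a purely formal octahedral argument may only force $\Phi$ to be a self-equivalence rather than the identity'' is unfounded. In fact a single direct application of the octahedral axiom already yields $\Phi\simeq\text{id}$, with no induction and no geometric comparison of explicit formulae required.

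Here is the point you are missing. Write $f$ for the composite $\tau_k\circ h_k\circ res_k^{-1}\circ\phi_k:G_k\to G_k$ and consider the two cofibre triangles sharing the vertex $F_k(\Ell)_\infty\simeq X_k$: the Miller triangle $F_{k-1}(\Ell)_\infty\to F_k(\Ell)_\infty\xrightarrow{\tau_k\circ h_k}G_k$ and the tower triangle $G_k\xrightarrow{\phi_k}X_k\to X_{k-1}$. The composite along the middle is precisely $f$, so the octahedral axiom produces a cofibre sequence
\[
F_{k-1}(\Ell)_\infty\longrightarrow X_{k-1}\longrightarrow C_f,
\]
and a direct check shows the first map is exactly $res_{k-1}$. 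Since you are already assuming Conjecture~\ref{equivofmillersplittings}, $res_{k-1}$ is an equivalence, so $C_f$ is contractible and Proposition~\ref{zerocofibre} gives $f\simeq\text{id}$ immediately. This is the paper's proof: one octahedron, no further work. Your detour through the splitting of Proposition~\ref{howacofibseqsplits} is not wrong, but it leaves you with an unidentified self-map and then sends you looking for tools you do not need; the octahedral axiom applied \emph{before} splitting identifies the cofibre of $f$ directly rather than merely constraining $f$ up to a wedge summand.
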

\begin{proof} We again make notation simplifications, we write $G_{k}$ as a shorthand for $G_{k}(V_{0})^{\Hom(T,V_{1}-V_{0})\oplus s(T)}$ and $f$ as shorthand for the map $\tau_{k}\circ h_{k}\circ res_{k}^{-1}\circ \phi_{k}$. We now have the below diagram of cofibre sequences:
$$\xymatrix@C=0.8cm{&&C_{f}\ar[dr]|\bigcirc&&\\
&G_{k}\ar[ur]\ar[dl]|\bigcirc&&\ar[ll]_{f}G_{k}\ar[dl]^{\phi_{k}}&\\
F_{k-1}(\Ell)_{\infty}\ar[rr]&&F_{k}(\Ell)_{\infty}\simeq
X_{k}\ar[ul]^{\tau_{k}\circ
h_{k}}\ar[rr]&&X_{k-1}\ar[ul]|\bigcirc}$$ 

Via the octahedral axiom we can put arrows around the diagram and form the cofibre sequence below:
$$F_{k-1}(\Ell)_{\infty}\to X_{k-1}\to C_{f}.$$
It is a trivial exercise to observe that in this case the map $F_{k-1}(\Ell)_{\infty}\to X_{k-1}$ is $res_{k-1}$. This, however, is an equivalence and so has contractible cofibre. Thus $C_{f}$ is contractible and by Proposition \ref{zerocofibre} this gives that $f\simeq \text{id}$.
\end{proof}

In particular this result would demonstrate that should $res_{k}$ be an equivalence then $\phi_{k}$ has a one-way inverse. We now recall the theory from $\S$\ref{SplittingsviaCofibreSequences}. Applying this allows us to conjecture Claim \ref{thetowersplits} below; if Conjecture \ref{equivofmillersplittings} holds then the claim would follow:

\begin{claim}\label{thetowersplits} Let $V_{0}\leqslant V_{1}$ and recall the tower constructed in Theorem \ref{themaintheorem}. Then we have the following stable equivalences for all $0< k\leqslant d_{0}$:
$$X_{k}\simeq X_{k-1}\vee G_{k}(V_{0})^{\Hom(T,V_{1}-V_{0})\oplus s(T)}.$$

These imply the equivariant splittings given below, this includes an alternative method to building the equivariant Miller splitting:
$$X_{k}\simeq\bigvee_{r=0}^{k} G_{r}(V_{0})^{\Hom(T,V_{1}-V_{0})\oplus s(T)}$$
$$\Ell_{\infty}\simeq\bigvee_{r=0}^{d_{0}} G_{r}(V_{0})^{\Hom(T,V_{1}-V_{0})\oplus s(T)}.$$
\end{claim}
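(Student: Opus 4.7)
The plan is to deduce the claim as a direct consequence of the cofibre triangles of Theorem \ref{themaintheorem} together with the stable splitting criterion of Proposition \ref{howacofibseqsplits}, using Lemma \ref{splittingmaps} to produce the required one-sided inverse at each level of the tower.

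First I would invoke Conjecture \ref{equivofmillersplittings} to obtain, for each $1 \leqslant k \leqslant d_{0}$, a homotopy inverse $res_{k}^{-1}: X_{k} \to F_{k}(\Ell)_{\infty}$ in $\mathcal{F}_{G}$. Lemma \ref{splittingmaps} then gives that the composite $r_{k} := \tau_{k} \circ h_{k} \circ res_{k}^{-1}: X_{k} \to G_{k}(V_{0})^{\Hom(T, V_{1} - V_{0}) \oplus s(T)}$ satisfies $r_{k} \circ \phi_{k} \simeq \text{id}$ in $\mathcal{F}_{G}$. By Theorem \ref{themaintheorem}, the triangle
$$G_{k}(V_{0})^{\Hom(T, V_{1} - V_{0}) \oplus s(T)} \overset{\phi_{k}}{\to} X_{k} \overset{\pi_{k}}{\to} X_{k-1}$$
is a cofibre sequence of finite $G$-$CW$-spectra. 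Applying Proposition \ref{howacofibseqsplits} to this sequence using the inverse $r_{k}$ yields the first stable equivalence $X_{k} \simeq X_{k-1} \vee G_{k}(V_{0})^{\Hom(T, V_{1} - V_{0}) \oplus s(T)}$.

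To deduce the wedge decompositions, I would induct on $k$, taking as base case $X_{0} = S^{0}$. Since $G_{0}(V_{0})$ is a single point and the virtual bundle $\Hom(T, V_{1} - V_{0}) \oplus s(T)$ has zero-dimensional fibre there, the Thom spectrum $G_{0}(V_{0})^{\Hom(T, V_{1} - V_{0}) \oplus s(T)}$ is itself $S^{0}$, so the decomposition holds trivially at $k = 0$. Iterating the first splitting along the tower yields
$$X_{k} \simeq \bigvee_{r=0}^{k} G_{r}(V_{0})^{\Hom(T, V_{1} - V_{0}) \oplus s(T)},$$
and setting $k = d_{0}$ together with the identification $X_{d_{0}} = \Ell_{\infty}$ produces the final equivalence for the equivariant isometries.

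The main obstacle is not internal to this deduction at all; conditional on Conjecture \ref{equivofmillersplittings}, the argument is essentially formal, relying only on the cofibre-sequence splitting mechanism recorded in Section \ref{SplittingsviaCofibreSequences}. The genuine difficulty lies upstream in the conjecture itself: without an equivalence $res_{k}$ one cannot construct the one-sided inverse $r_{k}$, and hence the splitting criterion is not available. As indicated by Propositions \ref{homeomorphicondensesubsets}, \ref{directdimn22proof} and \ref{topequivalence}, partial evidence for the conjecture is already in place, so this proposal effectively reduces Claim \ref{thetowersplits} to identifying $F_{k}(\Ell)_{\infty}$ with $X_{k}$ in the stable category.
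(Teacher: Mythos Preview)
Your proposal is correct and follows essentially the same route as the paper: assume Conjecture \ref{equivofmillersplittings}, use Lemma \ref{splittingmaps} to produce a one-sided inverse to $\phi_{k}$, and then apply Proposition \ref{howacofibseqsplits} to the cofibre triangles of Theorem \ref{themaintheorem}. Your explicit treatment of the base case $X_{0}=S^{0}$ and the inductive assembly of the wedge is slightly more detailed than the paper's discussion, but the argument is the same.
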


We can, however, prove a special case of this claim. Firstly we note that the map $res_{0}:S^{0}\to S^{0}$ is the identity. Combining this with Proposition \ref{topequivalence} and $\S$\ref{SplittingsviaCofibreSequences} proves the below theorem:

\begin{thm}\label{dimntwotowersplits} Let $V_{0}$ and $V_{1}$ be such that $V_{0}\leqslant V_{1}$ and $V_{0}$ is dimension $2$. Then we have the stable equivalences below and hence retrieve a dimension $2$ equivariant Miller
splitting:
$$X_{1}\simeq S^{0} \vee G_{1}(V_{0})^{\Hom(T,V_{1}-V_{0})\oplus s(T)}$$
$$\Ell_{\infty}\simeq X_{1}\vee G_{2}(V_{0})^{\Hom(T,V_{1}-V_{0})\oplus s(T)}$$
$$\Ell_{\infty}\simeq\bigvee_{r=0}^{2} G_{r}(V_{0})^{\Hom(T,V_{1}-V_{0})\oplus s(T)}.$$
\end{thm}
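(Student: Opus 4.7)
The plan is to apply the splitting criterion of Proposition \ref{howacofibseqsplits} to each of the two cofibre triangles at the bottom of the tower constructed in Theorem \ref{themaintheorem}. In the dimension $2$ case the tower consists of just two cofibre triangles, namely
\[G_{1}(V_{0})^{\Hom(T,V_{1}-V_{0})\oplus s(T)}\xrightarrow{\phi_{1}}X_{1}\xrightarrow{\pi_{1}}S^{0}\]
and
\[G_{2}(V_{0})^{\Hom(T,V_{1}-V_{0})\oplus s(T)}\xrightarrow{\phi_{2}}\Ell_{\infty}\xrightarrow{\pi_{2}}X_{1}.\]
The strategy is to produce, for each triangle, a section of the right-hand projection; the splittings then follow from Proposition \ref{howacofibseqsplits}, and the three-way splitting of $\Ell_{\infty}$ is obtained by iterating.

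For the first splitting, I would observe that $F_{0}(\Ell)=\{I\}$, so that $F_{0}(\Ell)_{\infty}\cong S^{0}$, and that the map $res_{0}:F_{0}(\Ell)_{\infty}\to X_{0}=S^{0}$ is the identity (this is the trivial base case of the set-up in $\S$\ref{TheConjecture}). Writing $i_{0}:F_{0}(\Ell)_{\infty}\to\Ell_{\infty}$ for the inclusion and recalling that $\pi_{1}\circ\pi_{2}:\Ell_{\infty}\to S^{0}$ is the projection $\pi$ from Theorem \ref{themaintheorem}, the composite $S^{0}\xrightarrow{i_{0}}\Ell_{\infty}\xrightarrow{\pi_{2}}X_{1}\xrightarrow{\pi_{1}}S^{0}$ equals $res_{0}=\text{id}$. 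Hence $\pi_{2}\circ i_{0}$ provides a section of $\pi_{1}$, and Proposition \ref{howacofibseqsplits} applied to the first cofibre triangle above yields
\[X_{1}\simeq S^{0}\vee G_{1}(V_{0})^{\Hom(T,V_{1}-V_{0})\oplus s(T)}.\]

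For the second splitting, I would invoke Proposition \ref{topequivalence}, which in the dimension $2$ setting gives that $res_{1}=\pi_{2}\circ i_{1}:F_{1}(\Ell)_{\infty}\to X_{1}$ is a stable equivalence, where $i_{1}:F_{1}(\Ell)_{\infty}\to\Ell_{\infty}$ is the standard inclusion. Picking a stable homotopy inverse $res_{1}^{-1}:X_{1}\to F_{1}(\Ell)_{\infty}$, the map $i_{1}\circ res_{1}^{-1}:X_{1}\to\Ell_{\infty}$ satisfies $\pi_{2}\circ(i_{1}\circ res_{1}^{-1})=res_{1}\circ res_{1}^{-1}\simeq\text{id}_{X_{1}}$, so it is a section of $\pi_{2}$. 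Applying Proposition \ref{howacofibseqsplits} to the second cofibre triangle gives
\[\Ell_{\infty}\simeq X_{1}\vee G_{2}(V_{0})^{\Hom(T,V_{1}-V_{0})\oplus s(T)},\]
and substituting the first splitting into the second yields the three-term wedge decomposition of $\Ell_{\infty}$, which is the equivariant Miller splitting in dimension $2$.

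There is no serious obstacle here: all of the heavy lifting has already been carried out in Propositions \ref{howacofibseqsplits} and \ref{topequivalence}. The only subtlety I would flag is that Proposition \ref{howacofibseqsplits} requires working with finite $G$-$CW$-spectra and is phrased in terms of stable homotopy equivalences, so the inverse $res_{1}^{-1}$ and the sections are only well-defined in $\mathcal{F}_{G}$; however, this causes no problem since all the spectra in the tower are finite $G$-$CW$-spectra by Lemma \ref{thetowerisGCW} and Lemma \ref{thomspaceisfiniteCW}.
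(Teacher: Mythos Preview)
Your proof is correct and follows essentially the same route as the paper: both use that $res_{0}$ is the identity and that $res_{1}$ is an equivalence by Proposition~\ref{topequivalence}, then invoke Proposition~\ref{howacofibseqsplits}. The only cosmetic difference is that the paper (via Lemma~\ref{splittingmaps} and the discussion preceding Claim~\ref{thetowersplits}) checks the \emph{first} condition of Proposition~\ref{howacofibseqsplits} by producing a left inverse $\tau_{k}\circ h_{k}\circ res_{k}^{-1}$ to $\phi_{k}$, whereas you check the \emph{second} condition by producing a section of $\pi_{k}$; either works and the underlying inputs are identical.
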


\section{An Equivalent Conjecture}\label{AnEquivalentConjecture}

We now state an alternate to Conjecture \ref{equivofmillersplittings}. The below claim is equivalent to the earlier conjecture; we demonstrate why in the next section. 

\begin{conjecture}\label{subrephomotopy} Let $\sigma_{k}$ denote the splitting map built by Miller, this notation matches that used in Appendix \ref{ch:appendixa}. Then the two maps below are homotopic:
$$res_{k}\circ\sigma_{k}:G_{k}(V_{0})^{\Hom(T,V_{1}-V_{0})\oplus s(T)}\to F_{k}(\Ell)_{\infty}\to X_{k}$$
$$\phi_{k}:G_{k}(V_{0})^{\Hom(T,V_{1}-V_{0})\oplus s(T)}\to X_{k}.$$
\end{conjecture}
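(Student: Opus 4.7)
The plan is to compare both maps via the functional calculus framework of Chapter \ref{ch:ch3}, reducing the comparison to a facial homotopy classification problem analogous to the one used in Section \ref{BypassingtheQuotientbyaLift}.

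First I would make both maps explicit at the unstable level. By Lemma \ref{tildephikisfnalcalc}, the unstable lift $\tilde{\phi}_{k}$ equals $\mathfrak{C}_{f}$ for the facial map $f$ from Lemma \ref{thirdfacialhomeomorphism}. For $res_{k}\circ \sigma_{k}$ I would unwind the definition of Miller's splitting map from Appendix \ref{ch:appendixa}: in the subrepresentation case $V_{1}=V_{0}\oplus V_{2}$, the map $\sigma_{k}$ realises a triple $(W,\gamma,\psi)\in G_{k}(V_{0})^{\Hom(T,V_{2})\oplus s(T)}$ as an honest isometry $V_{0}\to V_{1}$ obtained by perturbing the fixed inclusion $I$ through a rank $k$ modification determined by $(W,\gamma,\psi)$. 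Post-composing with $res_{k}$ restricts this isometry to $P_{k}(\alpha)$, where $\alpha$ is the selfadjoint part of the resulting pair in $\tilde{X}_{k}'$. I would then argue, in parallel to the proof that the lift $\mathfrak{r}_{k}\circ p_{k}$ can be rewritten as $\mathfrak{D}_{g'}$ in Lemma \ref{RcircKisfnalcalc}, that $\widetilde{res_{k}\circ\sigma_{k}}$ equals $\mathfrak{C}_{g}$ for some explicitly computable facial map $g:D(d_{0}-k)\wedge D_{+}(k)\to D(d_{0})$.

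Second, with both maps cast as $\mathfrak{C}_{f}$ and $\mathfrak{C}_{g}$, I would invoke the facial homotopy classification of Proposition \ref{facialhomotopytypeofC}: facial self-maps of $D(d_{0}-k)\wedge D(k)$ are classified up to facial homotopy by the degree of the induced self-map of the embedded $S^{2}$ arising from intersecting all faces. A direct degree calculation analogous to Proposition \ref{matchingdegreesofC} should show that both $f$ and $g$ have degree $1$, because in each case the restriction to the diagonal $S^{2}$ is either the identity or an orientation-preserving local diffeomorphism. Lemma \ref{homotopiesthroughfnalcalc} then lifts the resulting facial homotopy through $\mathfrak{C}$ to a homotopy between $\mathfrak{C}_{f}$ and $\mathfrak{C}_{g}$, and equivariance is preserved throughout since every construction in Chapter \ref{ch:ch3} is $G$-equivariant by inspection. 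Descending to spectra via $\Sigma^{\infty}$ and smashing with $S^{-\svo}$ then yields the stable homotopy $\phi_{k}\simeq res_{k}\circ\sigma_{k}$.

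The principal obstacle is the first step: translating Miller's geometric tube construction into functional calculus data. Miller builds isometries by perturbing $I$ in a manner indexed by the Grassmannian of rank $k$ modifications, while $\phi_{k}$ encodes the spectral decomposition directly via $\rho(\gamma)+e_{top}(\psi)$ on $W$ and $\psi$ on $W^{\bot}$. These two descriptions should agree on the underlying Thom space data, but a careful bookkeeping argument — likely involving a preliminary straight-line homotopy through a larger parameter space of pairs $(\alpha,\theta)$ — is needed before both maps visibly factor through $q$ as in Proposition \ref{fnalvarC}. The confirmed special cases, namely Proposition \ref{directdimn22proof} in dimension two and Proposition \ref{topequivalence} for the top triangle, act both as motivation and as test cases for the identification of $g$; indeed Proposition \ref{topequivalence} essentially amounts to the case $k=d_{0}$ where $f$ itself is the facial homeomorphism of Lemma \ref{thirdfacialhomeomorphism} and the Miller construction reduces to the collapse $\tau_{d_{0}}$, so the desired degree matching is transparent. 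Extending this transparency to all $k<d_{0}$ is the technical heart of the conjectured proof.
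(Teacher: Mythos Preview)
The statement is a \emph{conjecture}, and the paper does not prove it either; Section~\ref{AnEquivalentConjecture} gives only partial evidence and ends by saying that the required modification ``has, however, so far proved elusive.'' So your task was not to reproduce a proof but to assess a proof strategy. On that score, your plan has a concrete gap.

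The obstruction is in your first step. You propose to write the unstable version of $res_{k}\circ\sigma_{k}$ as $\mathfrak{C}_{g}$ for some facial $g$. But look at the characterisation of $\mathfrak{C}_{g}$ in Proposition~\ref{fnalvarC}: the isometry output is always $\theta=-\sigma(\gamma)$, determined entirely by the bundle data $\gamma$. Now unwind $res_{k}\circ\sigma_{k}$ as the paper does: writing $\gamma=\alpha+\beta$ with $\alpha\in\Hom(W,W^{\bot})$ and $\beta\in\Hom(W,V_{1}\setminus W^{\bot})$, the map $g_{0}$ has isometry factor $(-\sigma(\beta)\oplus I_{W^{\bot}})|_{P_{k}(\delta)}$. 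The presence of $I_{W^{\bot}}$ is the whole point of Miller's construction --- it uses the fixed equivariant inclusion $I:V_{0}\to V_{1}$, data which exists only in the subrepresentation case and which the $\mathfrak{C}$-calculus never sees. There is no facial $g$ whose $\mathfrak{C}_{g}$ outputs an isometry involving $I_{W^{\bot}}$; the framework of \S\ref{BypassingtheQuotientbyaLift} simply does not have access to that ingredient. So the reduction to a degree computation on $S^{2}$ never gets started.

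The paper's own approach is different and more direct: it writes the two unstable maps $g_{0},g_{1}$ explicitly, projects both to $S^{\svo}$, and constructs a proper homotopy $h'$ between the projections $g_{0}',g_{1}'$ using the interpolation $f(x,t)=(x^{t}-1)/t$ between $\log$ and $x-1$. What remains --- and what the paper cannot do --- is to lift $h'$ to $\tilde{X}_{k}$, which requires controlling the isometry factor throughout the homotopy so that one of $(-\sigma(\beta)\oplus I_{W^{\bot}})|_{P_{k}(\delta_{t})}$ or $(-\sigma(\alpha+\beta)\oplus I_{\Ker(\alpha^{\dag})})|_{P_{k}(\delta_{t})}$ stays injective. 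That is the same obstacle you would hit, phrased differently: the mismatch between the $I$-based isometry of Miller and the $\sigma$-based isometry of $\tilde{\phi}_{k}$ is the genuine content of the conjecture, and neither functional-calculus bookkeeping nor a straight-line homotopy disposes of it.
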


The rest of this section is dedicated to providing evidence for this claim. Firstly, we consider the two maps unstably by suspending by $S^{\svo}$. For any fixed $W\in G_{k}(V_{0})$ and any $\gamma\in \Hom(W,V_{1})$ we can decompose $\gamma$ into $\alpha+\beta$, with $\alpha\in\Hom(W,W^{\bot})$ and $\beta\in \Hom(W,V_{1}\backslash W^{\bot})$. From
here it is a simple exercise to untangle the definitions to find that the two maps are explicitly given by the formulations below. We use the matrix notation as discussed in the proof of Lemma \ref{destabilizinglemma}. We also have $\rho$ and $\sigma$ defined previously in \ref{themaprho} and \ref{themapsigma}:
$$g_{0},g_{1}:G_{k}(V_{0})^{\Hom(T,V_{1})\oplus s(T^{\bot})}\to \tilde{X}_{k}$$
\small
$$g_{0}(W,\alpha+\beta,\psi)=\left\{\begin{array}{ll}\left(\delta:=\left(\begin{array}{cc}\log(\rho(\beta))&\alpha^{\dag}\\
\alpha&\psi\end{array}\right),(-\sigma(\beta)\oplus I_{W^{\bot}})|_{P_{k}(\delta)}\right)& \text{ $\beta$ injective}\\
\infty&\text{ otherwise}\end{array}\right.$$ \normalsize
$$g_{1}(W,\alpha+\beta,\psi)= ((\rho(\alpha+\beta)+e_{top}(\psi))|_{W}\oplus \psi|_{W^{\bot}},-\sigma(\alpha+\beta)).$$

We note that checking equivariance for $g_{0}$ is a potential issue here as $W$ is not necessarily going to be a subrepresentation. This doesn't cause any issues, however, as the group actions respect the matrix decomposition and the conjugation action on the maps $\alpha$, $\beta$ and $\gamma$ interacts well with the action on $W$ - it is easy to observe that $g_{0}(g.W,g.(\alpha+\beta)=g.\alpha+g.\beta,g.\psi)=(g.\delta,g.(-\sigma(\beta)\oplus I_{W^{\bot}})|_{P_{k}(g,\delta)})$ using similar techniques to equivariance arguments from earlier. We now consider the projections of both maps down to $\svo$, call these $g_{0}'$ and $g_{1}'$. We can build a number of proper homotopies between them using the functional calculus. Firstly let $\tilde{U}:=\{(x,t):x\geqslant 0, t\in [0,1], x+t>0\}$. Then let $f:\tilde{U}\to \Real$ be given by the below integral:
$$f(x,t):=\int_{1}^{x}u^{t-1}du=\left\{\begin{array}{ll}\frac{x^{t}-1}{t}& t\neq 0\\
\log (x)& t=0.\end{array}\right.$$

This is continuous and thus we can use it in conjunction with the functional calculus. Recall $\tilde{Z}_{k}$ from
\ref{factoringinsTperp} and define the space $U$ and map $h'$ as follows:
$$U:=\{(t,W,\alpha+\beta,\psi)\in [0,1]\times\tilde{Z}_{k}, t+e_{0}(\rho(\beta))>0\}$$
$$h':U\to \svo$$
$$(t,w,\alpha+\beta,\psi)\mapsto \left(\begin{array}{cc}f(\rho(t\alpha+\beta),t)+t(e_{top}(\psi)+1)&(1-t)\alpha^{\dag}\\
(1-t)\alpha&\psi\end{array}\right).$$

\begin{lem}\label{hprimeisproper} $h'$ is continuous and proper and hence the one-point compactification $h$ provides a homotopy between $g_{0}'$ and $g_{1}'$.
\end{lem}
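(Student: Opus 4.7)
The plan is to verify that $h'$ restricts to $g_0'$ at $t=0$ and to $g_1'$ at $t=1$, then to check continuity (which is automatic from the functional calculus once the domain condition is unpacked) and properness (the substantive content). At $t=0$ the off-diagonal blocks are $\alpha^{\dag}$ and $\alpha$, and the upper-left block is $f(\rho(\beta),0)+0=\log\rho(\beta)$, matching $g_0'$; the condition $t+e_{0}(\rho(\beta))>0$ on $U$ at $t=0$ forces $\beta$ to be injective, which is exactly the non-basepoint locus of $g_0'$. At $t=1$ the off-diagonals vanish and the upper-left block simplifies to $f(\rho(\alpha+\beta),1)+(e_{top}(\psi)+1)=\rho(\alpha+\beta)-1+e_{top}(\psi)+1=\rho(\alpha+\beta)+e_{top}(\psi)$, matching $g_1'$. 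For continuity, $f$ is continuous on $\tilde{U}$ by its integral definition, and the defining condition on $U$, together with $e_{0}(\rho(t\alpha+\beta))\geq 0$ for $t>0$, ensures every eigenvalue pair $(e_{i}(\rho(t\alpha+\beta)),t)$ lies in $\tilde{U}$, so Lemma \ref{fnalcalclemma} gives continuity of the functional calculus term; the remaining factors are manifestly continuous. Hence, by Lemma \ref{propernessequalsbasepoints}, once $h'$ is shown to be proper we get the based extension $h=(h')_{\infty}$.

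For properness, I would take a sequence $(t_{n},W_{n},\alpha_{n}+\beta_{n},\psi_{n})\in(h')^{-1}(C)$ with $C\subset \svo$ compact and extract a convergent subsequence. The factors $t_{n}\in[0,1]$ and $W_{n}\in G_{k}(V_{0})$ already lie in compact sets, and the lower-right block of $h'$ bounds $\|\psi_{n}\|$ and hence $|e_{top}(\psi_{n})|$. Because $\alpha_{n}\in\Hom(W_{n},W_{n}^{\bot})$ and $\beta_{n}\in\Hom(W_{n},V_{1}\setminus W_{n}^{\bot})$ have orthogonal codomains in $V_{1}$, one has the identity $\rho(t_{n}\alpha_{n}+\beta_{n})^{2}=t_{n}^{2}\alpha_{n}^{\dag}\alpha_{n}+\beta_{n}^{\dag}\beta_{n}$. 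For $t_{n}\geq 1/2$ the monotonicity and growth of $f(\cdot,t_{n})$ let the upper-left block bound $\rho(t_{n}\alpha_{n}+\beta_{n})$, and the identity then controls both $\alpha_{n}$ and $\beta_{n}$. For $t_{n}\leq 1/2$ the off-diagonal block directly gives $\|\alpha_{n}\|\leq 2R$, and the upper-left block together with the same identity bounds $\beta_{n}$.

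The main obstacle, and the reason for the domain condition $t+e_{0}(\rho(\beta))>0$, is ensuring that a sequence in $(h')^{-1}(C)$ cannot approach the forbidden locus where $t=0$ and $e_{0}(\rho(\beta))=0$. The key quantitative fact is that $f$ blows up uniformly near this locus: if $(e,t)\in\tilde{U}$ satisfies $|f(e,t)|\leq R$, then an elementary analysis of $(e^{t}-1)/t$ (interpolating to $\log e$ at $t=0$) gives $e\geq(1-Rt)^{1/t}$, whose limit as $t\to 0^{+}$ is $e^{-R}$, so $e$ stays in a compact subinterval of $(0,\infty)$ uniformly for $t\in[0,T]$ with $T$ small. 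Applied to the lowest eigenvalue of $\rho(t_{n}\alpha_{n}+\beta_{n})$, this gives $e_{0}(\rho(t_{n}\alpha_{n}+\beta_{n}))\geq\delta>0$ uniformly for small $t_{n}$. Combined with the already-established bound on $\|\alpha_{n}\|$ for small $t_{n}$, Weyl's perturbation inequality applied to $\rho(t_{n}\alpha_{n}+\beta_{n})^{2}-\beta_{n}^{\dag}\beta_{n}=t_{n}^{2}\alpha_{n}^{\dag}\alpha_{n}$ yields $e_{0}(\rho(\beta_{n}))^{2}\geq\delta^{2}-t_{n}^{2}\|\alpha_{n}\|^{2}$, which is bounded below for $t_{n}$ sufficiently small. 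Hence the sequence remains uniformly in the interior of $U$, and together with the earlier bounds a convergent subsequence exists in $U$; so $(h')^{-1}(C)$ is compact, $h'$ is proper, and the endpoint identification makes $h$ the claimed homotopy between $g_{0}'$ and $g_{1}'$.
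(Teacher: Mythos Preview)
Your proof is correct and in fact more complete than the paper's. Both arguments identify properness as the only substantive issue and bound $\|\psi\|$ directly from the lower-right block. From there the paper proceeds in one stroke: for $t\in(0,1)$ the upper-left block $f(\rho(t\alpha+\beta),t)+t(e_{top}(\psi)+1)$ has norm at most $R$, so each eigenvalue $e$ of $\rho(t\alpha+\beta)^{t}$ satisfies $(e-1)/t+t(e_{top}(\psi)+1)\leqslant R$, which rearranges to the uniform upper bound $e\leqslant 1+2R$. The paper then simply declares that bounds on $\|\alpha+\beta\|$ and $\|\psi\|$ suffice for properness and stops.

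Your argument differs in two respects. First, you verify the endpoint identifications $h'|_{t=0}=g_{0}'$ and $h'|_{t=1}=g_{1}'$ explicitly, which the paper leaves implicit. Second, and more substantively, you recognize and resolve a point the paper's proof passes over in silence: since $U$ is only the \emph{open} subset of $[0,1]\times\tilde{Z}_{k}$ cut out by $t+e_{0}(\rho(\beta))>0$, a bounded closed-in-$U$ preimage need not be compact unless it also stays uniformly away from the boundary locus $\{t=0,\ e_{0}(\rho(\beta))=0\}$. Your lower-bound analysis of $f$ near $t=0$ (giving $e\geqslant(1-Rt)^{1/t}\to e^{-R}$), the orthogonality identity $\rho(t\alpha+\beta)^{2}=t^{2}\alpha^{\dag}\alpha+\beta^{\dag}\beta$, and the Weyl perturbation step supply precisely the missing lower bound on $e_{0}(\rho(\beta_{n}))$. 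The case split at $t=1/2$ is your device for making both the upper and lower estimates uniform; the paper's single estimate is slicker for the upper bound but, as written, leaves the compactness-in-$U$ issue unaddressed.
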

\begin{proof} The only issue here is the question of properness. Let $C\subseteq \svo$ be a compact set. Then let
$(t,w,\alpha+\beta,\psi)\in h'^{-1}\{C\}$. We can trivially assume that $t\in(0,1)$. We need a bound on $\|\alpha+\beta\|$ and a bound on $\|\phi\|$. Firstly note that seeking a bound on $\|\alpha+\beta\|$ is equivalent when $t\in(0,1)$ to seeking a bound on $\|\rho(t\alpha+\beta)^{t}\|$. By assumption we have that there is a positive real number $R$ such that $\|\psi\|\leqslant R$, $\|(1-t)\alpha\|\leqslant R$ and $\|f(\rho(t\alpha+\beta),t)+t(e_{top}(\psi)+1)\|\leqslant R$. The first assumption immediately gives us the first bound we seek. For the second, let $e$ be an eigenvalue of $\rho(t\alpha+\beta)^{t}$. We are given the below inequality via the third assumption:
$$(e-1)/t+t(e_{top}(\psi)+1)\leqslant R.$$

We wish to show that there is a real number $S$ such that $e\leqslant S$. We first rearrange the above inequality:
$$e\leqslant 1+tR-t^{2}-t^{2}e_{top}(\psi).$$

Note that $1-t^{2}\leqslant 1$ for $t\in[0,1]$. Next note that as $R$ is positive we have $tR\leqslant R$. Finally, note that as $\|\psi\|\leqslant R$ we have $-R\leqslant e_{top}(\psi)\leqslant R$ and hence that $-R\leqslant -e_{top}(\psi)\leqslant R$. It follows that $-t^{2}e_{top}(\psi)\leqslant t^{2}R\leqslant R$. Combining the above inequalities gives us the below inequality:
$$e\leqslant 1+2R.$$

This is enough to show that $h'$ is proper and the claim follows.
\end{proof}

Thus the problem reduces to modifying $h'$ so that it lifts to $\tilde{X}_{k}$. We immediately remark that isometry factors of the maps $g_{0}$ and $g_{1}$ already look very similar. Maneuvering from $-\sigma(\beta)$ to $-\sigma(\alpha+\beta)$ can be done essentially instantly with the homotopy $t\alpha+\beta$. Both isometries, however, have their issues. For $-\sigma(\beta)\oplus I_{W^{\bot}}$ when we move away from $\beta$ being injective, as we do immediately in the above homotopy, then we have $-\sigma(\beta)\oplus I_{W^{\bot}}$ undefined on $\Ker(\beta)\oplus
0\subseteq W\oplus W^{\bot}$. For $\sigma(\alpha+\beta)$ the problem in $W$ is less of an issue as the only problem area is $\Ker(\alpha)\cap\Ker(\beta)$; however, $\sigma(\alpha+\beta)$ is undefined on $W^{\bot}$. We can circumvent this a little via extending $\sigma(\alpha+\beta)$ to $\sigma(\alpha+\beta)\oplus I_{\Ker(\alpha^{\dag})}$.

Hence to solve this problem we need to suitably modify $h'$ into producing a selfadjoint endomorphism $\delta_{t}$ that keeps one of the two below maps injective at all times:
\begin{enumerate}
\item $(-\sigma(\beta)\oplus I_{W^{\bot}})|_{P_{k}(\delta_{t})}$.
\item $(-\sigma(\alpha+\beta)\oplus I_{\Ker(\alpha^{\dag})})|_{P_{k}(\delta_{t})}$.
\end{enumerate}

We note that we only require injectivity as another application of $\sigma$ will turn our maps into isometries. We also note that we can easily move from $1$ to $2$ via the homotopy $-\sigma(t\alpha+\beta)\oplus I_{\Ker(t\alpha)^{\dag}}$. The required modification of $h'$ has, however, so far proved elusive.

\section{Follow-up to the Second Conjecture}\label{FollowuptotheSecondConjecture}

We now survey what happens should Conjecture \ref{subrephomotopy} hold. This is to demonstrate that it is equivalent to Conjecture \ref{equivofmillersplittings}.

\begin{prop}\label{thetowerisequivtothefiltration} Assume Conjecture \ref{subrephomotopy} is true. Then the maps $res_{k}$ are equivalences and the maps $\tau_{k}\circ h_{k}\circ res_{k}^{-1}$ provide one-sided inverses to each $\phi_{k}$.
\end{prop}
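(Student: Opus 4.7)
The plan is to prove that each $res_k$ is an equivalence by induction on $k$, using Conjecture \ref{subrephomotopy} together with the octahedral axiom in the equivariant stable category, in a manner that directly generalizes the proof of Proposition \ref{topequivalence}. The base case is $k=0$: here $F_0(\Ell)_\infty = S^0 = X_0$ and $res_0$ is the identity, so it is trivially an equivalence. For the inductive step, assume $res_{k-1}$ is an equivalence and consider the factorization $G_k \overset{\sigma_k}{\to} F_k(\Ell)_\infty \overset{res_k}{\to} X_k$. By Conjecture \ref{subrephomotopy}, the composite is homotopic to $\phi_k$, so its cofibre is $X_{k-1}$ (from our tower cofibre sequence).

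Next, I identify the cofibre of $\sigma_k$. Because $\tau_k \circ h_k \circ \sigma_k \simeq \text{id}$, Miller's splitting gives $F_k(\Ell)_\infty \simeq G_k \vee F_{k-1}(\Ell)_\infty$, with $\sigma_k$ and the inclusion $i_k: F_{k-1}(\Ell)_\infty \hookrightarrow F_k(\Ell)_\infty$ being the inclusions of the two wedge summands. Consequently the cofibre of $\sigma_k$ is equivalent to $F_{k-1}(\Ell)_\infty$, with the cofibre map $j_k: F_k(\Ell)_\infty \to F_{k-1}(\Ell)_\infty$ being a retraction of $i_k$ (i.e.\ $j_k \circ i_k \simeq \text{id}$).

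Now I apply the octahedral axiom to the composite $res_k \circ \sigma_k$. This yields a cofibre sequence
$$C_{\sigma_k} \to C_{res_k \circ \sigma_k} \to C_{res_k},$$
which, after identifying the first two terms, reads
$$F_{k-1}(\Ell)_\infty \overset{\bar{res}_k}{\to} X_{k-1} \to C_{res_k}$$
for some induced map $\bar{res}_k$. The key step is identifying $\bar{res}_k$ with $res_{k-1}$. By the octahedral construction, the right-hand square in
$$\xymatrix{G_k \ar[r]^{\sigma_k} \ar@{=}[d] & F_k(\Ell)_\infty \ar[d]^{res_k} \ar[r]^{j_k} & F_{k-1}(\Ell)_\infty \ar[d]^{\bar{res}_k} \\ G_k \ar[r]_{\phi_k} & X_k \ar[r]_{\pi_k} & X_{k-1}}$$
commutes, so $\pi_k \circ res_k \simeq \bar{res}_k \circ j_k$. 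Pre-composing with $i_k$ and using $j_k \circ i_k \simeq \text{id}$, we obtain $\bar{res}_k \simeq \pi_k \circ res_k \circ i_k$, and the right-hand side is precisely $res_{k-1}$ by compatibility of the projections down the tower with the filtration inclusions. The main (minor) subtlety here is checking this naturality at the level of homotopies and bookkeeping that the splitting identification really does convert the octahedral connecting map into the standard projection $j_k$; both amount to routine diagram chases in the triangulated category $\mathcal{F}_G$.

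With $\bar{res}_k \simeq res_{k-1}$ and the inductive hypothesis, $\bar{res}_k$ is an equivalence, so the cofibre sequence above forces $C_{res_k}$ to be contractible. By Proposition \ref{zerocofibre} this shows $res_k$ is an equivalence, completing the induction. The second statement of the proposition is then immediate: once each $res_k$ is an equivalence, Lemma \ref{splittingmaps} (whose hypotheses now hold) directly furnishes the one-sided inverse $\tau_k \circ h_k \circ res_k^{-1}$ to $\phi_k$.
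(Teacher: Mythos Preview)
Your argument is correct, but it is organised differently from the paper's proof. The paper runs a \emph{downward} induction, starting from the top (the base case being Proposition~\ref{topequivalence}): assuming $res_k$ is an equivalence, it uses Conjecture~\ref{subrephomotopy} to substitute $\phi_k\simeq res_k\circ\sigma_k$ and obtain directly
\[
\tau_k\circ h_k\circ res_k^{-1}\circ\phi_k \;\simeq\; \tau_k\circ h_k\circ\sigma_k \;\simeq\; \text{id},
\]
which both establishes the one-sided inverse at level $k$ and sets up an octahedron on the composite $G_k\overset{\phi_k}{\to}X_k\simeq F_k\overset{\tau_k h_k}{\to}G_k$, yielding that $res_{k-1}$ has contractible cofibre. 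You instead induct \emph{upwards} from $res_0=\text{id}$, applying the octahedral axiom to $G_k\overset{\sigma_k}{\to}F_k\overset{res_k}{\to}X_k$ and using the splitting of $F_k$ to identify $C_{\sigma_k}\simeq F_{k-1}$ with $j_k$ a retraction of $i_k$. Both routes use the same ingredients (Conjecture~\ref{subrephomotopy}, Miller's splitting property, and the octahedral axiom); the paper's downward route is slightly slicker in that the one-sided inverse drops out of a one-line substitution rather than a separate appeal to Lemma~\ref{splittingmaps}, and it avoids your extra step of identifying the cofibre of $\sigma_k$ via the wedge decomposition. Your argument, on the other hand, makes the role of the Miller splitting of $F_k$ more explicit.
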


\begin{proof} We prove this by downward induction. We only state the inductive step, the initial step follows from \ref{topequivalence}. Assume that $res_{k}$ gives an equivalence with inverse $res_{k}^{-1}$. Then we first consider $\tau_{k}\circ h_{k}\circ res_{k}^{-1}\circ \phi_{k}$, from the assumption of Conjecture \ref{subrephomotopy} we have $\phi_{k}\simeq res_{k}\circ \sigma_{k}$. Thus we have the below equivalence:
$$\tau_{k}\circ h_{k}\circ res_{k}^{-1}\circ \phi_{k}\simeq\tau_{k}\circ h_{k}\circ res_{k}^{-1}\circ res_{k}\circ \sigma_{k}\simeq\tau_{k}\circ h_{k}\circ \sigma_{k}.$$

That this is the identity is given in the proof of the Miller splitting. We now prove that $res_{k-1}$ gives an equivalence. Using the shorthand $G_{k}$ from the proof of Lemma \ref{splittingmaps} we have the below diagram of cofibre sequences:
$$\xymatrix@C=0.8cm{&&\text{pt.}\ar[dr]|\bigcirc&&\\
&G_{k}\ar[ur]\ar[dl]|\bigcirc&&\ar[ll]_{\text{id.}}G_{k}\ar[dl]^{\phi_{k}}&\\
F_{k-1}(\Ell)_{\infty}\ar@/_1pc/[rrrr]_{res_{k-1}}\ar[rr]&&F_{k}(\Ell)_{\infty}\simeq
X_{k}\ar[ul]^{\tau_{k}\circ
h_{k}}\ar[rr]&&X_{k-1}\ar[ul]|\bigcirc}$$

That $res_{k-1}$ is an equivalence then follows from Proposition \ref{zerocofibre} and the result follows by downward induction.
\end{proof}

Thus Conjecture \ref{subrephomotopy} proves Conjecture \ref{equivofmillersplittings} and moreover proves Claim
\ref{thetowersplits}. It is trivial to see that Conjecture \ref{equivofmillersplittings} proves Conjecture
\ref{subrephomotopy}. Hence proving either one will exhibit the Miller splitting in the subrepresentation special case of Theorem \ref{themaintheorem}. This is a future goal of the author.
\chapter{The Equivariant K-Theory of the Tower}
\label{ch:ch8}

\section{Basic Results}\label{BasicResults}

We now begin to calculate algebraic invariants associated to our tower \ref{themaintheorem}. In particular, we aim to calculate the reduced equivariant complex topological $K$-theory of the diagram. We claim that the diagram in $K$-theory holds a certain nice algebraic structure that interacts well with the topological structure. We will not prove this, however, we will conjecture the result in $\S$\ref{TheKTheoryoftheTowerConjecture} after providing evidence for the claim. We open with an overview of the basic $K$-theory results we will need, we claim no originality for these results. Good surveys of the theory can be found in \cite{segalktheory} and Chapter $14$ of \cite{mayetal}, while \cite{AtiyahKTheory} gives an overview of the non-equivariant theory while alluding to the equivariant extensions. The basic building block of equivariant $K$-theory, and indeed of any cohomology theory, is the $K$-theory of a point:

\begin{defn}\label{therepresentationring} Let $R(G)$ denote the complex representation ring of $G$.
\end{defn}

This is the starting point of equivariant $K$-theory, it is easy to see that $\Kzero(\text{pt.})=R(G)$ as bundles over a point are representations. Further, we can switch to reduced $K$-theory $\redKstar$ and observe the below lemma:

\begin{lem}\label{reducedktheoryofsnought} $\redKzero(S^{0})=R(G)$ and $\redKone(S^{0})=0$.
\end{lem}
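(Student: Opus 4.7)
The strategy is to reduce the claim to the standard identification $\Kzero(\pt) = R(G)$ together with complex Bott periodicity. I would begin by recalling that $S^{0}$ is canonically the based space $\pt_{+}$, i.e.\ one point with a disjoint basepoint adjoined. The general relationship between reduced and unreduced equivariant $K$-theory, via the retraction $S^{0} \to \pt$, immediately yields a natural isomorphism $\redKstar(S^{0}) \cong \Kstar(\pt)$; indeed this is just the standard reformulation $\Kstar(X) = \redKstar(X_{+})$ applied to $X = \pt$.

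Next, I would invoke Definition \ref{therepresentationring} to identify $\Kzero(\pt)$ with $R(G)$, using the fact that a $G$-equivariant complex vector bundle over a point is precisely a finite-dimensional complex $G$-representation. Combined with the first step this settles the degree-zero claim $\redKzero(S^{0}) = R(G)$.

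For the degree-one claim, the plan is to use complex Bott periodicity (which carries over to equivariant $K$-theory for a compact Lie group) together with the suspension isomorphism to rewrite $\Kone(\pt) \cong \redKzero(S^{1})$. Since $G$ acts trivially on $S^{1}$, any $G$-equivariant bundle over $S^{1}$ splits as a sum $\bigoplus V_{i} \otimes L_{i}$ with $V_{i} \in R(G)$ and $L_{i}$ a non-equivariant bundle; combined with the classical vanishing $\tilde{K}^{0}(S^{1}) = 0$ non-equivariantly, this yields $\redKzero(S^{1}) = 0$ and hence $\redKone(S^{0}) = 0$. The main obstacle, such as it is, is only notational bookkeeping between the reduced/unreduced and positive/negative degree conventions; no substantive technical difficulty arises, and citations to \cite{segalktheory} or Chapter $14$ of \cite{mayetal} for Bott periodicity and the splitting of $G$-bundles over trivial $G$-spaces suffice.
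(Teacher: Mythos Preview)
Your argument is correct. The paper does not actually supply a proof of this lemma; it simply states the result as a standard observation immediately after noting that $\Kzero(\pt)=R(G)$, so there is no approach to compare against and your proposal fills in exactly the details the paper elides.
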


This is the $K$-theory of the bottom level of the tower. To calculate the next stage it is clear that we will need to know the equivariant $K$-theory of a projective representation. The result with proof is found as Proposition $(3.9)$ in \cite{segalktheory}:

\begin{prop}\label{thektheoryofprojectiverepresentations} Let $V$ be a representation of dimension $d$, and let $f_{V}(t)\in R(G)[t]$ be the polynomial given by:
$$f_{V}(t):= \sum_{k=0}^{d}t^{d-k}(-1)^{k}.\lambda^{k}(V).$$

Here $\lambda^{k}(V)$ is the $k^{\text{th}}$ exterior power of $V$. Further let $T_{V}$ be the tautological bundle over $PV$. Then the equivariant $K$-theory of $PV$ is as follows:
$$\Kzero(PV)=\frac{R(G)[T_{V},T_{V}^{-1}]}{f_{V}(T_{V})}$$
$$\Kone(PV)=0.$$
\end{prop}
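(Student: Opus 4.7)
The plan is twofold. First, I would exhibit the polynomial relation $f_V(T_V) = 0$ in $\Kzero(PV)$ directly via a Koszul-type calculation. Second, I would establish by induction on $d = \Dim(V)$ that $\Kstar(PV)$ is a free $R(G)$-module of rank $d$ on the basis $\{1, T_V, T_V^2, \ldots, T_V^{d-1}\}$ with $\Kone(PV) = 0$; combining the two deliverables forces the asserted presentation.

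For the Koszul step, the tautological sub-bundle inclusion $T_V \hookrightarrow V\times PV$ over $PV$ is equivalent to a nowhere-vanishing section of the rank-$d$ bundle $V \otimes T_V^{-1}$. The associated Koszul complex is therefore exact and yields in $\Kzero(PV)$ the identity
$$\sum_{k=0}^{d} (-1)^k \lambda^k(V^{\vee} \otimes T_V) \;=\; 0.$$
Since $T_V$ is a line bundle we have $\lambda^k(V^{\vee} \otimes T_V) = \lambda^k(V^{\vee})\, T_V^k$. Invoking the canonical isomorphism $\lambda^k(V^{\vee}) \cong \lambda^{d-k}(V) \otimes \lambda^d(V)^{-1}$ (from the perfect wedge pairing on $V$), multiplying the identity by the invertible class $\lambda^d(V)$, and reindexing the sum via $j = d-k$, converts this relation into exactly $\sum_{j=0}^{d} (-1)^j \lambda^j(V)\, T_V^{d-j} = f_V(T_V) = 0$.

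For the inductive step, I would exploit a $G$-invariant decomposition $V = L \oplus W$, available from complete reducibility whenever $V$ is reducible (with an auxiliary reduction to a maximal torus handling the irreducible case). The set $PV \setminus PW$ is equivariantly homeomorphic to the total space of $T_L^{\vee} \otimes W$ over $PL$, so there is a cofibre sequence
$$PW_{+} \;\longrightarrow\; PV_{+} \;\longrightarrow\; (PL)^{T_L^{\vee} \otimes W}.$$
Applying $\Kstar$, the equivariant Thom isomorphism combined with the inductive hypothesis for both $PL$ and $PW$, and surjectivity of the restriction $\Kstar(PV) \to \Kstar(PW)$ (which follows from $T_V|_{PW} = T_W$ together with the inductive description of $\Kstar(PW)$ in terms of powers of $T_W$), splits the long exact sequence into short exact sequences. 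This delivers both $\Kone(PV) = 0$ and the claimed free-module basis.

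The main obstacle will be matching the module presentation obtained from the induction with the explicit Koszul relation: the induction alone only shows that \emph{some} monic degree-$d$ relation in $T_V$ holds, and it is the Koszul calculation that pins this relation down as precisely $f_V$. Making this matching rigourous requires verifying compatibility between multiplication by $T_V$ and the Thom/Gysin maps extracted from the cofibre sequence -- in particular, tracking how the Thom class interacts with the inductive generators to account for the appearance of the exterior powers $\lambda^k(V)$ (as opposed to $\lambda^k(L) \cdot \lambda^j(W)$) in the final relation. Once this bookkeeping is carried out, both statements of the proposition follow simultaneously.
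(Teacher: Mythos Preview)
The paper does not prove this proposition; it simply cites Proposition~(3.9) of Segal's \emph{Equivariant $K$-theory} as the source. Your proposal is a reasonable reconstruction of the standard argument, and both the Koszul step (pinning down the relation $f_V(T_V)=0$) and the inductive cofibre-sequence step are correctly set up where they apply.

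The one soft spot is your parenthetical handling of irreducible $V$. When $G$ is non-abelian and $V$ is irreducible of dimension greater than one, no $G$-invariant splitting $V = L \oplus W$ exists, so your induction as written stalls. Passing to a maximal torus $T$ does make every representation split into lines, but transporting the conclusion back to $G$ requires a change-of-groups statement --- for instance that $\Kstar(PV) \hookrightarrow K_T^*(PV)$, or that $K_T^*(PV) \cong \Kstar(PV) \otimes_{R(G)} R(T)$ with $R(T)$ faithfully flat over $R(G)$ --- and such results (of Pittie--Steinberg type) carry their own hypotheses on $G$ and are not obviously lighter than what you are trying to prove. A cleaner route that avoids splitting $V$ equivariantly is a Leray--Hirsch argument: the classes $1, T_V, \ldots, T_V^{d-1}$ restrict on the fibre of $PV \to \pt$ to the standard basis of non-equivariant $K^0(\Complex P^{d-1})$, and this alone forces $\Kstar(PV)$ to be free over $R(G)$ on those classes. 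Combined with your Koszul relation, that finishes the proof uniformly in $G$.
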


For higher levels of the tower we will need to know the equivariant $K$-theory of Grassmannians. We do not provide the calculations here but we note that we can extend the standard non-equivariant theory detailed in many sources - for example result $2.7.14$ in \cite{AtiyahKTheory} or result $3.12$ in Chapter $4$ of \cite{KaroubiKTheory} - via the techniques laid out by Segal in Section $3$ of \cite{segalktheory}. This will yield results generalizing the proposition above. For the calculated portion of the tower, however, the explicit statement of the result for projective representations is all that is required. Noting that the theory is similar for Grassmannians, however, provides further motivation for the conjecture which follows in $\S$\ref{TheKTheoryoftheTowerConjecture}. 

Finally, we will have to deal with the $K$-theory of Thom spaces over vector bundles. In particular wish to know how to deal with sums and differences of these bundles. The two results below deal with this problem; the first calculates the $K$-theory of sums of bundles while the second covers the $K$-theory of virtual vector bundles.

\begin{lem}\label{addingvectorbundlesinKtheory} Let $V$ and $W$ be $G$-vector bundles over compact base $X$ such
that $\Kone(X)=0$. Then:
\begin{itemize}
\item $\redKzero(X^{V\oplus W})\cong \redKzero(X^{V})\otimes_{\Kzero(X)} \redKzero(X^{W})$.
\item $\redKone(X^{V\oplus W})\cong 0$.
\end{itemize}
\end{lem}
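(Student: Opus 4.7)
My plan is to exploit the Thom isomorphism in complex equivariant $K$-theory. The main input, which I will take as known, is that for any complex $G$-vector bundle $U$ over a compact $G$-space $X$ there is a Thom class $\lambda_U \in \redKzero(X^U)$ such that multiplication by $\lambda_U$ gives an isomorphism of $\Kstar(X)$-modules
$$\Kstar(X) \overset{\cong}{\longrightarrow} \redKstar(X^U).$$
This is standard and follows from the $K$-orientability of complex vector bundles; a proof in the equivariant setting can be found in $\S 3$ of \cite{segalktheory}.

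I would first apply this with $U$ equal to each of $V$, $W$ and $V\oplus W$ in turn, which shows that $\redKstar(X^V)$, $\redKstar(X^W)$ and $\redKstar(X^{V\oplus W})$ are each free of rank one over $\Kstar(X)$, generated by the relevant Thom class. The hypothesis $\Kone(X)=0$ then immediately yields $\redKone(X^V)=\redKone(X^W)=\redKone(X^{V\oplus W})=0$, which is precisely the second bullet, and simultaneously tells me that $\redKzero(X^V)$, $\redKzero(X^W)$ and $\redKzero(X^{V\oplus W})$ are free $\Kzero(X)$-modules of rank one on their respective Thom classes.

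To obtain the first bullet I will use the multiplicativity of the Thom class. Under the natural identification $X^{V\oplus W} \cong X^V \wedge_X X^W$ of the Thom space of a Whitney sum with the fibrewise smash product over $X$, the Thom class factorises as $\lambda_{V\oplus W} = \lambda_V \cdot \lambda_W$, via an external product which is $\Kzero(X)$-bilinear and hence induces a $\Kzero(X)$-module map
$$\mu: \redKzero(X^V) \otimes_{\Kzero(X)} \redKzero(X^W) \longrightarrow \redKzero(X^{V\oplus W})$$
sending $\lambda_V \otimes \lambda_W$ to $\lambda_{V\oplus W}$; since both sides are free rank-one $\Kzero(X)$-modules on precisely these generators, $\mu$ is forced to be an isomorphism. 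The step needing the most care is the multiplicativity of the Thom class itself, which although standard is what promotes three separate free rank-one identifications into the claimed tensor product decomposition.
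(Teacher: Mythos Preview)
Your argument is correct and rests on the same two ingredients as the paper's proof: the equivariant Thom isomorphism (so that each of $\redKzero(X^{V})$, $\redKzero(X^{W})$, $\redKzero(X^{V\oplus W})$ is free of rank one over $\Kzero(X)$, and all $\redKone$ groups vanish) together with the multiplicativity $\lambda_{V\oplus W}=\lambda_{V}\cdot\lambda_{W}$ of Thom classes. Where you and the paper differ is only in how the isomorphism is verified. The paper packages the Thom homomorphisms into a pushout square, obtains the comparison map from the universal property of the tensor product, and then produces an inverse from the two bundle projections $V\oplus W\to V$ and $V\oplus W\to W$; you instead observe directly that a $\Kzero(X)$-linear map between free rank-one modules which carries generator to generator is automatically an isomorphism. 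Your route is shorter and makes the role of the multiplicativity of the Thom class more transparent, while the paper's pushout formulation emphasises the functoriality of the Thom homomorphism under Whitney sum; mathematically the two presentations are equivalent.
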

\begin{proof} The first result is built via the Thom homomorphism, details of which are found in the sourced text. We first note that we have the following Thom homomorphisms:
$$\phi_{*}^{V}:\Kzero(X)\to \Kzero (V)$$
$$\phi_{*}^{W}:\Kzero(X)\to \Kzero (W).$$
As $X$ is compact we have that $\Kzero(V)\cong \redKzero (X^{V})$ and $\Kzero(W)\cong\redKzero(X^{W})$. Thus we have the below pushout:
$$\xymatrix{\Kzero(X)\ar[d]_{\phi_{*}^{W}}\ar[r]^{\phi_{*}^{V}}&\redKzero(X^{V})\ar[d]\\
\redKzero(X^{W})\ar[r]&\redKzero(X^{V})\otimes_{\Kzero(X)} \redKzero(X^{W})}$$
We also note that $V\oplus W$ is a bundle over both $V$ and $W$ so we have the following Thom homomorphisms:
$$\phi_{*}^{VW}:\Kzero(V)\cong\redKzero(X^{V})\to \Kzero(V\oplus W)\cong\redKzero(X^{V\oplus W})$$
$$\phi_{*}^{WV}:\Kzero(W)\cong\redKzero(X^{W})\to \Kzero(V\oplus W)\cong\redKzero(X^{V\oplus W}).$$
If $\phi_{*}:\Kzero(X)\to \redKzero(X^{V\oplus W})$ is the Thom homomorphism associated to the Whitney sum $V\oplus W$ of $V$ and $W$ then by result $(3.4)$ in \cite{segalktheory} we have that $\phi_{*}$ commutes with $\phi_{*}^{VW}\circ\phi_{*}^{V}$ and $\phi_{*}^{WV}\circ\phi_{*}^{W}$. Thus we have the below commutative diagram:
$$\xymatrix{\Kzero(X)\ar[dr]^{\phi_{*}}\ar[d]_{\phi_{*}^{W}}\ar[r]^{\phi_{*}^{V}}&\redKzero(X^{V})\ar[d]^{\phi_{*}^{VW}}\\ 
\redKzero(X^{W})\ar[r]_{\phi_{*}^{WV}}&\redKzero(X^{V\oplus W})}$$

Via the universal property of pushouts we have a map $\redKzero(X^{V})\otimes_{\Kzero(X)} \redKzero(X^{W})\to
\redKzero(X^{V\oplus W})$. This becomes an isomorphism if we can show that there exists a unique map $j$ which makes the diagram below commute:
$$\xymatrix{\Kzero(X)\ar[rr]^{\phi^{V}_{*}}\ar[dr]_{\phi_{*}}\ar[dd]_{\phi^{W}_{*}}&&\redKzero(X^{V})\ar[dl]_{\phi_{*}^{VW}}\ar[dd]\\
&\redKzero(X^{V\oplus W})\ar[dr]_{j}&\\
\redKzero(X^{W})\ar[ur]^{\phi_{*}^{WV}}\ar[rr]&&\redKzero(X^{V})\otimes_{\Kzero(X)}
\redKzero(X^{W})}$$
We have a unique map $j$, however - such a map can be built from the two bundle projections $V\oplus W\to V$ and $V\oplus W\to W$. Further, this will be unique. That it also makes the above diagram commute is implicitly covered in $(3.4)$ of \cite{segalktheory}. Thus the first claim follows from the universal property of pushouts.

For the second claim we note that the Thom homomorphism $\phi_{*}^{1}:\Kone(X)\to \redKone(X^{V\oplus W})$ gives an
isomorphism via multiplication by some element, as detailed in the literature. As $\Kone(X)$ is zero, however, this gives
$\redKone(X^{V\oplus W})=0$ as required. 
\end{proof}

\begin{lem}\label{virtualbundlesinKtheory} With the same set-up as \ref{addingvectorbundlesinKtheory} we can calculate the $K$-theory of the virtual bundle $X^{V-W}$:
\begin{itemize}
\item $\redKzero(X^{V- W})\cong\Hom_{\Kzero(X)}( \redKzero(X^{W}), \redKzero(X^{V}))$.
\item $\redKone(X^{V-W})\cong 0$.
\end{itemize}
\end{lem}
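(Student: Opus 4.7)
The plan is to reduce Lemma \ref{virtualbundlesinKtheory} to the previous lemma via a stable-complement trick. Since $X$ is compact, I can choose a $G$-vector bundle $W^{\perp}$ over $X$ such that $W \oplus W^{\perp}$ is a trivial bundle, say $W \oplus W^{\perp} \cong \Complex^{n} \times X$ for some complex representation $\Complex^{n}$. Then by the definition of virtual Thom spectra in $\mathcal{F}_{G}$ and Bott periodicity, we have
$$\redKstar(X^{V-W}) = \redKstar(X^{(V \oplus W^{\perp}) - (W \oplus W^{\perp})}) = \redKstar(\Sigma^{-2n} X^{V \oplus W^{\perp}}) \cong \redKstar(X^{V \oplus W^{\perp}}).$$
Since $\Kone(X) = 0$ by hypothesis, Lemma \ref{addingvectorbundlesinKtheory} immediately gives $\redKone(X^{V-W}) \cong \redKone(X^{V \oplus W^{\perp}}) = 0$, settling the second claim.

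For the first claim, I would first use Lemma \ref{addingvectorbundlesinKtheory} again to write
$$\redKzero(X^{V \oplus W^{\perp}}) \cong \redKzero(X^{V}) \otimes_{\Kzero(X)} \redKzero(X^{W^{\perp}}).$$
The key remaining piece is to identify $\redKzero(X^{W^{\perp}})$ with $\Hom_{\Kzero(X)}(\redKzero(X^{W}), \Kzero(X))$. This comes from the fact that the Thom homomorphism makes $\redKzero(X^{W})$ and $\redKzero(X^{W^{\perp}})$ into free rank-one $\Kzero(X)$-modules generated by the respective Thom classes, together with the multiplicative pairing
$$\redKzero(X^{W}) \otimes_{\Kzero(X)} \redKzero(X^{W^{\perp}}) \cong \redKzero(X^{W \oplus W^{\perp}}) \cong \redKzero(\Sigma^{2n} X_{+}) \cong \Kzero(X),$$
which exhibits $\redKzero(X^{W^{\perp}})$ as the $\Kzero(X)$-linear dual of $\redKzero(X^{W})$. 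Then, since $\redKzero(X^{W})$ is free of rank one (hence finitely generated projective), the standard identity $M \otimes_{R} N^{*} \cong \Hom_{R}(N, M)$ yields
$$\redKzero(X^{V}) \otimes_{\Kzero(X)} \redKzero(X^{W^{\perp}}) \cong \Hom_{\Kzero(X)}(\redKzero(X^{W}), \redKzero(X^{V})),$$
finishing the first claim.

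The main obstacle I anticipate is bookkeeping around the independence of the construction on the choice of complement $W^{\perp}$, and verifying that the isomorphism $\redKzero(X^{W^{\perp}}) \cong \Hom_{\Kzero(X)}(\redKzero(X^{W}), \Kzero(X))$ is natural enough that the composite identification matches the intended description of $\redKzero(X^{V-W})$ in terms of Hom. Both points are ultimately consequences of the naturality of the Thom isomorphism together with the fact that Thom classes for a Whitney sum multiply to the Thom class of the sum, so the argument should be routine once these naturality statements are invoked carefully. A minor wrinkle is that we should choose $W^{\perp}$ so that $W \oplus W^{\perp}$ is a trivial complex representation bundle (rather than only stably trivial), which is possible by standard equivariant averaging arguments for compact $G$-bundles over compact base, ensuring the Bott periodicity shift is by an even integer and preserves the $K^{0}/K^{1}$ split.
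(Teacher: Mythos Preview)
Your argument is correct and reaches the same conclusion, but the route differs from the paper's in a way worth noting. The paper does not introduce a complement $W^{\perp}$ at all: it simply asserts that Lemma~\ref{addingvectorbundlesinKtheory} extends verbatim to virtual bundles (since the Thom homomorphism and its multiplicativity persist stably), writes $X^{V}\cong X^{(V-W)\oplus W}$, applies the extended lemma once to obtain
\[
\redKzero(X^{V})\;\cong\;\redKzero(X^{V-W})\otimes_{\Kzero(X)}\redKzero(X^{W}),
\]
and then ``takes adjoints'' to solve for $\redKzero(X^{V-W})$. Your complement trick is essentially an explicit unwinding of what it means to extend Lemma~\ref{addingvectorbundlesinKtheory} to virtual bundles, and your duality step $\redKzero(X^{W^{\perp}})\cong\Hom_{\Kzero(X)}(\redKzero(X^{W}),\Kzero(X))$ is precisely the ``adjoint'' the paper invokes, made concrete via the invertibility of the Thom class. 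The paper's version is shorter and avoids the auxiliary choice of $W^{\perp}$ (and the attendant naturality checks you flagged); your version has the virtue of never leaving the realm of honest bundles when applying Lemma~\ref{addingvectorbundlesinKtheory}. One small terminological point: when $W\oplus W^{\perp}$ is the trivial bundle with fibre a nontrivial $G$-representation, the shift you use is the equivariant Thom isomorphism for a trivial bundle rather than Bott periodicity per se, though the effect on $\redKstar$ is the same.
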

\begin{proof} Firstly we note that although we did not explicitly mention it the above Lemma \ref{addingvectorbundlesinKtheory} holds for virtual vector bundles - in the stable case we still have a Thom homomorphism which has the factoring properties used in the above proof. Note that $X^{V}\cong X^{V-W\oplus W}$, then via an application of Lemma \ref{addingvectorbundlesinKtheory} we have:
$$\redKzero(X^{V})\cong\redKzero(X^{V-W})\otimes_{\Kzero(X)}\redKzero(X^{W}).$$
Taking adjoints then gives us the first claim. The second claim is standard and follows as per above from the Thom homomorphism actually being an isomorphism.
\end{proof}

\section{The K-Theory of the Bottom of the Tower}\label{TheKTheoryoftheBottomoftheTower}

We now partially calculate the equivariant $K$-theory of the bottom of the tower; we wish to provide evidence for a detailed conjecture on the $K$-theory of the tower. We first consider general Thom spaces of the form $PV^{\Hom(T_{V},W)}$. We have the below equivariant identification:
$$PV^{\Hom(T_{V},W)}\cong \frac{P(V\oplus W)}{PW}.$$

Let $i_{VW}:PV\to P(V\oplus W)$ be the inclusion. Then the following identity is easily observable:
$$\redKstar(PV^{\Hom(T_{V},W)})\cong \Ker(i_{VW}^{*}:\Kstar(P(V\oplus W))\to\Kstar(PW)).$$

It is clear that $i^{*}_{VW}$ is the evident restriction. We note one further detail before we begin the calculations:

\begin{lem}\label{ktheorypolyssplitup} Recall the polynomials $f_{V}(t)$ defined in Proposition \ref{thektheoryofprojectiverepresentations}. Then we have:
$$f_{V\oplus W}(t)= f_{V}(t).f_{W}(t).$$
\end{lem}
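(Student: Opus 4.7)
The plan is to reduce this identity to the standard direct-sum formula for exterior powers in representation theory, then match coefficients. Write $d_V = \dim V$, $d_W = \dim W$, and $d = d_V + d_W$. Expanding the right hand side and regrouping by total degree gives
\begin{align*}
f_V(t) \cdot f_W(t) &= \left(\sum_{k=0}^{d_V} t^{d_V - k}(-1)^k \lambda^k(V)\right)\left(\sum_{j=0}^{d_W} t^{d_W - j}(-1)^j \lambda^j(W)\right)\\
&= \sum_{n=0}^{d} t^{d - n}(-1)^n \sum_{k+j=n} \lambda^k(V) \cdot \lambda^j(W),
\end{align*}
where the product on the inner sum is the tensor product in $R(G)$.

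Comparing this with $f_{V \oplus W}(t) = \sum_{n=0}^{d} t^{d-n}(-1)^n \lambda^n(V \oplus W)$, the lemma reduces to showing that
$$\lambda^n(V \oplus W) \cong \bigoplus_{k+j=n} \lambda^k(V) \otimes \lambda^j(W)$$
as elements of $R(G)$, for each $n$. This is the standard equivariant direct-sum decomposition of exterior powers: the $G$-equivariant isomorphism is the obvious one sending $(v_1 \wedge \cdots \wedge v_k) \otimes (w_1 \wedge \cdots \wedge w_j)$ to $v_1 \wedge \cdots \wedge v_k \wedge w_1 \wedge \cdots \wedge w_j$ and using antisymmetry to show surjectivity. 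The $G$-action on $V \oplus W$ respects the decomposition, so this is genuinely an identity of $G$-representations, hence valid in $R(G)$.

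There is no real obstacle here; the argument is a routine bookkeeping exercise once one invokes the exterior algebra identity. The only thing worth flagging is that the identity holds at the level of $G$-representations (not merely virtual classes), which is what allows the identity to pass into $R(G)$ and hence into the polynomial ring $R(G)[t]$.
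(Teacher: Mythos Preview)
Your proof is correct and takes essentially the same approach as the paper: the paper simply invokes the exterior algebra identity $\lambda^{*}(V\oplus W)\cong \lambda^{*}(V)\otimes \lambda^{*}(W)$ and declares the result immediate, while you have written out the graded pieces of that identity and the polynomial bookkeeping explicitly.
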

\begin{proof} We state a single simple fact from the theory of exterior algebras, that $\lambda^{*}(V\oplus W)\cong \lambda^{*}(V)\otimes \lambda^{*}(W)$. The result then trivially follows from this.
\end{proof}

We can now calculate $\redKstar(PV_{0}^{\Hom(T,V_{1})})$ and $\redKstar(PV_{0}^{\Hom(T,V_{0})})$. The first calculation goes as follows:
$$\begin{array}{rcl}
\redKzero(PV_{0}^{\Hom(T,V_{1})})&=&\Ker\left(\frac{R(G)[T_{V_{0}\oplus V_{1}},T_{V_{0}\oplus V_{1}}^{-1}]}{f_{V_{0}\oplus V_{1}}(T_{V_{0}\oplus V_{1}})}\overset{i_{V_{0}V_{1}}^{*}}{\to}\frac{R(G)[T_{V_{1}},T_{V_{1}}^{-1}]}{f_{ V_{1}}(T_{ V_{1}})}\right)\\
&=&\Ker\left(\frac{R(G)[T_{V_{0}\oplus V_{1}},T_{V_{0}\oplus V_{1}}^{-1}]}{f_{V_{0}}(T_{V_{0}\oplus V_{1}}).f_{V_{1}}(T_{V_{0}\oplus V_{1}})}\overset{i_{V_{0}V_{1}}^{*}}{\to}\frac{R(G)[T_{V_{1}},T_{V_{1}}^{-1}]}{f_{ V_{1}}(T_{ V_{1}})}\right)\\
&=&\frac{R(G)[T,T^{-1}].f_{V_{1}}(T)}{R(G)[T,T^{-1}].f_{V_{0}}(T).f_{V_{1}}(T)}.
\end{array}
$$

The last step follows from observing that $i^{*}_{V_{0}V_{1}}$ is the restriction. In the final step here we use $T$ as a dummy variable; under the projection onto $PV_{i}$ we have $T_{V_{0}\oplus V_{1}}$ mapping to $T_{V_{i}}$ so we switch to $T$ to represent both $T_{V_{0}\oplus V_{1}}$ and its projection. Thus we have that $\redKzero(PV_{0}^{\Hom(T,V_{1})})$ is a free module of rank $1$ over $\Kzero(PV_{0})$. It is also clear to see that
$\redKone(PV_{0}^{\Hom(T,V_{1})})$ is zero as $\Kone(P(V_{0}\oplus V_{1}))$, $\Kone(PV_{0})$ and $\Kone(PV_{1})$ are all zero. We now perform a similar calculation for $\redKzero(PV_{0}^{\Hom(T,V_{0})})$, noting that $\redKone(PV_{0}^{\Hom(T,V_{0})})$ will again be zero:
$$\begin{array}{rcl}
\redKzero(PV_{0}^{\Hom(T,V_{0})})&=&\Ker\left(\frac{R(G)[T_{V_{0}\oplus V_{1}},T_{V_{0}\oplus V_{1}}^{-1}]}{f_{V_{0}\oplus V_{1}}(T_{V_{0}\oplus V_{1}})}\overset{i_{V_{0}V_{0}}^{*}}{\to}\frac{R(G)[T_{V_{0}},T_{V_{0}}^{-1}]}{f_{ V_{0}}(T_{ V_{0}})}\right)\\
&=&\Ker\left(\frac{R(G)[T_{V_{0}\oplus V_{1}},T_{V_{0}\oplus V_{1}}^{-1}]}{f_{V_{0}}(T_{V_{0}\oplus V_{1}}).f_{V_{1}}(T_{V_{0}\oplus V_{1}})}\overset{i_{V_{0}V_{0}}^{*}}{\to}\frac{R(G)[T_{V_{0}},T_{V_{0}}^{-1}]}{f_{ V_{0}}(T_{ V_{0}})}\right)\\
&=&\frac{R(G)[T,T^{-1}].f_{V_{0}}(T)}{R(G)[T,T^{-1}].f_{V_{0}}(T)^{2}}.
\end{array}
$$

Again we switch to $T$ as a dummy variable. We now combine the two calculations with Lemma \ref{virtualbundlesinKtheory} to give:
$$\redKzero(PV_{0}^{\Hom(T,V_{1}-V_{0})})\cong \frac{R(G)[T,T^{-1}]}{f_{V_{0}}(T)}\otimes \frac{f_{V_{1}}(T)}{f_{V_{0}}(T)}$$
$$\redKone(PV_{0}^{\Hom(T,V_{1}-V_{0})})\cong 0.$$

Finally, we note that over $PV_{0}$ the bundle $s(T)$ is just the trivial bundle $\Real$. Now recall the map $\delta_{1}$ in the tower of Theorem \ref{themaintheorem}:
$$\delta_{1}:S^{0}\to \Sigma PV_{0}^{\Hom(T,V_{1}-V_{0})\oplus s(T)}\cong \Sigma^{2}PV_{0}^{\Hom(T,V_{1}-V_{0})}.$$

We have the map $\delta_{1}^{*}$ on $K$-theory. Using Bott periodicity, this map has domain and codomain as follows:
$$\delta_{1}^{*}:\redKstar(\Sigma^{2}PV_{0}^{\Hom(T,V_{1}-V_{0})})\cong\redKstar(PV_{0}^{\Hom(T,V_{1}-V_{0})})\to \redKstar (S^{0}).$$

This map involves already calculated $K$-theory groups. We first note that all $\redKone$ groups have been zero, thus $\delta^{*}_{1}$ is the trivial map $0\to 0$ on $\redKone$. We can also observe that on $\redKzero$ the map $\delta^{*}_{1}$ has the following domain and codomain:
$$\delta_{1}^{*}:\redKzero(PV_{0}^{\Hom(T,V_{1}-V_{0})})\to \redKzero (S^{0})$$
$$\delta_{1}^{*}:\frac{R(G)[T,T^{-1}]}{f_{V_{0}}(T)}\otimes\frac{ f_{V_{1}}(T)}{f_{V_{0}}(T)}\to R(G).$$

Note we are implicitly suppressing here that $\redKzero(\Sigma^{2}PV_{0}^{\Hom(T,V_{1}-V_{0})})$ is not strictly
$\redKzero(PV_{0}^{\Hom(T,V_{1}-V_{0})})$, rather it is more correctly the below module:
$$\frac{R(G)[T,T^{-1}]}{f_{V_{0}}(T)}\otimes\frac{f_{V_{1}}(T)}{f_{V_{0}}(T)}.\omega.$$

Here $\omega$ is the Bott element in $\tilde{K}_{G}^{2}(S^{2})$; Bott periodicity gives an isomorphism $\redKzero(PV_{0}^{\Hom(T,V_{1}-V_{0})})\cong\redKzero(\Sigma^{2}PV_{0}^{\Hom(T,V_{1}-V_{0})})$ via multiplication with this $\omega$. We are now in a position to describe how the map $\delta^{*}_{1}$ acts in certain special cases:

\begin{prop}\label{itsaresiduemap} For $G$ a finite abelian group the map $\delta_{1}^{*}$ is the residue map.
\end{prop}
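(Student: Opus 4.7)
The plan is to exploit the Pontryagin--Thom description of $\delta_{1}$ provided by Proposition \ref{thebottommapisPT} and then identify the resulting Gysin map on $K$-theory with the algebraic residue via equivariant localization. First I would rewrite $\delta_{1}$ as the composite of the Pontryagin--Thom collapse $j^{!}:S^{s(V_{0})}\to PV_{0}^{\nu_{j}}$ for the embedding $j:L\mapsto 0_{L}\oplus -1_{L^{\bot}}$, the bundle identification $\nu_{j}\cong 2s(T)\ominus\Hom(T,V_{0})$ used in the derivation of Proposition \ref{thebottommapisPT}, and the zero-section inclusion $PV_{0}\to PV_{0}^{\Hom(T,V_{1})}$. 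On $K$-theory this exhibits $\delta_{1}^{*}$ as the composite of Bott periodicity, the Thom isomorphism, and the equivariant Gysin pushforward $\pi_{!}:K_{G}^{0}(PV_{0})\to R(G)$ for $\pi:PV_{0}\to\mathrm{pt.}$, with the Thom class responsible for the factor $f_{V_{1}}(T)/f_{V_{0}}(T)$ which already appears in the module presentation of the domain.

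Second, for a finite abelian $G$ the representation $V_{0}$ splits canonically into one-dimensional characters $V_{0}=\bigoplus_{i=0}^{d_{0}-1}\chi_{i}$, so that $f_{V_{0}}(T)=\prod_{i}(T-\chi_{i})$ in $R(G)[T]$, and the $G$-fixed locus $(PV_{0})^{G}$ is (generically) the discrete set of lines $\{[\chi_{i}]\}$. The Atiyah--Segal--Singer localization theorem in equivariant $K$-theory then expresses $\pi_{!}$ as a sum over these fixed points, each summand weighted by the reciprocal of the $K$-theoretic Euler class of the tangent representation $\Hom(\chi_{i},V_{0}\ominus\chi_{i})$ at $[\chi_{i}]$. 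Applied to a class $h(T)\cdot f_{V_{1}}(T)/f_{V_{0}}(T)$ in the domain, the $i$-th summand works out to
$$\frac{h(\chi_{i})\,f_{V_{1}}(\chi_{i})}{\prod_{j\neq i}(\chi_{i}-\chi_{j})},$$
which is precisely the algebraic residue at $T=\chi_{i}$ of the rational function $h(T)f_{V_{1}}(T)/f_{V_{0}}(T)$; summing over $i$ yields the residue map as required.

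The main obstacle will be the bookkeeping of signs and normalizations. The Pontryagin--Thom construction carries several twists (the $-1$ in the embedding $j$, the $t\mapsto 1-t$ twist inserted to define $\delta_{1}$, and the Bott periodicity isomorphism packaging the $S^{s(V_{0})}$ suspension together with the difference bundle), and these must be arranged so that the $R(G)$-linear map one obtains is the residue, rather than its negative, inverse, or a residue shifted by a power of $T$. A secondary technical issue is that Atiyah--Segal--Singer localization is cleanest after inverting the elements $\chi_{i}-\chi_{j}\in R(G)$; to obtain an integral equality I would then argue that both $\delta_{1}^{*}$ and the residue map are $R(G)$-linear maps out of a finitely generated free $R(G)$-module whose rank is known from the explicit presentation of the domain, and that they agree after inverting the relevant elements, from which integrality of the identification follows.
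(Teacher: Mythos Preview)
Your proposal is correct and begins exactly as the paper does, invoking Proposition \ref{thebottommapisPT} to factor $\delta_{1}$ through the Pontryagin--Thom collapse $j^{!}$ and the zero-section. The divergence comes at the identification of the Gysin map $(\Sigma^{-s(V_{0})}j^{!})^{*}$ with the residue: the paper does not prove this directly but instead cites it as a black box, namely Theorem 21.35 of Strickland's \emph{Multicurves and Equivariant Cohomology}, which establishes precisely this statement for finite abelian groups. Your route via Atiyah--Segal localization is the natural way to \emph{prove} that cited result, and your fixed-point formula is the correct one when the characters $\chi_{i}$ are pairwise distinct. What the paper's approach buys is brevity and a clean handling of the degenerate cases (repeated characters, where $(PV_{0})^{G}$ is not discrete and your simple-pole residue formula no longer applies verbatim) by outsourcing them to Strickland; what your approach buys is a self-contained argument that makes the appearance of the residue transparent. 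The two concerns you flag --- sign and normalization bookkeeping, and the passage from a localized identity to an integral one --- are genuine and are exactly the content absorbed by the cited theorem, so if you pursue your route you should expect most of the work to lie there, together with a separate treatment of the case where some $\chi_{i}$ coincide.
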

\begin{proof} Recalling Section \ref{TheBottomoftheTower} we observe that $\delta_{1}^{*}$ is the composition of the stable Pontryagin-Thom collapse map $\Sigma^{-\svo}j^{!}:S^{0}\to PV_{0}^{-\tau_{PV_{0}}}$ with the twisting zero-section map $g:PV_{0}^{-\tau_{PV_{0}}}\to \Sigma^{2}PV_{0}^{\Hom(T,V_{1}-V_{0})}$. Hence in cohomology we have a Gysin map $(\Sigma^{-\svo}j^{!})^{*}:\redKstar(PV_{0}^{-\tau_{PV_{0}}})\to\redKstar(S^{0})$. As we are working over a finite abelian group this is the residue map by a result of Strickland, Theorem 21.35 in \cite{StricklandMulticurves}. That the composition is the residue map easily follows from here.
\end{proof}

We conjecture that this result holds in the more general setting of compact Lie groups. Non-equivariantly, Quillen in
\cite{QuillenCobordismFGL} stated that Gysin maps of the form above are residue maps in complex cobordism. From this it is not a far stretch to hypothesize that the above proposition holds in more generality. A toric version of the result is an immediate future goal of the author and a generalization to all compact Lie groups would produce a complete calculation of the $K$-theory of the bottom of the tower. We can offer no proof to this conjecture, however.

The result does, however, construct the bottom of the tower in some generality. We now check one further result, that of the subrepresentation case. Similar to our conjecture in Chapter \ref{ch:ch7}, we expect the algebraic invariants to suitably degenerate to mirror the non-equivariant case when both theories are applicable. Non-equivariantly we have the result \ref{kitchloosresult} of Kitchloo from \cite{Kitchloo}. We conjecture, as stated briefly in \ref{introconjecture} and to be stated in more detail in the next section, that the tower is a Koszul complex. In order for the result
to suitably retrieve the Miller splitting on the cohomological level, we require that the following result holds:

\begin{prop}\label{bottomdifferentialiszeroinsubrep} For $G$ a finite abelian group the map $\delta_{1}^{*}$ is zero if and only if $V_{0}\leqslant V_{1}$.
\end{prop}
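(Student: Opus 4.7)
The plan is to combine Proposition \ref{itsaresiduemap} with the character decomposition available for finite abelian groups. By that proposition, $\delta_1^*$ equals the residue map
$$\text{Res}: \frac{R(G)[T,T^{-1}]}{f_{V_0}(T)} \otimes \frac{f_{V_1}(T)}{f_{V_0}(T)} \to R(G),$$
so the task reduces to identifying precisely when this residue vanishes. Since $G$ is finite abelian, every complex representation splits as a sum of 1-dimensional characters, so write $V_0 = \bigoplus_{i=1}^{d_0} L_i$ and $V_1 = \bigoplus_{j=1}^{d_1} M_j$ with $L_i, M_j \in \hat{G} \subset R(G)^\times$. Then $f_{V_0}(T) = \prod_i (T - L_i)$ and $f_{V_1}(T) = \prod_j (T - M_j)$, and the residue of $g(T) \otimes \tfrac{f_{V_1}(T)}{f_{V_0}(T)}$ is computed from the rational function $g(T) f_{V_1}(T)/f_{V_0}(T)$ by summing the local residues at each root $L_i$ of $f_{V_0}$.

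The forward direction ($V_0 \leq V_1 \Rightarrow \delta_1^* = 0$) is the easier half. Assuming $V_0 \leq V_1$, decompose $V_1 = V_0 \oplus V_2$ orthogonally; then Lemma \ref{ktheorypolyssplitup} gives $f_{V_1}(T) = f_{V_0}(T) f_{V_2}(T)$. The rational function $g(T) f_{V_1}(T)/f_{V_0}(T) = g(T) f_{V_2}(T)$ is a Laurent polynomial with no poles at any root of $f_{V_0}$, so every local residue is zero, forcing $\delta_1^* = 0$.

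The converse proceeds by contrapositive. Suppose $V_0 \not\leq V_1$, so some character $L \in \hat{G}$ has strictly greater multiplicity $m_0$ in $V_0$ than its multiplicity $m_1$ in $V_1$. I would construct a test element by Lagrange interpolation: working in $R(G)[T]/f_{V_0}(T)$ (after possibly passing to a localization that inverts differences of distinct characters), choose $g(T)$ so that $g(T) f_{V_1}(T)/f_{V_0}(T)$ has principal part of order $m_0 - m_1 \geq 1$ concentrated at $T = L$, with leading coefficient $\prod_{M \in V_1, M \neq L}(L - M)^{a}\prod_{L' \in V_0 \setminus\{L\}}(L - L')^{-b}$. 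Then the residue at $T = L$ is an explicit nonzero element of the localized ring.

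The main obstacle is promoting this nonvanishing from a localization back to $R(G)$ itself, since $R(G)$ has zero divisors and differences of characters can fail to be regular. I would circumvent this by exploiting the standard isomorphism $R(G) \otimes \mathbb{C} \cong C(G, \mathbb{C})$ for $G$ finite abelian: evaluating the residue at a generic group element $g_0 \in G$ for which $\chi(g_0) \neq \chi'(g_0)$ whenever $\chi \neq \chi'$ converts the residue to the ordinary analytic residue of a meromorphic function on $\mathbb{C}^\times$, where the pole at $T = L(g_0)$ has order $m_0 - m_1 > 0$ and is therefore genuinely nonzero. Since a generic such $g_0$ exists (the complement of finitely many hyperplanes in $G$), the residue is nonzero in $R(G) \otimes \mathbb{C}$, hence nonzero in $R(G)$.
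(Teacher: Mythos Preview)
Your forward direction is correct and essentially identical to the paper's: both use Lemma \ref{ktheorypolyssplitup} and the line decomposition $f_V(T)=\prod_i(T-[L_i])$ to see that $f_{V_1}/f_{V_0}$ is a genuine polynomial, whence the residue vanishes.

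For the converse, the paper argues more tersely than you do: it simply asserts that $\delta_1^*=0$ forces $f_{V_1}(T)/f_{V_0}(T)$ to be free of singularities, and then observes that the linear independence of characters in $R(G)$ makes the factorisation $f_V(T)=\prod_i(T-[L_i])$ rigid enough that divisibility of the polynomials is equivalent to $V_0\leqslant V_1$. Your contrapositive route via an explicit test element and evaluation at a group element is more concrete and more honest about the zero-divisor issue, but it contains a genuine gap.

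The problem is the sentence ``a generic such $g_0$ exists (the complement of finitely many hyperplanes in $G$)''. For a finite group there are no hyperplanes to avoid, and for non-cyclic abelian $G$ there is \emph{no} element separating all characters simultaneously: already for $G=\mathbb{Z}/2\times\mathbb{Z}/2$, every $g\in G$ has at least two distinct characters taking the same value at $g$. So your reduction to a single ordinary complex residue collapses. The idea of passing to $R(G)\otimes\mathbb{C}\cong\prod_{g\in G}\mathbb{C}$ is sound, but you must use the whole product, not one coordinate. One clean repair: the residue pairing $(g,h)\mapsto\mathrm{Res}(gh/f_{V_0})$ on $R(G)[T,T^{-1}]/f_{V_0}$ is perfect over $R(G)$ (this is the standard duality for monic polynomials with unit constant term, and holds over any commutative ring), so the map $g\mapsto\mathrm{Res}(g\,f_{V_1}/f_{V_0})$ vanishes identically if and only if $f_{V_1}\equiv 0$ modulo $f_{V_0}$ in $R(G)[T]$, i.e.\ $f_{V_0}\mid f_{V_1}$. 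Since distinct characters are $\mathbb{Z}$-linearly independent in $R(G)$, this divisibility forces the multiset of characters of $V_0$ to sit inside that of $V_1$, giving $V_0\leqslant V_1$. This recovers exactly the implication the paper asserts, and avoids the nonexistent generic point.
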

\begin{proof} Let $V_{0}\leqslant V_{1}$, we claim that the meromorphic function $f_{V_{1}}(T)/f_{V_{0}}(T)$ has no singularities. As $G$ is abelian, note that any representation $V$ of dimension $d$ decomposes into a sum of $1$-dimensional representations $L_{1}\oplus\ldots\oplus L_{d}$; this follows from (3.7) of \cite{segalktheory}. Under this decomposition the polynomial $f_{V}(t)$ can take the below form, this is covered in detail in $2.5.3$ and $2.7.1$ of \cite{AtiyahKTheory}:
$$f_{V}(t)=\prod_{i=1}^{d}(t-[L_{i}]).$$

With this identification it is clear that $f_{V_{1}}(T)/f_{V_{0}}(T)$ will have no singularities - it will take the form of $\prod(t-[L_{i}])$ for each line that is part of the decomposition of $V_{1}$ but not part of the corresponding
decomposition of $V_{0}$. This is enough to show that $\delta_{1}^{*}$ is zero.

For the converse, if $\delta_{1}^{*}$ is zero then it follows that $f_{V_{1}}(T)/f_{V_{0}}(T)$ has no singularities. Using the line decomposition above it is easy to see that $f_{V_{1}}(T)/f_{V_{0}}(T)$ having no singularities implies that for every $(T-[L])$-component of the polynomial $f_{V_{0}}(T)$ there is a corresponding component of $(T-[L])$ in $f_{V_{1}}(T)$ to cancel and make $f_{V_{1}}(T)/f_{V_{0}}(T)$ nonsingular. This gives $V_{0}$ decomposed into a sum of lines which all lie in the decomposition of $V_{1}$, thus $V_{0}\leqslant V_{1}$ as required.
\end{proof}

This gives a partial description of the equivariant $K$-theory of the bottom of the tower, suggesting a pattern which we now proceed to hypothesize.

\section{The K-Theory of the Tower - Conjecture}\label{TheKTheoryoftheTowerConjecture}

After detailing the $K$-theory of the bottom of the tower, the next logical goal would be to extend the $K$-theory results up the tower to try and build meaningful structure. An obstruction to this generalization following through, however, is the behaviour of the $K$-theory of the $G_{k}(V_{0})^{s(T)}$-part.

Previously, we built the $K$-theory of $PV_{0}^{\Hom(T,V_{1})}$ and similar out of equivariant identities; we then used the lemmas detailed at the end of $\S$\ref{BasicResults} to combine everything together. We can somewhat repeat this process up the tower - we have already noted that there will be a resemblance between the constructions of $\Kstar(PV_{0})$ and $\Kstar (G_{k}(V_{0}))$. Furthermore, we can modify the techniques used throughout
$\S$\ref{TheKTheoryoftheBottomoftheTower} in ways that will hopefully follow through towards a more general result. However, in the detailed case we completely circumvented the issue of the $s(T)$-bundle by noting that if $T$ is a line bundle then $s(T)\cong \Real$. For higher tautological bundles, however, we simply cannot do this. Thus one of the issues becomes detailing $\redKstar (G_{k}(V_{0})^{s(T)})$ and combining it into our calculations above via Lemma \ref{addingvectorbundlesinKtheory}.

The other obstruction to our progress is the reliance on Gysin maps for our dimension $2$ proof. Further up the tower we do not have access to such a nice geometric structure. Thus we would have to find another way to suitably describe the maps.

For reasons of time, we are unable to include these calculations within the document. We can, however, note certain things about the issues at hand and disseminate the evidence towards producing a conjecture. Firstly, we note that what we wish to regard are the groups and maps below, recalling $\delta_{k}$ and $\phi_{k-1}$ from Theorem \ref{themaintheorem}:
$$\redKstar(G_{k}(V_{0})^{s(T)\oplus\Hom(T,V_{1}-V_{0})})\overset{(\delta_{k}\circ\phi_{k-1})^{*}}{\longrightarrow}\redKstar(G_{k-1}(V_{0})^{s(T)\oplus\Hom(T,V_{1}-V_{0})}).$$

We immediately note that we have suppressed that $\delta_{k}^{*}$ is actually a map to suspension. It is clear that from
$\Kone(G_{k}(V_{0}))$ being zero we will have one of the two groups of $\redKstar(G_{k}(V_{0})^{s(T)\oplus\Hom(T,V_{1}-V_{0})})$ also being zero. The suspension in the $\delta_{k}^{*}$ will make these zeroes match up - the addition of the bundle $s(T)$ will amongst other things add in a $K$-theory dimension shift for $k$ odd, this is easy to see by recalling that for $V$ a representation of dimension $n$ we have $s(V)$ as a real representation of dimension $n^{2}$. Thus we know that what we will have are maps between the `interesting' $K$-groups in this case, which matches with the behaviour at the bottom.

We also recall one other piece of evidence we have, that of the subrepresentation case. Geometrically, we conjecture that we can link in and retrieve the work of Miller from our construction. Moreover non-equivariantly we know what happens regarding the cohomology of the picture, Kitchloo detailed in \cite{Kitchloo} that it becomes an exterior algebra with interesting additional structure. Passing to an equivariant theory in this case should also behave in a similar fashion; if the subrepresentation conjecture were to hold as we believe it does then we would expect to retrieve
the algebraic equivalent in $K$-theory.

Combining the evidence regarding the subrepresentation case with the bottom of the tower results allows us to confidently conjecture an algebraic structure to be held by the $K$-theory of the tower. We take the definition of a Koszul complex to be as follows, this will be the structure our tower will hold. One of a number of references
covering the structure is Chapter XXI, $\S$4 of \cite{LangAlgebra}:

\begin{defn}\label{AKoszulComplex} Let $R$ be a ring and $(x_{1},\ldots,x_{r})$ a sequence of elements in $R$. Then a Koszul complex is given by the following information:
\begin{enumerate}
\item Modules $K_{i}$ for $i=0,\ldots,r$ given by setting $K_{0}$ to be $R$, $K_{1}$ to be the free $R$-module generated by elements $e_{1},\ldots,e_{r}$ and each $K_{i}$ to be $\lambda^{i}_{R}(K_{1})$, the $i^{th}$ exterior power of $K_{1}$ over $R$.
\item Differential maps $d_{i}:K_{i}\to K_{i-1}$ given by $d_{1}(e_{j})=x_{j}$ and $d_{i}$ sending $(e_{j_{1}}\wedge \ldots\wedge e_{j_{i}})$ to $\sum_{k=1}^{i}(-1)^{k-1}x_{j_{k}}(e_{j_{1}}\wedge\ldots\wedge\widehat{e_{j_{k}}}\wedge\ldots\wedge e_{j_{i}})$.
\end{enumerate}
The notation $e_{j_{1}}\wedge\ldots\wedge\widehat{e_{j_{k}}}\wedge\ldots\wedge e_{j_{i}}$ refers to the exterior product achieved by removing the $e_{j_{k}}$-element from the product. We note that the above definition gives a complex with $d^{2}=0$ and refer to the cited text for the theory behind the construction.
\end{defn}

We conclude by stating in detail what we believe to be true for the equivariant $K$-theory of the tower built in \ref{themaintheorem}. A proof of the conjecture below is one of the goals of the author in follow-up to this document.

\begin{conjecture}\label{TheFinalConjecture} Applying $\redKstar$ to the tower detailed in \ref{themaintheorem} will produce the structure of a Koszul complex as follows:
\begin{itemize}
\item The base-ring will be $R(G)$.
\item $K_{1}$ will be up to isomorphism $\redKzero(PV_{0}^{\Hom(T,V_{1}-V_{0})})$, the isomorphism arising
from a double suspension and Bott periodicity.
\item Each $K_{r}\cong\redKstar(G_{r}(V_{0})^{s(T)\oplus \Hom(T,V_{1}-V_{0})})$ will be isomorphic to the $k^{th}$ exterior power of $K_{1}$.
\item The differential $d_{1}:K_{1}\to K_{0}=R(G)$ will be $\delta_{1}^{*}$, a residue map.
\item The differentials $d_{n}:K_{n}\to K_{n-1}$ will be $(\delta_{n}\circ \phi_{n-1})^{*}$.
\end{itemize}
Moreover, $V_{0}\leqslant V_{1}$ if and only if each differential is zero. Thus if $V_{0}\leqslant V_{1}$ the complex has zero differentials and we retrieve that $\Kstar(\Ell)$ is an exterior algebra generated over $R(G)$ by $\redKstar(PV_{0}^{s(T)\oplus \Hom(T,V_{1}-V_{0})})$ with each exterior power being the equivariant $K$-theory of each component in the splitting.
\end{conjecture}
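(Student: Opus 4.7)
The plan is to proceed in four stages: first extend the $K$-theory computation from the bottom of the tower (Section \ref{TheKTheoryoftheBottomoftheTower}) up through each level, then identify the differentials, then verify the Koszul identity $d^{2}=0$, and finally treat the subrepresentation case separately via the results of Chapter \ref{ch:ch7}.

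For the first stage, I would compute $\redKstar(G_{k}(V_{0})^{\Hom(T,V_{1}-V_{0})\oplus s(T)})$ by isolating the three distinct contributions. The equivariant $K$-theory of $G_{k}(V_{0})$ is obtained by extending Segal's projective-space calculation (\ref{thektheoryofprojectiverepresentations}) via the standard flag-space resolution, presenting $\Kstar(G_{k}(V_{0}))$ as a quotient of $R(G)[\lambda_{1},\ldots,\lambda_{k}]^{S_{k}}$ by relations from $f_{V_{0}}$ factoring through the universal bundle. The virtual $\Hom(T,V_{1}-V_{0})$ part is handled by Lemmas \ref{addingvectorbundlesinKtheory} and \ref{virtualbundlesinKtheory}, generalizing the explicit calculation that produced the rank-one module $R(G)[T,T^{-1}]\cdot f_{V_{1}}(T)/(f_{V_{0}}(T)^{2})$ in Section \ref{TheKTheoryoftheBottomoftheTower}. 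The main obstruction here, flagged at the start of Section \ref{TheKTheoryoftheTowerConjecture}, is the $s(T)$ bundle: for $k>1$ it has rank $k^{2}$ and is not a complex bundle. I would attack this by exploiting the isomorphism $\Hom(T,T)\cong 2s(T)$ established in Lemma \ref{destabilizinglemma}, so that the Thom isomorphism for $s(T)$ can be traded (after suitable Bott periodicity manipulations) for that of the honest complex bundle $\Hom(T,T)$ less a trivial summand. This should match the conjectured exterior power structure: each generator of $K_{1}$ contributes a factor, with antisymmetrization provided by the Weyl-group invariants in the flag-space model.

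For the second stage, identifying $d_{1}$ with a residue map follows from Proposition \ref{itsaresiduemap} in the finite abelian case, and I would extend this to general compact Lie groups by using the Pontryagin-Thom description in Section \ref{TheBottomoftheTower} combined with an equivariant version of Quillen's residue formula for Gysin maps (\cite{QuillenCobordismFGL}); the toric case is already an immediate follow-up goal. For $d_{n}=(\delta_{n}\circ\phi_{n-1})^{*}$, the strategy is to realise the composition $\delta_{n}\circ\phi_{n-1}$ as a fibrewise Pontryagin-Thom collapse over a Grassmannian flag variety; then the standard Koszul differential arises from the same residue-type formula applied fibrewise, extracting one tautological line at a time. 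The identification with the algebraic Koszul differential then reduces to a direct computation on generators, checking signs via the behaviour of the twist $-\Sigma$ in the definition of $\tilde{\delta}_{k}$ (Definition \ref{tildedeltak}).

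The third stage is essentially formal: the composites $\pi_{k}\circ\phi_{k}$ and $\delta_{k}\circ\pi_{k}$ are null by the cofibre sequence property (verified explicitly in Section \ref{ExplicitNullHomotopies}), so the induced $K$-theory composites $(\delta_{n-1}\circ\phi_{n-2})^{*}\circ(\delta_{n}\circ\phi_{n-1})^{*}$ vanish, giving $d^{2}=0$ and the complex structure. Finally, the subrepresentation case follows by combining Proposition \ref{bottomdifferentialiszeroinsubrep} at the bottom with Claim \ref{thetowersplits}: if the tower splits as a wedge, then each $\phi_{k}$ admits a one-sided inverse after composition with $\pi_{k}$, forcing every $\delta_{k+1}\circ\phi_{k}$ to be stably null and hence all higher differentials to vanish. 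The resulting collapsed complex realises the exterior algebra structure on $\redKstar(\Ell_{\infty})$, matching Kitchloo's Theorem \ref{kitchloosresult} equivariantly. The hardest part will be Stage 1: controlling the equivariant Thom isomorphism for $s(T)$ in a way compatible with the exterior power identification, since without this the generators-and-relations description of each $K_{r}$ cannot be pinned down precisely enough to recognise the differentials as the Koszul maps.
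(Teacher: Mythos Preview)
The statement you are attempting to prove is explicitly a \emph{conjecture} in the paper, not a theorem; the paper offers no proof and states that ``a proof of the conjecture below is one of the goals of the author in follow-up to this document.'' So there is nothing to compare your proposal against. What you have written is not a proof but a research programme, and while it is a sensible one, several of its steps remain genuine open problems rather than arguments.

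In Stage~1, the idea of trading $s(T)$ for $\Hom(T,T)$ via Lemma~\ref{destabilizinglemma} is natural, but you have not actually carried out the resulting Thom computation or shown that the answer is $\lambda^{k}_{R(G)}(K_{1})$; this is precisely the obstruction the paper flags and does not resolve. In Stage~2, extending Proposition~\ref{itsaresiduemap} beyond finite abelian groups is itself stated in the paper as an open problem, and realising $\delta_{n}\circ\phi_{n-1}$ as a fibrewise Pontryagin--Thom collapse is asserted rather than established---the paper explicitly notes that ``further up the tower we do not have access to such a nice geometric structure.'' In Stage~3, the null-homotopy you need for $d^{2}=0$ is $\Sigma\tilde{\phi}_{k}\circ\tilde{\delta}_{k}\simeq *$ (Proposition~\ref{thethirdcomposite}), not the two you cite; with that correction the chain-complex property does follow formally, but this says nothing about the Koszul form of the differential. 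Finally, Stage~4 invokes Claim~\ref{thetowersplits}, which is itself conditional on the unproven Conjecture~\ref{equivofmillersplittings}, so the subrepresentation conclusion is doubly conjectural.

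In short: your outline correctly identifies the architecture of a proof and the key difficulties, but it does not overcome them---which is consistent with the paper's own assessment that this remains open.
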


\appendix
\chapter{A Proof of the Miller Splitting}
\label{ch:appendixa}

As an appendix we present details of a proof of the Miller splitting of $\Ell$ as first proved in \cite{Miller}. The mathematics is unoriginal but we choose to present the result in terms of our language and style so we can refer when needed in the main document to parts of the proof. In particular we will be explicit about the change from $u(T)$ to $s(T)$. Due to this focus we will only briefly address aspects of the proof not pertinent to our work; we refer the reader to the previous literature where needed. Familiarity with some of the notation and results in the main document is assumed. Other proofs of this result can be found in \cite{Crabb} and \cite{Kitchloo}.

Let $V_{0}\leqslant V_{1}$ via the map $I:V_{0}\to V_{1}$, i.e. $V_{1}\cong V_{0}\oplus V_{2}$ for some representation $V_{2}$ and $I$ is the inclusion. We define a filtration on $\Ell$ as follows:
$$F_{k}(\Ell):=\{\alpha\in\Ell:\text{rank}(\alpha-I)\leqslant k\}.$$

We first note that $F_{k}(\Ell)$ is a submanifold of $\Ell$. Moreover, it is clear that when based the inclusion $i_{k}:F_{k-1}(\Ell)_{\infty}\to F_{k}(\Ell)_{\infty}$ is a cofibration; it is a closed inclusion between Hausdorff spaces. Let $h_{k}$ be the corresponding collapse map, then we have the below cofibre sequence by Proposition \ref{classicalcofibrationcofibres}:
$$\xymatrix{\frac{F_{k}(\Ell)}{F_{k-1}(\Ell)}_{\infty}\ar[dr]|\bigcirc_{\eta_{k}}&F_{k}(\Ell)_{\infty}\ar[l]_{h_{k}}\\
&F_{k-1}(\Ell)_{\infty}\ar[u]_{i_{k}}}$$

Two results then imply the splitting:

\begin{aprop}\label{thequotientisthethomspace} There is a homeomorphism:
$$\tau_{k}:\frac{F_{k}(\Ell)}{F_{k-1}(\Ell)}_{\infty}\cong G_{k}(V_{0})^{\Hom(T,V_{1}-V_{0})\oplus s(T)}.$$
\end{aprop}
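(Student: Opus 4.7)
The plan is to first construct, for each $k$, a natural diffeomorphism between the open stratum $F_{k}(\Ell)\setminus F_{k-1}(\Ell)$ and the total space of the real vector bundle $\Hom(T,V_{1}-V_{0})\oplus s(T)$ over $G_{k}(V_{0})$, and then extend across the basepoint at infinity. The underlying geometric idea is the following. For $\alpha$ in the open stratum, $\mathrm{rank}(\alpha-I)=k$ exactly, so $W(\alpha):=(\Ker(\alpha-I))^{\perp}$ is a well-defined $k$-dimensional subspace of $V_{0}$ and assembles into a continuous, $G$-equivariant map $F_{k}(\Ell)\setminus F_{k-1}(\Ell)\to G_{k}(V_{0})$. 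This will be the bundle projection.

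Next I would analyse the fibre over a fixed $W\in G_{k}(V_{0})$. Decomposing $V_{1}=V_{0}\oplus V_{2}$ via $I$ and writing $\alpha|_{W}=\alpha_{0}+\alpha_{2}$ with $\alpha_{0}\in\End(V_{0})$ and $\alpha_{2}\in\Hom(V_{0},V_{2})$, the condition $\alpha\equiv I$ on $W^{\perp}$ together with the isometry relation forces $\alpha(W)\subseteq W\oplus V_{2}$, so that $\alpha|_{W}$ is recorded by $(\alpha_{0}|_{W},\alpha_{2}|_{W})\in\End(W)\oplus\Hom(W,V_{2})$ satisfying $\alpha_{0}^{\dag}\alpha_{0}+\alpha_{2}^{\dag}\alpha_{2}=I_{W}$. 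A short argument using this identity shows $I_{W}-\alpha_{0}|_{W}$ is invertible (any $w\in W$ with $\alpha_{0}(w)=w$ would force $\alpha_{2}(w)=0$ and hence $w\in W\cap\Ker(\alpha-I)=0$). The fibre of $\Hom(T,V_{1}-V_{0})\oplus s(T)$ at $W$ is naturally $\Hom(W,V_{2})\oplus s(W)$; I would produce the fibrewise identification by a Cayley-type map, sending $\alpha$ to $\bigl(W(\alpha),\beta(\alpha),\psi(\alpha)\bigr)$ where $\beta(\alpha):=\alpha_{2}|_{W}\circ(I_{W}-\alpha_{0}|_{W})^{-1}$ (or a suitable functional calculus variant) lies in $\Hom(W,V_{2})$, and $\psi(\alpha):=i(I_{W}+\alpha_{0}|_{W})(I_{W}-\alpha_{0}|_{W})^{-1}$ (or the analogous selfadjoint produced by the functional calculus of $\S$\ref{TheStandardTheory}) lies in $s(W)$. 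The isometry relation ensures the two components are independent parameters once $W$ is fixed, and the inverse is written down explicitly by reconstructing $(\alpha_{0},\alpha_{2})$ from $(\psi,\beta)$ via the inverse Cayley formulas, extending by $I$ on $W^{\perp}$.

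The remaining work is topological. I would verify equivariance of each construction (the $W$, $\beta$, $\psi$ assignments all transform naturally under the conjugation action of $G$), continuity of both directions using Corollary \ref{Pkiscont} and Lemma \ref{fnalcalclemma} in the style of Proposition \ref{topologisingthingy}, and properness on the unbased stratum in order to apply Lemma \ref{propernessequalsbasepoints} and extend to one-point compactifications. The extension is where the main obstacle lies: as $\alpha$ approaches $F_{k-1}(\Ell)$ from within the open stratum, $W(\alpha)$ need not converge in $G_{k}(V_{0})$ (the rank drop corresponds to a limit where the top eigen-data collapses), and simultaneously as $\alpha_{0}|_{W}$ approaches a fixed point of its restriction the Cayley coordinates $\psi$ and $\beta$ escape to infinity. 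The content of the proposition is that these two divergences cancel: the map extends continuously and properly to send all such limits to the point at infinity, while no other identifications are introduced. I would handle this by showing that the map $\alpha\mapsto(W(\alpha),\beta(\alpha),\psi(\alpha))$ composed with the bundle projection to $G_{k}(V_{0})$ is proper using the norm bounds produced by the Cayley denominators, exactly analogous to the properness arguments carried out in Propositions \ref{phikworks} and \ref{thefirsttopcomposite}. Lemma \ref{whensubspaceisquotient} then produces the desired homeomorphism on compactifications, and naturality of every step gives the $G$-equivariance of $\tau_{k}$.
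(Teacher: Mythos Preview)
Your approach is essentially the one in the paper: identify the open stratum $F_{k}\setminus F_{k-1}$ with the total space of $\Hom(T,V_{1}-V_{0})\oplus s(T)$ over $G_{k}(V_{0})$ by first sending $\alpha\mapsto W(\alpha)=(\Ker(\alpha-I))^{\perp}$ and then applying a Cayley transform on fibres. The paper packages this a little differently by introducing the auxiliary space $\Gamma_{k}=\{(\psi,W):\psi|_{W^{\perp}}=I\}\subset\Ell\times G_{k}(V_{0})$, which is a fibre bundle over $G_{k}(V_{0})$ with fibre $\mathcal{L}(W,W\oplus V_{2})$; the projection $\pi_{0}:\Gamma_{k}\to F_{k}(\Ell)$ is then the resolution that makes the relative diffeomorphism with $(F_{k},F_{k-1})$ transparent, after which the Cayley transform (Lemma~\ref{cayleytransform}) is applied fibrewise. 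This organization is a bit cleaner than working directly on the stratum, but the content is the same.

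Two points to tighten. First, your explicit formula $\psi(\alpha)=i(I_{W}+\alpha_{0})(I_{W}-\alpha_{0})^{-1}$ is selfadjoint only when $\alpha_{0}^{\dag}\alpha_{0}=I_{W}$, i.e.\ only when $V_{2}=0$; in the general case the correct Cayley parametrization of $\mathcal{L}(W,W\oplus V_{2})\setminus F_{k-1}$ by $s(W)\oplus\Hom(W,V_{2})$ mixes $\alpha_{0}$ and $\alpha_{2}$ and is not the pair of formulas you wrote (the paper also only writes out the $V_{0}=V_{1}$ case and defers the general formula to \cite{Crabb}). Your hedge ``or a suitable functional calculus variant'' is not enough here; the adjustment is genuinely two-variable. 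Second, you over-dramatize the extension step. Once you have a homeomorphism between the locally compact Hausdorff spaces $F_{k}\setminus F_{k-1}$ and $E(\Hom(T,V_{1}-V_{0})\oplus s(T))$, the extension to one-point compactifications is automatic, and since $F_{k}$ is compact with $F_{k-1}$ closed one has $(F_{k}\setminus F_{k-1})_{\infty}\cong F_{k}/F_{k-1}$ directly. The work is entirely in the fibrewise bijection and its continuity, not in any separate properness argument at the boundary.
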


\begin{aprop}\label{thecompositionsplits} There are stable maps $\sigma_{k}$ as below:
$$\sigma_{k}:G_{k}(V_{0})^{\Hom(T,V_{1}-V_{0})\oplus s(T)}\to F_{k}(\Ell)_{\infty}.$$
These are such that the composition $\tau_{k}\circ h_{k}\circ\sigma_{k}$ is fibrewise homotopic to the identity.
\end{aprop}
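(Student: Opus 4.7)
The plan is to construct $\sigma_k$ as the stable Pontryagin--Thom collapse associated to an explicit equivariant open embedding of the total space of $\Hom(T,V_{1}-V_{0})\oplus s(T)$ into the open submanifold $F_{k}(\Ell)\setminus F_{k-1}(\Ell)$, and then to verify the composition identity directly. The unstable embedding $\sigma_k'$ is given as follows: for $(W,\gamma,\psi)$ with $W\in G_{k}(V_{0})$, $\gamma\in\Hom(W,V_{2})$ (using $V_{1}=V_{0}\oplus V_{2}$), and $\psi\in s(W)$, form the skew-Hermitian block endomorphism
\[
A(\gamma,\psi) \;=\; \begin{pmatrix} i\pi\cdot 1_{W} + i\psi & -\gamma^{\dag}\\ \gamma & 0\end{pmatrix} \;\in\; \End(W\oplus V_{2}),
\]
set $U = \Exp(A(\gamma,\psi))\in\Aut(W\oplus V_{2})$, and define $\sigma_k'(W,\gamma,\psi)\in\Ell$ to be $I|_{W^{\bot}}$ on the orthogonal complement of $W$ in $V_{0}$ and $U|_{W}\colon W\to W\oplus V_{2}\hookrightarrow V_{1}$ on $W$. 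A direct computation shows $\sigma_k'$ is a well-defined equivariant isometry with $\mathrm{rank}(\sigma_k'-I)\leqslant k$; the shift by $i\pi\cdot 1_{W}$ is chosen so that the zero-section value $\sigma_k'(W,0,0)$ is the reflection $-1_{W}\oplus 1_{W^{\bot}}$, which has rank-$k$ difference from $I$ and hence lies in $F_{k}\setminus F_{k-1}$. Comparing derivatives at the zero section with Miller's normal-bundle identification $\tau_k$, one checks that $\sigma_k'$ is an equivariant diffeomorphism onto $F_{k}(\Ell)\setminus F_{k-1}(\Ell)$, fibrewise inverse to $\tau_k$.

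To promote $\sigma_k'$ to a based map of Thom spectra, I would apply the tubular-neighbourhood Pontryagin--Thom construction: embed $F_{k}(\Ell)$ equivariantly into a representation sphere, and compose the collapse for the open embedding $\sigma_k'$ with the collapse for $F_{k}(\Ell)$ inside the ambient sphere; after stabilizing by the normal bundle to $F_{k}(\Ell)$, this yields the desired stable map $\sigma_k\colon G_{k}(V_{0})^{\Hom(T,V_{1}-V_{0})\oplus s(T)}\to F_{k}(\Ell)_{\infty}$.

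The composition $\tau_k\circ h_k\circ\sigma_k$ is then, by construction, the Pontryagin--Thom idempotent of the open embedding $\sigma_k'$: it equals the identity on the complement of the zero section, fixes the zero section pointwise, and sends basepoint to basepoint. A straight-line convex-combination homotopy in each fibre of the Thom space then upgrades this pointwise agreement to a genuine fibrewise homotopy to the identity. The main obstacle will be making the middle step precise: passing from the unstable open embedding $\sigma_k'$ to an honest based, equivariant stable map $\sigma_k$ is delicate because $F_{k}(\Ell)$ is already compact and the basepoint of $F_{k}(\Ell)_{\infty}$ is isolated, so the point at infinity of each bundle fibre has no direct geometric target; handling the tubular neighbourhood equivariantly, and checking that the resulting composition matches the identity up to fibrewise homotopy rather than merely on a dense open subset, is where the bulk of the technical work sits.
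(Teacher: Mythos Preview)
Your construction of $\sigma_k'$ via the exponential has a genuine gap: the map is not a diffeomorphism onto $F_{k}(\Ell)\setminus F_{k-1}(\Ell)$, nor even an open embedding. Already in the simplest case $k=d_{0}$, $V_{0}=V_{1}$ (so $V_{2}=0$, $\gamma=0$, $W=V_{0}$) your map reduces to $\psi\mapsto -\Exp(i\psi)$ from $s(V_{0})$ to $U(V_{0})$. This is neither injective (shifting any eigenvalue of $\psi$ by $2\pi$ gives the same unitary) nor does it land in $F_{d_{0}}\setminus F_{d_{0}-1}$: whenever an eigenvalue of $\psi$ equals $\pi$ modulo $2\pi$, the output acquires $1$ as an eigenvalue and falls into $F_{d_{0}-1}$. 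A derivative check at the zero section only gives a local diffeomorphism near the zero section; it cannot certify global injectivity or image control for the exponential. Your subsequent Pontryagin--Thom step then inherits this defect, and in any case would produce a map in the wrong direction (an open embedding $E\hookrightarrow F_{k}$ yields a collapse $F_{k\,\infty}\to E_{\infty}$, not the other way); your $\sigma_k'$ is also not proper (the source is noncompact, the target compact), so it does not extend over the point at infinity.

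The paper sidesteps both problems by first suspending by $s(V_{0})$, so that the domain becomes the honest Thom space $G_{k}(V_{0})^{\Hom(T,V_{1})\oplus s(T^{\bot})}$ and the target becomes $S^{s(V_{0})}\wedge F_{k}(\Ell)_{\infty}$. The unstable representative of $\sigma_{k}$ is then built \emph{fibrewise} from the Pontryagin--Thom collapse $\kappa_{W}^{!}\colon S^{\Hom(W,V_{1}\setminus W^{\bot})}\to \inj(W,V_{1}\setminus W^{\bot})_{\infty}$ associated to the open inclusion $\inj\hookrightarrow\Hom$ (Proposition~\ref{thetaEalpha}), composed with the explicit homeomorphism $\inj(W,V_{1}\setminus W^{\bot})\cong s(W)\times\mathcal{L}(W,V_{1}\setminus W^{\bot})$ and the identification $\mathcal{L}(T,V_{1}\setminus T^{\bot})\cong\Gamma_{k}$ from Lemma~\ref{diffeomorphismthingy}, finishing with $1\wedge\pi_{0}$. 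The point is that the collapse happens on the $\Hom$-side, where the open set $\inj$ really is the complement of a closed subvariety, rather than on the $F_{k}(\Ell)$-side; this is what makes the based map exist unstably (after one $s(V_{0})$-suspension) without any delicate tubular-neighbourhood gymnastics. The composition $\tau_{k}\circ h_{k}\circ\sigma_{k}$ is then analysed as a self-map of a sphere whose derivative at the origin is the identity, whence the fibrewise homotopy.
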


These two results are enough to imply the Miller Splitting Theorem via the information detailed in $\S$\ref{SplittingsviaCofibreSequences}:

\begin{athm}[Miller]\label{themillersplittingthm} $\Ell_{\infty}$ splits stably as:
$$\Ell_{\infty}\simeq\bigvee_{k=0}^{n}G_{k}(V_{0})^{\Hom(T,V_{1}-V_{0})\oplus s(T)}.$$
\end{athm}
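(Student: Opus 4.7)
The plan is to prove the theorem by induction on $k$, repeatedly applying Proposition \ref{howacofibseqsplits} to the stabilized cofibre sequences coming from the filtration $F_k(\Ell)$, with the two propositions \ref{thequotientisthethomspace} and \ref{thecompositionsplits} supplying precisely the data needed to invoke that splitting criterion.

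\textbf{Base case.} Observe that $F_{0}(\Ell)=\{I\}$, so that $F_{0}(\Ell)_{\infty}\cong S^{0}$, which agrees with $G_{0}(V_{0})^{\Hom(T,V_{1}-V_{0})\oplus s(T)}=S^{0}$. Note also that $F_{n}(\Ell)=\Ell$ since $\alpha-I$ has rank at most $n=d_{0}=\dim V_{0}$ for any $\alpha\in\Ell$, so the top of the filtration recovers $\Ell$ itself. Thus it suffices to prove, inductively in $k$, that
\[
\Sigma^{\infty}F_{k}(\Ell)_{\infty}\;\simeq\;\bigvee_{j=0}^{k}\Sigma^{\infty}G_{j}(V_{0})^{\Hom(T,V_{1}-V_{0})\oplus s(T)}
\]
in $\mathcal{F}_{G}$.

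\textbf{Inductive step.} Assume the result for $k-1$. Since $F_{k-1}(\Ell)\hookrightarrow F_{k}(\Ell)$ is a closed inclusion of submanifolds, it is an NDR pair, so Proposition \ref{classicalcofibrationcofibres} yields an unstable cofibre sequence
\[
F_{k-1}(\Ell)_{\infty}\overset{i_{k}}{\to}F_{k}(\Ell)_{\infty}\overset{h_{k}}{\to}\frac{F_{k}(\Ell)}{F_{k-1}(\Ell)}_{\infty}\overset{\eta_{k}}{\to}\Sigma F_{k-1}(\Ell)_{\infty}.
\]
Apply $\Sigma^{\infty}$ to obtain the corresponding cofibre sequence in $\mathcal{F}_{G}$. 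By Proposition \ref{thequotientisthethomspace} we may identify the third term with $G_{k}(V_{0})^{\Hom(T,V_{1}-V_{0})\oplus s(T)}$ via $\tau_{k}$, and by Proposition \ref{thecompositionsplits} the stable map $\sigma_{k}$ satisfies $\tau_{k}\circ h_{k}\circ \sigma_{k}\simeq\mathrm{id}$. Hence, writing $g:=\tau_{k}\circ h_{k}$ and $g':=\sigma_{k}$, the composite $g\circ g'$ is an isomorphism in $\mathcal{F}_{G}$, which is exactly the second hypothesis of Proposition \ref{howacofibseqsplits}.

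Applying that proposition gives
\[
\Sigma^{\infty}F_{k}(\Ell)_{\infty}\;\simeq\;\Sigma^{\infty}F_{k-1}(\Ell)_{\infty}\;\vee\;\Sigma^{\infty}G_{k}(V_{0})^{\Hom(T,V_{1}-V_{0})\oplus s(T)}
\]
in $\mathcal{F}_{G}$, and combining with the inductive hypothesis closes the induction. Taking $k=n$ delivers the theorem.

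\textbf{Where the work is.} The splitting step itself is essentially formal once the two propositions are in hand: the induction, the application of \ref{howacofibseqsplits}, and the identification $F_{n}(\Ell)=\Ell$ are all routine. The real content of the argument lies entirely in the two preparatory propositions that the excerpt defers. The main obstacle is Proposition \ref{thecompositionsplits}, the construction of the stable splitting map $\sigma_{k}$; one identifies $F_{k}(\Ell)\setminus F_{k-1}(\Ell)$ with the total space of the bundle $\Hom(T,V_{1}-V_{0})\oplus s(T)$ over $G_{k}(V_{0})$ (via $\alpha\mapsto(\mathrm{Im}(\alpha-I),\ldots)$, which also supplies $\tau_{k}$ in \ref{thequotientisthethomspace}) and then produces $\sigma_{k}$ by a Pontryagin--Thom style collapse on a tubular neighbourhood of the zero section, using the $s(T)$ factor (via the functional calculus homeomorphism $s(T)\cong s_{++}(T)$ from Corollary \ref{shomeotosplus}) to exponentiate back into the manifold $F_{k}(\Ell)$. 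Verifying that the composite with $\tau_{k}\circ h_{k}$ is fibrewise homotopic to the identity is the only genuinely non-formal point in the entire argument.
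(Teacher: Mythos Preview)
Your proposal is correct and matches the paper's approach essentially exactly: the paper states Propositions \ref{thequotientisthethomspace} and \ref{thecompositionsplits}, then says these ``are enough to imply the Miller Splitting Theorem via the information detailed in \S\ref{SplittingsviaCofibreSequences}'', which is precisely your inductive application of Proposition \ref{howacofibseqsplits} to the filtration cofibre sequences. Your commentary on where the work lies is also accurate; the paper's actual construction of $\sigma_{k}$ goes through an auxiliary bundle $\Gamma_{k}$ and fibrewise applications of the collapse $\kappa^{!}$ from Proposition \ref{thetaEalpha} rather than a tubular-neighbourhood Pontryagin--Thom map per se, but the spirit is the same and this is peripheral to the deduction of the theorem from the two propositions.
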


We first prove Proposition \ref{thequotientisthethomspace}. Let $\Gamma_{k}$ be the submanifold of $\Ell\times G_{k}(V_{0})$ given by:
$$\{(\psi,W):\psi\in\Ell,W\in G_{k}(V_{0}),\psi|_{W^{\bot}}=I|_{W^{\bot}}\}.$$

Now setting $\pi_{0}$ and $\pi_{1}$ to be the two evident projections it is clear that the image of $\pi_{0}$ is
$F_{k}(\Ell)$ and that $\pi_{1}$ makes $\Gamma_{k}$ into a fibre bundle over $G_{k}(V_{0})$. Moreover, there is a section $\iota_{k}$ sending $W\mapsto (-I|_{W}\oplus I|_{W^{\bot}},W)$ and thus we have a concept of basepoint for $\Gamma_{k}$, we refer to this based version as $\Gamma_{k}^{\infty}$.

We now claim the following:

\begin{alem}\label{diffeomorphismthingy} There is a diffeomorphism of bundles over $G_{k}(V_{0})$ between $\Gamma_{k}$ and the below bundle, we leave the details as to why this is a bundle to the previously cited papers:
$$\{(W,\phi):W\in G_{k}(V_{0}),\phi\in\mathcal{L}(W,W\oplus V_{2})\}.$$

We note here that as $V_{1}\cong V_{0}\oplus V_{1}$ and $V_{0}\cong W\oplus W^{\bot}$ we can also denote $W\oplus V_{2}$ as $V_{1}\backslash W^{\bot}$. Using this notation we refer to the above as $\mathcal{L}(T,V_{1}\backslash T^{\bot})$. This diffeomorphism is given by the map below:
$$\Gamma_{k}\to\mathcal{L}(T,V_{1}\backslash T^{\bot})$$
$$(W,\psi)\mapsto (W,\psi- (0_{W}\oplus I_{W^{\bot}})).$$
\end{alem}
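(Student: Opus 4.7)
The plan is to verify directly that the stated map $\Phi\colon \Gamma_k \to \mathcal{L}(T, V_1 \setminus T^\bot)$ sending $(W,\psi)$ to $(W, \psi - (0_W \oplus I_{W^\bot}))$ is a well-defined smooth bundle map with a smooth inverse that respects the projection to $G_k(V_0)$. First I would unpack what the formula actually does fibrewise. Fix $W \in G_k(V_0)$ and decompose $V_0 = W \oplus W^\bot$ and $V_1 = W \oplus W^\bot \oplus V_2$. A point $(W,\psi) \in \Gamma_k$ has $\psi|_{W^\bot} = I|_{W^\bot}$, so subtracting $0_W \oplus I_{W^\bot}$ kills the $W^\bot$-component entirely and leaves $\psi|_W$ on the $W$-summand. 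Thus $\Phi(W,\psi)$ is really the pair $(W, \psi|_W)$.

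The nontrivial content is that $\psi|_W$ lands in $W \oplus V_2 = V_1 \setminus W^\bot$ and is isometric there, i.e.\ genuinely defines an element of $\mathcal{L}(W, V_1 \setminus W^\bot)$. This is a short calculation: since $\psi$ is a global isometry, $\langle \psi(w), \psi(w')\rangle = \langle w, w'\rangle$ for $w \in W$, $w' \in W^\bot$, and using $\psi(w') = w'$ this forces $\psi(w) \perp W^\bot$, hence $\psi(W) \subseteq W \oplus V_2$. Isometry of $\psi|_W$ follows from isometry of $\psi$. Then I would write the candidate inverse $\Psi\colon (W,\phi) \mapsto (W, \phi \oplus I_{W^\bot})$ and check that $\phi \oplus I_{W^\bot}$ is an isometry $V_0 \to V_1$: the two summand maps have orthogonal images in $V_1$ (again because $\phi(W) \subseteq W \oplus V_2$ is orthogonal to $W^\bot$), so orthogonality of the sums plus isometry of each summand gives the global isometry property, and the condition $\psi|_{W^\bot} = I|_{W^\bot}$ is tautological. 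That $\Phi$ and $\Psi$ are mutual inverses is immediate from the decomposition.

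For smoothness, both $\Phi$ and $\Psi$ are built from the assignments $W \mapsto W^\bot$, orthogonal projection to $W$ and $W^\bot$, and direct-sum assembly; all of these are smooth in the standard differentiable atlas on $G_k(V_0)$ (this is the same local-trivialization data underpinning Proposition \ref{xisavectorbundle}), so smoothness follows by composition. Commutation with the projections to $G_k(V_0)$ is built into the formulas. The step I expect to absorb the most care is not the set-theoretic bijection but rather spelling out that the decomposition $V_0 = W \oplus W^\bot$, $V_1 = W \oplus W^\bot \oplus V_2$ varies smoothly with $W$ so that the pointwise prescription assembles into a genuine bundle map; everything else is formal linear algebra.
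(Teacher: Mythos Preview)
Your proposal is correct and is in fact more than the paper itself supplies: the paper states the lemma with the explicit formula for the map but gives no proof, deferring the bundle-theoretic verifications to the cited literature. Your fibrewise check that $\psi(W)\perp W^{\bot}$ (hence $\psi|_{W}\in\mathcal{L}(W,W\oplus V_{2})$), together with the evident inverse $(W,\phi)\mapsto(W,\phi\oplus I_{W^{\bot}})$ and the smoothness argument via the standard local trivializations, is exactly the routine verification one would carry out to justify the stated formula.
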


We now note that this diffeomorphism is filtration preserving when composed with $\pi_{0}$. Thus we have the below relative diffeomorphism:
$$(\mathcal{L}(T,V_{1}\backslash T^{\bot}),F_{k-1}(\mathcal{L}(T,V_{1}\backslash T^{\bot})))\cong(
F_{k}(\Ell),F_{k-1}(\Ell)).$$

We partner this result with the following lemma:

\begin{alem}\label{cayleytransform} Let $W\in G_{k}(V_{0})$. We have the following Cayley transform:
$$s(W)\oplus \Hom(W,V_{1}-V_{0})\cong \mathcal{L}(W,V_{1}\backslash W^{\bot})\backslash F_{k-1}(\mathcal{L}(W,V_{1}\backslash W^{\bot})).$$
This is given in the case $V_{0}=V_{1}$ by the map and inverse as follows - the generalization is detailed in \cite{Crabb}:
$$\delta\mapsto\frac{((-i\delta/2)-1)}{((-i\delta/2)+1)}$$
$$\frac{2}{i}\frac{(\phi+1)}{(\phi-1)}\mapsfrom \phi.$$
\end{alem}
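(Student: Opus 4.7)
The plan is to verify the Cayley transform identity as stated, first in the model case $V_0 = V_1$ (so that $W^{\bot} \subseteq V_0$ is all of $W^{\bot}$, $V_1 \setminus W^{\bot} = W$, and $\Hom(W, V_1 - V_0) = 0$), and then to describe how the generalization assembles. In the square case we must show that the formula $\delta \mapsto \phi := (A - I)(A+I)^{-1}$, with $A = -i\delta/2$, is a diffeomorphism from $s(W)$ onto the set of unitaries $\phi \in \mathcal{L}(W,W)$ such that $\phi - I$ is invertible (which, since $k = \dim W$, is exactly $\mathcal{L}(W,W)\setminus F_{k-1}$).

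The key points to verify, all by direct computation: (i) $A$ is skew-Hermitian (since $\delta \in s(W)$), so the spectrum of $A$ is purely imaginary and $A + I$ is invertible, making $\phi$ well-defined and smooth on all of $s(W)$; (ii) $\phi$ is unitary, which drops out of the observation $(A-I)^{\dagger}(A-I) = (-A-I)(A-I) = I - A^{2}$ and $(A+I)^{\dagger}(A+I) = I - A^{2}$ (the two factors commute because they are polynomials in $A$), so $\phi^{\dagger}\phi = I$; (iii) $\phi - I = (A-I)(A+I)^{-1} - I = -2(A+I)^{-1}$ is invertible, placing $\phi$ outside $F_{k-1}$. For the inverse, solve $\phi(A+I) = A - I$ for $A$, obtaining $A = -(\phi + I)(\phi - I)^{-1}$, which is defined precisely when $\phi - I$ is invertible; since $\phi$ is unitary, one checks that $A$ is skew-Hermitian, hence $\delta = 2iA = (2/i)(\phi+1)/(\phi-1)$ lies in $s(W)$. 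That the two formulas are mutually inverse is a line of algebra, and continuity in both directions is manifest from the explicit rational expressions together with the invertibility guaranteed by the spectral constraint.

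For the general case $V_0 \leqslant V_1$ one extends the construction by incorporating the $\Hom(W, V_2)$-coordinate so as to produce an isometry $\phi\in\mathcal{L}(W, W\oplus V_2)$ with $\phi - I_W$ of full rank $k$; the essential idea is to run Cayley on a skew-Hermitian operator on $W \oplus V_2$ built from the pair $(\delta,\mu)$ and read off the restriction to $W$, the remaining degrees of freedom in $V_2$ accounting for the extra $2kt$ real dimensions that match $\dim_{\Real}\Hom(W,V_2)$. A dimension count (which matches the fibrewise calculations of Lemma A.1) confirms that no parameters are lost. I expect the main technical obstacle to be not the algebra of the Cayley transform itself but the verification that the parameterization in the general case is globally surjective onto $\mathcal{L}(W,V_1\setminus W^{\bot}) \setminus F_{k-1}$ and has an everywhere-defined smooth inverse, since one must show that every full-rank-defect isometry arises uniquely from some $(\delta,\mu)$; this is the content that is referred to \cite{Crabb} and which follows by an analogous, if bookkeeping-heavy, rational-function computation.
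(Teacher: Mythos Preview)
The paper does not actually supply a proof of this lemma: the explicit formulas are part of the lemma statement itself, and immediately afterwards the text simply says ``Combining the lemma with the relative diffeomorphism gives the homeomorphisms $\tau_{k}$ as required'' and moves on. The general case is explicitly deferred to \cite{Crabb}. So there is no proof in the paper to compare your proposal against.

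Your verification of the $V_{0}=V_{1}$ case is correct and more thorough than anything the paper offers. The checks that $A=-i\delta/2$ is skew-Hermitian, that $A+I$ is invertible, that $\phi$ is unitary, that $\phi-I=-2(A+I)^{-1}$ is invertible, and that the inverse formula recovers a self-adjoint $\delta$ are all fine. Your sketch of the general case is accurate in spirit (extend to a skew-Hermitian operator on $W\oplus V_{2}$ and run Cayley), and you correctly identify that the bookkeeping for global surjectivity and smoothness of the inverse is exactly what the paper outsources to Crabb. In short, you have supplied what the paper chose to omit.
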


Combining the lemma with the relative diffeomorphism gives the homeomorphisms $\tau_{k}$ as required and proves Proposition \ref{thequotientisthethomspace}.

We now construct the maps of Proposition \ref{thecompositionsplits}. For the top map $\sigma_{d_{0}}:G_{d_{0}}(V_{0})^{\Hom(T,V_{1}-V_{0})\oplus s(T)}\to \Ell_{\infty}$ we take as an unstable representative the collapse map $\kappa^{!}:S^{\aich}\to S^{\svo}\wedge\Ell_{\infty}$ from Proposition \ref{thetaEalpha}. We have in the main document observed the below bundle identity: 
$$S^{\svo}\wedge G_{d_{0}}(V_{0})^{\Hom(T,V_{1}-V_{0})\oplus s(T)}\cong S^{\aich}.$$

Thus we look at the composition $(1\wedge(\tau_{d_{0}}\circ h_{d_{0}}))\circ \kappa^{!}$, a self map of $S^{\aich}$. This map is homotopic to the identity, which is easiest to see if $V_{0}=V_{1}$. In that case this map is the collapse corresponding to the embedding:
$$S^{2\svo}\to S^{2\svo}\cong S^{\End(V_{0})}$$
$$(\delta,\alpha)\mapsto -\frac{((-i\delta/2)-1)}{((-i\delta/2)+1)}\Exp(\alpha).$$
That the composition is the identity then follows from the above map having the identity as its derivative at $(0,0)$. Thus we have constructed a map $\sigma_{d_{0}}$ with the required properties.

For $\sigma_{k}$ we again note an observation made in the main document:
$$S^{\svo}\wedge G_{k}(V_{0})^{\Hom(T,V_{1}-V_{0})\oplus s(T)}\cong G_{k}(V_{0})^{\Hom(T,V_{1})\oplus s(T^{\bot})}.$$

Thus we can build $\sigma_{k}$ unstably, mapping out of $G_{k}(V_{0})^{\Hom(T,V_{1})\oplus s(T^{\bot})}$. We introduce the space $\mathcal{K}_{k}'$ to be the fibre bundle given below, with one-point compactification $\mathcal{K}_{k}$:
$$\{W\in G_{k}(V_{0}),\gamma\in \inj(W,V_{1}\backslash W^{\bot}),\beta\in\Hom(W,W^{\bot}),\psi\in s(W^{\bot})\}.$$

We define a map $\tilde{K}:G_{k}(V_{0})^{\Hom(T,V_{1})\oplus s(T^{\bot})}\to \mathcal{K}_{k}$ as follows. First we note that when fixing $W$ we can standardly decompose $f\in\Hom(W,V_{1})\cong \Hom(W,W\oplus W^{\bot}\oplus V_{2})$ into $(g,h)$ with $g\in\Hom(W,W\oplus V_{2})\cong \Hom(W,V_{1}\backslash W^{\bot})$ and $h\in\Hom(W,W^{\bot})$. This gives us a homeomorphism from $G_{k}(V_{0})^{\Hom(T,V_{1})}$ to $G_{k}(V_{0})^{\Hom(T,V_{1}\backslash T^{\bot})\oplus
\Hom(T,T^{\bot})}$. We then compose fibrewise with collapse maps built via Proposition \ref{thetaEalpha}; we have collapses which we apply on each fibre given by $\kappa_{W}^{!}:S^{\Hom(W,V_{1}\backslash W^{\bot})}\to
\inj(W,V_{1}\backslash W^{\bot})_{\infty}$ for each fixed $W$. We then take the identity on each $s(W^{\bot})$ and $\Hom(W,W^{\bot})$ to build the map $\tilde{K}$.

We now note that for $W\in G_{k}(V_{0})$ we have fibrewise identifications from Proposition \ref{thetaEalpha} given as follows:
$$\mathcal{L}(W,V_{1}\backslash W^{\bot})\times s(W)\cong \inj(W,V_{1}\backslash W^{\bot}).$$

We can apply these fibrewise on $\mathcal{K}_{k}$. This allows us to map $\mathcal{K}_{k}$ homeomorphically onto $S^{\svo}\wedge \Gamma_{k}^{\infty}$ via the bundle identity $s(T)\oplus s(T^{\bot})\oplus \Hom(T,T^{\bot})\cong \svo$ from the proof of Lemma \ref{destabilizinglemma} and via the diffeomorphism \ref{diffeomorphismthingy}, call this map $j_{k}$. We then take as an unstable representative of $\sigma_{k}$ the following composite:
$$G_{k}(V_{0})^{\Hom(T,V_{1})\oplus s(T^{\bot})}\overset{\tilde{K}}{\to}\mathcal{K}_{k}\overset{j_{k}}{\cong} S^{\svo}\wedge \Gamma_{k}^{\infty}\overset{1\wedge \pi_{0}}{\to}S^{\svo}\wedge F_{k}(\Ell)_{\infty}.$$

When composed with $(1\wedge(\tau_{k}\circ h_{k}))$ this is fibrewise homotopic to the identity for similar reasons to why $(1\wedge(\tau_{d_{0}}\circ h_{d_{0}}))\circ \kappa^{!}$ is homotopic to the identity. Thus we have built stable $\sigma_{k}$ proving Proposition \ref{thecompositionsplits} - from here it is simple to retrieve a proof of The Miller Splitting Theorem \ref{themillersplittingthm}.

\bibliography{thesisbib}

\begin{thebibliography}{10}

\bibitem{Adams1}
J.~F. Adams.
\newblock {\em Stable homotopy and generalised homology}.
\newblock Chicago Lectures in Mathematics. University of Chicago Press,
  Chicago, IL, 1995.
\newblock Reprint of the 1974 original.

\bibitem{AtiyahKTheory}
M.~F. Atiyah.
\newblock {\em {$K$}-theory}.
\newblock Advanced Book Classics. Addison-Wesley Publishing Company Advanced
  Book Program, Redwood City, CA, second edition, 1989.
\newblock Notes by D. W. Anderson.

\bibitem{BousfieldKan}
A.~K. Bousfield and D.~M. Kan.
\newblock {\em Homotopy limits, completions and localizations}.
\newblock Lecture Notes in Mathematics, Vol. 304. Springer-Verlag, Berlin,
  1972.

\bibitem{BredonTopologyandGeometry}
Glen~E. Bredon.
\newblock {\em Topology and geometry}, volume 139 of {\em Graduate Texts in
  Mathematics}.
\newblock Springer-Verlag, New York, 1997.
\newblock Corrected third printing of the 1993 original.

\bibitem{Crabb}
M.~C. Crabb.
\newblock On the stable splitting of {${\rm U}(n)$} and {$\Omega {\rm U}(n)$}.
\newblock In {\em Algebraic topology, {B}arcelona, 1986}, volume 1298 of {\em
  Lecture Notes in Math.}, pages 35--53. Springer, Berlin, 1987.

\bibitem{DavidsonC*Algebras}
Kenneth~R. Davidson.
\newblock {\em {$C^*$}-algebras by example}, volume~6 of {\em Fields Institute
  Monographs}.
\newblock American Mathematical Society, Providence, RI, 1996.

\bibitem{Goodwillie2}
Thomas~G. Goodwillie.
\newblock Calculus. {II}. {A}nalytic functors.
\newblock {\em $K$-Theory}, 5(4):295--332, 1991/92.

\bibitem{Hatcher}
Allen Hatcher.
\newblock {\em Algebraic topology}.
\newblock Cambridge University Press, Cambridge, 2002.

\bibitem{Hovey1}
Mark Hovey.
\newblock {\em Model categories}, volume~63 of {\em Mathematical Surveys and
  Monographs}.
\newblock American Mathematical Society, Providence, RI, 1999.

\bibitem{HoveyPalmieriStrickland}
Mark Hovey, John~H. Palmieri, and Neil~P. Strickland.
\newblock Axiomatic stable homotopy theory.
\newblock {\em Mem. Amer. Math. Soc.}, 128(610):x+114, 1997.

\bibitem{JamesStiefelManifolds}
I.~M. James.
\newblock Spaces associated with {S}tiefel manifolds.
\newblock {\em Proc. London Math. Soc. (3)}, 9:115--140, 1959.

\bibitem{KaroubiKTheory}
Max Karoubi.
\newblock {\em {$K$}-theory}.
\newblock Classics in Mathematics. Springer-Verlag, Berlin, 2008.
\newblock An introduction, Reprint of the 1978 edition, With a new postface by
  the author and a list of errata.

\bibitem{Kitchloo}
Nitu Kitchloo.
\newblock Cohomology splittings of {S}tiefel manifolds.
\newblock {\em J. London Math. Soc. (2)}, 64(2):457--471, 2001.

\bibitem{LangAlgebra}
Serge Lang.
\newblock {\em Algebra}, volume 211 of {\em Graduate Texts in Mathematics}.
\newblock Springer-Verlag, New York, third edition, 2002.

\bibitem{LewisMaySteinberger}
L.~G. Lewis, Jr., J.~P. May, M.~Steinberger, and J.~E. McClure.
\newblock {\em Equivariant stable homotopy theory}, volume 1213 of {\em Lecture
  Notes in Mathematics}.
\newblock Springer-Verlag, Berlin, 1986.
\newblock With contributions by J. E. McClure.

\bibitem{maclane}
Saunders Mac~Lane.
\newblock {\em Categories for the working mathematician}, volume~5 of {\em
  Graduate Texts in Mathematics}.
\newblock Springer-Verlag, New York, second edition, 1998.

\bibitem{mayetal}
J.~P. May.
\newblock {\em Equivariant homotopy and cohomology theory}, volume~91 of {\em
  CBMS Regional Conference Series in Mathematics}.
\newblock Published for the Conference Board of the Mathematical Sciences,
  Washington, DC, 1996.
\newblock With contributions by M. Cole, G. Comeza\~na, S. Costenoble, A. D.
  Elmendorf, J. P. C. Greenlees, L. G. Lewis, Jr., R. J. Piacenza, G.
  Triantafillou, and S. Waner.

\bibitem{ConciseMay}
J.~P. May.
\newblock {\em A concise course in algebraic topology}.
\newblock Chicago Lectures in Mathematics. University of Chicago Press,
  Chicago, IL, 1999.

\bibitem{McCordClassifyingSpacesAndSymmetricProducts}
M.~C. McCord.
\newblock Classifying spaces and infinite symmetric products.
\newblock {\em Trans. Amer. Math. Soc.}, 146:273--298, 1969.

\bibitem{Miller}
Haynes Miller.
\newblock Stable splittings of {S}tiefel manifolds.
\newblock {\em Topology}, 24(4):411--419, 1985.

\bibitem{CharacteristicClassesMilnorStasheff}
John~W. Milnor and James~D. Stasheff.
\newblock {\em Characteristic classes}.
\newblock Princeton University Press, Princeton, N. J., 1974.
\newblock Annals of Mathematics Studies, No. 76.

\bibitem{PryceFunctionalAnalysis}
J.~D. Pryce.
\newblock {\em Basic methods of linear functional analysis}.
\newblock Hutchinson\thinspace \&\thinspace Co., Ltd., London, 1973.
\newblock Hutchinson University Library.

\bibitem{QuillenCobordismFGL}
Daniel Quillen.
\newblock On the formal group laws of unoriented and complex cobordism theory.
\newblock {\em Bull. Amer. Math. Soc.}, 75:1293--1298, 1969.

\bibitem{Ravenel}
Douglas~C. Ravenel.
\newblock {\em Nilpotence and periodicity in stable homotopy theory}, volume
  128 of {\em Annals of Mathematics Studies}.
\newblock Princeton University Press, Princeton, NJ, 1992.
\newblock Appendix C by Jeff Smith.

\bibitem{ReedSimonMethModMathPhyVol4}
Michael Reed and Barry Simon.
\newblock {\em Methods of modern mathematical physics. {IV}. {A}nalysis of
  operators}.
\newblock Academic Press [Harcourt Brace Jovanovich Publishers], New York,
  1978.

\bibitem{segalktheory}
Graeme Segal.
\newblock Equivariant {$K$}-theory.
\newblock {\em Inst. Hautes \'Etudes Sci. Publ. Math.}, (34):129--151, 1968.

\bibitem{spanier}
Edwin~H. Spanier.
\newblock {\em Algebraic topology}.
\newblock Springer-Verlag, New York, 1991.
\newblock Corrected reprint of the 1966 original.

\bibitem{SteenrodCategory}
N.~E. Steenrod.
\newblock A convenient category of topological spaces.
\newblock {\em Michigan Math. J.}, 14:133--152, 1967.

\bibitem{StricklandSubbundles}
N.~P. Strickland.
\newblock Common subbundles and intersections of divisors.
\newblock {\em Algebr. Geom. Topol.}, 2:1061--1118 (electronic), 2002.

\bibitem{StricklandMulticurves}
N.~P. Strickland.
\newblock Multicurves and equivariant cohomology.
\newblock 2008.
\newblock Eprint, arXiv:math/0211058v2.

\bibitem{Wegge-Olsen}
N.~E. Wegge-Olsen.
\newblock {\em {$K$}-theory and {$C^*$}-algebras}.
\newblock Oxford Science Publications. The Clarendon Press Oxford University
  Press, New York, 1993.
\newblock A friendly approach.

\end{thebibliography}
\addcontentsline{toc}{chapter}{Bibliography}
\bibliographystyle{plain}
\end{document}